\theoremstyle{plain} %style for theorems
\newtheorem{thm}{Theorem}[subsection] %call Theorem using thm
\newtheorem{lemma}[thm]{Lemma} %call Lemma using lemma
\newtheorem{prop}[thm]{Proposition} %call Proposition using prop
\newtheorem{cor}[thm]{Corollary} %call Corollary using cor
\theoremstyle{definition} %style for definitions and examples
\newtheorem{defn}[thm]{Definition} %call Definition using defn
\newtheorem{example}{Example} %examples aren't numbered --- they are now
\newtheorem{notation}[thm]{Notation}
\newtheorem{con}[thm]{Convention}
\theoremstyle{remark} %style for remarks - might not use?
\newtheorem*{rem}{Remark} %call Remark using rem, remarks aren't numbered
\newtheorem{obs}[thm]{Observation} %call Observation using obs, numbered with theorems and definitions
\DeclareMathOperator{\aut}{Aut} %aut
\DeclareMathOperator{\out}{Out} %out
\DeclareMathOperator{\outs}{\out_{\mathfrak{S}}} %out_S
\DeclareMathOperator{\inn}{Inn} %inn
\DeclareMathOperator{\stab}{Stab} %stab
\DeclareMathOperator{\ad}{Ad} %ad
\DeclareMathSymbol{\shortminus}{\mathbin}{AMSa}{"39} % minus sign for diagrams
\DeclareMathOperator{\dash }{\textbf{\textemdash}} % dash for math mode instead of $---$
\newcommand{\overbar}[1]{\mkern 1.5mu\overline{\mkern-1.5mu#1\mkern-1.5mu}\mkern 1.5mu} % for a bar that isn't too short or too long
\newcommand{\Int}{\aleph_{1}\cap\aleph_{2}} %pairwise intersection
\newcommand{\diag}{\begin{tikzpicture} \draw (0,0) -- (0.2,0.1) -- (0.4,0) -- (0.2,-0.1) -- cycle; \draw (0,-0.075) -- (0.2,-0.175); \node at (0,0) {}; \node at (0.4,0) {}; \end{tikzpicture}} %diagonal subgroup
\tikzset{->-/.style={decoration={markings, mark=at position .55 with {\arrow{>}}}, postaction={decorate}}} %makes ->- a type of arrow in tikz diagrams
\tikzset{-<-/.style={decoration={markings, mark=at position .55 with {\arrow{<}}}, postaction={decorate}}} %makes -<- a type of arrow in tikz diagrams
		\newcommand{\Taleph}[5]{
		\begin{tikzpicture}[scale=0.8] %aleph shape
		%edges:
		\draw[thick] (0,0) -- (-0.5,0.866);
		\draw[thick] (0,0) -- (-0.5,-0.866);
		\draw[thick] (0,0) -- (1,0);
		\draw[thick] (2,0) -- (2.5,0.866);
		\draw[thick] (2,0) -- (2.5,-0.866);
		\draw[thick] (2,0) -- (1,0);
		%vertices:
		\draw[yellow, fill] (0,0) circle [radius=0.09];
		\draw[red, fill] (-0.5,0.866) circle [radius=0.09];
		\draw[red, fill] (-0.5,-0.866) circle [radius=0.09];
		\filldraw[color=blue, fill=yellow, very thick] (1,0) circle [radius=0.125];
		\draw[yellow, fill] (2,0) circle [radius=0.09];
		\draw[red, fill] (2.5,0.866) circle [radius=0.09];
		\draw[red, fill] (2.5,-0.866) circle [radius=0.09];
		%labels:
		\node[text=blue] at (1,0.4) {#1};
		\node at (-0.8,0.9) {#2};
		\node at (-0.8,-0.9) {#3};
		\node at (2.8,0.9) {#4};
		\node at (2.8,-0.9) {#5};
		\end{tikzpicture}
		}
		\newcommand{\Trho}[3]{
		\begin{tikzpicture}[scale=0.8]  %rho shape
		%edges:
		\draw[thick] (0,0) -- (-0.5,0.866);
		\draw[thick] (0,0) -- (-0.5,-0.866);
		\draw[thick] (0,0) -- (1,0);
		%vertices:
		\draw[yellow, fill] (0,0) circle [radius=0.09];
		\draw[red, fill] (-0.5,0.866) circle [radius=0.09];
		\draw[red, fill] (-0.5,-0.866) circle [radius=0.09];
		\filldraw[color=blue, fill=yellow, very thick] (1,0) circle [radius=0.125];
		%labels:
		\node[text=blue] at (1,0.4) {#1};
		\node at (-0.8,0.9) {#2};
		\node at (-0.8,-0.9) {#3};
		\end{tikzpicture}
		}
		\newcommand{\Tsigma}[6]{
		\begin{tikzpicture}[scale=0.8]  %sigma shape
		%edges:
		\draw[thick] (0,0) -- (-0.5,0.866);
		\draw[thick] (0,0) -- (-0.5,-0.866);
		\draw[thick] (0,0) -- (1,0);
		\draw[thick] (2,0) -- (2.5,0.866);
		\draw[thick] (2,0) -- (2.5,-0.866);
		\draw[thick] (2,0) -- (1,0);
		%vertices:
		\draw[yellow, fill] (0,0) circle [radius=0.09];
		\draw[red, fill] (-0.5,0.866) circle [radius=0.09];
		\draw[red, fill] (-0.5,-0.866) circle [radius=0.09];
		\filldraw[color=blue, fill=red, very thick] (1,0) circle [radius=0.125];
		\draw[yellow, fill] (2,0) circle [radius=0.09];
		\draw[red, fill] (2.5,0.866) circle [radius=0.09];
		\draw[red, fill] (2.5,-0.866) circle [radius=0.09];
		%labels:
		\node[text=blue] at (1,0.4) {#1};
		\node at (1,-0.4) {#2};
		\node at (-0.8,0.9) {#3};
		\node at (-0.8,-0.9) {#4};
		\node at (2.8,0.9) {#5};
		\node at (2.8,-0.9) {#6};
		\end{tikzpicture}
		}
		\newcommand{\Ttau}[5]{
		\begin{tikzpicture}[scale=0.8]  %tau shape
		%edges:
		\draw[thick] (0,0) -- (-1,0);
		\draw[thick] (0,0) -- (1,0);
		\draw[thick] (2,0) -- (2.5,0.866);
		\draw[thick] (2,0) -- (2.5,-0.866);
		\draw[thick] (2,0) -- (1,0);
		%vertices:
		\draw[red, fill] (0,0) circle [radius=0.09];
		\draw[red, fill] (-1,0) circle [radius=0.09];
		\filldraw[color=blue, fill=yellow, very thick] (1,0) circle [radius=0.125];
		\draw[yellow, fill] (2,0) circle [radius=0.09];
		\draw[red, fill] (2.5,0.866) circle [radius=0.09];
		\draw[red, fill] (2.5,-0.866) circle [radius=0.09];
		%labels:
		\node[text=blue] at (1,0.4) {#1};
		\node at (0,-0.4) {#2};
		\node at (-1,-0.4) {#3};
		\node at (2.8,0.9) {#4};
		\node at (2.8,-0.9) {#5};
		\end{tikzpicture}
		}
		\newcommand{\Talpha}[1]{
		\begin{tikzpicture}[scale=0.8]  %alpha shape
		%vertices:
		\filldraw[color=blue, fill=yellow, very thick] (1,0) circle [radius=0.125];
		%labels:
		\node[text=blue] at (1,0.4) {#1};
		\end{tikzpicture}
		}
		\newcommand{\Tbeta}[3]{
		\begin{tikzpicture}[scale=0.8]  %beta shape
		%edges:
		\draw[thick] (0,0) -- (-1,0);
		\draw[thick] (0,0) -- (1,0);
		%vertices:
		\draw[red, fill] (0,0) circle [radius=0.09];
		\draw[red, fill] (-1,0) circle [radius=0.09];
		\filldraw[color=blue, fill=yellow, very thick] (1,0) circle [radius=0.125];
		%labels:
		\node[text=blue] at (1,0.4) {#1};
		\node at (0,-0.4) {#2};
		\node at (-1,-0.4) {#3};
		\end{tikzpicture}
		}
		\newcommand{\Tgamma}[4]{
		\begin{tikzpicture}[scale=0.8]  %gamma shape
		%edges:
		\draw[thick] (0,0) -- (-0.5,0.866);
		\draw[thick] (0,0) -- (-0.5,-0.866);
		\draw[thick] (0,0) -- (1,0);
		%vertices:
		\draw[yellow, fill] (0,0) circle [radius=0.09];
		\draw[red, fill] (-0.5,0.866) circle [radius=0.09];
		\draw[red, fill] (-0.5,-0.866) circle [radius=0.09];
		\filldraw[color=blue, fill=red, very thick] (1,0) circle [radius=0.125];
		%labels:
		\node[text=blue] at (1,0.4) {#1};
		\node at (1,-0.4) {#2};
		\node at (-0.8,0.9) {#3};
		\node at (-0.8,-0.9) {#4};
		\end{tikzpicture}
		}
		\newcommand{\Tdelta}[6]{
		\begin{tikzpicture}[scale=0.8]  %delta shape
		%edges:
		\draw[thick] (0,0) -- (-1,0);
		\draw[thick] (0,0) -- (1,0);
		\draw[thick] (2,0) -- (2.5,0.866);
		\draw[thick] (2,0) -- (2.5,-0.866);
		\draw[thick] (2,0) -- (1,0);
		%vertices:
		\draw[red, fill] (0,0) circle [radius=0.09];
		\draw[red, fill] (-1,0) circle [radius=0.09];
		\filldraw[color=blue, fill=red, very thick] (1,0) circle [radius=0.125];
		\draw[yellow, fill] (2,0) circle [radius=0.09];
		\draw[red, fill] (2.5,0.866) circle [radius=0.09];
		\draw[red, fill] (2.5,-0.866) circle [radius=0.09];
		%labels:
		\node[text=blue] at (1,0.4) {#1};
		\node at (1,-0.4) {#2};
		\node at (0,-0.4) {#3};
		\node at (-1,-0.4) {#4};
		\node at (2.8,0.9) {#5};
		\node at (2.8,-0.9) {#6};
		\end{tikzpicture}
		}
		\newcommand{\Tepsilon}[5]{
		\begin{tikzpicture}[scale=0.8]  %epsilon shape
		%edges:
		\draw[thick] (0,0) -- (-1,0);
		\draw[thick] (0,0) -- (1,0);
		\draw[thick] (2,0) -- (3,0);
		\draw[thick] (2,0) -- (1,0);
		%vertices:
		\draw[red, fill] (0,0) circle [radius=0.09];
		\draw[red, fill] (-1,0) circle [radius=0.09];
		\filldraw[color=blue, fill=yellow, very thick] (1,0) circle [radius=0.125];
		\draw[red, fill] (2,0) circle [radius=0.09];
		\draw[red, fill] (3,0) circle [radius=0.09];
		%labels:
		\node[text=blue] at (1,0.4) {#1};
		\node at (0,-0.4) {#2};
		\node at (-1,-0.4) {#3};
		\node at (2,-0.4) {#4};
		\node at (3,-0.4) {#5};
		\end{tikzpicture}
		}
		\newcommand{\TA}[2]{
		\begin{tikzpicture}[scale=0.8]  %A shape
		%vertices:
		\filldraw[color=blue, fill=red, very thick] (1,0) circle [radius=0.125];
		%labels:
		\node[text=blue] at (1,0.4) {#1};
		\node at (1,-0.4) {#2};
		\end{tikzpicture}
		}
		\newcommand{\TB}[4]{
		\begin{tikzpicture}[scale=0.8]  %B shape
		%edges:
		\draw[thick] (0,0) -- (-1,0);
		\draw[thick] (0,0) -- (1,0);
		%vertices:
		\draw[red, fill] (0,0) circle [radius=0.09];
		\draw[red, fill] (-1,0) circle [radius=0.09];
		\filldraw[color=blue, fill=red, very thick] (1,0) circle [radius=0.125];
		%labels:
		\node[text=blue] at (1,0.4) {#1};
		\node at (1,-0.4) {#2};
		\node at (0,-0.4) {#3};
		\node at (-1,-0.4) {#4};
		\end{tikzpicture}
		}
		\newcommand{\TC}[6]{
		\begin{tikzpicture}[scale=0.8]  %C shape
		%edges:
		\draw[thick] (0,0) -- (-1,0);
		\draw[thick] (0,0) -- (1,0);
		\draw[thick] (2,0) -- (3,0);
		\draw[thick] (2,0) -- (1,0);
		%vertices:
		\draw[red, fill] (0,0) circle [radius=0.09];
		\draw[red, fill] (-1,0) circle [radius=0.09];
		\filldraw[color=blue, fill=red, very thick] (1,0) circle [radius=0.125];
		\draw[red, fill] (2,0) circle [radius=0.09];
		\draw[red, fill] (3,0) circle [radius=0.09];
		%labels:
		\node[text=blue] at (1,0.4) {#1};
		\node at (1,-0.4) {#2};
		\node at (0,-0.4) {#3};
		\node at (-1,-0.4) {#4};
		\node at (2,-0.4) {#5};
		\node at (3,-0.4) {#6};
		\end{tikzpicture}
		}
		\newcommand{\Talphaexp}[5]{
		\scalebox{1}{
		\begin{tikzpicture}[scale=1]  % expanded alpha shape
		%edges:
		\draw[thick] (0,0) -- (0,1); % 1
		\draw[thick] (0,0) -- (0.707,0.707); % 2
		\draw[thick] (0,0) -- (1,0); % 3
		\draw[thin] (0.177,-0.177) -- (0.530,-0.530);
		\draw[thin] (0,-0.25) -- (0,-0.75);
		\draw[thin] (-0.177,-0.177) -- (-0.530,-0.530);
		\draw[thick] (0,0) -- (-1,0); % 4
		\draw[thick] (0,0) -- (-0.707,0.707); % 5
		%vertices:
		\filldraw (0,0) circle [radius=0.09cm];
		\filldraw (0,1) circle [radius=0.07cm];
		\filldraw (0.707,0.707) circle [radius=0.07cm];
		\filldraw (1,0) circle [radius=0.07cm];
		\filldraw (0.707,-0.707) circle [radius=0.065cm];
		\filldraw (0,-1) circle [radius=0.065cm];
		\filldraw (-0.707,-0.707) circle [radius=0.065cm];
		\filldraw (-1,0) circle [radius=0.07cm];
		\filldraw (-0.707,0.707) circle [radius=0.07cm];
		%labels:
		\node at (0,1.275) {#1};
		\node at (0.95,0.9) {#2};
		\node at (1.4,-0.025) {#3};
		%\node at (1,-0.95) {#4};
		%\node at (0,-1.4) {#5};
		%\node at (-0.95,-0.95) {#6};
		\node at (-1.5,0) {#4};
		\node at (-0.95,0.95) {#5};
		\end{tikzpicture}
		}}
\begin{document}

\title{A Presentation for the Group of Pure Symmetric Outer Automorphisms of a Given Splitting of a Free Product}
\author{Harry M. J. Iveson\thanks{H.M.J.Iveson@soton.ac.uk}}
\date{\today}
%\address{Mathematical Sciences, Building 54, University of Southampton, Southampton, SO17 1BJ}

\maketitle

%	--------------------------------------------		SECTION 0		------------------------------------------------------

\begin{abstract}
We give a concise presentation for the group of pure symmetric outer automorphisms of a given splitting of a free product $G_{1}\ast\dots\ast G_{n}$.
These are the (outer) automorphisms which preserve the conjugacy classes of the free factors $G_{i}$.
This is achieved by considering the action of these automorphisms on a particular subcomplex of `Outer Space', which we show to be simply connected.
We then apply a theorem of K. S. Brown to extract our presentation.
\end{abstract}

\section*{Introduction}\label{section introduction}

The study of group presentations, especially finite ones, is a core part of geometric group theory, dating back to the work of M. Dehn in the 1910's.
Providing such group presentations is not only necessary for such study, but interesting in and of itself.
Automorphism groups of free groups, and more generally, of free products, are natural objects to consider in this area.
Different presentations may display various desirable properties, such as having few generators or relations, or highlighting some structure of the group.

In 2008, H. Armstrong, B. Forrest, and K. Vogtmann \cite{Armstrong2008} gave a finite presentation for $\aut(F_{r})$, the automorphism group of a free group of rank $r$.
They achieved this by applying a theorem of Brown \cite[Theorem 1]{Brown1984} to a subcomplex of a version of M. Culler and K. Vogtmann's `Auter Space' \cite{Culler1986} on which $\aut(F_{r})$ acts `nicely'.

While finite presentations for $\aut(F_{r})$ were already known (for example, see the works of J. Nielsen \cite{Nielsen1924} from 1924, whose presentation demonstrates the surjectivity of the map to $GL_{n}(\mathbb{Z})$, and B. Neumann \cite{Neumann1933} from 1933, whose presentation had only 2 generators, but many relations),  Armstrong,  Forrest, and  Vogtmann \cite{Armstrong2008} gave a presentation whose generators are all involutions, with a relatively small number of relations, making it straightforward to comprehend and apply.

In 1986, J. McCool \cite{Mccool1986} gave a concise presentation for the subgroup of $\aut(F_{r})$ comprising automorphisms which map each generator to a conjugate of itself.
McCool's presentation comprised $r^{2}-r$ generators, but only three (families of) relations.

There is a longstanding trend of generalising results from automorphisms of free groups to automorphisms of free products.
In the 1940's, D. I. Fouxe-Rabinovitch \cite{F-R1940}, \cite{F-R1941} gave a finite presentation for the automorphism group of a free product $G=G_{1}\ast\dots\ast G_{n}\ast F_{k}$, where $F_{k}$ is the free group of rank $k$ and where each $G_{i}$ is non-trivial, freely indecomposable, and not infinite cyclic (i.e. $G_{i}\not\cong\mathbb{Z}$).
N. D. Gilbert \cite[Theorem 2.20]{Gilbert1987} gave an equivalent presentation for $\aut(G)$ in 1987 with fewer relations, using `peak reduction' methods of J. H. C. Whitehead \cite{Whitehead1936} adapted to the free product case by D. J. Collins and H. Zieschang \cite{Collins1984}.

\subsection*{Main Result}

We follow the methods of Armstrong, Forrest, and Vogtmann \cite{Armstrong2008} to give a concise presentation for the group of pure symmetric outer automorphisms of a given splitting $G_{1}\ast\dots\ast G_{n}$ of  free product $G$, denoted $\out(G;G_{1},\dots,G_{n})$.
In our case, we have a `strict fundamental domain' for the action of $\out(G;G_{1},\dots,G_{n})$, so can apply a more straightforward theorem of Brown \cite[Theorem 3]{Brown1984} to extract our presentation, given below:

\begin{restatable*}{thm}{presentation}\label{thm n>=5 presentation}
Let $G_{1}\ast\dots\ast G_{n}$ be a free splitting of a group $G$ where each $G_{i}$ is non-trivial and $n\ge5$.
For $i\in[n]:=\{1,\dots,n\}$ and $j\in[n]-\{i\}$, let $f_{i_{j}}:G_{i}\to G_{i_{j}}$ be group isomorphisms, and for $g\in G_{i}$ let $\ad_{G_{i}}(g)$ be the inner automorphism $x\mapsto g^{-1}xg$ of $G_{i}$.
Then the group $\out(G;G_{1},\dots,G_{n})$ is generated by the $n(n-1)$ groups $G_{i_{j}}\cong G_{i}$ and $\Phi=\prod_{i=1}^{n}\aut(G_{i})$,
subject to relations:
\begin{enumerate}
\item $[f_{i_{j}}(g), f_{i_{k}}(h)]=1$ $\forall g,h\in G_{i}$, for all $i\in[n]$, $j,k\in[n]-\{i\}$
\item $[f_{i_{j}}(g), f_{k_{l}}(h)]=1$ $\forall g\in G_{i}, h\in G_{k}$, for all distinct $i,j,k,l\in[n]$
\item $[f_{j_{k}}(g),f_{i_{j}}(h)f_{i_{k}}(h) ]=1$ $\forall g\in G_{j}, h\in G_{i}$, for all distinct $i,j,k\in[n]$
\item $f_{i_{v_{1}}}(g)\dots f_{i_{v_{n-1}}}(g)=\ad_{G_{i}}(g)$ $\forall g\in G_{i}$, for all $i\in[n]$ and $\{v_{1},\dots,v_{n-1}\}=[n]-\{i\}$
\item $\varphi^{-1}f_{i_{j}}(g)\varphi=f_{i_{j}}(\varphi(g))$ $\forall g\in G_{i}$, for all distinct $i,j\in[n]$ and all $\varphi\in\Phi$
\end{enumerate}
As well as all relations in $G$ and $\Phi$.
\end{restatable*}

\begin{restatable*}{cor}{presentationcor}
If a group $G$ splits as a free product where the factor groups are non-trivial, freely indecomposable, not infinite cyclic, and pairwise non-isomorphic, then Theorem \ref{thm n>=5 presentation} gives a presentation for $\out(G)$.
\end{restatable*}

\begin{obs}
In the case where some of the factor groups may be isomorphic, one may choose to study the symmetric automorphisms  of the splitting.
Then a finite direct product of symmetric groups, $\Pi$, acts on the splitting by permuting all possible isomorphic factors.
The group of symmetric outer automorphisms of the splitting is then given by $\out(G;G_{1},\dots,G_{n})\rtimes\Pi$.
While this is be hard to see geometrically using the methods of this paper, it may deduced algebraically.
\end{obs}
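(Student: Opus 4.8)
The plan is to realise the group of symmetric outer automorphisms of the splitting, which I will write $\outs(G;G_{1},\dots,G_{n})$, as a split extension
\[ 1 \longrightarrow \out(G;G_{1},\dots,G_{n}) \longrightarrow \outs(G;G_{1},\dots,G_{n}) \stackrel{\pi}{\longrightarrow} \Pi \longrightarrow 1, \]
and then to quote the splitting lemma. So the main tasks are to pin down $\Pi$, produce a section, build the retraction $\pi$, and identify its kernel.

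First I would set up $\Pi$ concretely and produce the section. Let $I_{1},\dots,I_{r}$ be the equivalence classes of $[n]$ under the relation $G_{i}\cong G_{j}$; for each $I_{c}$ fix a model group $M_{c}$ and isomorphisms $\iota_{i}\colon G_{i}\to M_{c}$ ($i\in I_{c}$), and put $\Pi=\prod_{c=1}^{r}\mathrm{Sym}(I_{c})$, a finite direct product of symmetric groups. A permutation $\sigma\in\mathrm{Sym}(I_{c})$ determines, via $G_{i}\ni g\mapsto\iota_{\sigma(i)}^{-1}\iota_{i}(g)\in G_{\sigma(i)}$ on the factors it moves and the identity elsewhere, an automorphism of $G=G_{1}\ast\dots\ast G_{n}$ (by the universal property of the free product, with inverse coming from $\sigma^{-1}$); one checks these respect composition and that distinct blocks act commutingly, so they assemble into a homomorphism $s\colon\Pi\to\aut(G)$. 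Composing with $\aut(G)\to\out(G)$ gives $\bar{s}\colon\Pi\to\out(G)$, whose image lies in $\outs(G;G_{1},\dots,G_{n})$ because each $\bar{s}(\sigma)$ permutes the conjugacy classes of the factors within isomorphism types.

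Next I would build $\pi$ and read off its kernel. Any $[\varphi]\in\outs(G;G_{1},\dots,G_{n})$ permutes the set $\{[G_{1}],\dots,[G_{n}]\}$ of conjugacy classes of factors; these $n$ classes are distinct because distinct free factors of a non-trivial free product are never conjugate (indeed $gG_{i}g^{-1}\cap G_{i}\ne1$ forces $g\in G_{i}$), so the induced permutation is well defined and independent of the representative $\varphi$, and it preserves the blocks $I_{c}$ since $\varphi(G_{i})\cong G_{i}$ — hence it is an element of $\Pi$. This gives the homomorphism $\pi$, and by construction $\ker\pi$ is precisely the set of outer automorphisms fixing every $[G_{i}]$, i.e. $\ker\pi=\out(G;G_{1},\dots,G_{n})$. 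Since $\bar{s}(\sigma)$ sends $G_{i}$ to $G_{\sigma(i)}$ we have $\pi\circ\bar{s}=\mathrm{id}_{\Pi}$; so the displayed sequence is split exact and $\outs(G;G_{1},\dots,G_{n})=\ker\pi\rtimes\bar{s}(\Pi)\cong\out(G;G_{1},\dots,G_{n})\rtimes\Pi$, as claimed.

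I expect the only point needing real care to be the well-definedness of $\pi$, which rests on the standard non-conjugacy (malnormality) properties of free factors in a free product and not on anything from the bulk of this paper; everything else is formal manipulation of a split short exact sequence. (One could go slightly further and, for $n\ge5$, combine Theorem~\ref{thm n>=5 presentation} with the standard presentation of a semidirect product to present $\outs(G;G_{1},\dots,G_{n})$ explicitly, using the conjugation action of $\bar{s}(\Pi)$ on the generators listed there.)
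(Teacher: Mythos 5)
Your argument is correct. Note that the paper offers no proof of this Observation at all --- it is stated as a remark, with the comment that the semidirect product structure ``may be deduced algebraically'' --- so your write-up is precisely the algebraic deduction the paper gestures at but does not carry out: realise the symmetric outer automorphism group as a split extension of $\Pi$ by $\out(G;G_{1},\dots,G_{n})$, with the section given by the chosen ``permutation'' automorphisms and the retraction given by the induced permutation of the conjugacy classes $[G_{1}],\dots,[G_{n}]$. The only point worth tightening is your parenthetical justification that the $n$ classes are distinct: the fact you quote ($gG_{i}g^{-1}\cap G_{i}\ne 1$ implies $g\in G_{i}$) is the malnormality of a single factor, whereas what you actually need for well-definedness of $\pi$ is the cross-factor statement that $gG_{i}g^{-1}\cap G_{j}=1$ for $i\ne j$ (so that $\varphi(G_{i})$ is conjugate to a \emph{unique} $G_{j}$). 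Both follow from the same normal-form argument in a free product, so this is a citation slip rather than a gap. Everything else --- the check that $s$ respects composition, that $\pi$ is independent of the representative in the outer class, that $\pi\circ\bar{s}=\mathrm{id}_{\Pi}$, and the appeal to the splitting of the short exact sequence --- is sound.
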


The cases $n=4$ and $n=3$ are similar, and are given in Theorems \ref{thm n=4 presentation} and \ref{thm n=3 presentation} in Section \ref{section presentation}.

If each of the groups $G_{i}$ and $\aut(G_{i})$ are finitely presented, then one may extract a finite presentation for $\out(G;G_{1},\dots,G_{n})$ from this theorem by replacing each group $G_{i_{j}}$ with a set of elements $\{f_{i_{j}}(g_{1}),\dots,f_{i_{j}}(g_{m_{i}})\}$ such that the $g_{k}$'s generate $G_{i}$, and replacing the group $\Phi$ with a generating set $\{\varphi_{1},\dots,\varphi_{m_{\Phi}}\}$. 
For conciseness, we do not make this more formal.
%
%In 1986, J. McCool \cite{Mccool1986} gave a concise presentation for the group of pure symmetric automorphisms of the free group $F_{r}$ of rank $r$.
%If $\{x_{1},\dots,x_{r}\}$ is a fixed basis for $F_{r}$,
%the \emph{symmetric automorphisms} of $F_{r}$ are the automorphisms of $F_{r}$ which map each $x_{i}$ to a conjugate of some $x_{j}$,
%and the \emph{pure symmetric automorphisms} of $F_{r}$ are the automorphisms which map each $x_{i}$ to a conjugate of itself.

Our result may be considered to be a generalisation of McCool's presentation for the group of pure symmetric automorphisms of a free group.

\begin{thm}[McCool \cite{Mccool1986}]
Let $F_{r}=\langle x_{1},\dots,x_{r} \rangle$ be the free group on $r$ generators.
The group of pure symmetric automorphisms of $F_{r}$ is generated by $r(r-1)$ elements $(x_{i};x_{j})$ (for $i,j\in\{1,\dots,r\}$ and $i\ne j$), subject to commutation relations:
\begin{enumerate}
\item $(x_{i};x_{j})(x_{k};x_{j})=(x_{k};x_{j})(x_{i};x_{j})$
\item $(x_{i};x_{j})(x_{k};x_{l})=(x_{k};x_{l})(x_{i};x_{j})$
\item $(x_{i};x_{j})(x_{k};x_{j})(x_{i};x_{k})=(x_{i};x_{k})(x_{i};x_{j})(x_{k};x_{j})$
\end{enumerate}
(where $i,j,k,l$ are assumed to be distinct).
\end{thm}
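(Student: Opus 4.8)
This is McCool's presentation \cite{Mccool1986}; his original proof uses the peak-reduction method of Whitehead. I would instead recover it by running the programme behind Theorem~\ref{thm n>=5 presentation} on the free splitting $F_{r}=\langle x_{1}\rangle\ast\dots\ast\langle x_{r}\rangle$ in which every factor is infinite cyclic.

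Two adjustments are needed. First, because $\aut(\mathbb{Z})=\mathbb{Z}/2$ consists of the identity and an inversion, the group $\out(F_{r};\langle x_{1}\rangle,\dots,\langle x_{r}\rangle)$ to which Theorem~\ref{thm n>=5 presentation} literally applies is \emph{not} McCool's group: it also contains the $2^{r}$ orientation choices, so one must pass to the index-$2^{r}$ subgroup that fixes each $x_{i}$ up to orientation-preserving conjugacy. In terms of the presentation this means deleting the generating subgroup $\Phi=(\mathbb{Z}/2)^{r}$ together with relation~(5); relation~(4) then collapses to $\prod_{j\ne i}(x_{j};x_{i})=1$, because $\ad_{G_{i}}(g)$ is trivial when $G_{i}\cong\mathbb{Z}$. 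Secondly, McCool's group is a group of honest automorphisms, so in place of the subcomplex of reduced Outer Space used in the body of the paper I would act on its pointed analogue --- a subcomplex of a space of \emph{based} marked graphs, on which the pure symmetric \emph{automorphism} group acts. This is the same modification by which Armstrong--Forrest--Vogtmann \cite{Armstrong2008} treat $\aut(F_{r})$ rather than $\out(F_{r})$. Granting that this pointed subcomplex is simply connected and carries a strict fundamental domain for the action, Brown's Theorem~\cite[Theorem~3]{Brown1984} would output the $r(r-1)$ generators $(x_{i};x_{j})$ together with vertex-stabiliser and edge relations; sorting these should yield the commutation relations (1), (2) and relation (3). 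No analogue of (4) survives, precisely because $\prod_{j\ne i}(x_{j};x_{i})=\ad_{F_{r}}(x_{i})$ is trivial in $\out(F_{r})$ but \emph{not} in $\aut(F_{r})$, and it is the latter that acts in the pointed setting.

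Alternatively one could bypass the geometry and obtain McCool's presentation from Theorem~\ref{thm n>=5 presentation} algebraically, by presenting the extension $1\to\inn(F_{r})\to(\text{McCool's group})\to(\text{its image in }\out(F_{r}))\to1$: the kernel $\inn(F_{r})\cong F_{r}$ is free and its generators $\ad_{F_{r}}(x_{i})=\prod_{j\ne i}(x_{j};x_{i})$ are already words in the $(x_{j};x_{i})$, so relations (1)--(3) lift verbatim, relation (4) becomes vacuous, and the conjugation relations of the extension should reduce, modulo (1)--(2), to relation (3). On either route, the step I expect to be hardest is the one that controls the basepoint: establishing simple connectivity of, and a \emph{strict} fundamental domain on, the \emph{pointed} complex for the non-outer group (the unbased outer case being exactly what the rest of the paper sets up), respectively verifying that the inner-automorphism bookkeeping collapses precisely to McCool's relation (3). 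As with Theorems~\ref{thm n=4 presentation} and~\ref{thm n=3 presentation}, the small cases $r\le 4$ would need to be treated separately.
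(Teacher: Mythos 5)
The paper does not prove this statement: it is quoted verbatim as McCool's theorem, with \cite{Mccool1986} as the source, and the only thing the paper does with it is the remark immediately following --- that McCool's relations (1)--(3) translate index-for-index into Relations 1--3 of Theorem~\ref{thm n>=5 presentation}, that Relation 4 is an ``outer'' relation with no counterpart in the automorphism group, and that Relation 5 concerns factor automorphisms which do not appear in McCool's group. So there is no internal proof to compare against; what you have written is a plan for re-deriving a cited result, and it has to be judged as such.

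As a plan it is sound in outline and correctly identifies both obstructions: $\Phi=\prod\aut(\mathbb{Z})=(\mathbb{Z}/2)^{r}$ is nontrivial, so $\outs(F_{r})$ is the extension of the image of McCool's group by $\Phi$ rather than that image itself; and McCool's group lives in $\aut$, not $\out$, so Relation 4 survives as $\prod_{j\ne i}(x_{j};x_{i})=\ad_{F_{r}}(x_{i})$ rather than dying. But neither of the two steps that carry the actual mathematical weight is executed. First, the passage from $\outs(F_{r})$ to the automorphism group: the pointed-complex route requires constructing the based analogue of $\mathcal{C}_{n}$, proving it simply connected, and exhibiting a strict fundamental domain --- none of which follows formally from the unbased case, and which is the bulk of the work in \cite{Armstrong2008}; the algebraic route requires checking that the conjugation relations $(x_{k};x_{l})^{-1}\,\ad(x_{i})\,(x_{k};x_{l})=\ad\bigl((x_{i})(x_{k};x_{l})\bigr)$, rewritten via $\ad(x_{i})=\prod_{j\ne i}(x_{j};x_{i})$, are consequences of (1)--(3), a finite but nontrivial verification you defer with ``should reduce.'' Second, even granting the presentation of the extension, the claim that the resulting relations ``sort'' into exactly McCool's three families is asserted rather than checked (and the deduction of a presentation of the $\Phi$-complement from the semidirect product presentation needs the observation that Relations 1--4 are $\Phi$-invariant as a set of relators). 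Since the statement being proved is precisely a presentation, these bookkeeping steps are the content, and omitting them leaves the argument a sketch rather than a proof.
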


Observe by comparing indices that these relations directly translate to our Relations 1--3.
Our Relation 4 is an `outer' relation so is not present in the automorphism group, and our Relation 5 describes automorphisms within a given factor, which are trivial in McCool's case.

\smallskip
In the case $n=3$, we recover
a special case of Gilbert's result \cite[Theorem 2.20]{Gilbert1987}, given by D. J. Collins and N. D. Gilbert \cite{Collins1990} in 1990, for three freely indecomposable, non-trivial, not infinite cyclic, pairwise non-isomorphic factors:

\begin{thm}[Collins--Gilbert {\cite[Proposition 4.1]{Collins1990}}]
For $G=X\ast Y\ast Z$ where each of $X, Y, Z$ is freely indecomposable, non-trivial, not infinite cyclic, and where none of $X,Y,Z$ are isomorphic to each other, we have that $\out(G)$ is generated by \[ \{ (Y,x), (Z,y), (X,z), \varphi | x\in X, y\in Y, z\in Z, \varphi\in \Phi \} \]
where $\Phi$ is the set of {factor automorphisms} (see Definition \ref{defn factor autos}),
subject to relations:
\begin{itemize}
\item $(Y,x_{1})(Y,x_{2})=(Y,x_{1}x_{2})$
\item $(Z,y_{1})(Z,y_{2})=(Z,y_{1}y_{2})$
\item $(X,z_{1})(X,z_{2})=(X,z_{1}z_{2})$
\item $\varphi^{-1}(Y,x)\varphi=(Y,x\varphi)$
\item $\varphi^{-1}(Z,y)\varphi=(Z,y\varphi)$
\item $\varphi^{-1}(X,z)\varphi=(X,z\varphi)$
\item All relations from $\Phi$
\end{itemize}
In particular,
$\out(G)\cong G\rtimes\Phi$.
\end{thm}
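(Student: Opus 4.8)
The plan is to obtain this as a corollary of Theorem~\ref{thm n=3 presentation} by a sequence of Tietze transformations.

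First I would note that the hypotheses already force $\out(G)=\out(G;X,Y,Z)$. A group that splits as a free product of non-trivial, freely indecomposable factors, none infinite cyclic, has this decomposition unique up to conjugacy and reordering of the factors, so every automorphism of $G$ permutes the conjugacy classes $[X],[Y],[Z]$; since $X,Y,Z$ are pairwise non-isomorphic this permutation is trivial, and hence every automorphism is pure symmetric. So it suffices to massage the presentation of $\out(G;X,Y,Z)$ supplied by Theorem~\ref{thm n=3 presentation}. Writing $G_{1}=X$, $G_{2}=Y$, $G_{3}=Z$, that presentation has generating set the six groups $G_{i_{j}}\cong G_{i}$ ($i\in[3]$, $j\in[3]-\{i\}$) together with $\Phi=\aut(X)\times\aut(Y)\times\aut(Z)$, subject to Relations 1, 3, 4, 5 in the notation of Theorem~\ref{thm n>=5 presentation} (Relation 2 being vacuous for $n=3$), together with the relations of the $G_{i_{j}}$ and of $\Phi$. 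Under the matching $(Y,x)=f_{1_{2}}(x)$, $(Z,y)=f_{2_{3}}(y)$, $(X,z)=f_{3_{1}}(z)$, the target presentation retains only three of these six groups; the three to be removed are $G_{1_{3}}$, $G_{2_{1}}$, $G_{3_{2}}$.

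The main step is to eliminate those three groups using Relation 4. Because $\ad_{G_{i}}(g)=\inn(g)$ lies in $\aut(G_{i})\le\Phi$, the relevant instances of Relation 4 can be read as
\[
f_{1_{3}}(x)=f_{1_{2}}(x)^{-1}\ad_{X}(x),\qquad f_{2_{1}}(y)=f_{2_{3}}(y)^{-1}\ad_{Y}(y),\qquad f_{3_{2}}(z)=f_{3_{1}}(z)^{-1}\ad_{Z}(z),
\]
so each omitted generator is expressed through a retained one and an element of $\Phi$; this consumes Relation 4. It then remains to verify that, after these substitutions, every instance of Relations 1 and 3, the internal relations of the omitted groups, and the instances of Relation 5 involving an omitted generator, are all consequences of what is left. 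Two facts carry this. (i) By Relation 5 with $\varphi=\ad_{G_{i}}(g)$, conjugation of any element of $G_{i_{j}}$ by $f_{i_{j}}(g)^{-1}\ad_{G_{i}}(g)$ is the identity, so that element centralises $G_{i_{j}}$. (ii) As $\Phi$ is a direct product, $\ad_{G_{i}}(g)$ commutes with $\aut(G_{k})$ for $k\ne i$, and by Relation 5 it commutes with every $f_{k_{l}}(h)$ with $h\in G_{k}$, $k\ne i$. Fact (i) trivialises each instance of Relation 1 after substitution; for Relation 3 one first rewrites the product $f_{i_{j}}(h)f_{i_{k}}(h)$ occurring in it as $\ad_{G_{i}}(h)$ by Relation 4, leaving $[f_{j_{k}}(g),\ad_{G_{i}}(h)]=1$ with $g\in G_{j}$, $j\ne i$, which holds by (ii); and the remaining substituted relations reduce, via (i), (ii) and the relations of $\Phi$, to instances of Relation 5 on the retained generators. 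What survives is precisely the internal relations of $G_{1_{2}}\cong X$, $G_{2_{3}}\cong Y$, $G_{3_{1}}\cong Z$, the three instances of Relation 5 for the retained generators, and the relations of $\Phi$ --- the claimed presentation.

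For the closing assertion $\out(G)\cong G\rtimes\Phi$, I would point out that the presentation has no relation between two distinct retained groups, so $\langle(Y,x),(Z,y),(X,z)\rangle$ is the free product $X\ast Y\ast Z=G$; it is normalised by $\Phi$ (Relation 5) and by the other retained generators, hence normal, and a routine normal-form argument --- pushing every $\Phi$-letter to the right using Relation 5, and noting that $\Phi$ embeds as a retract --- identifies the group with $G\rtimes\Phi$. The part I expect to be the main obstacle is the bookkeeping above: one must check that every instance of Relations 1 and 3 (including those instances of Relation 3 in which $f_{j_{k}}$ is itself an omitted generator) becomes redundant after the substitution --- here the cyclic pattern $X\to Y\to Z\to X$ of the retained generators is what guarantees that the product appearing in each Relation 3 is matched by the inner automorphism supplied by Relation 4 --- and one must confirm that no relation linking two distinct retained groups survives, this last point being exactly what the free-product, and hence the semidirect-product, conclusion relies on.
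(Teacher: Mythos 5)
Your proposal is correct and follows essentially the same route as the paper: the paper likewise obtains this statement from its $n=3$ computation, using the uniqueness of the Grushko decomposition to identify $\out(G)$ with $\outs(G)$ and then discarding one of each pair $G_{i_{j}},G_{i_{k}}$ precisely because $f_{i_{j}}(g)f_{i_{k}}(g)=\ad_{G_{i}}(g)$ lies in $\Phi$. The only cosmetic difference is that the paper performs the elimination at the level of the amalgam $\stab(A_{1})\ast_{\Phi}\stab(A_{2})\ast_{\Phi}\stab(A_{3})$, rewriting each $\stab(A_{i})$ as $G_{i_{j}}\rtimes\Phi$, which makes the absence of relations between distinct retained groups --- and hence the identification with $G\rtimes\Phi$ --- immediate, rather than something to be verified by Tietze bookkeeping as in your write-up.
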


Collins and Gilbert's result may be thought of as a presentation for the pure symmetric outer automorphisms preserving a free splitting structure $G_{1}\ast G_{2}\ast G_{3}$ where the only condition on the $G_{i}$'s is that they are non-trivial.
Thus our result may also be seen as both a special case of Gilbert's presentation \cite[Theorem 2.20]{Gilbert1987} and a generalisation of Collins and Gilbert's presentation \cite[Proposition 4.1]{Collins1990}.

\smallskip
In future, we hope to generalise this further to free splittings of the form $G_{1}\ast\dots\ast G_{n}\ast F_{k}$ where automorphisms need not preserve the conjugacy classes of the generators for $F_{k}$.
However this greatly increases the number of cells in our chosen subcomplex of Outer Space.
Moreover the fundamental domain of the action ceases to be strict, meaning we can no longer apply the simplified version of Brown's theorem.
These complications increase the complexity of the problem, though we hope that the end result will still be a pleasing presentation.
%
%Another possible generalisation is to consider symmetric (not pure) automorphisms in the case where some factor groups are isomorphic.
%It is easy to see how to apply this method in the case where all factors are isomporphic to each other.
%However, the `inbetween' cases where only some factors are isomorphic are less obviously generalisable from a geometric perspective.

\subsection*{Methods and Techniques}

To achieve our presentation, we choose a particular subcomplex of the `Outer Space' for a free product with no free rank, introduced by D. McCullough and A. Miller \cite{McCullough1996} in order to study the symmetric automorphisms of a free product.
We work with the definition of the space provided by Guirardel and Levitt \cite{Guirardel2007}, since this interpolates between the Outer Spaces of Culler and Vogtmann \cite{Culler1986} and of McCullough and Miller \cite{McCullough1996}, which lends itself well to future work in the case of a splitting $G_{1}\ast\dots\ast G_{n}\ast F_{k}$.

We call our chosen complex $\mathcal{C}_{n}$, discussed in Section \ref{section subcomplex}.
In the cases $n=3$ and $n=4$, $\mathcal{C}_{n}$ is precisely the barycentric spine of Guirardel and Levitt's Outer Space for a free product whose Grushko decomposition has four non-isomorphic free factors and no free rank (see Section \ref{subsection outer space}).
Definition \ref{def subcomplex Cn} details the construction of the complex $\mathcal{C}_{n}$ for $n\ge5$.

\smallskip
In order to apply Brown's theorem \cite[Theorem 3]{Brown1984}, we require that $\outs(G)$ acts cellularly on our complex $\mathcal{C}_{n}$ and with a strict fundamental domain, and that the complex $\mathcal{C}_{n}$ is both connected and simply connected. We will also need suitable presentations for $\outs(G)$-stabilisers of vertices (graphs of groups) in $\mathcal{C}_{n}$.

The action of $\outs(G)$ on $\mathcal{C}_{n}$ and its fundamental domain are studied in Section \ref{subsection action on Cn}, and the connectedness of the complex $\mathcal{C}_{n}$ is Corollary \ref{cor Cn pc} of Section \ref{section connected}.

Vertex stabilisers are studied in Section \ref{stabilisers}, where we combine techniques of Guirardel and Levitt \cite{Guirardel2007} with those of H. Bass and R. Jiang \cite{Bass1996} to procure presentations which are both concise and precise (Propositions \ref{prop brown stabs 1}, \ref{prop brown stabs 2}, and \ref{prop brown stabs 3}).

Showing that the complex $\mathcal{C}_{n}$ is simply connected is highly non-trivial and is delayed until the second half of the paper, comprising Sections \ref{section space of domains} and \ref{section peak reduction}.
We give a brief overview of the idea of the proof below.

\smallskip

In 1928, P. Alexandroff \cite{Alexandroff1928} introduced the notion of a `nerve complex' associated to a cover of a space. In ideal conditions, this shares many of the same topological properties as the original space, while often being a much simpler object to understand.

We apply a similar concept in Section \ref{section space of domains}, introducing the `Space of Domains' (see Definition \ref{Space of Domains defn}) as a way of recording intersection patterns of $\outs(G)$-images of the fundamental domain in $\mathcal{C}_{n}$.
Unlike Alexandroff's nerve complex, we are only interested in 2-way and 3-way intersections.

We show in Proposition \ref{prop Cn is sc if SoD is} that in order to prove simple connectivity of the complex $\mathcal{C}_{n}$, it suffices to show that the Space of Domains is simply connected (having already shown that our fundamental domain of the $\outs(G)$-action on $\mathcal{C}_{n}$ is simply connected in Theorem \ref{thm fun dom sc} of Section \ref{section fun dom sc}).

Finally in Section \ref{section peak reduction} we apply `peak reduction' techniques as used by Collins and Zieschang \cite{Collins1984} and Gilbert \cite{Gilbert1987} to deduce that the Space of Domains is simply connected (Theorem \ref{Space of Domains is simply connected}).

\subsection*{Acknowledgements}

I am grateful to my supervisor, Armando Martino, for his continued help and support. I am also grateful to Naomi Andrew for her helpful and generous feedback on this manuscript.

\setcounter{tocdepth}{1}
\tableofcontents

%	--------------------------------------------- 		SECTION 1		-------------------------------------------------------

\section{Preliminaries}\label{section background}

\subsection{Some Useful Definitions and Notation}\label{subsection background}

We adapt the notation for automorphisms used by Gilbert \cite[Section 1]{Gilbert1987}.
Throughout, we consider a group $G$ which splits as a free product $G_{1}\ast\dots\ast G_{n}$, where each $G_{i}$ is non-trivial and $n\ge3$.
We refer to each $G_{i}$ as a \emph{factor group}.

\begin{notation}
Let $G$ be a group.
We denote by $\aut(G)$ the group of automorphisms of $G$, that is, isomorphisms from $G$ to itself.
We say $\psi\in\aut(G)$ is an \emph{inner automorphism} if there exists $x\in G$ so that for all $g\in G$, $\psi(g)=g^{x}=x^{-1}gx$.
The collection of inner automorphisms forms a normal subgroup, $\inn(G)$, of $\aut(G)$.
We then define $\out(G):=\faktor{\aut(G)}{\inn(G)}$, and call this the \emph{outer automorphism group} of $G$.
\end{notation}

\begin{defn}[Pure Symmetric Automorphism]\label{defn pure symmetric autos}
Let $G=G_{1}\ast\dots\ast G_{n}$ be a group which splits as a free product. We say $\psi\in\aut(G)$ is a \emph{pure symmetric automorphism} of the splitting $G_{1}\ast\dots\ast G_{n}$ if for each $i$ there is some $g_{i}\in G$ such that $\psi(G_{i})=G_{i}^{g_{i}}=g_{i}^{-1}G_{i}g_{i}$.
We say $\hat{\psi}\in\out(G)$ is a \emph{pure symmetric outer automorphim} of the splitting if there is some $\psi\in\hat{\psi}$ which is a pure symmetric automorphism of the splitting.
\end{defn}

\begin{rem}
It is easy to see that if $\psi$ is a pure symmetric automorphism of some free splitting, and $\iota$ is an inner automorphism of the free product, then $\iota\psi$ is also a pure symmetric automorphism of the splitting. Thus the concept of `pure symmetric outer automorphisms' is well-defined.
It is not hard to verify that the collection of pure symmetric (outer) automorphisms forms a subgroup of $\aut(G)$ (respectively, $\out(G)$).
\end{rem}

\begin{notation}
We denote by $\out(G;G_{1}\ast\dots\ast G_{n})$ the subgroup of $\out(G)$ comprising pure symmetric outer automorphisms of the splitting $G_{1}\ast\dots\ast G_{n}$ of $G$.
Given such a splitting, we may set $\mathfrak{S}$ to be the tuple $(G_{1},\dots,G_{n})$ and let $\out(G;G_{1},\dots,G_{n})=:\outs(G)$, for brevity.
We may similarly define $\aut(G;G_{1},\dots,G_{n})$ and $\aut_{\mathfrak{S}}(G)$.
We will sometimes refer to $\mathfrak{S}$ itself as the splitting, as opposed to the product $G_{1}\ast\dots\ast G_{n}$.
\end{notation}

\begin{obs}
Given a splitting $G=G_{1}\ast\dots\ast G_{n}$ with $\mathfrak{S}=(G_{1},\dots,G_{n})$, it is clear that $\inn(G)\subseteq\aut_{\mathfrak{S}}(G)$.
Since $\inn(G)\trianglelefteq\aut(G)$ then $\inn(G)\trianglelefteq\aut_{\mathfrak{S}}(G)$, and it follows that $\faktor{\aut_{\mathfrak{S}}(G)}{\inn(G)}\cong\outs(G)$, as one would expect.
\end{obs}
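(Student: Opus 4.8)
The plan is to unwind the definitions and then invoke the standard isomorphism theorems for groups. First I would verify the containment $\inn(G)\subseteq\aut_{\mathfrak{S}}(G)$ by a one-line computation: for $x\in G$ the inner automorphism $\iota_{x}\colon g\mapsto x^{-1}gx$ satisfies $\iota_{x}(G_{i})=x^{-1}G_{i}x=G_{i}^{x}$ for every $i$, so taking $g_{i}=x$ for all $i$ in Definition \ref{defn pure symmetric autos} shows $\iota_{x}$ is a pure symmetric automorphism of the splitting. Hence $\inn(G)\subseteq\aut_{\mathfrak{S}}(G)$.

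Next I would observe that normality is automatic: since $\inn(G)\trianglelefteq\aut(G)$ and $\inn(G)\subseteq\aut_{\mathfrak{S}}(G)\subseteq\aut(G)$, a normal subgroup of $\aut(G)$ that happens to lie inside the intermediate subgroup $\aut_{\mathfrak{S}}(G)$ is in particular normal in $\aut_{\mathfrak{S}}(G)$ (conjugation by elements of $\aut_{\mathfrak{S}}(G)$ is a special case of conjugation by elements of $\aut(G)$). So the quotient $\faktor{\aut_{\mathfrak{S}}(G)}{\inn(G)}$ makes sense.

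To obtain the isomorphism with $\outs(G)$, I would restrict the canonical projection $\pi\colon\aut(G)\to\out(G)$ to $\aut_{\mathfrak{S}}(G)$. Its kernel is $\aut_{\mathfrak{S}}(G)\cap\inn(G)=\inn(G)$ by the first step. For the image, I would use the definition of a pure symmetric \emph{outer} automorphism together with the Remark following Definition \ref{defn pure symmetric autos}: $\hat\psi\in\out(G)$ lies in $\outs(G)$ precisely when the coset $\hat\psi$ contains some pure symmetric automorphism, i.e. precisely when $\hat\psi\cap\aut_{\mathfrak{S}}(G)\ne\emptyset$, i.e. precisely when $\hat\psi\in\pi(\aut_{\mathfrak{S}}(G))$. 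Thus $\pi(\aut_{\mathfrak{S}}(G))=\outs(G)$, and the first isomorphism theorem gives $\faktor{\aut_{\mathfrak{S}}(G)}{\inn(G)}\cong\outs(G)$.

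There is no real obstacle here; the only point requiring a moment's care is the identification of the image of $\pi|_{\aut_{\mathfrak{S}}(G)}$ with $\outs(G)$, which is exactly the content of the well-definedness Remark and so is already available. The argument is entirely formal once the containment $\inn(G)\subseteq\aut_{\mathfrak{S}}(G)$ is checked.
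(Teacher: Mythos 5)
Your proof is correct and follows exactly the reasoning the paper sketches in the observation itself (inner automorphisms conjugate each factor, normality is inherited from $\aut(G)$, and the isomorphism comes from restricting the projection $\aut(G)\to\out(G)$ and applying the first isomorphism theorem). The paper offers no further proof beyond these assertions, so your write-up is simply a careful expansion of the same argument.
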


\begin{defn}[Factor Automorphism]\label{defn factor autos}
We say $\varphi\in\aut(G;G_{1},\dots,G_{n})$ is a \emph{factor automorphism} if for each $i\in\{1,\dots,n\}$, $\varphi|_{G_{i}}$ (that is, $\varphi$ with domain restricted to the embedding of $G_{i}$ in $G$) is an automorphism of $G_{i}$ (i.e. $\varphi|_{G_{i}}\in\aut(G_{i})$).
We will say $\hat{\varphi}\in\out(G;G_{1},\dots,G_{n})$ is a \emph{factor automorphism} if $\hat{\varphi}$ has a representative $\varphi\in\aut(G;G_{1},\dots,G_{n})$ which is a factor automorphism.
We will denote the set of factor automorphisms in $\out(G;G_{1},\dots,G_{n})$ by $\Phi$. 
\end{defn}

The set of factor automorphisms $\Phi$ forms a subgroup of $\out(G;G_{1},\dots,G_{n})$, with $\Phi\cong\displaystyle\prod_{i=1}^{n}\aut(G_{i})$.

\begin{notation}\label{notation ad(g)}
We write $\ad_{G_{i}}(g)$ for the inner automorphism of $G_{i}$ which conjugates each element of $G_{i}$ by $g$ (with $g\in G_{i}$).
Since $\ad_{G_{i}}(g)\in\inn(G_{i})\le\aut(G_{i})$, then $\ad_{G_{i}}(g)\in\Phi\le\out(G;G_{1},\dots,G_{n})$.
Note however that $\ad_{G_{i}}(g)$ is \textbf{not} in $\inn(G)$.
\end{notation}

We will often abuse notation by writing $\psi$ for both an automorphism in $\aut(G)$ (or $\aut(G;G_{1},\dots,G_{n})$), and for the class it represents in $\out(G)$ (or $\out(G;G_{1},\dots,G_{n})$).

\begin{defn}\label{defn s labelling}
Let $\mathfrak{S}=(G_{1},\dots,G_{n})$ be the tuple associated to a group $G$ which splits as a free product $G_{1}\ast\dots\ast G_{n}$, and let $T$ be a finite tree on at least $n$ vertices.
\\ \noindent A free product $H_{1}\ast\dots\ast H_{n}$ is an \emph{$\mathfrak{S}$ free factor splitting} for $G=G_{1}\ast\dots\ast G_{n}$ if for each $i$, there exists $g_{i}\in G$ so that $H_{i}=G_{i}^{g_{i}}$, and the subgroups $G_{1}^{g_{1}},\dots, G_{n}^{g_{n}}$ generate the group $G$.
\\ \noindent An \emph{$\mathfrak{S}$-labelling} of $T$ is an assignment of $n$ vertex groups $H_{v}$ to vertices $v\in V(T)$ so that $H_{1}\ast\dots\ast H_{n}$ is an $\mathfrak{S}$ free factor splitting for $G$.
\\ \noindent Given an $\mathfrak{S}$-labelling $(H_{1},\dots,H_{n})$ of $T$, we may consider the graph of groups $\mathbf{T}=(T,(H_{1},\dots,H_{n}))$ formed by associating the trivial group $\{1\}$ to any remaining vertices of $T$, and setting all edge groups to also be trivial.
\end{defn}

\begin{lemma}\label{lemma automorphisms exist between splittings}
Let $G$ be a group with splitting $\mathfrak{S}=(G_{1},\dots,G_{n})$ and let $H_{1}\ast\dots\ast H_{n}$ be an $\mathfrak{S}$ free factor splitting for $G$.
Then there exists $\psi\in\aut_{\mathfrak{S}}(G)$ with $\psi(G_{i})=H_{i}$ for each $i$.
\end{lemma}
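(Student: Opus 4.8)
The plan is to build $\psi$ directly from the universal property of the free product. Since $\mathfrak{S}=(G_1,\dots,G_n)$ gives $G=G_1\ast\dots\ast G_n$, a homomorphism out of $G$ is determined by its restrictions to the factors $G_i$; and since $H_1\ast\dots\ast H_n$ is an $\mathfrak{S}$ free factor splitting, by definition there are elements $g_i\in G$ with $H_i=G_i^{g_i}=g_i^{-1}G_ig_i$, and the subgroups $H_1,\dots,H_n$ generate $G$. First I would record the auxiliary fact that $H_1\ast\dots\ast H_n$ is genuinely a \emph{free} product decomposition of $G$: each $H_i=g_i^{-1}G_ig_i$ is isomorphic to $G_i$ via conjugation, the $H_i$ generate $G$ by hypothesis, and a Grushko/Kurosh rank count (or the fact that $G\cong G_1\ast\dots\ast G_n$ forces any generating family of subgroups isomorphic to the $G_i$ that are conjugates of the $G_i$ to be a free factor system) shows there can be no collapsing, so $G=H_1\ast\dots\ast H_n$ internally. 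Granting this, define $\psi\colon G\to G$ by specifying on each factor the composite isomorphism $G_i\xrightarrow{\ \sim\ }H_i$, $x\mapsto g_i^{-1}xg_i$, i.e. $\psi|_{G_i}=\ad_G(g_i)|_{G_i}$ in the sense of mapping into $G$. By the universal property of the free product $G=G_1\ast\dots\ast G_n$ this extends (uniquely) to a homomorphism $\psi\colon G\to G$.

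Next I would check $\psi$ is an automorphism. It is surjective because its image contains every $H_i$ and the $H_i$ generate $G$. For injectivity: a normal form argument in $G=G_1\ast\dots\ast G_n$ shows that a homomorphism $G\to G$ which restricts to an injection on each $G_i$ and sends the factors to a free factor family $H_1,\dots,H_n$ (so that a reduced word in the $G_i$ maps to a reduced word in the $H_i$, using $G=H_1\ast\dots\ast H_n$ from the previous paragraph) is injective; alternatively, construct the inverse explicitly by sending $H_i\ni y\mapsto g_iyg_i^{-1}\in G_i$ and extending over $G=H_1\ast\dots\ast H_n$, then verify the two composites are the identity on each factor and hence on all of $G$. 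Finally, $\psi(G_i)=H_i=G_i^{g_i}$ by construction, so $\psi$ preserves the conjugacy class of each $G_i$ and therefore $\psi\in\aut_{\mathfrak{S}}(G)$, which is exactly what is required.

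The one genuinely non-routine point, and the step I would flag as the main obstacle, is establishing that $H_1\ast\dots\ast H_n$ being an $\mathfrak{S}$ free factor splitting (a generating condition) actually yields an \emph{internal free product} decomposition $G=H_1\ast\dots\ast H_n$ — equivalently, that the map $\psi$ defined above is injective rather than merely surjective. Once that is in hand, everything else is a formal consequence of the universal property of free products. I would handle it either by invoking Grushko's theorem (the free rank and the multiset of isomorphism types of freely indecomposable free factors are invariants, so $n$ generating conjugates of the $G_i$ must themselves form a free factorization) or, more self-containedly, by a direct normal-form argument: any relation among the $H_i$ pulls back, via $y\mapsto g_iyg_i^{-1}$, to a relation among the $G_i$ in $G=G_1\ast\dots\ast G_n$, which must be trivial.
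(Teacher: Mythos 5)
Your proposal is correct and follows essentially the same route as the paper: define $\psi$ on each factor by $x\mapsto g_i^{-1}xg_i$, extend via the universal property of $G=G_{1}\ast\dots\ast G_{n}$, get surjectivity from the generating hypothesis, and obtain injectivity by extending the maps $\psi_i^{-1}$ to an explicit inverse. The only divergence is the point you flag as non-routine: the paper silently treats ``$H_{1}\ast\dots\ast H_{n}$ is an internal free product decomposition of $G$'' as part of Definition \ref{defn s labelling} and simply ``repeats the process'' to build $\varphi$, whereas you try to prove it --- note, though, that your Grushko-invariance route does not directly apply here since the $G_i$ are only assumed non-trivial (not freely indecomposable), and the normal-form pull-back needs the conjugators $g_i$ handled explicitly, so the clean way to close this is a Bass--Serre/ping-pong argument (the $H_i$ fix distinct vertices of the tree for $\mathfrak{S}$, and edge stabilisers are trivial, so they generate their free product).
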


\begin{proof}
Since $H_{1}\ast\dots\ast H_{n}$ is an $\mathfrak{S}$ free factor splitting for $G$, for each $i$ there exists $g_{i}\in G$ so that $H_{i}=G_{i}^{g_{i}}$.
Let $\psi_{i}:G_{i}\to H_{i}$ be the map $\psi(x)=g_{i}^{-1}xg_{i}$ $\forall x\in G_{i}$. Clearly, $\psi_{i}$ is an isomorphism of (sub)groups.
By the universal property of free products, these isomorphisms $\psi_{i}$ extend to an endomorphism $\psi:G\to G$.
Since $H_{1}\ast\dots\ast H_{n}$ is an $\mathfrak{S}$ free factor splitting of $G_{1}\ast\dots\ast G_{n}$, then $G$ is generated by the subgroups $H_{1},\dots,H_{n}$, and so $\psi$ is surjective.
Repeating this process on the maps $\psi_{i}^{-1}:H_{i}\to G_{i}$, we recover a surjective homomorphism $\varphi:G\to G$, which composes with $\psi$ to give the identity map.
Thus $\varphi$ is an inverse for $\psi$, and so $\psi\in\aut(G)$.
Moreover, $\psi$ restricts to $\psi_{i}$ on each $G_{i}$, that is, $\psi(G_{i})=G_{i}^{g_{i}}=H_{i}$, and so $\psi\in\aut_{\mathfrak{S}}(G)$, as required.
\end{proof}

\begin{defn}[Whitehead Automorphism]\label{defn whitehead autos}
An automorphism in $\aut(G;G_{1},\dots,G_{n})$ which, for each $j$, either pointwise fixes $G_{j}$, or pointwise conjugates $G_{j}$ by a given $x\in G$ is called a \emph{Whitehead automorphism}.
Given $x\in G_{i}$ and $A\subseteq\{G_{1},\dots,G_{n}\}-\{G_{i}\}$, we write $(A,x)$ for the Whitehead automorphism which pointwise fixes any $G_{j}\not\in A$, and pointwise conjugates by $x$ any $G_{j}\in A$.

Given finite sequences $\mathbf{x}=(x_{1},\dots,x_{k})\subset G_{i}$ and $\mathbf{A}=(A_{1},\dots,A_{k})\subset\{G_{1},\dots,G_{n}\}-\{G_{i}\}$ (with the $A_{j}$'s pairwise disjoint), we write $(\mathbf{A},\mathbf{x})$ for the composition $(A_{1},x_{1})\dots(A_{k},x_{k})$ (which should be read from left to right, since we consider the action of $\aut(G)$ on $G$ to be a right action). We call such a map a \emph{multiple Whitehead automorphism}.

An element $\hat{\psi}\in\out(G;G_{1},\dots,G_{n})$ will be called a (multiple) Whitehead automorphism if it has some representative $\psi\in\aut(G;G_{1},\dots,G_{n})$ which is a (multiple) Whitehead automorphism.
\end{defn}

\begin{rem}
Our notation differs from that of Gilbert \cite{Gilbert1987} in that we decide not to include the operating factor $G_{i}$ (see below) in the set $A$.
\end{rem}

Note that we write $g^{x}$ for the conjugation $x^{-1}gx$.
More detail on Whitehead automorphisms, including relative Whitehead automorphisms, can be found in Section \ref{whitehead autos}.

\begin{notation}\label{defn operating factor}
Given factor groups $G_{i}$ and $G_{j}$, we will write $G_{i_{j}}$ (sometimes abbreviated as $i_{j}$) for the group generated by automorphisms $(G_{j},x)$ where $x\in G_{i}$.
We call $G_{i}$ the \emph{operating factor} and $G_{j}$ the \emph{dependant factor}.
Additionally, given factor groups $G_{i}$ and $G_{v_{1}},\dots,G_{v_{k}}$, we will write $i_{v_{1}\dots v_{k}}$ (or $G_{i_{v_{1}\dots v_{k}}}$) for the subgroup of $i_{v_{1}}\times\dots\times i_{v_{k}}$ generated by the Whitehead automorphisms $(\{G_{v_{1}},\dots,G_{v_{k}}\},x)$ where $x\in G_{i}$.
We think of this as the diagonal subgroup, and denote this by $i_{v_{1}\dots v_{k}}\diag i_{v_{1}}\times\dots\times i_{v_{k}}$.
\end{notation}

\begin{obs}
Defining multiplication in $G_{i_{j}}$ by $(G_{j},x)(G_{j},y)=(G_{j},yx)$, we have a natural isomorphism $f_{i_{j}}:G_{i}\to G_{i_{j}}$ given by $f_{i_{j}}(x)=(G_{j},x^{-1})$.
\end{obs}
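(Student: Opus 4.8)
The plan is to verify the two assertions in turn: first, that the product rule displayed in the statement is nothing other than the composition law inherited from $\aut(G)$ (so that the set $\{(G_{j},x):x\in G_{i}\}$ is already closed under composition and inversion, hence equals the subgroup $G_{i_{j}}$ it generates); and second, that $f_{i_{j}}$ is a well-defined isomorphism onto this group.

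For the product rule I would simply compute the action of the composite $(G_{j},x)(G_{j},y)$ on $G$, remembering that this composite is read left to right as a right action. Both $(G_{j},x)$ and $(G_{j},y)$ fix every factor $G_{k}$ with $k\ne j$ pointwise, so their composite does too. For $g\in G_{j}$ we have $g\cdot(G_{j},x)=x^{-1}gx$; applying $(G_{j},y)$ to this, and using that $x\in G_{i}$ is fixed by $(G_{j},y)$ while $g$ is conjugated by $y$, we obtain $x^{-1}(y^{-1}gy)x=(yx)^{-1}g(yx)$. Hence $(G_{j},x)(G_{j},y)=(G_{j},yx)$. In particular $(G_{j},x)(G_{j},x^{-1})=(G_{j},1)=\mathrm{id}$, so $(G_{j},x)^{-1}=(G_{j},x^{-1})$, and $\{(G_{j},x):x\in G_{i}\}$ is a subgroup of $\aut(G)$; it therefore coincides with $G_{i_{j}}$ and carries precisely the stated multiplication. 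The homomorphism property of $f_{i_{j}}$ is then immediate from the product rule: $f_{i_{j}}(x)f_{i_{j}}(y)=(G_{j},x^{-1})(G_{j},y^{-1})=(G_{j},y^{-1}x^{-1})=(G_{j},(xy)^{-1})=f_{i_{j}}(xy)$ — the inverse built into the definition of $f_{i_{j}}$ is exactly what absorbs the order-reversal in the product rule. Surjectivity is clear, since each generator $(G_{j},x)$ of $G_{i_{j}}$ is $f_{i_{j}}(x^{-1})$ and the image of a homomorphism is a subgroup.

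The only point that is not pure bookkeeping is injectivity. If $f_{i_{j}}(x)=\mathrm{id}$, then conjugation by $x^{-1}$ acts trivially on $G_{j}$, that is, $x$ centralises the subgroup $G_{j}$. Here I would invoke the standard fact that in a nontrivial free product the centraliser of any nontrivial element of a free factor is contained in that factor; since $G_{j}$ is nontrivial, its centraliser in $G$ lies in $G_{j}$, and as $x\in G_{i}$ with $i\ne j$ we get $x\in G_{i}\cap G_{j}=\{1\}$. Thus $f_{i_{j}}$ has trivial kernel, completing the proof. I expect this last step — using that a nontrivial factor of a free product has trivial centraliser outside itself — to be the only place where the free-product hypothesis and the nontriviality of the $G_{i}$ are genuinely needed; everything else is a formal unwinding of the definitions of Whitehead automorphism and of $G_{i_{j}}$.
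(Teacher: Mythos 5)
Your proof is correct; the paper states this as an unproved Observation, and your argument supplies exactly the routine verification it leaves implicit (the composite $(G_{j},x)(G_{j},y)$, read left to right as a right action, conjugates $G_{j}$ by $yx$, which makes $f_{i_{j}}$ a homomorphism, and surjectivity is clear since the image is a subgroup containing the generators). The one non-formal step, injectivity, is also handled correctly: if $x\in G_{i}$ centralises the non-trivial factor $G_{j}$ then the standard description of centralisers in free products forces $x\in G_{j}$, hence $x\in G_{i}\cap G_{j}=\{1\}$.
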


\subsection{Key Theorems}

We will later make repeated use of the Seifert--van Kampen Theorem.
As our simplicial complexes are all closed, and we usually only care about closed subcomplexes of these, we will use a `closed version' of the theorem.
Such a theorem can be found in some undergraduate Algebraic Topology notes, such as \cite{Wilton2019} delivered by H. Wilton at the University of Cambridge.

\begin{restatable}[Seifert--Van Kampen (Closed Version)]{thm}{svank}\label{s van k}
For closed sets $A$ and $B$ with $A$, $B$, and $A\cap B$ path-connected and such that there exist open sets $U\subset A$ and $V\subset B$ with $A\cap B$ a (strong) deformation retract of both $U$ and $V$, we have that the diagram:
\begin{center}
\begin{tikzcd}[ampersand replacement=\&]
\pi_{1}(A\cap B)
\ar[r, "i_{A_{\ast}}"]
\ar[d, "i_{B_{\ast}}"]
			\&	\pi_{1}(A)
				\ar[d, "j_{A_{\ast}}"]	\\
\pi_{1}(B)
\ar[r, "j_{B_{\ast}}"]
			\&	\pi_{1}(A\cup B)	
\end{tikzcd}
\end{center}
is a pushout, where $i_{A}:A\cap B\hookrightarrow A$, $i_{B}:A\cap B\hookrightarrow B$, $j_{A}:A\hookrightarrow A\cup B$, and $j_{B}:B\hookrightarrow A\cup B$ are inclusion maps.
We will abuse notation and abbreviate this by writing:
 \[\pi_{1}(A\cup B)\cong\pi_{1}(A)\ast_{\pi_{1}(A\cap B)}\pi_{1}(B)\]
\end{restatable}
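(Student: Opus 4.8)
The plan is to deduce the closed version from the classical open-cover Seifert--van Kampen theorem by \emph{thickening} $A$ and $B$ to open subsets of $X:=A\cup B$ without changing their homotopy types, nor that of their intersection. Fix a basepoint $\ast\in A\cap B$; note $X$ is path-connected since $A$ and $B$ are and they meet in $\ast$. Set $A':=A\cup V$ and $B':=B\cup U$. Because $U\subseteq A$ and $V\subseteq B$ we have $A'\cup B'=A\cup B=X$, and distributing, $A'\cap B'=(A\cup V)\cap(B\cup U)=U\cup V$ (using $A\cap U=U$, $V\cap B=V$, and $A\cap B\subseteq U$). A routine point-set check, using the openness of $U$ and $V$ in $X$ together with the splitting $X=A\cup B$, shows $A'$ and $B'$ are open in $X$; the relevant observations are that $X\setminus B$ is an open neighbourhood of each point of $A\setminus B$ that is contained in $A$, that each point of $A\cap B$ lies in the open set $U$ (hence in $A'$), and that $V$ is an open neighbourhood of each point of $V\setminus A$.

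Next I would upgrade the two given strong deformation retractions to strong deformation retractions of the thickened pieces. The crucial containments are $V\cap A\subseteq A\cap B$ (because $V\subseteq B$) and $U\cap B\subseteq A\cap B$ (because $U\subseteq A$). Since a strong deformation retraction onto $A\cap B$ is stationary on $A\cap B$, the retraction of $V$ onto $A\cap B$ is stationary on $V\cap A$, hence agrees there with the constant homotopy of $A$; gluing the two yields a strong deformation retraction of $A'=A\cup V$ onto $A$, and symmetrically one of $B'=B\cup U$ onto $B$. For the intersection, the retractions of $U$ and of $V$ onto $A\cap B$ are both stationary on $U\cap V\subseteq A\cap B$, so they agree there and glue to a strong deformation retraction of $A'\cap B'=U\cup V$ onto $A\cap B$. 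In particular each of the inclusions $A\hookrightarrow A'$, $B\hookrightarrow B'$, $A\cap B\hookrightarrow A'\cap B'$ (and also $A\cap B\hookrightarrow A'$, $A\cap B\hookrightarrow B'$) is a homotopy equivalence, and $A'$, $B'$, $A'\cap B'$ are path-connected.

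Now I would invoke the classical Seifert--van Kampen theorem for the open cover $X=A'\cup B'$ with path-connected intersection, obtaining that the square with corners $\pi_1(A'\cap B',\ast)$, $\pi_1(A',\ast)$, $\pi_1(B',\ast)$, $\pi_1(X,\ast)$ and inclusion-induced edge maps is a pushout. Finally I would transport this square along the isomorphisms $\pi_1(A')\cong\pi_1(A)$, $\pi_1(B')\cong\pi_1(B)$, $\pi_1(A'\cap B')\cong\pi_1(A\cap B)$ coming from the deformation retractions above. Since each of those isomorphisms is itself induced by an inclusion, the relevant triangles of inclusions of spaces commute on the nose (e.g.\ $A\cap B\hookrightarrow A\hookrightarrow A'$ equals $A\cap B\hookrightarrow A'\cap B'\hookrightarrow A'$), so by functoriality of $\pi_1$ the transported square with corners $\pi_1(A\cap B)$, $\pi_1(A)$, $\pi_1(B)$, $\pi_1(X)$ and maps $i_{A\ast},i_{B\ast},j_{A\ast},j_{B\ast}$ is again a pushout. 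This is exactly the assertion.

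The only genuinely delicate point is the gluing in the second step: the pasting lemma in its usual form glues maps along two closed (or two open) pieces, whereas here one glues a homotopy defined on the \emph{closed} set $A$ to one defined on the \emph{open} set $V$, so its continuity has to be verified by hand. At a point of $A\setminus B$ one uses the open neighbourhood $X\setminus B\subseteq A$; at a point of $A\cap B$ one uses that $U$ is an open neighbourhood on which both definitions of the homotopy coincide (this is where $U\cap V\subseteq A\cap B$ enters); and at a point of $V\setminus A$ the set $V$ itself works. This is precisely where the containments $V\cap A\subseteq A\cap B$ and $U\cap B\subseteq A\cap B$ are indispensable: they force the two formulas for the homotopy to agree not merely on the overlap of the pieces but on honest neighbourhoods of every point.
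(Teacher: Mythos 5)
Your proposal is correct and is essentially the paper's own argument: the paper likewise passes to the cover $\{A\cup V,\,U\cup B\}$ of $A\cup B$ with intersection $U\cup V$, applies the classical open Seifert--van Kampen theorem, and transports the pushout square along the inclusion-induced isomorphisms coming from the deformation retractions. The only caveat is that the paper takes $U$ to be open in $A$ (and $V$ in $B$) rather than in $A\cup B$, so your point-set check of openness of $A\cup V$ and $U\cup B$ at points of $A\cap B$ should use the ambient open sets cutting out $U$ and $V$, but this is a routine adjustment.
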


\begin{rem}
This closed version follows by noting that the diagram:
\begin{center}
\begin{tikzcd}[ampersand replacement=\&]
\pi_{1}(U\cup V)
\ar[r]
\ar[d]
			\&	\pi_{1}(A\cup V)
				\ar[d]	\\
\pi_{1}(U\cup B)
\ar[r]
			\&	\pi_{1}(A\cup B)	
\end{tikzcd}
\end{center}
is a pushout by the standard Seifert--van Kampen Theorem, where the corresponding components in the closed version are neighbourhood deformation retracts of those here, and hence have the same fundamental group.
\end{rem}

Since our sets $A$, $B$, $A\cup B$, and $A\cap B$ will always be (finite) simplicial complexes, we will always have that $A\cap B$ is a neighbourhood deformation retract in both $A$ and $B$.
Indeed, we can take a union of open subsets of each simplex of $A$ containing $A\cap B$, and similarly for $B$, and we will have open sets $U$ and $V$  satisfying this requirement.
We illustrate this with an example:

\begin{example}
Let $A=$
\begin{tikzpicture}
\filldraw[thick, fill=gray!30] (0,0) -- (0,1) -- (-0.866,0.5) -- cycle;
\draw[fill] (0,0) circle [radius=0.06cm];
\draw[fill] (0,1) circle [radius=0.06cm];
\draw[fill] (-0.866,0.5) circle [radius=0.06cm];
\end{tikzpicture}
and $B=$
\begin{tikzpicture}
\filldraw[thick, fill=gray!30] (0,0) -- (0,1) -- (0.866,0.5) -- cycle;
\draw[fill] (0,0) circle [radius=0.06cm];
\draw[fill] (0,1) circle [radius=0.06cm];
\draw[fill] (0.866,0.5) circle [radius=0.06cm];
\end{tikzpicture}
be two (closed) simplices, and $X=A\cup B=$
\begin{tikzpicture}
\filldraw[thick, fill=gray!30] (0,0) --(0.866,0.5) -- (0,1) -- (-0.866,0.5) -- cycle;
\draw[thick] (0,0) -- (0,1);
\draw[fill] (0,0) circle [radius=0.06cm];
\draw[fill] (0,1) circle [radius=0.06cm];
\draw[fill] (-0.866,0.5) circle [radius=0.06cm];
\draw[fill] (0.866,0.5) circle [radius=0.06cm];
\end{tikzpicture}
a simplicial complex.
Then in $X$ we have that $A\cap B=$
\begin{tikzpicture}
\draw[thick] (0,0) -- (0,1);
\draw[fill] (0,0) circle [radius=0.06cm];
\draw[fill] (0,1) circle [radius=0.06cm];
\end{tikzpicture}
. We can then take our sets $U\subseteq A$ and $V\subseteq B$ to $U=$
\begin{tikzpicture}
\fill[fill=gray!30] (-0.1732,0.1) --(0,0) -- (0,1) -- (-0.1732,0.9) -- cycle;
\draw[thick] (-0.1732,0.1) --(0,0) -- (0,1) -- (-0.1732,0.9);
\draw[dotted,thick,gray] (-0.1732,0.9) -- (-0.1732,0.1);
\draw[fill] (0,0) circle [radius=0.06cm];
\draw[fill] (0,1) circle [radius=0.06cm];
\end{tikzpicture}
and $V=$
\begin{tikzpicture}
\fill[fill=gray!30] (0.1732,0.1) --(0,0) -- (0,1) -- (0.1732,0.9) -- cycle;
\draw[thick] (0.1732,0.1) --(0,0) -- (0,1) -- (0.1732,0.9);
\draw[dotted,thick,gray] (0.1732,0.9) -- (0.1732,0.1);
\draw[fill] (0,0) circle [radius=0.06cm];
\draw[fill] (0,1) circle [radius=0.06cm];
\end{tikzpicture}
. Then $A-U=$
\begin{tikzpicture}
\fill[fill=gray!30] (-0.1732,0.1) -- (-0.1732,0.9) -- (-0.866,0.5) -- cycle;
\draw[thick] (-0.1732,0.1) -- (-0.866,0.5) -- (-0.1732,0.9);
\draw[thick,gray] (-0.1732,0.1) -- (-0.1732,0.9);
\draw[fill] (-0.866,0.5) circle [radius=0.06cm];
\end{tikzpicture}
which is a closed set, hence $U$ is open in $A$.
Similarly, $V$ is open in $B$, and it is clear that $A\cap B$ is a deformation retract of both $U$ and $V$.
\end{example}

\medskip

In \cite{Brown1984}, Brown presents a method for extracting a group presentation from its action on a CW complex.
A streamlined version of this is given as Theorem 3 in \cite{Brown1984} which holds when the action of the group on the complex has a strict fundamental domain:

\begin{restatable}[Brown {\cite[Theorem 3]{Brown1984}}]{thm}{brown}\label{brown thm strict} %
Let $\mathcal{G}$ act on a simply connected $\mathcal{G}$-CW complex $X$ (without inversion on the 1-cells of $X$).
Suppose there is a subcomplex $W$ of $X$ so that every cell of $X$ is equivalent under the action of $\mathcal{G}$ to a unique cell of $W$.
Then $\mathcal{G}$ is generated by the isotropy subgroups $\mathcal{G}_{v}$ ($v\in V(W)$) subject to edge relations $\iota_{o(e)}(g)=\iota_{t(e)}(g)$ for all $g\in \mathcal{G}_{e}$ ($e\in E(W)$)
(where for any $e\in E(W)$, $\iota_{o(e)}:\mathcal{G}_{e}\to\mathcal{G}_{o(e)}$ and $\iota_{t(e)}:\mathcal{G}_{e}\to\mathcal{G}_{t(e)}$ are inclusions).
\end{restatable}

It is this theorem that forms the basis of Section \ref{section presentation} in which we give a presentation for $\outs(G)$.

\subsection{Outer Space for Free Products} \label{subsection outer space}

In \cite{Guirardel2007}, Guirardel and Levitt give a description of a deformation space for certain free products $G=G_{1}\ast\dots\ast G_{n}\ast F_{k}$ on which $\out(G)$ acts, allowing us to study properties of the outer automorphism group of a free product.
They call this space $\mathcal{O}$, the `Outer Space' (for a free product), and the projectivised space $\mathcal{PO}$.

The space $\mathcal{PO}$, while cellular, is not simplicial, due to `missing' faces (faces `at infinity').
To resolve this, we consider a construction called the \emph{barycentric spine} of $\mathcal{PO}$ (denoted `$\mathcal{S}$').
This is obtained by taking the first barycentric subdivision of $\mathcal{PO}$, and then linearly retracting off the missing faces, to give a simplicial complex.
This equates to taking the geometric realisation of the poset on the cells of $\mathcal{PO}$ given by $A\prec B$ iff $A$ is a face of $B$.

Whilst their construction is defined for a Grushko decomposition (i.e. each factor group $G_{i}$ is non-trivial, freely indecomposable, and not infinite cyclic), by considering instead the subgroup $\out(G;G_{1},\dots,G_{n},F_{k})$ of $\out(G)$ which preserves a given splitting of $G$, we can loosen these conditions.
We may refer to this as a `relative' Outer Space.
% This construction applies more generally to free factors which are not freely indecomposable.
%In this case it is not $\out(G)$ which acts on the Outer Space, but the subgroup of $\out(G)$ which preserves (up to conjugation) the chosen free factor splitting of $G$.
%Thus all of our results will generalise in a similar way.

Since we are going to be interested in subcomplexes of the barycentric spine $\mathcal{S}$, we will now give an explicit description for it.
We will restrict ourselves to the case where every factor group in the splitting of $G$ acts elliptically (i.e. $k0$).

\subsubsection*{Points in the Barycentric Spine of Projectivised Outer Space} %points												3.1

Let $G$ be a grop which splits as a free product $G_{1}\ast\dots\ast G_{n}$ where each $G_{i}$ is non-trivial, and let $\mathfrak{S}=(G_{1},\dots,G_{n})$ be the tuple associated to the splitting.

The barycentric spine $\mathcal{S}$ of $\mathcal{PO}$ is a simplicial complex whose 0-cells are graphs of groups $\Gamma$ (with $\pi_{1}(\Gamma)\cong G$), as follows:

\begin{itemize}
\item The underlying graph structure of $\Gamma$ is a tree
\item $\Gamma$ has one vertex with vertex group conjugate to $G_{i}$ for each $i$
\item All other vertex groups are trivial (vertices with trivial vertex group will be called `trivial vertices')
\item Any trivial vertex has valency at least 3
\item All edge groups are trivial
\item The vertex groups $G_{1}^{g_{1}},\dots,G_{n}^{g_{n}}$ generate the group $G$ (that is, $G_{1}^{g_{1}}\ast\dots\ast G_{n}^{g_{n}}$ is a free fractor splitting for $G$)
\end{itemize}

Note that two graphs of groups are equivalent if and only if they are isomorphic in the sense of Bass \cite[Definition 2.1]{Bass1993}.

Via Bass--Serre Theory, we could equally consider points of $\mathcal{S}$ to be certain actions of $G$ on trees $T$, up to equivariant isometry.

\subsubsection*{Structure of the Barycentric Spine $\mathcal{S}$ of $\mathcal{PO}$} \label{s3.2} %structure/topology					3.2

Given two 0-cells $\Gamma_{1}$ and $\Gamma_{2}$ in our barycentric spine, we have a 1-cell $[\Gamma_{1},\Gamma_{2}]$ whenever $\Gamma_{2}$ can be achieved by collapsing an edge or edges of $\Gamma_{1}$.

Whenever a collection of 0-cells $\Gamma_{1},\dots,\Gamma_{m}$ form an $m$-clique in the 1-skeleton (that is, whenever the restriction of the 1-skeleton to the vertices $\Gamma_{1},\dots,\Gamma_{m}$ forms a complete graph), we have a unique $(m-1)$-cell $[\Gamma_{1},\dots,\Gamma_{m}]$.

Since the maximum number of edges such a graph of groups can have is $2n-3$ (when all non-trivial vertices have valency 1 and all trivial vertices have valency 3), and the minimum number of edges is $n-1$ (when there are no trivial vertices), then the dimension of the barycentric spine of projectivised Outer Space is $(2n-3)-(n-1)=n-2$.
Since $\mathcal{PO}$ is contractible \cite[Theorem 4.2 and Corollary 4.4]{Guirardel2007}, and $\mathcal{PO}$ deformation retracts onto $\mathcal{S}$, then so too is $\mathcal{S}$.

\subsubsection*{Action of $\outs(G)$ on $\mathcal{S}$} %action																3.3

If we consider points of $\mathcal{S}$ to be actions $\psi:G\times T\to T, \psi(g,t)=g\cdot_{\psi}t$ on $G$-trees $T$,
then for $\theta\in \outs(G)$, the action on $\mathcal{S}$, $\theta\cdot(T,\psi)$, is defined by $\theta(\psi(g,t))=\theta(g\cdot_{\psi}t)=\theta(g)\cdot_{\psi}t$.
Considering length functions, this simply says $\theta\cdot l_{T}=l_{\theta T}$ where $l_{\theta T}(g)=l_{T}(\theta(g))$ for all $g\in G$.
This extends to a cellular action on $\mathcal{S}$.
In Section \ref{subsection action on Cn}, we give a description of the action of $\outs(G)$ on our chosen subcomplex of $\mathcal{S}$ in terms of graphs of groups.

We will be interested in finding $\outs(G)$-stabilisers of vertices in the barycentric spine.
%If we restrict to the case where all our factors $G_{i}$ are pairwise non-isomorphic and we have no free component, then 
Considering points as actions of $G$ on trees $T$,
the stabiliser of a point $T$ is precisely the group of automorphisms acting trivially on the quotient graph $\Gamma=\faktor{T}{G}$.
This is the subgroup denoted by Guirdardel and Levitt as $\out_{0}^{S}(G)$.
If the vertex $v_{i}$ of $\Gamma$ represents the orbit of the vertex in $T$ whose stabiliser is $G_{i}$, and $\mu_{i}$ is the valency of $v_{i}$ in $\Gamma$, then $\out_{0}^{S}(G)$ is isomorphic to the direct product $\prod_{i=1}^{n}(G_{i}^{\mu_{i}-1}\rtimes \aut(G_{i}))$
(where $\aut(G_{i})$ is identified with its projection in $\aut_{\mathfrak{S}}(G)$ (or $\outs(G)$)).
The precise details of this are found in \cite[Section 5]{Guirardel2007}.
We explore this more explicitly in Section \ref{stabilisers}.

%	---------------------------------------------		SECTION 2		-------------------------------------------------------

\section{The Complex $\mathcal{C}_{n}$} \label{section subcomplex}

From now on, we fix a splitting $\mathfrak{S}=G_{1},\dots,G_{n})$ of a group $G=G_{1}\ast\dots\ast G_{n}$, where each $G_{i}$ is non-trivial.
% where  freely indecomposable, , and not infinite cyclic, and where $G_{i}$ is not isomorphic to $G_{j}$ if $i\ne j$.
%Note that these restrictions force all of our graphs of groups to be trees (that is, $G$ has no free rank).
We will consider graphs of groups of $G$ which respect the splitting $\mathfrak{S}$ --- note that these will all be trees, as each factor group acts elliptically in the relative Bass--Serre tree.

The barycentric spine of the projectivised relative Outer Space for $G$ with respect to $\mathfrak{S}$ has a `reasonably sized' quotient under the action of $\outs(G)$ when $n=3$ and $n=4$ (4 vertices contributing to a total of 7 cells, and 32 vertices contributing to a total of 159 cells, respectively).
As $n$ grows, this quotient space quickly becomes unwieldy.

\begin{defn}\label{defn c3 and c4}
For $n=3$ or $n=4$, we define $\mathcal{C}_{n}$ to be the barycentric spine of Guirardel and Levitt's projectivised relative Outer Space associated to the splitting $\mathfrak{S}$ of $G$.
\end{defn}

That is, $\mathcal{C}_{3}$ and $\mathcal{C}_{4}$ are the geometric realisations of the posets whose elements are simplices in the projectivised relative Outer Spaces for the splittings $G_{1}\ast G_{2}\ast G_{3}$ and $G_{1}\ast G_{2}\ast G_{3}\ast G_{4}$, respectively, where $A\prec B$ if the simplex $A$ is a face of the simplex $B$.

\begin{lemma}\label{lemma c3 and c4 sc}
$\mathcal{C}_{3}$ and $\mathcal{C}_{4}$ are contractible. In particular, they are simply connected.
\end{lemma}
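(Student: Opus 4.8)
The plan is to read this off directly from the structure already set up in Section \ref{subsection outer space}, since for $n\in\{3,4\}$ the complex $\mathcal{C}_{n}$ is by Definition \ref{defn c3 and c4} nothing other than the barycentric spine $\mathcal{S}$ of the projectivised relative Outer Space $\mathcal{PO}$ associated to the splitting $\mathfrak{S}$. So the whole statement reduces to contractibility of $\mathcal{S}$, and then to the observation that contractible implies simply connected.

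First I would recall that $\mathcal{PO}$ is contractible: this is Guirardel and Levitt's result \cite[Theorem 4.2 and Corollary 4.4]{Guirardel2007} (and it applies in the relative setting, since passing to the subgroup $\outs(G)$ and loosening the Grushko hypotheses does not affect contractibility of the space itself --- it is the action that changes, not the space). Next I would invoke the fact, recorded in Section \ref{subsection outer space}, that $\mathcal{PO}$ deformation retracts onto its barycentric spine $\mathcal{S}$: one takes the first barycentric subdivision and then linearly pushes off the ``missing'' faces at infinity, exactly as in the Culler--Vogtmann spine construction, and this retraction is a deformation retraction. Since a deformation retract of a contractible space is contractible, $\mathcal{S}$ is contractible, i.e. $\mathcal{C}_{n}$ is contractible for $n=3,4$. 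Finally, any contractible space has trivial fundamental group, which gives the ``in particular'' clause.

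The only point that requires any care --- and hence the ``main obstacle'', such as it is --- is justifying that linearly retracting off the faces at infinity genuinely yields a \emph{deformation} retraction onto the poset realisation $\mathcal{S}$, rather than just a homotopy equivalence or a map in the wrong direction. I would handle this by citing it as the standard spine construction (the same argument that shows the spine of Culler--Vogtmann Outer Space is a deformation retract carries over verbatim, since $\mathcal{PO}$ is a finite-dimensional simplicial-like complex with the missing faces forming an open subcomplex of the link structure), rather than reproving it; alternatively one can note that $\mathcal{S}$ is the geometric realisation of the face poset of $\mathcal{PO}$ and that $\mathcal{PO}$ collapses onto it along a standard partial-matching/discrete-Morse-theoretic retraction. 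Either way the argument is short, and no computation specific to $n=3$ or $n=4$ is needed --- the small cell counts quoted just after Definition \ref{defn c3 and c4} are there for later use, not for this lemma.
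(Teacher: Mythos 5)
Your proposal is correct and follows exactly the same route as the paper: $\mathcal{C}_{3}$ and $\mathcal{C}_{4}$ are by definition the barycentric spine of the relative projectivised Outer Space, which is contractible by Guirardel--Levitt and deformation retracts onto its spine. The extra care you take over why the retraction onto the spine is a genuine deformation retraction is reasonable but not something the paper itself elaborates.
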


\begin{proof}
This follows from contractibility of projectivised Outer Space, proven by Guirardel and Levitt \cite[Theorem 4.2 and Corollary 4.4]{Guirardel2007} , since projectivised Outer Space deformation retracts onto its spine.
\end{proof}
 
 Our goal now is to construct a simplicial complex $\mathcal{C}_{n}$ for each $n\ge5$ whose quotient under the action of $\outs(G)$ remains `reasonably sized'.
 
 \subsection{Restricting to a Subcomplex of Outer Space}

For the rest of this section, we assume $n\ge5$.
In general, the barycentric spine of the Outer Space for $n$ factors will be $(n-2)$-dimensional.

Since our graphs of groups are all trees, we will find that the stabilisers of higher-dimensional simplices in Outer Space are contained in the stabilisers of their faces.
Hence restricting ourselves to lower dimensional simplices will not sacrifice information gathered from vertex stabilisers in the barycentric spine.
We will thus restrict ourselves to the three lowest possible dimensions of simplex; then when we take the barycentyric spine of this restricted space, we will recover a 2-dimensional complex.

The lowest dimension of a simplex in Outer Space for $n$ factors is $n-1$ (since our trees will have the minimal possible number of vertices, $n$, leading to $n-1$ edges). Thus we are interested in graphs of groups with $n-1$, $n$, or $n+1$ edges.

For $n=5$, this means we just drop the top-dimensional simplices, which represent the graphs of groups of the form
	\begin{tikzpicture}[scale=0.8] %aleph shape for n=5
	%edges:
	\draw[thick] (0,0) -- (-0.5,0.866);
	\draw[thick] (0,0) -- (-0.5,-0.866);
	\draw[thick] (0,0) -- (1,0);
	\draw[thick] (2,0) -- (2.5,0.866);
	\draw[thick] (2,0) -- (2.5,-0.866);
	\draw[thick] (2,0) -- (1,0);
	\draw[thick] (1,0) -- (1,1);
	%vertices:
	\draw[yellow, fill] (0,0) circle [radius=0.09];
	\draw[red, fill] (-0.5,0.866) circle [radius=0.09]; % j
	\draw[red, fill] (-0.5,-0.866) circle [radius=0.09]; % k
	\draw[yellow, fill] (1,0) circle [radius=0.085];
	\draw[yellow, fill] (2,0) circle [radius=0.09];
	\draw[red, fill] (2.5,0.866) circle [radius=0.09]; % l
	\draw[red, fill] (2.5,-0.866) circle [radius=0.09]; % m
	\draw[red, fill] (1,1) circle [radius=0.09]; % i
	%labels:
	\node at (-0.75,0.9) {$j$};
	\node at (-0.8,-0.9) {$k$};
	\node at (2.75,0.9) {$l$};
	\node at (2.9,-0.9) {$m$};
	\node at (1,1.3) {$i$};
	\end{tikzpicture}
, where the labelled (red) vertices have vertex groups conjugate to the free factors of $G$, and the unlabelled (yellow) vertices have trivial vertex group (and all edge groups are trivial).

We will call such a graph (i.e. associated to a top-dimensional simplex) a \emph{maximal graph}. Note that in our case, these are characterised by having precisely $n$ leaves (all with non-trivial vertex group) and with all other vertices (each having trivial vertex group) having valency exactly 3.

As $n$ increases, so too does the number of maximal graphs associated to `top' simplices.
For $n=6$, there are two maximal graph structures,
\begin{tikzpicture}[scale=0.8]
%edges:
\draw[thick] (-0.5,0) -- (0.5,0);
\draw[thick] (0.5,0) -- (1,0.866);
\draw[thick] (0.5,0) -- (1,-0.866);
\draw[thick] (1,-0.866) -- (2,-0.866);
\draw[thick] (1,-0.866) -- (0.5,-1.732);
\draw[thick] (-0.5,0) -- (-1,0.866);
\draw[thick] (-0.5,0) -- (-1,-0.866);
\draw[thick] (-1,-0.866) -- (-2,-0.866);
\draw[thick] (-1,-0.866) -- (-0.5,-1.732);
%vertices:
\draw[yellow,fill] (-0.5,0) circle [radius=0.09];
\draw[yellow,fill] (0.5,0) circle [radius=0.09];
\draw[yellow,fill] (1,-0.866) circle [radius=0.09];
\draw[yellow,fill] (-1,-0.866) circle [radius=0.09];
\draw[red,fill] (1,0.866) circle [radius=0.09];
\draw[red,fill] (-1,0.866) circle [radius=0.09];
\draw[red,fill] (2,-0.866) circle [radius=0.09];
\draw[red,fill] (-2,-0.866) circle [radius=0.09];
\draw[red,fill] (0.5,-1.732) circle [radius=0.09];
\draw[red,fill] (-0.5,-1.732) circle [radius=0.09];
\end{tikzpicture}
and 
\begin{tikzpicture}[scale=0.8]
%edges:
\draw[thick] (0,0) -- (0,1);
\draw[thick] (0,0) -- (0.866,-0.5);
\draw[thick] (0,0) -- (-0.866,-0.5);
\draw[thick] (0,1) -- (0.866,1.5);
\draw[thick] (0,1) -- (-0.866,1.5);
\draw[thick] (0.866,-0.5) -- (1.732,0);
\draw[thick] (0.866,-0.5) -- (0.866,-1.5);
\draw[thick] (-0.866,-0.5) -- (-1.732,0);
\draw[thick] (-0.866,-0.5) -- (-0.866,-1.5);
%vertices:
\draw[yellow,fill] (0,0) circle [radius=0.09];
\draw[yellow,fill] (0,1) circle [radius=0.09];
\draw[yellow,fill] (0.866,-0.5) circle [radius=0.09];
\draw[yellow,fill] (-0.866,-0.5) circle [radius=0.09];
\draw[red,fill] (0.866,1.5) circle [radius=0.09];
\draw[red,fill] (-0.866,1.5) circle [radius=0.09];
\draw[red,fill] (0.866,-1.5) circle [radius=0.09];
\draw[red,fill] (-0.866,-1.5) circle [radius=0.09];
\draw[red,fill] (1.732,0) circle [radius=0.09];
\draw[red,fill] (-1.732,0) circle [radius=0.09];
\end{tikzpicture}.
For $n=7$ there are also two types of maximal graph, for $n=8$ there are four, for $n=9$ there are six, for $n=10$ there are twelve, and for $n=11$ there are eighteen.
\footnote{This is somewhat analogous to alkane chains in organic chemistry, and the various isomers for these (if we were to pretend that carbon could make only three bonds, and not four).}
Collapsing edges (passing to faces in the associated simplex in Outer Space) in each of these leads to a variety of structures.

\begin{defn}\label{defn collapse}
In a graph of groups, we say that an edge is \emph{collapsible} if it has at least one trivial endpoint (that is, at least one endpoint whose vertex group is the trivial group).

The process of replacing a collapsible edge (including its endpoints) by a single vertex whose vertex group is the free product of the vertex groups of the endpoints of said edge is called \emph{collapsing}.

Given two graphs of groups $T_{1}$ and $T_{2}$, we will say $T_{2}$ is a \emph{collapse} of $T_{1}$ if $T_{2}$ can be achieved as the result of successively collapsing edges of $T_{1}$.
\end{defn}

\begin{rem}
Since a collapsible edge has at least one trivial endpoint, then one may think of the edge as collapsing to its other (potentially non-trivial) vertex.

That is, if 
\begin{tikzpicture}
\draw[thick] (0,0) -- (1,0);
\filldraw (0,0) circle [radius=0.06cm];
\filldraw (1,0) circle [radius=0.06cm];
\node at (0,0.2) {$u$};
\node at (1,0.2) {$v$};
\end{tikzpicture}
 is a collapsible edge, with $u$ being the trivial vertex and $v$ having vertex group $G_{v}$ (possibly also trivial), then in collapsing
 \begin{tikzpicture}
\draw[thick] (0,0) -- (1,0);
\filldraw (0,0) circle [radius=0.06cm];
\filldraw (1,0) circle [radius=0.06cm];
\node at (0,0.2) {$u$};
\node at (1,0.2) {$v$};
\end{tikzpicture}
, we replace it with a vertex whose vertex group is equal to $\{1\}\ast G_{v}=G_{v}$. Thus we may think of collapsing
\begin{tikzpicture}
\draw[thick] (0,0) -- (1,0);
\filldraw (0,0) circle [radius=0.06cm];
\filldraw (1,0) circle [radius=0.06cm];
\node at (0,0.2) {$u$};
\node at (1,0.2) {$v$};
\end{tikzpicture}
as replacing it with the vertex $v$.
Note that the new valency of $v$ is equal to the old valency of $v$ plus the valency of $u$ minus 2.

Alternatively, the collapse of such an edge \begin{tikzpicture}
\draw[thick] (0,0) -- (1,0);
\filldraw (0,0) circle [radius=0.06cm];
\filldraw (1,0) circle [radius=0.06cm];
\node at (0,0.2) {$u$};
\node at (1,0.2) {$v$};
\end{tikzpicture}
in $T_{1}$ may be thought of as a map $f:T_{1}\to T_{2}$ sending 
\begin{tikzpicture}
\draw[thick] (0,0) -- (1,0);
\filldraw (0,0) circle [radius=0.06cm];
\filldraw (1,0) circle [radius=0.06cm];
\node at (0,0.2) {$u$};
\node at (1,0.2) {$v$};
\end{tikzpicture}
to
\begin{tikzpicture}
%\draw[thick] (0,0) -- (1,0);
%\filldraw (0,0) circle [radius=0.06cm];
\filldraw (1,0) circle [radius=0.06cm];
%\node at (0,0.2) {$u$};
\node at (1,0.2) {$v$};
\end{tikzpicture}
and acting as the identity on the rest of $T_{1}$.
Collapses of multiple edges can be achieved by composing these maps.
\end{rem}

Recall that we have already decided to limit ourselves to graphs with $n-1$, $n$, or $n+1$ edges.
So we will restrict ourselves further to collapses of graphs of groups of the form
	\begin{tikzpicture}[scale=0.8] %aleph shape with bonus leaves not suppressed
	%edges:
	\draw[thick] (0,0) -- (-0.5,0.866);
	\draw[thick] (0,0) -- (-0.5,-0.866);
	\draw[thick] (0,0) -- (1,0);
	\draw[thick] (2,0) -- (2.5,0.866);
	\draw[thick] (2,0) -- (2.5,-0.866);
	\draw[thick] (2,0) -- (1,0);
	\draw[thick] (1,0) -- (0.55,0.866);
	\draw[thick] (1,0) -- (0.334,0.5);
	\draw[thick] (1,0) -- (1.45,0.866);
	\draw[thick] (1,0) -- (1.666,0.5);
	%vertices:
	\draw[yellow, fill] (0,0) circle [radius=0.09];
	\draw[red, fill] (-0.5,0.866) circle [radius=0.09]; % j
	\draw[red, fill] (-0.5,-0.866) circle [radius=0.09]; % k
	\draw[yellow, fill] (1,0) circle [radius=0.085];
	\draw[yellow, fill] (2,0) circle [radius=0.09];
	\draw[red, fill] (2.5,0.866) circle [radius=0.09]; % l
	\draw[red, fill] (2.5,-0.866) circle [radius=0.09]; % m
	\draw[red, fill] (0.55,0.866) circle [radius=0.09]; % v2
	\draw[red, fill] (0.334,0.5) circle [radius=0.09]; % v1
	\draw[red, fill] (1.45,0.866) circle [radius=0.09]; % v(n-5)
	\draw[red, fill] (1.666,0.5) circle [radius=0.09]; % v(n-4)
	\draw[red, fill] (0.8,0.98) circle [radius=0.05];
	\draw[red, fill] (1.2,0.98) circle [radius=0.05];
	\draw[red, fill] (1,1) circle [radius=0.05];
	%labels:
	\node at (-0.75,0.9) {$j$};
	\node at (-0.8,-0.9) {$k$};
	\node at (2.75,0.9) {$l$};
	\node at (2.9,-0.9) {$m$};
	\node at (0.4,0.7) {\footnotesize$v_{1}$};
	\node at (0.575,1.1) {\footnotesize$v_{2}$};
	\node at (1.8,1.1) {\footnotesize$v_{n-5}$};
	\node at (1.96,0.7) {\footnotesize$v_{n-4}$};
	\end{tikzpicture}
, which we will `abbreviate' as \Taleph{$n-4$}{$j$}{$k$}{$l$}{$m$} (where the `$n-4$' means we have suppressed $n-4$ leaves).
We will use this method of abbreviation on a frequent basis. We will often refer to the blue-ringed vertex (with valency dependent on $n$) as the `basepoint' of the graph.

For $n=5$ this is exactly as we have described, and results in taking the barycentric spine of the 3-skeleton of Outer Space.
This graph shape provides a natural way to generalise to $n>5$, without having to worry about the varying maximal graphs.
Note that this means that for $n>5$ there will be graphs of groups representing simplices of the `correct' dimension (i.e. $n-1$, $n$, or $n+1$) in Outer Space which we do not include in our chosen complex.

Our complex $\mathcal{C}_{n}$ will be the geometric realisation of the poset whose elements are the graphs of groups we have selected above, where the order is given by collapsing.
We formalise this in the following subsection.

\subsection{Points in the Complex}\label{subsection points in the subcomplex}

Table \ref{table n>=5 points} summarises the graph shapes we will encounter, as well as a naming convention, the number we expect to see in a fundamental domain of the subcomplex we choose, and associated colours which are useful in drawing diagrams (though can largely be ignored).

\begin{table}[h!]
\centering
\begin{tabular}{ m{4cm} | m{1.5cm} | m{1.5cm} | c }
\hspace{1cm}	Tree 					&	Name				& 	No. per Domain			&	Colour							\\ 
\hline 					\hspace{0.1cm}
\scalebox{1}{\Trho{$n-2$}{$i$}{$j$}}			&	$\rho_{ij}$			&	$\frac{n(n-1)}{2}$			&	\textcolor{Green}{\circle*{0.4cm}}		\\	\hspace{0.1cm}
\scalebox{1}{\Tsigma{$n-5$}{$i$}{$j$}{$k$}{$l$}{$m$}}	&$\sigma_{i,jk,lm}$		&	$\frac{n!}{8\times(n-5)!}$	&	\textcolor{Goldenrod}{\circle*{0.4cm}}		\\
\scalebox{1}{\Ttau{$n-4$}{$i$}{$j$}{$k$}{$l$}}	&	$\tau_{i,j,kl}$		&	$\frac{n!}{2\times(n-4)!}$	&	\textcolor{Blue}{\circle*{0.4cm}}		\\ 	\hspace{1.42cm}
\scalebox{1}{\Talpha{$n$}}				&	$\alpha$			&	$1$					&	\textcolor{Red}{\circle*{0.4cm}}			\\
\scalebox{1}{\Tbeta{$n-2$}{$i$}{$j$}}			&	$\beta_{i,j}$			&	$n(n-1)$				&	\textcolor{Cyan}{\circle*{0.4cm}}		\\ 	\hspace{0.1cm}
\scalebox{1}{\Tgamma{$n-3$}{$i$}{$j$}{$k$}}	&	$\gamma_{i,jk}$		&	$\frac{n!}{2\times(n-3)!}$	&	\textcolor{SpringGreen}{\circle*{0.4cm}}	\\
\scalebox{1}{\Tdelta{$n-5$}{$i$}{$j$}{$k$}{$l$}{$m$}} &$\delta_{i,j,k,lm}$		&	$\frac{n!}{2\times(n-5)!}$	&	\textcolor{Purple}{\circle*{0.4cm}}		\\
\scalebox{1}{\Tepsilon{$n-4$}{$i$}{$j$}{$k$}{$l$}} &	$\varepsilon_{i,j,k,l}$	&	$\frac{n!}{2\times(n-4)!}$	&	\textcolor{Lavender}{\circle*{0.4cm}}		\\	 \hspace{1.2cm}
\scalebox{1}{\TA{$n-1$}{$i$}}				&	$A_{i}$			&	$n$					&	\textcolor{Bittersweet}{\circle*{0.4cm}}	\\
\scalebox{1}{\TB{$n-3$}{$i$}{$j$}{$k$}}		&	$B_{i,j,k}$			&	$\frac{n!}{(n-3)!}$			&	\textcolor{Orange}{\circle*{0.4cm}}		\\
\scalebox{1}{\TC{$n-5$}{$i$}{$j$}{$k$}{$l$}{$m$}} &	$C_{i,j,k,l,m}$		&	$\frac{n!}{2\times(n-5)!}$	&	\textcolor{Dandelion}{\circle*{0.4cm}}		\\
\end{tabular}
\caption{Points in the Subcomplex for $n\ge5$}
\label{table n>=5 points}
\end{table}

In general, subscripts separated by a comma are ordered, whereas subscripts not separated by a comma are not ordered.
So $\rho_{ij}$ and $\rho_{ji}$ both refer to the tree \Trho{$n-2$}{$i$}{$j$} whereas $\beta_{i,j}$ and $\beta_{j,i}$ refer to distinct trees, \Tbeta{$n-2$}{$i$}{$j$} and \Tbeta{$n-2$}{$j$}{$i$}, respectively.

There is some additional symmetry from our trees, so we also have that $\sigma_{i,jk,lm}=\sigma_{i,lm,jk}$, $\varepsilon_{i,j,k,l}=\varepsilon_{k,l,i,j}$, and $C_{i,j,k,l,m}=C_{i,l,m,j,k}$.
It is always assumed that, for example, $\{i,j,k,l,v_{1},\dots,v_{n-4}\}=\{1,\dots,n\}$ as sets.

%\textcolor{Green}{
%\begin{obs}\label{free factor splitting}
%A free product $H_{1}\ast\dots\ast H_{n}$ is a \emph{free factor splitting} for $G=G_{1}\ast\dots\ast G_{n}$
%if $H_{1}\ast\dots\ast H_{n}$ is of the form $G_{1}^{g_{1}}\ast\dots\ast G_{n}^{g_{n}}$ for some $g_{1},\dots,g_{n}\in G$, where the subgroups $G_{1}^{g_{1}},\dots, G_{n}^{g_{n}}$ generate the group $G$.
%\end{obs}
%%
%\begin{defn}
%Given a tree $T$ in Table \ref{table n>=5 points}, a \emph{labelling} of $T$ is an assignment of vertex groups $H_{v}$ to named vertices $v\in V(T)$ so that $H_{1}\ast\dots\ast H_{n}$ is a free factor splitting for $G$.
%(Unnamed vertices in $T$ may be thought of as having the trivial group $\{1\}$ assigned.)
%\end{defn}
%}

Recall from Definition \ref{defn s labelling} that an $\mathfrak{S}$-labelling is an assignement of vertex groups $H_{1},\dots,H_{n}$ to a tree $T$ so that $\pi_{1}(\mathbf{T})\cong G$ which respects the splitting $\mathfrak{S}$ of $G$. For trees in Table \ref{table n>=5 points}, vertex groups are only assigned to named (red) vertices.

\begin{defn}\label{defn equivalent labellings}
Let $H=(H_{v_{1}},\dots,H_{v_{n}})$ and $H'=(H'_{v_{1}},\dots,H'_{v_{n}})$ be two $\mathfrak{S}$-labellings of a tree $T$ from Table \ref{table n>=5 points} with $\{v_{1},\dots,v_{n}\}\subseteq V(T)$.
Then $H$ and $H'$ are \emph{equivalent} as labellings (with respect to $T$) if for each $v\in V(T)$ (including trivial vertices)
there exists $g_{v}\in G$ and (if $v$ is not a trivial vertex) $\varphi_{v}\in\aut(H_{v})$ 
so that $H'_{v}=g_{v}^{-1}\varphi_{v}(H_{v})g_{v}$,
and moreover, for any edge $e\in E(T)$ we have $g_{t(e)}g_{o(e)}^{-1}\in H_{o(e)}$ 
(where $o(e)$ is the endpoint of $e$ closest in $T$ to the `basepoint', and $t(e)$ is the further endpoint).
If $o(e)$ does not have a vertex group assigned (i.e. $o(e)$ is a trivial vertex) then $g_{t(e)}=g_{o(e)}$.

Two graphs of groups $\mathbf{T_{1}}$ and $\mathbf{T_{2}}$ are \emph{equivalent} if they each have underlying graph isomorphic to some graph $T$, and their labellings are equivalent (with respect to $T$).
We denote this equivalence by $\mathbf{T_{1}}\simeq\mathbf{T_{2}}$.
\end{defn}

Considering the fundamental group of a labelled tree $\mathbf{T}=(T,H)$ to be $\displaystyle\bigast_{i=1}^{n}H_{v_{i}}$, this equivalence induces an isomorphism $H_{v_{1}}\ast\dots\ast H_{v_{n}}\to H'_{v_{1}}\ast\dots\ast H'_{v_{n}}$.
Some basic manipulation of notation shows that this notion of equivalence corresponds to taking isomorphism classes of graphs of groups described by Bass \cite[Section 2]{Bass1993}.

When considering $\mathcal{C}_{n}$, we assume a given splitting $\mathfrak{S}$ of our group $G$, and may simply refer to `labellings' of trees.

\begin{obs}
If $\mathbf{T_{1}}\simeq\mathbf{T_{2}}$ are equivalent graphs of groups and $f$ is a collapsing map of the underlying graph $T$, then $f(\mathbf{T_{1}})\simeq f(\mathbf{T_{1}})$ are also equivalent.
\end{obs}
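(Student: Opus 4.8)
The plan is to reduce to a single edge-collapse and then transport the equivalence data explicitly. Since any collapsing map $f$ of $T$ is a composition of collapses of single collapsible edges, by induction on the number of collapsed edges it suffices to treat the case where $f$ collapses one collapsible edge $e_{0}$. Write $u=o(e_{0})$ and $w=t(e_{0})$ for its endpoints, ordered by distance to the basepoint, at least one of which is trivial, and let $\bar v$ denote their common image in $T'=f(T)$, so that $H_{\bar v}=H_{u}\ast H_{w}$, which equals whichever of $H_{u},H_{w}$ is non-trivial. Suppose $\mathbf{T_{1}}\simeq\mathbf{T_{2}}$ via data $(g_{v},\varphi_{v})_{v\in V(T)}$ as in Definition \ref{defn equivalent labellings}. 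Because $\varphi_{v}$ is an automorphism, $H'_{v}$ is trivial exactly when $H_{v}$ is; hence $e_{0}$ is collapsible in $\mathbf{T_{2}}$ as well, and $f(\mathbf{T_{2}})$ is a graph of groups on the same underlying tree $T'$ with the same trivial vertices as $f(\mathbf{T_{1}})$. The goal is to produce equivalence data $(\tilde g_{v},\tilde\varphi_{v})_{v\in V(T')}$ witnessing $f(\mathbf{T_{1}})\simeq f(\mathbf{T_{2}})$.

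For every vertex $v\neq\bar v$ I would keep $\tilde g_{v}=g_{v}$ and $\tilde\varphi_{v}=\varphi_{v}$. For $\bar v$ I set $\tilde g_{\bar v}=g_{u}$ and let $\tilde\varphi_{\bar v}$ be $\varphi_{u}$ or $\varphi_{w}$ according to which of $u,w$ is the non-trivial endpoint. This is unambiguous: if $w$ is the trivial endpoint then the convention in Definition \ref{defn equivalent labellings} gives $g_{w}=g_{u}$, while if $u$ is the trivial endpoint the genuine hypothesis $g_{w}g_{u}^{-1}\in H_{u}=\{1\}$ again gives $g_{w}=g_{u}$. The vertex condition $H'_{\bar v}=\tilde g_{\bar v}^{-1}\tilde\varphi_{\bar v}(H_{\bar v})\tilde g_{\bar v}$ then follows immediately from the corresponding condition at $u$ (resp.\ $w$) for $\mathbf{T_{1}}\simeq\mathbf{T_{2}}$, using $g_{w}=g_{u}$ and that the trivial factor contributes nothing.

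It remains to verify the edge condition $g_{t(e)}g_{o(e)}^{-1}\in H_{o(e)}$ (with the trivial-vertex convention) for every edge $e$ of $T'$. Edges not incident to $\bar v$ are untouched, so the condition is inherited. For an edge $e$ incident to $\bar v$ I would split into the cases $o(e)=\bar v$ and $t(e)=\bar v$ and, in each, trace $e$ back to an edge of $T$ incident to $u$ or to $w$; since $o(e_{0})$ is by definition the endpoint of $e_{0}$ on the geodesic to the basepoint, $e_{0}$ is the unique edge out of $w$ toward the basepoint, so no clash of orientation conventions occurs. In each sub-case the required containment collapses, using $g_{w}=g_{u}$ together with the triviality of $H_{u}$ or $H_{w}$ as appropriate, to either the original edge condition for $e$ in $T$ or the edge condition for $e_{0}$ itself. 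I expect this bookkeeping to be the only delicate point: keeping track of the $o/t$ orientation of each edge relative to the basepoint, in particular in the degenerate situation where $e_{0}$ is incident to the basepoint. Everything else is direct substitution. (As a sanity check, one may note that an equivalence of graphs of groups is precisely a Bass isomorphism and that collapsing is a morphism of graphs of groups, so the statement is an instance of functoriality; the argument above simply makes this explicit.) Assembling the vertex and edge checks gives $f(\mathbf{T_{1}})\simeq f(\mathbf{T_{2}})$ for a single collapse, and the induction completes the proof.
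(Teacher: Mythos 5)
The paper states this as an unproved observation, so there is no written argument to compare against; your strategy --- reduce to a single edge collapse by induction and transport the equivalence data $(g_{v},\varphi_{v})$ explicitly --- is the natural verification and is structurally sound.

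There is, however, one incorrect intermediate claim: that $g_{w}=g_{u}$ holds in both sub-cases. When $u=o(e_{0})$ is the trivial endpoint this is right, but the reason is the convention in Definition \ref{defn equivalent labellings} (which replaces the containment by the equality $g_{t(e)}=g_{o(e)}$ precisely when $o(e)$ is trivial), not the containment $g_{w}g_{u}^{-1}\in\{1\}$ as you state. When instead $w=t(e_{0})$ is the trivial endpoint and $u$ is not, that convention does not apply, and the edge condition for $e_{0}$ gives only $g_{w}g_{u}^{-1}\in H_{u}$, which need not force $g_{w}=g_{u}$; so your choice $\tilde g_{\bar v}=g_{u}$ versus $g_{w}$ is not literally unambiguous. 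Fortunately nothing breaks. The vertex condition at $\bar v$ uses only the data at the non-trivial endpoint $u$ (and even the choice $\tilde g_{\bar v}=g_{w}$ would give the same subgroup, since $g_{w}g_{u}^{-1}$ normalises $H_{u}$). For a former child $e$ of $w$, the required containment $g_{t(e)}\tilde g_{\bar v}^{-1}\in H_{\bar v}=H_{u}$ follows from $g_{t(e)}=g_{w}$ (the convention at the trivial vertex $w$, which is the $o$-endpoint of $e$) together with $g_{w}g_{u}^{-1}\in H_{u}$ --- that is, exactly ``the edge condition for $e_{0}$ itself'' that you already invoke. (The degenerate case where both endpoints of $e_{0}$ are trivial, which does occur, e.g.\ collapsing $\rho_{ij}$ to $\alpha$, is immediate since then $\bar v$ is trivial and $g_{w}=g_{u}$ genuinely holds.) With the justification corrected in this one sub-case, your proof is complete.
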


\begin{example}\label{eg equivalent B labellings}
Consider the labelled graph of groups $T:=$
\begin{tikzpicture} %expanded B shape
%edges:
\draw[thick] (-1,0) -- (-2,0);
\draw[thick] (0,0) -- (-1,0);
\draw[thick] (0,0) -- (0.866,0.5);
\draw[thick] (0,0) -- (0.866,-0.5);
\draw (0.2,0) -- (0.8,0);
\draw (0.242,0.0625) -- (0.775,0.2);
\draw (0.242,-0.0625) -- (0.775,-0.2);
%vertices:
\draw[red, fill] (-2,0) circle [radius=0.075]; % k
\draw[red, fill] (-1,0) circle [radius=0.075]; % j
\draw[red, fill] (0,0) circle [radius=0.075]; % i
\draw[red, fill] (0.866,0.5) circle [radius=0.075]; % v_1
\draw[red, fill] (0.866,-0.5) circle [radius=0.075]; % v_n-3
\draw[red, fill] (0.968,0.25) circle [radius=0.05];
\draw[red, fill] (1,0) circle [radius=0.05];
\draw[red, fill] (0.968,-0.25) circle [radius=0.05];
%labels:
\node at (0,-0.3625) {$G_{i}$};
\node at (-1,-0.39) {$G_{j}$};
\node at (-2,-0.35) {$G_{k}$};
\node at (1.3,0.6) {$G_{v_{1}}$};
\node at (1.5,-0.65) {$G_{v_{n-3}}$};
\end{tikzpicture}
. The following labelled graphs of groups are all equivalent to $T$:
\begin{enumerate}
\item \label{inner equivalent} 					% 1
	\begin{tikzpicture} %expanded B shape
	%edges:
	\draw[thick] (-1,0) -- (-2,0);
	\draw[thick] (0,0) -- (-1,0);
	\draw[thick] (0,0) -- (0.866,0.5);
	\draw[thick] (0,0) -- (0.866,-0.5);
	\draw (0.2,0) -- (0.8,0);
	\draw (0.242,0.0625) -- (0.775,0.2);
	\draw (0.242,-0.0625) -- (0.775,-0.2);
	%vertices:
	\draw[red, fill] (-2,0) circle [radius=0.075]; % k
	\draw[red, fill] (-1,0) circle [radius=0.075]; % j
	\draw[red, fill] (0,0) circle [radius=0.075]; % i
	\draw[red, fill] (0.866,0.5) circle [radius=0.075]; % v_1
	\draw[red, fill] (0.866,-0.5) circle [radius=0.075]; % v_n-3
	\draw[red, fill] (0.968,0.25) circle [radius=0.05];
	\draw[red, fill] (1,0) circle [radius=0.05];
	\draw[red, fill] (0.968,-0.25) circle [radius=0.05];
	%labels:
	\node at (0,-0.3675) {$G_{i}^{g}$};
	\node at (-1,-0.395) {$G_{j}^{g}$};
	\node at (-2,-0.355) {$G_{k}^{g}$};
	\node at (1.3,0.6) {$G_{v_{1}}^{g}$};
	\node at (1.5,-0.65) {$G_{v_{n-3}}^{g}$};
	\end{tikzpicture}
	where $g\in G$ --- since $gg^{-1}=1\in G_{v}$ for any $v$.
\item \label{inner factor equivalent} 				% 2
	\begin{tikzpicture} %expanded B shape
	%edges:
	\draw[thick] (-1,0) -- (-2,0);
	\draw[thick] (0,0) -- (-1,0);
	\draw[thick] (0,0) -- (0.866,0.5);
	\draw[thick] (0,0) -- (0.866,-0.5);
	\draw (0.2,0) -- (0.8,0);
	\draw (0.242,0.0625) -- (0.775,0.2);
	\draw (0.242,-0.0625) -- (0.775,-0.2);
	%vertices:
	\draw[red, fill] (-2,0) circle [radius=0.075]; % k
	\draw[red, fill] (-1,0) circle [radius=0.075]; % j
	\draw[red, fill] (0,0) circle [radius=0.075]; % i
	\draw[red, fill] (0.866,0.5) circle [radius=0.075]; % v_1
	\draw[red, fill] (0.866,-0.5) circle [radius=0.075]; % v_n-3
	\draw[red, fill] (0.968,0.25) circle [radius=0.05];
	\draw[red, fill] (1,0) circle [radius=0.05];
	\draw[red, fill] (0.968,-0.25) circle [radius=0.05];
	%labels:
	\node at (0,-0.3675) {$G_{i}^{g_{i}}$};
	\node at (-1,-0.395) {$G_{j}^{g_{j}}$};
	\node at (-2,-0.355) {$G_{k}^{g_{k}}$};
	\node at (1.4,0.6) {$G_{v_{1}}^{g_{v_{1}}}$};
	\node at (1.5,-0.65) {$G_{v_{n-3}}^{g_{v_{n-3}}}$};
	\end{tikzpicture}
	where each $g_{v}\in G_{v}$ --- since for each $v$, $G_{v}\mapsto g_{v}^{-1}G_{v}g_{v}$ is an element of $\aut(G_{v})$.
\item \label{twists equivalent}				 	% 3
	\begin{tikzpicture} %expanded B shape
	%edges:
	\draw[thick] (-1,0) -- (-2,0);
	\draw[thick] (0,0) -- (-1,0);
	\draw[thick] (0,0) -- (0.866,0.5);
	\draw[thick] (0,0) -- (0.866,-0.5);
	\draw (0.2,0) -- (0.8,0);
	\draw (0.242,0.0625) -- (0.775,0.2);
	\draw (0.242,-0.0625) -- (0.775,-0.2);
	%vertices:
	\draw[red, fill] (-2,0) circle [radius=0.075]; % k
	\draw[red, fill] (-1,0) circle [radius=0.075]; % j
	\draw[red, fill] (0,0) circle [radius=0.075]; % i
	\draw[red, fill] (0.866,0.5) circle [radius=0.075]; % v_1
	\draw[red, fill] (0.866,-0.5) circle [radius=0.075]; % v_n-3
	\draw[red, fill] (0.968,0.25) circle [radius=0.05];
	\draw[red, fill] (1,0) circle [radius=0.05];
	\draw[red, fill] (0.968,-0.25) circle [radius=0.05];
	%labels:
	\node at (0,-0.3625) {$G_{i}$};
	\node at (-0.9,-0.395) {$G_{j}^{i_{jk}}$};
	\node at (-1.8,-0.355) {$G_{k}^{j_{k}i_{jk}}$};
	\node at (1.375,0.6) {$G_{v_{1}}^{i_{v_{1}}}$};
	\node at (1.5,-0.65) {$G_{v_{n-3}}^{i_{v_{n-3}}}$};
	\end{tikzpicture}
	where $i_{v_{1}},\dots,i_{v_{n-3}},i_{jk}\in G_{i}$ and $j_{k}\in G_{j}$ --- since $i\in G_{i}\Rightarrow i1^{-1}\in G_{i}$ and $(j_{k} i_{jk}){i_{jk}}^{-1}\in G_{j}$.
\item \label{twisting not from basepoint} 			% 4
	\begin{tikzpicture} %expanded B shape
	%edges:
	\draw[thick] (-1,0) -- (-2,0);
	\draw[thick] (0,0) -- (-1,0);
	\draw[thick] (0,0) -- (0.866,0.5);
	\draw[thick] (0,0) -- (0.866,-0.5);
	\draw (0.2,0) -- (0.8,0);
	\draw (0.242,0.0625) -- (0.775,0.2);
	\draw (0.242,-0.0625) -- (0.775,-0.2);
	%vertices:
	\draw[red, fill] (-2,0) circle [radius=0.075]; % k
	\draw[red, fill] (-1,0) circle [radius=0.075]; % j
	\draw[red, fill] (0,0) circle [radius=0.075]; % i
	\draw[red, fill] (0.866,0.5) circle [radius=0.075]; % v_1
	\draw[red, fill] (0.866,-0.5) circle [radius=0.075]; % v_n-3
	\draw[red, fill] (0.968,0.25) circle [radius=0.05];
	\draw[red, fill] (1,0) circle [radius=0.05];
	\draw[red, fill] (0.968,-0.25) circle [radius=0.05];
	%labels:
	\node at (0,-0.3675) {$G_{i}^{j_{i}}$};
	\node at (-1,-0.39) {$G_{j}$};
	\node at (-2,-0.355) {$G_{k}^{j_{k}}$};
	\node at (1.3,0.6) {$G_{v_{1}}^{j_{i}}$};
	\node at (1.5,-0.65) {$G_{v_{n-3}}^{j_{i}}$};
	\end{tikzpicture}
	where $j_{k},j_{i}\in G_{j}$ --- this is achieved by combining \ref{inner equivalent}. (with $g=j_{i}$), \ref{inner factor equivalent}. (conjugating $G_{j}^{j_{i}}$ by $j_{i}^{-1}\in G_{j}$), and \ref{twists equivalent}. (conjugating $G_{k}^{j_{i}}$ by $j_{i}^{-1}j_{k}\in G_{j}$) above.
\end{enumerate}
In general, elements in the equivalence class of $T$ all have the form shown in Figure \ref{fig class of B_{i,j,k}},
where $g\in G$, $g_{v}\in G_{v}$ for each $v$, $i_{v_{1}},\dots,i_{v_{n-3}},i_{jk}\in G_{i}$, and $j_{k}\in G_{j}$.
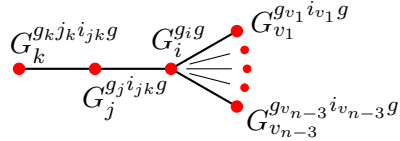
\begin{figure}[h]
\centering
	\begin{tikzpicture} %expanded B shape
	%edges:
	\draw[thick] (-1,0) -- (-2,0);
	\draw[thick] (0,0) -- (-1,0);
	\draw[thick] (0,0) -- (0.866,0.5);
	\draw[thick] (0,0) -- (0.866,-0.5);
	\draw (0.2,0) -- (0.8,0);
	\draw (0.242,0.0625) -- (0.775,0.2);
	\draw (0.242,-0.0625) -- (0.775,-0.2);
	%vertices:
	\draw[red, fill] (-2,0) circle [radius=0.075]; % k
	\draw[red, fill] (-1,0) circle [radius=0.075]; % j
	\draw[red, fill] (0,0) circle [radius=0.075]; % i
	\draw[red, fill] (0.866,0.5) circle [radius=0.075]; % v_1
	\draw[red, fill] (0.866,-0.5) circle [radius=0.075]; % v_n-3
	\draw[red, fill] (0.968,0.25) circle [radius=0.05];
	\draw[red, fill] (1,0) circle [radius=0.05];
	\draw[red, fill] (0.968,-0.25) circle [radius=0.05];
	%labels:
	\node at (0.1,0.3625) {$G_{i}^{g_{i}g}$};
	\node at (-0.6,-0.39) {$G_{j}^{g_{j}i_{jk}g}$};
	\node at (-1.4,0.35) {$G_{k}^{g_{k}j_{k}i_{jk}g}$};
	\node at (1.7,0.65) {$G_{v_{1}}^{g_{v_{1}}i_{v_{1}}g}$};
	\node at (2,-0.65) {$G_{v_{n-3}}^{g_{v_{n-3}}i_{v_{n-3}}g}$};
	\end{tikzpicture}
\caption{Equivalence Class of $\mathfrak{S}$-Labellings of $T$}
\label{fig class of B_{i,j,k}}
\end{figure}
\end{example}

We are now ready to define our complex.

\begin{defn}[The Complex $\mathcal{C}_{n}$]\label{def subcomplex Cn}
We build a (2-dimensional, oriented, simplicial) complex called $\mathcal{C}_{n}$ as follows:
\begin{itemize}
\item 	Take one 0-simplex for each equivalence class of $\mathfrak{S}$-labellings of each tree in Table \ref{table n>=5 points} (equivalently, take one 0-simplex for each equivalence class of $\mathfrak{S}$-labellings of each tree which is achieved by collapsing at least one edge of one of the trees
	\begin{tikzpicture}[scale=0.8] %aleph shape with bonus leaves not suppressed
	%edges:
	\draw[thick] (0,0) -- (-0.5,0.866);
	\draw[thick] (0,0) -- (-0.5,-0.866);
	\draw[thick] (0,0) -- (1,0);
	\draw[thick] (2,0) -- (2.5,0.866);
	\draw[thick] (2,0) -- (2.5,-0.866);
	\draw[thick] (2,0) -- (1,0);
	\draw[thick] (1,0) -- (0.55,0.866);
	\draw[thick] (1,0) -- (0.334,0.5);
	\draw[thick] (1,0) -- (1.45,0.866);
	\draw[thick] (1,0) -- (1.666,0.5);
	%vertices:
	\draw[yellow, fill] (0,0) circle [radius=0.09];
	\draw[red, fill] (-0.5,0.866) circle [radius=0.09]; % j
	\draw[red, fill] (-0.5,-0.866) circle [radius=0.09]; % k
	\draw[yellow, fill] (1,0) circle [radius=0.085];
	\draw[yellow, fill] (2,0) circle [radius=0.09];
	\draw[red, fill] (2.5,0.866) circle [radius=0.09]; % l
	\draw[red, fill] (2.5,-0.866) circle [radius=0.09]; % m
	\draw[red, fill] (0.55,0.866) circle [radius=0.09]; % v2
	\draw[red, fill] (0.334,0.5) circle [radius=0.09]; % v1
	\draw[red, fill] (1.45,0.866) circle [radius=0.09]; % v(n-5)
	\draw[red, fill] (1.666,0.5) circle [radius=0.09]; % v(n-4)
	\draw[red, fill] (0.8,0.98) circle [radius=0.05];
	\draw[red, fill] (1.2,0.98) circle [radius=0.05];
	\draw[red, fill] (1,1) circle [radius=0.05];
	%labels:
	\node at (-0.75,0.9) {$j$};
	\node at (-0.8,-0.9) {$k$};
	\node at (2.75,0.9) {$l$};
	\node at (2.9,-0.9) {$m$};
	\node at (0.4,0.7) {\footnotesize$v_{1}$};
	\node at (0.575,1.1) {\footnotesize$v_{2}$};
	\node at (1.8,1.1) {\footnotesize$v_{n-5}$};
	\node at (1.96,0.7) {\footnotesize$v_{n-4}$};
	\end{tikzpicture}
for each of the $\frac{1}{2} {n\choose2} {n-2\choose2}$ subsets $\{j,k\}$ and $\{l,m\}$ of $\{1,\dots,n\}$).
\item 	Given 0-simplices $[T_{1}]$ and $[T_{2}]$, insert a 1-simplex from $[T_{1}]$ to $[T_{2}]$ if and only if some represenative $T_{2}$ of $[T_{2}]$ is a collapse of some representative $T_{1}$ of $[T_{1}]$.
\item 	Insert a 2-simplex wherever there is a 3-clique $[T_{1}]\dash [T_{2}]\dash [T_{3}]\dash [T_{1}]$ in the 1-skeleton.
\end{itemize}
\end{defn}
We will often refer to simplices of $\mathcal{C}_{n}$ as cells. We will use these terms interchangeably.
Additionally, we will sometimes refer to 0-cells as `vertices', 1-cells as `edges', and 2-cells as `faces'.

Note that $\mathcal{C}_{n}$ is the barycentric spine of the subspace of Outer Space obtained by restricting to only simplices representing the above graph shapes.
As such, we will sometimes refer to it as `the/our complex', or `the/our subcomplex'.

\subsection{The Action of $\outs(G)$ on $\mathcal{C}_{n}$ and its Fundamental Domain $\mathcal{D}_{n}$}\label{subsection action on Cn}

By considering the action of $\outs(G)$ on our complex $\mathcal{C}_{n}$, there is a natural idea of a quotient of $\mathcal{C}_{n}$ (two points are equivalent if they are in the same $\outs(G)$-orbit). We can then pick a `fundamental domain' $\mathcal{D}_{n}$ for the action by choosing a lift of this quotient in $\mathcal{C}_{n}$.

We begin by defining the action of $\outs(G)$ on the 0-skeleton $\mathcal{C}_{n}^{(0)}$ of $\mathcal{C}_{n}$, and then extend this to an action on the full complex $\mathcal{C}_{n}$.

\begin{defn}[Action of $\outs(G)$ on $\mathcal{C}_{n}^{(0)}$]\label{defn action of out(G)}
Let $[\psi]\in\outs(G)$ have representative $\psi\in\aut_{\mathfrak{S}}(G)$ and let $T$ be a point in $\mathcal{C}_{n}^{(0)}$ with $\mathfrak{S}$-labelling $(H_{1},\dots,H_{n})$.
Then $T\cdot[\psi]$ is a graph of groups with the same underlying graph as $T$ and labelling $(\psi(H_{1}),\dots,\psi(H_{n}))$ (where $\psi(H_{i})$ is given by the usual action of $\aut(G)$ on $G$, noting that $H_{i}\le G$).
\end{defn}

\begin{rem}
Given $[\psi_{1}]=[\psi_{2}]\in\outs(G)$, there is some $\iota_{g}:x\mapsto g^{-1}xg \in\inn(G)$ so that $\psi_{2}=\psi_{1}\circ\iota$.
Then for any point $T\in\mathcal{C}_{n}^{(0)}$ with labelling $\left(H_{1},\dots,H_{n}\right)$, we have $\psi_{2}(H_{i})=(\psi_{1}(H_{i}))^{g}$.
As noted in \ref{inner equivalent} of Example \ref{eg equivalent B labellings}, $\left((\psi_{1}(H_{1}))^{g},\dots,(\psi_{1}(H_{n}))^{g}\right)$ and $\left(\psi_{1}(H_{1}),\dots,\psi_{1}(H_{n})\right)$ are equivalent as labellings.
So we really do have that $T\cdot[\psi_{1}]=T\cdot[\psi_{2}]$ --- that is, the action here is well-defined.
As such, we will often write $T\cdot\psi$ (or even $\psi(T)$) for $T\cdot[\psi]$.
\end{rem}

\begin{lemma}\label{lemma action commutes with collapses}
Let $S,T\in\mathcal{C}_{n}^{(0)}$ such that $S$ is a collapse of $T$, and let $f:T\to S$ be the collapsing map.
Let $\psi\in\outs(G)$.
Then $f(T\cdot\psi)=f(T)\cdot\psi$.
\end{lemma}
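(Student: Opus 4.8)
The plan is to reduce immediately to the case in which $f$ collapses a single edge, and then to observe that the two operations at play act on disjoint pieces of data: by Definition~\ref{defn action of out(G)}, applying $\psi$ only alters the $\mathfrak{S}$-labelling while leaving the underlying graph (and its basepoint) untouched, whereas collapsing an edge is a purely combinatorial move on the underlying graph that merely folds a trivial vertex into a neighbour. Once this is made precise the two commute essentially on the nose. For the reduction, write $f=f_{k}\circ\dots\circ f_{1}$ as a composite of single-edge collapses $f_{\ell}\colon T_{\ell-1}\to T_{\ell}$ with $T_{0}=T$ and $T_{k}=S$; since each $f_{\ell}$ is determined combinatorially by the underlying graph of $T_{\ell-1}$, and $T_{\ell-1}$ and $T_{\ell-1}\cdot\psi$ share that graph, the collapse $f_{\ell}$ applies verbatim to $T_{\ell-1}\cdot\psi$. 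Granting the single-edge case, an easy induction gives $f(T\cdot\psi)=f_{k}\big(\dots f_{1}(T\cdot\psi)\big)=f_{k}\big(\dots(f_{1}(T)\cdot\psi)\big)=\dots=f(T)\cdot\psi$, where at each step I also invoke the Observation preceding Example~\ref{eg equivalent B labellings} so that the equivalence class, and not merely a chosen representative, is what is being manipulated.

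For the single-edge case, let $e$ be the collapsed edge, with trivial endpoint $u$ and other endpoint $w$ (possibly also trivial), and fix the representative of $T$ carrying the labelling $(H_{1},\dots,H_{n})$. Then $T\cdot\psi$ is represented by the same graph with labelling $(\psi(H_{1}),\dots,\psi(H_{n}))$; collapsing $e$ deletes $u$ and $e$, reattaches the edges at $u$ to $w$, and gives $w$ the vertex group $\{1\}\ast\psi(H_{w})$ (or keeps it trivial if $w$ was trivial), every other factor vertex keeping its group. On the other side, $f(T)$ is represented by the same collapsed graph with $w$ carrying $\{1\}\ast H_{w}$ and the remaining factor vertices carrying their $H_{i}$; applying $\psi$ labelwise turns these into $\{1\}\ast\psi(H_{w})$ and $\psi(H_{i})$. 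Using the canonical identification $\{1\}\ast\psi(H_{w})=\psi(H_{w})=\psi(\{1\}\ast H_{w})$, the two labelled graphs are literally identical, so $f(T\cdot\psi)=f(T)\cdot\psi$ as $0$-cells of $\mathcal{C}_{n}$.

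I do not expect a genuine obstacle here; the two points needing a little care are (i) pinning down that a collapsing map is combinatorial data on the underlying graph, so that ``the same $f$'' makes sense when applied to $T\cdot\psi$, and (ii) checking that the absorption of the trivial vertex $u$ into $w$---including the basepoint/orientation data $o(e),t(e)$ that enters the equivalence relation of Definition~\ref{defn equivalent labellings}---is unaffected by $\psi$, which is immediate since $\psi$ fixes the underlying graph together with its basepoint. If one prefers to avoid choosing representatives altogether, the same argument runs class-wise using only the Observation that collapsing respects the equivalence $\simeq$.
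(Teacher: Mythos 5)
Your proposal is correct and rests on exactly the same observation as the paper's proof: the action of $\psi$ alters only the $\mathfrak{S}$-labelling while fixing the underlying graph, and collapsing alters only the underlying graph while preserving the vertex groups, so the two operations commute. The paper argues this directly for the full collapse $f$ without your reduction to single-edge collapses, but the content is identical.
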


\begin{proof}
Suppose $T$ as a graph has labelling $(H_{1},\dots,H_{n})$.
Recall from Definition \ref{defn collapse} that permitted collapses do not alter vertex groups in any way.
Thus $(H_{1},\dots,H_{n})$ must also be a labelling for $S$.
Now $T\cdot\psi$ is a graph of groups with the same underlying graph as $T$, and labelling $(\psi(H_{1}),\dots,\psi(H_{n}))$.
Similarly, $S\cdot\psi$ has the same underlying graph as $S$, with labelling $(\psi(H_{1}),\dots,\psi(H_{n}))$.
Since $T\cdot\psi$ has the same underlying graph as $T$, applying $f$ to $T\cdot\psi$ yields a graph of groups whose underlying graph is the same as that of $S$, and has $(\psi(H_{1}),\dots,\psi(H_{n}))$ as a labelling.
But this exactly describes the graph of groups $S\cdot\psi$.
That is, $f(T\cdot\psi)=S\cdot\psi=f(T)\cdot\psi$.
\end{proof}

Since $\mathcal{C}_{n}$ is a simplicial complex, then any cell is uniquely determined by its vertices (0-cells).
We will thus denote a cell by $[T_{0},\dots,T_{k}]$ where $T_{0},\dots,T_{k}$ are its vertices.
Note that for us we will only ever have $k=1$ or $k=2$ (or $k=0$).

\begin{prop}\label{prop action preserves cells}
Let $\psi\in\outs(G)$. If $[T_{0},\dots,T_{k}]$ is a cell in $\mathcal{C}_{n}$, then so is 
\\ \noindent $[T_{0}\cdot\psi,\dots,T_{k}\cdot\psi]$.
\end{prop}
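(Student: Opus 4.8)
The plan is to show that the action of $\outs(G)$ on $\mathcal{C}_n$ respects the three defining conditions of a cell given in Definition \ref{def subcomplex Cn}: being a valid $0$-simplex, spanning an edge, and spanning a $2$-simplex. Since every cell of $\mathcal{C}_n$ is determined by its vertex set, and since $\psi$ acts as a bijection on the full set of $0$-cells (with inverse given by the action of $\psi^{-1}$), it suffices to check that if $[T_0,\dots,T_k]$ is a cell then the vertices $T_0\cdot\psi,\dots,T_k\cdot\psi$ still span a cell of the same dimension.

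First I would record that $T\cdot\psi$ is again a valid $0$-cell of $\mathcal{C}_n$: by Definition \ref{defn action of out(G)} it has the same underlying graph as $T$ (one of the trees of Table \ref{table n>=5 points}), and its labelling $(\psi(H_1),\dots,\psi(H_n))$ is again an $\mathfrak{S}$-labelling — indeed, since $\psi\in\aut_{\mathfrak{S}}(G)$, for each $i$ we have $\psi(H_i)=\psi(G_i^{g_i})=\psi(G_i)^{\psi(g_i)}$, which is a conjugate of $G_i$ (using that $\psi$ is pure symmetric), and the $\psi(H_i)$ generate $G$ since the $H_i$ do and $\psi$ is surjective. Hence $T\cdot\psi$ lies in $\mathcal{C}_n^{(0)}$.

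Next, for the edge case $k=1$: if $[T_0,T_1]$ is a $1$-cell then (up to swapping) some representative of $T_1$ is a collapse of some representative of $T_0$, i.e. there is a collapsing map $f$ with $f(T_0)\simeq T_1$. By Lemma \ref{lemma action commutes with collapses}, $f(T_0\cdot\psi)=f(T_0)\cdot\psi\simeq T_1\cdot\psi$ (using the observation that $\simeq$ is preserved by collapses, together with the fact that the action is well-defined on equivalence classes as noted after Definition \ref{defn action of out(G)}). So $T_1\cdot\psi$ is a collapse of $T_0\cdot\psi$, and $[T_0\cdot\psi,T_1\cdot\psi]$ is a $1$-cell. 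Finally, for the $2$-cell case $k=2$: by Definition \ref{def subcomplex Cn} a $2$-cell is exactly a $3$-clique in the $1$-skeleton, so all three pairs $[T_i,T_j]$ are edges; applying the edge case to each pair shows all three pairs $[T_i\cdot\psi,T_j\cdot\psi]$ are edges, hence they form a $3$-clique and span a $2$-cell.

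I do not expect a serious obstacle here — the proposition is essentially a bookkeeping verification that the action defined on $0$-cells in Definition \ref{defn action of out(G)} is compatible with the combinatorial structure. The one point requiring a little care is the interaction between the action (defined on honest labelled graphs) and the equivalence relation $\simeq$: one must invoke the earlier remark that $T\cdot[\psi]$ is well-defined on $\simeq$-classes and the observation that collapsing maps descend to $\simeq$-classes, so that "some representative of $T_1$ is a collapse of some representative of $T_0$" transports correctly under $\psi$. With Lemma \ref{lemma action commutes with collapses} in hand this is immediate.
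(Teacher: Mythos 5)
Your proposal is correct and follows essentially the same route as the paper's own proof: the $k=0$ case by definition of the action, the edge case via Lemma \ref{lemma action commutes with collapses} applied to the collapsing map, and the $2$-cell case by transporting the $3$-clique. The extra care you take in verifying that $T\cdot\psi$ is a genuine $0$-cell and that the action descends to $\simeq$-classes is sensible but is already handled in the paper by the definition of the action and the remark following it.
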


\begin{proof}
This is true by definition of the action for $k=0$.

Let $[T_{0},T_{1}]$ be an edge in $\mathcal{C}_{n}$.
Then $T_{0}$ and $T_{1}$ are graphs of groups with $T_{1}$ a collapse of $T_{0}$ --- say $f:T_{0}\to T_{1}$ is the collapsing map.
We know that $T_{0}\cdot\psi$ is a point in $\mathcal{C}_{n}^{(0)}$, and since it has the same underlying graph as $T_{0}$, then so is $f(T_{0}\cdot\psi)$.
So we have an edge $[T_{0}\cdot\psi,f(T_{0}\cdot\psi)]\in\mathcal{C}_{n}$.
But by Lemma \ref{lemma action commutes with collapses}, $f(T_{0}\cdot\psi)=f(T_{0})\cdot\psi=T_{1}\cdot\psi$.
So if $[T_{0},T_{1}]$ is a cell in $\mathcal{C}_{n}$, then so is $[T_{0}\cdot\psi,T_{1}\cdot\psi]$.

Now suppose $[T_{0},T_{1},T_{2}]$ is a 2-cell in $\mathcal{C}_{n}$. Then we must have a 3-clique 
\\ \noindent $[T_{0}]\dash [T_{1}]\dash[T_{2}]\dash [T_{0}]$, so $[T_{0},T_{1}]$, $[T_{1},T_{2}]$, and $[T_{0},T_{2}]$ are 1-cells in $\mathcal{C}_{n}$.
Then 
\\ \noindent $[T_{0}\cdot\psi,T_{1}\cdot\psi]$, $[T_{1}\cdot\psi,T_{2}\cdot\psi]$, and $[T_{0}\cdot\psi,T_{2}\cdot\psi]$ are 1-cells in $\mathcal{C}_{n}$ forming a 3-clique, hence by Definition \ref{def subcomplex Cn} we must have a 2-cell $[T_{0}\cdot\psi,T_{1}\cdot\psi,T_{2}\cdot\psi]$.
\end{proof}

\begin{defn}[Action of $\outs(G)$ on $\mathcal{C}_{n}$]
The action of an element $\psi\in\outs(G)$ on a $k$-cell $[T_{0},\dots,T_{k}]$ of $\mathcal{C}_{n}$ is defined to be:
\[[T_{0},\dots,T_{k}]\cdot\psi:=[T_{0}\cdot\psi,\dots,T_{k}\cdot\psi]\]
\end{defn}

We now construct a fundamental domain for this action.
The quotient space obtained from the action has one cell for each orbit of cells in $\mathcal{C}_{n}$.
The obvious choice to make here is to take the lift to be the subcomplex supported by vertices which are all the graphs of groups (as listed in Table \ref{table n>=5 points}) whose vertex groups are precisely the factor groups $G_{1},\dots,G_{n}$. This is formalised below:

\begin{defn}[Construction of $\mathcal{D}_{n}$]\label{defn Dn}
We take the 0-skeleton $\mathcal{D}_{n}^{(0)}$ of $\mathcal{D}_{n}$ to be the set of graphs of groups $T$ whose underlying graph  is a tree from Table \ref{table n>=5 points} so that, up to permuting the indices, $T$ has a labelling $(G_{1},\dots,G_{n})$.
We now define $\mathcal{D}_{n}$ to be the subcomplex of $\mathcal{C}_{n}$ made up of all cells whose vertices are in $\mathcal{D}_{n}^{(0)}$.
\end{defn}

\begin{example}
Note that in our selection of graphs of groups, we still allow permutation of the vertex labels, just not conjugation.
So \Trho{$n-2$}{$G_{i}$}{$G_{j}$} and \Trho{$n-2$}{$G_{i}$}{$G_{k}$} are both in $\mathcal{D}_{n}^{(0)}$, while \Trho{$n-2$}{$G_{i}^{x}$}{$G_{j}$} is not (for $x\not\in G_{i}$, i.e. $G_{i}\ne G_{i}^{x}$ as sets).
Note however that (for $y\in G_{i}$) \Tbeta{$n-2$}{$G_{i}$}{$G_{j}^{y}$} \textbf{is} in $\mathcal{D}_{n}^{(0)}$, since \Tbeta{$n-2$}{$G_{i}$}{$G_{j}^{y}$} is equivalent to \Tbeta{$n-2$}{$G_{i}$}{$G_{j}$} under Definition \ref{defn equivalent labellings}.
\end{example}

\begin{notation}\label{notation stab}
Given a vertex $T$ in $\mathcal{C}_{n}$, we denote by $\stab(T)$ the $\outs(G)$-stabiliser of $T$, that is, the set $\{\psi\in\outs(G) | T\simeq T\cdot\psi\}$,
where $\simeq$ is the equivalence described in Definition \ref{defn equivalent labellings}. We will often abuse notation and write $T=S$ for $T\simeq S$.
\end{notation}

\begin{lemma}\label{lem stabs include}
Let $T$ be a vertex in $\mathcal{C}_{n}$ (so $T$ is a graph of groups) and let $S$ be achieved by collapsing edges of $T$. Then $\stab(T)\subseteq\stab(S)$.
\end{lemma}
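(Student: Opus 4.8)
The plan is to show that any automorphism stabilising $T$ also stabilises $S$, by using the fact that the collapsing map $f:T\to S$ commutes with the $\outs(G)$-action (Lemma \ref{lemma action commutes with collapses}) together with the observation that $\outs(G)$-equivalence is preserved under collapsing.

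First I would take $\psi\in\stab(T)$, so that $T\simeq T\cdot\psi$ by Notation \ref{notation stab}. Since $S$ is a collapse of $T$, there is a collapsing map $f:T\to S$. Applying the observation immediately following Definition \ref{defn equivalent labellings} (if $\mathbf{T_1}\simeq\mathbf{T_2}$ and $f$ is a collapsing map of the underlying graph, then $f(\mathbf{T_1})\simeq f(\mathbf{T_2})$), from $T\simeq T\cdot\psi$ we obtain $f(T)\simeq f(T\cdot\psi)$. Now $f(T)=S$ by hypothesis, and by Lemma \ref{lemma action commutes with collapses} we have $f(T\cdot\psi)=f(T)\cdot\psi=S\cdot\psi$. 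Chaining these gives $S\simeq S\cdot\psi$, i.e. $\psi\in\stab(S)$. Hence $\stab(T)\subseteq\stab(S)$.

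One small technical point to verify is that $S$ is indeed a vertex of $\mathcal{C}_n$, so that $\stab(S)$ makes sense; this follows because collapses of the trees in Table \ref{table n>=5 points} remain in the table (by the construction in Definition \ref{def subcomplex Cn}, where $\mathcal{C}_n$'s $0$-skeleton is closed under collapsing the listed graph shapes), and $S$ is obtained from the graph of groups $T$ purely by collapsing, so its underlying labelled tree is one of the admissible ones. I should also note that Lemma \ref{lemma action commutes with collapses} is stated for a single collapse but extends to iterated collapses by composing the collapsing maps, as remarked after Definition \ref{defn collapse}; the observation after Definition \ref{defn equivalent labellings} likewise iterates.

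I do not expect a genuine obstacle here: the statement is essentially a formal consequence of equivariance of the collapsing map. The only thing requiring any care is making sure the equivalence relation $\simeq$ (rather than literal equality of graphs of groups) is the one being transported, and that both Lemma \ref{lemma action commutes with collapses} and the post-Definition \ref{defn equivalent labellings} observation are invoked with respect to $\simeq$; since the excerpt already phrases stabilisers in terms of $\simeq$ and notes the observation holds for equivalence classes, this is routine.
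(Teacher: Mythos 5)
Your proof is correct and follows essentially the same route as the paper's: both reduce the claim to Lemma \ref{lemma action commutes with collapses} via $S\cdot\psi=f(T)\cdot\psi=f(T\cdot\psi)=f(T)=S$. Your extra care in transporting the equivalence $\simeq$ (rather than literal equality) through the collapsing map is a point the paper glosses over by its stated abuse of notation, but the argument is the same.
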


\begin{proof}
Let $f:T\to S$ be the collapsing map, and let $\psi\in\stab(T)\le\outs(G)$.
By Lemma \ref{lemma action commutes with collapses}, $S\cdot\psi=f(T)\cdot\psi=f(T\cdot\psi)$.
Since $\psi\in\stab(T)$ then $T\cdot\psi=T$, hence $S\cdot\psi=f(T)=S$.
That is, $\psi\in\stab(S)$.
\end{proof}

\begin{prop} \label{prop Dn is fun dom}
The subcomplex $\mathcal{D}_{n}$ of $\mathcal{C}_{n}$ described above is indeed a fundamental domain for the action of $\outs(G)$.
\end{prop}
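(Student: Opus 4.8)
The plan is to show that every cell of $\mathcal{C}_{n}$ is $\outs(G)$-equivalent to a cell of $\mathcal{D}_{n}$, and that $\mathcal{D}_{n}$ contains exactly one representative from each orbit; together with Proposition \ref{prop action preserves cells} (which guarantees the action is cellular), this is exactly what it means to be a fundamental domain in the sense required for Theorem \ref{brown thm strict}. Since $\mathcal{C}_{n}$ is simplicial, every cell is determined by its vertex set, so it suffices to work at the level of $0$-cells and then observe that the equivalences assemble coherently along edges and faces.

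First I would prove the \emph{existence} of a representative. Take a $0$-cell $T$ of $\mathcal{C}_{n}$, i.e.\ a graph of groups on one of the trees in Table \ref{table n>=5 points} with some $\mathfrak{S}$-labelling $(H_{1},\dots,H_{n})$, where $H_{1}\ast\dots\ast H_{n}$ is an $\mathfrak{S}$ free factor splitting of $G$. By Lemma \ref{lemma automorphisms exist between splittings} there exists $\psi\in\aut_{\mathfrak{S}}(G)$ with $\psi(G_{\pi(i)})=H_{i}$ for a suitable bijection $\pi$ of $[n]$ (the $H_{i}$ are conjugates of the $G_{i}$ in some order, since the labelling respects $\mathfrak{S}$); applying $\psi^{-1}$ to $T$ via Definition \ref{defn action of out(G)} produces a graph of groups on the same underlying tree with labelling $(G_{\pi(1)},\dots,G_{\pi(n)})$, which is a $0$-cell of $\mathcal{D}_{n}$ by Definition \ref{defn Dn}. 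Hence $T\cdot\psi^{-1}\in\mathcal{D}_{n}^{(0)}$. For a higher cell $[T_{0},\dots,T_{k}]$, one picks such a $\psi$ for the vertex $T_{0}$ of largest dimension (the ``top'' of the clique, from which the others are obtained by collapsing); since collapsing does not change vertex groups (Definition \ref{defn collapse}) and the action commutes with collapses (Lemma \ref{lemma action commutes with collapses}), the same $\psi$ carries \emph{all} the $T_{i}$ into $\mathcal{D}_{n}^{(0)}$, so the whole cell lands in $\mathcal{D}_{n}$.

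Next I would prove \emph{uniqueness}: if two cells of $\mathcal{D}_{n}$ lie in the same $\outs(G)$-orbit, they are equal. Reduce again to $0$-cells: suppose $S,S'\in\mathcal{D}_{n}^{(0)}$ with $S'\simeq S\cdot\psi$ for some $\psi\in\outs(G)$. Both $S$ and $S'$ have underlying tree from Table \ref{table n>=5 points}; since the action preserves the underlying graph (Definition \ref{defn action of out(G)}), the two trees are isomorphic, and because $S,S'$ have honest labellings $(G_{\sigma(1)},\dots)$, $(G_{\sigma'(1)},\dots)$ (no conjugation), comparing which factor group sits at which vertex forces $\sigma=\sigma'$ after accounting for the graph automorphisms recorded in the symmetry conventions ($\rho_{ij}=\rho_{ji}$, $\sigma_{i,jk,lm}=\sigma_{i,lm,jk}$, etc.); so $S=S'$ as cells of $\mathcal{C}_{n}$. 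The content here is that $\psi$, sending one free-factor labelling to an equivalent one, must induce a permutation-plus-conjugation of the $G_{i}$, and the equivalence relation of Definition \ref{defn equivalent labellings} already absorbs the conjugation part, leaving only the genuine tree symmetry. For cells of higher dimension, equality of the top vertices (forced as above) plus the fact that the lower vertices are determined as collapses gives equality of the cells.

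The main obstacle is the uniqueness step, specifically pinning down that an element of $\outs(G)$ relating two standard labellings can only permute the factor groups (up to conjugation), rather than doing something more exotic; this rests on the fact that in a free product the conjugacy classes of the free factors $G_{i}$ in a given Grushko-type splitting are essentially rigid data, together with the bookkeeping of the built-in graph symmetries. A clean way to package this is to note that an $\mathfrak{S}$-labelling of a fixed tree, up to the equivalence of Definition \ref{defn equivalent labellings}, records precisely a partition of the vertex set into the ``slots'' occupied by each conjugacy class, so two standard labellings are equivalent iff one is obtained from the other by an automorphism of the underlying tree --- and the naming conventions in Table \ref{table n>=5 points} were chosen exactly so that such tree automorphisms act trivially on names. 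I would also take care to check the edge-case bookkeeping that a cell of $\mathcal{C}_{n}$ never has two vertices of equal (maximal) dimension in its clique, so that ``the top vertex'' is well defined; this is immediate from the list in Table \ref{table n>=5 points} since a $2$-cell's clique consists of a graph with $n+1$ edges and two of its collapses.
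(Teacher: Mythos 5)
Your proposal is correct and the existence half — producing $\psi$ via Lemma \ref{lemma automorphisms exist between splittings} for the top vertex of a cell and pushing it down to the collapses via Lemma \ref{lemma action commutes with collapses} — is essentially the paper's own proof of this proposition. The uniqueness half you include is not needed here (the paper's notion of fundamental domain in this proposition only requires one representative per orbit), but it is exactly the content and argument of the subsequent strictness statement, Proposition \ref{prop strict fun dom}, so nothing in your write-up is wasted or wrong.
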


\begin{proof}
We need to show that every orbit of cells in $\mathcal{C}_{n}$ is represented in $\mathcal{D}_{n}$.
That is, if $C\in\mathcal{C}_{n}$ is a $k$-cell of $\mathcal{C}_{n}$ (for $k\in\{0,1,2\}$), then there is some $\psi\in\outs(G)$ so that $C\cdot\psi^{-1}\in\mathcal{D}_{n}$.

Let $(T,(H_{1},\dots,H_{n}))$ be a point in $\mathcal{C}_{n}^{(0)}$. 
Since $H_{1}\ast\dots\ast H_{n}$ is an $\mathfrak{S}$ free factor splitting of $G_{1}\ast\dots\ast G_{n}$, then by Lemma \ref{lemma automorphisms exist between splittings}, there exists $\psi\in\aut_{\mathfrak{S}}(G)$ so that for each $i$, $\psi(G_{i})=H_{i}$.
Then $(T,(H_{1},\dots,H_{n}))\cdot[\psi^{-1}]=(T,(G_{1},\dots,G_{n}))\in\mathcal{D}_{n}$, with $[\psi^{-1}]\in\outs(G)$ as required.
%Recall that for each $i$, there is some $g_{i}\in G$ with $H_{i}=G_{i}^{g_{i}}$, and hence a corresponding isomorphism $\psi_{i}: G_{i}\to H_{i}$, $x\mapsto x^{g_{i}}$.
%By the universal property of free products, these isomorphisms $\psi_{i}$ extend to an endomorphism $\psi:G\to G$.
%Since $H_{1}\ast\dots\ast H_{n}$ is an $\mathfrak{S}$ free factor splitting of $G_{1}\ast\dots\ast G_{n}$, then $\psi$ is surjective.
%We can repeat this process to construct an epimorphism $\varphi$ from the isomorphisms $\psi_{i}^{-1}$, and will find that $\varphi$ is an inverse for $\psi$.
%Hence $\psi\in\out(G)$, and since $\psi$ restricts to $\psi_{i}$ on each $G_{i}$, $\psi\in\outs(G)$, and so $\psi^{-1}=\varphi\in\outs(G)$.
%We observe that $(T,(H_{1},\dots,H_{n}))\cdot\psi^{-1}=(T,(G_{1},\dots,G_{n}))\in\mathcal{D}_{n}$, as required.

Now let $[\mathbf{T},\mathbf{S}]$ be an edge in $\mathcal{C}_{n}$ (so $\mathbf{S}$ is a collapse of $\mathbf{T}$), and choose $\psi\in\outs(G)$ so that $\mathbf{T}\cdot\psi^{-1}\in\mathcal{D}_{n}$.
Then $(G_{1},\dots,G_{n})$ is an $\mathfrak{S}$-labelling for $\mathbf{T}\cdot\psi^{-1}$, and by Lemma \ref{lemma action commutes with collapses}, $\mathbf{S}\cdot\psi$ is a collapse of $\mathbf{T}\cdot\psi^{-1}$ and hence $(G_{1},\dots,G_{n})$ is also an $\mathfrak{S}$-labelling for $\mathbf{S}\cdot\psi^{-1}$.
That is, $[\mathbf{T}\cdot\psi^{-1},\mathbf{S}\cdot\psi^{-1}]$ is an edge in $\mathcal{D}_{n}$.

Similarly, if $[\mathbf{T_{0}},\mathbf{T_{1}},\mathbf{T_{2}}]$ is a face in $\mathcal{C}_{n}$, then $\mathbf{T_{2}}$ is a collapse of $\mathbf{T_{1}}$, which in turn is a collapse of $\mathbf{T_{0}}$. Choosing $\psi\in\outs(G)$ with $\mathbf{T_{0}}\cdot\psi^{-1}\in\mathcal{D}_{n}$, the above argument then yields that $[\mathbf{T_{0}}\cdot\psi^{-1},\mathbf{T_{1}}\cdot\psi^{-1},\mathbf{T_{2}}\cdot\psi^{-1}]$ is a face in $\mathcal{D}_{n}$.
\end{proof}

\begin{prop}\label{prop strict fun dom} 
The fundamental domain $\mathcal{D}_{n}$ described above is \emph{strict}. That is, it contains precisely one representative of each vertex, edge, and face (2-cell) orbit.
\end{prop}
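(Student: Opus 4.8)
By Proposition~\ref{prop Dn is fun dom} we already know that $\mathcal{D}_n$ contains at least one representative of every orbit of cells, so the whole content of the statement is the reverse inequality: distinct cells of $\mathcal{D}_n$ must lie in distinct $\outs(G)$-orbits. The plan is to prove this first for $0$-cells and then bootstrap to edges and faces, using as the single nontrivial ingredient the standard fact about free products that for $i\ne j$ the factors $G_i$ and $G_j$ are non-conjugate in $G$ --- indeed $G_i\cap G_j^{\,g}=\{1\}$ for every $g\in G$, so $G_i=G_j^{\,g}$ would force the non-trivial group $G_i$ to be trivial. This is exactly where the word \emph{pure} (rather than merely symmetric) earns its keep: $\outs(G)$ is not allowed to permute the conjugacy classes of the factors, so it cannot identify two $0$-cells of $\mathcal{D}_n$ that carry genuinely different factor-labellings.

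For the $0$-cell case, suppose $T,T'\in\mathcal{D}_n^{(0)}$ and $\psi\in\outs(G)$ with $T\cdot\psi\simeq T'$. Choosing a representative in $\aut_{\mathfrak{S}}(G)$, pure symmetry gives $\psi(G_i)=G_i^{\,g_i}$ for suitable $g_i\in G$, so by Definition~\ref{defn action of out(G)} the graph of groups $T\cdot\psi$ has the same underlying tree as $T$ and is obtained from it by replacing the label $G_i$ at each non-trivial vertex with the conjugate $G_i^{\,g_i}$; in particular the group at each non-trivial vertex of $T\cdot\psi$ is conjugate to $G_i$ and, by the fact above, to no other factor. An equivalence $T\cdot\psi\simeq T'$ (Definition~\ref{defn equivalent labellings}) includes an isomorphism of the underlying trees carrying each vertex group of $T\cdot\psi$ to a conjugate of the corresponding vertex group of $T'$; since the only factor a conjugate of $G_i$ can be conjugate to is $G_i$ itself, this isomorphism must send the vertex of $T$ labelled $G_i$ to the vertex of $T'$ labelled $G_i$, for every $i$. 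Hence $T$ and $T'$ are the same tree from Table~\ref{table n>=5 points} with the same assignment of factor groups to vertices up to the automorphisms of that tree --- but that residual ambiguity is precisely the one already built into the indexing conventions (e.g. $\rho_{ij}=\rho_{ji}$, $\sigma_{i,jk,lm}=\sigma_{i,lm,jk}$). Therefore $T=T'$ as $0$-cells of $\mathcal{D}_n$; equivalently $T\cdot\psi\simeq T$, i.e. $\psi\in\stab(T)$.

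To extend to edges and faces, recall that $\mathcal{C}_n$ is simplicial, so a $k$-cell is determined by its set of $0$-cell vertices, and $[T_0,\dots,T_k]\cdot\psi=[T_0\cdot\psi,\dots,T_k\cdot\psi]$ by definition. If $C=[T_0,\dots,T_k]$ and $C'=[T'_0,\dots,T'_k]$ are cells of $\mathcal{D}_n$ with $C\cdot\psi\simeq C'$, then as simplices $\{T_0\cdot\psi,\dots,T_k\cdot\psi\}=\{T'_0,\dots,T'_k\}$, so after reindexing $T_i\cdot\psi\simeq T'_i$ for each $i$; each $T_i\cdot\psi$ is then a $0$-cell of $\mathcal{D}_n$ (namely $T'_i$) lying in the orbit of $T_i\in\mathcal{D}_n^{(0)}$, so the $0$-cell case yields $T_i\cdot\psi\simeq T_i$. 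Hence $C\cdot\psi\simeq C$ and $C'=C$. Combined with Proposition~\ref{prop Dn is fun dom}, $\mathcal{D}_n$ contains exactly one representative of each vertex, edge and face orbit, which is exactly what is needed to invoke Brown's Theorem~\ref{brown thm strict}.

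I expect the only delicate point to be the bookkeeping in the $0$-cell step: one must check that ``same underlying tree with the same vertex-labelling'' really does mean ``equal as $0$-cells of $\mathcal{D}_n$'', which relies on the fact that the trees of Table~\ref{table n>=5 points} carry non-trivial vertex groups exactly at the $n$ named vertices and that their only relabelling symmetries are the stated ones. Everything else is a formal consequence of non-conjugacy of distinct free factors together with $\mathcal{C}_n$ being simplicial.
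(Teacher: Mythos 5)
Your proof is correct and follows essentially the same route as the paper: distinct $0$-cells of $\mathcal{D}_n$ lie in distinct orbits because pure symmetric automorphisms preserve the conjugacy class (hence the index) of each factor group, and then edges and faces are handled by noting that a simplex of $\mathcal{C}_n$ is determined by its vertices. Your version merely makes explicit the non-conjugacy of distinct free factors underlying the paper's phrase ``they must have the same indexing of vertices''.
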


\begin{proof}
First, note that for two vertices to share an $\outs(G)$-orbit, they must have the same underlying graph structure. Moreover, since our automorphisms are pure symmetric (i.e. do not permute factor groups), they must have the same indexing of vertices.
Since our fundamental domain was chosen to allow only one labelling for each distinct graph structure, this precisely means that each vertex of $\mathcal{D}_{n}$ is in a distinct orbit.

Now suppose we have two faces in the fundamental domain, $[T_{0},T_{1},T_{2}]$ and $[S_{0},S_{1},S_{2}]$, which are in the same orbit.
Then their vertices are also in the same respective orbits (i.e. $T_{i}$ and $S_{i}$ share an orbit for each $i$).
Since our fundamental domain contains only one representative of each vertex orbit, we must have that $T_{i}=S_{i}$ for each $i=1,2,3$.
But when we constructed $\mathcal{C}_{n}$, we inserted only one 2-cell for each 3-clique.
That is, a face is uniquely determined by its vertices, so $[T_{0},T_{1},T_{2}]=[S_{0},S_{1},S_{2}]$.

The same argument applies to edges (cells with the form $[T_{0},T_{1}]$).
Hence no two cells of our fundamental domain are in the same orbit, that is, we have a strict fundamental domain.
\end{proof}

\subsection{Stabilisers of Vertices in $\mathcal{D}_{n}$}\label{stabilisers}

To move through our complex $\mathcal{C}_{n}$, we
consider `collapse--expansion' paths, since two vertices (graphs of groups) are adjacent in $\mathcal{C}_{n}$ if and only if one is a collapse of the other.
If $T_{1}\dash T_{2}\dash T_{3}$ is a path in $\mathcal{C}_{n}$ such that $T_{2}$ is a collapse of both $T_{1}$ and $T_{3}$, and $T_{1}$ and $T_{3}$ have the same underlying graph structure, then we will have that $T_{3}=T_{1}\cdot\psi$ for some $\psi\in\stab(T_{2})$.
Thus understanding vertex stabilisers is key to understanding adjacency in $\mathcal{C}_{n}$.
We will also need to understand vertex stabilisers in order to apply Brown's Theorem (Theorem \ref{brown thm strict}).

Recall that given a point $T=(T,(H_{1},\dots,H_{n}))\in\mathcal{C}_{n}^{(0)}$, we have that $\psi\in\outs(G)$ is in the stabiliser $\stab(T)$ of $T$ if and only if $T\cdot\psi=T$, that is, $(H_{1},\dots,H_{n})$ and $(\psi(H_{1}),\dots,\psi(H_{n}))$ are equivalent as labellings of $T$.
Recall from Definition \ref{defn equivalent labellings} that this means for each $i=1,\dots,n$ there exists $g_{i}\in G$ and $\varphi_{i}\in\aut(H_{i})$ so that $\psi(H_{i})=\varphi_{i}(H_{i})^{g_{i}}$, and moreover, for every edge
\begin{tikzpicture}
\draw[thick,->-] (0,0) -- (1,0);
\filldraw (0,0) circle [radius=0.06];
\filldraw (1,0) circle [radius=0.06];
\node at (0,0.25) {$u$};
\node at (1,0.25) {$v$};
\end{tikzpicture}
of $T$ we have $g_{v}g_{u}^{-1}\in H_{u}$.

We will only compute stabilisers of vertices in $\mathcal{D}_{n}$.
However, if $T\cdot\chi\in\mathcal{C}_{n}$ (with $T\in\mathcal{D}_{n}$ and $\chi\in\outs(G)$), then $\stab(T\cdot\chi)=\chi^{-1}\stab(T)\chi=\stab(T)^{\chi}$.
As such, we will assume for now that any graph of groups $T$ has $(G_{1},\dots,G_{n})$ as a labelling.

We will present several viewpoints on the stabiliser of a vertex.

\subsubsection*{The Guirardel--Levitt Approach}

Recall from Definition \ref{defn factor autos} that $\Phi\le\outs(G)$ is the group of factor automorphisms of $G_{1}\ast\dots\ast G_{n}$, with $\Phi=\prod_{i=1}^{n}\aut(G_{i})$.

Given a vertex $v_{i}$ of a point (graph of groups) $T\in\mathcal{C}_{n}$, with vertex group $G_{v_{i}}$ (assuming $G_{v_{i}}\ne\{1\}$, that is, $v_{i}$ is not a trivial vertex), let 
$\mu_{i}$ be the valency of $v_{i}$ in $T$.

In \cite[Section 5]{Guirardel2007}, Guirardel and Levitt give the stabiliser of $T$ in $\outs(G)$ as being isomorphic to:
\[\prod_{i=1}^{n}\left(G_{i}^{\mu_{i}-1}\rtimes\aut(G_{i})\right)=\left(\prod_{i=1}^{n}G_{i}^{\mu_{i}-1}\right)\rtimes\Phi\]
where the semidirect product relation is given by the natural action of $\Phi$ on each $G_{i}$.

Using this, we recover Table \ref{table stabs G-L} showing the stabilisers (upto isomorphism) of points in $\mathcal{D}_{n}$. Recall that the graph structures of these points are shown in Table \ref{table n>=5 points}.

\begin{table}[h]
\centering
\begin{tabular}{ | c | r | }
\hline
Vertex 			&	Stabiliser								%&	Graph of Groups					
\\	
\hline 
$\rho_{jk}$			&	$\Phi$									%&	\Trho{$n-2$}{$i$}{$j$}				
\\
$\sigma_{i,jk,lm}$		&	$G_{i}^{n-4}\rtimes\Phi$						%&	\Tsigma{$n-5$}{$i$}{$j$}{$k$}{$l$}{$m$}	
\\
$\tau_{j,k,lm}$		&	$G_{j}\rtimes\Phi$							%&	\Ttau{$n-4$}{$i$}{$j$}{$k$}{$l$}		
\\
$\alpha$			&	$\Phi$									%&	\Talpha{$n$}					
\\
$\beta_{j,k}$		&	$G_{j}\rtimes\Phi$							%&	\Tbeta{$n-2$}{$i$}{$j$}				
\\
$\gamma_{i,jk}$		&	$G_{i}^{n-3}\rtimes\Phi$						%&	\Tgamma{$n-3$}{$i$}{$j$}{$k$}			
\\
$\delta_{i,j,k,lm}$		&	$(G_{i}^{n-4}\times G_{j})\rtimes\Phi$				%&	\Tdelta{$n-5$}{$i$}{$j$}{$k$}{$l$}{$m$}	
\\
$\varepsilon_{j,k,l,m}$	&	$(G_{j}\times G_{l})\rtimes\Phi$					%&	\Tepsilon{$n-4$}{$i$}{$j$}{$k$}{$l$}		
\\
$A_{i}$			&	$G_{i}^{n-2}\rtimes\Phi$						%&	\TA{$n-1$}{$i$}					
\\
$B_{i,j,k}$			&	$(G_{i}^{n-3}\times G_{j})\rtimes\Phi$				%&	\TB{$n-3$}{$i$}{$j$}{$k$}			
\\
$C_{i,j,k,l,m}$		&	$(G_{i}^{n-4}\times G_{j}\times G_{l})\rtimes\Phi$		%&	\TC{$n-5$}{$i$}{$j$}{$k$}{$l$}{$m$}		
\\
\hline
\end{tabular}
\caption{Vertex Stabilisers (up to isomorphism) using Guirardel--Levitt}
\label{table stabs G-L}
\end{table}

This point of view corresponds to fixing a particular edge of a graph of groups $T$, and then twisting the remaining edges (outwards from the fixed edge).
We demonstrate this with an example:

\begin{example}\label{eg stab sigma}
Consider the graph of groups $\sigma$:
\begin{tikzpicture}[scale=0.8] %expanded sigma shape
%edges:
\draw[thick] (0,0) -- (-1,0);
\draw[thick] (-1,0) -- (-1.5,0.866);
\draw[thick] (-1,0) -- (-1.5,-0.866);
\draw[thick] (0,0) -- (1,0);
\draw[thick] (1,0) -- (1.5,0.866);
\draw[thick] (1,0) -- (1.5,-0.866);
\draw[thick] (0,0) -- (0.5,-0.866);
\draw[thick] (0,0) -- (-0.5,-0.866);
\draw (0,-0.2) -- (0,-0.8);
\draw (0.0625,-0.242) -- (0.2,-0.775);
\draw (-0.0625,-0.242) -- (-0.2,-0.775);
%vertices:
\draw[red, fill] (-1.5,0.866) circle [radius=0.09]; % j
\draw[red, fill] (-1.5,-0.866) circle [radius=0.09]; % k
\draw[yellow, fill] (-1,0) circle [radius=0.065]; % jk
\draw[red, fill] (0,0) circle [radius=0.09]; % i
\draw[yellow, fill] (1,0) circle [radius=0.065]; % lm
\draw[red, fill] (1.5,0.866) circle [radius=0.09]; % l
\draw[red, fill] (1.5,-0.866) circle [radius=0.09]; % m
\draw[red, fill] (-0.5,-0.866) circle [radius=0.09]; % v_1
\draw[red, fill] (0.5,-0.866) circle [radius=0.09]; % v_n-3
\draw[red, fill] (-0.25,-0.968) circle [radius=0.05];
\draw[red, fill] (0,-1) circle [radius=0.05];
\draw[red, fill] (0.25,-0.968) circle [radius=0.05];
%labels:
\node at (0,0.3) {$G_{i}$};
\node at (-1.85,0.9) {$G_{j}$};
\node at (-1.85,-0.9) {$G_{k}$};
\node at (1.85,0.9) {$G_{l}$};
\node at (1.95,-0.9) {$G_{m}$};
\node at (-0.6,-1.2) {$G_{v_{1}}$};
\node at (1,-1.2) {$G_{v_{n-5}}$};
\end{tikzpicture}
(the graph $\sigma_{i,jk,lm}$ with labelling $\hat{G}:=(G_{1},\dots, G_{n})$).
We will compute the stabiliser of $\sigma$ by `fixing' an edge in the graph of groups.

Let $\hat{H}=\left(G_{1}^{h_{1}},\dots,G_{n}^{h_{n}}\right)$ be an arbitrary labelling in the equivalence class of $\hat{G}$ of labellings of $\sigma$.
Note that by Definition \ref{defn equivalent labellings} we must also have elements $h_{jk},h_{lm}\in G$ corresponding to the two trivial vertices of $\sigma$, and that $h_{j}=h_{k}=h_{jk}$ and $h_{l}=h_{m}=h_{lm}$.
As noted in \ref{inner equivalent} of Example \ref{eg equivalent B labellings}, inner automorphisms of $G$ preserve equivalence classes of labellings.
Thus the labelling $\hat{H}'$ achieved by replacing each $G_{a}^{h_{a}}$ by $G_{a}^{h_{a}h_{lm}^{-1}}$ is equivalent to $\hat{H}$.
Note that in this labelling, the conjugator of $G_{i}$ is $h_{i}h_{lm}^{-1}=(h_{lm}h_{i}^{-1})^{-1}\in G_{i}$ by Definition \ref{defn equivalent labellings}.
Since this is an (inner) factor automorphism of $G_{i}$, then the labelling $\hat{H}''$ achieved by replacing $G_{i}^{h_{i}h_{lm}^{-1}}$ in $\hat{H}'$ with simply $G_{i}$ is also equivalent to $\hat{H}$.
We have now essentially `fixed' the edge $i\dash lm$ (i.e. the edge which separates $G_{l}$ and $G_{m}$ from $G_{i}$ and all the other vertex groups) in $\sigma$.

Note that for any $a$ we have $h_{a}h_{lm}^{-1}=(h_{a}h_{i}^{-1})(h_{lm}h_{i}^{-1})^{-1}\in G_{i}$.
So aside from factor automorphisms (i.e. replacing $G_{a}$ with $\varphi(G_{a})$ for $\varphi\in\Phi$),
the only freedom we have left is to `twist' along the remaining edges incident to $i$ (the vertex in $\sigma$ whose vertex group is $G_{i}$);
that is, given each remaining edge $e$ incident to $i$, to conjugate all vertex groups separated from $i$ by $e$ by an element of $G_{i}$.
Note that for $g_{i}\in G_{i}$ and $\varphi\in\Phi$, we have $\varphi(G_{a}^{g_{i}})=\varphi(G_{a})^{\varphi(g_{i})}=G_{a}^{\varphi(g_{i})}$.
We will let $G_{i_{v}}$ denote the group of Whitehead automorphisms which conjugate the vertex group $G_{v}$ by elements of $G_{i}$ (for $v=v_{1},\dots,v_{n-5}$), and similarly denote by $G_{i_{jk}}$ the group of Whitehead automorphisms which conjugate the vertex groups $G_{j}$ and $G_{k}$ simultaneously by elements of $G_{i}$.
Note that $G_{i_{jk}}\cong G_{i_{v}}\cong G_{i}$ (for $v=v_{1},\dots,v_{n-5}$).
Since twists along edges from $i$ happen independently of each other, we then have that $\stab(\sigma)=(G_{i_{jk}}\times G_{i_{v_{1}}}\times\dots\times G_{i_{v_{n-5}}})\rtimes\Phi\cong G_{i}^{n-4}\rtimes\Phi$, as listed in Table \ref{table stabs G-L}.
\end{example}

Note that by choosing an edge to `fix'  in a graph of groups $T$ and indexing the remaining edges as in the above example, we can similarly expand all the stabilisers listed in Table \ref{table stabs G-L}.
While this works well for some graphs, it does lead to a lack of symmetry, and in graphs such as $A_{i}$, such a choice can feel entirely arbitrary.

Even in the above example, we could have chosen to fix the edge from $G_{i}$ leading to $G_{j}$ and $G_{k}$ (or even an edge $G_{i}\dash G_{v}$) instead of the edge from $G_{i}$ leading to $G_{l}$ and $G_{m}$ .
This means we must have that $\stab(\sigma)=(G_{i_{jk}}\times G_{i_{v_{1}}}\times\dots\times G_{i_{v_{n-5}}})\rtimes\Phi=(G_{i_{lm}}\times G_{i_{v_{1}}}\times\dots\times G_{i_{v_{n-5}}})\rtimes\Phi\left(=(G_{i_{jk}}\times G_{i_{lm}}\times G_{i_{v_{2}}}\times\dots\times G_{i_{v_{n-5}}})\rtimes\Phi\right)$ as subgroups of $\outs(G)$.

To see why this holds, let $g_{i}\in G_{i}$, let $\iota_{g_{i}^{-1}}$ be the element of $\inn(G)$ which conjugates every element of $G$ by $g_{i}^{-1}$, and for groups $G_{w_{1}},\dots,G_{w_{a}}$ ($\{w_{1},\dots,w_{a}\}\subseteq\{1,\dots,n\}$) let $\left(\{G_{w_{1}},\dots, G_{w_{k}}\},g_{i}\right)$ be the Whitehead automorphism which conjugates each $G_{w}$ by $g_{i}$ (for $w=w_{1},\dots,w_{a}$).
Then for $\left(\{G_{l},G_{m}\},g_{i}\right)$ an arbitrary element of $G_{i_{lm}}$, we have that $\left(\{G_{l},G_{m}\},g_{i}\right)\iota_{g_{i}^{-1}}=\left(\{G_{i}\},g_{i}\right)\left(\{G_{j},G_{k},G_{v_{1}},\dots,G_{v_{n-5}}\},g_{i}\right)$, where $\left(\{G_{i}\},g_{i}\right)\in\inn(G_{i}) \\ \noindent \le\aut(G_{i})\le\Phi$ and $\left(\{G_{j},G_{k},G_{v_{1}},\dots,G_{v_{n-5}}\},g_{i}\right)\in G_{i_{jk}}\times G_{i_{v_{1}}}\times\dots\times G_{i_{v_{n-5}}}$.
That is, $G_{i_{lm}}\le(G_{i_{jk}}\times G_{i_{v_{1}}}\times\dots\times G_{i_{v_{n-5}}})\rtimes\Phi$.
One can similarly show that $G_{i_{jk}}\le(G_{i_{lm}}\times G_{i_{v_{1}}}\times\dots\times G_{i_{v_{n-5}}})\rtimes\Phi$, as well as the inclusions required for the third claimed equality.

Thus while correct, and simple to write down, this method of determining stabilisers can obscure subgroups and other structure.
To remedy this, we may consider initially fixing just a vertex, rather than an edge, in our graph of groups, or more generally, not `fixing' anything at all.

\subsubsection*{The Bass--Jiang Approach}

Given a vertex $v_{i}$ of a point (graph of groups) $T\in\mathcal{C}_{n}^{(0)}$, with vertex group $G_{i}$ (assuming $G_{i}\ne\{1\}$, that is, $v_{i}$ is not a trivial vertex), let
$E(v_{i})$ be the set of edges of $T$ with $v_{i}$ as an endpoint.
We will index these edges by the vertices they separate from $v_{i}$
(so for example in \Tgamma{$n-3$}{$i$}{$j$}{$k$}, the edges incident to the vertex $i$ are indexed by $v_{1},\dots,v_{n-3}$ and $(jk)$).

Bass and Jiang \cite[Theorem 8.1]{Bass1996} give a filtration explicitly describing the $\out(G)$ stabiliser of a graph of groups.
Since we are restricting to pure symmetric (outer) automorphisms, we have trivial edge stabilisers in our graphs of groups and no graph automorphisms (as our graphs of groups are trees, and we do not permit permutation of the vertex groups). So this filtration simplifies to a short exact sequence:

\begin{center}
\begin{tikzcd}
1
\ar[r]
	&
		\displaystyle\prod_{i=1}^{n}\left(\faktor{\left(\displaystyle\prod_{e\in E(v_{i})}G_{i_{e}}\right)}{Z(G_{i})}\right)
		\ar[r]
			&
				\stab(T)
				\ar[r]
					&
						\displaystyle\prod_{i=1}^{n}\out(G_{i})
						\ar[r]
							&
								1
\end{tikzcd}
\end{center}
where $G_{i_{e}}\cong G_{i}$ is the group of Whitehead automorphisms which conjugate the vertex groups of vertices separated from $v_{i}$ by $e$ by elements of the vertex group $G_{i}$, and $Z(G_{i})$ is the centre of $G_{i}$, with diagonal embedding.
For brevity, given $T\in\mathcal{D}_{n}^{(0)}$ we will write $M_{T}$ for the $\prod_{i=1}^{n}\left(\faktor{\left(\prod_{e\in E(v_{i})}G_{i_{e}}\right)}{Z(G_{i})}\right)$ term of the above short exact sequence.
This term corresponds to `twisting' along each edge incident to each vertex in $T$.

Writing $v$ for the vertex group $G_{v}$, and $v_{e}$ for the automorphism group $G_{v_{e}}$ (with the indexing described above), we recover Table \ref{table stabs B-J}, showing the $M_{T}$ term of the Bass--Jiang short exact sequence for each tree $T$ of Table \ref{table n>=5 points}.

\begin{table}[h]
\centering
\begin{tabular}{ | c | l | }
\hline 				&																							\\
$\displaystyle T$		&	$\displaystyle M_{T}$																			\\
\hline 				&																							\\
$\rho_{jk}$			&	
					\hspace{0.185cm}$\displaystyle\prod_{v}\inn(v)$ 															\\
$\sigma_{i,jk,lm}$		&	
					\hspace{0.185cm}$\displaystyle\prod_{v\ne i}\inn(v)\hspace{0.185cm}\times\faktor{(i_{v_{1}}\times\dots\times i_{v_{n-5}}\times i_{jk}\times i_{lm})}{Z(i)}$																											\\
$\tau_{j,k,lm}$		&
					\hspace{0.175cm}$\displaystyle\prod_{v\ne j}\inn(v)\hspace{0.1725cm}\times\faktor{(j_{v_{1}\dots v_{n-4} l m}\times j_{k})}{Z(j)}$			\\
$\alpha$			&
					\hspace{0.185cm}$\displaystyle\prod_{v}\inn(v)$															\\
$\beta_{j,k}$			&	
					\hspace{0.175cm}$\displaystyle\prod_{v\ne j}\inn(v)\hspace{0.1725cm}\times\faktor{(j_{v_{1}\dots v_{n-2}}\times j_{k})}{Z(j)}$			\\
$\gamma_{i,jk}$		&
					\hspace{0.185cm}$\displaystyle\prod_{v\ne i}\inn(v)\hspace{0.185cm}\times\faktor{(i_{v_{1}}\times\dots\times i_{v_{n-3}}\times i_{jk})}{Z(i)}$	\\
$\delta_{i,j,k,lm}$		&	
					\hspace{0.075cm}$\displaystyle\prod_{v\ne i,j}\inn(v)\hspace{0.075cm}\times\faktor{(i_{v_{1}}\times\dots\times i_{v_{n-5}}\times i_{jk}\times i_{lm})}{Z(i)} \times\faktor{(j_{v_{1}\dots v_{n-5} i l m}\times j_{k})}{Z(j)}$																	\\
$\varepsilon_{j,k,l,m}$	&
					\hspace{0.09cm}$\displaystyle\prod_{v\ne j,l}\inn(v)\hspace{0.0775cm}\times\faktor{(j_{v_{1}\dots v_{n-4} l m}\times j_{k})}{Z(j)} \times\faktor{(l_{v_{1}\dots v_{n-4} j k}\times l_{m})}{Z(l)}$																		\\
$A_{i}$			&
					\hspace{0.185cm}$\displaystyle\prod_{v\ne i}\inn(v)\hspace{0.185cm}\times\faktor{(i_{v_{1}}\times\dots i_{v_{n-1}})}{Z(i)}$				\\
$B_{i,j,k}$			&
					\hspace{0.075cm}$\displaystyle\prod_{v\ne i,j}\inn(v)\hspace{0.08cm}\times\faktor{(i_{v_{1}}\times\dots\times i_{v_{n-3}}\times i_{jk})}{Z(i)} \times\faktor{(j_{i v_{1}\dots v_{n-3}}\times j_{k})}{Z(j)}$																			\\
$C_{i,j,k,l,m}$		&
					$\displaystyle\prod_{v\ne i,j,l}\inn(v)\times\faktor{(i_{v_{1}}\times\dots\times i_{v_{n-5}}\times i_{jk}\times i_{lm})}{Z(i)}\times\faktor{(j_{i v_{1}\dots v_{n-5} l m}\times j_{k})}{Z(j)}$																							\\
				&	\hspace{2.08cm}$\displaystyle\times\faktor{(l_{i v_{1}\dots v_{n-5} j k}\times l_{m})}{Z(l)}$									\\
\hline
\end{tabular}
\caption{$M_{T}$ Terms of Vertices $T$ from Bass--Jiang Short Exact Sequence}
\label{table stabs B-J}
\end{table}

Observe that for any $a=2,\dots,n-1$ we can embed the group $i_{v_{1}\dots v_{a}}$ diagonally into the direct product $i_{v_{1}}\times\dots\times i_{v_{a}}$.
We write $i_{v_{1}\dots v_{a}}\diag i_{v_{1}}\times\dots\times i_{v_{a}}$ to indicate that we consider $i_{v_{1}\dots v_{a}}$ to be the diagonal subgroup of $i_{v_{1}}\times\dots\times i_{v_{a}}$. 
If $g_{i}\in i_{v_{1}\dots v_{n-1}}$ and $\iota_{g_{i}}$ is the inner automorphism which conjugates all elements of $G$ by $g_{i}$,
then $g_{i}\iota_{g_{i}}$ conjugates all elements of $G_{i}$ by $g_{i}^{-1}$ and fixes all other elements of $G$.
That is, $g_{i}\iota_{g_{i}}\in\inn(G_{i})$. So we have that $i_{v_{1}\dots v_{n-1}}\inn(G)=\inn(G_{i})\inn(G)$ as cosets in $\outs(G)$.

More generally, if $A\sqcup B$ partitions $\{1,\dots,n\}-\{i\}$ then $G_{i_{A}}=G_{i_{iB}}$ in $\outs(G)$.

\begin{example}\label{eg stab A}
Consider the graph of groups $A$:
\begin{tikzpicture} %expanded A shape
%edges:
\draw[thick] (0,0) -- (0,-1);
\draw[thick] (0,0) -- (0.866,0.5);
\draw[thick] (0,0) -- (-0.866,0.5);
\draw (0.2,0) -- (0.8,0);
\draw (0.1732,-0.1) -- (0.6928,-0.4);
\draw (0.1,-0.1732) -- (0.4,-0.6928);
\draw (-0.1,-0.1732) -- (-0.4,-0.6928);
\draw (-0.1732,-0.1) -- (-0.6928,-0.4);
\draw (-0.2,0) -- (-0.8,0);
%vertices:
\draw[red, fill] (0,0) circle [radius=0.075]; % i
\draw[red, fill] (0.866,0.5) circle [radius=0.075]; % v_1
\draw[red, fill] (1,0) circle [radius=0.05];
\draw[red, fill] (0.866,-0.5) circle [radius=0.05];
\draw[red, fill] (0.5,-0.866) circle [radius=0.05];
\draw[red, fill] (0,-1) circle [radius=0.075]; %v_ j
\draw[red, fill] (-0.5,-0.866) circle [radius=0.05];
\draw[red, fill] (-0.866,-0.5) circle [radius=0.05];
\draw[red, fill] (-1,0) circle [radius=0.05];
\draw[red, fill] (-0.866,0.5) circle [radius=0.075]; % v_n-1
%labels:
\node at (0,0.35) {$G_{i}$};
\node at (0.1,-1.4) {$G_{v_{j}}$};
\node at (1.05,0.775) {$G_{v_{1}}$};
\node at (-0.6,0.775) {$G_{v_{n-1}}$};
\end{tikzpicture}
(the graph $A_{i}$ with labelling $\hat{G}:=\left(G_{1},\dots,G_{n}\right)$).
We explore two ways of determining $\stab(A)$.

\begin{enumerate}
% bass jiang method
\item We will first deduce $\stab(A)$ from the Bass--Jiang filtration.
Note that this filtration allows us to compute the $M_{A}$ term of the Bass--Jiang short exact sequence, rather than the stabiliser of $A$ itself.

We begin by considering `twisting' from a vertex $v_{j}$ (with vertex group $G_{v_{j}}$) for some $j=1,\dots,n-1$.
Since this is a vertex of valency 1, we conjugate all other vertex groups by the same element $g_{j}$ of $G_{v_{j}}$, to achieve a labelling (upto appropriate reordering) $\left(G_{v_{j}}, G_{i}^{g_{j}},G_{v_{1}}^{g_{j}},\dots,G_{v_{n-1}}^{g_{j}}\right)$.
However, by applying the inner automorphism $\iota_{g_{j}^{-1}}\in\inn(G)$ which conjugates all elements of $G$ by $g_{j}^{-1}$, we see that this is equivalent to the labelling $\left(G_{j}^{g_{j}^{-1}},G_{i},G_{v_{1}},\dots,G_{v_{n-1}}\right)$, which equates to having applied the inner factor automorphism which conjugates $G_{v_{j}}$ by an element of itself.
Thus twisting from a valency 1 vertex $v_{j}$ simply yields $\inn(G_{v_{j}})$ at this stage of the Bass--Jiang filtration.

We now consider twists from the vertex $i$ with vertex group $G_{i}$.
This has valency $n-1$, and so we get $n-1$ groups $i_{v_{j}}\cong G_{i}$, which equate to conjugating the vertex group $G_{v}$ by elements of $G_{i}$.
Since twisting along edges incident to $i$ is independent of the order in which we twist them, this forms a direct product of groups.
Note that if we were to twist all edges by the same element $g_{i}$ of $G_{i}$, this is equivalent (up to an inner automorphism of $G$) to just conjugating $G_{i}$ by $g_{i}^{-1}$.
That is, $\inn(G_{i})=i_{v_{1}\dots v_{n-1}}\diag i_{v_{1}}\times\dots\times i_{v_{n-1}}$, the diagonal subgroup.
Moreover, if $g_{i}\in Z(G_{i})$ is central in $G_{i}$, then conjugation of $G_{i}$ by $g_{i}^{-1}$ is the identity map on $G_{i}$, and so twisting along all edges incident to $i$ by $g_{i}$ is equivalent to the identity automorphism.
Thus we must quotient out by the centre of $G_{i}$, embedded diagonally into $i_{v_{1}\dots v_{n-1}}\diag i_{v_{1}}\times\dots\times i_{v_{n-1}}$.

Hence we have the short exact sequence:
\[1\to \prod_{v\ne i}\inn(v)\times\faktor{(i_{v_{1}}\times\dots i_{v_{n-1}})}{Z(i)}\to \stab(A)\to \prod_{v\ne i}\out(v)\times\out(i)\to 1\]
We deduce from this that $\stab(A)$ is generated by $\Phi=\prod_{a=1}^{n}\aut(G_{a})$ and groups $G_{i_{a}}\cong G_{i}$ for $a\in\left\{1,\dots,n\right\}-\left\{i\right\}$,
so that $[G_{i_{a}},G_{i_{b}}]=1$ (each element of $G_{i_{a}}$ commutes with each element of $G_{i_{b}}$) for every $a,b\in\left\{1,\dots,n\right\}-\left\{i\right\}$,
and where $G_{i_{v_{1}\dots v_{n-1}}}\diag G_{i_{v_{1}}}\times\dots\times G_{i_{v_{n-1}}}$ is the diagonal subgroup, we have $\faktor{G_{i_{v_{1}\dots v_{n-1}}}}{Z(G_{i})}\\ \noindent =\inn(G_{i})$.
We observe that for $a,b\ne i$, $[\aut(G_{a}),G_{i_{b}}]=1$.
However, for $a\ne i$, $\left(\{G_{a}\},g_{i}\right)\in G_{i_{a}}$, and $\varphi_{i}\in\aut(G_{i})$, we have $\left(\{G_{a}\},g_{i}\right)\varphi_{i}=\varphi_{i}\left(\{G_{a}\},\varphi_{i}(g_{i})\right)$.
These relations on the given generators are enough to fully determine $\stab(A)$ as a subgroup of $\out(G)$.

%equivalent labellings method
\item Alternatively, we can calculate $\stab(A)$ from Definition \ref{defn equivalent labellings} by considering equivalent labellings on $A$ and the automorphisms which lead to these.
This corresponds to `twisting outwards from $i$'.
Note that in this method, we implicitly `fix' the `basepoint' $i$.
Recall that $A$ has labelling $\hat{G}:=\left(G_{1},\dots,G_{n}\right)$, and let $\hat{H}:=\left(G_{1}^{h_{1}},\dots,G_{n}^{h_{n}}\right)$ be an arbitrary labelling in the equivalence class of $\hat{G}$ of labellings of $A$.

Observe that as in \ref{inner equivalent} of Example \ref{eg equivalent B labellings}, we can apply the inner automorphism $\iota_{h_{i}^{-1}}\in\inn(G)$ which conjugates each element of $G$ by $h_{i}^{-1}$.
Thus we obtain the labelling $\hat{H}':=\left(G_{i},G_{v_{1}}^{h_{v_{1}}h_{i}^{-1}},\dots,G_{v_{n-1}}^{h_{v_{n-1}}h_{i}^{-1}}\right)$ which (upto appropriate reordering) is equivalent to $\hat{H}$.

By Definition \ref{defn equivalent labellings}, we have that for each $a\in\left\{1,\dots,n-1\right\}$, $h_{v_{a}}h_{i}^{-1}\in G_{i}$.
Hence (aside from factor automorphisms) our only freedom in labellings is to conjugate each non-$G_{i}$ vertex group by an element of $G_{i}$, i.e. to `twist' along each of the edges incident to the vertex $i$ (with vertex group $G_{i}$) in $A$.

Thus $\stab(A)$ is generated by $\Phi$ and by $n-1$ groups $G_{i_{a}}\cong G_{i}$.
Note that this is exactly as determined above, and the same arguments can be made to determine relations, resulting in the same presentation for $\stab(A)$.
\end{enumerate}
\end{example}

While the Bass--Jiang approach deals with the removal of symmetry which occurs by making specific choices in the Guirardel--Levitt approach, we lose the ability to concisely write down stabilisers.
As such, we do not wish to replace the Guirardel--Levitt approach with this one, but rather enhance it.

\subsubsection*{Vertex Stabilisers for Common Use}

We will now detail presentations for the stabilisers of vertices in $\mathcal{D}_{n}$ (graphs of groups with structure listed in Table \ref{table n>=5 points} and labelling $(G_{1},\dots,G_{n})$) which will be useful throughout the paper, but especially in Sections \ref{presentation 5} and \ref{pairwise intersections}.
For brevity, we write $i_{j}$ for the group $G_{i_{j}}\cong G_{i}$ of Whitehead automorphisms which conjugate the factor group $G_{j}$ by elements of the factor group $G_{i}$.
Recall that there is an isomorphism $f_{i_{j}}:G_{i}\to i_{j}$ given by $f_{i_{j}}(g)=(\{G_{j}\},g^{-1})$.
We will consider the centre $Z(G_{i})$ of $G_{i}$ to be embedded in $i_{v_{1}\dots v_{k}}\diag i_{v_{1}}\times\dots\times i_{v_{k}}$ via the isomorphisms $f_{i_{j}}$.

We divide the vertices of $\mathcal{D}_{n}$ into three categories, according to which method(s) we will use to compute their stabilisers.
\smallskip

First, are vertices which have graph structure well-suited to the Guirardel--Levitt approach:

\begin{prop}\label{prop brown stabs 1} 
As subgroups of $\outs(G)$ we have:
\begin{itemize}
\item $\stab(\rho_{ij})=\stab(\alpha)=\Phi$
\item $\stab(\tau_{i,j,kl})=\stab(\beta_{i,j})=i_{j}\rtimes\Phi$
\item $\stab(\varepsilon_{i,j,k,l})=(i_{j}\times k_{l})\rtimes\Phi$
\end{itemize}
where the semidirect relation is given by $\left(\{G_{j}\},g_{i}\right)\circ\varphi=\varphi\circ\left(\{G_{j}\},\varphi(g)\right)$ for any $i,j$ with $\left(\{G_{j}\},g\right)\in i_{j}$ and $\varphi\in\Phi$.
In other words, $\varphi^{-1}f_{i_{j}}(g)\varphi=f_{i_{j}}(\varphi(g))$ for $g\in G_{i}$.
\end{prop}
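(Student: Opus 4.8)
The three viewpoints assembled above already contain what is needed; the content of this proposition is to upgrade the ``up to isomorphism'' entries of Table~\ref{table stabs G-L} to equalities of concrete subgroups of $\outs(G)$. The structural observation that makes this easy is that in each of $\rho_{ij},\alpha,\tau_{i,j,kl},\beta_{i,j},\varepsilon_{i,j,k,l}$ every non-trivial vertex has valency $1$ or $2$. First I would record, from the Bass--Jiang sequence (Table~\ref{table stabs B-J}) together with the identity $G_{i_A}=G_{i_{iB}}$ valid whenever $A\sqcup B$ partitions $\{1,\dots,n\}-\{i\}$, that a twist along the single edge at a leaf vertex carrying $G_i$ lies in $\inn(G_i)\le\Phi$ and so contributes nothing new, and that at a valency-$2$ vertex carrying $G_i$, with incident edges pointing towards the basepoint and towards a leaf $G_j$, the basepoint-side twist agrees, modulo $\inn(G_i)\le\Phi$, with a twist of $G_j$ alone, so the only genuinely new contribution is the Whitehead group $i_j=\{(\{G_j\},x):x\in G_i\}$.

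Granting this, the identifications follow from the graph shapes in Table~\ref{table n>=5 points}. For $\rho_{ij}$ and $\alpha$ all factor groups sit at leaves, so $\stab(\rho_{ij})=\stab(\alpha)=\Phi$; the same conclusion follows from Definition~\ref{defn equivalent labellings} directly, exactly as in Example~\ref{eg stab A}: after conjugating an arbitrary equivalent labelling by an inner automorphism of $G$, the trivial-vertex clause of Definition~\ref{defn equivalent labellings} forces all conjugators to coincide, leaving only the factor-automorphism freedom. For $\tau_{i,j,kl}$ and $\beta_{i,j}$ the unique non-trivial vertex of valency greater than one is the $G_i$-vertex, adjacent both to the basepoint and to the leaf $G_j$; the analysis of Example~\ref{eg stab sigma} then gives $\stab=i_j\rtimes\Phi$ in both cases. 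For $\varepsilon_{i,j,k,l}$ there are two such vertices, the $G_i$-vertex (carrying the leaf $G_j$) and the $G_k$-vertex (carrying the leaf $G_l$), lying on opposite sides of the basepoint; twists at different vertices being independent, we get $\stab(\varepsilon_{i,j,k,l})=(i_j\times k_l)\rtimes\Phi$.

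Finally I would confirm that these are genuine semidirect products with the stated action. The intersection $i_j\cap\Phi$ is trivial in $\outs(G)$ because a non-trivial $(\{G_j\},x)$ with $x\in G_i$ carries $G_j$ to the \emph{proper} conjugate $G_j^x$, so is neither a factor automorphism nor the product of one with an inner automorphism of $G$, by the normal-form structure of the free product; the same argument handles $(i_j\times k_l)\cap\Phi$. The conjugation formula $\varphi^{-1}f_{i_j}(g)\varphi=f_{i_j}(\varphi(g))$ for $\varphi\in\Phi$ and $g\in G_i$ is then verified by evaluating both automorphisms on each factor group: both fix $G_k$ pointwise for $k\ne j$, and on $G_j$ they agree because $\varphi$ restricts to an automorphism of $G_j$ while carrying the conjugator $g$ to $\varphi(g)$. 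In particular $\Phi$ normalises $i_j$ and $i_j\times k_l$, which completes the proof.

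I expect the main obstacle to lie in the first paragraph: tracking which twists become redundant modulo $\Phi$ (the $G_{i_A}=G_{i_{iB}}$ cancellations) carefully enough to be sure the listed subgroups are exactly the stabilisers, neither too small nor too large. The remaining work --- the triviality of the intersections and the conjugation identity --- is routine, though the latter is sensitive to the right-action and conjugation conventions fixed in Section~\ref{subsection background}.
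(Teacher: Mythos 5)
Your proposal is correct and follows essentially the same route as the paper: the paper's proof simply lifts the isomorphism types from Table \ref{table stabs G-L} (Guirardel--Levitt) and uses the edge-fixing/outward-twisting analysis of Example \ref{eg stab sigma} to index each twist group by the vertex it acts on, which is exactly your first two paragraphs. Your additional verifications (triviality of $i_{j}\cap\Phi$ via the normaliser of a free factor, and the conjugation identity) are correct and merely make explicit what the paper leaves implicit in its one-line proof.
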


\begin{proof}
These are lifted directly from Table \ref{table stabs G-L}, utilising Example \ref{eg stab sigma} which follows it to index the groups of automorphisms by the vertices they act on.
\end{proof}

Our second category is that of vertices whose graph structures are well-suited to the Bass--Jiang approach.
We write $i_{v_{1}\dots v_{k}}$ for the group of automorphims \\ \noindent $\left\{ \left( \{G_{v_{1}},\dots,G_{v_{k}}\},g \right) | g\in G_{i} \right\}$.

\begin{prop}\label{prop brown stabs 2} 
As subgroups of $\outs(G)$ we have:
\begin{itemize}
\item $\stab(\sigma_{i,jk,lm})$ is generated by $\faktor{(i_{jk}\times i_{lm}\times i_{v_{1}}\times\dots\times i_{v_{n-5}})}{Z(G_{i})}$ and $\Phi$
\item $\stab(\gamma_{i,jk})$ is generated by $\faktor{(i_{jk}\times i_{v_{1}}\times\dots\times i_{v_{n-3}})}{Z(G_{i})}$ and $\Phi$
\item $\stab(A_{i})$ is generated by $\faktor{(i_{v_{1}}\times\dots\times i_{v_{n-1}})}{Z(G_{i})}$ and $\Phi$
\end{itemize} 
each subject to the relations
$f_{i_{w_{1}}}(g)\dots f_{i_{w_{n-1}}}(g)=\ad_{G_{i}}(g)$ (with $\ad_{G_{i}}(g)$ as in Notation \ref{notation ad(g)}) and $\varphi^{-1}f_{i_{v}}(g)\varphi=f_{i_{v}}(\varphi(g))$,
where $\{w_{1},\dots,w_{n-1}\}=\{1,\dots,n\}-\{i\}$, $v\in\{w_{1},\dots,w_{n-1}\}$, and $\varphi\in\Phi$.
That is,
$\faktor{i_{w_{1}\dots w_{n-1}}}{Z(G_{i})}=\inn(G_{i})$ and
$i_{v}^{\varphi}=\varphi(i_{v})$.
\end{prop}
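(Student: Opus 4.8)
The plan is to read off the structure of $\stab(T)$, for each $T\in\{\sigma_{i,jk,lm},\gamma_{i,jk},A_{i}\}$, from the Bass--Jiang short exact sequence
\[1\to M_{T}\to\stab(T)\to\prod_{a=1}^{n}\out(G_{a})\to1,\]
whose $M_{T}$ term is recorded in Table \ref{table stabs B-J}. In all three of these cases the underlying tree has a single non-leaf vertex $v_{i}$ carrying the group $G_{i}$; its incident edges $E(v_{i})$ are indexed by the blocks of a partition of $\{1,\dots,n\}-\{i\}$ ($n-1$ singletons for $A_{i}$; one pair together with $n-3$ singletons for $\gamma_{i,jk}$; two pairs together with $n-5$ singletons for $\sigma_{i,jk,lm}$), and every other non-trivial vertex is a leaf. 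Thus Table \ref{table stabs B-J} gives $M_{T}=\prod_{a\ne i}\inn(G_{a})\times\bigl(\prod_{e\in E(v_{i})}i_{e}\bigr)/Z(G_{i})$, with the central $Z(G_{i})$ embedded diagonally via the isomorphisms $f_{i_{e}}$.

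First I would observe that $\Phi$ surjects onto the quotient: the composite $\Phi\hookrightarrow\stab(T)\to\prod_{a}\out(G_{a})$ is the canonical projection $\prod_{a}\aut(G_{a})\twoheadrightarrow\prod_{a}\out(G_{a})$, which is onto, so $\stab(T)=M_{T}\cdot\Phi$ and $\stab(T)$ is generated by $M_{T}$ together with $\Phi$. Since $\inn(G_{a})\le\aut(G_{a})\le\Phi$ for every $a$, the factors $\prod_{a\ne i}\inn(G_{a})$ of $M_{T}$ are redundant as generators; so is the diagonal $i_{w_{1}\dots w_{n-1}}/Z(G_{i})\le\bigl(\prod_{e}i_{e}\bigr)/Z(G_{i})$, because it coincides with $\inn(G_{i})\le\Phi$ (this is precisely the coset identity $i_{v_{1}\dots v_{n-1}}\inn(G)=\inn(G_{i})\inn(G)$ established after Table \ref{table stabs B-J}). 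This leaves exactly the generating set claimed: $\Phi$ together with $\bigl(\prod_{e\in E(v_{i})}i_{e}\bigr)/Z(G_{i})$.

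Next I would verify the two stated relation families hold in $\outs(G)$. The relation $f_{i_{w_{1}}}(g)\cdots f_{i_{w_{n-1}}}(g)=\ad_{G_{i}}(g)$ is the observation just used: the composite of the twists $(\{G_{a}\},g^{-1})$ over all $a\ne i$ differs from the inner automorphism of $G$ conjugating by $g$ only on $G_{i}$, where the discrepancy is the inner factor automorphism $\ad_{G_{i}}(g)$ of Notation \ref{notation ad(g)}; this is exactly the computation of Example \ref{eg stab A}, and it carries over verbatim to $\gamma_{i,jk}$ and $\sigma_{i,jk,lm}$ once the singleton twists are grouped into the partition blocks using $f_{i_{jk}}(g)=f_{i_{j}}(g)f_{i_{k}}(g)$. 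The relation $\varphi^{-1}f_{i_{e}}(g)\varphi=f_{i_{e}}(\varphi(g))$ is a direct computation on the Whitehead generator $(\{G_{e}\},g^{-1})$ with $g\in G_{i}$, already recorded in Proposition \ref{prop brown stabs 1} and Example \ref{eg stab A}; note it also subsumes the fact that $\aut(G_{a})$ commutes with $i_{e}$ when $a\ne i$.

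Finally I would assemble the presentation. Since $\stab(T)=M_{T}\cdot\Phi$ with $M_{T}\trianglelefteq\stab(T)$, the second isomorphism theorem gives $\stab(T)/M_{T}\cong\Phi/(\Phi\cap M_{T})$ with $\Phi\cap M_{T}=\prod_{a}\inn(G_{a})$, so a presentation of $\stab(T)$ is obtained from: a presentation of the direct product $M_{T}$ (the relations internal to each $i_{e}\cong G_{i}$, the commutators $[i_{e},i_{e'}]=1$, and the relation killing the central diagonal $Z(G_{i})$); a presentation of $\Phi$; the conjugation relations describing the $\Phi$-action on $M_{T}$; and relations identifying the overlap $\Phi\cap M_{T}$ inside $M_{T}$. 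The first of these is exactly the direct-product notation $\bigl(\prod_{e}i_{e}\bigr)/Z(G_{i})$; the $\Phi$-relations are the ``relations in $\Phi$''; and the $\Phi$-action and overlap identifications are precisely the two displayed families --- here $f_{i_{w_{1}}}(g)\cdots f_{i_{w_{n-1}}}(g)=\ad_{G_{i}}(g)$ does double duty, killing the central diagonal (as $\ad_{G_{i}}(z)=1$ for $z\in Z(G_{i})$) and gluing the remaining diagonal to $\inn(G_{i})\le\Phi$. The main obstacle is exactly this last bookkeeping: one must check that this single relation suffices for both jobs and that no further relations are forced, which is cleanest to phrase via the standard presentation of a group $NH$ with $N\trianglelefteq$ normal, followed by Tietze transformations eliminating the redundant $\prod_{a\ne i}\inn(G_{a})$ generators. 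For $A_{i}$ this is carried out in Example \ref{eg stab A}; the $\gamma_{i,jk}$ and $\sigma_{i,jk,lm}$ cases are formally identical once leaves are grouped into the partition blocks.
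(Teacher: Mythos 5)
Your proof is correct and follows essentially the same route as the paper: the paper's own (two-sentence) proof simply reads the generators off the $M_{T}$ terms of the Bass--Jiang short exact sequence in Table \ref{table stabs B-J} and defers the derivation of the relations to Example \ref{eg stab A}, which is exactly the argument you have expanded. Your additional bookkeeping (the surjection of $\Phi$ onto $\prod_{a}\out(G_{a})$, the identification $\Phi\cap M_{T}=\prod_{a}\inn(G_{a})$, and the observation that the single relation $f_{i_{w_{1}}}(g)\cdots f_{i_{w_{n-1}}}(g)=\ad_{G_{i}}(g)$ both kills the central diagonal and glues the full diagonal to $\inn(G_{i})$) is a faithful and somewhat more explicit rendering of what the paper leaves implicit.
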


\begin{proof}
These are deduced from the $M_{T}$ terms of the Bass--Jiang short exact sequences listed in Table \ref{table stabs B-J}.
 Example \ref{eg stab A} explicitly details how to recover relations for $\stab(A_{i})$, and the others follow similarly.
\end{proof}

Finally, our third category is that of vertices whose graph structures are not well-suited to either approach, and we thus work directly from Definition \ref{defn equivalent labellings}:

\begin{prop}\label{prop brown stabs 3} 
As subgroups of $\outs(G)$ we have:
	\begin{itemize}
	\item $\stab(\delta_{i,j,k,lm})$ is generated by $j_{k}$, $\faktor{(i_{jk}\times i_{lm}\times i_{v_{1}}\times\dots\times i_{v_{n-5}})}{Z(G_{i})}$, and $\Phi$, subject to the relations:
		\begin{enumerate}
		\item $\faktor{i_{j k l m v_{1} \dots v_{n-5}}}{Z(G_{i})}=\inn(G_{i})$
		\item $\left[j_{k}, \faktor{(i_{jk}\times i_{lm}\times i_{v_{1}}\times\dots\times i_{v_{n-5}})}{Z(G_{i})}\right]=1$
		\item $j_{k}^{\varphi}=\varphi(j_{k})$ and $i_{x}^{\varphi}=\varphi(x)$ for each $x\in\{jk, lm, v_{1}, \dots, v_{n-5}\}$
		\end{enumerate}
	\item $\stab(B_{i,j,k})$ is generated by $j_{k}$, $\faktor{(i_{jk}\times i_{v_{1}}\times\dots\times i_{v_{n-3}})}{Z(G_{i})}$, and $\Phi$, subject to the relations:
		\begin{enumerate}
		\item $\faktor{i_{j k v_{1} \dots v_{n-3}}}{Z(G_{i})}=\inn(G_{i})$
		\item $\left[j_{k} , \faktor{(i_{jk}\times i_{v_{1}}\times\dots\times i_{v_{n-3}})}{Z(G_{i})}\right]=1$
		\item $j_{k}^{\varphi}=\varphi(j_{k})$ and $i_{x}^{\varphi}=\varphi(x)$ for each $x\in\{jk, v_{1}, \dots, v_{n-3}\}$
		\end{enumerate}
	\item $\stab(C_{i,j,k,l,m})$ is generated by $j_{k}$, $l_{m}$, $\faktor{(i_{jk}\times i_{lm}\times i_{v_{1}}\times\dots\times i_{v_{n-5}})}{Z(G_{i})}$, and $\Phi$, subject to the relations:
		\begin{enumerate}
		\item $\faktor{i_{j k l m v_{1} \dots v_{n-5}}}{Z(G_{i})}=\inn(G_{i})$
		\item $\left[a_{b} , \faktor{(i_{jk}\times i_{lm}\times i_{v_{1}}\times\dots\times i_{v_{n-5}})}{Z(G_{i})}\right]=1$ for each of $a_{b}=j_{k}$ and $a_{b}= l_{m}$
		\item $j_{k}^{\varphi}=\varphi(j_{k})$, $l_{m}^{\varphi}=\varphi(l_{m})$, and $i_{x}^{\varphi}=\varphi(x)$ for each $x\in\{jk, lm, v_{1}, \dots, v_{n-5}\}$
		\item $\left[j_{k} , l_{m}\right]=1$
		\end{enumerate}
	\end{itemize} 
\end{prop}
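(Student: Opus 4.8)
The plan is to compute each of the three stabilisers $\stab(\delta_{i,j,k,lm})$, $\stab(B_{i,j,k})$, $\stab(C_{i,j,k,l,m})$ directly from Definition \ref{defn equivalent labellings}, in the spirit of the second method of Example \ref{eg stab A} — that is, by starting with an arbitrary labelling $\hat{H}=(G_1^{h_1},\dots,G_n^{h_n})$ equivalent to $(G_1,\dots,G_n)$ on the relevant tree, normalising via an inner automorphism of $G$ so that the `basepoint' vertex group $G_i$ has trivial conjugator, and then reading off which twists remain as genuine degrees of freedom. The output of this bookkeeping will be exactly the generating sets claimed, and the relations will emerge from (a) the fact that twisting along all edges incident to a vertex simultaneously by the same element amounts to an inner (factor) automorphism, forced to be trivial when that element is central (giving the $\faktor{i_{\dots}}{Z(G_i)}=\inn(G_i)$ relations, via the diagonal embedding $i_{\dots}\diag i_{v_1}\times\dots$ discussed after Table \ref{table stabs B-J}), and (b) the standard computation $(\{G_v\},g)\varphi = \varphi(\{G_v\},\varphi(g))$ for $\varphi\in\Phi$ (giving the conjugation-by-$\Phi$ relations, as in Proposition \ref{prop brown stabs 1}).

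First I would treat $\stab(B_{i,j,k})$ as the base case, since it has the simplest such structure: the tree $\beta$-part hanging off a single non-trivial vertex $j$ with one further leaf $G_k$, with $G_i$ the basepoint of valency $n-2$ (edges to $j$ and to $v_1,\dots,v_{n-3}$). After fixing the basepoint $G_i$, the remaining freedom is: twist along each of the $n-2$ edges from $i$ by elements of $G_i$ (the $i_{jk}\times i_{v_1}\times\dots\times i_{v_{n-3}}$ factor, modulo $Z(G_i)$ for the reason above — note the edge towards $j$ carries $G_k$ with it, hence the combined symbol $i_{jk}$), and independently twist the edge from $j$ to $k$ by an element of $G_j$ (the $j_k$ factor; this does not need a central quotient because it is a valency-$1$ leaf edge and $j$ itself is not being used as a basepoint). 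Independence of these two families of twists — twists from $i$ act only on vertex groups separated from $i$, and the single twist from $j$ moves only $G_k$ — gives relation 2, $[j_k, \faktor{(i_{jk}\times\dots)}{Z(G_i)}]=1$; relations 1 and 3 are as described in the previous paragraph. Then $\stab(\delta_{i,j,k,lm})$ is the identical analysis with the tree extended so that $i$ also has an edge separating off $\{G_l,G_m\}$ (hence the extra $i_{lm}$ factor in the $i$-part), and $\stab(C_{i,j,k,l,m})$ adds a third non-trivial branching vertex $l$ with its own leaf $G_m$, contributing a second leaf-twist group $l_m$; the only genuinely new relation there is $[j_k,l_m]=1$, which again follows because the two leaf twists move disjoint sets of vertex groups ($G_k$ resp. $G_m$), together with the symmetry $C_{i,j,k,l,m}=C_{i,l,m,j,k}$ recorded after Table \ref{table n>=5 points}, which shows the roles of $j$ and $l$ are interchangeable.

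The main obstacle — or at least the part requiring genuine care rather than routine symbol-pushing — is verifying that the listed relations are a \emph{complete} set, i.e. that $\stab(B_{i,j,k})$ (and likewise the other two) is really the quotient of the free product of the listed groups by exactly these relations, and not some proper quotient thereof. For this I would lean on the Bass--Jiang short exact sequence: the $M_T$ terms in Table \ref{table stabs B-J} identify the kernel $M_T$ precisely, the cokernel is $\prod_{a}\out(G_a)$, and the relations listed are exactly those needed to present the extension — the $\faktor{i_{\dots}}{Z(G_i)}=\inn(G_i)$ relations pin down the $M_T$ term, the commutation relations 2 (and 4, for $C$) say the various twist blocks form a direct product inside $M_T$, and the $\varphi$-conjugation relations 3 encode the (split) action of $\Phi\twoheadrightarrow\prod\out(G_a)$ on $M_T$ together with the lift of $\inn(G_a)\le\aut(G_a)$. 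Since the extension splits (via $\Phi$) and $M_T$ is exactly a product of the displayed twist groups modulo the displayed central subgroups, no further relations can arise; I would spell this out by checking that the group defined by the given presentation surjects onto $\stab$ (generators clearly lie in $\stab$ and satisfy the relations) and that the kernel of this surjection must be trivial by comparing against the Bass--Jiang filtration term by term. I do not expect to need to write out these verifications in full detail, referring instead to the worked Example \ref{eg stab A} and Propositions \ref{prop brown stabs 1} and \ref{prop brown stabs 2}.
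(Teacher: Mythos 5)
Your proposal is correct and follows essentially the same route as the paper: the stabilisers are computed directly from Definition \ref{defn equivalent labellings}, with the equivalence class of labellings for $B_{i,j,k}$ read off as in Example \ref{eg equivalent B labellings} and the generators and relations then extracted exactly as in the second part of Example \ref{eg stab A}; your extra cross-check of completeness against the Bass--Jiang $M_{T}$ terms is a harmless supplement (provided you remember that the $\faktor{(j_{iv_{1}\dots v_{n-3}}\times j_{k})}{Z(G_{j})}$ factor there is generated by $j_{k}$ together with $\inn(G_{j})\le\Phi$ modulo $\inn(G)$, which is why no central quotient appears on $j_{k}$ in the statement). One small imprecision: your justification of relation 2 for $B_{i,j,k}$ — that the two families of twists move disjoint sets of vertex groups — is not literally true, since $i_{jk}$ and $j_{k}$ both move $G_{k}$; the commutation instead follows from the explicit parametrisation of the equivalence class in Example \ref{eg equivalent B labellings}, where the conjugator of $G_{k}$ has the form $g_{k}j_{k}i_{jk}g$ with $j_{k}\in G_{j}$ and $i_{jk}\in G_{i}$ varying independently (the disjointness argument is fine for $[j_{k},l_{m}]=1$ in the $C$ case).
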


\begin{proof}
We calculate these using Definition \ref{defn equivalent labellings}.
The equivalence class of labellings for $B_{i,j,k}$ is explicitly described in Example \ref{eg equivalent B labellings}.
The classes for $\delta_{i,j,k,lm}$ and $C_{i,j,k,l,m}$ follow similarly.
From here, deduction of the $\outs(G)$ stabilisers follows in much the same way as in the second part of Example \ref{eg stab A}.
\end{proof}

\section{Properties of the Fundamental Domain $\mathcal{D}_{n}$}\label{section fun dom connectivity}

Before proving any structural statements about the fundamental domain $\mathcal{D}_{n}$, we provide some illustrations to aid in understanding this subcomplex.
We describe some of the substructures found within the 1-skeleton $\mathcal{D}_{n}^{(1)}$ of the fundamental domain of $\mathcal{C}_{n}$.
These will be particularly useful in determining edge inclusion relations in Theorem \ref{thm n>=5 presentation}, as well as in showing that $\mathcal{D}_{n}$ is simply connected in Section \ref{section fun dom sc}.

The subcomplex of the fundamental domain obtained by restricting to collapses of a given graph \Taleph{$n-4$}{$i$}{$j$}{$k$}{$l$} will be referred to as a `spike' of the fundamental domain.

\begin{figure}[h]
\centering
\begin{tikzpicture} % alpha - A star
%back dots:
\draw[Bittersweet,fill] (0,-2.9) circle [radius=0.075]; % A
\draw[Bittersweet,fill] (0.3,-2.95) circle [radius=0.075]; % A
\draw[Bittersweet,fill] (-0.3,-2.95) circle [radius=0.075]; % A
%edges:
\draw[thick] (0,-2) -- (-0.45,-3.8);
\draw[thick] (0,-2) -- (0.45,-3.8);
\draw[thick] (0,-2) -- (-0.7,-3.3);
\draw[thick] (0,-2) -- (0.7,-3.3);
\draw[gray] (0,-2) -- (-0.3,-2.95);
\draw[gray] (0,-2) -- (0,-2.9);
\draw[gray] (0,-2) -- (0.3,-2.95);
%vertices:
\draw[Red,fill] (0,-2) circle [radius=0.2]; % alpha
\draw[Bittersweet,fill] (-0.45,-3.8) circle [radius=0.2]; % A i
\draw[Bittersweet,fill] (0.45,-3.8) circle [radius=0.2]; % A j
\draw[Bittersweet,fill] (-0.7,-3.3) circle [radius=0.15]; % A v_1
\draw[Bittersweet,fill] (0.7,-3.3) circle [radius=0.15]; % A v_n-2
%labels:
\node at (0,-2) {\textcolor{white}{$\alpha$}}; % alpha
\node at (-0.45,-3.8) {\textcolor{white}{$i$}}; % A i
\node at (0.45,-3.8) {\textcolor{white}{$j$}}; % A j
\node at (-0.7,-3.3) {\textcolor{white}{{\small$1$}}}; % A 1
\node at (0.7,-3.3) {\textcolor{white}{{\footnotesize$n_{2}$}}}; % A n-2
\end{tikzpicture}
\caption{The $\alpha$-$A$--Star}
\label{alpha A star}
\end{figure}
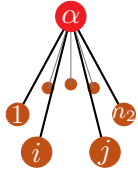
Figure \ref{alpha A star} shows the $\alpha$-$A$--Star. The circle with label `$i$' represents the vertex $A_{i}$. The circle with label `$n_{2}$' represents the vertex $A_{v_{n-2}}$.
Where the 3 small dots at the back are, one should imagine ``many'', that is, that there are in fact $n-4$ $A$-vertices there.
This structure appears precisely once in the fundamental domain, and is present in every `spike' (of which there are $\frac{n!}{8(n-4)!}$).

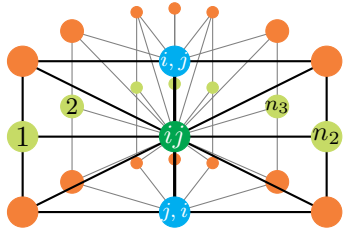
\begin{figure}[h]
\centering
\begin{tikzpicture} % rho book
%back lines:
\draw[thin,gray] (0,0) -- (0,0.7);
\draw[thin,gray] (0,0) -- (-0.5,0.65);
\draw[thin,gray] (0,0) -- (0.5,0.65);
\draw[thin,gray] (0,0) -- (0,1.7);
\draw[thin,gray] (0,0) -- (-0.5,1.65);
\draw[thin,gray] (0,0) -- (0.5,1.65);
\draw[thin,gray] (0,0) -- (0,-0.3);
\draw[thin,gray] (0,0) -- (-0.5,-0.35);
\draw[thin,gray] (0,0) -- (0.5,-0.35);
\draw[thin,gray] (0,0.7) -- (0,1.7);
\draw[thin,gray] (0,0.7) -- (0,-0.3);
\draw[thin,gray] (0.5,0.65) -- (0.5,1.65);
\draw[thin,gray] (0.5,0.65) -- (0.5,-0.35);
\draw[thin,gray] (-0.5,0.65) -- (-0.5,1.65);
\draw[thin,gray] (-0.5,0.65) -- (-0.5,-0.35);
\draw[thin,gray] (0,1) -- (0,1.7);
\draw[thin,gray] (0,1) -- (-0.5,1.65);
\draw[thin,gray] (0,1) -- (0.5,1.65);
\draw[thin,gray] (0,-1) -- (0,-0.3);
\draw[thin,gray] (0,-1) -- (-0.5,-0.35);
\draw[thin,gray] (0,-1) -- (0.5,-0.35);
\draw[gray] (0,0) -- (-1.35,0.4);
\draw[gray] (0,0) -- (1.35,0.4);
\draw[gray] (0,0) -- (-1.35,1.4);
\draw[gray] (0,0) -- (1.35,1.4);
\draw[gray] (0,0) -- (-1.35,-0.6);
\draw[gray] (0,0) -- (1.35,-0.6);
\draw[gray] (-1.35,0.4) -- (-1.35,1.4);
\draw[gray] (-1.35,0.4) -- (-1.35,-0.6);
\draw[gray] (1.35,0.4) -- (1.35,1.4);
\draw[gray] (1.35,0.4) -- (1.35,-0.6);
\draw[gray] (0,1) -- (-1.35,1.4);
\draw[gray] (0,1) -- (1.35,1.4);
\draw[gray] (0,-1) -- (-1.35,-0.6);
\draw[gray] (0,-1) -- (1.35,-0.6);
%back dots:
\draw[SpringGreen,fill] (0,0.7) circle [radius=0.075]; % gamma
\draw[SpringGreen,fill] (-0.5,0.65) circle [radius=0.075]; % gamma
\draw[SpringGreen,fill] (0.5,0.65) circle [radius=0.075]; % gamma
\draw[Orange,fill] (-1.35,-0.6) circle [radius=0.15]; % B v_2,i,j
\draw[Orange,fill] (1.35,-0.6) circle [radius=0.15]; % B v_n-3,i,j
\draw[Orange,fill] (0,-0.3) circle [radius=0.075]; % B bottom
\draw[Orange,fill] (-0.5,-0.35) circle [radius=0.075]; % B bottom
\draw[Orange,fill] (0.5,-0.35) circle [radius=0.075]; % B bottom
%edges:
\draw[very thick] (0,0) -- (0,1);
\draw[very thick] (0,0) -- (0,-1);
\draw[thick] (0,0) -- (-2,0);
\draw[thick] (0,0) -- (2,0);
\draw[thick] (0,0) -- (-2,1);
\draw[thick] (0,0) -- (-2,-1);
\draw[thick] (0,0) -- (2,1);
\draw[thick] (0,0) -- (2,-1);
\draw[thick] (0,1) -- (-2,1);
\draw[thick] (0,1) -- (2,1);
\draw[thick] (0,-1) -- (-2,-1);
\draw[thick] (0,-1) -- (2,-1);
\draw[thick] (-2,0) -- (-2,1);
\draw[thick] (2,0) -- (2,1);
\draw[thick] (-2,0) -- (-2,-1);
\draw[thick] (2,0) -- (2,-1);
%vertices:
\draw[Green,fill] (0,0) circle [radius=0.2]; % rho
\draw[Cyan,fill] (0,1) circle [radius=0.2]; % beta i,j
\draw[Cyan,fill] (0,-1) circle [radius=0.2]; % beta j,i
\draw[SpringGreen,fill] (-2,0) circle [radius=0.2]; % gamma v_1,ij
\draw[SpringGreen,fill] (2,0) circle [radius=0.2]; % gamma v_n-2,ij
\draw[Orange,fill] (-2,1) circle [radius=0.2]; % B v_1,i,j
\draw[Orange,fill] (-2,-1) circle [radius=0.2]; % B v_1,j,i
\draw[Orange,fill] (2,1) circle [radius=0.2]; % B v_n-2,i,j
\draw[Orange,fill] (2,-1) circle [radius=0.2]; % B v_n-2,j,i
\draw[SpringGreen,fill] (-1.35,0.4) circle [radius=0.15]; % gamma 2,ij
\draw[SpringGreen,fill] (1.35,0.4) circle [radius=0.15]; % gamma v_n-3,ij
\draw[Orange,fill] (-1.35,1.4) circle [radius=0.15]; % B v_2,i,j
\draw[Orange,fill] (1.35,1.4) circle [radius=0.15]; % B v_n-3,i,j
\draw[Orange,fill] (0,1.7) circle [radius=0.075]; % B top
\draw[Orange,fill] (-0.5,1.65) circle [radius=0.075]; % B top
\draw[Orange,fill] (0.5,1.65) circle [radius=0.075]; % B top
%labels:
\node at (0,0) {{\small\textcolor{white}{$ij$}}}; % rho ij
\node at (0,1) {\scriptsize\textcolor{white}{$i,j$}}; % beta i,j
\node at (0,-1) {\scriptsize\textcolor{white}{$j,i$}}; % beta j,i
\node at (-2,0) {$1$}; % gamma v_1,ij
\node at (2,0) {{\small$n_{2}$}}; % gamma v_n-2,ij
\node at (-1.35,0.4) {\footnotesize$2$}; % gamma v_2,ij
\node at (1.35,0.4) {\scriptsize$n_{3}$}; % gamma v_n-3,ij
\end{tikzpicture}
\caption{The $\rho$--Book}
\label{rho book}
\end{figure}
Figure \ref{rho book} shows a $\rho$--Book (\textbf{the} $\rho$--Book associated to the graph $\rho_{ij}$). The circle with label `$1$' represents the vertex $\gamma_{v_{1},ij}$. The circle with label `$ij$' represents the vertex $\rho_{ij}$, and the circle with label `$i,j$' represents the vertex $\beta_{i,j}$.
The orange circle adjacent to both $\beta_{i,j}$ and $\gamma_{v_{1},ij}$ represents $B_{v_{1},i,j}$.
This structure appears $\frac{n(n-1)}{2}$ times in the fundamental domain. There are two $\rho$--Books per `spike', and each $\rho$--Book appears in $\frac{(n-2)(n-3)}{2}$ spikes.
Two distinct $\rho$--Books appear in only one spike together, and only if they are associated to $\rho_{ij}$ and $\rho_{kl}$ where $i,j,k,l$ are all distinct.

\begin{figure}[h]
\centering
\begin{tikzpicture} % tau - epsilon box
%back,left,bottom edges
\draw[gray] (-2,-1) -- (-1,1) -- (2,1);
\draw[gray] (0,-2) -- (-1.333,-2.5);
\draw[gray] (0,-2) -- (-0.333,-2.5);
\draw[gray] (0,-2) -- (0.667,-2.5);
\draw[gray] (0,-2) -- (1,-2);
\draw[gray] (0,-2) -- (1.333,-1.5);
\draw[gray] (0,-2) -- (0.333,-1.5);
\draw[gray] (0,-2) -- (-0.667,-1.5);
\draw[gray] (0,-2) -- (-1,-2);
\draw[gray] (-1.333,-2.5) -- (-0.667,-1.5) -- (1.333,-1.5);
\draw[gray] (-1,2) -- (-1.5,0) -- (-1,-2);
\draw[gray] (-0.667,2.5) -- (-1,1) -- (-0.667,-1.5);
\draw[gray] (0.333,2.5) -- (0.5,1) -- (0.333,-1.5);
\draw[gray] (-2,-1) -- (-1,2) -- (-1,1) -- (-1,-2) -- cycle;
\draw[gray] (-1,1) -- (0.333,2.5) -- (2,1) -- (0.333,-1.5) -- cycle;
%back,left,bottom vertices
\draw[Lavender,fill] (-1.5,0) circle [radius=0.2]; % epsilon
\draw[Blue,fill] (-1,1) circle [radius=0.15]; % tau
\draw[Lavender,fill] (0.5,1) circle [radius=0.15]; % epsilon
\draw[Goldenrod,fill] (0,-2) circle [radius=0.175]; % sigma
\draw[Dandelion,fill] (0.333,-1.5) circle [radius=0.125]; % C
\draw[Purple,fill] (-0.667,-1.5) circle [radius=0.125]; % delta
\draw[Dandelion,fill] (-1,-2) circle [radius=0.175]; % C
%sigma dots
\draw[Goldenrod,fill] (0,1.25) circle [radius=0.125];
\draw[Goldenrod,fill] (0,0.5) circle [radius=0.075];
\draw[Goldenrod,fill] (0,0) circle [radius=0.075];
\draw[Goldenrod,fill] (0,-0.5) circle [radius=0.075];
\draw[Goldenrod,fill] (0,-1.25) circle [radius=0.125];
%top,right,front edges
\draw[thick] (0,2) -- (-1.333,1.5);
\draw[thick] (0,2) -- (-0.333,1.5);
\draw[thick] (0,2) -- (0.667,1.5);
\draw[thick] (0,2) -- (1,2);
\draw[thick] (0,2) -- (1.333,2.5);
\draw[thick] (0,2) -- (0.333,2.5);
\draw[thick] (0,2) -- (-0.667,2.5);
\draw[thick] (0,2) -- (-1,2);
\draw[thick] (-1.333,1.5) -- (0.667,1.5) -- (1.333,2.5) -- (-0.667,2.5) -- cycle;
\draw[thick] (-2,-1) -- (1,-1) -- (2,1);
\draw[thick] (-1.333,-2.5) -- (0.667,-2.5) -- (1.333,-1.5);
\draw[thick] (-1.333,1.5) -- (-2,-1) -- (-1.333,-2.5);
\draw[thick] (-0.333,1.5) -- (-0.5,-1) -- (-0.333,-2.5);
\draw[thick] (0.667,1.5) -- (1,-1) -- (0.667,-2.5);
\draw[thick] (1,2) -- (1.5,0) -- (1,-2);
\draw[thick] (1.333,2.5) -- (2,1) -- (1.333,-1.5);
\draw[thick] (-2,-1) -- (-0.333,1.5) -- (1,-1) -- (-0.333,-2.5) -- cycle;
\draw[thick] (1,-1) -- (1,2) -- (2,1) -- (1,-2) -- cycle;
%top,right,front vertices
\draw[Goldenrod,fill] (0,2) circle [radius=0.175]; % sigma
\draw[Purple,fill] (-1.333,1.5) circle [radius=0.225]; % delta
\draw[Dandelion,fill] (-0.333,1.5) circle [radius=0.225]; % C
\draw[Purple,fill] (0.667,1.5) circle [radius=0.225]; % delta
\draw[Dandelion,fill] (1,2) circle [radius=0.175]; % C
\draw[Purple,fill] (1.333,2.5) circle [radius=0.125]; % delta
\draw[Dandelion,fill] (0.333,2.5) circle [radius=0.125]; % C
\draw[Purple,fill] (-0.667,2.5) circle [radius=0.125]; % delta
\draw[Dandelion,fill] (-1,2) circle [radius=0.175]; % C
\draw[Blue,fill] (-2,-1) circle [radius=0.25]; % tau
\draw[Lavender,fill] (-0.5,-1) circle [radius=0.25]; % epsilon
\draw[Blue,fill] (1,-1) circle [radius=0.25]; % tau
\draw[Lavender,fill] (1.5,0) circle [radius=0.2]; % epsilon
\draw[Blue,fill] (2,1) circle [radius=0.15]; % tau
\draw[Purple,fill] (-1.333,-2.5) circle [radius=0.225]; % delta
\draw[Dandelion,fill] (-0.333,-2.5) circle [radius=0.225]; % C
\draw[Purple,fill] (0.667,-2.5) circle [radius=0.225]; % delta
\draw[Dandelion,fill] (1,-2) circle [radius=0.175]; % C
\draw[Purple,fill] (1.333,-1.5) circle [radius=0.125]; % delta
%labels:
\node at (-2,-1) {\small\textcolor{white}{$ij$}}; % tau
\node at (-0.5,-1) {\tiny$ijkl$}; % epsilon
\node at (1,-1) {\small\textcolor{white}{$kl$}}; % tau
\node at (2,1) {\tiny\textcolor{white}{$ji$}}; % tau
\node at (-1,1) {\tiny\textcolor{white}{$lk$}}; % tau
\node at (0,2) {\small$1$}; % sigma
\node at (0,1.25) {\scriptsize$2$}; % sigma
\node at (0,-1.25) {\tiny$n_{5}$}; % sigma
\node at (0,-2) {\scriptsize$n_{4}$}; % sigma
\end{tikzpicture}
\caption{The $\tau$-$\varepsilon$--Box}
\label{tau epsilon box}
\end{figure}
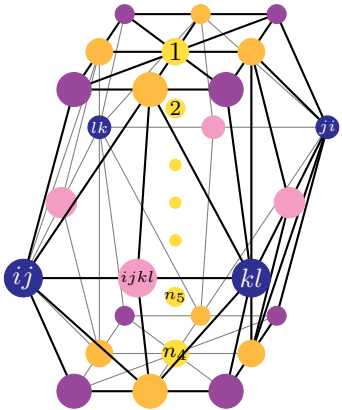
Figure \ref{tau epsilon box} shows a $\tau$-$\varepsilon$--Box.
It appears precisely once in each spike, and is unique to its spike.
It has $n-4$ layers, each associated to a $\sigma$-vertex. A given layer will be called a $\sigma$--Slice.
The cycle left when removing all $\sigma$--Slices is called a $\tau$-$\varepsilon$--Square.
The yellow circle with label `$n_{5}$' represents $\sigma_{v_{n-5},ij,kl}$.
The blue circle with label `$ij$' represents $\tau_{i,j,kl}$, and the pink circle with label `$ijkl$' represents $\varepsilon_{i,j,k,l}$.

\subsection{Connectedness of the Fundamental Domain} \label{section connected}

We first show that the fundamental domain $\mathcal{D}_{n}$ is (path) connected.
We will do this by finding a path from an arbitrary vertex $T\in\mathcal{D}_{n}$ to the vertex $\alpha\in\mathcal{D}_{n}$ (see Table \ref{table n>=5 points}).
Then any two arbitrary vertices of $\mathcal{D}_{n}$ will be connected via $\alpha$.

\begin{lemma}\label{lemma fun dom pc}
The fundamental domain $\mathcal{D}_{n}$ of $\mathcal{C}_{n}$ is path connected.
\end{lemma}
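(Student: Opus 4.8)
The plan is to prove the stronger statement that every vertex $T\in\mathcal{D}_n$ is joined to the vertex $\alpha$ by an edge-path in $\mathcal{D}_n^{(1)}$; path-connectedness of $\mathcal{D}_n$ then follows at once, since any two vertices are linked through $\alpha$ and every $1$- or $2$-cell is path-connected to its vertices.

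The first step is to reduce to the \emph{minimal} vertices, i.e.\ the graphs of groups in Table \ref{table n>=5 points} having no trivial vertex --- namely $A_i$, $B_{i,j,k}$ and $C_{i,j,k,l,m}$, which are exactly the graphs with $n-1$ edges. Given an arbitrary $T\in\mathcal{D}_n^{(0)}$ with a trivial vertex, $T$ has a collapsible edge (Definition \ref{defn collapse}); collapsing it produces a graph of groups $T'$ with one fewer edge. Since a composite of collapsing maps is again a collapsing map, $T'$ is again obtained by collapsing at least one edge of one of the graphs used in Definition \ref{def subcomplex Cn}, hence is a $0$-cell of $\mathcal{C}_n$; as collapsing leaves all vertex groups unchanged, $T'$ is in fact a vertex of $\mathcal{D}_n$, and $[T,T']$ is an edge of $\mathcal{D}_n$. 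Every vertex of $\mathcal{D}_n$ has at most $n+1$ edges, so after at most two such collapses we reach a vertex with $n-1$ edges, which is of type $A$, $B$ or $C$. It therefore suffices to connect each of $A_i$, $B_{i,j,k}$, $C_{i,j,k,l,m}$ to $\alpha$.

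These last links I would exhibit as explicit short paths, each step being a single collapse. Collapsing any one of the $n$ edges of the star $\alpha$ merges the trivial centre with an adjacent leaf $G_k$, producing $A_k$; with $k=i$ this shows $A_i$ is adjacent to $\alpha$. For $B_{i,j,k}$: collapsing the unique trivial vertex of $\gamma_{i,jk}$ onto $G_i$ produces $A_i$, while collapsing it onto the leaf $G_j$ produces $B_{i,j,k}$, which gives the path $B_{i,j,k}$ --- $\gamma_{i,jk}$ --- $A_i$ --- $\alpha$. For $C_{i,j,k,l,m}$: collapsing both trivial vertices of $\sigma_{i,jk,lm}$ onto $G_i$ produces $A_i$, while collapsing them onto the leaves $G_j$ and $G_l$ produces $C_{i,j,k,l,m}$, which gives the path $C_{i,j,k,l,m}$ --- $\sigma_{i,jk,lm}$ --- $A_i$ --- $\alpha$. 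This connects every minimal vertex to $\alpha$, and hence every vertex of $\mathcal{D}_n$ to $\alpha$.

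No step here is genuinely hard; the real content is bookkeeping. One must verify that collapsing a collapsible edge of any of the eleven graph shapes in Table \ref{table n>=5 points} again yields one of those shapes --- so that the reduction in the second paragraph stays inside $\mathcal{D}_n$ and terminates at an $A$-, $B$- or $C$-vertex --- and that the explicit collapses in the third paragraph really do produce the claimed graphs, with the valencies working out for every $n\ge5$ (the only slightly delicate point being small $n$, where some suppressed-leaf counts such as $n-5$ degenerate to $0$). Both are routine consequences of unwinding Definitions \ref{defn collapse} and \ref{def subcomplex Cn}.
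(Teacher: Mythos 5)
Your proof is correct and follows essentially the same strategy as the paper's: connect every vertex of $\mathcal{D}_{n}$ to $\alpha$ by an explicit edge-path of collapses inside the fundamental domain. The paper simply tabulates such a path for each of the eleven graph types of Table \ref{table n>=5 points} (its Table \ref{paths in fun dom}), whereas you first collapse an arbitrary vertex down to an $A$-, $B$- or $C$-vertex and then connect those three types to $\alpha$; the particular intermediate vertices differ (e.g.\ the paper routes $B_{i,j,k}$ through $\beta_{j,k}$ and $\rho_{jk}$ rather than through $\gamma_{i,jk}$ and $A_{i}$), but the substance is identical.
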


\begin{proof}
We refer to Table \ref{table n>=5 points} for the naming convention of vertices in $\mathcal{D}_{n}$ (where each vertex group is precisely one of the factor groups $G_{1},\dots,G_{n}$).
Note that $\alpha$ is adjacent to every $\rho_{ij}$ and every $A_{k}$ in $\mathcal{D}_{n}$. 
Every $\beta$ and $\gamma$ graph is a collapse of some $\rho$ graph, hence any $\beta$ or $\gamma$ vertex in $\mathcal{D}_{n}$ is path connected to $\alpha$.
Additionally, every $B$ graph is the collapse of some $\beta$ graph, so $B$ vertices are also connected.
Note that $\sigma_{i,jk,lm}$ and $\tau_{i,j,kl}$ both collapse to $A_{i}$ (which is adjacent to $\alpha$).
Any $C$ graph is the collapse of some $\sigma$ graph, and every $\delta$ and $\varepsilon$ graph is the collapse of some $\tau$ graph.
Hence any vertex in $\mathcal{D}_{n}$ has a path in the fundamental domain to $\alpha$.
Thus the fundamental domain is (path) connected.
Explicit paths are listed in Table \ref{paths in fun dom}.
\begin{table}[h]
\centering
\begin{tabular}{ | c | }								\hline
$\alpha$									\\	\hline
$\rho_{ij}\dash \alpha$							\\	\hline
$A_{i}\dash \alpha$							\\	\hline
$\beta_{i,j}\dash \rho_{ij}\dash \alpha$				\\	\hline
$\gamma_{i,jk}\dash \rho_{jk}\dash \alpha$				\\	\hline
$B_{i,j,k}\dash \beta_{j,k}\dash \rho_{jk}\dash \alpha$		\\	\hline
$\sigma_{i,jk,lm}\dash A_{i}\dash \alpha$				\\	\hline
$\tau_{i,j,kl}\dash A_{i}\dash \alpha$					\\	\hline
$C_{i,j,k,l,m}\dash \sigma_{i,jk,lm}\dash A_{i}\dash \alpha$	\\	\hline
$\delta_{i,j,k,lm}\dash \tau_{j,k,lm}\dash A_{j}\dash \alpha$	\\	\hline
$\varepsilon_{i,j,k,l}\dash \tau_{i,j,kl}\dash A_{i}\dash \alpha$	\\	\hline
\end{tabular}
\caption{Paths Between Vertices in the Fundamental Domain}
\label{paths in fun dom}
\end{table}
\end{proof}

\begin{cor}\label{cor copies of the fun dom are pc}
Any vertex $T$ in the complex $\mathcal{C}_{n}$ is connected via an edge path to some $\alpha$ graph.
\end{cor}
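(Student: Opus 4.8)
The plan is to reduce the general case to the fundamental domain case, which is Lemma~\ref{lemma fun dom pc}. Given an arbitrary vertex $T\in\mathcal{C}_{n}$, the key observation is that $T$ is a graph of groups with some $\mathfrak{S}$-labelling $(H_{1},\dots,H_{n})$, and by Lemma~\ref{lemma automorphisms exist between splittings} there exists $\psi\in\aut_{\mathfrak{S}}(G)$ with $\psi(G_{i})=H_{i}$ for each $i$. Then $T\cdot\psi^{-1}$ has underlying graph equal to that of $T$ but with labelling $(G_{1},\dots,G_{n})$, so $T\cdot\psi^{-1}\in\mathcal{D}_{n}$ (this is precisely the argument used in the proof of Proposition~\ref{prop Dn is fun dom}). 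Since the $\alpha$-graph is the unique graph with $n-1$ edges, it is fixed (up to equivalence) by every element of $\outs(G)$; concretely $\alpha\cdot\psi=\alpha$ for all $\psi$, because the $\alpha$-labelling $(G_{1},\dots,G_{n})$ is sent to $(\psi(G_{1}),\dots,\psi(G_{n}))$, which is an equivalent labelling of the same underlying graph (any free factor splitting labelling of the $\alpha$-graph is equivalent to the standard one, since $\alpha$ has a single vertex).

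Next I would transport a path. By Lemma~\ref{lemma fun dom pc} there is an edge path in $\mathcal{D}_{n}$ from $T\cdot\psi^{-1}$ to $\alpha$ (explicitly, one of the paths in Table~\ref{paths in fun dom}). Applying the cellular action of $\psi\in\outs(G)$ to this path, and using Proposition~\ref{prop action preserves cells} to ensure that edges are carried to edges, yields an edge path in $\mathcal{C}_{n}$ from $(T\cdot\psi^{-1})\cdot\psi=T$ to $\alpha\cdot\psi$. Since $\alpha\cdot\psi=\alpha$, this is exactly an edge path from $T$ to an $\alpha$-graph, as required. (If one prefers not to argue that $\alpha$ is globally fixed, one can simply note that $\alpha\cdot\psi$ is \emph{some} $\alpha$-graph in $\mathcal{C}_{n}$, which already suffices for the statement as phrased, which says ``to some $\alpha$ graph''.)

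The only subtlety — and the step I expect to require the most care — is confirming that the action of $\outs(G)$ genuinely carries the specific adjacencies along the chosen path to adjacencies in $\mathcal{C}_{n}$, rather than only ``up to equivalence''. This is handled by Lemma~\ref{lemma action commutes with collapses} (the action commutes with collapse maps) together with Proposition~\ref{prop action preserves cells}, so each consecutive pair $(S,S')$ in the $\mathcal{D}_{n}$-path, with $S'$ a collapse of $S$, is sent to a pair $(S\cdot\psi, S'\cdot\psi)$ with $S'\cdot\psi$ a collapse of $S\cdot\psi$, hence joined by an edge of $\mathcal{C}_{n}$. Once this bookkeeping is in place, the corollary follows immediately, and connectedness of the whole complex $\mathcal{C}_{n}$ (Corollary~\ref{cor Cn pc}, referenced earlier) follows by joining any two vertices through $\alpha$.
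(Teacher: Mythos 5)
Your argument is essentially the paper's: the paper's proof simply observes that $T$ sits in some copy $\mathcal{D}_{n}\cdot\psi$ of the fundamental domain and that the action preserves adjacency, so Lemma~\ref{lemma fun dom pc} transports to give a path from $T$ to the $\alpha$-graph of that domain; your explicit bookkeeping via Lemma~\ref{lemma action commutes with collapses} and Proposition~\ref{prop action preserves cells} is just this spelled out.

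One claim in your write-up is false and worth correcting, even though your own parenthetical rescues the proof: it is \emph{not} true that $\alpha\cdot\psi=\alpha$ for all $\psi\in\outs(G)$. The $\alpha$-graph has $n+1$ vertices, not one, and by Definition~\ref{defn equivalent labellings} an equivalent labelling of $\alpha$ must have the form $(\varphi_{1}(G_{1})^{g},\dots,\varphi_{n}(G_{n})^{g})$ for a \emph{single} $g\in G$ (all edges emanate from the trivial basepoint, forcing all conjugators to agree) and factor automorphisms $\varphi_{i}$; that is, $\stab(\alpha)=\Phi$ exactly as in Proposition~\ref{prop brown stabs 1}. Indeed, the whole Space of Domains construction in Section~\ref{section space of domains} rests on there being one distinct $\alpha$-graph per domain, which your claim would collapse to a point. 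The correct conclusion is the one in your parenthesis: $\alpha\cdot\psi$ is \emph{some} $\alpha$-graph, which is all the statement requires.
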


\begin{proof}
This follows by noting that $T$ sits in at least one copy of the fundamental domain, and that the action of $\outs(G)$ preserves adjacency, so the above argument applies.
\end{proof}

In \cite[Section 4]{Gilbert1987}, Gilbert gives a summary of Fouxe-Rabinovitch's presentation for $\aut(G)$ described in \cite{F-R1940} and \cite{F-R1941}.
Restricting to the pure symmetric automorphisms $\aut_{\mathfrak{S}}(G)$ of a splitting $\mathfrak{S}$ of $G$, this states that %(in our specific case where there is no free rank and no pairwise isomorphic factors) 
$\aut_{\mathfrak{S}}(G)$ is generated by factor automorphisms and Whitehead automorphisms only (see Definitions \ref{defn factor autos} and \ref{defn whitehead autos}).
We can use this to give a quick proof that our full complex $\mathcal{C}_{n}$ is path connected.

\begin{prop}\label{prop alpha graphs are path connected} 
Any two $\alpha$-graphs in the complex $\mathcal{C}_{n}$ are connected via a path which travels only via $\alpha$ and $A$ graphs.
\end{prop}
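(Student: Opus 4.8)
The plan is to reduce the statement to a fact about automorphisms rather than about the topology of $\mathcal{C}_n$. An $\alpha$-graph in $\mathcal{C}_n$ is (an equivalence class of) a labelling of the star-shaped tree $\Talpha{$n$}$ by conjugates of the factor groups, and two such graphs differ precisely by the action of some $\psi\in\outs(G)$. Since the $\alpha$-graph $\alpha_0\in\mathcal{D}_n$ with labelling $(G_1,\dots,G_n)$ is fixed and every $\alpha$-graph is $\alpha_0\cdot\psi$ for some $\psi$, it suffices to show: for every $\psi\in\outs(G)$ there is an edge-path from $\alpha_0$ to $\alpha_0\cdot\psi$ lying entirely in $\alpha$- and $A$-vertices. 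By the Fouxe-Rabinovitch presentation quoted just before the statement, $\psi$ is a product of factor automorphisms $\varphi\in\Phi$ and Whitehead automorphisms $(\{G_j\},x)$ with $x\in G_i$. So the strategy is: (1) reduce to the case where $\psi$ is a single factor automorphism or a single Whitehead automorphism, using that paths compose; (2) handle each of these two cases by exhibiting an explicit short path through an $A$-vertex.

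For the reduction step, if $\psi=\psi_1\psi_2\cdots\psi_k$ is a word in the generators, and each $\psi_\ell$ individually moves $\alpha_0$ to an $\alpha$-graph by a path through $\alpha/A$-vertices, then applying $\psi_1\cdots\psi_{\ell-1}$ to the $\ell$-th such path (which is legitimate since the $\outs(G)$-action is cellular and preserves vertex types — an $\alpha$-graph goes to an $\alpha$-graph, an $A$-graph to an $A$-graph) concatenates to give a path from $\alpha_0$ to $\alpha_0\cdot\psi$; stringing these together proves the claim. So everything comes down to the two atomic cases. For $\psi=\varphi\in\Phi$: the factor automorphism $\varphi$ stabilises every $A_i$ (indeed $\Phi\le\stab(A_i)$ by Proposition \ref{prop brown stabs 2}), and also $\Phi=\stab(\alpha)$ by Proposition \ref{prop brown stabs 1}, so $\alpha_0\cdot\varphi=\alpha_0$ and there is nothing to do. For $\psi=(\{G_j\},x)$ with $x\in G_i$: this Whitehead automorphism lies in $\stab(A_i)$ (it is one of the twist generators $i_j$ appearing in Proposition \ref{prop brown stabs 2}); hence $\alpha_0\cdot\psi$ is obtained from $\alpha_0$ by a ``collapse--expansion'' move at $A_i$. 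Concretely, $\alpha_0$ and $\alpha_0\cdot\psi$ are both collapses of $A_i$ and $A_i\cdot\psi=A_i$ (the two $A$-graphs coincide since $\psi\in\stab(A_i)$), giving the path $\alpha_0 \dash A_i \dash \alpha_0\cdot\psi$ through a single $A$-vertex. We must check $A_i$ (with the appropriate labelling) is genuinely a vertex of $\mathcal{C}_n$ lying above both, but this is exactly the adjacency built into Definition \ref{def subcomplex Cn}: $A_i$ collapses to $\alpha$, and applying $\psi\in\stab(A_i)$ to the edge $[A_i,\alpha_0]$ yields the edge $[A_i,\alpha_0\cdot\psi]$ by Proposition \ref{prop action preserves cells} and Lemma \ref{lemma action commutes with collapses}.

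The main obstacle I anticipate is bookkeeping rather than conceptual: one must be careful that the Whitehead generators supplied by the Fouxe-Rabinovitch presentation are exactly of the form $(\{G_j\},x)$ with a \emph{single} dependent factor (so that they are honest twists at one edge of some $A_i$), and that multiple Whitehead automorphisms decompose into such single ones — this is available from Definition \ref{defn whitehead autos}, where $(\mathbf{A},\mathbf{x})$ is by definition a product of $(A_\ell,x_\ell)$'s, and each $(A_\ell,x_\ell)$ with $|A_\ell|>1$ can itself be written as a product of single-factor Whitehead automorphisms $(\{G_j\},x_\ell)$ for $G_j\in A_\ell$ (these commute, by Relation 1-type reasoning, so order is immaterial). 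One should also note in passing that a factor automorphism $\varphi$ need not equal the identity as an element of $\outs(G)$ yet still fixes the vertex $\alpha_0$, which is fine — we only need the \emph{vertex} $\alpha_0\cdot\varphi$ to be reachable, and here it literally equals $\alpha_0$. Assembling these observations gives the path, travelling only through $\alpha$- and $A$-graphs, as claimed.
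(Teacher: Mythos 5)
Your proposal takes essentially the same route as the paper: decompose $\psi$ into a factor automorphism and Whitehead automorphisms via the Fouxe-Rabinovitch presentation, observe that the factor automorphism lies in $\stab(\alpha_{0})=\Phi$ and that each Whitehead automorphism with operating factor $G_{j}$ lies in $\stab(A_{j})$, and chain together the resulting elementary paths $\alpha\dash A\dash\alpha$ using equivariance of the action. The two atomic cases are handled correctly, as is the reduction of a multiple Whitehead automorphism to single-factor ones.

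However, the concatenation step fails as written. The action of $\outs(G)$ on $\mathcal{C}_{n}$ is a \emph{right} action (compositions are read left to right), so translating the $\ell$-th elementary path, which runs from $\alpha_{0}$ to $\alpha_{0}\cdot\psi_{\ell}$, by $\gamma=\psi_{1}\cdots\psi_{\ell-1}$ produces a path from $\alpha_{0}\cdot\gamma$ to $(\alpha_{0}\cdot\psi_{\ell})\cdot\gamma=\alpha_{0}\cdot\psi_{\ell}\psi_{1}\cdots\psi_{\ell-1}$, which is not $\alpha_{0}\cdot\psi_{1}\cdots\psi_{\ell}$ unless the generators commute; the segments therefore do not join end to end. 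Put differently, the element carrying $\alpha_{0}\cdot\gamma$ to $\alpha_{0}\cdot\gamma\psi_{\ell}$ is $\psi_{\ell}$ itself, but the stabiliser of the relevant translated $A$-vertex is $\stab(A_{j_{\ell}}\cdot\gamma)=\stab(A_{j_{\ell}})^{\gamma}$, which need not contain $\psi_{\ell}$, so that single intermediate $A$-vertex need not exist where you place it. The paper repairs exactly this by rewriting $\psi_{1}\cdots\psi_{m}=\psi_{m}(\psi_{m-1}^{\psi_{m}})(\psi_{m-2}^{\psi_{m-1}\psi_{m}})\cdots(\psi_{1}^{\psi_{2}\cdots\psi_{m}})$ and noting that $\psi_{i}^{\chi}\in\stab(A_{j_{i}})^{\chi}=\stab(A_{j_{i}}\cdot\chi)$ for $\chi=\psi_{i+1}\cdots\psi_{m}$; the path then runs $\alpha_{0}\dash A_{j_{m}}\dash\alpha_{0}\cdot\psi_{m}\dash A_{j_{m-1}}\cdot\psi_{m}\dash\cdots\dash\alpha_{0}\cdot\psi_{1}\cdots\psi_{m}$. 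Equivalently, you should translate the $i$-th elementary path by the suffix $\psi_{i+1}\cdots\psi_{m}$ rather than the prefix. This is the bookkeeping you anticipated; once the conjugation is inserted your argument coincides with the paper's proof.
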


\begin{proof}
Let $\alpha_{0}$ be the $\alpha$-graph in the fundamental domain $\mathcal{D}_{n}$ and let $\alpha_{0}\cdot\hat{\psi}$ be an arbitrary $\alpha$-graph in $\mathcal{C}_{n}$ (with $\hat{\psi}\in\outs(G)$ and $\psi\in\aut_{\mathfrak{S}}(G)$ a representative for $\hat{\psi}$).
By \cite{F-R1940} and \cite{F-R1941}, transcribed in \cite[Section 4]{Gilbert1987}, we can write $\psi$ as $\psi_{0}\psi_{1}\dots\psi_{m}$ for some $m\in\mathbb{N}$ where $\psi_{0}\in\Phi$ is a factor automorphism, and for each $1\le i\le m$, $\psi_{i}$ is a Whitehead automorphism of the form $(S_{i},x_{i})$ with $x_{i}\in G_{j_{i}}$ for some $j_{i}\in\{1,\dots,n\}$ and $S_{i}\subseteq\{G_{1},\dots,G_{n}\}-\{G_{j_{i}}\}$.

By Proposition \ref{prop brown stabs 1}, $\psi_{0}\in\stab(\alpha_{0})$, that is, $\alpha_{0}\cdot\psi_{0}=\alpha_{0}$.
We now write $\psi_{1}\dots\psi_{m}=\psi_{m}(\psi_{m-1}^{\psi_{m}})(\psi_{m-2}^{\psi_{m-1}\psi_{m}})\dots(\psi_{2}^{\psi_{3}\dots\psi_{m}})(\psi_{1}^{\psi_{2}\dots\psi_{m}})$.
Observe that for each $1\le i<m$, $\psi_{i}^{\psi_{i+1}\dots\psi_{m}}$ acts on $\alpha_{0}\cdot\psi_{i+1}\dots\psi_{m}$ to produce the graph $\alpha_{0}\cdot\psi_{i}\dots\psi_{m}$ (and $\psi_{m}$ acts on $\alpha_{0}=\alpha_{0}\cdot\psi_{0}$ producing $\alpha_{0}\cdot\psi_{m}$).
Moreover, if $\psi_{i}$ has operating factor $G_{j_{i}}$ and $A_{j_{i}}$ is the $A$-graph in $\mathcal{D}_{n}$ whose central vertex has stabiliser $G_{j_{i}}$, then by Proposition \ref{prop brown stabs 2}, $\psi_{i}\in\stab(A_{j_{i}})$ and thus $\psi_{i}^{\psi_{i+1}\dots\psi_{m}}\in\stab(A_{j_{i}}\cdot\psi_{i+1}\dots\psi_{m})$ (and $\psi_{m}\in\stab(A_{j_{m}})$).
Thus both the graphs $\alpha_{0}\cdot\psi_{i}\dots\psi_{m}$ and $\alpha_{0}\cdot\psi_{i+1}\dots\psi_{m}$ collapse to the graph $A_{j_{i}}\cdot\psi_{i+1}\dots\psi_{m}$.

We therefore have a path $\alpha_{0} \dash A_{i_{m}} \dash \alpha_{0}\cdot\psi_{m} \dash A_{i_{m-1}}\cdot\psi_{m} \dash \alpha_{0}\cdot\psi_{m-1}\psi_{m} \dash \dots \dash \\ \noindent \alpha_{0}\cdot\psi_{2}\dots\psi_{m} \dash A_{j_{1}}\cdot\psi_{2}\dots\psi_{m} \dash \alpha_{0}\cdot\psi_{1}\dots\psi_{m}=\alpha_{0}\cdot\psi_{0}\psi_{1}\dots\psi_{m}=\alpha_{0}\cdot\psi$, as required.
\end{proof}

We give an alternative proof of this in \cite{Iveson2025}, which does not rely on already having a presentation for $\aut_{\mathfrak{S}}(G)$ or $\outs(G)$, and rather uses the geometry of $\mathcal{C}_{n}$.

\begin{cor}\label{cor Cn pc}
The complex $\mathcal{C}_{n}$ is (path) connected.
\end{cor}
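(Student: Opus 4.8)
The plan is to reduce the statement to the three results that immediately precede it. Since $\mathcal{C}_{n}$ is a simplicial complex, it suffices to show that its $1$-skeleton $\mathcal{C}_{n}^{(1)}$ is connected as a graph: every point of $\mathcal{C}_{n}$ lies in some (closed) cell, which is path connected and whose vertices lie in $\mathcal{C}_{n}^{(0)}$, so a path from an arbitrary point to a vertex always exists, and path connectivity of $\mathcal{C}_{n}$ then follows from edge-path connectivity of the vertex set.

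So first I would take two arbitrary vertices $S$ and $T$ of $\mathcal{C}_{n}$. By Corollary \ref{cor copies of the fun dom are pc}, there is an edge path in $\mathcal{C}_{n}$ from $S$ to some $\alpha$-graph $\alpha_{S}$, and likewise an edge path from $T$ to some $\alpha$-graph $\alpha_{T}$. Next I would invoke Proposition \ref{prop alpha graphs are path connected}, which supplies an edge path from $\alpha_{S}$ to $\alpha_{T}$ (travelling only through $\alpha$- and $A$-graphs). Concatenating these three edge paths yields an edge path in $\mathcal{C}_{n}$ from $S$ to $T$, so $\mathcal{C}_{n}^{(0)}$ is edge-path connected.

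Finally I would remark, as above, that for a simplicial complex this is equivalent to path connectedness of $\mathcal{C}_{n}$ itself, completing the proof.

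\textbf{Main obstacle.} There is essentially no obstacle here: all of the genuine work has already been carried out in Lemma \ref{lemma fun dom pc} (connectedness of $\mathcal{D}_{n}$), Corollary \ref{cor copies of the fun dom are pc} (every vertex reaches some $\alpha$-graph), and Proposition \ref{prop alpha graphs are path connected} (the $\alpha$-graphs form a connected subset). The only point requiring a word of care is the standard passage from edge-path connectivity of the $1$-skeleton to path connectivity of the full simplicial complex, which one disposes of in a sentence.
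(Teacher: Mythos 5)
Your proposal is correct and follows the same route as the paper, which proves this corollary by directly combining Corollary \ref{cor copies of the fun dom are pc} with Proposition \ref{prop alpha graphs are path connected}. The only difference is that you spell out the concatenation of edge paths and the passage from the $1$-skeleton to the full complex, which the paper leaves implicit.
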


\begin{proof}
This follows immediately by combining Corollary \ref{cor copies of the fun dom are pc} with Proposition \ref{prop alpha graphs are path connected}.
\end{proof}

% ------------------------------------------------ 		

\subsection{Simple Connectivity of the Fundamental Domain} \label{section fun dom sc}

We will now show that the fundamental domain $\mathcal{D}_{n}$ of the space $\mathcal{C}_{n}$ (and hence each $\outs(G)$-image of $\mathcal{D}_{n}$) is simply connected.
This will be the main result of this section, and is given as Theorem \ref{thm fun dom sc}.

We will consider nested subcomplexes of $\mathcal{D}_{n}$, adding `types' of 0-cell at each stage.
We will show that the first of these subcomplexes is simply connected, and then apply a corollary of the Seifert--van Kampen Theorem to see that each successive subcomplex is also simply connected.

\begin{cor}\label{cor gluing 2-cells}
Let $X$ and $Y$ both be simply connected (simplicial) complexes.
If we (suitably\footnote{i.e. so that $X\cup Y$ is still a simplicial complex}) glue $X$ and $Y$ together along a path connected collection of edges, then $X\cup Y$ is simply connected.
\end{cor}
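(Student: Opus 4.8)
The plan is to apply the closed version of the Seifert--van Kampen Theorem (Theorem \ref{s van k}) directly, with $A = X$, $B = Y$, and $A \cap B$ the path-connected collection of edges along which we glue. First I would check the hypotheses of Theorem \ref{s van k}: both $X$ and $Y$ are simply connected, hence in particular path-connected; $A \cap B$ is path-connected by assumption (and nonempty, since we are gluing along it); and as observed in the discussion following the statement of Theorem \ref{s van k}, for finite simplicial complexes the required open neighbourhoods $U \subset A$ and $V \subset B$ deformation retracting onto $A \cap B$ always exist (take suitable open sub-stars of the simplices of $A$, respectively $B$, meeting $A \cap B$). Finally, $X \cup Y$ is path-connected as the union of two path-connected sets with nonempty intersection.

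Granting these, Theorem \ref{s van k} gives
\[
\pi_1(X \cup Y) \;\cong\; \pi_1(X) \ast_{\pi_1(A \cap B)} \pi_1(Y).
\]
Since $\pi_1(X) = \pi_1(Y) = 1$, the amalgamated free product $1 \ast_{\pi_1(A\cap B)} 1$ is trivial: it is a quotient of the free product $1 \ast 1 = 1$. Hence $\pi_1(X \cup Y) = 1$, and together with path-connectedness this shows $X \cup Y$ is simply connected.

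I do not anticipate a genuine obstacle here; the only point requiring a word of care is the verification that $A \cap B$ is path-connected, which is exactly the hypothesis that the gluing locus is a \emph{path connected} collection of edges (a connected $1$-dimensional subcomplex is path-connected), so that $\pi_1(A \cap B)$ is defined and Theorem \ref{s van k} applies. Everything else is immediate from the fact that the free product of trivial groups is trivial.
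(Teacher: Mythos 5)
Your proof is correct and follows essentially the same route as the paper: both apply the closed Seifert--van Kampen Theorem with $A=X$, $B=Y$, $A\cap B$ the path-connected gluing locus, and conclude that $\pi_{1}(X\cup Y)\cong\{1\}\ast_{\pi_{1}(A\cap B)}\{1\}=\{1\}$. Your additional verification of the neighbourhood-retract hypothesis is a welcome but minor elaboration of what the paper already notes in the discussion following Theorem \ref{s van k}.
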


\begin{proof}
Note that in the complex $X\cup Y$, the subset $X\cap Y$ is precisely the collection of edges we have glued along.
Since this is stipulated to be path connected, then by the Seifert--van Kampen Theorem (Theorem \ref{s van k}), we have
$\pi_{1}(X\cup Y)\cong\pi_{1}(X)\ast_{\pi_{1}(X\cap Y)}\pi_{1}(Y)=\{1\}\ast_{\pi_{1}(X\cap Y)}\{1\}=\{1\}$.
\end{proof}

Recall that we describe a vertex $[T]$ of $\mathcal{C}_{n}$ as `collapsing' to another vertex $[S]$ if a graph represented by $[T]$ has an edge (or edges) which can be collapsed to form a graph associated to $[S]$. That is, the vertices $[T]$ and $[S]$ are adjacent in $\mathcal{C}_{n}$.

\begin{defn}\label{defn DnT}
Let $\mathcal{T}$ be a subset of the graph structures shown in Table \ref{table n>=5 points}.
Denote by $\mathcal{D}_{n}[\mathcal{T}]$ the subcomplex of the fundamental domain of $\mathcal{C}_{n}$ obtained by restricting to simplices whose 0-cells are those associated to graph structures in $\mathcal{T}$.
\end{defn}

\begin{lemma}
$\mathcal{D}_{n}[\{\alpha,\rho,A\}]$ is simply connected.
\end{lemma}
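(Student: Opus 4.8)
The plan is to identify $\mathcal{D}_{n}[\{\alpha,\rho,A\}]$ with the closed star of the vertex $\alpha$, and then use that a closed star $\overline{\operatorname{st}}(\alpha)=\alpha\ast\operatorname{lk}(\alpha)$ is a cone, hence contractible and in particular simply connected. The only real work is an explicit census of the cells of $\mathcal{D}_{n}[\{\alpha,\rho,A\}]$, using that adjacency in $\mathcal{C}_{n}$ is precisely the ``is a collapse of'' relation (Definition \ref{def subcomplex Cn}).

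The $0$-cells are $\alpha$ (the star on $n$ leaves with trivial centre, with $n$ edges), the $\binom{n}{2}$ graphs $\rho_{ij}$ (with $n+1$ edges), and the $n$ graphs $A_{k}$ (with $n-1$ edges). I claim the $1$-cells are exactly: $\alpha\dash\rho_{ij}$ for each unordered pair $\{i,j\}$, realised by collapsing the single interior edge of $\rho_{ij}$ joining its two trivial vertices; $\alpha\dash A_{k}$ for each $k$, realised by collapsing the leaf-edge of $\alpha$ carrying $G_{k}$; and $\rho_{ij}\dash A_{k}$ for every pair $\{i,j\}$ and every $k$, realised by collapsing both the interior edge of $\rho_{ij}$ and the leaf-edge carrying $G_{k}$. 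The last point needs a moment's care about where $G_{k}$ sits in $\rho_{ij}$ --- off the basepoint if $k\notin\{i,j\}$, off the other trivial vertex if $k\in\{i,j\}$ --- but in both cases those two edges are collapsible and collapsing them yields a single vertex with group $G_{k}$ carrying the other $n-1$ factors as leaves, namely $A_{k}$. There are no other $1$-cells, because collapsing an edge strictly reduces the edge count, so no $\rho_{ij}$ is a collapse of a distinct $\rho_{kl}$ and no $A_{k}$ is a collapse of a distinct $A_{l}$.

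It follows that the only $3$-cliques in the $1$-skeleton are the triples $\{\alpha,\rho_{ij},A_{k}\}$, so by Definition \ref{def subcomplex Cn} the $2$-cells are exactly the triangles $[\alpha,\rho_{ij},A_{k}]$, and there are no higher cells since $\mathcal{C}_{n}$ is two-dimensional. Now every simplex of $\mathcal{D}_{n}[\{\alpha,\rho,A\}]$ not containing $\alpha$ --- each vertex $\{\rho_{ij}\}$, each vertex $\{A_{k}\}$, and each edge $\{\rho_{ij},A_{k}\}$ --- is a face of a simplex that does contain $\alpha$ (respectively $\{\alpha,\rho_{ij}\}$, $\{\alpha,A_{k}\}$, $\{\alpha,\rho_{ij},A_{k}\}$). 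Hence $\mathcal{D}_{n}[\{\alpha,\rho,A\}]=\overline{\operatorname{st}}(\alpha)=\alpha\ast\operatorname{lk}(\alpha)$ is a cone, so it is contractible, and in particular simply connected.

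The only genuine obstacle is the bookkeeping in the second step: getting the edge set right, especially the adjacency $\rho_{ij}\dash A_{k}$ for $k\in\{i,j\}$, and confirming no stray edges (hence no stray triangles) appear; once the cell structure is in hand the conclusion is formal. As an alternative one could build $\mathcal{D}_{n}[\{\alpha,\rho,A\}]$ from the $\alpha$-$A$--star of Figure \ref{alpha A star} by adjoining the vertices $\rho_{ij}$ one at a time, each adjunction being the gluing of a cone (apex $\rho_{ij}$) along the connected subcomplex $\operatorname{lk}(\rho_{ij})$, and applying Corollary \ref{cor gluing 2-cells} repeatedly; but the direct cone description above is shorter.
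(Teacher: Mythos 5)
Your proof is correct, and the cell census at its heart (vertices $\alpha$, $\rho_{ij}$, $A_{k}$; edges $\alpha\dash\rho_{ij}$, $\alpha\dash A_{k}$, $\rho_{ij}\dash A_{k}$ for all $k$, including $k\in\{i,j\}$; $2$-cells exactly the triangles $[\alpha,\rho_{ij},A_{k}]$) agrees with what the paper uses. Where you differ is in how you pass from the census to simple connectivity. The paper views the subcomplex as a union of cones with apex $\rho_{ij}$ over the $\alpha$-$A$--star, all sharing that star, and then applies Corollary \ref{cor gluing 2-cells} (the closed Seifert--van Kampen gluing statement) once per $\rho_{ij}$; this is exactly your ``alternative'' route, and it has the virtue of rehearsing the gluing template that the paper then reuses for every subsequent lemma in Section \ref{section fun dom sc}. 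Your primary argument instead observes that every simplex not containing $\alpha$ is a face of one that does, so the whole subcomplex is the closed star $\overline{\operatorname{st}}(\alpha)=\alpha\ast\operatorname{lk}(\alpha)$, a cone on the complete bipartite link, hence contractible. That is shorter and strictly stronger for this base case (contractibility with no appeal to van Kampen), at the cost of being special to the situation where a single vertex cones off everything --- it does not propagate to the later lemmas, where $\alpha$ is no longer adjacent to every new vertex. Both arguments are sound; yours buys economy here, the paper's buys uniformity with what follows.
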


\begin{proof}
Note that any $\rho$-vertex \Trho{$n-2$}{$i$}{$j$} collapses to $\alpha$, \Talpha{$n$} (by collapsing the edge whose endpoints are both trivial), and that $\alpha$ in turn collapses to any $A$-vertex \TA{$n-1$}{$k$} (including $k=i$ or $k=j$).
According to how we constructed the space $\mathcal{C}_{n}$, this means that for each pair $(\rho, A)$ we have a $2$-cell $[\rho, \alpha, A]$.
So the subcomplex $\mathcal{D}_{n}[\{\alpha,\rho,A\}]$ comprises these 2-cells, glued along `matching' edges.
Given a particular $\rho_{ij}$-graph, we have a cone on a star at $\alpha$ (where the leaves of the star are the various $A$ graphs).
As we vary $\rho$, we get copies of this cone, all glued along the star formed by the $\alpha$ and $A$ vertices.
\begin{center}
\begin{tikzpicture} % cone on star at alpha
\transparent{0.8}
\filldraw[blue,  thick, fill=cyan] (0,0) -- (1,0) -- (0,1.5) -- cycle;
\filldraw[blue,  thick, fill=cyan] (0,0) -- (-1,0) -- (0,1.5) -- cycle;
%\draw[very thick] (0,0) -- (1,0); % alpha to A
%\draw[very thick] (0,0) -- (-1,0); % alpha to A
\filldraw[blue,  thick, fill=cyan] (0,0) -- (0.5,-0.5) -- (0,1.5) -- cycle;
\filldraw[blue,  thick, fill=cyan] (0,0) -- (-0.5,-0.5) -- (0,1.5) -- cycle;
%\draw[very thick] (0,0) -- (0,1.5); % alpha to rho
%\draw[very thick] (0,0) -- (0.5,-0.5); % alpha to A
%\draw[very thick] (0,0) -- (-0.5,-0.5); % alpha to A
\transparent{0.9}
\draw[white,fill] (0,1.5) circle [radius=0.15cm];
\node[teal] at (0,1.6) {$\rho_{ij}$}; % rho
\draw[white,fill] (0,0) circle [radius=0.15cm];
\node[red] at (0,0) {$\alpha$}; % alpha
\draw[white,fill] (-0.5,-0.5) circle [radius=0.15cm];
\node[brown] at (-0.5,-0.6) {$A_{2}$}; % A2
\draw[white,fill] (-1,0) circle [radius=0.15cm];
\node[brown] at (-1.05,0) {$A_{1}$}; % A1
\draw[white,fill] (1,0) circle [radius=0.15cm];
\node[brown] at (1.1,0) {$A_{n}$}; % An
\draw[white,fill] (0.5,-0.5) circle [radius=0.15cm];
\node[brown] at (0.75,-0.6) {$A_{n-1}$}; % An-1
\node[brown] at (0.1,-0.6) {$\dots$};
\end{tikzpicture}
\begin{tikzpicture} % multiple rhos
%leaning right - on bottom
\transparent{0.7}
\filldraw[red,  thick, fill=pink] (0,0) -- (1,0) -- (1.2,1.2) -- cycle;
\filldraw[red,  thick, fill=pink] (0,0) -- (-1,0) -- (1.2,1.2) -- cycle;
%\draw[very thick] (0,0) -- (1,0); % alpha to A
%\draw[very thick] (0,0) -- (-1,0); % alpha to A
\filldraw[red,  thick, fill=pink] (0,0) -- (0.5,-0.5) -- (1.2,1.2) -- cycle;
\filldraw[red,  thick, fill=pink] (0,0) -- (-0.5,-0.5) -- (1.2,1.2) -- cycle;
%\draw[very thick] (0,0) -- (1.2,1.2); % alpha to rho
%\draw[very thick] (0,0) -- (0.5,-0.5); % alpha to A
%\draw[very thick] (0,0) -- (-0.5,-0.5); % alpha to A
\transparent{0.9}
\draw[white,fill] (1.2,1.2) circle [radius=0.15cm];
\node[teal] at (1.3,1.3) {$\rho_{jk}$}; % rho
%middle - middle
\transparent{0.7}
\filldraw[blue,  thick, fill=cyan] (0,0) -- (1,0) -- (0,1.5) -- cycle;
\filldraw[blue,  thick, fill=cyan] (0,0) -- (-1,0) -- (0,1.5) -- cycle;
%\draw[very thick] (0,0) -- (1,0); % alpha to A
%\draw[very thick] (0,0) -- (-1,0); % alpha to A
\filldraw[blue,  thick, fill=cyan] (0,0) -- (0.5,-0.5) -- (0,1.5) -- cycle;
\filldraw[blue,  thick, fill=cyan] (0,0) -- (-0.5,-0.5) -- (0,1.5) -- cycle;
%\draw[very thick] (0,0) -- (0,1.5); % alpha to rho
%\draw[very thick] (0,0) -- (0.5,-0.5); % alpha to A
%\draw[very thick] (0,0) -- (-0.5,-0.5); % alpha to A
\transparent{0.9}
\draw[white,fill] (0,1.5) circle [radius=0.15cm];
\node[teal] at (0,1.6) {$\rho_{ij}$}; % rho
%leaning left - on top
\transparent{0.7}
\filldraw[olive,  thick, fill=lime] (0,0) -- (1,0) -- (-1.2,1.2) -- cycle;
\filldraw[olive,  thick, fill=lime] (0,0) -- (-1,0) -- (-1.2,1.2) -- cycle;
%\draw[very thick] (0,0) -- (1,0); % alpha to A
%\draw[very thick] (0,0) -- (-1,0); % alpha to A
\filldraw[olive,  thick, fill=lime] (0,0) -- (0.5,-0.5) -- (-1.2,1.2) -- cycle;
\filldraw[olive,  thick, fill=lime] (0,0) -- (-0.5,-0.5) -- (-1.2,1.2) -- cycle;
%\draw[very thick] (0,0) -- (-1.2,1.2); % alpha to rho
%\draw[very thick] (0,0) -- (0.5,-0.5); % alpha to A
%\draw[very thick] (0,0) -- (-0.5,-0.5); % alpha to A
\transparent{0.9}
\draw[white,fill] (-1.2,1.2) circle [radius=0.15cm];
\node[teal] at (-1.3,1.3) {$\rho_{lm}$}; % rho
\draw[white,fill] (0,0) circle [radius=0.15cm];
\node[red] at (0,0) {$\alpha$}; % alpha
\draw[white,fill] (-0.5,-0.5) circle [radius=0.15cm];
\node[brown] at (-0.5,-0.6) {$A_{2}$}; % A2
\draw[white,fill] (-1,0) circle [radius=0.15cm];
\node[brown] at (-1.1,0) {$A_{1}$}; % A1
\draw[white,fill] (1,0) circle [radius=0.15cm];
\node[brown] at (1.1,-0.05) {$A_{n}$}; % An
\draw[white,fill] (0.5,-0.5) circle [radius=0.15cm];
\node[brown] at (0.75,-0.6) {$A_{n-1}$}; % An-1
\node[brown] at (0.1,-0.6) {$\dots$};
\end{tikzpicture}
\end{center}
Clearly each cone is simply connected.
Since the intersection of these cones is (the star based at $\alpha$ with leaves the $A$ vertices, which is) path connected, we can iteratively apply Corollary \ref{cor gluing 2-cells} to `add in' each cone (of which there are finitely many).
Hence the structure $\mathcal{D}_{n}[\{\alpha,\rho,A\}]$ is simply connected.
\end{proof}

\begin{lemma}
$\mathcal{D}_{n}[\{\alpha,\rho,A,\beta,\gamma\}]$ is simply connected.
\end{lemma}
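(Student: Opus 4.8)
The plan is to exhibit $\mathcal{D}_{n}[\{\alpha,\rho,A,\beta,\gamma\}]$ as the simply connected complex $X:=\mathcal{D}_{n}[\{\alpha,\rho,A\}]$ of the previous lemma with one $2$-simplex glued on for each $\beta$- and each $\gamma$-vertex, each glued along a single edge of $X$, and then to conclude by applying Corollary \ref{cor gluing 2-cells}, adding one flap at a time.

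First I would pin down the local structure around a $\beta$- or a $\gamma$-vertex in $\mathcal{C}_{n}$. A graph $\beta_{i,j}$ arises from $\rho_{ij}$ by collapsing the edge from the central trivial vertex to the vertex carrying $G_{i}$, and collapsing in $\beta_{i,j}$ the edge from the surviving trivial vertex to the $G_{i}$-vertex produces $A_{i}$; likewise $\gamma_{i,jk}$ arises from $\rho_{jk}$ by collapsing the leaf edge carrying $G_{i}$, and collapses in turn onto $A_{i}$. Next I would record that, among the graph shapes occurring in $\mathcal{D}_{n}[\{\alpha,\rho,A,\beta,\gamma\}]$, every $\rho$-graph has $n+1$ edges, every $\alpha$-, $\beta$- or $\gamma$-graph has $n$ edges, and every $A$-graph has $n-1$ edges; since adjacency in $\mathcal{C}_{n}$ requires one graph to be a proper collapse of the other, no two of $\alpha$, $\beta_{\cdot,\cdot}$, $\gamma_{\cdot,\cdot}$ are adjacent, and a $\beta$- or $\gamma$-vertex can be adjacent, within $\{\alpha,\rho,A,\beta,\gamma\}$, only to $\rho$- and $A$-vertices. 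Inspecting the collapse moves above pins these down: $\beta_{i,j}$ is adjacent only to $\rho_{ij}$ and $A_{i}$, and $\gamma_{i,jk}$ only to $\rho_{jk}$ and $A_{i}$ --- precisely the configuration displayed in the $\rho$--Book of Figure \ref{rho book}. Writing $\rho_{v}$ and $A_{v}$ for the two neighbours of a $\beta$- or $\gamma$-vertex $v$, it follows that the unique $2$-cell of $\mathcal{D}_{n}[\{\alpha,\rho,A,\beta,\gamma\}]$ containing $v$ is the triangle $Y_{v}:=[\rho_{v},v,A_{v}]$ (here $\rho_{v}$ and $A_{v}$ are adjacent, since $\rho_{v}$ collapses onto $A_{v}$ through $v$), that $Y_{v}$ is a subcomplex meeting $X$ in the single edge $[\rho_{v},A_{v}]$, and that every cell of $\mathcal{D}_{n}[\{\alpha,\rho,A,\beta,\gamma\}]$ lies either in $X$ or in some $Y_{v}$.

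I would then finish by induction. List the finitely many $\beta$- and $\gamma$-vertices as $v_{1},\dots,v_{N}$, put $X_{0}=X$ and $X_{k}=X_{k-1}\cup Y_{v_{k}}$, so that $X_{N}=\mathcal{D}_{n}[\{\alpha,\rho,A,\beta,\gamma\}]$. Each $X_{k-1}$ is simply connected (base case: the previous lemma) and each $Y_{v_{k}}$ is a $2$-simplex, hence simply connected; moreover $X_{k-1}\cap Y_{v_{k}}$ is the closed edge $[\rho_{v_{k}},A_{v_{k}}]$, since $v_{k}$ and the two edges of $Y_{v_{k}}$ incident to it belong to no earlier $X_{j}$, while $[\rho_{v_{k}},A_{v_{k}}]\subseteq X\subseteq X_{k-1}$. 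An edge is path connected, so Corollary \ref{cor gluing 2-cells} yields that $X_{k}$ is simply connected, completing the induction.

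The one genuinely delicate point is the claim in the second paragraph that introducing the $\beta$- and $\gamma$-vertices creates nothing beyond the triangles $Y_{v}$ --- no unexpected edges or $2$-cells among $\{\alpha,\rho,A,\beta,\gamma\}$-vertices --- so that the sets fed to Corollary \ref{cor gluing 2-cells} really are single (hence path connected) edges. This is where one needs the explicit ``which graph collapses to which'' bookkeeping; the edge-count stratification ($n+1$, $n$, $n-1$) reduces it to checking the handful of one-edge collapses of $\rho$-, $\beta$- and $\gamma$-graphs.
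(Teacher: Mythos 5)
Your proof is correct and follows essentially the same route as the paper: identify the closed star of each $\beta$- or $\gamma$-vertex as a single $2$-cell $[\rho,v,A]$ meeting the previously built complex in the one edge $\rho\dash A$, then iterate Corollary \ref{cor gluing 2-cells}. The edge-count stratification you use to rule out extra adjacencies is a slightly more systematic version of the collapse bookkeeping the paper does implicitly, but the argument is the same.
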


\begin{proof}
By the previous lemma, we have that $\pi_{1}(\mathcal{D}_{n}[\{\alpha,\rho,A\}])=\{1\}$.
We will (iteratively) apply Corollary \ref{cor gluing 2-cells} to $\mathcal{D}_{n}[\{\alpha,\rho,A,\beta,\gamma\}]$ taking $X$ to be $\mathcal{D}_{n}[\{\alpha,\rho,A\}]$ (or the union of this with successive $Y$'s) and $Y$ to be the neighbourhood in $\mathcal{D}_{n}[\{\alpha,\rho,A,\beta,\gamma\}]$ of $\beta$ or $\gamma$.

Each $\rho$ graph  collapses to $2$ $\beta$ graphs, and $n-2$ $\gamma$ graphs. These $\beta$s and $\gamma$s are unique to the given $\rho$ (that is, two distinct $\rho$ graphs cannot both collapse to the same $\beta$ or $\gamma$.
Specifically, $\rho_{ij}$ collapses to $\beta_{i,j}$, $\beta_{j,i}$ and $\gamma_{v,ij}$ for $v\ne i,j$.
In turn, $\beta_{i,j}$ collapses to $A_{i}$, $\beta_{j,i}$ to $A_{j}$, and $\gamma_{v,ij}$ to $A_{v}$.
Thus the neighbourhood of $\beta_{i,j}$ (or $\gamma_{k,ij}$) is a 2-cell  $[\rho_{ij}, \beta_{i,j}, A_{i}]$ (or $[\rho_{ij}, \gamma_{k,ij}, A_{k}]$, respectively).
Note that any 2-cell is simply connected.
The intersection of each of these neighbourhoods with any of the spaces $X$ is an edge $\rho\dash A$ (which is, in particular, path connected).
So by Corollary \ref{cor gluing 2-cells}, $\mathcal{D}_{n}[\{\alpha,\rho,A,\beta,\gamma\}]$ is simply connected.
\begin{center}
\begin{tikzpicture} % addition of beta and gamma
\transparent{0.7}
\filldraw[blue,  thick, fill=cyan] (0,0) -- (1,0) -- (0,1.5) -- cycle;
\filldraw[blue,  thick, fill=cyan] (0,0) -- (-1,0) -- (0,1.5) -- cycle;
%\draw[very thick] (0,0) -- (1,0); % alpha to A
%\draw[very thick] (0,0) -- (-1,0); % alpha to A
\filldraw[blue,  thick, fill=cyan] (0,0) -- (0.5,-0.5) -- (0,1.5) -- cycle;
\filldraw[blue,  thick, fill=cyan] (0,0) -- (-0.5,-0.5) -- (0,1.5) -- cycle;
%\draw[very thick] (0,0) -- (0,1.5); % alpha to rho
%\draw[very thick] (0,0) -- (0.5,-0.5); % alpha to A
%\draw[very thick] (0,0) -- (-0.5,-0.5); % alpha to A
\transparent{0.7}
\filldraw[orange, thick, fill=Goldenrod] (1,0) -- (1,1.5) -- (0,1.5) -- cycle;
\filldraw[orange, thick, fill=Goldenrod] (-1,0) -- (-1,1.5) -- (0,1.5) -- cycle;
%\draw[very thick] (0,1.5) -- (1,1.5); % rho to beta
%\draw[very thick] (0,1.5) -- (-1,1.5); % rho to gamma
\transparent{0.9}
\draw[white,fill] (1,1.5) circle [radius=0.15cm];
\node[violet] at (1.1,1.5) {$\beta_{j,i}$}; % beta
\draw[white,fill] (-1,1.5) circle [radius=0.15cm];
\node[violet] at (-1.1,1.5) {$\beta_{i,j}$}; % beta
\transparent{0.7}
\filldraw[orange, thick, fill=yellow] (0.5,-0.5) -- (0.5,1) -- (0,1.5) -- cycle;
\filldraw[orange, thick, fill=yellow] (-0.5,-0.5) -- (0,1.5) -- (-0.5,1) -- cycle;
%\draw[very thick] (0,1.5) -- (-0.5,1); % rho to beta
%\draw[very thick] (0,1.5) -- (0.5,1); % rho to gamma
\transparent{0.9}
\draw[white,fill] (-0.5,1) circle [radius=0.2cm];
\node[violet] at (-0.35,0.95) {$\gamma_{k,ij}$}; % gamma
\draw[white,fill] (0.5,1) circle [radius=0.2cm];
\node[violet] at (0.65,0.95) {$\gamma_{l,ij}$}; % gamma
\transparent{0.9}
\draw[white,fill] (0,1.5) circle [radius=0.15cm];
\node[teal] at (0,1.6) {$\rho_{ij}$}; % rho
\draw[white,fill] (0,0) circle [radius=0.15cm];
\node[red] at (0,0) {$\alpha$}; % alpha
\draw[white,fill] (-0.5,-0.5) circle [radius=0.15cm];
\node[brown] at (-0.5,-0.6) {$A_{k}$}; % Ak
\draw[white,fill] (-1,0) circle [radius=0.15cm];
\node[brown] at (-1.1,0) {$A_{i}$}; % Ai
\draw[white,fill] (1,0) circle [radius=0.15cm];
\node[brown] at (1.1,0) {$A_{j}$}; % Aj
\draw[white,fill] (0.5,-0.5) circle [radius=0.15cm];
\node[brown] at (0.6,-0.6) {$A_{l}$}; % Al
\transparent{1}
\node[brown] at (0.1,-0.6) {$\dots$};
\node[violet] at (0.1,0.7) {$\dots$};
\end{tikzpicture}
\end{center}
\end{proof}

We now consider how to attach the $B$-vertices. 

\begin{lemma}
$\mathcal{D}_{n}[\{\alpha,\rho,A,\beta,\gamma,B\}]$ is simply connected.
\end{lemma}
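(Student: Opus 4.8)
The plan is to proceed exactly as in the previous two lemmas: starting from the simply connected complex $X:=\mathcal{D}_{n}[\{\alpha,\rho,A,\beta,\gamma\}]$, I would attach the $B$-vertices one at a time and invoke Corollary \ref{cor gluing 2-cells} at each step. The point making this easy is that every $B_{i,j,k}$-graph is a minimal element of the collapse poset: it has $n-1$ edges, every vertex group is one of the factor groups $G_{1},\dots,G_{n}$, so there is no collapsible edge and no graph collapses \emph{off} of $B_{i,j,k}$. In particular no two $B$-vertices are adjacent, so the $B$-vertices form an independent set and may be added in any order without interfering.

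First I would identify the closed star $\mathrm{St}(B_{i,j,k})$ of $B_{i,j,k}$ in $\mathcal{D}_{n}[\{\alpha,\rho,A,\beta,\gamma,B\}]$. The key combinatorial claim is that the only vertices of $\mathcal{D}_{n}$ of type $\alpha,\rho,A,\beta,\gamma$ adjacent to $B_{i,j,k}$ are $\beta_{j,k}$ and $\gamma_{i,jk}$ — each collapsing onto $B_{i,j,k}$ by a single edge-collapse (absorbing, respectively, the leaf $G_{i}$ into the trivial vertex of $\beta_{j,k}$, or the edge $G_{j}\dash G_{k}$ into the trivial vertex of $\gamma_{i,jk}$) — together with $\rho_{jk}$, which collapses onto $B_{i,j,k}$ in two steps (via either $\beta_{j,k}$ or $\gamma_{i,jk}$). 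One checks by direct inspection of Table \ref{table n>=5 points} that no other single- or double-edge expansion of $B_{i,j,k}$ has a shape occurring there. Since $\beta_{j,k}$ and $\gamma_{i,jk}$ both have $n$ edges, neither is a collapse of the other, so they are not adjacent; hence the only $2$-cells through $B_{i,j,k}$ are $[\rho_{jk},\beta_{j,k},B_{i,j,k}]$ and $[\rho_{jk},\gamma_{i,jk},B_{i,j,k}]$ (both arising from genuine chains in the collapse poset, and both lying inside the $\rho$-book of $\rho_{jk}$, cf. Figure \ref{rho book}). These two triangles share the edge $[\rho_{jk},B_{i,j,k}]$, so $\mathrm{St}(B_{i,j,k})$ is a union of two triangles along an edge, and is therefore contractible.

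Then I would apply Corollary \ref{cor gluing 2-cells} with $Y:=\mathrm{St}(B_{i,j,k})$. The intersection of $Y$ with the complex built so far equals $\mathrm{St}(B_{i,j,k})$ with $B_{i,j,k}$ and its incident cells deleted, i.e.\ the two edges $[\rho_{jk},\beta_{j,k}]$ and $[\rho_{jk},\gamma_{i,jk}]$ together with their endpoints; this is path connected because both edges meet at $\rho_{jk}$. Adding earlier $B$-vertices does not change this intersection, as no earlier $B$-vertex is adjacent to $B_{i,j,k}$. Since both $Y$ and the complex built so far are simply connected, Corollary \ref{cor gluing 2-cells} lets us add in $\mathrm{St}(B_{i,j,k})$; iterating over the finitely many $B$-vertices shows $\mathcal{D}_{n}[\{\alpha,\rho,A,\beta,\gamma,B\}]$ is simply connected.

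The hard part is the bookkeeping in the second paragraph: one must be sure that the expansions of $B_{i,j,k}$ which land in Table \ref{table n>=5 points} are precisely $\beta_{j,k}$, $\gamma_{i,jk}$, and $\rho_{jk}$ — in particular that no further $\rho$-, $\beta$- or $\gamma$-graph collapses onto $B_{i,j,k}$, and that the two triangles I list really are all the $2$-cells of $\mathcal{D}_{n}$ containing $B_{i,j,k}$. This is routine but demands care with the conventions about which subscripts are ordered and about which graph shapes are actually allowed in $\mathcal{C}_{n}$ (recall that for $n>5$ several ``correct-dimension'' graphs are deliberately excluded).
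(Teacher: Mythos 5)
Your proof is correct and follows essentially the same route as the paper: for each $B_{i,j,k}$ one attaches the two $2$-cells $[\rho_{jk},\beta_{j,k},B_{i,j,k}]$ and $[\rho_{jk},\gamma_{i,jk},B_{i,j,k}]$ (glued along their common $\rho$--$B$ edge) to the previously built complex along the path-connected arc $\beta_{j,k}\dash\rho_{jk}\dash\gamma_{i,jk}$, and invokes Corollary \ref{cor gluing 2-cells}. Your extra observations that the $B$-vertices form an independent set and that $\beta_{j,k}$ and $\gamma_{i,jk}$ are non-adjacent are correct and only make explicit what the paper leaves implicit.
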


\begin{proof}
Each $B_{i,j,k}$ \TB{$n-3$}{$i$}{$j$}{$k$} grows to a unique $\beta_{j,k}$ \Tbeta{$n-2$}{$j$}{$k$}, a unique $\gamma_{k,ij}$ \Tgamma{$n-3$}{$k$}{$i$}{$j$}, and a unique $\rho_{jk}$ \Trho{$n-2$}{$j$}{$k$}.
So $\mathcal{D}_{n}[\{\alpha,\rho,A,\beta,\gamma,B\}]$ is the subcomplex $\mathcal{D}_{n}[\{\alpha,\rho,A,\beta,\gamma\}]$with the addition of $2$-cells $[\rho_{jk}, \beta_{j,k}, B_{i,j,k}]$ and $[\rho_{jk}, \gamma_{i,jk}, B_{i,j,k}]$ for all possible values of $i$, $j$ and $k$ (by gluing along the $\rho$-$\beta$ and $\rho$-$\gamma$ edges that already exist, and additionally gluing our two 2-cells along the $\rho$-$\beta$ edge they both share):
\begin{center}
\begin{tikzpicture} % add B
\transparent{0.7}
\filldraw[red, thick, fill=pink] (0,0) -- (1,0) -- (0,1) -- cycle;
\filldraw[red, thick, fill=pink] (0,0) -- (-1,0) -- (0,1) -- cycle;
\filldraw[orange, thick, fill=Goldenrod] (1,1.5) -- (1,0) -- (0,1) -- cycle;
\filldraw[orange, thick, fill=Goldenrod] (-1,1.5) -- (-1,0) -- (0,1) -- cycle;
\filldraw[purple, thick, fill=magenta] (0,1) -- (1,1.5) -- (0.5,2) -- cycle;
\filldraw[purple, thick, fill=magenta] (0,1) -- (-1,1.5) -- (-0.5,2) -- cycle;
\filldraw[purple, thick, fill=magenta] (0,1) -- (0.4,1) -- (0.5,2) -- cycle;
\filldraw[purple, thick, fill=magenta] (0,1) -- (0.4,1) -- (-0.5,2) -- cycle;
\filldraw[red, thick, fill=pink] (0,0) -- (0.4,-0.5) -- (0,1) -- cycle;
\filldraw[orange, thick, fill=yellow] (0,1) -- (0.4,-0.5) -- (0.4,1) -- cycle;
\transparent{0.9}
\draw[white,fill] (0,0) circle [radius=0.15cm];
\node[red] at (0,0) {$\alpha$}; % alpha 
\draw[white,fill] (-1,0) circle [radius=0.2cm];
\node[brown] at (-1,0) {$A_{j}$}; % Aj
\draw[white,fill] (1,0) circle [radius=0.2cm];
\node[brown] at (1,0) {$A_{k}$}; % Ak
\draw[white,fill] (0.4,-0.5) circle [radius=0.2cm];
\node[brown] at (0.4,-0.55) {$A_{i}$}; % Ai
\draw[white,fill] (-1,1.5) circle [radius=0.15cm];
\node[violet] at (-1.1,1.5) {$\beta_{j,k}$}; % beta 
\draw[white,fill] (1,1.5) circle [radius=0.15cm];
\node[violet] at (1.2,1.5) {$\beta_{k,j}$}; % beta 
\draw[white,fill] (0.4,1) circle [radius=0.15cm];
\node[violet] at (0.6,0.95) {$\gamma_{i,jk}$}; % gamma 
\draw[white,fill] (0,1) circle [radius=0.15cm];
\node[teal] at (0.1,0.95) {$\rho_{jk}$}; % rho 
\draw[white,fill] (0.5,2) circle [radius=0.2cm];
\node[orange] at (0.8,2) {$B_{i,k,j}$}; % B
\draw[white,fill] (-0.5,2) circle [radius=0.2cm];
\node[orange] at (-0.3,2) {$B_{i,j,k}$}; % B  
\end{tikzpicture}
\end{center}

\noindent That is, given a specific $\rho$-vertex in our structure, for every $\beta$ and $\gamma$ we see adjacent to said $\rho$, we glue in a `fin'
\begin{tikzpicture}
\transparent{0.7}
\filldraw[purple, thick, fill=magenta] (0,1) -- (-1,0) -- (0,-1) -- (1,0) -- cycle;
\draw[thick,purple] (0,1) -- (0,-1);
\draw[very thick,dotted,blue] (-1,0) -- (0,-1) -- (1,0);
\node[orange] at (0,1.2) {$B$};
\node[violet] at (-1.2,0) {$\beta$};
\node[violet] at (1.2,0) {$\gamma$};
\node[teal] at (0,-1.2) {$\rho$};
\end{tikzpicture}
along the dotted line.
The edge path $\beta\dash \rho\dash \gamma$ is path connected, so by repeated applications of Corollary \ref{cor gluing 2-cells}, we see that  $\mathcal{D}_{n}[\{\alpha,\rho,A,\beta,\gamma,B\}]$ is simply connected.
\end{proof}

\begin{lemma}
$\mathcal{D}_{n}[\{\alpha,\rho,A,\beta,\gamma,B,\sigma\}]$ is simply connected.
\end{lemma}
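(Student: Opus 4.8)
The plan is to mimic the strategy of the preceding lemmas: realise $\mathcal{D}_{n}[\{\alpha,\rho,A,\beta,\gamma,B,\sigma\}]$ as $\mathcal{D}_{n}[\{\alpha,\rho,A,\beta,\gamma,B\}]$ with the closed stars of the (finitely many) $\sigma$-vertices glued on one at a time, applying Corollary \ref{cor gluing 2-cells} at each stage.

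First I would determine the neighbours of a fixed vertex $\sigma_{i,jk,lm}$ inside $\mathcal{C}_{n}$ that lie among the graph structures $\{\alpha,\rho,A,\beta,\gamma,B,\sigma\}$. Since a $\sigma$-graph is maximal (it has $n+1$ edges), every such neighbour is a collapse of it. Collapsing the edge joining the basepoint to one of the two trivial vertices yields $\gamma_{i,jk}$ or $\gamma_{i,lm}$; collapsing both of these edges yields $A_{i}$; and collapsing one basepoint--trivial edge together with an edge joining the other trivial vertex to one of its non-trivial neighbours yields one of $B_{i,j,k}$, $B_{i,k,j}$, $B_{i,l,m}$, $B_{i,m,l}$. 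Every other collapse of a $\sigma$-graph produces a $\delta$- or $C$-graph, which is not present in this subcomplex; and $\sigma$ is adjacent to no $\alpha$-, $\rho$-, $\beta$-, or other $\sigma$-vertex (for reasons of dimension and shape). One also checks that, among these seven neighbours, the only adjacencies are those of $\gamma_{i,jk}$ with each of $A_{i}$, $B_{i,j,k}$, $B_{i,k,j}$, and of $\gamma_{i,lm}$ with each of $A_{i}$, $B_{i,l,m}$, $B_{i,m,l}$.

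It follows that the closed star of $\sigma$ consists of exactly six $2$-cells: a ``fan'' of three cells $[\sigma,\gamma_{i,jk},\,\cdot\,]$ (with third vertex $A_{i}$, $B_{i,j,k}$, or $B_{i,k,j}$) all sharing the edge $[\sigma,\gamma_{i,jk}]$, and a similar fan of three cells $[\sigma,\gamma_{i,lm},\,\cdot\,]$ all sharing $[\sigma,\gamma_{i,lm}]$, the two fans meeting precisely along the single edge $[\sigma,A_{i}]$. Each fan is simply connected (indeed it retracts onto its spine edge), and as they are glued along one edge, the closed star of $\sigma$ is simply connected by Corollary \ref{cor gluing 2-cells}.

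Finally I would add these closed stars to $\mathcal{D}_{n}[\{\alpha,\rho,A,\beta,\gamma,B\}]$ one $\sigma$-vertex at a time. The intersection of the closed star of $\sigma_{i,jk,lm}$ with the subcomplex built so far is precisely its ``outer boundary'': the tree on $\gamma_{i,jk},\gamma_{i,lm},A_{i},B_{i,j,k},B_{i,k,j},B_{i,l,m},B_{i,m,l}$ with the edges recorded above, which already lies in $\mathcal{D}_{n}[\{\alpha,\rho,A,\beta,\gamma,B\}]$ --- and since no two $\sigma$-vertices are adjacent, previously-attached stars contribute nothing further to this intersection. This tree is path-connected, so Corollary \ref{cor gluing 2-cells} applies at each step, and after finitely many steps $\mathcal{D}_{n}[\{\alpha,\rho,A,\beta,\gamma,B,\sigma\}]$ is simply connected. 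The one delicate point --- the main obstacle --- is the bookkeeping in the second step: correctly sorting the collapses of a $\sigma$-graph into those landing in the chosen seven graph types and those giving $\delta$- or $C$-graphs, and thereby pinning down the exact cell structure of the closed star; once this is in hand, the gluing argument is routine.
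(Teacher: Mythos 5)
Your proposal is correct and follows essentially the same route as the paper: identify the neighbourhood of each $\sigma_{i,jk,lm}$ as the six $2$-cells $[\sigma,\gamma_{i,jk},A_{i}]$, $[\sigma,\gamma_{i,lm},A_{i}]$, $[\sigma,\gamma,B]$ (two fans glued along $[\sigma,A_{i}]$, hence simply connected), observe that its intersection with $\mathcal{D}_{n}[\{\alpha,\rho,A,\beta,\gamma,B\}]$ is the path-connected tree of edges $\gamma_{i,jk}\dash A_{i}$, $\gamma_{i,lm}\dash A_{i}$, $\gamma\dash B$, and apply Corollary \ref{cor gluing 2-cells} one $\sigma$-vertex at a time. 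Your enumeration of which collapses of $\sigma$ land in the chosen graph types (versus producing $\delta$- or $C$-graphs) matches the paper's.
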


\begin{proof}
Given a vertex $\sigma_{i,jk,lm}$ in $\mathcal{D}_{n}[\{\alpha,\rho,A,\beta,\gamma,B,\sigma\}]$, with associated graph \\ \Tsigma{$n-5$}{$i$}{$j$}{$k$}{$l$}{$m$}, we see there is precisely one $A$-vertex it is adjacent to, \TA{$n-1$}{$i$}~.
We also see that $\sigma_{i,jk,lm}$ collapses to two $\gamma$-vertices, $\gamma_{i,jk}$ and $\gamma_{i,lm}$.
Further, $\gamma_{i,jk}$ collapses to both $B_{i,j,k}$ and $B_{i,k,j}$ (similarly for $\gamma_{i,lm}$).
To create $\mathcal{D}_{n}[\{\alpha,\rho,A,\beta,\gamma,B,\sigma\}]$, in $\mathcal{D}_{n}[\{\alpha,\rho,A,\beta,\gamma,B\}]$ at any $A_{i}$-vertex we attach two 2-cells $[\sigma_{i,jk,lm},\gamma_{i,jk}, A_{i}]$ and $[\sigma_{i,jk,lm},\gamma_{i,lm}, A_{i}]$ (glued to each other along the shared edge $A_{i}$-$\sigma_{i,jk,lm}$) wherever we see two vertices $\gamma_{i,jk}$ and $\gamma_{i,lm}$ adjacent to $A_{i}$ with $j$, $k$, $l$ and $m$ (and $i$) distinct.
We then glue in additional 2-cells of the form $[\sigma,\gamma,B]$ wherever we see a path $\sigma$--$\gamma$--$B$.

\begin{center}
\begin{tikzpicture} % add sigma
\transparent{0.7}
\filldraw[olive, thick, fill=lime] (0,0) -- (0,1) -- (2,0) -- cycle; % top right
\filldraw[red, thick, fill=pink] (0,0) -- (0,1) -- (-2,0) -- cycle; % top left
\filldraw[orange, thick, fill=yellow] (0,0) -- (1,-1.5) -- (2,0) -- cycle; % middle left
\filldraw[orange, thick, fill=yellow] (0,0) -- (-1,-1.5) -- (-2,0) -- cycle; % middle right
\filldraw[purple, thick, fill=magenta] (0.9,-0.5) -- (1,-1.5) -- (2,0) -- cycle;  % rho B
\filldraw[purple, thick, fill=magenta] (-0.9,-0.5) -- (-1,-1.5) -- (-2,0) -- cycle; % rho B
\filldraw[purple, thick, fill=magenta] (1.2,-2.2) -- (1,-1.5) -- (2,0) -- cycle; % rho B bottom
\filldraw[purple, thick, fill=magenta] (-1.2,-2.2) -- (-1,-1.5) -- (-2,0) -- cycle; % rho B bottom
\filldraw[RedViolet, thick, fill=Thistle] (0,0) -- (0,-1.5) -- (1,-1.5) -- cycle;
\filldraw[RedViolet, thick, fill=Thistle] (0,0) -- (0,-1.5) -- (-1,-1.5) -- cycle;
\filldraw[RedViolet, thick, fill=Thistle] (1.2,-2.2) -- (0,-1.5) -- (1,-1.5) -- cycle;
\filldraw[RedViolet, thick, fill=Thistle] (-1.2,-2.2) -- (0,-1.5) -- (-1,-1.5) -- cycle;
\filldraw[RedViolet, thick, fill=Thistle] (0.9,-0.5) -- (0,-1.5) -- (1,-1.5) -- cycle;
\filldraw[RedViolet, thick, fill=Thistle] (-0.9,-0.5) -- (0,-1.5) -- (-1,-1.5) -- cycle;
\transparent{0.9}
\draw[white,fill] (0,0) circle [radius=0.2cm];
\node[brown] at (0,0) {$A_{i}$}; % Ai
\draw[white,fill] (0,1) circle [radius=0.15cm];
\node[red] at (0,1) {$\alpha$}; % alpha
\draw[white,fill] (-2,0) circle [radius=0.15cm];
\node[teal] at (-2.1,0) {$\rho_{jk}$}; % rho
\draw[white,fill] (2,0) circle [radius=0.15cm];
\node[teal] at (2.2,0) {$\rho_{lm}$}; % rho
\draw[white,fill] (1,-1.5) circle [radius=0.15cm];
\node[violet] at (1.2,-1.52) {$\gamma_{i,lm}$}; % gamma
\draw[white,fill] (-1,-1.5) circle [radius=0.15cm];
\node[violet] at (-0.8,-1.52) {$\gamma_{i,jk}$}; % gamma
\draw[white,fill] (0,-1.5) circle [radius=0.15cm];
\node[yellow] at (0,-2) {$\sigma_{i,jk,lm}$}; % sigma
\node[yellow] at (0,-1.5) {$\sigma$};
\draw[white,fill] (-0.9,-0.5) circle [radius=0.15cm];
\node[orange] at (-0.7,-0.55) {$B_{i,j,k}$}; % B
\draw[white,fill] (0.9,-0.5) circle [radius=0.15cm];
\node[orange] at (1.2,-0.55) {$B_{i,l,m}$}; % B
\draw[white,fill] (-1.2,-2.2) circle [radius=0.15cm];
\node[orange] at (-0.9,-2.3) {$B_{i,k,j}$}; % B
\draw[white,fill] (1.2,-2.2) circle [radius=0.15cm];
\node[orange] at (1.5,-2.3) {$B_{i,m,l}$}; % B
\end{tikzpicture}
\end{center}
That is, each $\sigma_{i,jk,lm}$ has a simply connected neighbourhood, and the intersection of this neighbourhood with $\mathcal{D}_{n}[\{\alpha,\rho,A,\beta,\gamma,B\}]$ is the collection of edges $\gamma_{i,jk}\dash B_{i,j,k}$, $\gamma_{i,jk}\dash B_{i,k,j}$, $\gamma_{i,lm}\dash B_{i,l,m}$, $\gamma_{i,lm}\dash B_{i,m,l}$, $\gamma_{i,jk}\dash A_{i}$, and $\gamma_{i,lm}\dash A_{i}$, which is path conected.
The result follows from Corollary \ref{cor gluing 2-cells}.
\end{proof}

The process of adding in $\tau$-vertices to our structure will be very similar to that for $\sigma$-vertices.

\begin{lemma}
$\mathcal{D}_{n}[\{\alpha,\rho,A,\beta,\gamma,B,\sigma,\tau\}]$ is simply connected.
\end{lemma}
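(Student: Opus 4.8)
The plan is to adapt, essentially verbatim, the proof of the previous lemma, treating $\tau$-vertices the way $\sigma$-vertices were treated. Write $X=\mathcal{D}_{n}[\{\alpha,\rho,A,\beta,\gamma,B,\sigma\}]$, which is simply connected by the previous lemma, and build $\mathcal{D}_{n}[\{\alpha,\rho,A,\beta,\gamma,B,\sigma,\tau\}]$ from $X$ by adjoining, one $\tau$-vertex at a time, the neighbourhood $N(\tau_{i,j,kl})$ of that vertex in the new subcomplex, i.e.\ the union of the closed cells containing $\tau_{i,j,kl}$. Since $\tau_{i,j,kl}$ is one of the graph shapes with the maximal number $n+1$ of edges, nothing collapses onto it, so $N(\tau_{i,j,kl})$ is the cone with apex $\tau_{i,j,kl}$ over the subcomplex $R$ of $X$ spanned by the collapses of $\tau_{i,j,kl}$ that occur in $X$.

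The first substantive step is to identify $R$. Collapsing the edge between the two trivial vertices of $\tau_{i,j,kl}$ yields $\beta_{i,j}$; collapsing the edge from $G_{i}$ to its trivial neighbour yields $\gamma_{i,kl}$; collapsing any further (equivalently, collapsing both of these edges) yields $A_{i}$; collapsing the $G_{i}$-edge together with the edge from the other trivial vertex to $G_{k}$ (resp.\ $G_{l}$) yields $B_{i,k,l}$ (resp.\ $B_{i,l,k}$); and, for each suppressed leaf $G_{v}$ with $v\in\{1,\dots,n\}-\{i,j,k,l\}$, collapsing the central trivial vertex and its trivial neighbour onto $G_{v}$ yields $B_{v,i,j}$. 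The remaining single-edge collapses of $\tau_{i,j,kl}$ are to $\varepsilon$- and $\delta$-graphs, which are \emph{not} vertices of this subcomplex and so contribute nothing. Hence the $2$-cells of the subcomplex through $\tau_{i,j,kl}$ are exactly $[\tau_{i,j,kl},\beta_{i,j},A_{i}]$, $[\tau_{i,j,kl},\gamma_{i,kl},A_{i}]$, $[\tau_{i,j,kl},\gamma_{i,kl},B_{i,k,l}]$, $[\tau_{i,j,kl},\gamma_{i,kl},B_{i,l,k}]$, and $[\tau_{i,j,kl},\beta_{i,j},B_{v,i,j}]$ for $v\in\{1,\dots,n\}-\{i,j,k,l\}$, so $R$ is the graph with edges $\beta_{i,j}\dash A_{i}$, $\gamma_{i,kl}\dash A_{i}$, $\gamma_{i,kl}\dash B_{i,k,l}$, $\gamma_{i,kl}\dash B_{i,l,k}$ and $\beta_{i,j}\dash B_{v,i,j}$ ($v\in\{1,\dots,n\}-\{i,j,k,l\}$). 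Each edge of $R$ meets one of $A_{i},\beta_{i,j},\gamma_{i,kl}$, and these three are joined along $A_{i}$, so $R$ is a tree; hence $R$ is contractible and $N(\tau_{i,j,kl})$, being a cone on $R$, is contractible, in particular simply connected.

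To conclude, note that $R\subseteq X$, so $X\cap N(\tau_{i,j,kl})=R$ is path connected, and that two distinct $\tau$-neighbourhoods can share only cells containing no $\tau$-vertex (no $\tau$-graph collapses to another), hence only cells already lying in $X$. Therefore, after adjoining any collection of $\tau$-neighbourhoods to $X$, the intersection of the result with the next $N(\tau_{i,j,kl})$ is still just $R$, which is path connected; applying Corollary \ref{cor gluing 2-cells} to $X$ and then to each $N(\tau_{i,j,kl})$ in turn (there are only finitely many) shows that $\mathcal{D}_{n}[\{\alpha,\rho,A,\beta,\gamma,B,\sigma,\tau\}]$ is simply connected. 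The one point requiring care is the enumeration in the second paragraph: one must be sure that no collapse of $\tau_{i,j,kl}$ into a graph shape already present has been overlooked, since a missed $2$-cell could introduce a cycle into $R$ — it is precisely the absence, at this stage, of the $\delta$- and $\varepsilon$-vertices that keeps $R$ acyclic, mirroring the role played by the absence of the $\delta$-vertices in the $\sigma$-lemma.
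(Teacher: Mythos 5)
Your overall strategy is the same as the paper's: attach each $\tau$-vertex via its neighbourhood, check that neighbourhood is simply connected, check its intersection with the previous stage is path connected, and apply Corollary \ref{cor gluing 2-cells} iteratively. The cone-on-the-link formulation and the explicit remark that distinct $\tau$-neighbourhoods meet only inside $X$ are welcome additions. However, the enumeration of $R$ — the step you yourself flag as the one requiring care — is incomplete. The assertion that the single-edge collapses of $\tau_{i,j,kl}$ to $\varepsilon$-graphs ``contribute nothing'' is too quick: although $\varepsilon_{i,j,k,l}$ is not a vertex of this subcomplex, its further collapses are. Concretely, collapsing the edge between the two trivial vertices of $\tau_{i,j,kl}$ and then collapsing the resulting trivial vertex onto $G_{k}$ (resp.\ $G_{l}$) produces $B_{k,i,j}$ (resp.\ $B_{l,i,j}$); equivalently, $\beta_{i,j}$ collapses to $B_{v,i,j}$ for \emph{all} $v\ne i,j$, not just the $n-4$ suppressed leaves. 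Hence the $3$-cliques $\tau_{i,j,kl}\dash\beta_{i,j}\dash B_{k,i,j}$ and $\tau_{i,j,kl}\dash\beta_{i,j}\dash B_{l,i,j}$ give two further $2$-cells in $N(\tau_{i,j,kl})$, and $R$ has two more edges $\beta_{i,j}\dash B_{k,i,j}$ and $\beta_{i,j}\dash B_{l,i,j}$ than you list.

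Fortunately this does not damage the argument: $B_{k,i,j}$ and $B_{l,i,j}$ are bottom-dimensional graphs adjacent in $R$ only to $\beta_{i,j}$ (neither is a collapse of $\gamma_{i,kl}$, and no two bottom graphs are adjacent), so they attach as two additional leaves and $R$ remains a tree. With the enumeration corrected, the rest of your proof goes through verbatim and agrees with the paper's.
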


\begin{proof}
The vertex $\tau_{i,l,jk}$ in $\mathcal{D}_{n}[\{\alpha,\rho,A,\beta,\gamma,B,\sigma,\tau\}]$ with graph \\ \noindent \Ttau{$n-4$}{$i$}{$l$}{$j$}{$k$} collapses to $\gamma_{i,jk}$ \Tgamma{$n-3$}{$i$}{$j$}{$k$} and $\beta_{i,l}$ \Tbeta{$n-2$}{$i$}{$l$}.
In turn, $\beta_{i,l}$ and $\gamma_{i,jk}$ both collapse to $A_{i}$.
Additionally $\gamma_{i,jk}$ collapses to $B_{i,j,k}$ and $B_{i,k,j}$, and $\beta_{i,l}$ collapses to $n-2$ vertices of the form $B_{v,i,l}$ (for $v\ne i,l$).
and in our structure so far, wherever we see a path $\gamma_{i,jk}$--$A_{i}$--$\beta_{i,l}$ with $i$, $j$, $k$, $l$ distinct, we glue in a pair of 2-cells of the form $[\tau,\gamma,A]$ and $[\tau,\beta,A]$ (glued together along their common edge $A$-$\tau$).
As before with $\sigma$, we must also glue in all possible 2-cells of the form $[\tau,\gamma,B]$ and $[\tau,\beta,B]$ as determined by the relative collapses of $\gamma$ and $\beta$.
\begin{center}
\begin{tikzpicture} % add tau
\transparent{0.7}
\filldraw[olive, thick, fill=lime] (0,0) -- (0,1) -- (2,0) -- cycle;
\filldraw[red, thick, fill=pink] (0,0) -- (0,1) -- (-2,0) -- cycle;
\filldraw[orange, thick, fill=Goldenrod] (0,0) -- (0.7,-1) -- (2,0) -- cycle;
\filldraw[orange, thick, fill=yellow] (0,0) -- (-0.7,-1) -- (-2,0) -- cycle;
\filldraw[Violet, thick, fill=SkyBlue] (1.5,-2.5) -- (0,-1.5) -- (0.7,-1) -- cycle;
\filldraw[Violet, thick, fill=SkyBlue] (-1.5,-2.5) -- (0,-1.5) -- (-0.7,-1) -- cycle;
\filldraw[purple, thick, fill=magenta] (0.5,-2.5) -- (0.7,-1) -- (2,0) -- cycle;
\filldraw[purple, thick, fill=magenta] (-0.5,-2.5) -- (-0.7,-1) -- (-2,0) -- cycle;
\filldraw[Violet, thick, fill=SkyBlue] (0,0) -- (0,-1.5) -- (0.7,-1) -- cycle;
\filldraw[Violet, thick, fill=SkyBlue] (0,0) -- (0,-1.5) -- (-0.7,-1) -- cycle;
\filldraw[purple, thick, fill=magenta] (1.5,-2.5) -- (0.7,-1) -- (2,0) -- cycle;
\filldraw[purple, thick, fill=magenta] (-1.5,-2.5) -- (-0.7,-1) -- (-2,0) -- cycle;
\filldraw[Violet, thick, fill=SkyBlue] (0.5,-2.5) -- (0,-1.5) -- (0.7,-1) -- cycle;
\filldraw[Violet, thick, fill=SkyBlue] (-0.5,-2.5) -- (0,-1.5) -- (-0.7,-1) -- cycle;
\transparent{0.9}
\draw[white,fill] (0,0) circle [radius=0.2cm];
\node[brown] at (0,0) {$A_{i}$}; % Ai
\draw[white,fill] (0,1) circle [radius=0.15cm];
\node[red] at (0,1) {$\alpha$}; % alpha
\draw[white,fill] (-2,0) circle [radius=0.15cm];
\node[teal] at (-2.1,0) {$\rho_{jk}$}; % rho
\draw[white,fill] (2,0) circle [radius=0.15cm];
\node[teal] at (2.2,0) {$\rho_{lm}$}; % rho
\draw[white,fill] (0.7,-1) circle [radius=0.15cm];
\node[violet] at (0.8,-1.05) {$\beta_{i,l}$}; % beta
\draw[white,fill] (-0.7,-1) circle [radius=0.15cm];
\node[violet] at (-0.5,-1) {$\gamma_{i,jk}$}; % gamma
\draw[white,fill] (0,-1.5) circle [radius=0.15cm];
\node[blue] at (0,-1.8) {$\tau_{i,l,jk}$}; % tau
\node[blue] at (0,-1.5) {$\tau$};
\draw[white,fill] (-0.5,-2.5) circle [radius=0.15cm];
\node[orange] at (-0.35,-2.6) {$B_{i,j,k}$}; % B
\draw[white,fill] (0.5,-2.5) circle [radius=0.15cm];
\node[orange] at (0.8,-2.6) {$B_{v_{1},i,l}$}; % B
\draw[white,fill] (-1.5,-2.5) circle [radius=0.15cm];
\node[orange] at (-1.25,-2.6) {$B_{i,k,j}$}; % B
\draw[white,fill] (1.5,-2.5) circle [radius=0.15cm];
\node[orange] at (2,-2.6) {$B_{v_{n-2},i,l}$}; % B
\node[orange] at (1,-2.4) {$\dots$};
\end{tikzpicture}
\end{center}
We have then identified the neighbourhood of $\tau_{i,l,jk}$ inside $\mathcal{D}_{n}[\{\alpha,\rho,A,\beta,\gamma,B,\sigma,\tau\}]$, and found the intersection of this neighbourhood with $\mathcal{D}_{n}[\{\alpha,\rho,A,\beta,\gamma,B,\sigma\}]$.
By the previous lemma, $\mathcal{D}_{n}[\{\alpha,\rho,A,\beta,\gamma,B,\sigma\}]$ is simply connected, and clearly the neighbourhood of $\tau$ (shown in pale blue) is simply connected.
Moreover, the intersection of these subsets is the collection of edges $\gamma_{i,jk}\dash A_{i}$, $\gamma_{i,jk}\dash B_{i,j,k}$, $\gamma_{i,jk}\dash B_{i,k,j}$, $\beta_{i,l}\dash A_{i}$, $\beta_{i,l}\dash B_{v,i,l}$ (for $v\ne i,j,k,l$).
Since this is path connected, then by Corollary \ref{cor gluing 2-cells}, $\mathcal{D}_{n}[\{\alpha,\rho,A,\beta,\gamma,B,\sigma,\tau\}]$ is simply connected.
\end{proof}

\begin{lemma}
$\mathcal{D}_{n}[\{\alpha,\rho,A,\beta,\gamma,B,\sigma,\tau,\delta\}]$ is simply connected.
\end{lemma}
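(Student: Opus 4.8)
The plan is to continue exactly as in the preceding lemmas, attaching the neighbourhood of each $\delta$-vertex to the subcomplex $\mathcal{D}_{n}[\{\alpha,\rho,A,\beta,\gamma,B,\sigma,\tau\}]$ (which the previous lemma shows is simply connected) by means of Corollary \ref{cor gluing 2-cells}.

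First I would pin down, for a fixed vertex $\delta_{i,j,k,lm}$ (whose graph has a single trivial vertex, adjacent to $G_{i}$, $G_{l}$ and $G_{m}$, with $G_{i}$ the basepoint carrying the $n-5$ suppressed leaves, adjacent to $G_{j}$, which has pendant $G_{k}$), exactly which vertices of $\mathcal{D}_{n}[\{\alpha,\rho,A,\beta,\gamma,B,\sigma,\tau,\delta\}]$ it is adjacent to. Collapsing the edge from the trivial vertex towards $G_{i}$ gives $B_{i,j,k}$, while collapsing either of the edges towards $G_{l}$ or $G_{m}$ gives $C_{i,j,k,l,m}$ or $C_{i,j,k,m,l}$. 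Dually, $\delta_{i,j,k,lm}$ is itself a collapse of exactly two graph shapes appearing in Table \ref{table n>=5 points}: of $\tau_{j,k,lm}$, by collapsing the edge from the basepoint to the suppressed leaf $G_{i}$, and of $\sigma_{i,jk,lm}$, by collapsing the edge from the trivial $\{G_{j},G_{k}\}$-vertex to $G_{j}$ (it is not a collapse of any $\rho$-graph, since $\rho$-graphs collapse only to $\alpha$-, $\beta$- and $\gamma$-graphs). Since $C$-vertices do not lie in $\{\alpha,\rho,A,\beta,\gamma,B,\sigma,\tau,\delta\}$, the vertex $\delta_{i,j,k,lm}$ is adjacent in $\mathcal{D}_{n}[\{\alpha,\rho,A,\beta,\gamma,B,\sigma,\tau,\delta\}]$ precisely to $\tau_{j,k,lm}$, $\sigma_{i,jk,lm}$ and $B_{i,j,k}$.

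Next I would record the $2$-cells through $\delta_{i,j,k,lm}$. Since $\tau_{j,k,lm}$ and $\sigma_{i,jk,lm}$ are of the same (top) dimension, neither is a collapse of the other, so they are not adjacent; on the other hand $B_{i,j,k}$ is a collapse of each of them (indeed, via $\delta_{i,j,k,lm}$ itself). Thus the only $2$-cells containing $\delta_{i,j,k,lm}$ are $[\delta_{i,j,k,lm},\tau_{j,k,lm},B_{i,j,k}]$ and $[\delta_{i,j,k,lm},\sigma_{i,jk,lm},B_{i,j,k}]$, and these meet along the edge $\delta_{i,j,k,lm}\dash B_{i,j,k}$. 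Hence the neighbourhood of $\delta_{i,j,k,lm}$ in $\mathcal{D}_{n}[\{\alpha,\rho,A,\beta,\gamma,B,\sigma,\tau,\delta\}]$ is the union of two triangles glued along a common edge, a quadrilateral disk, in particular simply connected; and its intersection with $\mathcal{D}_{n}[\{\alpha,\rho,A,\beta,\gamma,B,\sigma,\tau\}]$ is exactly the edge path $\tau_{j,k,lm}\dash B_{i,j,k}\dash \sigma_{i,jk,lm}$, which is path connected. Applying Corollary \ref{cor gluing 2-cells} once for each of the finitely many $\delta$-vertices then gives the result: distinct $\delta$-vertices contribute disjoint new cells and no $\delta$-vertex lies in the neighbourhood of another, so at each stage the newly attached disk meets the complex built so far in such a two-edge path.

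The part requiring the most care is the combinatorial bookkeeping of the first two steps — enumerating all collapses and expansions of $\delta_{i,j,k,lm}$ within the restricted family of graph shapes, and in particular verifying that the two ways of growing $\delta_{i,j,k,lm}$ (to $\tau_{j,k,lm}$ and to $\sigma_{i,jk,lm}$) and the one in-family way of collapsing it (to $B_{i,j,k}$) are compatible, so that the neighbourhood is a single disk whose overlap with the previous subcomplex is connected. It is essential here that both the $\tau$- and the $\sigma$-expansion of $\delta_{i,j,k,lm}$ collapse down to the \emph{same} vertex $B_{i,j,k}$; once this is checked, the argument is a routine instance of the gluing scheme already used to attach the $\beta$-, $\gamma$-, $B$-, $\sigma$- and $\tau$-vertices.
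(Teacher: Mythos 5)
Your proof is correct and follows essentially the same route as the paper: you identify the neighbourhood of each $\delta_{i,j,k,lm}$ as the two 2-cells $[\delta,\tau_{j,k,lm},B_{i,j,k}]$ and $[\delta,\sigma_{i,jk,lm},B_{i,j,k}]$ glued along the $\delta\dash B$ edge, note that its intersection with $\mathcal{D}_{n}[\{\alpha,\rho,A,\beta,\gamma,B,\sigma,\tau\}]$ is the path-connected edge path $\sigma_{i,jk,lm}\dash B_{i,j,k}\dash\tau_{j,k,lm}$, and apply Corollary \ref{cor gluing 2-cells} once per $\delta$-vertex. Your extra bookkeeping (excluding the $C$-collapses, ruling out $\rho$-expansions, and checking $\sigma$ and $\tau$ are not adjacent) only makes explicit what the paper leaves implicit.
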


\begin{proof}
Each $\delta$ in $\mathcal{D}_{n}[\{\alpha,\rho,A,\beta,\gamma,B,\sigma,\tau,\delta\}]$ collapses to a unique $B$ and `grows' to a unique $\sigma$ and a unique $\tau$.\\ \noindent
Precisely, $\delta_{i,j,k,lm}$ \Tdelta{$n-5$}{$i$}{$j$}{$k$}{$l$}{$m$} collapses to $B_{i,j,k}$ \TB{$n-3$}{$i$}{$j$}{$k$} and grows to $\sigma_{i,jk,lm}$ \Tsigma{$n-5$}{$i$}{$j$}{$k$}{$l$}{$m$} and $\tau_{j,k,lm}$ \Ttau{$n-4$}{$j$}{$k$}{$l$}{$m$}.
As such, we see that a given $\sigma_{i,jk,lm}$ and $\tau_{j,k,lm}$ are both adjacent to $B_{i,j,k}$ (and share no other common $B$ adjacency).
So whenever we see a path $\sigma_{i,jk,lm}$--$B_{i,j,k}$--$\tau_{j,k,lm}$ in our structure, we will glue in 2-cells $[\sigma,\delta,B]$ and $[\tau,\delta,B]$ (gluing them along their shared $\delta$-$B$ edge).
\begin{center}
\begin{tikzpicture} % add delta
\transparent{0.7}
\filldraw[red, thick, fill=pink] (0,0) -- (0,1) -- (1,0) -- cycle;
\filldraw[red, thick, fill=pink] (0,0) -- (0,1) -- (-1,0) -- cycle;
\filldraw[orange, thick, fill=Goldenrod] (0,1) -- (0.5,2) -- (1,0) -- cycle;
\filldraw[orange, thick, fill=yellow] (0,1) -- (-0.5,2) -- (-1,0) -- cycle;
\filldraw[Violet, thick, fill=SkyBlue] (1.2,3) -- (0.5,2) -- (1,0) -- cycle;
\filldraw[RedViolet, thick, fill=Thistle] (-1.2,3) -- (-0.5,2) -- (-1,0) -- cycle;
\filldraw[Violet, thick, fill=SkyBlue] (1.2,3) -- (0.5,2) -- (0,3) -- cycle;
\filldraw[RedViolet, thick, fill=Thistle] (-1.2,3) -- (-0.5,2) -- (0,3) -- cycle;
\filldraw[purple, thick, fill=magenta] (0,1) -- (0.5,2) -- (0,3) -- cycle;
\filldraw[purple, thick, fill=magenta] (0,1) -- (-0.5,2) -- (0,3) -- cycle;
\filldraw[Bittersweet, thick, fill=Salmon] (0,4) -- (1.2,3) -- (0,3) -- cycle;
\filldraw[Bittersweet, thick, fill=Salmon] (0,4) -- (-1.2,3) -- (0,3) -- cycle;
\transparent{0.9}
\draw[white,fill] (0,0) circle [radius=0.15cm];
\draw[white,fill] (0,1) circle [radius=0.15cm];
\draw[white,fill] (1,0) circle [radius=0.15cm];
\draw[white,fill] (0.5,2) circle [radius=0.15cm];
\draw[white,fill] (-1,0) circle [radius=0.15cm];
\draw[white,fill] (-0.5,2) circle [radius=0.15cm];
\draw[white,fill] (1.2,3) circle [radius=0.15cm];
\draw[white,fill] (-1.2,3) circle [radius=0.15cm];
\draw[white,fill] (0,3) circle [radius=0.15cm];
\draw[white,fill] (0,4) circle [radius=0.15cm];
\node[red] at (0,0) {$\alpha$};
\node[brown] at (-1,-0.1) {$A_{i}$};
\node[brown] at (1,-0.1) {$A_{j}$};
\node[teal] at (0.15,0.95) {$\rho_{jk}$};
\node[violet] at (-0.3,1.95) {$\gamma_{i,jk}$};
\node[violet] at (0.65,1.95) {$\beta_{j,k}$};
\node[yellow] at (-1.7,3) {$\sigma_{i,jk,lm}$};
\node[blue] at (1.6,2.9) {$\tau_{j,k,lm}$};
\node[orange] at (0.28,2.92) {$B_{i,j,k}$};
\node[violet] at (0.5,4) {$\delta_{i,j,k,lm}$};
\end{tikzpicture}
\end{center}
That is, each $\delta_{ij,k,lm}$ has a neighbourhood in $\mathcal{D}_{n}[\{\alpha,\rho,A,\beta,\gamma,B,\sigma,\tau,\delta\}]$ comprising two 2-cells glued along a single edge.
The intersection of this neighbourhood with $\mathcal{D}_{n}[\{\alpha,\rho,A,\beta,\gamma,B,\sigma,\tau\}]$ is the edge path $\sigma_{i,jk,lm}\dash B_{i,j,k}\dash \tau_{j,k,lm}$.
So successive applications of Corollary \ref{cor gluing 2-cells} tells us that $\mathcal{D}_{n}[\{\alpha,\rho,A,\beta,\gamma,B,\sigma,\tau,\delta\}]$ is simply connected.
\end{proof}

\begin{lemma}
$\mathcal{D}_{n}[\{\alpha,\rho,A,\beta,\gamma,B,\sigma,\tau,\delta,C\}]$ is simply connected.
\end{lemma}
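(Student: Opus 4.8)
The plan is to run, one final time, the attach-and-glue scheme used for all the previous subcomplexes. By the previous lemma $\mathcal{D}_{n}[\{\alpha,\rho,A,\beta,\gamma,B,\sigma,\tau,\delta\}]$ is simply connected, and $\mathcal{D}_{n}[\{\alpha,\rho,A,\beta,\gamma,B,\sigma,\tau,\delta,C\}]$ is obtained from it by adjoining the $C$-vertices together with the edges and $2$-cells incident to them; note that no edge joins two distinct $C$-vertices, since both are minimal $(n-1)$-edge graphs and neither is a collapse of the other, so every new cell contains exactly one $C$-vertex. Hence it suffices to describe the neighbourhood of a single $C_{i,j,k,l,m}$ (the union of the closed cells containing it), to verify that it is simply connected and that its intersection with $\mathcal{D}_{n}[\{\alpha,\rho,A,\beta,\gamma,B,\sigma,\tau,\delta\}]$ is path connected, and then to iterate Corollary \ref{cor gluing 2-cells} over the finitely many $C$-vertices.

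First I would determine which graphs of Table \ref{table n>=5 points} collapse onto $C_{i,j,k,l,m}$. Since $C_{i,j,k,l,m}$ has no trivial vertex it is a sink for the collapsing order, so all its neighbours are larger graphs, and an inspection of the edge-collapses — using the symmetries $\sigma_{i,jk,lm}=\sigma_{i,lm,jk}$, $\varepsilon_{i,j,k,l}=\varepsilon_{k,l,i,j}$ and $C_{i,j,k,l,m}=C_{i,l,m,j,k}$ — shows that $C_{i,j,k,l,m}$ is the one-step collapse of $\delta_{i,j,k,lm}$, $\delta_{i,l,m,jk}$ and $\varepsilon_{j,k,l,m}$, and the two-step collapse of $\sigma_{i,jk,lm}$, $\tau_{j,k,lm}$ and $\tau_{l,m,jk}$. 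Reading off the $2$-cells of $\mathcal{C}_{n}$ (length-two collapse chains) through $C_{i,j,k,l,m}$, these are exactly the six triangles
\[ [\sigma_{i,jk,lm},\delta_{i,j,k,lm},C_{i,j,k,l,m}],\quad [\sigma_{i,jk,lm},\delta_{i,l,m,jk},C_{i,j,k,l,m}],\quad [\tau_{j,k,lm},\delta_{i,j,k,lm},C_{i,j,k,l,m}], \]
\[ [\tau_{j,k,lm},\varepsilon_{j,k,l,m},C_{i,j,k,l,m}],\quad [\tau_{l,m,jk},\delta_{i,l,m,jk},C_{i,j,k,l,m}],\quad [\tau_{l,m,jk},\varepsilon_{j,k,l,m},C_{i,j,k,l,m}]. \]

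Next I would check how these six triangles are assembled. Each of the six edges issuing from $C_{i,j,k,l,m}$ — to $\sigma_{i,jk,lm}$, $\tau_{j,k,lm}$, $\tau_{l,m,jk}$, $\delta_{i,j,k,lm}$, $\delta_{i,l,m,jk}$ and $\varepsilon_{j,k,l,m}$ — lies in precisely two of the triangles, and tracing these incidences shows that the triangles are glued cyclically around $C_{i,j,k,l,m}$; equivalently, the link of $C_{i,j,k,l,m}$ is the hexagonal cycle $\sigma_{i,jk,lm}\dash\delta_{i,j,k,lm}\dash\tau_{j,k,lm}\dash\varepsilon_{j,k,l,m}\dash\tau_{l,m,jk}\dash\delta_{i,l,m,jk}\dash\sigma_{i,jk,lm}$ (one checks there are no chords, since no two of these six neighbours are joined by an edge of $\mathcal{C}_{n}$). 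Hence the neighbourhood of $C_{i,j,k,l,m}$ is a cone on this hexagon, so a $2$-disk, hence simply connected, and its intersection with $\mathcal{D}_{n}[\{\alpha,\rho,A,\beta,\gamma,B,\sigma,\tau,\delta\}]$ is exactly that hexagon, which is path connected and already lies in the subcomplex. Because distinct $C$-vertices share no $2$-cell and no incident edge, the intersection of this neighbourhood with $\mathcal{D}_{n}[\{\alpha,\rho,A,\beta,\gamma,B,\sigma,\tau,\delta\}]$ enlarged by any $C$-neighbourhoods already glued in remains the same hexagon, so applying Corollary \ref{cor gluing 2-cells} once per $C$-vertex yields the claim.

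The main obstacle is purely combinatorial: correctly enumerating which collapses of $\sigma$, $\tau$, $\delta$ and $\varepsilon$ produce $C$-type graphs (and keeping the three index conventions and the symmetries straight), and confirming that the six triangles really close up into a single disk rather than, say, an incomplete fan or a more complicated $2$-complex. Once the ``hexagon of six triangles around $C_{i,j,k,l,m}$'' picture is in place, the topology and the invocation of Corollary \ref{cor gluing 2-cells} are formally identical to the earlier lemmas.
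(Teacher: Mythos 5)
Your combinatorial picture of the star of $C_{i,j,k,l,m}$ is computed in the wrong complex, and this breaks the gluing step as written. By Definition \ref{defn DnT}, $\mathcal{D}_{n}[\{\alpha,\rho,A,\beta,\gamma,B,\sigma,\tau,\delta,C\}]$ contains only simplices all of whose $0$-cells have graph type in the listed set; since $\varepsilon$ is not in that set, the vertex $\varepsilon_{j,k,l,m}$, the edge $C_{i,j,k,l,m}\dash\varepsilon_{j,k,l,m}$, and the two triangles $[\tau_{j,k,lm},\varepsilon_{j,k,l,m},C_{i,j,k,l,m}]$ and $[\tau_{l,m,jk},\varepsilon_{j,k,l,m},C_{i,j,k,l,m}]$ are simply not cells of the space whose simple connectivity you are proving (they are added only at the next stage, when the $\varepsilon$-vertices come in). Gluing your cone-on-a-hexagon therefore builds a strictly larger complex than the one in the statement. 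Worse, your verification of the hypothesis of Corollary \ref{cor gluing 2-cells} fails: you assert that the intersection of the neighbourhood with $\mathcal{D}_{n}[\{\alpha,\rho,A,\beta,\gamma,B,\sigma,\tau,\delta\}]$ is ``exactly that hexagon, which \ldots already lies in the subcomplex,'' but the hexagon passes through $\varepsilon_{j,k,l,m}$, which is not a vertex of that subcomplex, so the claimed identification of $X\cap Y$ is false as stated.

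The repair is small and lands you exactly on the paper's argument: delete the two $\varepsilon$-triangles from your list. The correct neighbourhood of $C_{i,j,k,l,m}$ in $\mathcal{D}_{n}[\{\alpha,\rho,A,\beta,\gamma,B,\sigma,\tau,\delta,C\}]$ is the fan of the four remaining triangles $[\sigma_{i,jk,lm},\delta_{i,j,k,lm},C]$, $[\sigma_{i,jk,lm},\delta_{i,l,m,jk},C]$, $[\tau_{j,k,lm},\delta_{i,j,k,lm},C]$, $[\tau_{l,m,jk},\delta_{i,l,m,jk},C]$, glued along the edges $C\dash\delta_{i,j,k,lm}$, $C\dash\sigma_{i,jk,lm}$, $C\dash\delta_{i,l,m,jk}$; this is still a disk, and its intersection with the previous stage is the edge path $\tau_{l,m,jk}\dash\delta_{i,l,m,jk}\dash\sigma_{i,jk,lm}\dash\delta_{i,j,k,lm}\dash\tau_{j,k,lm}$ (your hexagon with the open star of $\varepsilon$ removed), which is path connected and genuinely contained in $\mathcal{D}_{n}[\{\alpha,\rho,A,\beta,\gamma,B,\sigma,\tau,\delta\}]$. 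With that correction your remaining observations (no edges or shared $2$-cells between distinct $C$-vertices, no chords in the link, iteration of Corollary \ref{cor gluing 2-cells}) are sound and coincide with the paper's proof.
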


\begin{proof}
Each $C_{i,j,k,l,m}$ \TC{$n-5$}{$i$}{$j$}{$k$}{$l$}{$m$} grows to $\delta_{i,j,k,lm}$, $\delta_{i,l,m,jk}$, $\tau_{l,m,jk}$, $\tau_{j,k,lm}$, and $\sigma_{i,jk,lm}$.\\ \noindent
So the neighbourhood of $C_{i,j,k,l,m}$ inside $\mathcal{D}_{n}[\{\alpha,\rho,A,\beta,\gamma,B,\sigma,\tau,\delta,C\}]$ is four 2-cells $[\tau_{l,m,jk},\delta_{i,l,m,jk},C_{i,j,k,l,m}]$, $[\sigma_{i,jk,lm},\delta_{i,l,m,jk},C_{i,j,k,l,m}]$, \\ \noindent $[\tau_{j,k,lm},\delta_{i,j,k,lm},C_{i,j,k,l,m}]$, and $[\sigma_{i,jk,lm},\delta_{i,j,k,lm},C_{i,j,k,l,m}]$, glued along their common edges.\\ \noindent
The intersection of this neighbourhood with $\mathcal{D}_{n}[\{\alpha,\rho,A,\beta,\gamma,B,\sigma,\tau,\delta\}]$ is the edge path $\tau_{l,m,jk}$--$\delta_{i,l,m,jk}$--$\sigma_{i,jk,lm}$--$\delta_{i,j,k,lm}$--$\tau_{j,k,lm}$. \\ \noindent
By repeated applications of Corollary \ref{cor gluing 2-cells} (and by the previous lemma), \\ $\mathcal{D}_{n}[\{\alpha,\rho,A,\beta,\gamma,B,\sigma,\tau,\delta,C\}]$ is simply connected.
\begin{center}
\begin{tikzpicture} % add C
\transparent{0.7}
\filldraw[red, thick, fill=pink] (0,0) -- (1,1) -- (2,1) -- cycle;
\filldraw[red, thick, fill=pink] (0,0) -- (1,1) -- (0,1) -- cycle;
\filldraw[orange, thick, fill=Goldenrod] (1,1) -- (1.5,2) -- (2,1) -- cycle;
\filldraw[orange, thick, fill=yellow] (1,1) -- (0.5,2) -- (0,1) -- cycle;
\filldraw[Violet, thick, fill=SkyBlue] (2,5) -- (1.5,2) -- (2,1) -- cycle;
\filldraw[RedViolet, thick, fill=Thistle] (0,3) -- (0.5,2) -- (0,1) -- cycle;
\filldraw[Violet, thick, fill=SkyBlue] (2,5) -- (1.5,2) -- (1,3) -- cycle;
\filldraw[RedViolet, thick, fill=Thistle] (0,3) -- (0.5,2) -- (1,3) -- cycle;
\filldraw[purple, thick, fill=magenta] (1,1) -- (1.5,2) -- (1,3) -- cycle;
\filldraw[purple, thick, fill=magenta] (1,1) -- (0.5,2) -- (1,3) -- cycle;
\filldraw[Bittersweet, thick, fill=Salmon] (1,4) -- (2,5) -- (1,3) -- cycle;
\filldraw[Bittersweet, thick, fill=Salmon] (1,4) -- (0,3) -- (1,3) -- cycle;
\filldraw[olive, thick, fill=lime] (0,0) -- (-1,1) -- (-2,1) -- cycle;
\filldraw[olive, thick, fill=lime] (0,0) -- (-1,1) -- (0,1) -- cycle;
\filldraw[orange, thick, fill=Goldenrod] (-1,1) -- (-1.5,2) -- (-2,1) -- cycle;
\filldraw[orange, thick, fill=yellow] (-1,1) -- (-0.5,2) -- (0,1) -- cycle;
\filldraw[Violet, thick, fill=SkyBlue] (-2,5) -- (-1.5,2) -- (-2,1) -- cycle;
\filldraw[RedViolet, thick, fill=Thistle] (0,3) -- (-0.5,2) -- (0,1) -- cycle;
\filldraw[Violet, thick, fill=SkyBlue] (-2,5) -- (-1.5,2) -- (-1,3) -- cycle;
\filldraw[RedViolet, thick, fill=Thistle] (0,3) -- (-0.5,2) -- (-1,3) -- cycle;
\filldraw[purple, thick, fill=magenta] (-1,1) -- (-1.5,2) -- (-1,3) -- cycle;
\filldraw[purple, thick, fill=magenta] (-1,1) -- (-0.5,2) -- (-1,3) -- cycle;
\filldraw[Bittersweet, thick, fill=Salmon] (-1,4) -- (-2,5) -- (-1,3) -- cycle;
\filldraw[Bittersweet, thick, fill=Salmon] (-1,4) -- (0,3) -- (-1,3) -- cycle;
\filldraw[Green, thick, fill=YellowGreen] (0,5.5) -- (0,3) -- (1,4) -- cycle;
\filldraw[Green, thick, fill=YellowGreen] (0,5.5) -- (2,5) -- (1,4) -- cycle;
\filldraw[Green, thick, fill=YellowGreen] (0,5.5) -- (0,3) -- (-1,4) -- cycle;
\filldraw[Green, thick, fill=YellowGreen] (0,5.5) -- (-2,5) -- (-1,4) -- cycle;
\transparent{0.9}
\draw[white,fill] (0,0) circle [radius=0.15cm];
\draw[white,fill] (1,1) circle [radius=0.15cm];
\draw[white,fill] (2,1) circle [radius=0.15cm];
\draw[white,fill] (1.5,2) circle [radius=0.15cm];
\draw[white,fill] (0,1) circle [radius=0.15cm];
\draw[white,fill] (0.5,2) circle [radius=0.15cm];
\draw[white,fill] (2,5) circle [radius=0.15cm];
\draw[white,fill] (0,3) circle [radius=0.15cm];
\draw[white,fill] (1,3) circle [radius=0.15cm];
\draw[white,fill] (1,4) circle [radius=0.15cm];
\draw[white,fill] (-1,1) circle [radius=0.15cm];
\draw[white,fill] (-2,1) circle [radius=0.15cm];
\draw[white,fill] (-1.5,2) circle [radius=0.15cm];
\draw[white,fill] (0,1) circle [radius=0.15cm];
\draw[white,fill] (-0.5,2) circle [radius=0.15cm];
\draw[white,fill] (-2,5) circle [radius=0.15cm];
\draw[white,fill] (0,3) circle [radius=0.15cm];
\draw[white,fill] (-1,3) circle [radius=0.15cm];
\draw[white,fill] (-1,4) circle [radius=0.15cm];
\draw[white,fill] (0,5.5) circle [radius=0.15cm];
\node[red] at (0,0) {$\alpha$};
\node[brown] at (2.1,1) {$A_{j}$};
\node[teal] at (1.15,0.95) {$\rho_{jk}$};
\node[violet] at (0.7,1.95) {$\gamma_{i,jk}$};
\node[violet] at (1.65,1.95) {$\beta_{j,k}$};
\node[blue] at (2.4,4.9) {$\tau_{j,k,lm}$};
\node[orange] at (1.28,2.92) {$B_{i,j,k}$};
\node[violet] at (1.5,4) {$\delta_{i,j,k,lm}$};
\node[brown] at (0,1) {$A_{i}$};
\node[brown] at (-2.1,1) {$A_{l}$};
\node[teal] at (-0.8,1) {$\rho_{lm}$};
\node[violet] at (-0.3,1.95) {$\gamma_{i,lm}$};
\node[violet] at (-1.3,1.95) {$\beta_{l,m}$};
\node[yellow] at (0.47,2.95) {$\sigma_{i,jk,lm}$};
\node[blue] at (-2.4,5) {$\tau_{l,m,jk}$};
\node[orange] at (-0.75,2.95) {$B_{i,l,m}$};
\node[violet] at (-0.5,4) {$\delta_{i,l,m,jk}$};
\node[orange] at (0.55,5.5) {$C_{i,j,k,l,m}$};
\end{tikzpicture}
\end{center}
\end{proof}

Finally, we add $\varepsilon$-vertices to our complex.

\begin{lemma}\label{adding epsilons is sc}
$\mathcal{D}_{n}[\{\alpha,\rho,A,\beta,\gamma,B,\sigma,\tau,\delta,C,\varepsilon\}]$ is simply connected.
\end{lemma}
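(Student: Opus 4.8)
The plan is to follow, essentially verbatim in structure, the inductive scheme of the preceding lemmas of this section: by the previous lemma $\mathcal{D}_{n}[\{\alpha,\rho,A,\beta,\gamma,B,\sigma,\tau,\delta,C\}]$ is simply connected, and we glue in the neighbourhood of each $\varepsilon$-vertex, one at a time, using Corollary \ref{cor gluing 2-cells}. The first task is to pin down the neighbours of a fixed $\varepsilon_{i,j,k,l}$ in $\mathcal{C}_{n}$. Its graph (Table \ref{table n>=5 points}) has $n$ edges, a single trivial basepoint of valency $n-2$, and two valency-$2$ vertices carrying $G_{i}$ and $G_{k}$; the collapsible edges are the two joining the basepoint to those valency-$2$ vertices together with the $n-4$ leaf-edges. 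Collapsing the edge to the $G_{i}$-vertex gives $B_{i,k,l}$, collapsing the edge to the $G_{k}$-vertex gives $B_{k,i,j}$, and collapsing any leaf-edge gives a $C$-vertex; expanding the $G_{i}$- or $G_{k}$-vertex gives $\tau_{k,l,ij}$ and $\tau_{i,j,kl}$ respectively (using the symmetry $\varepsilon_{i,j,k,l}=\varepsilon_{k,l,i,j}$). A short check that no other expansion or collapse lands on one of the eleven admissible graph shapes shows these are all the neighbours of $\varepsilon_{i,j,k,l}$.

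Next I would identify the $2$-cells containing $\varepsilon_{i,j,k,l}$. From the collapse relations one checks that each of $B_{i,k,l}$, $B_{k,i,j}$ and each of the $n-4$ $C$-neighbours is a (two-edge) collapse of \emph{both} $\tau_{i,j,kl}$ and $\tau_{k,l,ij}$, whereas the two $\tau$-vertices are not adjacent (equal edge count) and the collapse-type neighbours are pairwise non-adjacent (they are all minimal graphs, with $n-1$ edges). Hence the $3$-cliques through $\varepsilon_{i,j,k,l}$ are exactly $[\varepsilon_{i,j,k,l},\tau_{i,j,kl},X]$ and $[\varepsilon_{i,j,k,l},\tau_{k,l,ij},X]$ as $X$ runs over $B_{i,k,l}$, $B_{k,i,j}$ and the $C$-neighbours, giving $2(n-2)$ triangles; this is the local picture living inside the $\tau$-$\varepsilon$--Box of Figure \ref{tau epsilon box}. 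Their union is the closed neighbourhood of $\varepsilon_{i,j,k,l}$, and since every one of these triangles contains $\varepsilon_{i,j,k,l}$, this neighbourhood cones onto $\varepsilon_{i,j,k,l}$, so is contractible and in particular simply connected.

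Finally I would compute the intersection of this neighbourhood with $\mathcal{D}_{n}[\{\alpha,\rho,A,\beta,\gamma,B,\sigma,\tau,\delta,C\}]$: it is the union of the edges $\tau_{i,j,kl}\dash X$ and $\tau_{k,l,ij}\dash X$ for $X$ as above, which is path connected, since every such $X$ is joined to $\tau_{i,j,kl}$ and $\tau_{k,l,ij}$ is joined to $\tau_{i,j,kl}$ through, say, $B_{i,k,l}$. As there are only finitely many $\varepsilon$-vertices, no two are ever adjacent, and the full link of each already lies inside $\mathcal{D}_{n}[\{\alpha,\rho,A,\beta,\gamma,B,\sigma,\tau,\delta,C\}]$ (so gluing in earlier $\varepsilon$-neighbourhoods never enlarges the relevant intersection), iterating Corollary \ref{cor gluing 2-cells} — with $X$ the space built so far and $Y$ the neighbourhood of the next $\varepsilon$-vertex — yields that $\mathcal{D}_{n}[\{\alpha,\rho,A,\beta,\gamma,B,\sigma,\tau,\delta,C,\varepsilon\}]$ is simply connected.

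The main obstacle, exactly as in the $C$-case just before, is the bookkeeping in the first two paragraphs: correctly enumerating the neighbours of $\varepsilon_{i,j,k,l}$ and — the delicate point — verifying that each $B$- and $C$-neighbour is genuinely a two-edge collapse of \emph{both} adjacent $\tau$-vertices, so that all $2(n-2)$ triangles are present and the link is the path-connected graph claimed. Once this local picture is fixed, the Seifert--van Kampen step is identical to the preceding lemmas.
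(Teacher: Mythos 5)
Your proposal is correct and follows essentially the same route as the paper: identify the neighbourhood of $\varepsilon_{i,j,k,l}$ as the cone on the $2(n-2)$ triangles $[\varepsilon,\tau,X]$ with $X$ ranging over the two $B$-collapses and the $n-4$ $C$-collapses, observe that its intersection with $\mathcal{D}_{n}[\{\alpha,\rho,A,\beta,\gamma,B,\sigma,\tau,\delta,C\}]$ is the path-connected union of the $\tau\dash X$ edges, and iterate Corollary \ref{cor gluing 2-cells}. Your explicit verification that the non-$\varepsilon$ vertices of the link are pairwise non-adjacent except via the two $\tau$'s (so the list of $2$-cells is complete) and that distinct $\varepsilon$-vertices never interact is a mild sharpening of bookkeeping the paper leaves implicit.
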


\begin{proof}
Note that the subcomplex neighbourhood around $\varepsilon_{j,k,l,m}$ in our fundamental domain (equal to $\mathcal{D}_{n}[\{\alpha,\rho,A,\beta,\gamma,B,\sigma,\tau,\delta,C,\varepsilon\}]$) is:
\begin{center}
\begin{tikzpicture} % epsilon (from below)
\transparent{0.7}
\filldraw[Plum, thick, fill=Orchid] (0,0.5) -- (-2,-2) -- (1,-1.5) -- cycle; % bottom
\filldraw[Plum, thick, fill=Orchid] (0,0.5) -- (2,2) -- (1,-1.5) -- cycle; % right
\filldraw[Plum, thick, fill=Orchid] (0,0.5) -- (-2,1.5) -- (2,2) -- cycle; % top
\filldraw[Plum, thick, fill=Orchid] (0,0.5) -- (-2,-2) -- (-2,1.5) -- cycle; %  left
%\draw[very thick] (0,0.5) -- (-2,1.5);
\transparent{0.9}
\draw[white,fill] (-2,1.5) circle [radius=0.15cm]; % B
\draw[white,fill] (1,-1.5) circle [radius=0.15cm]; % B
\transparent{1}
\node[orange] at (-2.32,1.55) {$B_{j,l,m}$}; % B j
\node[orange] at (1.3,-1.65) {$B_{l,j,k}$}; % B l
\transparent{0.7}
\filldraw[Plum, thick, fill=Orchid] (0,0.5) -- (-2,-2) -- (-1,2) -- cycle; % back left C
\filldraw[Plum, thick, fill=Orchid] (0,0.5) -- (2,2) -- (-1,2) -- cycle; % back right C
\filldraw[Plum, thick, fill=Orchid] (0,0.5) -- (-2,-2) -- (0,1) -- cycle; % middle left C
\filldraw[Plum, thick, fill=Orchid] (0,0.5) -- (2,2) -- (0,1) -- cycle; % middle right C
\filldraw[Plum, thick, fill=Orchid] (0,0.5) -- (-2,-2) -- (1,-0.5) -- cycle; % top left C
\filldraw[Plum, thick, fill=Orchid] (0,0.5) -- (2,2) -- (1,-0.5) -- cycle; % top right C
\transparent{0.9}
\draw[orange, loosely dotted, ultra thick] (-1,2) -- (0,1) -- (1,-0.5);
\draw[white,fill] (0,0.5) circle [radius=0.15cm]; % epsilon
\draw[white,fill] (0,1) circle [radius=0.16cm]; % middle C
\draw[white,fill] (-1,2) circle [radius=0.15cm]; % back C
\draw[white,fill] (1,-0.5) circle [radius=0.16cm]; % top C
\draw[white,fill] (-2,-2) circle [radius=0.15cm]; % left tau
\draw[white,fill] (2,2) circle [radius=0.15cm]; % right tau
\transparent{1}
\node[violet] at (0.45,0.42) {$\varepsilon_{j,k,l,m}$};
\node[blue] at (2.4,1.95) {$\tau_{j,k,lm}$};
\node[blue] at (-2.4,-2) {$\tau_{l,m,jk}$};
\node[orange] at (-1.65,2.05) {$C_{v_{1},j,k,l,m}$}; % C v(1)
\node[orange] at (0.53,0.95) {$C_{i,j,k,l,m}$}; % C i
\node[orange] at (1.8,-0.55) {$C_{v_{n-4},j,k,l,m}$}; % C v(n-4)
\end{tikzpicture}
\end{center}

\noindent That is, $\varepsilon_{j,k,l,m}$ grows to $\tau_{j,k,lm}$ and $\tau_{l,m,jk}$ and collapses to $B_{j,l,m}$, $B_{l,j,k}$, and $n-4$ vertices of the form $C_{v,j,k,l,m}$ (for $v\ne j,k,l,m$).

The intersection of this neighbourhood with $\mathcal{D}_{n}[\{\alpha,\rho,A,\beta,\gamma,B,\sigma,\tau,\delta,C\}]$ is the boundary of the neighbourhood.
\begin{center}
\begin{tikzpicture} %epsilon boundary
\draw[Plum,thick] (-2,-2) -- (-2,2.5) -- (2,2) -- (2,-1.5) -- cycle;
\draw[Plum,thick] (-2,-2) -- (-1,2) -- (2,2) -- (1,-0.5) -- cycle;
\draw[Plum,thick] (-2,-2) -- (0,1) -- (2,2);
\draw[Plum, loosely dotted, ultra thick] (-0.75,1.75) -- (-0.25,1.25);
\draw[Plum, loosely dotted, ultra thick] (0.3,0.55) -- (0.7,-0.05);
\node[blue] at (2.6,1.95) {$\tau_{j,k,lm}$};
\node[blue] at (-2.6,-2) {$\tau_{l,m,jk}$};
\node[orange] at (-1.85,2.05) {$C_{v_{1},j,k,l,m}$}; % C v(1)
\node[orange] at (0.73,0.75) {$C_{i,j,k,l,m}$}; % C i
\node[orange] at (2,-0.8) {$C_{v_{n-4},j,k,l,m}$}; % C v(n-4)
\node[orange] at (-2.52,2.55) {$B_{j,l,m}$}; % B j
\node[orange] at (2.4,-1.65) {$B_{l,j,k}$}; % B l
\end{tikzpicture}
\end{center}
Since this is path connected, Corollary \ref{cor gluing 2-cells} applies, and $\mathcal{D}_{n}[\{\alpha,\rho,A,\beta,\gamma,B,\sigma,\tau,\delta,C,\varepsilon\}]$ is simply connected.
\end{proof}

Finally, we have proved:

\begin{thm}\label{thm fun dom sc}
The fundamental domain $\mathcal{D}_{n}$ of the complex  $\mathcal{C}_{n}$ is simply connected.
\end{thm}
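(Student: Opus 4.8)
The plan is to realise $\mathcal{D}_n$ as the top term of the increasing chain of subcomplexes $\mathcal{D}_n[\mathcal{T}]$ assembled in the lemmas of this subsection, and to read off simple connectivity from that chain. First I would fix the order of graph types $\alpha,\rho,A,\beta,\gamma,B,\sigma,\tau,\delta,C,\varepsilon$, chosen precisely so that each newly introduced vertex type attaches only to types already present, and then argue by induction on the length of the chain. The base step is that $\mathcal{D}_n[\{\alpha,\rho,A\}]$ is simply connected: it is a finite bouquet of cones over the common star with apex $\alpha$ and leaves the $A_i$, and since that star is path connected one applies Corollary \ref{cor gluing 2-cells} once per cone. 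For the inductive step, adding a new type $V$ amounts to gluing onto the current complex, one vertex at a time, the union of $2$-cells of $\mathcal{D}_n$ incident to that vertex; one checks this local neighbourhood is simply connected (it is always a fan, ``book'', ``fin'' or ``box'' of triangles sharing edges through $V$) and that it meets the previously built subcomplex in a path-connected union of edges, so that Corollary \ref{cor gluing 2-cells} keeps $\pi_1$ trivial.

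Granting these lemmas, the theorem itself is then immediate: the eleven graph types $\alpha,\rho,A,\beta,\gamma,B,\sigma,\tau,\delta,C,\varepsilon$ are exactly the entries of Table \ref{table n>=5 points}, so by Definition \ref{defn DnT} we have $\mathcal{D}_n[\{\alpha,\rho,A,\beta,\gamma,B,\sigma,\tau,\delta,C,\varepsilon\}] = \mathcal{D}_n$, and Lemma \ref{adding epsilons is sc} gives $\pi_1(\mathcal{D}_n)=\{1\}$; that $\mathcal{D}_n$ is connected in the first place is Lemma \ref{lemma fun dom pc}.

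The substance of the argument, and the step I expect to be the main obstacle, is not the final assembly but the individual inductive steps for the higher-codimension vertices $\sigma$, $\tau$, $\delta$, $C$, and above all $\varepsilon$. For each of these one must pin down exactly which $\gamma$-, $\beta$-, $B$- and $C$-vertices it collapses to or grows from, then verify both that its star of $2$-cells inside $\mathcal{D}_n$ is simply connected and that this star meets the earlier subcomplex in a path-connected set of edges; the combinatorics underlying the $\tau$-$\varepsilon$--Box of Figure \ref{tau epsilon box} and the four- and $(n-4)$-cell neighbourhoods of $C$ and $\varepsilon$ is precisely what makes these path-connectedness checks delicate, and is what one must track most carefully to make the iterated van Kampen argument go through.
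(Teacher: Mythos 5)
Your proposal is correct and follows essentially the same route as the paper: the paper builds $\mathcal{D}_{n}$ as the nested chain $\mathcal{D}_{n}[\{\alpha,\rho,A\}]\subset\mathcal{D}_{n}[\{\alpha,\rho,A,\beta,\gamma\}]\subset\dots\subset\mathcal{D}_{n}[\{\alpha,\rho,A,\beta,\gamma,B,\sigma,\tau,\delta,C,\varepsilon\}]=\mathcal{D}_{n}$, at each stage gluing in the simply connected neighbourhoods of the new vertex type along a path-connected union of edges and invoking Corollary \ref{cor gluing 2-cells}, exactly as you describe. The final step of the theorem is likewise just the observation that the eleven types exhaust Table \ref{table n>=5 points}, so Lemma \ref{adding epsilons is sc} gives the result.
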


\begin{proof}
By Lemma \ref{adding epsilons is sc}, $\mathcal{D}_{n}[\{\alpha,\rho,A,\beta,\gamma,B,\sigma,\tau,\delta,C,\varepsilon\}]$ is simply connected.
But $\{\alpha,\rho,A,\beta,\gamma,B,\sigma,\tau,\delta,C,\varepsilon\}$ covers all of the graph structures in Table \ref{table n>=5 points}.\\
So by Definitions \ref{defn DnT} and \ref{defn Dn} $\mathcal{D}_{n}[\{\alpha,\rho,A,\beta,\gamma,B,\sigma,\tau,\delta,C,\varepsilon\}]$ is precisely the fundamental domain $\mathcal{D}_{n}$ of $\mathcal{C}_{n}$.
\end{proof}

% ----------------------- 	SECTION 4 	---------------------

\section{A Presentation for $\outs(G)$}\label{section presentation}

This section is the main result of the paper.

We recall from Section \ref{section background} the theorem of Brown \cite{Brown1984} which we will use to determine a presentation for $\outs(G)$ (where $G=G_{1}\ast\dots\ast G_{n}$, $\mathfrak{S}=(G_{1},\dots,G_{n})$, and $\mathcal{G}=\outs(G)$):

\brown*

The complex $X$ we will use is $\mathcal{C}_{n}$, and the subcomplex $W$ is $\mathcal{D}_{n}$.
Since $\mathcal{C}_{3}$ and $\mathcal{C}_{4}$ are just the barycentric spine of Guirardel and Levitt's Outer Space (for $n=3$ and $n=4$ respectively), results satisfying the restrictions on $X$ and $W$ are assumed from \cite{Guirardel2007}. 
It is highly non-trivial to show that $\mathcal{C}_{n}$ is simply connected for $n\ge5$, so we delay the proof of this to Sections \ref{section space of domains} and \ref{section peak reduction}.
The required result here is:

\begin{restatable*}{cor}{simplyconnected}\label{Cn is sc}
The space $\mathcal{C}_{n}$ (for $n\ge5$) is simply connected.
\end{restatable*}

That $\mathcal{D}_{n}$ for $n\ge5$ satisfies the strictness condition on $W$ is the result of Propositions \ref{prop Dn is fun dom} and \ref{prop strict fun dom}.

The isotropy subgroups $\mathcal{G}_{v}$ here are the vertex stabilisers $\stab(T)$ for $T\in\mathcal{D}_{n}^{(0)}$,
which are detailed in Propositions \ref{prop brown stabs 1}, \ref{prop brown stabs 2}, and \ref{prop brown stabs 3}.

Note that Lemma \ref{lem stabs include} implies that the edge relations in Brown's Theorem become:
`$g=\iota_{o(e)}^{-1}\left(\iota_{t(e)}(g)\right)$ for all $g\in \mathcal{G}_{o(e)}$'
(that is, that vertex stabilisers $\stab(T)$ are identified with their natural images under inclusion in the stabilisers $\stab(S)$ of any vertices $S$ to which the original vertex $T$ collapses).

\begin{notation}\label{notation for presentation}
We summarise the notational shorthand we have adopted thus far:
\begin{description}
\item[{$\left[A, B\right]=1$:}] For subgroups $A$ and $B$, `$A$ commutes with $B$', in the sense that for all $a\in A$ and for all $b\in B$ we have $ab=ba$.
\item[$G_{i_{j}}$:] The group of (outer) automorphisms which act by conjugating all elements of the factor group $G_{j}$ by an element of the factor group $G_{i}$.
\item[$f_{i_{j}}$:] The isomorphism $f_{i_{j}}:G_{i}\to G_{i_{j}}$ which maps an element $g\in G_{i}$ to the element in $G_{i_{j}}$ which conjugates each element of $G_{j}$ by $g^{-1}$.
\item[$G_{i_{v_{1}\dots v_{k}}}$:] The group $\left\{ \left. \left(f_{i_{v_{1}}}(g_{i}),\dots,f_{i_{v_{k}}}(g_{i})\right) \right\vert g_{i}\in G_{i}\right\}$ which is the diagonal subgroup of $f_{i_{v_{1}}}(G_{i})\times\dots\times f_{i_{v_{k}}}(G_{i})=G_{i_{v_{1}}}\times\dots\times G_{i_{v_{k}}}$. \\
	\noindent We often denote this by $G_{i_{v_{1}\dots v_{k}}}\diag G_{i_{v_{1}}}\times\dots\times G_{i_{v_{k}}}$.
\item[$Z(G_{i})$:] The centre of the group $G_{i}$, i.e. the subgroup $\left\{ g\in G_{i}  \left\vert \ gh=hg \ \forall h\in G_{i} \right. \right\}$.
We will often identify $Z(G_{i})$ with its images $f_{i_{j}}(Z(G_{i}))$.
\item[$\aut(G_{i})$:] Often considered to be the subgroup $\left\{ \left( 1,\dots,1,\aut(G_{i}), 1,\dots,1 \right) \right\}$ of $\Phi=\prod_{j=1}^{n}\aut(G_{j})$.
\item[$\ad_{G_{i}}(g)$:] The element of $\inn(G_{i})$ which conjugates each element of $G_{i}$ by $g$ (where $g\in G_{i}$).
\item[$G_{i_{j}}^{\varphi}$:] The group of automorphisms $\left\{ \left. \varphi^{-1} \circ f_{i_{j}}(g) \circ \varphi \ \right\vert g\in G_{i} \right\}$ (where $\varphi\in\Phi$).
\item[$\varphi(G_{i_{j}})$:] The group of automorphisms $\left\{ \left. f_{i_{j}}\left( \varphi\left(g\right) \right) \right\vert g\in G_{i} \right\}$ (where $\varphi\in\Phi$).
\end{description}
\end{notation}

We now split into cases dependent on the number $n$ of factors in our splitting $G=G_{1}\ast\dots\ast G_{n}$.

\subsection{The Case $n\ge5$} \label{presentation 5} % n>=5

We have all the pieces required to build our presentation for $\outs(G)$.
\presentation

\begin{proof}
We apply Brown's Theorem (Theorem \ref{brown thm strict}) to the fundamental domain $\mathcal{D}_{n}$ of the action of $\outs(G)$ on $\mathcal{C}_{n}$.
As previously noted, Proposition \ref{prop strict fun dom} states that the strictness requirement on $\mathcal{D}_{n}$ to apply Brown's Theorem is satisfied.
We also require $\mathcal{C}_{n}$ to be simply connected. We delay the proof of this until after this section. The desired result here is Corollary \ref{Cn is sc}.

We now have that $\outs(G)$ is generated by $\left\{ \stab(T) \left\vert T\in\mathcal{D}_{n}^{(0)} \right. \right\}$, such that if $[S,T]$ is an edge in $\mathcal{D}_{n}^{(1)}$ (i.e. $S,T\in\mathcal{D}_{n}^{(0)}$ with $T$ a collapse of $S$) then we have inclusions $\stab(S)\hookrightarrow\stab(T)$.
We use the descriptions of $\stab(T)$ from Propositions \ref{prop brown stabs 1}, \ref{prop brown stabs 2}, and \ref{prop brown stabs 3}.
We proceed by examining the structure of $\mathcal{D}_{n}^{(1)}$. Recall that graph shapes for $T\in\mathcal{D}_{n}^{(0)}$ are listed in Table \ref{table n>=5 points}.

We first observe that every $\rho_{ij}$ collapses to $\alpha$. Since abstractly, $\stab(\rho_{ij})=\stab(\alpha)$ (Proposition \ref{prop brown stabs 1}), then each $\stab(\rho_{ij})$ is identified with $\stab(\alpha)=\Phi$ in $\outs(G)$.
Similarly, for each $i,j$ we have that every $\stab(\tau_{i,j,kl})$ ($k,l\in[n]-\{i,j\}$) is identified with $\stab(\beta_{i,j})$ in $\outs(G)$.

Since $\rho_{jk}$ collapses to $\beta_{j,k}$, $\gamma_{i,jk}$, $A_{v}$, and $B_{i,j,k}$ (for any $v\in\{1,\dots,n\}$ and any $i\not\in \{j,k\}$), we immediately deduce that the $\Phi$ contribution from any $\stab(\beta)$, $\stab(\gamma)$, $\stab(A)$, or $\stab(B)$ is identified with $\stab(\alpha)$.

We now consider the $\tau$-$\varepsilon$--Square (Figure \ref{tau epsilon box}). Note that for each $\varepsilon_{i,j,k,l}$ there are precisely two $\tau$ graphs, $\tau_{i,j,kl}$ and $\tau_{k,l,ij}$, which collapse to $\varepsilon_{i,j,k,l}$. 
We may `replace' $\tau_{i,j,kl}$ with $\beta_{i,j}$ and $\tau_{k,l,ij}$ with $\beta_{k,l}$, and recalling that $\rho_{ij}$ collapses to $\beta_{i,j}$ and $\rho_{kl}$ to $\beta_{k,l}$, also `replace' $\rho_{ij}$ and $\rho_{kl}$ with $\alpha$. We thus have a diagram:
\begin{center}
\begin{tikzcd}
\stab(\tau_{i,j,kl})
\ar[r,hookrightarrow]			
			&	\stab(\varepsilon_{i,j,k,l})
									&	\stab(\tau_{k,l,ij})
										\ar[l,hook']
													\\
\stab(\beta_{i,j})
\ar[u,equal]
\ar[ur,dashed,hook]
			&						&	\stab(\beta_{k,l})
										\ar[u,equal]
										\ar[ul,dashed,hook']	
													\\
\stab(\rho_{ij})
\ar[u,hookrightarrow]
			&	\stab(\alpha)
				\ar[l,equal]
				\ar[r,equal]
				\ar[ul,dashed,hook']
				\ar[ur,dashed,hook]
									&	\stab(\rho_{kl})
										\ar[u,hook']
\end{tikzcd}
\end{center}
where the dashed inclusions are naturally induced by the `replacements' we made.
Note that $\stab(\beta_{i,j})$ and $\stab(\beta_{k,l})$ `cover' $\stab(\varepsilon_{i,j,k,l})$ in the sense that $\stab(\varepsilon_{i,j,k,l})\subseteq \stab(\beta_{i,j})\times\stab(\beta_{k,l})$.
Since $\stab(\beta_{i,j})\cap\stab(\beta_{k,l})=\Phi=\stab(\alpha)$, the dashed inclusions form something akin to a pushout diagram, and we may conclude that $\stab(\varepsilon)=\stab(\beta_{i,j})\times_{\Phi}\stab(\beta_{k,l})$.

Next, we consider $\stab(A_{i})$.
Observe that given $i\in[n]$, every $\beta_{i,j}$ for $j\in[n]-\{i\}$ collapses to $A_{i}$, thus we have $n-1$ inclusions $\stab(\beta_{i,j})\hookrightarrow\stab(A_{i})$.
Then the $G_{i_{j}}$ contribution from $\stab(A_{i})$ is identified in $\outs(G)$ with the $G_{i_{j}}$ contribution from $\stab(\beta_{i,j})$, and we can consider $\stab(A_{i})$ to be generated by $\stab(\beta_{i_{v_{1}}})\times_{\Phi}\dots\times_{\Phi}\stab(\beta_{i_{v_{n-1}}})$ subject to the relations in Proposition \ref{prop brown stabs 2}, as well as the relation $Z(G_{i_{v_{1}\dots v_{n-1}}})=\{1\}$ (since the $\beta_{i,j}$'s `cover' $A_{i}$).

A similar principle applies to $\stab(\gamma_{i,jk})$.
Given $i,j,k\in[n]$, we have that for any $l\in[n]-\{i,j,k\}$, the graph $\tau_{i,l,jk}$ collapses to $\gamma_{i,jk}$.
Noting that $\stab(\tau_{i,l,jk})$ is identified in $\outs(G)$ with $\stab(\beta_{i,j})$, we have $n-3$ inclusions of the form $\stab(\beta_{i,l})\hookrightarrow\stab(\gamma_{i,jk})$.
Recall from Proposition \ref{prop brown stabs 2} that abstractly, $\stab(\gamma_{i,jk})$ is generated by 
\\ \noindent $\faktor{G_{i_{jk}}\times G_{i_{l_{1}}}\times\dots\times G_{i_{l_{n-3}}} }{Z(G_{i})}$ and $\Phi$.
However, by manipulation of relations in $\stab(\gamma_{i,jk})$ (or by considering the Guirardel--Levitt approach to computing stabilisers), we have that the $G_{i_{jk}}$ component  is redundant as a generator.
Specifically, for $f_{i_{jk}}(g_{i})\in G_{i_{jk}}$ (with $g_{i}\in G_{i}$), we have that $f_{i_{jk}}(g_{i})=\iota_{g_{i}}f_{i_{l_{1}\dots l_{n-3}}}(g_{i}^{-1})f_{i}(g_{i}^{-1})$,
where $\iota_{g_{i}}\in\inn(G)$ conjugates every element of $G$ by $g_{i}$, and $f_{i}:G_{i}\to\inn(G_{i})$ is the canonical homomorphism.
Thus we can consider $\stab(\gamma_{i,jk})$ to be generated by $\stab(\beta_{i_{l_{1}}})\times_{\Phi}\dots\times_{\Phi}\stab(\beta_{i_{l_{n-3}}})$ (with relations similar to $\stab(A_{i})$).

Given a `top' vertex in $\mathcal{D}_{n}^{(0)}$ (i.e. a $\rho$, $\sigma$, or $\tau$ graph), we can reach a `bottom' vertex ($A$, $B$, or $C$ graph) by successively collapsing two edges.
By changing the order in which we collapse these edges, we produce square (or `diamond') diagrams of inclusions
\begin{tikzcd}[cramped,sep=small]
		&	W
			\ar[dl,hook']
			\ar[dr,hook]
				 	&			\\
X
\ar[dr,hook]
		&			&	Y
						\ar[dl,hook']	\\
		&	Z 		&
\end{tikzcd}
where so long as $X$ and $Y$ `cover' $Z$ (in the sense that $\stab(Z)\subseteq\stab(X)\times\stab(Y)$),
we will have $\stab(Z)=\stab(X)\times_{\stab(W)}\stab(Y)$, where $\stab(W)=\stab(X)\cap\stab(Y)$.
Figure \ref{fig pushouts} illustrates some such diagrams which are of particular use.
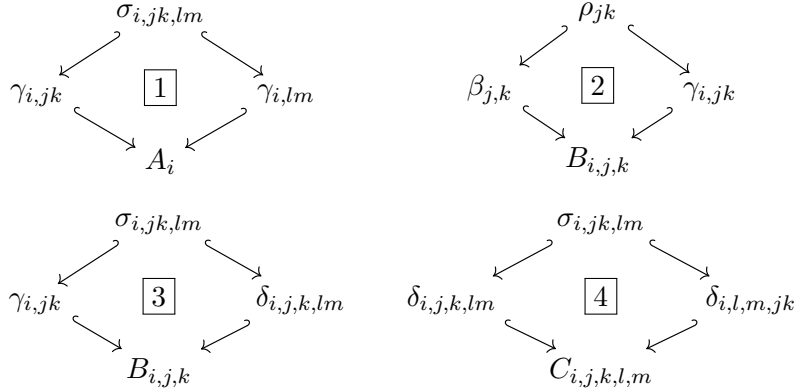
\begin{figure}[h]
\centering
\begin{tabular}{c c}
{
	\begin{tikzcd}[cramped,sep=small]
			&	\sigma_{i,jk,lm}
				\ar[dl,hook']
				\ar[dr,hook]
					 	&			\\
	\gamma_{i,jk}
	\ar[dr,hook]
			&	\boxed{1}
						&	\gamma_{i,lm}
							\ar[dl,hook']	\\
			&	A_{i} 		&
	\end{tikzcd}
}
&
{
	\begin{tikzcd}[cramped,sep=small]
			&	\rho_{jk}
				\ar[dl,hook']
				\ar[dr,hook]
					 	&			\\
	\beta_{j,k}
	\ar[dr,hook]
			&	\boxed{2}		&	\gamma_{i,jk}
								\ar[dl,hook']	\\
			&	B_{i,j,k} 		&
	\end{tikzcd}
}
\\[1.5cm]
{ \ \ \
	\begin{tikzcd}[cramped,sep=small]
			&	\sigma_{i,jk,lm}
				\ar[dl,hook']
				\ar[dr,hook]
					 	&			\\
	\gamma_{i,jk}
	\ar[dr,hook]
			&	\boxed{3}		&	\delta_{i,j,k,lm}
								\ar[dl,hook']	\\
			&	B_{i,j,k} 		&
	\end{tikzcd}
}
&
{ \
	\begin{tikzcd}[cramped,sep=small]
			&	\sigma_{i,jk,lm}
				\ar[dl,hook']
				\ar[dr,hook]
					 	&			\\
	\delta_{i,j,k,lm}
	\ar[dr,hook]
			&	\boxed{4}		&	\delta_{i,l,m,jk}
								\ar[dl,hook']	\\
			&	C_{i,j,k,l,m} 		&
	\end{tikzcd}
}
\end{tabular}
\caption{Inclusion Diagrams in $\mathcal{D}_{n}^{(1)}$}
\label{fig pushouts}
\end{figure}

Diagram $\boxed{1}$ may be thought of as akin to a pullback diagram, in that $\sigma_{i,jk,lm}$ is uniquely determined by $\gamma_{i,jk}$ and $\gamma_{i,lm}$.
Since $\stab(A_{i})\subseteq\stab(\gamma_{i,jk})\times\stab(\gamma_{i,lm})$, we deduce that $\stab(\sigma_{i,jk,lm})=\stab(\gamma_{i,jk})\cap\stab(\gamma_{i,lm})$ in $\outs(G)$.
The remaining diagrams are more akin to pushouts than pullbacks.

Diagram $\boxed{2}$ is from the $\rho$--Book of Figure \ref{rho book}, and is enough to uniquely determine a given $B_{i,j,k}$.
Recalling that $\stab(\rho_{jk})=\stab(\alpha)=\Phi$ in $\outs(G)$, we conclude that $\stab(B_{i,j,k})=\stab(\beta_{j,k})\times_{\Phi}\stab(\gamma_{i,jk})$.
Diagram $\boxed{3}$ implies that $\stab(B_{i,j,k})=\stab(\delta_{i,j,k,lm})\times_{\stab(\sigma_{i,jk,lm})}\stab(\gamma_{i,jk})$,
thus $\stab(\beta_{j,k})\times_{\Phi}\stab(\gamma_{i,jk})=$ \\ \noindent $\stab(\delta_{i,j,k,lm})\times_{\stab(\sigma_{i,jk,lm})}\stab(\gamma_{i,jk})$.
From this, we deduce that $\stab(\delta_{i,j,k,lm})=\stab(\beta_{j,k})\times_{\Phi}\stab(\sigma_{i,jk,lm})$.

Diagram $\boxed{4}$ is from the $\sigma$--Slice of Figure \ref{tau epsilon box}, and is enough to uniquely determine a given $C_{i,j,k,l,m}$.
We then have that:
\begin{align*}
\stab(C_{i,j,k,l,m}) 	&	= 	\stab(\delta_{i,j,k,lm})\times_{\stab(\sigma_{i,jk,lm})}\stab(\delta_{i,l,m,jk}) 		\\
			&	= 	\left( \stab(\beta_{jk})\times_{\Phi}\stab(\sigma_{i,jk,lm}) \right) \times_{\stab(\sigma_{i,jk,lm})} \left( \stab(\beta_{l,m})\times_{\Phi}\stab(\sigma_{i,jk,lm}) \right) 															\\
			&	=	\stab(\beta_{j,k})\times_{\Phi}\stab(\beta_{l,m})\times_{\Phi}\stab(\sigma_{i,jk,lm})	
\end{align*}

We have now shown that any $\stab(T)$ for $T\in\mathcal{D}_{n}^{(0)}$ can be written in terms of $\Phi=\stab(\alpha)$ and $\stab(\beta_{i,j})$ (allowing $i$ and $j$ to vary over $\{1,\dots,n\}$).
Thus $\outs(G)$ is generated by $\left\{ \left. \stab(\beta_{i,j}) \right\vert i\in[n], j\in[n]-\{i\} \right\}$,
that is, $\outs(G)$ is generated by \\ \noindent $\left\{ \left. G_{i_{j}} \right\vert i \in \left\{1,\dots,n\right\}, j \in \left\{1,\dots,n\right\}-\left\{ i \right\} \right\} \cup \Phi$.

From $\stab(A_{i})$ (Proposition \ref{prop brown stabs 2}), we see that $[G_{i_{j}},G_{i_{k}}]=1$ and $\faktor{G_{i_{v_{1}\dots v_{n-1}}}}{Z(G_{i})}=\inn(G_{i})\cong \faktor{G_{i}}{Z(G_{i})}$. Using the isomorphisms $f_{i_{j}}$ for preciseness, we recover Relations 1 and 4. Note that if $g\in Z(G_{i})$ then Relation 4 gives $f_{i_{v_{1}}}(g)\dots f_{i_{v_{n-1}}}(g)=1$.
We deduce from $\stab(\varepsilon_{i,j,k,l})$ (Proposition \ref{prop brown stabs 1}) that $[G_{i_{j}},G_{k_{l}}]=1$, from $\stab(B_{i,j,k})$ (Proposition \ref{prop brown stabs 3}) that $[G_{j_{k}},G_{i_{jk}}]=1$, and from $\stab(\beta_{i,j})$ (Proposition \ref{prop brown stabs 1}) that $G_{i_{j}}^{\varphi}=\varphi(G_{i_{j}})$.
We now recover Relations 2,3, and 5 by substituting the appropriate isomorphisms $f_{i_{j}}$ into the above formulae.
All other relations found in vertex stabilisers are subsumed by these five.
\end{proof}

\begin{rem}
Note that $n$ of these generators are `redundant', in that for each $i\in[n]$ and $j\in[n]-\{i\}$, $G_{i_{j}}\le(G_{i_{v_{1}}}\times\dots\times G_{i_{v_{n-5}}})\rtimes\Phi$.
However, consistently choosing generators to remove without overcomplicating the relations is tricky, so we elect  not to do this.
\end{rem}

\presentationcor

\begin{proof}
Note that the splitting $G_{1}\ast\dots\ast G_{n}$ described is a Grushko decomposition for $G$, and so every automorphism must preserve the conjugacy classes of the factor groups.
That is, $\out(G;G_{1},\dots,G_{n})=\out(G)$.
\end{proof}

\subsection{The Case $n=4$} \label{presentation 4} % n=4

Let $G=G_{1}\ast G_{2}\ast G_{3}\ast G_{4}$ %with each $G_{i}$ freely indecomposable, not infinite cyclic, and non-trivial, and so that $G_{i}$ is not isomorphic to $G_{j}$ for $i\ne j$.
be a free splitting of a group $G$, and let $\mathfrak{S}=(G_{1},G_{2},G_{3},G_{4})$.

By Definition \ref{defn c3 and c4}, our complex $\mathcal{C}_{4}$ is the barycentric spine of Guirardel and Levitt's Outer Space relative to $\mathfrak{S}$, which we build by taking only the graph shapes from Table \ref{table n>=5 points} which have at most four non-trivial (red) vertices.
This leaves us with graph shapes $\rho$, $\alpha$, $\beta$, $\gamma$, $A$, and $B$. Note however that for $\{i,j,k,l\}=\{1,2,3,4\}$, we have $\gamma_{i,kl}=\beta_{i,j}$. Also, $\rho_{ij}=\rho_{kl}$ and $B_{i,j,k}=B_{j,i,l}$.
Note additionally that we did not take $\tau$ or $\varepsilon$ graphs (despite these only displaying four non-trivial vertices) since the trivial `basepoint' must have valency at least three here, implying at least one suppressed non-trivial vertex in each case.

Thus the vertex set of the fundamental domain $\mathcal{D}_{4}^{(0)}$ of $\mathcal{C}_{4}$ consists of $\frac{1}{2}{{4}\choose{2}}=3$ $\rho$ vertices, $1$ $\alpha$ vertex, $4\times 3=12$ $\beta$ vertices, $4$ $A$ vertices, and $\frac{4!}{2}=12$ $B$ vertices.
These form three $\rho_{ij}$ `spikes' (for $\{i,j\}\subseteq\{1,2,3,4\}$) in $\mathcal{D}_{4}^{(1)}$, shown in Figure \ref{fig n=4 spike},
\begin{figure}[h]
\centering
\begin{tikzpicture}
% bottom alpha A's
\draw[ultra thick, white] (0,-5) -- (-0.75,-4); \draw[thick,->-] (0,-5) -- (-0.75,-4);
\draw[ultra thick, white] (0,-5) -- (1.25,-4); \draw[thick,->-] (0,-5) -- (1.25,-4);
\draw[ultra thick, white] (0,-5) -- (-1.25,-6); \draw[thick,->-] (0,-5) -- (-1.25,-6);
\draw[ultra thick, white] (0,-5) -- (0.75,-6); \draw[thick,->-] (0,-5) -- (0.75,-6);
% connectors
\draw[ultra thick, white] (-0.75,1) -- (-0.75,-4); \draw[thick,->-] (-0.75,1) -- (-0.75,-4);
\draw[ultra thick, white] (1.25,1) -- (1.25,-4); \draw[thick,->-] (1.25,1) -- (1.25,-4);
\draw[ultra thick, white] (0,0) -- (-0.75,-4); \draw[thick,->-] (0,0) -- (-0.75,-4);
\draw[ultra thick, white] (0,0) -- (1.25,-4); \draw[thick,->-] (0,0) -- (1.25,-4);
\draw[ultra thick, white] (0,0) -- (0,-5); \draw[thick,->-] (0,0) -- (0,-5);
\draw[ultra thick, white] (0,0) -- (-1.25,-6); \draw[thick,->-] (0,0) -- (-1.25,-6);
\draw[ultra thick, white] (0,0) -- (0.75,-6); \draw[thick,->-] (0,0) -- (0.75,-6);
\draw[ultra thick, white] (-1.25,-1) -- (-1.25,-5.5); \draw[thick,->-] (-1.25,-1) -- (-1.25,-6);
\draw[ultra thick, white] (0.75,-1) -- (0.75,-5.5); \draw[thick,->-] (0.75,-1) -- (0.75,-6);
% top square
\draw[ultra thick, white] (0,0) -- (-1,0); \draw[thick,->-] (0,0) -- (-1,0);
\draw[ultra thick, white] (0,0) -- (1,0); \draw[thick,->-] (0,0) -- (1,0);
\draw[ultra thick, white] (0,0) -- (-0.75,1); \draw[thick,->-] (0,0) -- (-0.75,1);
\draw[ultra thick, white] (0,0) -- (0.25,1); \draw[thick,->-] (0,0) -- (0.25,1);
\draw[ultra thick, white] (0,0) -- (1.25,1); \draw[thick,->-] (0,0) -- (1.25,1);
\draw[ultra thick, white] (0,0) -- (-1.25,-1); \draw[thick,->-] (0,0) -- (-1.25,-1);
\draw[ultra thick, white] (0,0) -- (-0.25,-1); \draw[thick,->-] (0,0) -- (-0.25,-1);
\draw[ultra thick, white] (0,0) -- (0.75,-1); \draw[thick,->-] (0,0) -- (0.75,-1);
\draw[ultra thick, white] (-0.75,1) -- (0.25,1); \draw[thick,->-] (-0.75,1) -- (0.25,1);
\draw[ultra thick, white] (1.25,1) -- (0.25,1); \draw[thick,->-] (1.25,1) -- (0.25,1);
\draw[ultra thick, white] (-1.25,-1) -- (-0.25,-1); \draw[thick,->-] (-1.25,-1) -- (-0.25,-1);
\draw[ultra thick, white] (0.75,-1) -- (-0.25,-1); \draw[thick,->-] (0.75,-1) -- (-0.25,-1);
\draw[ultra thick, white] (-0.75,1)-- (-1,0); \draw[thick,->-] (-0.75,1) -- (-1,0);
\draw[ultra thick, white] (-1.25,-1) -- (-1,0); \draw[thick,->-] (-1.25,-1) -- (-1,0);
\draw[ultra thick, white] (1.125,0.5) -- (1,0) ; \draw[thick,->-] (1.25,1) -- (1,0);
\draw[ultra thick, white] (0.75,-1)-- (1,0); \draw[thick,->-] (0.75,-1) -- (1,0);
% space around B's
\draw[ultra thick, white] (-0.75,0.175) -- (-0.75,-0.175);
\draw[ultra thick, white] (1.25,0.175) -- (1.25,-0.175);
\draw[ultra thick,white] (-0.3,-1.15) -- (0.1,-1.15);
\draw[ultra thick,white] (-0.185,-0.85) -- (0.2,-0.85);
\draw[ultra thick, white] (0.1275,-1.02) -- (0.1275,-1.175);
% vertices
\node[rectangle, inner sep=1pt, draw, fill=white] at (0,0) {\tiny$\rho_{ij}$};
\node[rectangle, inner sep=1pt, draw, fill=white] at (-0.75,1) {\tiny$\beta_{k,l}$};
\node[rectangle, inner sep=1pt, draw, fill=white] at (0.25,1) {\tiny$B_{i,k,l}$};
\node[rectangle, inner sep=1pt, draw, fill=white] at (1.25,1) {\tiny$\beta_{i,j}$};
\node[rectangle, inner sep=1pt, draw, fill=white] at (-1.25,-1) {\tiny$\beta_{j,i}$};
\node[rectangle, inner sep=1pt, draw, fill=white] at (-0.25,-1) {\tiny$B_{j,l,k}$};
\node[rectangle, inner sep=1pt, draw, fill=white] at (0.75,-1) {\tiny$\beta_{l,k}$};
\node[rectangle, inner sep=1pt, draw, fill=white] at (-1,0) {\tiny$B_{j,k,l}$};
\node[rectangle, inner sep=1pt, draw, fill=white] at (1,0) {\tiny$B_{i,l,k}$};
\node[rectangle, inner sep=1pt, draw, fill=white] at (0,-5) {\tiny$\alpha$};
\node[rectangle, inner sep=1pt, draw, fill=white] at (-0.75,-4) {\tiny$A_{k}$};
\node[rectangle, inner sep=1pt, draw, fill=white] at (1.25,-4) {\tiny$A_{i}$};
\node[rectangle, inner sep=1pt, draw, fill=white] at (-1.25,-6) {\tiny$A_{j}$};
\node[rectangle, inner sep=1pt, draw, fill=white] at (0.75,-6) {\tiny$A_{l}$};
% line over A
\draw[ultra thick, white] (-0.8,-3.84) -- (-0.875,-4.2);
\draw[thick] (-0.68,-3.264) -- (-0.9,-4.32);
\end{tikzpicture}
\caption{A `Spike' in $\mathcal{D}_{4}^{(1)}$}
\label{fig n=4 spike}
\end{figure}
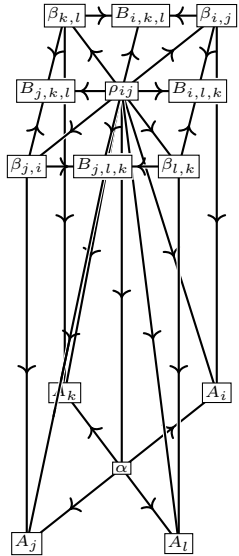
which are identified along the $\alpha$-$A$--Star
\begin{tikzpicture}
\draw[thick] (0,0) -- (0.5,0.5);
\draw[thick] (0,0) -- (0.5,-0.5);
\draw[thick] (0,0) -- (-0.5,-0.5);
\draw[thick] (0,0) -- (-0.5,0.5);
\filldraw[white] (0,0) circle [radius=0.15cm];
\filldraw[white] (0.5,0.5) circle [radius=0.15cm];
\filldraw[white] (0.5,-0.5) circle [radius=0.15cm];
\filldraw[white] (-0.5,-0.5) circle [radius=0.15cm];
\filldraw[white] (-0.5,0.5) circle [radius=0.15cm];
\node at (0,0) {$\alpha$};
\node at (0.5,0.6) {$A_{1}$};
\node at (0.5,-0.5) {$A_{2}$};
\node at (-0.5,-0.5) {$A_{3}$};
\node at (-0.5,0.6) {$A_{4}$};
\end{tikzpicture}.

\begin{prop}\label{prop n=4 stabs} 
We have the following for $\{i,j,k,l\}=\{1,2,3,4\}$:
\begin{enumerate}
\item $\stab(\rho_{ij})=\stab(\alpha)=\Phi$
\item $\stab(\beta_{i,j})=G_{i_{j}}\rtimes\Phi$
\item $\stab(A_{i})$ is generated by $\faktor{(G_{i_{j}}\times G_{ i_{k}}\times G_{i_{l}})}{Z(G_{i})}$ and $\Phi$ with relations $\faktor{G_{i_{jkl}}}{Z(G_{i})}=\inn(G_{i})$ and $(\{G_{a}\},g_{i})\circ\varphi=\varphi\circ(\{G_{a}\},\varphi(g_{i}))$ for $\varphi\in\Phi$ and $(\{G_{a}\},g_{i})\in i_{a}$ for each $a\in\{j,k,l\}$
\item $\stab(B_{i,k,l})=(G_{i_{j}}\times G_{k_{l}})\rtimes\Phi$
\end{enumerate}
\end{prop}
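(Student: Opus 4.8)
The plan is to transcribe, in the much smaller $n=4$ setting, the vertex-stabiliser computations of Section~\ref{stabilisers}. By Definition~\ref{defn c3 and c4} the complex $\mathcal{C}_{4}$ is literally Guirardel--Levitt's barycentric spine, so the $\outs(G)$-stabiliser of a vertex of $\mathcal{C}_{4}$ is computed exactly as in Section~\ref{stabilisers}; the only new feature is that the graph shapes occurring here are precisely the $n\ge5$ shapes of Table~\ref{table n>=5 points} with their suppressed leaves stripped away. For each such shape I would read the stabiliser off either from the Guirardel--Levitt product (Table~\ref{table stabs G-L}) or from the Bass--Jiang short exact sequence (Table~\ref{table stabs B-J}), with the twist subgroups indexed exactly as in Examples~\ref{eg stab sigma} and~\ref{eg stab A}. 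Concretely, I would get items (1), (2) and (4) by the Guirardel--Levitt approach and item (3) by Bass--Jiang, just as Propositions~\ref{prop brown stabs 1}, \ref{prop brown stabs 2} and~\ref{prop brown stabs 3} do for $n\ge5$.

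For (1) and (2): in both $\rho_{ij}$ and $\alpha$ every non-trivial vertex has valency $1$, so every $\mu_i-1=0$ and the Guirardel--Levitt product $\prod_{i=1}^{4}\bigl(G_i^{\mu_i-1}\rtimes\aut(G_i)\bigr)$ collapses to $\Phi$. In $\beta_{i,j}$ only the vertex carrying $G_i$ has valency $2$, so the product gives $G_i\rtimes\Phi$; identifying the resulting single twist with the Whitehead automorphisms conjugating $G_j$ by elements of $G_i$ (as in Example~\ref{eg stab sigma}) rewrites this as $G_{i_{j}}\rtimes\Phi$ with the semidirect action $\varphi^{-1}f_{i_{j}}(g)\varphi=f_{i_{j}}(\varphi(g))$.

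For (3): $A_i$ is the star whose centre carries $G_i$ and has valency $3$, with leaves carrying $G_j$, $G_k$, $G_l$. The Bass--Jiang sequence gives $M_{A_i}=\inn(G_j)\times\inn(G_k)\times\inn(G_l)\times\faktor{(G_{i_{j}}\times G_{i_{k}}\times G_{i_{l}})}{Z(G_i)}$, and since each $\inn(G_a)\le\aut(G_a)\le\Phi$ the inner factors are absorbed into the copy of $\Phi$ covering $\prod_a\out(G_a)$; hence $\stab(A_i)$ is generated by $\faktor{(G_{i_{j}}\times G_{i_{k}}\times G_{i_{l}})}{Z(G_i)}$ and $\Phi$. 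The relations $\faktor{G_{i_{jkl}}}{Z(G_i)}=\inn(G_i)$ and $(\{G_a\},g_i)\circ\varphi=\varphi\circ(\{G_a\},\varphi(g_i))$ are extracted exactly as in the Bass--Jiang half of Example~\ref{eg stab A}, which also shows these relations suffice to pin $\stab(A_i)$ down inside $\outs(G)$. For (4): $B_{i,k,l}$ is a path with vertices carrying $G_j$, $G_i$, $G_k$, $G_l$ in this order (using $B_{i,j,k}=B_{j,i,l}$ to put the operating factor at an interior vertex); fixing the central edge, between the vertices carrying $G_i$ and $G_k$, as in Example~\ref{eg stab sigma} leaves the only remaining freedom as twisting $G_j$ along its edge by elements of $G_i$ and twisting $G_l$ along its edge by elements of $G_k$, independently, which gives $(G_{i_{j}}\times G_{k_{l}})\rtimes\Phi$, with $[G_{i_{j}},G_{k_{l}}]=1$ automatic since $i\ne k$ and $j\ne l$.

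The only delicate point --- and the one I expect to take up most of the write-up --- is the index bookkeeping forced by the coincidences $\gamma_{i,kl}=\beta_{i,j}$, $\rho_{ij}=\rho_{kl}$ and $B_{i,j,k}=B_{j,i,l}$ special to $n=4$ (visible in the spike of Figure~\ref{fig n=4 spike}): one must choose, consistently across all four stabilisers, which index plays the role of the ``dependant factor'' in each twist subgroup $G_{i_{j}}$, so that the inclusions $\stab(S)\hookrightarrow\stab(T)$ along collapses become the obvious maps when Brown's Theorem is applied in Section~\ref{presentation 4}. This is pure index-chasing and introduces no idea beyond those already used for $n\ge5$.
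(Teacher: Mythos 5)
Your proposal is correct and follows essentially the same route as the paper: the paper likewise disposes of items (1)--(3) by observing that the $n\ge5$ arguments (Guirardel--Levitt valency count for $\rho$, $\alpha$, $\beta$; Bass--Jiang for $A_i$) carry over verbatim, and computes $\stab(B_{i,k,l})$ by the Guirardel--Levitt method, fixing exactly the central edge between the vertices carrying $G_i$ and $G_k$ in the path $G_j\dash G_i\dash G_k\dash G_l$. The index coincidences you flag ($\rho_{ij}=\rho_{kl}$, $B_{i,j,k}=B_{j,i,l}$, etc.) are handled in the paper in the surrounding discussion rather than in the proof of this proposition, and do not affect the stabiliser computations themselves.
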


\begin{proof}
The first three items are the same as the presentations listed for the $n\ge5$ case. The arguments there also hold for $n=4$.
We use the Guirardel--Levitt approach (see Example \ref{eg stab sigma}) to compute $\stab(B_{i,k,l})$, noting that $B_{i,k,l}=$
\begin{tikzpicture}[scale=0.8]
\draw[thick] (1,0) -- (2,0) -- (3,0) -- (4,0);
\filldraw[red] (1,0) circle [radius=0.09cm];
\filldraw[red] (2,0) circle [radius=0.09cm];
\filldraw[red] (3,0) circle [radius=0.09cm];
\filldraw[red] (4,0) circle [radius=0.09cm];
\node at (1.05,0.3) {$G_{j}$};
\node at (2.05,0.3) {$G_{i}$};
\node at (3.05,0.3) {$G_{k}$};
\node at (4.05,0.3) {$G_{l}$};
\end{tikzpicture}
and fixing the edge between vertex groups $G_{i}$ and $G_{k}$.
\end{proof}

\begin{thm}\label{thm n=4 presentation}
Let $G_{1}\ast G_{2}\ast G_{3}\ast G_{4}$ %freely indecomposable, , and not infinite cyclic, and where $G_{i}\not\cong G_{j}$ for $i\ne j$.
be a free splitting of a group $G$ where each $G_{i}$ is non-trivial, and let $\mathfrak{S}=(G_{1},G_{2},G_{3},G_{4})$.
Writing $f_{i_{j}}$ for the isomorphism $G_{i}\to G_{i_{j}}$,
$\outs(G)$ is generated by the twelve groups $G_{i_{j}}$ for $i,j\in\{1,2,3,4\}$ distinct, and $\Phi=\prod_{k=1}^{4}\aut(G_{k})$, subject to relations:
\begin{enumerate}
\item $\left[ f_{i_{j}}(g),f_{i_{k}}(h) \right]=1 \ \forall g,h\in G_{i}$
\item $\left[ f_{i_{j}}(g),f_{k_{l}}(h) \right]=1 \ \forall g\in G_{i}, h\in G_{k}$
\item $f_{i_{j}}(g)f_{i_{k}}(g)f_{i_{l}}(g)=\ad_{G_{i}}(g) \ \forall g\in G_{i}$
\item $\varphi^{-1}f_{i_{j}}(g)\varphi=f_{i_{j}}(\varphi(g)) \ \forall g\in G_{i}$ for all $\varphi\in\Phi$
\end{enumerate}
where $\{i,j,k,l\}=\{1,2,3,4\}$.
\end{thm}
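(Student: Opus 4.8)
The plan is to run Brown's Theorem (Theorem \ref{brown thm strict}) for the action of $\outs(G)$ on $\mathcal{C}_{4}$, in exact parallel with the proof of Theorem \ref{thm n>=5 presentation}, but exploiting the much smaller fundamental domain. By Lemma \ref{lemma c3 and c4 sc} the complex $\mathcal{C}_{4}$ is contractible, hence simply connected, and since $\mathcal{C}_{4}$ is the spine of Guirardel and Levitt's relative Outer Space, $\mathcal{D}_{4}$ is a strict fundamental domain for the action (by \cite{Guirardel2007}, or by the arguments of Propositions \ref{prop Dn is fun dom} and \ref{prop strict fun dom}, which apply equally when $n=4$). Brown's Theorem then presents $\outs(G)$ on the vertex stabilisers $\{\stab(T)\mid T\in\mathcal{D}_{4}^{(0)}\}$ — supplied by Proposition \ref{prop n=4 stabs} — subject, via Lemma \ref{lem stabs include}, to the edge relations identifying $\stab(S)$ with its image in $\stab(T)$ whenever $T$ is a collapse of $S$.

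Next I would cut the generating set down to $\{G_{i_{j}}\mid i\neq j\}\cup\Phi$ using the structure of $\mathcal{D}_{4}^{(1)}$, which is three `spikes' (Figure \ref{fig n=4 spike}) identified along the $\alpha$-$A$--Star. Every $\rho_{ij}$ collapses to $\alpha$ and $\stab(\rho_{ij})=\stab(\alpha)=\Phi$ abstractly, so all $\rho$-stabilisers — and, since every $\beta$-, $A$- and $B$-vertex is a collapse of some $\rho$, the $\Phi$-parts of all the remaining stabilisers — are identified with $\stab(\alpha)=\Phi$. Reading off the $\alpha$-$A$--Star, the three vertices $\beta_{i,j},\beta_{i,k},\beta_{i,l}$ collapse to $A_{i}$ and `cover' it ($\stab(A_{i})\subseteq\prod_{a\neq i}\stab(\beta_{i,a})$ once the $\Phi$'s are identified), so $\stab(A_{i})$ is generated in $\outs(G)$ by $\stab(\beta_{i,j})\times_{\Phi}\stab(\beta_{i,k})\times_{\Phi}\stab(\beta_{i,l})$ together with the extra relation $\faktor{G_{i_{jkl}}}{Z(G_{i})}=\inn(G_{i})$. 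Finally, inside each spike the quadrilateral $\rho_{ij}\,(=\rho_{kl})$, $\beta_{i,j}$, $\beta_{k,l}$, $B_{i,k,l}$ — the $n=4$ form of Diagram $\boxed{2}$ of Figure \ref{fig pushouts}, in which a $\gamma$-vertex coincides with a $\beta$-vertex ($\gamma_{i,kl}=\beta_{i,j}$) — gives $\stab(B_{i,k,l})=\stab(\beta_{i,j})\times_{\Phi}\stab(\beta_{k,l})$. Since there are no $\sigma,\tau,\delta,\varepsilon,C$ vertices here, these are the only reductions needed, and $\outs(G)$ is generated by the twelve groups $G_{i_{j}}$ and $\Phi$.

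It then remains to collect relations. From $\stab(A_{i})$ (Proposition \ref{prop n=4 stabs}(3)) the direct-product structure yields Relation 1, and $\faktor{G_{i_{jkl}}}{Z(G_{i})}=\inn(G_{i})$ yields Relation 3, $f_{i_{j}}(g)f_{i_{k}}(g)f_{i_{l}}(g)=\ad_{G_{i}}(g)$; from $\stab(B_{i,k,l})=(G_{i_{j}}\times G_{k_{l}})\rtimes\Phi$ (Proposition \ref{prop n=4 stabs}(4)) the commutation gives Relation 2; and the semidirect-product actions inside $\stab(\beta_{i,j})$, $\stab(A_{i})$ and $\stab(B_{i,k,l})$ all give Relation 4. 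One then checks that every other relation carried by a vertex stabiliser is a consequence of Relations 1--4 (together with the relations internal to $\Phi$ and to each $G_{i_{j}}\cong G_{i}$); in particular the three-term commutation analogous to Relation 3 of Theorem \ref{thm n>=5 presentation} becomes redundant here, since after using $\gamma_{i,jk}=\beta_{i,l}$ to identify $G_{i_{jk}}$ with $G_{i_{l}}$ modulo $\Phi$ it collapses to an instance of Relation 2 via Relations 3 and 4. This gives the claimed presentation.

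The hard part will be the combinatorial bookkeeping on $\mathcal{D}_{4}^{(1)}$: because $n=4$ is degenerate, several graph shapes coincide — $\gamma_{i,kl}=\beta_{i,j}$, $\rho_{ij}=\rho_{kl}$, $B_{i,j,k}=B_{j,i,l}$ — so the three spikes are glued to one another along the $\alpha$-$A$--Star in a way that must be traced carefully, and each fibre-product identification above needs its `covering' hypothesis verified by hand before one can conclude that no relations beyond 1--4 survive.
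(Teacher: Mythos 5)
Your proposal is correct and follows essentially the same route as the paper: Brown's Theorem applied to $\mathcal{C}_{4}$ with the strict fundamental domain $\mathcal{D}_{4}$, simple connectivity from Lemma \ref{lemma c3 and c4 sc}, the stabilisers of Proposition \ref{prop n=4 stabs}, identification of all $\Phi$-contributions via the $\rho$-collapses, reduction of the $A_{i}$- and $B_{i,k,l}$-stabilisers to the $\beta$-stabilisers along the spike and $A_{i}$-neighbourhood inclusion diagrams, and the same attribution of Relations 1 and 3 to $\stab(A_{i})$, Relation 2 to $\stab(B_{i,k,l})$, and Relation 4 to $\stab(\beta_{i,j})$. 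The degeneracies you flag ($\gamma_{i,kl}=\beta_{i,j}$, $\rho_{ij}=\rho_{kl}$, $B_{i,j,k}=B_{j,i,l}$) and the redundancy of the three-term commutation relation are exactly the points the paper also records.
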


\begin{proof}
We apply Theorem \ref{brown thm strict} to the complex $\mathcal{C}_{4}$. Note that Lemma \ref{lemma c3 and c4 sc} tells us $\mathcal{C}_{4}$ is simply connected, and it is not hard to deduce that the fundamental domain $\mathcal{D}_{4}$ meets the strictness requirements here.
Then $\outs(G)$ is generated by the stabiliser groups described in Proposition \ref{prop n=4 stabs} for each combination of $\{i,j,k,l\}=\{1,2,3,4\}$.
We now consider the edge relations from $\mathcal{D}_{4}$.
Since each $\rho_{ij}$ collapses to $\alpha$, we have that $\stab(\rho_{12})$, $\stab(\rho_{13})$, $\stab(\rho_{14})$, and $\stab(\alpha)$ all generate the same subgroup $\Phi$ of $\out(G)$.
Moreover, since every vertex is a collapse of some $\rho_{ij}$ then the $\Phi$ contribution from each stabiliser in Proposition \ref{prop n=4 stabs} is identified with $\stab(\alpha)$.
Each `spike' of $\mathcal{D}_{4}$ gives a diagram of inclusions:
\\ \noindent
\begin{center}
\begin{tikzcd}[cramped]
(G_{k_{l}})\rtimes\Phi \ar[r] \ar[d]		& (G_{i_{j}}\times G_{k_{l}})\rtimes\Phi					& (G_{i_{j}})\rtimes\Phi \ar[l] \ar[d]		\\
(G_{j_{i}}\times G_{k_{l}})\rtimes\Phi		& \Phi \ar[l] \ar[r] \ar[u] \ar[d] \ar[ul] \ar[ur] \ar[dl] \ar[dr]		& (G_{i_{j}}\times G_{l_{k}})\rtimes\Phi	\\
(G_{j_{i}})\rtimes\Phi \ar[u] \ar[r]		& (G_{j_{i}}\times G_{l_{k}})\rtimes\Phi					& (G_{l_{k}})\rtimes\Phi \ar[u] \ar[l]		\\	
\end{tikzcd}
\end{center}
and each $A_{i}$ neighbourhood gives a further diagram of inclusions:
\\ \noindent
\begin{center}
\begin{tikzcd}[cramped, column sep=0.5em, row sep=1.5em] % tripod
					&	G_{i_{l}}\rtimes\Phi \ar[dd]	&						\\
					&						&						\\				
					&	\stab(A_{i})				&						\\
G_{i_{j}}\rtimes\Phi \ar[ur]	&						&	G_{i_{k}}\rtimes\Phi \ar[ul]	\\
\end{tikzcd}
\end{center}
Hence any $G_{i_{j}}$ contribution from $\stab(A_{i})$ or $\stab(B_{i,k,l})$ is identified with the $G_{i_{j}}$ contribution from $\beta_{i,j}$.
Thus $\outs(G)$ is generated $\stab(\beta_{i,j})$ for each pair $(i,j)$ and $\alpha$, with the $\Phi$ contribution from each $\stab(\beta_{i,j})$ identified with $\stab(\alpha)$.
That is, $\outs(G)$ is generated by $G_{i_{j}}$ for each pair $(i,j)$ and $\Phi$.
The Relations 1--4 come from the groups in Proposition \ref{prop n=4 stabs} (specifically, 1 and 3 are from $\stab(A_{i})$, 2 is from $\stab(B_{i,k,l})$, and 4 is from $\stab(\beta_{i,j})$).
\end{proof}

Observe that the Relation `$\left[ f_{j_{k}}(g),f_{i_{j}}(h)f_{i_{k}}(h) \right]=1$' from the case $n\ge5$ may be deduced from Relations 2 and 3 here (since writing $k_{k}$ for $\inn(k)$ yields $k_{ij}=k_{kl}$).
Thus Theorem \ref{thm n>=5 presentation} in fact holds for $n\ge4$.

One should note that one of each of the groups $G_{i_{j}}\cong G_{i}$ is `redundant' in the sense that only any $2$ of the groups $G_{i_{j}}, G_{i_{k}}, G_{i_{l}}$ are independent of each other.
It is possible to consistently choose $4$ such groups to eliminate from the list of generators, but doing so would somewhat complicate the relations.

\subsection{The Case $n=3$} \label{presentation 3} % n=3

For $G=G_{1}\ast G_{2}\ast G_{3}$, the only graphs which respect the splitting $\mathfrak{S}=(G_{1},G_{2},G_{3})$ are $\alpha$ and $A$ graphs. Thus the fundamental domain $\mathcal{D}_{3}$ of our complex $\mathcal{C}_{3}$ is the following tripod:
\begin{center}
\begin{tikzpicture} %barycentric spine for 3
%vertices:
\draw[fill] (0,0) circle [radius=0.07];
\draw[fill] (0,-1) circle [radius=0.07];
\draw[fill] (0.866,0.5) circle [radius=0.07];
\draw[fill] (-0.866,0.5) circle [radius=0.07];
%edges:
\draw[thick] (0,0) -- (0,-1);
\draw[thick] (0,0) -- (0.866,0.5);
\draw[thick] (0,0) -- (-0.866,0.5);
%labels:
\node at (0,0.4) {$\alpha$};
\node at (0,-1.4) {$A_{1}$};
\node at (1.2,0.8) {$A_{2}$};
\node at (-1.2,0.8) {$A_{3}$};
\end{tikzpicture}
\end{center}
where $A_{i}$ is the graph
\begin{tikzpicture}[scale=0.8]
%edges:
\draw[thick] (0,0) -- (-1,0);
\draw[thick] (0,0) -- (1,0);
%vertices:
\draw[red, fill] (0,0) circle [radius=0.09];
\draw[red, fill] (-1,0) circle [radius=0.09];
\draw[red, fill] (1,0) circle [radius=0.09];
%labels:
\node at (1.025,-0.375) {$G_{j}$};
\node at (0,-0.35) {$G_{i}$};
\node at (-0.95,-0.35) {$G_{k}$};
\end{tikzpicture}
for $i=1,2,3$ and $\{j,k\}=\{1,2,3\}-\{i\}$.

As before, we have that $\stab(\alpha)=\Phi=\aut(G_{1})\ast\aut(G_{2})\ast\aut(G_{3})$, and that for each $i\in\{1,2,3\}$ with $\{j,k\}=\{1,2,3\}-\{i\}$, $\stab(A_{i})$ is generated by $\faktor{(G_{i_{j}}\times G_{i_{k}})}{Z(G_{i})}$ and $\Phi$ such that $\faktor{G_{i_{jk}}}{Z(G_{i})}=\inn(G_{i})$ and $\left(\{G_{a}\},g_{i}\right)\circ\varphi=\varphi\circ\left(\{G_{a}\},\varphi(g_{i})\right)$ (for $a=j,k$) for all $\left(\{G_{a}\},g_{i}\right)\in G_{i_{a}}$ and $\varphi\in\Phi$.

\begin{thm}\label{thm n=3 presentation}
Let $G=G_{1}\ast G_{2}\ast G_{3}$ %where each $G_{i}$ is freely indecomposable, non-trivial, and not infinite cyclic, and so that $G_{i}\not\cong G_{j}$ for $i\ne j$.
be a free splitting of a group $G$ where each $G_{i}$ is non-trivial, and let $\mathfrak{S}=(G_{1},G_{2},G_{3})$.
Then writing $f_{i_{j}}$ for the isomorphism $G_{i}\to G_{i_{j}}$, $\outs(G)$ is generated by the six groups $G_{i_{j}}$ for $i,j\in\{1,2,3\}$ distinct, and $\Phi$, subject to relations:
\begin{enumerate}
\item $\left[f_{i_{j}}(g), f_{i_{k}}(h)\right]=1 \ \forall g,h\in G_{i}$
\item $f_{i_{j}}(g)f_{i_{k}}(g)=\ad_{G_{i}}(g) \ \forall g\in G_{i}$
\item $\varphi^{-1}f_{i_{j}}(g)\varphi=f_{i_{j}}(\varphi(g)) \ \forall g\in G_{i}$ for all $\varphi\in\Phi$
\end{enumerate}
for all $i=1,2,3$ and $\{j,k\}=\{1,2,3\}-\{i\}$.
\end{thm}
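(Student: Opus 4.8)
The plan is to apply Brown's Theorem (Theorem \ref{brown thm strict}) to the action of $\outs(G)$ on the complex $\mathcal{C}_{3}$, exactly as in the cases $n\ge5$ and $n=4$, but now with the dramatically simpler fundamental domain $\mathcal{D}_{3}$ which is just the tripod with central vertex $\alpha$ and leaves $A_{1}, A_{2}, A_{3}$. First I would record the inputs to Brown's Theorem: simple connectivity of $\mathcal{C}_{3}$ is Lemma \ref{lemma c3 and c4 sc}; the strictness of $\mathcal{D}_{3}$ as a fundamental domain follows from Propositions \ref{prop Dn is fun dom} and \ref{prop strict fun dom} (or can be checked directly, since $\mathcal{D}_{3}$ has four vertices in four distinct orbits and three edges in three distinct orbits); the vertex stabilisers are $\stab(\alpha)=\Phi$ and $\stab(A_{i})$ as described in the paragraph preceding the theorem statement, generated by $\faktor{(G_{i_{j}}\times G_{i_{k}})}{Z(G_{i})}$ and $\Phi$ subject to $\faktor{G_{i_{jk}}}{Z(G_{i})}=\inn(G_{i})$ and the semidirect-product relation $\left(\{G_{a}\},g_{i}\right)\circ\varphi=\varphi\circ\left(\{G_{a}\},\varphi(g_{i})\right)$.

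Next I would assemble the generators and relations. Brown's Theorem gives $\outs(G)$ as generated by $\stab(\alpha)$ together with $\stab(A_{1}),\stab(A_{2}),\stab(A_{3})$, subject to edge relations identifying $\stab(A_{i})$ with its image in $\stab(\alpha)$ under the inclusion coming from the edge $A_{i}\dash\alpha$ (using Lemma \ref{lem stabs include}, which tells us $\stab(A_{i})\supseteq\stab(\alpha)$ fails in general but the edge inclusion runs $\stab(\alpha)\hookrightarrow\stab(A_{i})$ — here it is the leaf $A_i$ that collapses-to nothing and $\alpha$ that collapses to $A_i$, so in fact $\stab(\alpha)\subseteq\stab(A_i)$). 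Thus the three copies of $\Phi$ sitting inside $\stab(A_1),\stab(A_2),\stab(A_3)$ are all identified with the single $\Phi=\stab(\alpha)$. The remaining generators are the groups $G_{i_{j}}$ for $i\ne j$ in $\{1,2,3\}$, i.e. six groups, and $\Phi$. The relations are then exactly those internal to the $\stab(A_i)$'s: commutation $[f_{i_{j}}(g),f_{i_{k}}(h)]=1$ (from $G_{i_{j}}\times G_{i_{k}}$ being a direct product inside $\stab(A_i)$), the ``outer'' relation $f_{i_{j}}(g)f_{i_{k}}(g)=\ad_{G_{i}}(g)$ (the precise form, via the isomorphisms $f_{i_{j}}$, of $\faktor{G_{i_{jk}}}{Z(G_{i})}=\inn(G_{i})$; note when $g\in Z(G_i)$ this reads $f_{i_j}(g)f_{i_k}(g)=1$, consistent with the quotient), and $\varphi^{-1}f_{i_{j}}(g)\varphi=f_{i_{j}}(\varphi(g))$ (the semidirect relation), together with all relations within each $G_i$ and within $\Phi$. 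Translating each piece of $\stab(A_i)$ through the isomorphism $f_{i_j}:G_i\to G_{i_j}$ gives precisely Relations 1--3 as stated.

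I would also remark, as the paper does in the $n=4$ case, that there is no analogue of Relation 2/3 of the $n\ge5$ theorem (the relation $[f_{j_k}(g),f_{i_j}(h)f_{i_k}(h)]=1$) needed here, because there is no $\beta$ or $B$ graph in $\mathcal{D}_3$ — the tripod has no such vertices, every non-$\alpha$ vertex being an $A_i$ — so no further relations arise. One could then close the loop by noting that this recovers the Collins--Gilbert presentation quoted in the introduction, via the identifications $G\cong\bigast_i G_i \hookrightarrow \outs(G)$ realised by the $f_{i_j}$'s and Relation 2, yielding $\outs(G)\cong G\rtimes\Phi$ in the pairwise-non-isomorphic Grushko case.

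The main obstacle is not really in this theorem but is inherited: it is entirely parasitic on the simple connectivity of $\mathcal{C}_3$. Fortunately, unlike $n\ge5$, here $\mathcal{C}_3$ is literally the barycentric spine of Guirardel--Levitt Outer Space, which is contractible (Lemma \ref{lemma c3 and c4 sc}), so there is nothing hard to prove. The only genuinely fiddly bookkeeping is checking that the edge inclusions $\stab(\alpha)\hookrightarrow\stab(A_i)$ are the obvious ones (so that the three $\Phi$'s genuinely coincide and no spurious extra $\Phi$-generators appear) and that the presentation of $\stab(A_i)$ from the Bass--Jiang short exact sequence really does boil down to Relations 1--3 once one passes through $f_{i_j}$ and quotients by $Z(G_i)$ — but this is the same argument already carried out in Example \ref{eg stab A} and Proposition \ref{prop brown stabs 2}, specialised to valency $2$, so I would simply cite it.
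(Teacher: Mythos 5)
Your proposal is correct and follows essentially the same route as the paper: apply Brown's Theorem to the tripod $\mathcal{D}_{3}$, identify the three copies of $\Phi$ inside $\stab(A_{1})$, $\stab(A_{2})$, $\stab(A_{3})$ with $\stab(\alpha)$ via the edge inclusions, and read off Relations 1--3 from the presentations of the $\stab(A_{i})$ (the paper phrases this as $\outs(G)=\stab(A_{1})\ast_{\Phi}\stab(A_{2})\ast_{\Phi}\stab(A_{3})$). Your additional remarks on the absence of $\beta$/$B$-type relations and the recovery of the Collins--Gilbert presentation match the surrounding discussion in the paper.
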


\begin{proof}
By Theorem \ref{brown thm strict}, $\outs(G)$ is generated by $\stab(\alpha)$, $\stab(A_{1})$, $\stab(A_{2})$, and $\stab(A_{3})$.
The structure of $\mathcal{D}_{3}$ means that the $\Phi$ contribution from each $\stab(A_{i})$ is identified with $\stab(\alpha)$.
That is, $\outs(G)=\stab(A_{1})\ast_{\Phi}\stab(A_{2})\ast_{\Phi}\stab(A_{3})$.
The result then follows by examining each $\stab(A_{i})$.
\end{proof}

Note that $\faktor{G_{i_{jk}}}{Z(i)}=\inn(G_{i})$ means $G_{i_{j}}\inn(G_{i})=G_{i_{k}}\inn(G_{i})$ as cosets in $\stab(A_{i})$.
Thus we can write $\stab(A_{i})=G_{i_{j}}\rtimes\Phi=G_{i_{k}}\rtimes\Phi$ (using the Guirardel--Levitt approach to computing stabilisers demonstrated in Example \ref{eg stab sigma}).
Since the only relation between vertex groups here is the amalgamation over $\Phi$, we can in this case obtain a much simpler presentation:

\begin{cor}
For $G=G_{1}\ast G_{2}\ast G_{3}$ as above, we have:
\[\outs(G)=(G_{1_{2}}\ast G_{2_{3}}\ast G_{3_{1}})\rtimes\Phi\cong G\rtimes\Phi\]
\end{cor}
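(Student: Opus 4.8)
The plan is to combine the presentation of Theorem~\ref{thm n=3 presentation} with the observation recorded immediately above the corollary, namely that for each $i\in\{1,2,3\}$ and $\{j,k\}=\{1,2,3\}-\{i\}$ one has $\stab(A_{i})=G_{i_{j}}\rtimes\Phi=G_{i_{k}}\rtimes\Phi$ (this follows from $\faktor{G_{i_{jk}}}{Z(G_{i})}=\inn(G_{i})$, which lets us absorb $G_{i_{k}}$ into $G_{i_{j}}\inn(G_{i})\subseteq G_{i_{j}}\rtimes\Phi$, exactly the Guirardel--Levitt computation of Example~\ref{eg stab sigma}). First I would note, as in the proof of Theorem~\ref{thm n=3 presentation}, that the tripod shape of $\mathcal{D}_{3}$ together with Brown's Theorem (Theorem~\ref{brown thm strict}) and Lemma~\ref{lem stabs include} give $\outs(G)=\stab(A_{1})\ast_{\Phi}\stab(A_{2})\ast_{\Phi}\stab(A_{3})$, the amalgamation being over the common subgroup $\Phi=\stab(\alpha)$. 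Choosing the cyclic pattern of complements, write $\stab(A_{1})=G_{1_{2}}\rtimes\Phi$, $\stab(A_{2})=G_{2_{3}}\rtimes\Phi$, and $\stab(A_{3})=G_{3_{1}}\rtimes\Phi$.

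Next I would invoke the elementary group-theoretic fact that an amalgam of semidirect products over a shared complement is again a semidirect product: if $N_{1},N_{2},N_{3}$ are groups each equipped with an action of a group $Q$, then
\[
(N_{1}\rtimes Q)\ast_{Q}(N_{2}\rtimes Q)\ast_{Q}(N_{3}\rtimes Q)\;\cong\;(N_{1}\ast N_{2}\ast N_{3})\rtimes Q,
\]
where $Q$ acts on $N_{1}\ast N_{2}\ast N_{3}$ by acting on each free factor. This is proved by comparing universal properties: a homomorphism out of the left-hand side is a triple of homomorphisms $N_{i}\rtimes Q\to H$ agreeing on $Q$, equivalently a homomorphism $Q\to H$ together with $Q$-equivariant maps $N_{i}\to H$, equivalently (by the universal property of the free product, with $Q$ acting diagonally) a homomorphism out of the right-hand side; one checks the identifications are mutually inverse. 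Applying this with $N_{i}$ the relevant $G_{i_{j}}$ and $Q=\Phi$ yields $\outs(G)=(G_{1_{2}}\ast G_{2_{3}}\ast G_{3_{1}})\rtimes\Phi$.

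Finally I would transport this across the isomorphisms $f_{i_{j}}\colon G_{i}\to G_{i_{j}}$ of the standing Observation, which assemble to an isomorphism $G_{1}\ast G_{2}\ast G_{3}\xrightarrow{\ \sim\ }G_{1_{2}}\ast G_{2_{3}}\ast G_{3_{1}}$; Relation~3 of Theorem~\ref{thm n=3 presentation}, $\varphi^{-1}f_{i_{j}}(g)\varphi=f_{i_{j}}(\varphi(g))$, says precisely that this isomorphism intertwines the natural $\Phi$-action on $G=G_{1}\ast G_{2}\ast G_{3}$ with the $\Phi$-action on $G_{1_{2}}\ast G_{2_{3}}\ast G_{3_{1}}$ appearing above, so it upgrades to an isomorphism of semidirect products $G\rtimes\Phi\xrightarrow{\ \sim\ }\outs(G)$. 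The only genuinely delicate point, and the one I would spell out carefully, is the verification of the amalgam-of-semidirect-products lemma (in particular that the diagonal $\Phi$-action on the free product is well defined and that the two universal-property translations are inverse to one another); everything else is bookkeeping with the isomorphisms $f_{i_{j}}$ and the relations already extracted in Theorem~\ref{thm n=3 presentation}.
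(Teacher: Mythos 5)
Your proposal is correct and follows the same route as the paper: the paper likewise reads off $\outs(G)=\stab(A_{1})\ast_{\Phi}\stab(A_{2})\ast_{\Phi}\stab(A_{3})$ from the tripod $\mathcal{D}_{3}$, rewrites each $\stab(A_{i})$ as $G_{i_{j}}\rtimes\Phi$ using $\faktor{G_{i_{jk}}}{Z(G_{i})}=\inn(G_{i})$, and then observes that $(G_{1_{2}}\rtimes\Phi)\ast_{\Phi}(G_{2_{3}}\rtimes\Phi)\ast_{\Phi}(G_{3_{1}}\rtimes\Phi)=(G_{1_{2}}\ast G_{2_{3}}\ast G_{3_{1}})\rtimes\Phi$. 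The only difference is that you spell out the amalgam-of-semidirect-products step and the final identification with $G\rtimes\Phi$ via the $f_{i_{j}}$, which the paper leaves implicit.
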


\begin{proof}
As noted above, in this case we have that $\outs(G)=\stab(A_{1})\ast_{\Phi}\stab(A_{2})\ast_{\Phi}\stab(A_{3})$.
We now observe that $(G_{1_{2}}\rtimes\Phi)\ast_{\Phi}(G_{2_{3}}\rtimes\Phi)\ast_{\Phi}(G_{3_{1}}\rtimes\Phi)=(G_{1_{2}}\ast G_{2_{3}}= G_{3_{1}})\rtimes\Phi$.
\end{proof}

If each of $G_{1}$, $G_{2}$, and $G_{3}$ is additionally freely indecomposable, not infinite cyclic, and pairwise non-isomorphic, then we have that $\outs(G)=\out(G)$.
In this case, this is exactly the presentation for $\out(G)$ given by Collins and Gilbert \cite[Propositions 4.1,4.2]{Collins1990}.

% ------------------------------------------------ 		SECTION 4		--------------------------------------------------------

\section{The Space of Domains} \label{section space of domains}

The rest of the paper will be spent proving that for $n\ge5$, $\mathcal{C}_{n}$ is simply connected.

In order to study global properties of our complex $\mathcal{C}_{n}$, we will define a new space, akin to a nerve complex (first introduced by Alexandroff in \cite{Alexandroff1928}).

A nerve complex is an abstract simplicial complex built using information on the intersections within a family of sets.
The sets we will choose are copies of the fundamental domain $\mathcal{D}_{n}$, which form a (closed) cover of $\mathcal{C}_{n}$.
However, in order to keep the dimension low, we will only consider $k$-wise intersections of sets for $k\le3$.

It is not necessary to have background knowledge of nerve complexes in order to understand our space or arguments.

\subsection{Defining the Space of Domains} \label{subsection space of domains}

We will call a subset of our complex a \emph{domain} if it is of the form $\mathcal{D}_{n}\cdot\psi$ for some $\psi\in\outs(G)$, where we think of $\mathcal{D}_{n}$ as a set and $\mathcal{D}_{n}\cdot\psi=\{x\cdot\psi|x\in\mathcal{D}_{n}\}$. Since the action of $\outs(G)$ on $\mathcal{C}_{n}$ preserves adjacency, then $\mathcal{D}_{n}\cdot\psi$ has the same topological structure as $\mathcal{D}_{n}$.

\begin{defn}
The \emph{Graph of Domains} is a graph whose vertex set contains one vertex for every domain in our complex, and whose edge set contains an edge joining distinct vertices $u$ and $v$ if and only if the intersection of the domains associated to $u$ and $v$ is non-empty.
\end{defn}

Since the graph \Talpha{$n$} (denoted $\alpha$) occurs precisely once per domain, taking one vertex per domain equates to taking one vertex for every point of the form \Talpha{$n$} in our complex $\mathcal{C}_{n}$.
We will thus often denote vertices in the Graph of Domains by $\alpha$.
Now any two distinct vertices $\alpha_{1}$ and $\alpha_{2}$ in the Graph of Domains are joined by an edge precisely when the intersection of the domain containing the graph $\alpha_{1}$ and the domain containing the graph $\alpha_{2}$ is non-empty.

Note that the action of $\outs(G)$ on $\mathcal{C}_{n}$ induces a natural $\outs(G)$-action on the Graph of Domains.

\begin{defn}\label{defn splitting of domain}
The \emph{splitting associated to a domain} $\mathcal{D}_{n}\cdot\psi$ is the labelling \\ \noindent $\left( (G_{1})\psi,\dots,(G_{n})\psi \right)$, which is equivalent to any labelling $(H_{1},\dots,H_{n})$ of the $\alpha$-graph contained within the domain $\mathcal{D}_{n}\cdot\psi$.
\end{defn}

Note that $\psi\in\outs(G)$ is only unique up to factor automorphisms --- that is, if $\varphi\in\Phi$ then $\mathcal{D}_{n}\cdot\psi=\mathcal{D}_{n}\cdot\varphi\psi$.

\begin{cor}
The Graph of Domains is (path) connected.
\end{cor}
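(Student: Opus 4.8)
The plan is to bootstrap off the connectivity results already established for $\mathcal{C}_{n}$ itself, namely Corollary \ref{cor Cn pc}, together with the fact that the domains $\mathcal{D}_{n}\cdot\psi$ form a closed cover of $\mathcal{C}_{n}$. The key structural fact I would use is Corollary \ref{cor copies of the fun dom are pc}: every vertex of $\mathcal{C}_{n}$ lies in some copy of the fundamental domain and is joined by an edge path in that copy to some $\alpha$-graph. Since each domain contains exactly one $\alpha$-graph, the vertices of the Graph of Domains are in bijection with the $\alpha$-graphs in $\mathcal{C}_{n}$, and an edge in the Graph of Domains corresponds to a non-empty intersection of two domains.

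First I would take two arbitrary vertices of the Graph of Domains, represented by $\alpha$-graphs $\alpha_{0}$ and $\alpha_{1}$ in $\mathcal{C}_{n}$. By Proposition \ref{prop alpha graphs are path connected}, any two $\alpha$-graphs in $\mathcal{C}_{n}$ are connected via a path travelling only through $\alpha$ and $A$ graphs, say $\alpha_{0} \dash A^{(1)} \dash \alpha^{(1)} \dash A^{(2)} \dash \alpha^{(2)} \dash \dots \dash A^{(m)} \dash \alpha_{1}$. The crucial observation is that each consecutive pair $\alpha^{(t)}, \alpha^{(t+1)}$ in this list is separated by a single $A$-graph $A^{(t+1)}$ to which both collapse. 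I would argue that the domain $\mathcal{D}_{n}\cdot\psi_{t}$ containing $\alpha^{(t)}$ and the domain $\mathcal{D}_{n}\cdot\psi_{t+1}$ containing $\alpha^{(t+1)}$ both contain (a copy of) $A^{(t+1)}$: indeed $A^{(t+1)}$ is a collapse of $\alpha^{(t)}$, and since every graph in a spike of $\mathcal{D}_{n}$ containing an $\alpha$-graph collapses down through $A$-graphs, the $A$-graph $A^{(t+1)}$ lies in the same domain as $\alpha^{(t)}$ (and likewise for $\alpha^{(t+1)}$). Hence $\mathcal{D}_{n}\cdot\psi_{t} \cap \mathcal{D}_{n}\cdot\psi_{t+1} \neq \emptyset$, so there is an edge between these two vertices in the Graph of Domains. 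Stringing these edges together gives a path from the vertex represented by $\alpha_{0}$ to the vertex represented by $\alpha_{1}$.

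The main obstacle I anticipate is the bookkeeping needed to confirm that the $A$-graph joining two consecutive $\alpha$-graphs genuinely lies in \emph{both} of the relevant domains, rather than merely being adjacent to graphs that do. This requires being careful about which copy of the fundamental domain one is working in: an $\alpha$-graph determines its domain uniquely (since $\alpha$ occurs once per domain), and one must check that the standard `spike' structure of $\mathcal{D}_{n}$ places the $A$-vertices to which $\alpha$ collapses inside that same domain — which follows from the construction of $\mathcal{D}_{n}$ in Definition \ref{defn Dn} and the path $A_{i} \dash \alpha$ recorded in Table \ref{paths in fun dom}, transported by the $\outs(G)$-action. Once this is pinned down, the argument is immediate: connectedness of the Graph of Domains reduces to the already-proven connectedness statement for $\alpha$-graphs in $\mathcal{C}_{n}$.
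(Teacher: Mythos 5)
Your proposal is correct and follows essentially the same route as the paper, which simply cites Proposition \ref{prop alpha graphs are path connected} and leaves the translation to the Graph of Domains implicit; you have just spelled out the (correct) intermediate step that each $A$-graph on the path lies in the domains of both adjacent $\alpha$-graphs, giving the required edges.
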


\begin{proof}
This follows immediately from Proposition \ref{prop alpha graphs are path connected}.
\end{proof}

\begin{rem}
Note that the Graph of Domains is \textbf{not} locally finite.
\end{rem}

To show our complex $\mathcal{C}_{n}$ is simply connected, we will need the following construction:

\begin{defn}\label{Space of Domains defn}
Wherever we have a 3-cycle 
\begin{tikzpicture}
\node at (0,0) {$\alpha_{1}$};
\node at (1,1) {$\alpha_{2}$};
\node at (2,0) {$\alpha_{3}$};
\draw[thick] (0.2,0) -- (1.75,0);
\draw[thick] (0.1,0.1) -- (0.9,0.9);
\draw[thick] (1.15,0.85) -- (1.85,0.15);
\end{tikzpicture}
in our Graph of Domains, we will insert a 2-simplex
\begin{tikzpicture}
\transparent{0.7}
\filldraw[white, fill=lightgray] (0,0) -- (2,0) -- (1,1) -- cycle;
\transparent{1}
\node at (0,0) {$\alpha_{1}$};
\node at (1,1) {$\alpha_{2}$};
\node at (2,0) {$\alpha_{3}$};
\draw[thick] (0.2,0) -- (1.75,0);
\draw[thick] (0.1,0.1) -- (0.9,0.9);
\draw[thick] (1.15,0.85) -- (1.85,0.15);
\end{tikzpicture}
if and only if $\alpha_{1}\cap\alpha_{2}\cap\alpha_{3}\ne\emptyset$ (in the complex $\mathcal{C}_{n}$).
We call the resulting CW-complex the \emph{Space of Domains}.

Note that given a cycle $\alpha_{1}\dash \dots\dash \alpha_{n-1}\dash \alpha_{n}=\alpha_{1}$ with $\alpha_{1}\cap\dots\cap\alpha_{n-1}\ne\emptyset$, we can split this up into 3-cycles
\begin{tikzpicture}
\node at (-0.05,0) {$\alpha_{1}$};
\node at (1,1) {$\alpha_{i}$};
\node at (2.2,0) {$\alpha_{i+1}$};
\draw[thick] (0.2,0) -- (1.75,0);
\draw[thick] (0.1,0.1) -- (0.9,0.9);
\draw[thick] (1.15,0.85) -- (1.85,0.15);
\end{tikzpicture}
for $i=2,\dots,n-2$, with each $\alpha_{1}\cap\alpha_{i}\cap\alpha_{i+1}\ne\emptyset$, so any such loop is contractible in our Space of Domains.
\end{defn}

The idea behind this definition is that, since we have shown that the fundamental domain $\mathcal{D}_{n}$ is simply connected, then if we had that any pairwise intersection of domains is either empty or path-connected,
then any non-trivial loop in $\mathcal{C}_{n}$ would be projected to a non-trivial loop in the Space of Domains.

In particular, if $\alpha_{1}\cup\alpha_{2}$ were simply connected (assuming $\alpha_{1}\cap\alpha_{2}\ne\emptyset$) then there would be no non-trivial loops in $\mathcal{C}_{n}$ which would appear as a forwards-and-backwards traversal of an edge when projected to the Space of Domains,
and similarly for $\alpha_{1}\cup\alpha_{2}\cup\alpha_{3}$ (or the 2-cell $\alpha_{1}\dash \alpha_{2}\dash \alpha_{3}\dash \alpha_{1}$ in the Space of Domains).

Then to show that our space $\mathcal{C}_{n}$ is simply connected, it would suffice to show that our Space of Domains is simply connected.

Unfortunately, it will not be quite this simple, but the general idea will remain the same. 
We will formalise (and resolve) this in Sections \ref{pairwise intersections} and \ref{section map SoD to Cn}.

\subsection{Pairwise Intersections} \label{pairwise intersections}

Here we would hope to show that the intersection of two adjacent domains is path-connected.

Then we could deduce using the Seifert--van Kampen Theorem that the union of two adjacent domains is simply connected.
This would ensure, for example, that there are no non-trivial loops in $\mathcal{C}_{n}$ of the form $\alpha_{1}\dash A\dash \alpha_{2}\dash A\dash \alpha_{1}$, and would justify the use of a single edge between adjacent vertices in our Graph of Domains.

As it turns out, not quite all such intersections are path-connected, but we show that the case where this does not hold can be circumvented.
This is deduced in Propositions \ref{pw int pc case not C} and \ref{pw int pc case C}, the main results of this subsection.

To avoid confusion regarding domains and graphs within domains, we will temporarily break from the convention of naming domains $\alpha$.
So let $\aleph_{1}$ and $\aleph_{2}$ be two arbitrary domains. Assume $\Int\ne\emptyset$.
Without loss of generality, we may assume $\aleph_{1}$ is the fundamental domain $\mathcal{D}_{n}$.

\begin{obs}\label{obs pw int stab}
Note that if $\aleph_{2}=\aleph_{1}\cdot\psi$ then for $T\in\aleph_{1}$, we have $T\in\aleph_{2}$ if and only if $T=T'\cdot\psi$ for some $T'\in\aleph_{1}$.
But each domain contains precisely one element of each orbit, so we must have $T=T'$. Then $\psi\in\stab(T)$.
Moreover, for $T\in \aleph_{1}$ and $\psi\in\stab(T)$, we have $T=T\cdot\psi\in\aleph_{1}\cdot\psi$.
That is, for $T\in\aleph_{1}$, we have $T\in\aleph_{2}$ if and only if $\aleph_{2}=\aleph_{1}\cdot\psi$ for some $\psi\in\stab(T)$.
\end{obs}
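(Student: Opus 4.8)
The statement to prove is Observation~\ref{obs pw int stab}: for $T\in\aleph_{1}$, we have $T\in\aleph_{2}$ if and only if $\aleph_{2}=\aleph_{1}\cdot\psi$ for some $\psi\in\stab(T)$. The plan is to use two facts established earlier: first, that $\aleph_{1}$ and $\aleph_{2}$ are both domains, i.e.\ $\outs(G)$-translates of $\mathcal{D}_{n}$, so in particular we may write $\aleph_{2}=\aleph_{1}\cdot\psi$ for \emph{some} $\psi\in\outs(G)$; and second, the strictness of the fundamental domain from Propositions~\ref{prop Dn is fun dom} and~\ref{prop strict fun dom}, which says each domain contains exactly one representative of each $\outs(G)$-orbit of cells.

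The argument is short and has two directions. For the forward direction, suppose $T\in\aleph_{1}\cap\aleph_{2}$. Write $\aleph_{2}=\aleph_{1}\cdot\psi$. Since $T\in\aleph_{2}=\aleph_{1}\cdot\psi$, there is some $T'\in\aleph_{1}$ with $T=T'\cdot\psi$. But $T$ and $T'$ lie in $\aleph_{1}$ and are in the same $\outs(G)$-orbit, so by strictness of the fundamental domain $T=T'$ (as cells of $\mathcal{C}_{n}$, i.e.\ up to the equivalence $\simeq$ of Definition~\ref{defn equivalent labellings}). Hence $T=T\cdot\psi$, that is, $\psi\in\stab(T)$. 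For the reverse direction, suppose $\aleph_{2}=\aleph_{1}\cdot\psi$ with $\psi\in\stab(T)$ for some $T\in\aleph_{1}$. Then $T=T\cdot\psi\in\aleph_{1}\cdot\psi=\aleph_{2}$, and $T\in\aleph_{1}$ by hypothesis, so $T\in\aleph_{1}\cap\aleph_{2}$.

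I do not expect a genuine obstacle here: the only subtlety is being careful that ``$T=T'$'' is meant up to the labelling-equivalence $\simeq$ rather than literal equality of graphs of groups, and that $\stab(T)$ is defined in Notation~\ref{notation stab} precisely as $\{\psi\mid T\simeq T\cdot\psi\}$, so the identifications go through cleanly. One should also remark that the conclusion is independent of the choice of $\psi$ representing the relation $\aleph_{2}=\aleph_{1}\cdot\psi$: any two such choices differ by an element of $\stab(\aleph_{1})$, which fixes $\mathcal{D}_{n}$ pointwise up to $\simeq$, so the membership condition is well posed. This observation will then be used in the following subsection to re-express pairwise intersections $\Int$ of domains in terms of the vertex stabilisers computed in Propositions~\ref{prop brown stabs 1}, \ref{prop brown stabs 2}, and~\ref{prop brown stabs 3}.
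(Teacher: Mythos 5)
Your proof is correct and follows essentially the same route as the paper: the forward direction uses strictness of the fundamental domain (each domain contains exactly one representative of each orbit) to force $T=T'$ and hence $\psi\in\stab(T)$, and the reverse direction is immediate from $T=T\cdot\psi\in\aleph_{1}\cdot\psi$. Your additional remarks on the equivalence $\simeq$ and on independence of the choice of $\psi$ are consistent with the paper (the translating element is unique up to $\Phi$, which lies in every vertex stabiliser), but they add nothing beyond what the paper's own argument already contains.
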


\begin{lemma}
Let $T_{1}$, $T_{2}$, and $T_{3}$ be vertices in the complex $\mathcal{C}_{n}$. If $T_{1},T_{2}\in\Int$ and $T_{3}\in\aleph_{1}$ with $\stab(T_{1})\cap\stab(T_{2})\subseteq\stab(T_{3})$ then $T_{3}\in\Int$.
\end{lemma}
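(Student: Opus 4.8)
The statement is a transitivity-type fact about when a third vertex lies in the intersection of two domains, and Observation \ref{obs pw int stab} is precisely the tool. The plan is to characterise membership in each domain via stabilisers, and then chase the containment hypothesis. Without loss of generality write $\aleph_{1}=\mathcal{D}_{n}$ and $\aleph_{2}=\aleph_{1}\cdot\psi$ for some $\psi\in\outs(G)$.

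\textbf{Key steps.} First, by Observation \ref{obs pw int stab}, since $T_{1}\in\Int=\aleph_{1}\cap(\aleph_{1}\cdot\psi)$ and $T_{1}\in\aleph_{1}$, we have $\psi\in\stab(T_{1})$. Likewise, since $T_{2}\in\Int$ and $T_{2}\in\aleph_{1}$, we have $\psi\in\stab(T_{2})$. Hence $\psi\in\stab(T_{1})\cap\stab(T_{2})$. Second, by the hypothesis $\stab(T_{1})\cap\stab(T_{2})\subseteq\stab(T_{3})$, we conclude $\psi\in\stab(T_{3})$. Third, since $T_{3}\in\aleph_{1}$ and $\psi\in\stab(T_{3})$, the converse direction of Observation \ref{obs pw int stab} gives $T_{3}\in\aleph_{1}\cdot\psi=\aleph_{2}$. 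Combined with $T_{3}\in\aleph_{1}$, this yields $T_{3}\in\Int$, as required.

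\textbf{Main obstacle.} Honestly, there is no serious obstacle here: the lemma is a formal consequence of Observation \ref{obs pw int stab}, which already packages the ``$T\in\aleph_{2}$ iff $\aleph_{2}=\aleph_{1}\cdot\psi$ for some $\psi\in\stab(T)$'' equivalence in both directions. The only point requiring a moment's care is the reduction to the case $\aleph_{1}=\mathcal{D}_{n}$: one should note that the statement is invariant under the $\outs(G)$-action (translating all of $\aleph_{1},\aleph_{2},T_{1},T_{2},T_{3}$ by a common element preserves all the hypotheses and the conclusion, since stabilisers transform by conjugation, $\stab(T\cdot\chi)=\stab(T)^{\chi}$), so it suffices to prove it when $\aleph_{1}$ is the fundamental domain — which is exactly the setting in which Observation \ref{obs pw int stab} was stated. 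Everything else is a one-line stabiliser chase.
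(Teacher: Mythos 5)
Your proof is correct and follows essentially the same route as the paper: both arguments fix a $\psi$ with $\aleph_{2}=\aleph_{1}\cdot\psi$, use Observation \ref{obs pw int stab} to place $\psi$ in $\stab(T_{1})\cap\stab(T_{2})$, push it into $\stab(T_{3})$ via the hypothesis, and apply the converse direction of the Observation to conclude $T_{3}\in\Int$. The reduction to $\aleph_{1}=\mathcal{D}_{n}$ is also made (implicitly) in the paper's setup, so there is nothing to add.
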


\begin{proof}
By Observation \ref{obs pw int stab}, $T\in\Int$ if and only if $\aleph_{2}=\aleph_{1}\cdot\psi$ for some $\psi\in\stab{T}$.
Thus if $T_{1}, T_{2}\in\Int$ then $\aleph_{2}=\aleph_{1}\cdot\psi$ for some $\psi\in\stab{T_{1}}\cap\stab{T_{2}}$.
Additionally, if for some $T_{3}\in\aleph_{1}$ we have $\psi\in\stab(T_{3})$, then $T_{3}\in\Int$.
This last condition holds if (but not only if) $\stab(T_{1})\cap\stab(T_{2})\subseteq\stab(T_{3})$.
\end{proof}

\begin{cor}
Let $T_{1}$ and $T_{2}$ be vertices in the intersection $\Int\subseteq\mathcal{C}_{n}$.
If $\stab(T_{1})\cap\stab(T_{2})=\Phi$, then $\Int=\aleph_{1}$. 
\end{cor}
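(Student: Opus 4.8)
The plan is to show that if $\stab(T_{1})\cap\stab(T_{2})=\Phi$, then every vertex of $\aleph_{1}=\mathcal{D}_{n}$ lies in $\Int$; since domains are full subcomplexes (a cell lies in a domain iff all its vertices do), this will force $\Int$ to be all of $\aleph_{1}$. The key observation, already packaged in the preceding lemma, is that for any vertex $T_{3}\in\aleph_{1}$ with $\stab(T_{1})\cap\stab(T_{2})\subseteq\stab(T_{3})$ we automatically get $T_{3}\in\Int$. So it suffices to check that $\Phi=\stab(T_{1})\cap\stab(T_{2})$ is contained in $\stab(T_{3})$ for \emph{every} vertex $T_{3}$ of $\mathcal{D}_{n}$.

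First I would invoke Proposition \ref{prop brown stabs 1} (and Propositions \ref{prop brown stabs 2} and \ref{prop brown stabs 3}), which show that the stabiliser of \emph{every} graph of groups $T\in\mathcal{D}_{n}^{(0)}$ is generated by certain groups $G_{i_{e}}$ of Whitehead automorphisms together with $\Phi$, and in particular contains $\Phi$ as a subgroup. (Concretely: $\Phi=\stab(\alpha)=\stab(\rho_{ij})$, and for every other vertex type the listed stabiliser is of the form $(\text{Whitehead part})\rtimes\Phi$ or is generated by a Whitehead part and $\Phi$, so $\Phi\le\stab(T)$ in all cases.) Hence $\stab(T_{1})\cap\stab(T_{2})=\Phi\subseteq\stab(T_{3})$ holds for all $T_{3}\in\mathcal{D}_{n}^{(0)}$, and the lemma gives $T_{3}\in\Int$ for every vertex $T_{3}$ of $\aleph_{1}$.

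It then remains to upgrade this vertex-level statement to all of $\aleph_{1}$. Since $\aleph_{1}=\mathcal{D}_{n}$ and $\aleph_{2}=\mathcal{D}_{n}\cdot\psi$ are both (isomorphic copies of) the simplicial complex $\mathcal{D}_{n}$, and since by Observation \ref{obs pw int stab} we have $\aleph_{2}=\aleph_{1}\cdot\psi$ for some fixed $\psi\in\stab(T_{1})\cap\stab(T_{2})=\Phi$, the intersection $\Int$ is itself a subcomplex of $\mathcal{C}_{n}$; a cell $C$ of $\aleph_{1}$ lies in $\Int$ precisely when $\psi\in\bigcap_{v\in V(C)}\stab(v)$, i.e. precisely when all vertices of $C$ lie in $\Int$. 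As every vertex of $\aleph_{1}$ lies in $\Int$, every cell of $\aleph_{1}$ does too, so $\Int=\aleph_{1}$.

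\textbf{Main obstacle.} The only real content to verify carefully is the claim that $\Phi\le\stab(T)$ for \emph{every} vertex type in Table \ref{table n>=5 points} — this is where one must read off Propositions \ref{prop brown stabs 1}--\ref{prop brown stabs 3} rather than just the isomorphism-type Table \ref{table stabs G-L}, since we need $\Phi$ as an actual subgroup of $\outs(G)$ common to all these stabilisers, not merely a semidirect factor up to isomorphism. Given the explicit presentations already established, this is routine; no genuinely hard step remains.
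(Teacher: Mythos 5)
Your proof is correct, but it takes a different (and more laborious) route than the paper's. The paper's argument is a one-liner built on a single distinguished vertex: since $\stab(\alpha)=\Phi=\stab(T_{1})\cap\stab(T_{2})$, the preceding lemma puts $\alpha$ into $\Int$; but each $\alpha$-graph lies in exactly one domain, so $\aleph_{1}=\aleph_{2}$ and the conclusion is immediate. You instead verify that $\Phi\le\stab(T)$ for \emph{every} vertex type of $\mathcal{D}_{n}$ (reading this off Propositions \ref{prop brown stabs 1}--\ref{prop brown stabs 3}), conclude that every vertex of $\aleph_{1}$ lies in $\Int$, and then upgrade from vertices to cells using fullness of domains. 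Both arguments are sound — your observation that the witness $\psi$ with $\aleph_{2}=\aleph_{1}\cdot\psi$ lies in $\Phi$, which stabilises all of $\mathcal{D}_{n}$, would in fact let you conclude $\aleph_{1}=\aleph_{2}$ directly without the vertex-by-vertex check. What the paper's approach buys is economy: it needs only the facts that $\stab(\alpha)=\Phi$ and that the $\alpha$-graph determines its domain, rather than a survey of all eleven stabilisers and a separate cell-level argument; what yours buys is that it makes explicit the (true, and elsewhere useful) fact that $\Phi$ is a common subgroup of every vertex stabiliser in $\mathcal{D}_{n}$.
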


\begin{proof}
Recall from Proposition \ref{prop brown stabs 1} that $\stab(\alpha)=\Phi$.
Thus if $\stab(T_{1})\cap\stab(T_{2})=\Phi$ for some $T_{1},T_{2}\in\Int$, then $\alpha\in\Int$.
But each $\alpha$ vertex appears in exactly one domain, hence $\aleph_{1}=\aleph_{2}$.
\end{proof}

Suppose $T_{1}$ and $T_{2}$ are distinct vertices in the complex $\mathcal{C}_{n}$.
By Lemma \ref{lem stabs include}, if $T_{1}$ is a collapse of $T_{2}$, then $\stab(T_{2})\subseteq\stab(T_{1})$, so $T_{2}\in\Int\implies T_{1}\in\Int$. 
Since every graph collapses to at least one of $A_{i}$, $B_{i,j,k}$, or $C_{i,j,k,l,m}$ (for some $i,j,k,l,m$), then to show path connectivity of intersections, it suffices to find paths in the intersection $\Int$ with endpoints as the following six cases:
\begin{enumerate}
\item $A_{i}\dash A_{p}$
\item $B_{i,j,k}\dash A_{p}$
\item $B_{i,j,k}\dash B_{p,q,r}$
\item $C_{i,j,k,l,m}\dash A_{p}$
\item $C_{i,j,k,l,m}\dash B_{p,q,r}$
\item $C_{i,j,k,l,m}\dash C_{p,q,r,s,t}$
\end{enumerate}
(where $i,j,k,l,m,p,q,r,s,t$ need not be distinct, unless appearing together as indices of a single vertex.)
In our proofs, we will assume the `left' vertex has fixed indices, and allow the indices of the `right' vertex to vary.

Writing $i_{j}$ for the group $G_{i_{j}}$, we
recall that the stabiliser of $A_{i}$ is a quotient of 	$(i_{v_{1}}\times\dots\times i_{v_{n-1}})\rtimes\Phi$	 
, the stabiliser of $B_{i,j,k}$ is a quotient of 	$(i_{jk}\times j_{k}\times i_{v_{1}}\times\dots\times i_{v_{n-3}})\rtimes\Phi$	 
, and the stabiliser of $C_{i,j,k,l,m}$ is a quotient of 	$(i_{lm}\times l_{m} \times i_{jk}\times j_{k}\times i_{v_{1}}\times\dots\times i_{v_{n-5}})\rtimes\Phi$.	 
In the group of automorphisms $G_{i_{j}}$, we call $G_{i}$ the \emph{operating factor} and $G_{j}$ the \emph{dependent factor}.
We say a graph has operating and dependent factors if the same is true of its stabiliser.
So $A_{i}$ has one operating factor, $B_{i,j,k}$ has two, and $C_{i,j,k,l,m}$ has three distinct operating factors.
In each case, only one operating factor has more than one dependent factor.
We now proceed through the Cases 1--6:

\begin{lemma}[Case 1]\label{lemma pw int case 1}
If $A_{i}$ and $A_{p}$ are vertices in $\aleph_{1}\cap\aleph_{2}$, then there is a path in $\aleph_{1}\cap\aleph_{2}$ from $A_{i}$ to $A_{p}$.
\end{lemma}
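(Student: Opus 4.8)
The strategy is to exploit Observation~\ref{obs pw int stab}: since $A_i\in\aleph_1\cap\aleph_2$, we have $\aleph_2=\aleph_1\cdot\psi$ for some $\psi\in\stab(A_i)$, and since also $A_p\in\aleph_1\cap\aleph_2$ we have $\psi\in\stab(A_p)$ as well; thus $\psi\in\stab(A_i)\cap\stab(A_p)$. The goal is then to produce an explicit edge path from $A_i$ to $A_p$ all of whose vertices are fixed (up to equivalence) by this $\psi$, hence all lie in $\aleph_1\cap\aleph_2$. The natural candidate path travels ``upwards'' from $A_i$ to a common ancestor and back down to $A_p$; the most symmetric choice is to route through $\alpha$, but $\alpha$ need not be fixed by $\psi$, so instead I would route through a $\rho$-graph or, when $i\ne p$, through a suitable $\beta$ or $\tau$ vertex whose stabiliser contains $\stab(A_i)\cap\stab(A_p)$.

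\textbf{Key steps.} First I would dispose of the case $i=p$: then $A_i=A_p$ and the constant path suffices. So assume $i\ne p$. By Proposition~\ref{prop brown stabs 2}, $\stab(A_i)$ is generated by $\faktor{(i_{v_1}\times\dots\times i_{v_{n-1}})}{Z(G_i)}$ and $\Phi$, and similarly $\stab(A_p)$ by $\faktor{(p_{w_1}\times\dots\times p_{w_{n-1}})}{Z(G_p)}$ and $\Phi$. Using Relation~4 of the stabiliser presentation (that the diagonal $\faktor{i_{v_1\dots v_{n-1}}}{Z(G_i)}=\inn(G_i)$), one can rewrite an element of $\stab(A_i)$ using $i_p$ as the ``redundant'' coordinate, so that every element of $\stab(A_i)$ can be written using only the groups $i_q$ for $q\ne i,p$ together with $\Phi$ and $\inn(G_i)\le\Phi$; symmetrically for $\stab(A_p)$. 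Intersecting, an element $\psi\in\stab(A_i)\cap\stab(A_p)$ lies in $\stab(\beta_{i,p})\cap\stab(\beta_{p,i})$ — indeed $\beta_{i,p}$ collapses to $A_i$ and $\beta_{p,i}$ to $A_p$, and $\tau_{i,p,v_1v_2}$ (or the analogous $\varepsilon$ or $\rho$ vertex, depending on $n$) collapses to both $\beta_{i,p}$ and some graph collapsing to $A_p$. Concretely I would check that $\psi$ fixes the chain $A_i\dash \beta_{i,p}\dash \rho_{ip}\dash \beta_{p,i}\dash A_p$: each of these vertices has stabiliser containing $\stab(A_i)\cap\stab(A_p)$ because collapsing only enlarges stabilisers (Lemma~\ref{lem stabs include}) and because $\stab(\rho_{ip})=\Phi$ absorbs the inner and factor automorphism part. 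Verifying $\psi\in\stab(\beta_{i,p})$ and $\psi\in\stab(\beta_{p,i})$ is the crux; once that is done, every vertex on the displayed path lies in $\aleph_1\cap\aleph_2$ by Observation~\ref{obs pw int stab}, and the path connects $A_i$ to $A_p$ inside the intersection.

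\textbf{Main obstacle.} The delicate point is showing $\psi\in\stab(\beta_{i,p})\cap\stab(\beta_{p,i})$, i.e. that after using the diagonal relation to eliminate the $i_p$-coordinate from $\psi$'s expression in $\stab(A_i)$ and simultaneously the $p_i$-coordinate from its expression in $\stab(A_p)$, the two rewritten forms are compatible. This amounts to a careful bookkeeping argument with the isomorphisms $f_{i_j}$ and the relation $f_{i_{v_1}}(g)\cdots f_{i_{v_{n-1}}}(g)=\ad_{G_i}(g)$: one must track how an inner automorphism $\ad_{G_i}(g)$ produced on the $A_i$ side interacts with the $\Phi$-part seen from the $A_p$ side. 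I expect this to reduce to the observation already recorded after Table~\ref{table stabs B-J}, namely that for a partition $A\sqcup B$ of $\{1,\dots,n\}-\{i\}$ one has $G_{i_A}=G_{i_{iB}}$ in $\outs(G)$, together with the commutation relations $[G_{i_j},G_{p_q}]=1$ for distinct indices (Relation~2). The remaining cases (Lemmas for $B\dash A$, $B\dash B$, $C\dash A$, etc.) will follow the same template — write the ambient stabiliser via Propositions~\ref{prop brown stabs 2} and~\ref{prop brown stabs 3}, identify a low-dimensional common ancestor, and exhibit a fixed path through it — with the $C$-vertex cases requiring the $\tau$-$\varepsilon$--Box and $\sigma$--Slice geometry of Figure~\ref{tau epsilon box} to navigate between the three operating factors.
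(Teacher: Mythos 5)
There is a genuine gap, and it sits exactly where you flag the ``crux''. For $i\ne p$ you want to show that any $\psi\in\stab(A_{i})\cap\stab(A_{p})$ lies in $\stab(\beta_{i,p})\cap\stab(\beta_{p,i})$ by using the diagonal relation to eliminate the $i_{p}$-coordinate. But that rewriting only shows $\stab(A_{i})=\langle\, i_{q}\ (q\ne i,p),\ \Phi\,\rangle$ --- the group generated by the remaining $n-2$ twist groups together with $\Phi$ is still all of $\stab(A_{i})$, not the much smaller group $\stab(\beta_{i,p})=i_{p}\rtimes\Phi$, which consists of a \emph{single} twist of $G_{p}$ by $G_{i}$ together with factor automorphisms. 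A generic element of $\stab(A_{i})$ conjugating the various $G_{q}$ by unrelated elements of $G_{i}$ is not of that form, so the claimed containment does not follow from the bookkeeping you describe. Worse, your path routes through $\rho_{ip}$, whose stabiliser is exactly $\Phi$ (Proposition \ref{prop brown stabs 1}); for that vertex to lie in $\aleph_{1}\cap\aleph_{2}$ you would already need $\psi\in\Phi$, which is precisely the fact your argument never establishes.

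The missing observation --- and the whole content of the paper's proof --- is that for $i\ne p$ one has $\stab(A_{i})\cap\stab(A_{p})=\Phi$ outright: a non-factor element of $\stab(A_{i})$ conjugates the other factors by elements of $G_{i}$, while one of $\stab(A_{p})$ conjugates them by elements of $G_{p}$, and these cannot agree in $\outs(G)$ unless everything is trivial modulo $\Phi$. Once you have this, the corollary recorded just before the case analysis (if $T_{1},T_{2}\in\aleph_{1}\cap\aleph_{2}$ with $\stab(T_{1})\cap\stab(T_{2})=\Phi=\stab(\alpha)$, then $\alpha\in\aleph_{1}\cap\aleph_{2}$ and hence $\aleph_{1}=\aleph_{2}$) finishes the case $i\ne p$ immediately: the intersection is the entire domain, which is path connected by Lemma \ref{lemma fun dom pc}, and no explicit path through $\beta$ and $\rho$ vertices needs to be constructed. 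The case $i=p$ is handled as you say, since each domain contains a unique $A_{i}$-graph. So the fix is not more careful bookkeeping with the $f_{i_{j}}$ but the structural dichotomy: either the two $A$-vertices coincide, or the stabiliser intersection collapses to $\Phi$ and the two domains are equal.
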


\begin{proof}
We have $\stab(A_{i})\cap\stab(A_{p})\ne\Phi$ if and only if $i=p$. But each domain contains only one $A_{i}$-graph for each $i\in\{1,\dots,n\}$. So either $\alpha\in\Int$ (in which case $\alpha_{1}=\alpha_{2}$), or $A_{i}=A_{p}$.

So if $A_{i}$ and $A_{p}$ are points in $\Int$ for any $i,p\in\{1,\dots,n\}$ then there is a path in $\Int$ connecting them.
\end{proof}

\begin{lemma}[Case 2]\label{lemma pw int case 2}
If $B_{i,j,k}$ and $A_{p}$ are vertices in $\aleph_{1}\cap\aleph_{2}$, then there is a path in $\aleph_{1}\cap\aleph_{2}$ from $B_{i,j,k}$ to $A_{p}$.
\end{lemma}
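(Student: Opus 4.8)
The plan is to mimic the argument of Case 1 (Lemma \ref{lemma pw int case 1}), but now the two vertices live in different orbits, so I cannot immediately conclude $\aleph_1=\aleph_2$ when a central intersection of stabilisers occurs; instead I must actually exhibit a path. First I would invoke Observation \ref{obs pw int stab}: since $B_{i,j,k}\in\Int$ and $A_p\in\Int$, we have $\aleph_2=\aleph_1\cdot\psi$ for some $\psi\in\stab(B_{i,j,k})\cap\stab(A_p)$. Recall from Proposition \ref{prop brown stabs 3} that $\stab(B_{i,j,k})$ is generated by $j_k$, $\faktor{(i_{jk}\times i_{v_1}\times\dots\times i_{v_{n-3}})}{Z(G_i)}$, and $\Phi$, so its operating factors are $G_i$ (with dependent factors $G_j,G_k,G_{v_1},\dots,G_{v_{n-3}}$) and $G_j$ (with dependent factor $G_k$); meanwhile $\stab(A_p)$ is generated by $\faktor{(p_{w_1}\times\dots\times p_{w_{n-1}})}{Z(G_p)}$ and $\Phi$, with sole operating factor $G_p$.

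The key case distinction is on how $p$ relates to $\{i,j,k\}$. If $p\notin\{i,j\}$ (the two operating factors of $B_{i,j,k}$), then $\stab(B_{i,j,k})\cap\stab(A_p)=\Phi$: any $\psi$ in the intersection can have no nontrivial twist contribution, since a twist with operating factor $G_p$ cannot appear in $\stab(B_{i,j,k})$ and a twist with operating factor $G_i$ or $G_j$ cannot appear in $\stab(A_p)$ unless $p\in\{i,j\}$. Then $\psi\in\Phi=\stab(\alpha)$, so $\alpha\in\Int$ and $\aleph_1=\aleph_2$, giving a path trivially. If $p=i$, I would use the fact that $B_{i,j,k}$ collapses to $A_i$ (it appears in the $\rho$--Book, Figure \ref{rho book}, and more directly $B_{i,j,k}\dash\beta_{j,k}$, but note the collapse $B_{i,j,k}\dash A_i$ is not direct — rather one passes through $\gamma_{i,jk}$ or a $\beta$). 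The cleanest route: $B_{i,j,k}$ collapses to $\beta_{j,k}$ which collapses to $A_j$, and $B_{i,j,k}$ also collapses to $\gamma_{i,jk}$ which collapses to $A_i$. By Lemma \ref{lem stabs include}, $B_{i,j,k}\in\Int\implies \gamma_{i,jk}, A_i, \beta_{j,k}, A_j\in\Int$. So if $p=i$ the path $B_{i,j,k}\dash\gamma_{i,jk}\dash A_i$ lies in $\Int$. If $p=j$, the path $B_{i,j,k}\dash\beta_{j,k}\dash A_j$ lies in $\Int$. These are the only cases where the intersection of stabilisers is nontrivial.

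So the proof structure is: (i) reduce to $\aleph_2=\aleph_1\cdot\psi$ with $\psi$ fixing both vertices; (ii) observe the operating-factor constraint forces $p\in\{i,j\}$ unless $\psi\in\Phi$; (iii) in the exceptional case $\psi\in\Phi$ conclude $\aleph_1=\aleph_2$; (iv) in the cases $p=i$ and $p=j$ write down the explicit length-two collapse path through $\gamma_{i,jk}$ (resp. $\beta_{j,k}$), which lies in $\Int$ because every vertex a vertex of $\Int$ collapses to is again in $\Int$ by Lemma \ref{lem stabs include}. The main obstacle — and the reason this is not entirely automatic — is making the ``operating-factor constraint'' rigorous: I need to argue cleanly that a twist automorphism with operating factor $G_a$ can only lie in $\stab(T)$ if $G_a$ is one of the operating factors of $T$, and more precisely that an element of $\stab(B_{i,j,k})\cap\stab(A_p)$ with $p\notin\{i,j\}$ must be a factor automorphism. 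This follows from the explicit generating sets in Propositions \ref{prop brown stabs 2} and \ref{prop brown stabs 3} together with the semidirect-product structure (the $\Phi$-part and the twist-part of any stabiliser element are uniquely determined), but it needs to be spelled out carefully, presumably once in a preliminary remark so that Cases 2--6 can all cite it.
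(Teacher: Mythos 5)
Your case division ($p\notin\{i,j\}$, $p=i$, $p=j$) and the two paths $B_{i,j,k}\dash\gamma_{i,jk}\dash A_{i}$ and $B_{i,j,k}\dash\beta_{j,k}\dash A_{j}$ are exactly the paper's, and the $p\notin\{i,j\}$ case is handled the same way. But there is a genuine gap in how you place the middle vertex of each path inside $\Int$. You write ``By Lemma \ref{lem stabs include}, $B_{i,j,k}\in\Int\implies\gamma_{i,jk},A_{i},\beta_{j,k},A_{j}\in\Int$,'' and in your summary step (iv) you justify the whole path by ``every vertex a vertex of $\Int$ collapses to is again in $\Int$.'' This applies Lemma \ref{lem stabs include} in the wrong direction. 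That lemma says $\stab(T)\subseteq\stab(S)$ when $S$ is a \emph{collapse} of $T$, so membership in $\Int$ propagates from a graph to its collapses. Here $\gamma_{i,jk}$ and $\beta_{j,k}$ are \emph{expansions} of $B_{i,j,k}$ (they have $n$ edges versus $n-1$; the paper's convention is that $\gamma_{i,jk}$ and $\beta_{j,k}$ collapse \emph{to} $B_{i,j,k}$, and your chain ``$B_{i,j,k}$ collapses to $\beta_{j,k}$ which collapses to $A_{j}$'' cannot be right since $B$ and $A$ both already have the minimal number of edges). So $\stab(\gamma_{i,jk})\subseteq\stab(B_{i,j,k})$, and $B_{i,j,k}\in\Int$ gives you nothing about $\gamma_{i,jk}$. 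If your inference were valid, all of Section \ref{pairwise intersections} would collapse to a triviality, since every vertex of $\Int$ would drag all of its expansions into $\Int$ and connectivity of $\mathcal{D}_{n}$ would finish the job.

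The correct mechanism, which is what the paper does, uses \emph{both} hypotheses at once: by Observation \ref{obs pw int stab} the element $\psi$ with $\aleph_{2}=\aleph_{1}\cdot\psi$ lies in $\stab(B_{i,j,k})\cap\stab(A_{p})$, and one computes from Propositions \ref{prop brown stabs 2} and \ref{prop brown stabs 3} that for $p=i$ this intersection is $(i_{v_{1}}\times\dots\times i_{v_{n-3}})\rtimes\Phi=\stab(\gamma_{i,jk})$ and for $p=j$ it is $j_{k}\rtimes\Phi=\stab(\beta_{j,k})$; hence $\psi$ stabilises the intermediate vertex and that vertex lies in $\Int$. Your instinct at the end --- that the ``operating-factor constraint'' (an element of $\stab(B_{i,j,k})\cap\stab(A_{p})$ with $p\notin\{i,j\}$ must be a factor automorphism) deserves to be spelled out once from the explicit generating sets --- is exactly the computation you should also be doing in the $p=i$ and $p=j$ cases to identify the intersection with the stabiliser of the intermediate graph; once you replace the misapplied collapse lemma with that computation, the proof is the paper's.
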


\begin{proof}
If $p\not\in\{i,j\}$ then $B_{i,j,k}$ and $A_{p}$ share no common operating factors, hence \\ $\stab(B_{i,j,k})\cap\stab(A_{p})=\Phi$ and $\alpha\in\Int$.

If $p=i$ then $\stab(B_{i,j,k})\cap\stab(A_{p})$ contains only one operating factor, with $n~-~3$ dependent factors.
That is, $\stab(B_{i,j,k})\cap\stab(A_{i})=(i_{v_{1}}\times\dots\times i_{v_{n-3}})\rtimes\Phi$ for $\{v_{1},\dots,v_{n-3}\}=\{1,\dots,n\}-\{i,j,k\}$. This is precisely the stabiliser of $\gamma_{i,jk}$, hence $\gamma_{i,jk}\in\Int$.
Moreover, $\gamma_{i,jk}$ collapses to both $A_{i}$ and $B_{i,j,k}$, so we have a path $B_{i,j,k}\dash \gamma_{i,jk}\dash A_{i}$.

For $p=j$, we have $\stab(B_{i,j,k})\cap\stab(A_{j})=j_{k}\rtimes\Phi=\stab(\beta_{j,k})$, and so $B_{i,j,k}\dash \beta_{j,k}\dash A_{j}$ is a path in $\Int$.

So if $B_{i,j,k}$ and $A_{p}$ are points in $\Int$ for any $i,j,k,l\in\{1,\dots,n\}$ then there is a path in $\Int$ connecting them.
\end{proof}

\begin{lemma}[Case 3]\label{lemma pw int case 3}
If $B_{i,j,k}$ and $B_{p,q,r}$ are vertices in $\aleph_{1}\cap\aleph_{2}$, then there is a path in $\aleph_{1}\cap\aleph_{2}$ from $B_{i,j,k}$ to $B_{p,q,r}$.
\end{lemma}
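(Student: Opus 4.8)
The approach mirrors Cases~1 and~2: I will split into subcases according to how many of the indices $\{p,q,r\}$ coincide with $\{i,j,k\}$ (and in which roles), compute $\stab(B_{i,j,k})\cap\stab(B_{p,q,r})$ as a subgroup of $\outs(G)$ using the descriptions from Proposition~\ref{prop brown stabs 3} (together with the ``coset identities'' such as $G_{i_{A}}=G_{i_{iB}}$ noted after Table~\ref{table stabs B-J}), and in each case either conclude that the intersection is $\Phi$ (so $\alpha\in\Int$ and the two $B$-vertices are joined through $\alpha$), or exhibit a concrete vertex of $\mathcal{C}_{n}$ whose stabiliser \emph{equals} that intersection and which collapses onto both $B$-vertices, giving an explicit length-$2$ path in $\Int$ via Lemma~\ref{lem stabs include} and Observation~\ref{obs pw int stab}.

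\textbf{The subcases.} Recall $\stab(B_{i,j,k})$ has operating factors $G_{i}$ (with dependent factors $G_{j},G_{k}$ and all $G_{v}$, $v\ne i,j,k$) and $G_{j}$ (with sole dependent factor $G_{k}$). First, if $\{i,j\}\cap\{p,q\}=\emptyset$ then the two stabilisers share no operating factor, so their intersection is $\Phi$ and $\alpha\in\Int$. The remaining cases are where at least one operating factor is shared; up to the symmetry $B_{i,j,k}\leftrightarrow B_{p,q,r}$ these are: (a)~$B_{i,j,k}$ and $B_{i,j,r}$ (same first two indices, so $r=k$ forces equality; if $r\ne k$ then $G_{k}$ and $G_{r}$ play different roles and one checks the common operating factors are $G_{i}$ and $G_{j}$ but with restricted dependent sets, landing on a $\delta$-vertex); (b)~$B_{i,j,k}$ and $B_{i,q,r}$ with $q\ne j$ --- here only $G_{i}$ is a common operating factor, with dependent factors $\{1,\dots,n\}-\{i,j,q\}$ plus possibly $jq$-type twists, so the intersection is the stabiliser of a $\sigma$- or $\gamma$-vertex; (c)~$B_{i,j,k}$ and $B_{j,i,r}$ (roles of $i,j$ swapped) --- common operating factors are $G_{i}$ and $G_{j}$, and the intersection should be $\stab(\varepsilon_{i,j,\cdot,\cdot})$ or similar; (d)~$B_{i,j,k}$ and $B_{j,q,r}$ with $\{q,r\}\ne\{i,k\}$; (e)~$B_{i,j,k}$ and $B_{p,i,r}$, i.e. the shared factor $G_{i}$ is the ``big'' operating factor on one side and the ``small'' one on the other. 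In each of (a)--(e) I will identify the intersection with the stabiliser of one of the vertices from Table~\ref{table n>=5 points} --- a $\gamma$, $\sigma$, $\delta$, $\varepsilon$, or $\beta$ vertex, or in degenerate subcases an $A$-vertex --- and verify from the collapse relations that this vertex lies below both $B$-vertices (for instance $\sigma_{i,jk,\ell m}$ collapses to $B_{i,j,k}$, and $\delta_{i,j,k,\ell m}$ collapses to $B_{i,j,k}$, as recorded in Section~\ref{section fun dom sc}). Where the identified vertex does not directly collapse onto both endpoints, I will chain two such vertices (e.g. $B_{i,j,k}\dash\gamma_{i,jk}\dash\sigma_{i,jk,\ell m}\dash\dots$), using that path-connectivity only requires \emph{some} path in $\Int$.

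\textbf{Main obstacle.} The delicate point is the bookkeeping in subcases like (c) and (e), where the two $B$-vertices share the index $i$ or $j$ but in \emph{swapped} operating/dependent roles: here one must use the relation $G_{i_{A}}=G_{i_{iB}}$ (reindexing twists ``through the basepoint'') to see that, say, $j_{k}$ on one side coincides in $\outs(G)$ with a product of $j$-twists on the other, and only then does the intersection collapse to a recognisable stabiliser. It is conceivable --- as the lemma's phrasing ``not quite all such intersections are path-connected'' in the surrounding text hints --- that one particular subcase (plausibly $C$-related, but possibly surfacing already here when the natural candidate vertex is itself not in $\mathcal{C}_{n}$, e.g. a graph shape omitted for $n>5$) fails to yield a path; if so I would flag it and defer it, exactly as the paper does with Proposition~\ref{pw int pc case C}. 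Modulo that, the proof is a finite, if somewhat tedious, case check, and I expect each surviving case to resolve to a two- or three-edge path in $\Int$.
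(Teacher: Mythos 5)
Your overall strategy is the same as the paper's: case-split on how $\{p,q\}$ meets $\{i,j\}$, compute $\stab(B_{i,j,k})\cap\stab(B_{p,q,r})$, and either land in $\Phi$ (so $\alpha\in\Int$) or exhibit an intermediate vertex, falling back on Cases 1 and 2 where needed. Your subcase (c) ($p=j$, $q=i$, $r\ne k$) is exactly right: the intersection is $(i_{r}\times j_{k})\rtimes\Phi=\stab(\varepsilon_{i,r,j,k})$ and $B_{i,j,k}\dash\varepsilon_{i,r,j,k}\dash B_{j,i,r}$ is the path. The single-overlap subcases also resolve as you expect, via containment in some $\stab(A_{v})$ and an appeal to Case 2.

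However, your subcase (a) contains a genuine error. For $B_{i,j,k}$ versus $B_{i,j,r}$ with $r\ne k$, the factor $G_{j}$ is \emph{not} a surviving common operating factor: the $j$-contributions are $j_{k}$ and $j_{r}$ respectively, and an automorphism conjugating $G_{k}$ alone by an element of $G_{j}$ does not lie in $\stab(B_{i,j,r})$ (there $G_{k}$ can only be $j$-twisted diagonally with all factors other than $G_{r}$). Likewise $i_{jk}$ does not survive into $\stab(B_{i,j,r})$, since there $G_{j}$ is tied to $G_{r}$ rather than to $G_{k}$. The intersection is therefore $(i_{v_{1}}\times\dots\times i_{v_{n-4}})\rtimes\Phi$ with $\{v_{1},\dots,v_{n-4}\}=\{1,\dots,n\}-\{i,j,k,r\}$, which is not the stabiliser of a $\delta$-vertex (every $\stab(\delta_{i,j,k,lm})$ contains $j_{k}$), nor indeed the full stabiliser of any vertex in Table \ref{table n>=5 points}. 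This also breaks your stated methodology of exhibiting a vertex whose stabiliser \emph{equals} the intersection: containment is all that is available, and all that is needed — the intersection sits inside $\stab(A_{i})$, so $A_{i}\in\Int$, and two applications of Case 2 supply paths $B_{i,j,k}\dash\gamma_{i,jk}\dash A_{i}$ and $A_{i}\dash\gamma_{i,jr}\dash B_{i,j,r}$ inside $\Int$. Finally, your worry that a non-path-connected intersection might already surface in this case is unfounded: that pathology occurs only for pairs of $C$-vertices (Case 6); every subcase here, including the degenerate $B_{j,i,k}$ (where the intersection is just $\Phi$), yields a path.
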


\begin{proof}
In order to have $\stab(B_{i,j,k})\cap\stab(B_{p,q,r})\ne\Phi$, we must have that $\{i,j\}\cap\{p,q\}\ne\emptyset$. We will thus assume this holds.

If $\{i,j\}=\{p,q\}$ and additionally $r=k$, then either $B_{p,q,r}=B_{i,j,k}$ and we are done, or we have $B_{p,q,r}=B_{j,i,k}$, in which case $\stab(B_{i,j,k})\cap\stab(B_{p,q,r})=\Phi$.
So we may assume $r\ne k$ in this case.

If $p=j$ and $q=i$ (with $r\ne k$) then $\stab(B_{i,j,k})\cap\stab(B_{p,q,r})=(i_{r}\times j_{k})\rtimes\Phi=\stab(\varepsilon_{i,r,j,k})$, and $B_{i,j,k}-\varepsilon_{i,r,j,k}-B_{j,i,r}$ is a path in $\Int$.

If $p=i$ and $q=j$ (with $r\ne k$) then $\stab(B_{i,j,k})\cap\stab(B_{p,q,r})=(i_{v_{1}}\times\dots\times i_{v_{n-4}})\rtimes\Phi$ where $\{i_{v_{1}},\dots,i_{v_{n-4}}\}=\{1,\dots,n\}-\{i,j,k,r\}$.
This is contained within $\stab(A_{i})$, hence $A_{i}\in\Int$.
Then by Case 2 (Lemma \ref{lemma pw int case 2}), there is some path from $B_{i,j,k}$ to $A_{i}$ and some path from $A_{i}$ to $B_{i,j,r}$ in $\Int$.

We will now consider $\{i,j\}\ne\{p,q\}$.
Then $|\{i,j\}\cap\{p,q\}|=1$ and so $\stab(B_{i,j,k})\cap\stab(B_{p,q,r})$ has at most one operating factor (with at most $n-4$ dependent factors).
If there is no common operating factor, then $\stab(B_{i,j,k})\cap\stab(B_{p,q,r})=\Phi$. Otherwise, $\stab(B_{i,j,k})\cap\stab(B_{p,q,r})\subset A_{v}$ for some $v\in\{i,j,p,q\}$.
Then we are reduced to Case 2.

So if $B_{i,j,k}$ and $B_{p,q,r}$ are points in $\Int$ for any $i,j,k,p,q,r\in\{1,\dots,n\}$ then there is a path in $\Int$ connecting them.
\end{proof}

\begin{lemma}[Case 4]\label{lemma pw int case 4}
If $C_{i,j,k,l,m}$ and $A_{p}$ are vertices in $\aleph_{1}\cap\aleph_{2}$, then there is a path in $\aleph_{1}\cap\aleph_{2}$ from $C_{i,j,k,l,m}$ to $A_{p}$.
\end{lemma}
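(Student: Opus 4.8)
The plan is to handle Case 4 by the same mechanism already used in Cases 1--3: translate the condition $\stab(C_{i,j,k,l,m})\cap\stab(A_{p})\ne\Phi$ into a constraint on how the index $p$ can sit relative to $\{i,j,k,l,m\}$, and for each surviving configuration identify the intersection of stabilisers with the stabiliser of an intermediate vertex that collapses to both endpoints. Recall from Proposition \ref{prop brown stabs 3} that $\stab(C_{i,j,k,l,m})$ is (a quotient of) $(i_{lm}\times l_{m}\times i_{jk}\times j_{k}\times i_{v_{1}}\times\dots\times i_{v_{n-5}})\rtimes\Phi$, so its operating factors are $G_{i}$, $G_{j}$, and $G_{l}$; and $\stab(A_{p})$ is (a quotient of) $(p_{v_{1}}\times\dots\times p_{v_{n-1}})\rtimes\Phi$, with sole operating factor $G_{p}$. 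So unless $p\in\{i,j,l\}$ we get $\stab(C_{i,j,k,l,m})\cap\stab(A_{p})=\Phi$, whence $\alpha\in\aleph_{1}\cap\aleph_{2}$, forcing $\aleph_{1}=\aleph_{2}$ and collapsing the problem.

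First I would dispose of the case $p\notin\{i,j,l\}$ as above. Then I would treat the three remaining subcases. If $p=j$: the only shared operating factor is $G_{j}$, which in $\stab(C_{i,j,k,l,m})$ contributes just $j_{k}$, so $\stab(C_{i,j,k,l,m})\cap\stab(A_{j})=j_{k}\rtimes\Phi=\stab(\beta_{j,k})$, and since $\beta_{j,k}$ collapses from $C_{i,j,k,l,m}$ (via $\delta_{i,j,k,lm}$ and $B_{i,j,k}$, or directly noting $\beta_{j,k}$ is a collapse of $B_{i,j,k}$ which is a collapse of $\delta_{i,j,k,lm}$ which is a collapse of $C_{i,j,k,l,m}$) and collapses to $A_{j}$, we get a path $C_{i,j,k,l,m}\dash\beta_{j,k}\dash A_{j}$ in the intersection --- actually it is cleaner to route through $B_{i,j,k}$: $C_{i,j,k,l,m}\dash B_{i,j,k}\dash\beta_{j,k}\dash A_{j}$, using Lemma \ref{lem stabs include} to see each vertex lies in $\aleph_{1}\cap\aleph_{2}$. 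The case $p=l$ is symmetric under the relabelling $(i,j,k,l,m)\mapsto(i,l,m,j,k)$, which fixes $C_{i,j,k,l,m}$ (recall $C_{i,j,k,l,m}=C_{i,l,m,j,k}$), giving the path $C_{i,j,k,l,m}\dash B_{i,l,m}\dash\beta_{l,m}\dash A_{l}$. If $p=i$: the shared operating factor $G_{i}$ contributes $i_{lm}\times i_{jk}\times i_{v_{1}}\times\dots\times i_{v_{n-5}}$ (modulo $Z(G_{i})$) to $\stab(C_{i,j,k,l,m})$, and intersecting with $\stab(A_{i})$ — whose $G_i$-part is all of $(i_{v_1}\times\cdots\times i_{v_{n-1}})/Z(G_i)$ — yields exactly $\faktor{(i_{jk}\times i_{lm}\times i_{v_{1}}\times\dots\times i_{v_{n-5}})}{Z(G_{i})}\rtimes\Phi=\stab(\sigma_{i,jk,lm})$; since $\sigma_{i,jk,lm}$ collapses from $C_{i,j,k,l,m}$ and collapses to $A_{i}$, the path $C_{i,j,k,l,m}\dash\sigma_{i,jk,lm}\dash A_{i}$ lies in $\aleph_{1}\cap\aleph_{2}$.

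The one point requiring a little care — and the place I expect the only real friction — is justifying the equalities of intersections of stabilisers as \emph{subgroups of $\outs(G)$}, since the groups in Table \ref{table stabs B-J} are written as abstract quotients and, as the discussion after that table stresses, the identifications $G_{i_A}=G_{i_{iB}}$ mean the ``same'' subgroup of $\outs(G)$ has several presentations. I would handle this exactly as Example \ref{eg stab sigma} and the paragraph following Proposition \ref{prop brown stabs 3} do: fix the edge of the relevant graph adjacent to the $G_i$-vertex (resp. $G_j$- or $G_l$-vertex) that is ``shared'' between the two graphs, so that twisting data along the other edges matches up, and read off that the common twists are precisely those permitted by the intermediate vertex ($\sigma_{i,jk,lm}$, $\beta_{j,k}$, or $\beta_{l,m}$). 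Once the equalities $\stab(C_{i,j,k,l,m})\cap\stab(A_p)$ equals $\stab(\sigma_{i,jk,lm})$, $\stab(\beta_{j,k})$, or $\stab(\beta_{l,m})$ are established in each subcase, the path claim is immediate from Lemma \ref{lem stabs include} and the collapse relations, exactly mirroring the proofs of Lemmas \ref{lemma pw int case 2} and \ref{lemma pw int case 3}. I would close with the standard sentence: so if $C_{i,j,k,l,m}$ and $A_{p}$ are points in $\aleph_{1}\cap\aleph_{2}$ for any $i,j,k,l,m,p\in\{1,\dots,n\}$, then there is a path in $\aleph_{1}\cap\aleph_{2}$ connecting them.
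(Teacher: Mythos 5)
Your overall strategy matches the paper's: rule out $p\notin\{i,j,l\}$ via $\stab(C_{i,j,k,l,m})\cap\stab(A_{p})=\Phi$, use the symmetry $C_{i,j,k,l,m}=C_{i,l,m,j,k}$ to merge $p=l$ into $p=j$, and in the remaining subcases identify the intersection of stabilisers with the stabiliser of an intermediate vertex. Your $p=i$ subcase is fine (and in fact the equality $\stab(C_{i,j,k,l,m})\cap\stab(A_{i})=\stab(\sigma_{i,jk,lm})$ you assert is sharper than the containment the paper uses); the path $C_{i,j,k,l,m}\dash\sigma_{i,jk,lm}\dash A_{i}$ is exactly the paper's.

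The gap is in the $p=j$ subcase: neither of the sequences you write down is an edge path in $\mathcal{C}_{n}$. Adjacency in $\mathcal{C}_{n}$ requires that one vertex be a collapse of the other, and both $C_{i,j,k,l,m}$ and $B_{i,j,k}$ are minimal graphs (trees on $n$ non-trivial vertices with $n-1$ edges and no trivial vertex), so neither is a collapse of the other; likewise $C_{i,j,k,l,m}$ is not adjacent to $\beta_{j,k}$. Your justification conflates ``related by a chain of collapses and expansions'' with ``adjacent'': the chain runs $\beta_{j,k}\succ B_{i,j,k}\prec\delta_{i,j,k,lm}\succ C_{i,j,k,l,m}$ (both $B_{i,j,k}$ and $C_{i,j,k,l,m}$ are collapses of $\delta_{i,j,k,lm}$, and $B_{i,j,k}$ is a collapse of $\beta_{j,k}$ --- you also have these collapse directions reversed), and the alternating directions mean the chain does not compress to a single edge. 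The repair is easy: either insert $\delta_{i,j,k,lm}$, whose stabiliser contains $j_{k}\rtimes\Phi$ by Proposition \ref{prop brown stabs 3}, to get the genuine path $C_{i,j,k,l,m}\dash\delta_{i,j,k,lm}\dash B_{i,j,k}\dash\beta_{j,k}\dash A_{j}$ in $\Int$; or, as the paper does, note that $j_{k}\rtimes\Phi=\stab(\tau_{j,k,lm})$ and that $\tau_{j,k,lm}$ collapses to both $C_{i,j,k,l,m}$ and $A_{j}$, giving the two-edge path $C_{i,j,k,l,m}\dash\tau_{j,k,lm}\dash A_{j}$ directly.
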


\begin{proof}
In order to satisfy $\stab(C_{i,j,k,l,m})\cap\stab(A_{p})\ne\Phi$, we require that $p\in\{i,j,l\}$.
Note that by symmetry, $C_{i,j,k,l,m}=C_{i,l,m,j,k}$, so we only need to consider one of $p=j$ and $p=l$.

If $p=j$ then $\stab(C_{i,j,k,l,m})\cap\stab(A_{p})=j_{k}\rtimes\Phi=\stab(\tau_{j,k,lm})$.
We have that $\tau_{j,k,lm}$ collapses to both $A_{j}$ and $C_{i,j,k,l,m}$, so $C_{i,j,k,l,m}\dash \tau_{j,k,lm}\dash A_{j}$ is a path in $\Int$.

If $p=i$ then $\stab(C_{i,j,k,l,m})\cap\stab(A_{p})=(i_{v_{1}}\times\dots\times i_{v_{n-5}})\rtimes\Phi$ (where $\{v_{1},\dots,v_{n-5}\}=\{1,\dots,n\}-\{i,j,k,l,m\}$).
This is contained in the stabiliser of $\sigma_{i,jk,lm}$, which is a graph that collapses to both $A_{i}$ and $C_{i,j,k,l,m}$, hence $C_{i,j,k,l,m}\dash \sigma_{i,jk,lm}\dash A_{i}$ is a path in $\Int$.

So if $C_{i,j,k,l,m}$ and $A_{p}$ are points in $\Int$ for any $i,j,k,l,m,p\in\{1,\dots,n\}$ then there is a path in $\Int$ connecting them.
\end{proof}

\begin{lemma}[Case 5]\label{lemma pw int case 5}
If $C_{i,j,k,l,m}$ and $B_{p,q,r}$ are vertices in $\aleph_{1}\cap\aleph_{2}$, then there is a path in $\aleph_{1}\cap\aleph_{2}$ from $C_{i,j,k,l,m}$ to $B_{p,q,r}$.
\end{lemma}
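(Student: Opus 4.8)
The plan is to imitate the case analysis used in Lemmas \ref{lemma pw int case 1}--\ref{lemma pw int case 4}: compute the intersection $\stab(C_{i,j,k,l,m})\cap\stab(B_{p,q,r})$ as a subgroup of $\outs(G)$, recognise it (or a subgroup of a stabiliser containing it) as the stabiliser of an intermediate vertex, and then exhibit an explicit edge path through that vertex. Throughout I would write $i_{j}$ for $G_{i_{j}}$ and repeatedly use Lemma \ref{lem stabs include} (collapses only enlarge stabilisers, so a vertex lies in $\Int$ as soon as one of its collapses does) together with the observation that $\stab(C_{i,j,k,l,m})$ is a quotient of $(i_{lm}\times l_{m}\times i_{jk}\times j_{k}\times i_{v_{1}}\times\dots\times i_{v_{n-5}})\rtimes\Phi$ and $\stab(B_{p,q,r})$ a quotient of $(p_{qr}\times q_{r}\times p_{w_{1}}\times\dots\times p_{w_{n-3}})\rtimes\Phi$. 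The operating factors of $C_{i,j,k,l,m}$ are $G_{i},G_{j},G_{l}$ (with $G_{i}$ the one carrying several dependent factors), and those of $B_{p,q,r}$ are $G_{p},G_{q}$; so unless $\{p,q\}\cap\{i,j,l\}\neq\emptyset$ we get $\stab(C_{i,j,k,l,m})\cap\stab(B_{p,q,r})=\Phi$, hence $\alpha\in\Int$ and $\alpha_{1}=\alpha_{2}$. I would then split into the finitely many ways the index sets can overlap.

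\textbf{Main cases.} Using the symmetry $C_{i,j,k,l,m}=C_{i,l,m,j,k}$ I would reduce to: (a) $p=i$; (b) $p\in\{j,l\}$, say $p=j$; (c) $p\notin\{i,j,l\}$ but $q\in\{i,j,l\}$. In case (a), the common operating factor is $G_{i}$ and, depending on whether $\{q,r\}$ meets $\{j,k,l,m\}$, the intersection is contained in a product of the $i_{v}$'s which is itself contained in $\stab(A_{i})$ or in $\stab(\sigma_{i,jk,lm})$; since $\sigma_{i,jk,lm}$ collapses to both $A_{i}$ and $C_{i,j,k,l,m}$, one gets $A_{i}\in\Int$ and then reduces to Case 2 (Lemma \ref{lemma pw int case 2}) for the $B$-end via a path $B_{p,q,r}\dash\gamma\dash A_{i}$ or $B_{p,q,r}\dash\beta\dash A_{i}$; the ``bad'' sub-case where the intersection is exactly $\faktor{i_{jklm v_{1}\dots v_{n-5}}}{Z(G_{i})}=\inn(G_{i})$ — i.e. where it coincides with the diagonal — is the analogue of the ``case $C$'' exception flagged in Propositions \ref{pw int pc case not C} and \ref{pw int pc case C}, and I would note it cannot arise here because $B_{p,q,r}$ has no such diagonal relation among its $p_{w}$'s with $Z(G_i)$ unless $\inn(G_i)\le\stab(B_{p,q,r})$ which would force $A_i\in\Int$ anyway. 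In case (b), $p=j$, the common operating factor is $G_{j}$ with its single dependent factor, so $\stab(C_{i,j,k,l,m})\cap\stab(B_{j,q,r})$ is either $\Phi$ (if $\{q,r\}\neq\{k,\cdot\}$ appropriately) or a subgroup of $j_{k}\rtimes\Phi=\stab(\tau_{j,k,lm})$; then $C_{i,j,k,l,m}\dash\tau_{j,k,lm}\dash A_{j}$ and $B_{j,q,r}\dash\beta_{j,k}\dash A_{j}$ (when $r=k$) or a further reduction to Case 2 gives the path. In case (c), the single shared operating factor lies in $\{i,j,l\}$ but appears as the \emph{second} operating factor $G_{q}$ of $B_{p,q,r}$, so $\stab(C_{i,j,k,l,m})\cap\stab(B_{p,q,r})\subseteq\stab(A_{q})$ and we are again reduced to combining Case 2 and Case 4.

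\textbf{Expected obstacle.} The delicate point is bookkeeping: making sure that in every overlap pattern the computed intersection really is contained in the stabiliser of a \emph{single} intermediate vertex that collapses to both endpoints, and that the one exceptional configuration (where the intersection degenerates to $\inn(G_{i})$, so that $\Int$ splits into two pieces not joined inside $\Int$) is correctly identified and deferred — exactly as the excerpt does by separating Proposition \ref{pw int pc case not C} from Proposition \ref{pw int pc case C}. Concretely I expect the hard sub-case to be $p=i$ with $\{q,r\}=\{j,k\}$ reversed or with $\{q,r\}\cap\{l,m\}\neq\emptyset$, where one must check whether the surviving $i$-twists form a full diagonal or a proper sub-diagonal of $i_{v_{1}}\times\dots\times i_{v_{n-5}}$; in the proper sub-diagonal case one lands inside $\stab(A_{i})$ and wins, while the full-diagonal case is the one that gets quarantined. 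Everything else is routine index-chasing of the kind already carried out in Lemmas \ref{lemma pw int case 2}--\ref{lemma pw int case 4}, so I would present it compactly, perhaps tabulating the sub-cases, and invoke those earlier lemmas to close each branch.
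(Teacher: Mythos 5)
Your overall strategy is the paper's: compute $\stab(C_{i,j,k,l,m})\cap\stab(B_{p,q,r})$, locate an intermediate vertex whose stabiliser contains it and which collapses to both endpoints, and fall back on the earlier cases. But your case analysis is too coarse, and several of the claimed containments are false, precisely in the sub-cases where both $p$ and $q$ land in $\{i,j,l\}$. Concretely: (i) when $p=j$, $q=l$, $r=m$, the intersection is $(j_{k}\times l_{m})\rtimes\Phi=\stab(\varepsilon_{j,k,l,m})$, which is \emph{not} a subgroup of $j_{k}\rtimes\Phi=\stab(\tau_{j,k,lm})$, so your case (b) claim that the intersection is ``either $\Phi$ or a subgroup of $j_{k}\rtimes\Phi$'' fails and your proposed path through $A_{j}$ does not stay inside $\Int$ — the correct bridge is $\varepsilon_{j,k,l,m}$, which collapses to both $B_{j,l,m}$ and $C_{i,j,k,l,m}$; (ii) when $p=i$, $q=j$, $r=k$, the intersection is $(j_{k}\times i_{v_{1}}\times\dots\times i_{v_{n-5}})\rtimes\Phi$, which contains $j_{k}$ and therefore is not contained in $\stab(A_{i})$ or $\stab(\sigma_{i,jk,lm})$ as your case (a) asserts — here the bridge is $\delta_{i,j,k,lm}$; (iii) when $p=j$, $q=i$, $r\notin\{k,l,m\}$, the intersection is $(i_{r}\times j_{k})\rtimes\Phi\subset\stab(B_{i,j,k})$, which has two operating factors, so no single $A$- or $\tau$-vertex suffices; one must route $C_{i,j,k,l,m}\dash\delta_{i,j,k,lm}\dash B_{i,j,k}$ and then invoke Case 3 (Lemma \ref{lemma pw int case 3}) to reach $B_{j,i,r}$. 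Your taxonomy never mentions $\delta$ or $\varepsilon$ vertices and never invokes Case 3, all of which are essential.

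A secondary point: your discussion of a possible ``full diagonal'' degeneration is a red herring for this lemma. The non-path-connected exception in the paper arises only in Case 6 (two $C$-vertices with $r\ne k$ and $t=m$), and the paper's Case 5 has no quarantined sub-case; every branch produces a genuine path in $\Int$. The correct worry is not whether the $i$-twists form a diagonal, but whether the surviving operating factors force you through a $\delta$, $\varepsilon$, or $B$ vertex rather than an $A$ vertex, and your write-up does not track this.
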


\begin{proof}
To satisfy $\stab(C_{i,j,k,l,m})\cap\stab(B_{p,q,r})\ne\Phi$, we require $\{p,q\}\cap\{i,j,l\}\ne\emptyset$.

Suppose $p=j$ and $q=l$. If $r=k$ then $\stab(C_{i,j,k,l,m})\cap\stab(B_{p,q,r})=\Phi$.
If $r=m$ then $\stab(C_{i,j,k,l,m})\cap\stab(B_{p,q,r})=(j_{k}\times l_{m})\rtimes\Phi=\stab(\varepsilon(j,k,l,m)$, and $\varepsilon_{j,k,l,m}$ collapses to both $B_{j,l,m}$ and $C_{i,j,k,l,m}$, so these points are connected by a path in $\Int$.
If $r\not\in\{k,m\}$ then $\stab(C_{i,j,k,l,m})\cap\stab(B_{p,q,r})=j_{k}\rtimes\Phi\subset\stab(A_{j})$. Thus $A_{j}\in\Int$.
By Case 2 (Lemma \ref{lemma pw int case 2})there is a path in $\Int$ from $B_{j,l,r}$ to $A_{j}$, and by Case 4 (Lemma \ref{lemma pw int case 4}) there is a path in $\Int$ from $A_{j}$ to $C_{i,j,k,l,m}$.

By symmetry of $C_{i,j,k,l,m}=C_{i,l,m,j,k}$, we do not need to consider the case $p=l$ and $q=j$.

We may now assume $\{p,q\}\ne\{j,l\}$. Again by the symmetry of $C$-vertices, we need only consider $\{p,q\}\cap\{i,j\}\ne\emptyset$.

Suppose $p=i$ and $q=j$ (or $q=l$ by symmetry).
If $r=k$ then $\stab(C_{i,j,k,l,m})\cap\stab(B_{p,q,r})=(j_{k}\times i_{v_{1}}\times\dots\times i_{v_{n-5}})\rtimes\Phi\subseteq\stab(\delta_{i,j,k,lm})$.
Since $\delta_{i,j,k,lm}$ collapses to both $B_{i,j,k}$ and $C_{i,j,k,l,m}$ then this provides a path in $\Int$.
If $r\ne k$ then the only operating factor in $\stab(C_{i,j,k,l,m})\cap\stab(B_{p,q,r})$ is $i$, hence $A_{i}\in\Int$ and thus by Cases 2 and 4 there is a path from $C_{i,j,k,l,m}$ to $B_{i,j,r}$ in $\Int$.

Suppose $q=i$ and $p=j$ (or $p=l$ by symmetry).
If $r=k$ then $\stab(C_{i,j,k,l,m})\cap\stab(B_{p,q,r})=\Phi$ and $\alpha\in\Int$.
If $r\in\{l,m\}$ then $\stab(C_{i,j,k,l,m})\cap\stab(B_{p,q,r})=j_{k}\rtimes\Phi\subset\stab(A_{j})$, hence we are reduced to Cases 2 and 4.
If $r\not\in\{k,l,m\}$ then $\stab(C_{i,j,k,l,m})\cap\stab(B_{p,q,r})=(i_{r}\times j_{k})\rtimes\Phi\subset\stab(B_{i,j,k}$.
In the previous paragraph we showed there is a path in $\Int$ from $C_{i,j,k,l,m}$ to $B_{i,j,k}$ via a $\delta$-graph, and by Case 3 (Lemma \ref{lemma pw int case 3}), there is a path in $\Int$ from $B_{i,j,k}$ to $B_{j,i,r}=B_{p,q,r}$.

If $|\{p,q\}\cap\{i,j,l\}|=1$ then $\stab(C_{i,j,k,l,m})\cap\stab(B_{p,q,r})$ has at most one operating factor (with at most $n-6$ dependent factors). So either $\alpha\in\Int$, or there is some $A$-vertex in $\Int$, and we are reduced to Cases 2 and 4.

So if $C_{i,j,k,l,m}$ and $B_{p,q,r}$ are points in $\Int$ for any $i,j,k,l,m,p,q,r\in\{1,\dots,n\}$ then there is a path in $\Int$ connecting them.
\end{proof}

\begin{lemma}[Case 6]\label{lemma pw int case 6}
If $C_{i,j,k,l,m}$ and $C_{p,q,r,s,t}$ are vertices in $\aleph_{1}\cap\aleph_{2}$, then there is a path in $\aleph_{1}\cap\aleph_{2}$ from $C_{i,j,k,l,m}$ to $C_{p,q,r,s,t}$ if and only if $r\ne k \implies t\ne m$.
\end{lemma}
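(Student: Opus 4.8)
The plan is to mimic the structure of the earlier cases (Lemmas~\ref{lemma pw int case 1}--\ref{lemma pw int case 5}): compute $\stab(C_{i,j,k,l,m})\cap\stab(C_{p,q,r,s,t})$ as a subgroup of $\outs(G)$, and in each subcase either produce an explicit short path in $\Int$ through an appropriate intermediate graph of groups, or reduce to one of Cases~2, 4, or~5 by showing some $A$-, $B$-, or $\varepsilon$-vertex lies in $\Int$. The first step is to recall that $\stab(C_{i,j,k,l,m})$ is a quotient of $(i_{lm}\times l_{m}\times i_{jk}\times j_{k}\times i_{v_{1}}\times\dots\times i_{v_{n-5}})\rtimes\Phi$, so its operating factors are $i$ (with many dependent factors), $j$ (dependent factor $k$ only), and $l$ (dependent factor $m$ only). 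Hence for the intersection to be strictly larger than $\Phi$ we need $\{i,j,l\}\cap\{p,s\}\neq\emptyset$, and moreover a shared operating factor with a \emph{matching} dependent factor; using the symmetries $C_{i,j,k,l,m}=C_{i,l,m,j,k}$ and $C_{p,q,r,s,t}=C_{p,s,t,q,r}$ cuts the bookkeeping roughly in half.

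I would then run through the cases by which operating factors coincide. The easy reductions: if the only shared operating factor is $j$ (say $p=j$, dependent factor $k=r$), then $\stab\cap\stab = j_{k}\rtimes\Phi\subseteq\stab(A_{j})$, so $A_{j}\in\Int$ and Case~4 (Lemma~\ref{lemma pw int case 4}) applied twice gives a path $C_{i,j,k,l,m}\dash\sigma_{j,\dots}\text{ or }\tau_{j,k,\dots}\dash A_{j}\dash\dots\dash C_{p,q,r,s,t}$. Similarly if the shared operating factor is $i$ on both sides but the two $C$-graphs have different `$jk$/$lm$' partitions, the intersection has $i$ as its only operating factor, lands inside $\stab(A_i)$, and Case~4 finishes. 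The genuinely interesting subcase is when $i=p$ \emph{and} the partitions agree, i.e.\ $\{j,k\}=\{q,r\}$ and $\{l,m\}=\{s,t\}$ as split by the two inner edges at the $i$-vertex: then $\stab\cap\stab$ can retain two operating factors. If $j=q$, $k=r$, $l=s$, $m=t$ we have equality; the non-trivial configuration is where one of the pairs is ``flipped'', e.g.\ $j=q,k=r$ but $\{l,m\}$ reversed so $s=m,t=l$ --- here $\stab(C_{i,j,k,l,m})\cap\stab(C_{i,j,k,m,l})$ contains $j_{k}$ but the $l$- and $m$-operating parts collide to leave only $\Phi$ there, so the intersection is $(i_{v_1}\times\dots\times i_{v_{n-5}}\times i_{jk}\times j_k)/Z(G_i)\rtimes\Phi$ or similar, which is contained in $\stab(\delta_{i,j,k,lm})$, and $\delta_{i,j,k,lm}$ collapses to both $C$-graphs. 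This is exactly where the stated condition ``$r\neq k\implies t\neq m$'' comes from: when both inner pairs are flipped relative to each other the retained twists force $\alpha\in\Int$ and we are back in a single domain, but the residual ``single flip'' obstruction cannot be resolved --- there is no common collapse target and no common $A$-, $B$-, $\delta$-, or $\varepsilon$-vertex --- which is the content of the ``only if'' direction.

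Thus the proof splits into: (i) the ``if'' direction, handled by the enumeration above, always exhibiting an intermediate vertex ($\sigma$, $\tau$, $\delta$, $\varepsilon$, or an $A$-vertex reducing to earlier cases); and (ii) the ``only if'' direction, where I would show that if $k=r$ while $m\neq t$ (so, after using the $C$-symmetry, $\{l,m\}$ and $\{s,t\}$ overlap in exactly one index placed ``on the wrong side''), then $\stab(C_{i,j,k,l,m})\cap\stab(C_{p,q,r,s,t})=\Phi$ forces $\alpha\in\Int$, contradicting the assumption that the two $C$-graphs are distinct vertices of a common proper intersection --- hence no such configuration with $C_{i,j,k,l,m}\neq C_{p,q,r,s,t}$ actually occurs with both in $\Int$ unless there \emph{is} a connecting path, and where it genuinely fails to connect one checks directly from Definition~\ref{defn equivalent labellings} that the two graphs share no collapse and their stabiliser intersection supports no path. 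The main obstacle I anticipate is purely combinatorial: carefully organising the many index configurations modulo the two $C$-symmetries so that every surviving case is matched either to equality, to a named intermediate graph whose stabiliser visibly contains the intersection and which collapses to both endpoints, or to a strictly smaller case already handled --- and in particular pinning down precisely which flipped configuration is the one where $r\neq k$ yet $t=m$, so that the biconditional in the statement is matched exactly rather than merely up to the obvious symmetries.
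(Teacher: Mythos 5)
Your overall strategy---compute $\stab(C_{i,j,k,l,m})\cap\stab(C_{p,q,r,s,t})$, organise the index configurations modulo the symmetry $C_{i,j,k,l,m}=C_{i,l,m,j,k}$, and in each subcase either exhibit an intermediate vertex or reduce to Cases 2--5---is exactly the paper's strategy. But you have misidentified the configuration responsible for the biconditional. The subcase you single out as ``genuinely interesting'' ($p=i$, $q=j$, $r=k$, with $\{l,m\}$ flipped so $s=m$, $t=l$) is in fact unproblematic: as you yourself note, the intersection there lies in $\stab(\delta_{i,j,k,lm})$, and $\delta_{i,j,k,lm}$ collapses to both $C_{i,j,k,l,m}$ and $C_{i,j,k,m,l}$, so a path exists; moreover $r=k$ there, so the hypothesis ``$r\ne k\implies t\ne m$'' is vacuously satisfied and no obstruction should arise. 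The genuine obstruction, which your enumeration misses, occurs when $\{p,q,s\}=\{i,j,l\}$ but $p\ne i$: taking $p=j$, $q=i$, $s=l$ with $t=m$ and $r\ne k$, one finds $\stab(C_{i,j,k,l,m})\cap\stab(C_{j,i,r,l,m})=(j_{k}\times l_{m}\times i_{r})\rtimes\Phi$ --- three operating factors, each retaining exactly one dependent factor. This is strictly larger than $\Phi$, is contained in the stabiliser of no $A$-, $B$-, $\gamma$-, $\delta$-, $\varepsilon$-, $\sigma$- or $\tau$-vertex, and the only vertices whose stabilisers contain it are the three pairwise non-adjacent $C$-vertices $C_{i,j,k,l,m}$, $C_{j,i,r,l,m}$, $C_{l,i,r,j,k}$; hence $\Int$ is a discrete three-point set.

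Your ``only if'' direction is also internally inconsistent. You claim that in the bad configuration the stabiliser intersection equals $\Phi$, forcing $\alpha\in\Int$ --- but that would give $\aleph_{1}=\aleph_{2}$, and it is in any case the opposite of what happens: path-connectedness fails precisely because the intersection is \emph{too large} to force any $\alpha$-, $A$- or $B$-vertex into $\Int$, while being supported only on mutually non-adjacent $C$-vertices. You have also stated the failing condition backwards ($k=r$ with $m\ne t$, rather than $r\ne k$ with $t=m$). Until the cyclic configuration $p=j$, $q=i$, $s=l$, $t=m$, $r\ne k$ is isolated and its stabiliser intersection computed explicitly, neither direction of the biconditional is established.
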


\begin{proof}
Suppose we have $C_{i,j,k,l,m}\in\Int$ and $C_{p,q,r,s,t}\in\Int$.

If $\{p,q,s\}\cap\{i,j,l\}=\emptyset$ then $\stab(C_{i,j,k,l,m})\cap\stab(C_{p,q,r,s,t})=\Phi$ and $\alpha\in\Int$.

If $|\{p,q,s\}\cap\{i,j,l\}|=1$ then we have at most one operating factor in $\stab(C_{i,j,k,l,m})\cap\stab(C_{p,q,r,s,t})$ (with at most $n-7$ dependent factors), so either $\alpha\in\Int$ (and we are done) or there is some $A$-vertex in $\Int$, and we are reduced to Case 4 (Lemma \ref{lemma pw int case 4}).

Suppose $|\{p,q,s\}\cap\{i,j,l\}|=2$. Then $\stab(C_{i,j,k,l,m})\cap\stab(C_{p,q,r,s,t})$ has at most two operating factors.
If $p\ne i$ then all of these operating factors have at most one dependent factor.
Since any $B$-vertex has two operating factors, each with at least one dependent, then $\stab(C_{i,j,k,l,m})\cap\stab(C_{p,q,r,s,t})\subset \stab(B)$ for some $B$-vertex.
Then we are reduced to Case 5 (Lemma \ref{lemma pw int case 5}).
If $p=i$ then we have at most one operating factor with at most $n-5$ dependent factors, and the possible other operating factor has at most one dependent factor. So again there is some $B$-vertex in $\Int$, and by Case 5, there must be some path between our two $C$-vertices.

Now suppose $\{p,q,s\}=\{i,j,l\}$.
If $p=i$ then by symmetry of $C$-vertices we may assume $q=j$ and $s=l$.
If in addition we have $r=k$ and $t=m$ then $C_{p,q,r,s,t}=C_{i,j,k,l,m}$. So suppose $\{r,t\}\ne\{k,m\}$.
If $|\{r,t\}\cap\{k,m\}|=1$ then either $B_{i,j,k}$ or $B_{i,l,m}$ is in $\Int$, which reduces us to Case 5.
If $|\{r,t\}\cap\{k,m\}|=0$ then $A_{i}\in\Int$, and we are reduced to Case 4.

Finally, suppose $p=j$, $q=i$, and $s=l$.
By permuting indices in accordance with the symmetries of $C_{i,j,k,l,m}$ and $C_{p,q,r,s,t}$, this covers all cases where $\{p,q,s\}=\{i,j,l\}$ and $p\ne i$.
If $r=k$ then $\stab(C_{i,j,k,l,m})\cap\stab(C_{p,q,r,s,t})\subseteq l_{t}\rtimes\Phi\subset\stab(A_{l})$, so by Case 4 we are done.
If $r\ne k$ and $t\ne m$ then $\stab(C_{i,j,k,l,m})\cap\stab(C_{p,q,r,s,t})\subseteq(j_{k}\times i_{r})\rtimes\Phi\subset\stab(B_{i,j,k})$, so by Case 5 we are done.

A problem arises when $r\ne k$ but $t=m$.
Then $\stab(C_{i,j,k,l,m})\cap\stab(C_{p,q,r,s,t})=\stab(C_{i,j,k,l,m})\cap\stab(C_{j,i,r,l,m})=(j_{k}\times l_{m}\times i_{r})\rtimes\Phi$. The only graph $T$ in our complex (besides $C_{i,j,k,l,m}$ and $C_{j,i,r,l,m}$) with $(j_{k}\times l_{m}\times i_{r})\rtimes\Phi\subseteq\stab(T)$ is $C_{l,i,r,j,k}$.
So in this case $\Int$ consists of three distinct non-adjacent points.
Note that this is the only case where $\Int$ is not path-connected.

So if $C_{i,j,k,l,m}$ and $C_{p,q,r,s,t}$ are points in $\Int$ for any $i,j,k,l,m,p,q,r,s,t\in\{1,\dots,n\}$, and we don't have that $r\ne k$ and $t=m$, then there is a path in $\Int$ connecting $C_{p,q,r,s,t}$ to $C_{i,j,k,l,m}$.
\end{proof}

Having dealt with all six cases, we may now conclude:

\begin{prop}\label{pw int pc case not C}
If $\aleph_{1}$ and $\aleph_{2}$ are two domains with $\Int\ne\emptyset$ such that $\Int$ contains some vertex which is \textbf{not} of the form $C_{i,j,k,l,m}$, then $\aleph_{1}\cap\aleph_{2}$ is path-connected.
\end{prop}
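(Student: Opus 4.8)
The plan is to reduce the general statement to the six endpoint cases already handled in Lemmas \ref{lemma pw int case 1}--\ref{lemma pw int case 6}, using the "collapse implies membership" principle. First, recall from Lemma \ref{lem stabs include} that if $S$ is a collapse of $T$ then $\stab(T)\subseteq\stab(S)$, so by Observation \ref{obs pw int stab}, if $T\in\Int$ then every collapse of $T$ also lies in $\Int$. Since every graph of groups in our complex collapses (by successively collapsing collapsible edges) to at least one of $A_{i}$, $B_{i,j,k}$, or $C_{i,j,k,l,m}$, this means: if $T\in\Int$ is an arbitrary vertex, then $T$ is joined by an edge path inside $\Int$ to some "bottom" vertex of type $A$, $B$, or $C$, namely by the (one- or two-step) collapse path from $T$ down to that bottom vertex.

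Now suppose $\Int\ne\emptyset$ and contains some vertex $T_{0}$ which is not of type $C$. By the previous paragraph $T_{0}$ collapses to some $A$ or $B$ vertex, call it $V_{0}$, with $V_{0}\in\Int$ and a path $T_{0}\dash\cdots\dash V_{0}$ inside $\Int$. Let $T$ be any other vertex of $\Int$; it collapses to some bottom vertex $V\in\Int$ of type $A$, $B$, or $C$, again via a path in $\Int$. It now suffices to connect $V_{0}$ to $V$ by a path in $\Int$. If $V$ is of type $A$ or $B$, then $\{V_{0},V\}$ is an instance of one of Cases 1, 2, or 3 (both vertices lie in $\{A,B\}$), so Lemmas \ref{lemma pw int case 1}, \ref{lemma pw int case 2}, \ref{lemma pw int case 3} directly supply a path in $\Int$. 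If $V$ is of type $C$, then $V_{0}$ (of type $A$ or $B$) and $V$ form an instance of Case 4 or Case 5, and Lemmas \ref{lemma pw int case 4}, \ref{lemma pw int case 5} supply a path in $\Int$ --- note that the problematic non-path-connected situation of Lemma \ref{lemma pw int case 6} only ever arises between two $C$-vertices, so it never obstructs us here. Hence any two vertices of $\Int$ are connected by an edge path in $\Int$.

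Finally, since $\Int$ is a subcomplex of the $2$-dimensional complex $\mathcal{C}_{n}$ (it is an intersection of two copies of $\mathcal{D}_{n}$, each a subcomplex), path-connectedness of its $1$-skeleton is equivalent to path-connectedness of $\Int$ as a topological space: every point of a simplicial complex deformation retracts within its closed cell onto a vertex, and $2$-cells and edges only appear when all their vertices (and lower faces) are present. Thus $\Int$ is path-connected, as claimed.

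The main point to be careful about --- though it is not really an obstacle given the preceding lemmas --- is the bookkeeping that guarantees the exceptional Case 6 scenario genuinely cannot intrude: one must check that whenever $T_{0}$ is not of type $C$, the bottom vertex $V_{0}$ it collapses to is also not of type $C$ (which is automatic, since an $A$- or $B$-vertex, or any vertex that is already "bottom" and not $C$, stays $A$ or $B$), so that every pair $\{V_{0},V\}$ we feed into the case analysis has at least one non-$C$ member and therefore falls under Cases 1--5 rather than Case 6. The remaining verification that "every graph collapses to an $A$, $B$, or $C$ vertex" is immediate from Table \ref{table n>=5 points}: inspecting the list, $\rho,\beta,\gamma$ collapse to $A$ or $B$, and $\sigma,\tau,\delta,\varepsilon$ collapse to $B$ or $C$ (as already used in Lemma \ref{lemma fun dom pc}).
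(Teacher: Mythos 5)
Your proof is correct and takes essentially the same approach as the paper: both reduce the problem to the "bottom" vertices $A$, $B$, $C$ via the collapse/stabiliser-inclusion principle and then invoke the six pairwise case lemmas. The only (harmless) organisational difference is that you route every path through a fixed non-$C$ bottom vertex $V_{0}$ and so need only the unconditional Cases 1--5, whereas the paper instead appeals to Lemma \ref{lemma pw int case 6} together with the observation from its proof that the exceptional non-connected configuration occurs precisely when $\Int$ contains only $C$-vertices.
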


\begin{proof}
Note that in the proof of Lemma \ref{lemma pw int case 6} we showed that if $C_{i,j,k,l,m}\in\Int$ and $C_{p,q,r,s,t}\in\Int$ then there exists some non-$C$ vertex in $\Int$ if and only if $r\ne k \implies t\ne m$.
The result then follows from Lemmas \ref{lemma pw int case 1}, \ref{lemma pw int case 2}, \ref{lemma pw int case 3}, \ref{lemma pw int case 4}, \ref{lemma pw int case 5}, and \ref{lemma pw int case 6}. 
\end{proof}

\begin{rem}
Note that this statement is not saying that $\aleph_{1}\cap\aleph_{2}$ cannot contain a vertex $C_{i,j,k,l,m}$ if it is to be path connected, just that it must also contain some other vertex as well which is not a $C$-vertex.
\end{rem}

While it is not ideal that such an intersection containing only $C$-vertices is not path-connected, this can be handled via the following:

\begin{prop}\label{pw int pc case C}
If $\aleph_{1}$ and $\aleph_{2}$ are two domains with $\Int\ne\emptyset$ such that $\Int$ contains \textbf{only} vertices of the form $C_{i,j,k,l,m}$, then $\Int=\{C_{i,j,k,l,m}, C_{j,l,m,i,p}, C_{l,i,p,j,k}\}$ for some (distinct) $i,j,k,l,m,p\in\{1,\dots,n\}$.
Moreover, there exists a domain $\aleph_{3}$ with $\Int\subset\aleph_{3}$, such that $\aleph_{1}\cup\aleph_{2}\cup\aleph_{3}$ is simply connected.
\end{prop}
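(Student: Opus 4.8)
The plan is to first pin down exactly what $\Int$ looks like when it contains only $C$-vertices, using the analysis already carried out in Lemma~\ref{lemma pw int case 6}. In that lemma the only configuration in which $\Int$ is not path-connected — equivalently, the only configuration in which $\Int$ consists entirely of $C$-vertices — was shown to be $\Int = \{C_{i,j,k,l,m},\, C_{j,i,r,l,m},\, C_{l,i,r,j,k}\}$ (after relabelling the indices of the second $C$-vertex so that $r \ne k$ and $t = m$). Renaming $r$ as $p$ and using the symmetry $C_{a,b,c,d,e}=C_{a,d,e,b,c}$, one rewrites these three vertices in the symmetric form $\{C_{i,j,k,l,m},\, C_{j,l,m,i,p},\, C_{l,i,p,j,k}\}$; this is a purely bookkeeping step, checking that the three stabilisers each contain $(j_k \times l_m \times i_p)\rtimes\Phi$ and that no non-$C$ graph does. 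So the first half of the statement is essentially a restatement of the final paragraph of Lemma~\ref{lemma pw int case 6}, and I would present it as such.

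For the second half I would produce the domain $\aleph_3$ explicitly. Set $W := (j_k \times l_m \times i_p)\rtimes\Phi = \stab(C_{i,j,k,l,m})\cap\stab(C_{j,l,m,i,p})\cap\stab(C_{l,i,p,j,k})$. Since $\aleph_1 = \mathcal{D}_n$ and each of the three $C$-vertices lies in $\aleph_2 = \mathcal{D}_n\cdot\psi$, Observation~\ref{obs pw int stab} gives $\psi \in W$. Now I want a fourth $C$-vertex — call it $C_\ast$ — whose stabiliser also contains $W$ but which is distinct from the three above; the natural candidate is the $C$-graph obtained by the remaining way of pairing the operating factors $i,j,l$ with their dependent data, i.e. a graph of the form $C_{?,?,?,?,?}$ built from the three ``prongs'' $j$-$k$, $l$-$m$, $i$-$p$ in the third cyclic arrangement. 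Having identified $C_\ast$, I would check directly from the $C$-stabiliser description (Proposition~\ref{prop brown stabs 3}) that $(j_k\times l_m\times i_p)\rtimes\Phi \subseteq \stab(C_\ast)$, hence $\psi\in\stab(C_\ast)$, hence (again by Observation~\ref{obs pw int stab}) $C_\ast \in \aleph_2$. This would show $C_\ast$ is a fourth point of $\Int$, contradicting the hypothesis — so in fact the honest move is the opposite: I should instead choose $\aleph_3$ to be a domain \emph{containing} $\Int$ together with some lower vertex.

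Concretely, let $\chi\in\outs(G)$ be chosen (via Lemma~\ref{lemma automorphisms exist between splittings}) so that one of the three $C$-vertices, say $C_{i,j,k,l,m}$, together with the $\sigma$-graph $\sigma_{i,jk,lm}$ it grows to, lies in $\aleph_3 := \mathcal{D}_n\cdot\chi$ where $\chi$ is built to satisfy $\chi\in W$ as well; since $W \le \stab(\sigma_{i,jk,lm})\cap\bigcap\stab(C_{\bullet})$ by the $M_T$ computations in Table~\ref{table stabs B-J}, any $\chi\in W$ moving $\mathcal D_n$ so that its $\sigma$-vertex agrees with $\sigma_{i,jk,lm}$ works, and by Observation~\ref{obs pw int stab} all three $C$-vertices then lie in $\aleph_3$ too, so $\Int\subset\aleph_3$. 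Inside $\aleph_3$ the three $C$-vertices are connected through the $\sigma$-vertex and its adjacent $\delta$-vertices (this is precisely the $\sigma$-Slice of Figure~\ref{tau epsilon box}), so $\aleph_1\cap\aleph_3$ and $\aleph_2\cap\aleph_3$ each contain a path-connected set meeting all of $\Int$; combined with the fact that $\aleph_1\cap\aleph_2 = \Int \subset \aleph_3$, the three pairwise intersections $\aleph_1\cap\aleph_2$, $\aleph_1\cap\aleph_3$, $\aleph_2\cap\aleph_3$ and the triple intersection are all path-connected, so two applications of the closed Seifert--van Kampen theorem (Theorem~\ref{s van k}), using that each $\aleph_i$ is simply connected by Theorem~\ref{thm fun dom sc}, give $\pi_1(\aleph_1\cup\aleph_2\cup\aleph_3)=1$.

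The main obstacle I anticipate is the explicit construction of $\aleph_3$: one must exhibit a single $\chi\in\outs(G)$ (equivalently a single $\mathfrak S$ free factor splitting) such that $\mathcal D_n\cdot\chi$ simultaneously contains all three ``twisted'' $C$-vertices and a common lower vertex ($\sigma$ or a $\delta$) joining them within that domain, and then verify path-connectedness of the relevant intersections. This is a matter of choosing $\chi$ inside the subgroup $W$ and tracking, via Observation~\ref{obs pw int stab} and Lemma~\ref{lem stabs include}, which vertices of $\mathcal D_n\cdot\chi$ are fixed; the bookkeeping with the cyclic symmetries of the $C$-vertices is where care is needed, but no genuinely new idea beyond those already developed in this subsection is required.
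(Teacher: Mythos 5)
Your first half is fine: it is exactly the paper's argument, namely reading off the three-point description of $\Int$ from the final paragraph of Lemma \ref{lemma pw int case 6}. Your overall strategy for the second half — produce a third domain $\aleph_{3}\supset\Int$ and apply Seifert--van Kampen twice — is also the paper's strategy. But the execution has two genuine errors. First, the claim $W=(j_{k}\times l_{m}\times i_{p})\rtimes\Phi\le\stab(\sigma_{i,jk,lm})$ is false: by Table \ref{table stabs G-L} (or Proposition \ref{prop brown stabs 2}), $\stab(\sigma_{i,jk,lm})\cong G_{i}^{n-4}\rtimes\Phi$ consists only of twists by elements of $G_{i}$ together with factor automorphisms, so it contains $i_{p}$ but neither $j_{k}$ nor $l_{m}$. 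Consequently ``any $\chi\in W$'' does not work; demanding $\sigma_{i,jk,lm}\in\aleph_{1}\cap\aleph_{3}$ forces $\chi\in W\cap\stab(\sigma_{i,jk,lm})=i_{p}\rtimes\Phi$. Relatedly, the three $C$-vertices of $\Int$ are \emph{not} all joined to $\sigma_{i,jk,lm}$ through $\delta$-vertices: only $C_{i,j,k,l,m}$ grows to $\sigma_{i,jk,lm}$, while $C_{j,l,m,i,p}$ and $C_{l,i,p,j,k}$ grow to $\sigma_{j,lm,ip}$ and $\sigma_{l,ip,jk}$ respectively, so the $\sigma$-Slice picture does not connect $\Int$ inside $\aleph_{3}$ in the way you describe.

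Second, your final assembly asserts that all three pairwise intersections, including $\aleph_{1}\cap\aleph_{2}=\Int$, are path-connected — but $\Int$ is three isolated points, which is the entire reason this proposition exists. The correct bookkeeping, which is what the paper does, is: write $\psi=(G_{p},i_{0})(G_{k},j_{0})(G_{m},l_{0})\phi$ and set $\aleph_{3}:=\aleph_{1}\cdot(G_{p},i_{0})$. Then $(G_{p},i_{0})\in\stab(A_{i})$ gives $A_{i}\in\aleph_{1}\cap\aleph_{3}$, and $(G_{k},j_{0})(G_{m},l_{0})\phi\in(j_{k}\times l_{m})\rtimes\Phi\subseteq\stab(B_{j,l,m})$ gives $B_{j,l,m}\in\aleph_{2}\cap\aleph_{3}$; so both of these intersections contain a non-$C$ vertex and are path-connected by Proposition \ref{pw int pc case not C}. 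One then glues $\aleph_{1}\cup\aleph_{3}$ and $\aleph_{2}\cup\aleph_{3}$ first, and only afterwards glues these two unions along $(\aleph_{1}\cup\aleph_{3})\cap(\aleph_{2}\cup\aleph_{3})=\aleph_{3}\cup(\aleph_{1}\cap\aleph_{2})=\aleph_{3}$, which is path-connected precisely because $\Int\subset\aleph_{3}$; the disconnected set $\aleph_{1}\cap\aleph_{2}$ never appears as a gluing locus. Your proposal is missing both the explicit factorisation of $\psi$ that makes $\aleph_{2}\cap\aleph_{3}$ demonstrably path-connected and the ordering of the Seifert--van Kampen applications that avoids gluing along $\Int$ itself.
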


\begin{proof}
As noted in Case 6 (Lemma \ref{lemma pw int case 6}) above, if $\Int$ does not contain any vertices except $C$-vertices, then there exist some distinct $i,j,k,l,m,p\in\{1,\dots,n\}$ such that 
$\Int=\{C_{i,j,k,l,m}, C_{j,l,m,i,p}, C_{l,i,p,j,k}\}$.
Further, we have that $\aleph_{2}=\aleph_{1}\psi$ for some $\psi\in(i_{p}\times j_{k}\times l_{m})\rtimes\Phi$ (and $\psi\not\in (i_{p}\times j_{k})\rtimes\Phi \cup (j_{k}\times l_{m})\rtimes\Phi \cup (l_{m}\times i_{p})\rtimes\Phi$).
That is, $\psi=(G_{p},i_{o})(G_{k},j_{0})(G_{m},l_{0})\phi$ for some $i_{0}\in G_{i}$, $j_{0}\in G_{j}$, $l_{0}\in G_{l}$, and $\phi\in\Phi$.
Define $\aleph_{3}$ to be $\aleph_{1}(G_{p},i_{0})$.
Note that $(G_{p},i_{0})\in i_{p}\rtimes\Phi=\stab(A_{i})$, so we have $A_{i}\in\aleph_{1}\cap\aleph_{3}$.
Then $\aleph_{2}=\aleph_{1}(G_{p},i_{o})(G_{k},j_{0})(G_{m},l_{0})\phi=\aleph_{3}(G_{k},j_{0})(G_{m},l_{0})\phi$.
Note that $(G_{k},j_{0})(G_{m},l_{0})\phi\in(j_{k}\times l_{m})\rtimes\Phi\subseteq\stab(B_{j,l,m})$, so we have $B_{j,l,m}\in\aleph_{2}\cap\aleph_{3}$.
Moreover, $(G_{p},i_{o}), (G_{k},j_{0})(G_{m},l_{0})\phi \in (i_{p}\times j_{k}\times l_{m})\rtimes\Phi$. So $C_{i,j,k,l,m}, C_{j,l,m,i,p}, C_{l,i,p,j,k} \in \aleph_{3}$.

By Proposition \ref{pw int pc case not C}, we have that both $\aleph_{1}\cap\aleph_{3}$ and $\aleph_{2}\cap\aleph_{3}$ are path-connected,
and by Theorem \ref{thm fun dom sc}, $\aleph_{1}$, $\aleph_{2}$ and $\aleph_{3}$ are each simply connected.
Then by the Seifert--Van Kampen Theorem (Theorem \ref{s van k}), $\aleph_{1}\cup\aleph_{3}$ and $\aleph_{2}\cup\aleph_{3}$ are both simply connected.
Then we can again apply the Seifert--Van Kampen Theorem to the sets $A=\aleph_{1}\cup\aleph_{3}$ and $B=\aleph_{2}\cup\aleph_{3}$.
Since we have that $\pi_{1}(A)=\pi_{1}(B)=\{1\}$, and $A\cap B=(\Int)\cup\aleph_{3}=\aleph_{3}$ is path-connected by Corollary \ref{cor copies of the fun dom are pc},
then we get that $\pi_{1}(\aleph_{1}\cup\aleph_{2}\cup\aleph_{3})=\pi_{1}(A\cup B)=\{1\}$.
That is, $\aleph_{1}\cup\aleph_{2}\cup\aleph_{3}$ is simply connected.
\end{proof}

In plain language, this means if there is an edge in the Space of Domains which does not represent a simply connected subset of the complex $\mathcal{C}_{n}$, then it must be in the boundary of a 2-cell which \textbf{does} represent a simply connected subset of $\mathcal{C}_{n}$.

\begin{cor}\label{cor pw int has sc nbhd}
Let $\aleph_{1}$ and $\aleph_{2}$ be two domains so that $\aleph_{1}\cap\aleph_{2}\ne\emptyset$. Then there exists $U\subset\mathcal{C}_{n}$ with $\aleph_{1}\cup\aleph_{2}\subseteq U$ such that $U$ is simply connected.
\end{cor}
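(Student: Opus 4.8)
The plan is to split into two cases according to whether $\aleph_1\cap\aleph_2$ contains a vertex which is not of the form $C_{i,j,k,l,m}$, exactly matching the dichotomy already established in Propositions \ref{pw int pc case not C} and \ref{pw int pc case C}. In the first case I would simply take $U=\aleph_1\cup\aleph_2$ itself: by Proposition \ref{pw int pc case not C} the intersection $\aleph_1\cap\aleph_2$ is path-connected, and by Theorem \ref{thm fun dom sc} each of $\aleph_1$ and $\aleph_2$ is simply connected (being an $\outs(G)$-image of $\mathcal{D}_n$, hence homeomorphic to it). The sets $\aleph_1$ and $\aleph_2$ are closed subcomplexes of $\mathcal{C}_n$ with $\aleph_1\cap\aleph_2$ a subcomplex, so the closed Seifert--van Kampen Theorem (Theorem \ref{s van k}) applies, giving $\pi_1(\aleph_1\cup\aleph_2)\cong\{1\}\ast_{\pi_1(\aleph_1\cap\aleph_2)}\{1\}=\{1\}$.

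In the second case, where $\aleph_1\cap\aleph_2$ contains only $C$-vertices, Proposition \ref{pw int pc case C} already produces a domain $\aleph_3$ with $\aleph_1\cap\aleph_2\subseteq\aleph_3$ and with $\aleph_1\cup\aleph_2\cup\aleph_3$ simply connected. So here I would take $U=\aleph_1\cup\aleph_2\cup\aleph_3$, which contains $\aleph_1\cup\aleph_2$ as required and is simply connected by that proposition. Combining the two cases gives the desired $U$ in all cases, completing the proof.

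Since essentially all the content has already been extracted into Propositions \ref{pw int pc case not C} and \ref{pw int pc case C}, I do not anticipate a genuine obstacle; the only mild care needed is to note that $U$ is a genuine subset (indeed subcomplex) of $\mathcal{C}_n$ containing $\aleph_1\cup\aleph_2$, which is immediate in both cases, and to make sure the hypotheses of Theorem \ref{s van k} are met — but this was already used in the proof of Proposition \ref{pw int pc case C}, so it is routine. The proof is therefore short:

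\begin{proof}
If $\aleph_{1}\cap\aleph_{2}$ contains a vertex which is not of the form $C_{i,j,k,l,m}$, then by Proposition \ref{pw int pc case not C} the intersection $\aleph_{1}\cap\aleph_{2}$ is path-connected.
By Theorem \ref{thm fun dom sc}, $\mathcal{D}_{n}$ is simply connected, and since each domain is an $\outs(G)$-image of $\mathcal{D}_{n}$ with the same topological structure, both $\aleph_{1}$ and $\aleph_{2}$ are simply connected.
As $\aleph_{1}$, $\aleph_{2}$, and $\aleph_{1}\cap\aleph_{2}$ are subcomplexes of the simplicial complex $\mathcal{C}_{n}$, the closed Seifert--van Kampen Theorem (Theorem \ref{s van k}) applies, giving
\[\pi_{1}(\aleph_{1}\cup\aleph_{2})\cong\pi_{1}(\aleph_{1})\ast_{\pi_{1}(\aleph_{1}\cap\aleph_{2})}\pi_{1}(\aleph_{2})=\{1\}\ast_{\pi_{1}(\aleph_{1}\cap\aleph_{2})}\{1\}=\{1\}.\]
So we may take $U=\aleph_{1}\cup\aleph_{2}$.

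Otherwise, $\aleph_{1}\cap\aleph_{2}$ contains only vertices of the form $C_{i,j,k,l,m}$.
By Proposition \ref{pw int pc case C}, there is a domain $\aleph_{3}$ with $\aleph_{1}\cap\aleph_{2}\subseteq\aleph_{3}$ such that $\aleph_{1}\cup\aleph_{2}\cup\aleph_{3}$ is simply connected.
Taking $U=\aleph_{1}\cup\aleph_{2}\cup\aleph_{3}$, we have $\aleph_{1}\cup\aleph_{2}\subseteq U\subseteq\mathcal{C}_{n}$ with $U$ simply connected, as required.
\end{proof}
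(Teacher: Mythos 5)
Your proposal is correct and follows essentially the same argument as the paper: the same dichotomy via Propositions \ref{pw int pc case not C} and \ref{pw int pc case C}, taking $U=\aleph_{1}\cup\aleph_{2}$ with Seifert--van Kampen in the path-connected case and $U=\aleph_{1}\cup\aleph_{2}\cup\aleph_{3}$ in the remaining case. No issues.
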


\begin{proof}
Note that any pair of domains with non-empty intersection fall under precisely one of Proposition \ref{pw int pc case not C} or Proposition \ref{pw int pc case C}.
In the second case, we take $U=\aleph_{1}\cup\aleph_{2}\cup\aleph_{3}$ with $\aleph_{3}$ as in Proposition \ref{pw int pc case C}.
In the first case, we take $U=\aleph_{1}\cup\aleph_{2}$.
By Theorem \ref{thm fun dom sc} we have that each of $\aleph_{1}$ and $\aleph_{2}$ is simply connected, and since in this case $\aleph_{1}\cap\aleph_{2}$ is path connected, then by the Seifert--van Kampen Theorem (Theorem \ref{s van k}), we must have that $\aleph_{1}\cup\aleph_{2}$ is simply connected.
\end{proof}

%		------------------

\subsection{A Map from the Space of Domains to the Complex $\mathcal{C}_{n}$}\label{section map SoD to Cn}

In this subsection, we will define a (continuous) map from the Space of Domains to the complex $\mathcal{C}_{n}$.
This will allow us to conclude that simple connectivity of $\mathcal{C}_{n}$ can be deduced from simple connectivity of the Space of Domains, as desired.

\begin{defn}\label{defn map F}
We define a map $F$ from the 1-skeleton of the Space of Domains (equivalently, from the Graph of Domains) to $\mathcal{C}_{n}$ as follows:
\begin{itemize}
\item A vertex $\aleph$ in the 0-skeleton of the Space of Domains is mapped under $F$ to the $\alpha$-graph contained in the domain $\aleph\subseteq\mathcal{C}_{n}$.
\item Let $\aleph_{1}\dash \aleph_{2}$ be some edge in the Space of Domains, and set $\alpha_{1}=F(\aleph_{1})$ and $\alpha_{2}=F(\aleph_{2})$.
Choose an edge path $\lambda_{12}:[0,1]\to\aleph_{1}\cup\aleph_{2}\subset\mathcal{C}_{n}$ with $\lambda_{12}(0)=\alpha_{1}$ and $\lambda_{12}(1)=\alpha_{2}$ and let $|\lambda_{12}|$ be its image.
We define $F(\aleph_{1}\dash \aleph_{2}):=|\lambda_{12}|$.
\item Given $\lambda_{12}$, we require that $\lambda_{21}$ (the chosen path in $\aleph_{1}\cup\aleph_{2}$ from $\alpha_{2}$ to $\alpha_{1}$) be defined by $\lambda_{21}(t):=\lambda_{12}(1-t)$ for all $t\in[0,1]$; that is, that $F(\aleph_{1}\dash \aleph_{2})=F(\aleph_{2}\dash \aleph_{1})$.
\end{itemize}
\end{defn}

\begin{obs}
Note that by Definition \ref{Space of Domains defn}, if $\aleph_{1}\dash \aleph_{2}$ is an edge in the Space of Domains, then we have that $\aleph_{1}\cap\aleph_{2}\ne\emptyset$ as a subset of $\mathcal{C}_{n}$.
By Lemma \ref{lemma fun dom pc}, each of $\aleph_{1}$ and $\aleph_{2}$ is a path-connected subset of $\mathcal{C}_{n}$, hence so too is $\aleph_{1}\cup\aleph_{2}\subset\mathcal{C}_{n}$.
Thus there must exist some path in $\aleph_{1}\cup\aleph_{2}$ connecting $F(\aleph_{1})$ and $F(\aleph_{2})$.
\end{obs}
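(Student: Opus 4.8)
The plan is to confirm that the path $\lambda_{12}$ demanded in Definition \ref{defn map F} actually exists, so that $F$ is well-defined on edges of the Space of Domains. First I would unwind the hypothesis. By Definition \ref{Space of Domains defn}, an edge $\aleph_{1}\dash \aleph_{2}$ is present precisely when $\aleph_{1}\cap\aleph_{2}\ne\emptyset$ as subsets of $\mathcal{C}_{n}$, so I may fix some point $x\in\aleph_{1}\cap\aleph_{2}$. I would also record that, by Definition \ref{defn map F}, the two endpoints $\alpha_{1}=F(\aleph_{1})$ and $\alpha_{2}=F(\aleph_{2})$ are the $\alpha$-graphs contained in $\aleph_{1}$ and $\aleph_{2}$ respectively, so in particular $\alpha_{1}\in\aleph_{1}$ and $\alpha_{2}\in\aleph_{2}$.

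The one substantive ingredient is that each individual domain is path-connected. A domain is by definition a set of the form $\mathcal{D}_{n}\cdot\psi$ for some $\psi\in\outs(G)$, and since the action of $\outs(G)$ on $\mathcal{C}_{n}$ is cellular and sends cells to cells (Proposition \ref{prop action preserves cells}), the map $T\mapsto T\cdot\psi$ restricts to a homeomorphism of $\mathcal{D}_{n}$ onto $\mathcal{D}_{n}\cdot\psi$. The path-connectedness of $\mathcal{D}_{n}$ established in Lemma \ref{lemma fun dom pc} therefore transfers verbatim to every domain; in particular both $\aleph_{1}$ and $\aleph_{2}$ are path-connected.

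I would then assemble the path. Using path-connectedness of $\aleph_{1}$, choose an edge path $\mu_{1}$ inside $\aleph_{1}$ from $\alpha_{1}$ to $x$, and using path-connectedness of $\aleph_{2}$, choose an edge path $\mu_{2}$ inside $\aleph_{2}$ from $x$ to $\alpha_{2}$. The concatenation $\mu_{1}$ followed by $\mu_{2}$ is a path lying in $\aleph_{1}\cup\aleph_{2}\subset\mathcal{C}_{n}$ running from $\alpha_{1}$ to $\alpha_{2}$, which is exactly an admissible choice of $\lambda_{12}$; equivalently, one simply invokes the standard fact that a union of two path-connected sets with a common point is path-connected. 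There is no real obstacle here, and I would keep the write-up brief: the content is carried entirely by Lemma \ref{lemma fun dom pc} together with the definition of an edge in the Space of Domains. The only point meriting a word of care is that ``domain'' is understood as a homeomorphic image of $\mathcal{D}_{n}$, which is what licenses the appeal to Lemma \ref{lemma fun dom pc}, and this is guaranteed by the action being a cellular homeomorphism.
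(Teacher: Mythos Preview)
Your proposal is correct and matches the paper's own reasoning: the observation is justified exactly by noting that $\aleph_{1}\cap\aleph_{2}\ne\emptyset$ from the definition of an edge, that each domain is a homeomorphic copy of $\mathcal{D}_{n}$ and hence path-connected by Lemma~\ref{lemma fun dom pc}, and that the union of two path-connected sets sharing a point is path-connected. The paper's subsequent remark even spells out the same concatenation-through-a-common-point construction you describe.
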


\begin{rem}
If one wishes to be more explicit, given an edge $\aleph_{1}\dash \aleph_{2}$, one may choose $T\in\aleph_{1}\cap\aleph_{2}\subset\mathcal{C}_{n}$ and $\psi,\varphi\in\outs(G)$ with $\aleph_{1}=\mathcal{D}_{n}\cdot\psi$ and $\aleph_{2}=\mathcal{D}_{n}\cdot\varphi$ so that $\psi^{-1}\varphi\in\stab(T)$, where $\mathcal{D}_{n}$ is the fundamental domain.
Denote the path in Table \ref{paths in fun dom} from $T\cdot\psi^{-1}=T\cdot\varphi^{-1}$ to $\alpha_{0}$ by $p$, where $\alpha_{0}$ is the unique $\alpha$-graph contained in $\mathcal{D}_{n}$, and set $\lambda_{12}:=(p^{-1}\cdot\psi)(p\cdot\varphi)$.
Then $F(\aleph_{1}\dash \aleph_{2})=|p|\cdot\psi\cup|p|\cdot\varphi$.
It may well be that this construction leads to an $\outs(G)$-equivariant map, but one would need to carefully handle the choice of $T\in\aleph_{1}\cap\aleph_{2}$ to make it so.
\end{rem}

\begin{lemma}\label{loops homotope to image of F}
Any loop in $\mathcal{C}_{n}$ is homotopic to a loop which is the image under $F$ of a loop in the Space of Domains.
\end{lemma}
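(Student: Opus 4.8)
Let $\gamma:[0,1]\to\mathcal{C}_{n}$ be an arbitrary loop, which without loss of generality we may take to be an edge path in the $1$-skeleton of $\mathcal{C}_{n}$ (cellular approximation), based at an $\alpha$-graph $\alpha_{0}$ lying in the fundamental domain $\mathcal{D}_{n}=:\aleph_{0}$. The plan is to cover $|\gamma|$ by finitely many domains, reduce $\gamma$ to a concatenation of paths each living inside a single domain, and then replace each such sub-path by a path through $\alpha$-graphs that is manifestly in the image of $F$.

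First I would observe that since $\gamma$ is a compact edge path meeting only finitely many cells of $\mathcal{C}_{n}$, and every cell of $\mathcal{C}_{n}$ lies in at least one domain (indeed every vertex lies in some copy of $\mathcal{D}_{n}$ by Proposition \ref{prop Dn is fun dom} applied to the $\outs(G)$-action, and each edge/face of $\mathcal{C}_{n}$ lies in a domain since $\mathcal{D}_{n}$ is a strict fundamental domain), we may choose a finite sequence of domains $\aleph_{0},\aleph_{1},\dots,\aleph_{r}=\aleph_{0}$ and a subdivision $0=t_{0}<t_{1}<\dots<t_{r}=1$ of $[0,1]$ such that $\gamma([t_{s-1},t_{s}])\subseteq\aleph_{s}$ and consecutive domains $\aleph_{s-1},\aleph_{s}$ have non-empty intersection containing the point $\gamma(t_{s})$ --- in particular $\aleph_{s-1}\cap\aleph_{s}\neq\emptyset$, so $\aleph_{s-1}\dash\aleph_{s}$ is an edge of the Graph of Domains. (One obtains such a sequence by walking along $\gamma$ and, each time it leaves the current domain, switching to a domain containing the next portion; the switch point is in the overlap.) Next, for each $s$ let $\alpha_{s}:=F(\aleph_{s})$ be the $\alpha$-graph of $\aleph_{s}$, and choose, inside the path-connected set $\aleph_{s}$ (Lemma \ref{lemma fun dom pc}), an edge path $\mu_{s}$ from $\gamma(t_{s})$ to $\alpha_{s}$. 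Then $\gamma$ is homotopic (rel basepoint) to the concatenation of the pieces $\bar\mu_{s-1}\cdot\gamma|_{[t_{s-1},t_{s}]}\cdot\mu_{s}$; but each such piece is a loop-segment lying entirely inside $\aleph_{s}$ together with a tail in $\aleph_{s-1}$, and since we will insert $\alpha_{s}$ at the cut points it suffices to show each $\gamma|_{[t_{s-1},t_{s}]}$, regarded with endpoints adjusted to $\alpha_{s-1}$ and $\alpha_{s}$ via $\mu$'s, is homotopic rel endpoints within $\aleph_{s-1}\cup\aleph_{s}$ to the chosen path $\lambda_{(s-1)s}$ used to define $F$ on the edge $\aleph_{s-1}\dash\aleph_{s}$. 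This homotopy exists because, by Corollary \ref{cor pw int has sc nbhd}, $\aleph_{s-1}\cup\aleph_{s}$ is contained in a simply connected subset $U_{s}$ of $\mathcal{C}_{n}$, so any two paths in $U_{s}$ with the same endpoints are homotopic in $U_{s}$, hence in $\mathcal{C}_{n}$.

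Carrying this out, the loop $\gamma$ is homotopic in $\mathcal{C}_{n}$ to the concatenation $F(\aleph_{0}\dash\aleph_{1})\cdot F(\aleph_{1}\dash\aleph_{2})\cdots F(\aleph_{r-1}\dash\aleph_{r})$, which is precisely the image under $F$ of the edge-loop $\aleph_{0}\dash\aleph_{1}\dash\dots\dash\aleph_{r}=\aleph_{0}$ in the Graph of Domains, i.e.\ the image under $F$ of a loop in the $1$-skeleton of the Space of Domains. This is the desired conclusion.

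\textbf{Main obstacle.} The delicate point is the bookkeeping in the second step: organising the domain cover of $|\gamma|$ so that consecutive domains genuinely overlap at the subdivision points, and then keeping track of the auxiliary paths $\mu_{s}$ and the homotopies so that they splice together into a single well-defined homotopy rel basepoint rather than a collection of local homotopies that fail to match up. The key enabling fact that makes this go through cleanly --- and the reason Section \ref{pairwise intersections} was needed --- is Corollary \ref{cor pw int has sc nbhd}: without the guarantee that every overlapping pair of domains sits inside a \emph{simply connected} open (here, subcomplex) neighbourhood $U_{s}$, one could not freely re-route $\gamma$ through the $\alpha$-graphs, since the pairwise intersections themselves need not be path-connected (the exceptional $C$-vertex case of Proposition \ref{pw int pc case C}).
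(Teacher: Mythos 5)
Your proposal is correct and follows essentially the same route as the paper's proof: cover the (based) edge loop by a finite sequence of pairwise-overlapping domains, reroute each piece through the $\alpha$-graphs via auxiliary paths inside the path-connected domains, and use Corollary \ref{cor pw int has sc nbhd} to homotope each rerouted piece onto the chosen path $\lambda_{(s-1)s}$ defining $F$ on the corresponding edge of the Graph of Domains. The only difference is bookkeeping --- you assign domains to subintervals of the loop, whereas the paper assigns a domain to each vertex of the edge path (using that domains are closed under collapse to decide when to switch) --- but the key lemmas invoked and the logical structure are identical.
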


\begin{proof}
Let $\lambda'=T'_{0}\dash T'_{1}\dash \cdots\dash T'_{m'}\dash T'_{0}$ be a loop in the complex $\mathcal{C}_{n}$.
By Corollary \ref{cor Cn pc}, we can write down a path $p$ in $\mathcal{C}_{n}$ from $T'_{0}$ to the $\alpha$ graph in our fundamental domain (which we call $\alpha_{0}$).
By setting $\lambda=p^{-1}\lambda' p$ we now have a based loop in $\mathcal{C}_{n}$ (i.e. a loop containing the `basepoint' $\alpha_{0}$) which is homotopic to $\lambda'$. Say $\lambda=T_{0}\dash T_{1}\dash \cdots\dash T_{m}$, where $T_{m}=T_{0}=\alpha_{0}$.
\smallskip

We will now describe how to associate a domain $\aleph_{i}$ to each vertex $T_{i}$ in $\lambda$ with $T_{i}\in\aleph_{i}$.
This will allow us to construct a path in the Space of Domains whose image under ${F}$ we will show to be homotopic to $\lambda$.

First set $\aleph_{0}$ to be $\mathcal{D}_{n}$, the fundamental domain.
If $T_{i}$ is associated to $\aleph_{i}$ and $T_{i+1}\in\aleph_{i}$, we set $\aleph_{i+1}:=\aleph_{i}$. In particular, this is the case whenever $T_{i+1}$ is a collapse of $T_{i}$.
Note that for any edge $S\dash T$ in $\mathcal{C}_{n}$ we have that either $S$ is a collapse of $T$, or $T$ is a collapse of $S$.
Now suppose we have a domain $\aleph_{i}\ni T_{i}$ and $T_{i+1}\not\in\aleph_{i}$. We must then have that $T_{i}$ is a collapse of $T_{i+1}$.
Choose any domain $\aleph_{i+1}$ containing $T_{i+1}$. Then $T_{i}\in\aleph_{i+1}$, hence $T_{i}\dash T_{i+1}\subseteq\aleph_{i+1}$, and $T_{i}\in\aleph_{i}\cap\aleph_{i+1}$.

For a given domain $\aleph_{i}$, let $\alpha_{i}\in\mathcal{C}_{n}$ be the unique $\alpha$-graph contained in $\aleph_{i}$.
Since $\aleph_{i}\cap\aleph_{i+1}\ne\emptyset$, then either $\aleph_{i}=\aleph_{i+1}$ or $\aleph_{i}\dash \aleph_{i+1}$ is an edge in the Space of Domains.
We define paths $\mu_{i,i+1}$ as follows:
if $\aleph_{i}=\aleph_{i+1}$, set $\mu_{i,i+1}$ to be the constant path at $\alpha_{i}=\alpha_{i+1}$;
otherwise, set $\mu_{i,i+1}$ to be the path $\lambda_{i,i+1}$ from $\alpha_{i}$ to $\alpha_{i+1}$ such that $F(\aleph_{i}\dash \aleph_{i+1})=|\lambda_{i,i+1}|$.

Then the concatenation $\mu:=\mu_{0,1}\mu_{1,2}\dots\mu_{m-1,m}$ is equal to the concatenation \\ \noindent $F(\aleph_{\sigma(0)}\dash \aleph_{\sigma(1)})F(\aleph_{\sigma(1)}\dash \aleph_{\sigma(2)})\dots F(\aleph_{\sigma(k)}\dash \aleph_{\sigma(0)})$
which is \\ \noindent $F(\aleph_{\sigma(0)}\dash \aleph_{\sigma(1)}\dash \aleph_{\sigma(2)}\dash \cdots\dash \aleph_{\sigma(k)}\dash \aleph_{\sigma(0)})$,
where $\sigma(0)=0$, and given $\sigma(i)$, $\sigma(i+1)$ is the next index such that $\alpha_{\sigma(i+1)}\ne\alpha_{\sigma(i)}$.

We now prove that $\lambda$ (and hence also $\lambda'$) is homotopic in $\mathcal{C}_{n}$ to $\mu$.

Given $T_{i}$ and its associated domain $\aleph_{i}$ with graph $\alpha_{i}$, let $\nu_{i}$ be a path contained in $\aleph_{i}$ from $T_{i}$ to $\alpha_{i}$ (to be explicit, one may take the correct $\out(G)$-image of the relevant path listed in Table \ref{paths in fun dom}).
Additionally, let $e_{i}$ denote the (oriented) edge (path) $T_{i}\dash T_{i+1}$.
By Corollary \ref{cor pw int has sc nbhd}, there exists a simply connected neighbourhood $U_{i}\subset\mathcal{C}_{n}$ containing $\aleph_{i}\cup\aleph_{i+1}$ (if $\aleph_{i+1}=\aleph_{i}$, instead set $U_{i}=\aleph_{i}$, and note that by Theorem \ref{thm fun dom sc} this is simply connected).

Then the loop 
$\nu_{i}\mu_{i}\nu_{i+1}^{-1}\overbar{e_{i}}$ is contained in $U_{i}$,
hence said loop is contractible in $\mathcal{C}_{n}$. 
In other words, the edge $T_{i}\dash T_{i+1}$ is homotopic in $\mathcal{C}_{n}$ to the path
$\nu_{i}\mu_{i}\nu_{i+1}^{-1}$.
Since this holds for all $i$, it follows that $\lambda$ is homotopic in $\mathcal{C}_{n}$ to $\mu$, the image under $F$ of a loop in the Space of Domains.
\end{proof}

\begin{lemma}\label{lemma map F extends}
The map $F$ described in Definition \ref{defn map F} extends to a continuous map $\bar{F}$ from the Space of Domains to $\mathcal{C}_{n}$.
\end{lemma}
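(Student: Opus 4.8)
The plan is to extend $F$ over each $2$-cell of the Space of Domains by filling in the triangle with a disc mapped into $\mathcal{C}_{n}$; since discs are filled, the resulting map is automatically continuous, and the only thing to check is that the boundary loop of each $2$-cell is null-homotopic in $\mathcal{C}_{n}$ so that such a filling exists. Recall that a $2$-cell of the Space of Domains is a triangle $\aleph_{1}\dash\aleph_{2}\dash\aleph_{3}\dash\aleph_{1}$ inserted precisely when $\aleph_{1}\cap\aleph_{2}\cap\aleph_{3}\ne\emptyset$ (Definition \ref{Space of Domains defn}). So first I would fix such a triangle, pick a vertex $T\in\aleph_{1}\cap\aleph_{2}\cap\aleph_{3}$, and for each $i$ let $\nu_{i}$ be a path inside $\aleph_{i}$ from $T$ to $\alpha_{i}=F(\aleph_{i})$ (an appropriate $\outs(G)$-image of a path from Table \ref{paths in fun dom}). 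The boundary of the $2$-cell is sent by $F$ to the concatenation $\lambda_{12}\lambda_{23}\lambda_{31}$, where $\lambda_{ij}=F(\aleph_{i}\dash\aleph_{j})$ is the chosen edge path in $\aleph_{i}\cup\aleph_{j}$.

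The key step is then to show this boundary loop is contractible in $\mathcal{C}_{n}$. I would do this edge by edge: for each edge $\aleph_{i}\dash\aleph_{j}$ of the triangle, the loop $\nu_{i}\lambda_{ij}\nu_{j}^{-1}$ is a loop based at $T$; since both $\nu_{i}$ and $\nu_{j}$ and the path $\lambda_{ij}$ are all contained in $\aleph_{i}\cup\aleph_{j}$ (or in a simply connected $U$ containing it, via Corollary \ref{cor pw int has sc nbhd}), this loop is null-homotopic in $\mathcal{C}_{n}$. Hence $\lambda_{ij}$ is homotopic rel endpoints in $\mathcal{C}_{n}$ to $\nu_{i}\nu_{j}^{-1}$. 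Concatenating over the three edges of the triangle, the $F$-image of the boundary is homotopic to $(\nu_{1}\nu_{2}^{-1})(\nu_{2}\nu_{3}^{-1})(\nu_{3}\nu_{1}^{-1})$, which telescopes (up to homotopy via cancellation of $\nu_{2}^{-1}\nu_{2}$ and $\nu_{3}^{-1}\nu_{3}$) to the constant loop at $T$. So the boundary of each $2$-cell bounds a disc in $\mathcal{C}_{n}$, and I extend $F$ over that $2$-cell by any such disc.

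Since the Space of Domains is $2$-dimensional, extending over vertices (already done), edges (already done in Definition \ref{defn map F}), and now $2$-cells completes the construction of $\bar{F}$; continuity is immediate because a CW complex carries the weak topology, so a map is continuous as soon as its restriction to each closed cell is, and each such restriction is either the original $F$ (continuous by construction on the $1$-skeleton) or the chosen disc-filling (continuous as a map out of a disc). I expect the main obstacle to be purely bookkeeping: one must be careful that the homotopies used to straighten $\lambda_{ij}$ to $\nu_{i}\nu_{j}^{-1}$ genuinely live inside the simply connected set provided by Corollary \ref{cor pw int has sc nbhd}, and in particular that when $\aleph_{1}\cap\aleph_{2}$ is \emph{not} path-connected (the three-$C$-vertices case of Proposition \ref{pw int pc case C}) one uses the enlarged simply connected set $\aleph_{1}\cup\aleph_{2}\cup\aleph_{3}'$ rather than $\aleph_{1}\cup\aleph_{2}$ itself; this is exactly the subtlety that Corollary \ref{cor pw int has sc nbhd} was set up to absorb, so no genuinely new difficulty should arise.
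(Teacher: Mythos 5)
Your proposal is correct and follows essentially the same route as the paper: both pick a point of the triple intersection $\aleph_{1}\cap\aleph_{2}\cap\aleph_{3}$, join it to each $\alpha_{i}$ by a path inside the corresponding domain, and use Corollary \ref{cor pw int has sc nbhd} (together with Theorem \ref{thm fun dom sc}) to straighten each $\lambda_{ij}$ through that point so that the image of the boundary of the $2$-cell telescopes to a constant loop. The only quibble is an orientation slip: with $\nu_{i}$ running from $T$ to $\alpha_{i}$, the straightened edge should read $\nu_{i}^{-1}\nu_{j}$ rather than $\nu_{i}\nu_{j}^{-1}$, but the telescoping argument is unaffected.
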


\begin{proof}
Let $[\aleph_{1},\aleph_{2},\aleph_{3}]$ be a face (2-cell) in the Space of Domains. Then as subsets of $\mathcal{C}_{n}$, we have that $\aleph_{1}\cap\aleph_{2}\cap\aleph_{3}\ne\emptyset$.
Let $\lambda_{12}$, $\lambda_{23}$, and $\lambda_{31}$ be paths such that each $\lambda_{ij}$ is an edge path in $\aleph_{i}\cup\aleph_{j}$ from $\alpha_{i}$ to $\alpha_{j}$ with $F(\aleph_{i}\dash \aleph_{j})=|\lambda_{ij}|$ (for $i,j\in\{1,2,3\}$ distinct).
To show that $F$ extends to a map $\bar{F}$, it will suffice to show that the concatenated path $\lambda_{12}\lambda_{23}\lambda_{31}$ is the boundary of some simply connected subset of $\mathcal{C}_{n}$, that is, that the loop $\lambda_{12}\lambda_{23}\lambda_{31}$ is contractible in $\mathcal{C}_{n}$.
We illustrate the following process in Figure \ref{fig map extends}.
\\ \noindent
\textbf{Step 1:}
Since $[\aleph_{1},\aleph_{2},\aleph_{3}]$ is a 2-cell in the Space of Domains, then by Definition \ref{Space of Domains defn}, there must exist some point $x\in\aleph_{1}\cap\aleph_{2}\cap\aleph_{3}\subseteq\mathcal{C}_{n}$.
Let $\lambda_{03}$ be a path in $\aleph_{3}\subseteq\mathcal{C}_{n}$ from $x$ to $\alpha_{3}$ and let $\lambda_{30}=\overbar{\lambda_{03}}$ be its reverse path (i.e. the same set of edges, but read from $\alpha_{3}$ to $x$).
By Theorem \ref{thm fun dom sc}, each domain is simply connected, so $\lambda_{12}\lambda_{23}\lambda_{30}\lambda_{03}\lambda_{31}$ is a path in $\mathcal{C}_{n}$ which is homotopic to the image $\lambda_{12}\lambda_{23}\lambda_{31}$ of the loop $\aleph_{1}\dash \aleph_{2}\dash \aleph_{3}\dash \aleph_{1}$ in the Space of Domains.
\\ \noindent
\textbf{Step 2:}
By Corollary \ref{cor pw int has sc nbhd}, there exists a simply connected neighbourhood in $\mathcal{C}_{n}$ containing $\aleph_{2}\cup\aleph_{3}$. Thus the subpath $\lambda_{23}\lambda_{30}$ from $\alpha_{2}$ to $x$ is contained within a simply connected subset of $\mathcal{C}_{n}$ containing $\aleph_{2}$, hence is homotopic to some path $\lambda_{2}$ from $\alpha_{2}$ to $x$ fully contained in $\aleph_{2}$.
\\ \noindent
\textbf{Step 3:}
By the same reasoning as in Step 2, the subpath $\lambda_{03}\lambda_{31}$ from $x$ to $\alpha_{1}$ is homotopic in $\mathcal{C}_{n}$ to some path $\lambda_{1}$ from $x$ to $\alpha_{1}$ fully contained in $\aleph_{1}$.
We have so far shown that $\lambda_{12}\lambda_{23}\lambda_{31}$ is homotopic in $\mathcal{C}_{n}$ to the loop $\lambda_{12}\lambda_{2}\lambda_{1}$.
\\ \noindent
\textbf{Step 4:}
We have that $\lambda_{12}\lambda_{2}\lambda_{1}$ is a loop contained in $\aleph_{1}\cup\aleph_{2}$.
Again, by Corollary \ref{cor pw int has sc nbhd}, the loop $\lambda_{12}\lambda_{2}\lambda_{1}$ is contained within a simply connected subset of $\mathcal{C}_{n}$, hence it must be contractible.
\end{proof}

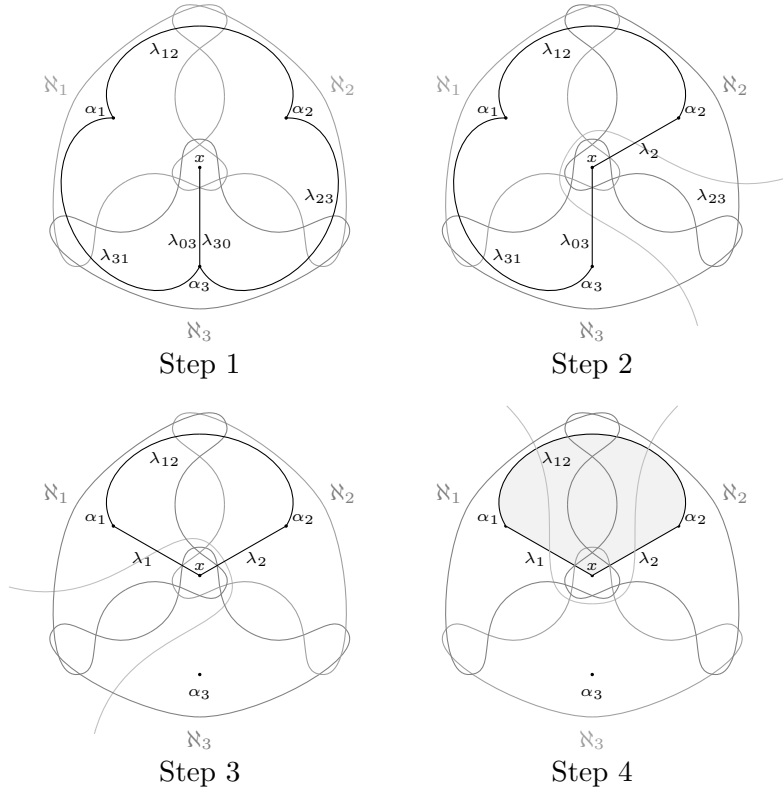
\begin{figure}[h!]
\centering
\begin{tikzpicture}[scale=1.5]
% dots
\draw[fill] (0,1.25) circle [radius=0.01cm]; % x
%\draw[fill] (-1.0825,0.625) circle [radius=0.01cm]; % 1 cap 3
%\draw[fill] (1.0825,0.625) circle [radius=0.01cm]; % 2 cap 3
%\draw[fill] (0,2.5) circle [radius=0.01cm]; % 1 cap 2
\draw[fill] (0,0.375) circle [radius=0.01cm]; % alpha 3
\draw[fill] (0.75775,1.6875) circle [radius=0.01cm]; % alpha 2
\draw[fill] (-0.75775,1.6875) circle [radius=0.01cm]; % alpha 1
% top extra
\draw[white] (-0.75,2.75) .. controls (0,2) and (-0.75,1) .. (0,1);
\draw[white] (0.75,2.75) .. controls (0,2) and (0.75,1) .. (0,1);
% bottom left extra
\draw[white] (-0.924,-0.1495) .. controls (-0.6495,0.875) and (0.5915,0.7255) .. (0.2165,1.375);
\draw[white] (-1.674,1.1495) .. controls (-0.6495,0.875) and (-0.1585,2.0245) .. (0.2165,1.375);
% bottom right extra
\draw[white] (0.924,-0.1495) .. controls (0.6495,0.875) and (-0.5915,0.7255) .. (-0.2165,1.375);
\draw[white] (1.674,1.1495) .. controls (0.6495,0.875) and (0.1585,2.0245) .. (-0.2165,1.375);
% top right shape
\draw[gray!75] (0.6495,1.125) .. controls (1.0825,0.875) and (0.7915,0.3791) .. (1.0825,0.375) .. controls (1.3745,0.3693) and (1.3325,1.442) .. (1.0825,1.875);
\draw[gray!75] (0.6495,1.125) .. controls (0.2165,1.375) and (-0.0665,0.8625) .. (-0.2165,1.125) .. controls (-0.3665,1.3848) and (0.2165,1.375) .. (0.2165,1.875);
\draw[gray!75] (0.2165,1.875) .. controls (0.2165,2.375) and (-0.3585,2.3709) .. (-0.2165,2.625) .. controls (-0.0755,2.8807) and (0.8325,2.308) .. (1.0825,1.875);
% top left shape
\draw[gray!75] (-0.6495,1.125) .. controls (-1.0825,0.875) and (-0.7915,0.3791) .. (-1.0825,0.375) .. controls (-1.3745,0.3693) and (-1.3325,1.442) .. (-1.0825,1.875);
\draw[gray!75] (-0.6495,1.125) .. controls (-0.2165,1.375) and (0.0665,0.8625) .. (0.2165,1.125) .. controls (0.3665,1.3848) and (-0.2165,1.375) .. (-0.2165,1.875);
\draw[gray!75] (-0.2165,1.875) .. controls (-0.2165,2.375) and (0.3585,2.3709) .. (0.2165,2.625) .. controls (0.0755,2.8807) and (-0.8325,2.308) .. (-1.0825,1.875);
% bottom shape
\draw[gray] (-0.433,0.75) .. controls (-0.866,0.5) and (-1.15,1) .. (-1.299,0.75) .. controls (-1.45,0.5) and (-0.5,0) .. (0,0);
\draw[gray] (-0.433,0.75) .. controls (0,1) and (-0.3,1.5) .. (0,1.5) .. controls (0.3,1.5) and (0,1) .. (0.433,0.75);
\draw[gray] (0.433,0.75) .. controls (0.866,0.5) and (1.15,1) .. (1.299,0.75) .. controls (1.45,0.5) and (0.5,0) .. (0,0);
% labels
\node[gray] at (0,-0.2) {\footnotesize$\aleph_{3}$}; % aleph 3
\node[gray!75] at (1.2557,1.975) {\footnotesize$\aleph_{2}$}; % aleph 2
\node[gray!75] at (-1.2557,1.975) {\footnotesize$\aleph_{1}$}; % aleph 1
\node at (0,0.2) {\tiny$\alpha_{3}$}; % alpha 3
\node at (0.9093,1.75) {\tiny$\alpha_{2}$};  % alpha 2
\node at (-0.9093,1.75) {\tiny$\alpha_{1}$}; % alpha 1
\node at (0,1.325) {\tiny$x$}; % x
\node at (1.05,1) {\tiny$\lambda_{23}$}; % lambda 23
\node at (-0.3085,2.2843) {\tiny$\lambda_{12}$}; % lambda 12
\node at (-0.7415,0.4657) {\tiny$\lambda_{31}$}; % lambda 31
\node at (0.15,0.6) {\tiny$\lambda_{30}$}; % lambda 30
\node at (-0.15,0.6) {\tiny$\lambda_{03}$}; % lambda 03
%\node[gray!50] at (1,0) {\footnotesize$\aleph_{4}$}; % aleph 4
% path/loop
\draw (0,1.25) -- (0,0.375); % x to 3
\draw (0,0.375) .. controls (0.2,0) and (0.80475,0.144) .. (1.0825,0.625); % 3 to 3 cap 2
\draw (0,0.375) .. controls (-0.2,0) and (-0.80475,0.144) .. (-1.0825,0.625); % 3 to 3 cap 1
\draw (0.75775,1.6875) .. controls (0.9825,2.0482) and (0.555,2.5) .. (0,2.5); % 2 to 2 cap 1
\draw (0.75775,1.6875) .. controls (1.1825,1.7018) and (1.360,1.106) .. (1.0825,0.625); % 2 to 2 cap 3
\draw (-0.75775,1.6875) .. controls (-0.9825,2.0482) and (-0.555,2.5) .. (0,2.5); % 1 to 1 cap 2
\draw (-0.75775,1.6875) .. controls (-1.1825,1.7018) and (-1.360,1.106) .. (-1.0825,0.625); % 1 to 1 cap 3
%\draw (0,0.375) -- (-1.0825,0.625) -- (-0.75775,1.6875) -- (0,2.5) -- (0.75775,1.6875) -- (1.0825,0.625) -- (0,0.375) -- (0,1.25);
%
\node at (0,-0.5) {Step 1};
\end{tikzpicture}
\begin{tikzpicture}[scale=1.5]
% dots
\draw[fill] (0,1.25) circle [radius=0.01cm]; % x
%\draw[fill] (-1.0825,0.625) circle [radius=0.01cm]; % 1 cap 3
%\draw[fill] (1.0825,0.625) circle [radius=0.01cm]; % 2 cap 3
%\draw[fill] (0,2.5) circle [radius=0.01cm]; % 1 cap 2
\draw[fill] (0,0.375) circle [radius=0.01cm]; % alpha 3
\draw[fill] (0.75775,1.6875) circle [radius=0.01cm]; % alpha 2
\draw[fill] (-0.75775,1.6875) circle [radius=0.01cm]; % alpha 1
% top extra
\draw[white] (-0.75,2.75) .. controls (0,2) and (-0.75,1) .. (0,1);
\draw[white] (0.75,2.75) .. controls (0,2) and (0.75,1) .. (0,1);
% bottom left extra
\draw[white] (-0.924,-0.1495) .. controls (-0.6495,0.875) and (0.5915,0.7255) .. (0.2165,1.375);
\draw[white] (-1.674,1.1495) .. controls (-0.6495,0.875) and (-0.1585,2.0245) .. (0.2165,1.375);
% bottom right extra
\draw[gray!50] (0.924,-0.1495) .. controls (0.6495,0.875) and (-0.5915,0.7255) .. (-0.2165,1.375);
\draw[gray!50] (1.674,1.1495) .. controls (0.6495,0.875) and (0.1585,2.0245) .. (-0.2165,1.375);
% bottom shape
\draw[gray] (-0.433,0.75) .. controls (-0.866,0.5) and (-1.15,1) .. (-1.299,0.75) .. controls (-1.45,0.5) and (-0.5,0) .. (0,0);
\draw[gray] (-0.433,0.75) .. controls (0,1) and (-0.3,1.5) .. (0,1.5) .. controls (0.3,1.5) and (0,1) .. (0.433,0.75);
\draw[gray] (0.433,0.75) .. controls (0.866,0.5) and (1.15,1) .. (1.299,0.75) .. controls (1.45,0.5) and (0.5,0) .. (0,0);
% top right shape
\draw[gray] (0.6495,1.125) .. controls (1.0825,0.875) and (0.7915,0.3791) .. (1.0825,0.375) .. controls (1.3745,0.3693) and (1.3325,1.442) .. (1.0825,1.875);
\draw[gray] (0.6495,1.125) .. controls (0.2165,1.375) and (-0.0665,0.8625) .. (-0.2165,1.125) .. controls (-0.3665,1.3848) and (0.2165,1.375) .. (0.2165,1.875);
\draw[gray] (0.2165,1.875) .. controls (0.2165,2.375) and (-0.3585,2.3709) .. (-0.2165,2.625) .. controls (-0.0755,2.8807) and (0.8325,2.308) .. (1.0825,1.875);
% top left shape
\draw[gray!75] (-0.6495,1.125) .. controls (-1.0825,0.875) and (-0.7915,0.3791) .. (-1.0825,0.375) .. controls (-1.3745,0.3693) and (-1.3325,1.442) .. (-1.0825,1.875);
\draw[gray!75] (-0.6495,1.125) .. controls (-0.2165,1.375) and (0.0665,0.8625) .. (0.2165,1.125) .. controls (0.3665,1.3848) and (-0.2165,1.375) .. (-0.2165,1.875);
\draw[gray!75] (-0.2165,1.875) .. controls (-0.2165,2.375) and (0.3585,2.3709) .. (0.2165,2.625) .. controls (0.0755,2.8807) and (-0.8325,2.308) .. (-1.0825,1.875);
% labels
\node[gray] at (0,-0.2) {\footnotesize$\aleph_{3}$}; % aleph 3
\node[gray] at (1.2557,1.975) {\footnotesize$\aleph_{2}$}; % aleph 2
\node[gray!75] at (-1.2557,1.975) {\footnotesize$\aleph_{1}$}; % aleph 1
\node at (0,0.2) {\tiny$\alpha_{3}$}; % alpha 3
\node at (0.9093,1.75) {\tiny$\alpha_{2}$};  % alpha 2
\node at (-0.9093,1.75) {\tiny$\alpha_{1}$}; % alpha 1
\node at (0,1.325) {\tiny$x$}; % x
\node at (1.05,1) {\tiny$\lambda_{23}$}; % lambda 23
\node at (-0.3085,2.2843) {\tiny$\lambda_{12}$}; % lambda 12
\node at (-0.7415,0.4657) {\tiny$\lambda_{31}$}; % lambda 31
%\node at (0.15,0.6) {\tiny$\lambda_{30}$}; % lambda 30
\node at (-0.15,0.6) {\tiny$\lambda_{03}$}; % lambda 03
%\node[gray!50] at (1,0) {\footnotesize$\aleph_{4}$}; % aleph 4
\node at (0.5,1.4) {\tiny$\lambda_{2}$}; % lambda 2
% path/loop
\draw (0,1.25) -- (0,0.375); % x to 3
%\draw (0,0.375) .. controls (0.2,0) and (0.80475,0.144) .. (1.0825,0.625); % 3 to 3 cap 2
\draw (0,0.375) .. controls (-0.2,0) and (-0.80475,0.144) .. (-1.0825,0.625); % 3 to 3 cap 1
\draw (0.75775,1.6875) .. controls (0.9825,2.0482) and (0.555,2.5) .. (0,2.5); % 2 to 2 cap 1
%\draw (0.75775,1.6875) .. controls (1.1825,1.7018) and (1.360,1.106) .. (1.0825,0.625); % 2 to 2 cap 3
\draw (-0.75775,1.6875) .. controls (-0.9825,2.0482) and (-0.555,2.5) .. (0,2.5); % 1 to 1 cap 2
\draw (-0.75775,1.6875) .. controls (-1.1825,1.7018) and (-1.360,1.106) .. (-1.0825,0.625); % 1 to 1 cap 3
\draw (0,1.25) -- (0.75775,1.6875); % x to 2
%\draw (0,0.375) -- (-1.0825,0.625) -- (-0.75775,1.6875) -- (0,2.5) -- (0.75775,1.6875) -- (1.0825,0.625) -- (0,0.375) -- (0,1.25);
%
\node at (0,-0.5) {Step 2};
\end{tikzpicture}
\begin{tikzpicture}[scale=1.5]
% dots
\draw[fill] (0,1.25) circle [radius=0.01cm]; % x
%\draw[fill] (-1.0825,0.625) circle [radius=0.01cm]; % 1 cap 3
%\draw[fill] (1.0825,0.625) circle [radius=0.01cm]; % 2 cap 3
%\draw[fill] (0,2.5) circle [radius=0.01cm]; % 1 cap 2
\draw[fill] (0,0.375) circle [radius=0.01cm]; % alpha 3
\draw[fill] (0.75775,1.6875) circle [radius=0.01cm]; % alpha 2
\draw[fill] (-0.75775,1.6875) circle [radius=0.01cm]; % alpha 1
% top extra
\draw[white] (-0.75,2.75) .. controls (0,2) and (-0.75,1) .. (0,1);
\draw[white] (0.75,2.75) .. controls (0,2) and (0.75,1) .. (0,1);
% bottom right extra
\draw[white] (0.924,-0.1495) .. controls (0.6495,0.875) and (-0.5915,0.7255) .. (-0.2165,1.375);
\draw[white] (1.674,1.1495) .. controls (0.6495,0.875) and (0.1585,2.0245) .. (-0.2165,1.375);
% bottom left extra
\draw[gray!50] (-0.924,-0.1495) .. controls (-0.6495,0.875) and (0.5915,0.7255) .. (0.2165,1.375);
\draw[gray!50] (-1.674,1.1495) .. controls (-0.6495,0.875) and (-0.1585,2.0245) .. (0.2165,1.375);
% bottom shape
\draw[gray] (-0.433,0.75) .. controls (-0.866,0.5) and (-1.15,1) .. (-1.299,0.75) .. controls (-1.45,0.5) and (-0.5,0) .. (0,0);
\draw[gray] (-0.433,0.75) .. controls (0,1) and (-0.3,1.5) .. (0,1.5) .. controls (0.3,1.5) and (0,1) .. (0.433,0.75);
\draw[gray] (0.433,0.75) .. controls (0.866,0.5) and (1.15,1) .. (1.299,0.75) .. controls (1.45,0.5) and (0.5,0) .. (0,0);
% top right shape
\draw[gray!75] (0.6495,1.125) .. controls (1.0825,0.875) and (0.7915,0.3791) .. (1.0825,0.375) .. controls (1.3745,0.3693) and (1.3325,1.442) .. (1.0825,1.875);
\draw[gray!75] (0.6495,1.125) .. controls (0.2165,1.375) and (-0.0665,0.8625) .. (-0.2165,1.125) .. controls (-0.3665,1.3848) and (0.2165,1.375) .. (0.2165,1.875);
\draw[gray!75] (0.2165,1.875) .. controls (0.2165,2.375) and (-0.3585,2.3709) .. (-0.2165,2.625) .. controls (-0.0755,2.8807) and (0.8325,2.308) .. (1.0825,1.875);
% top left shape
\draw[gray] (-0.6495,1.125) .. controls (-1.0825,0.875) and (-0.7915,0.3791) .. (-1.0825,0.375) .. controls (-1.3745,0.3693) and (-1.3325,1.442) .. (-1.0825,1.875);
\draw[gray] (-0.6495,1.125) .. controls (-0.2165,1.375) and (0.0665,0.8625) .. (0.2165,1.125) .. controls (0.3665,1.3848) and (-0.2165,1.375) .. (-0.2165,1.875);
\draw[gray] (-0.2165,1.875) .. controls (-0.2165,2.375) and (0.3585,2.3709) .. (0.2165,2.625) .. controls (0.0755,2.8807) and (-0.8325,2.308) .. (-1.0825,1.875);
% labels
\node[gray] at (0,-0.2) {\footnotesize$\aleph_{3}$}; % aleph 3
\node[gray] at (1.2557,1.975) {\footnotesize$\aleph_{2}$}; % aleph 2
\node[gray] at (-1.2557,1.975) {\footnotesize$\aleph_{1}$}; % aleph 1
\node at (0,0.2) {\tiny$\alpha_{3}$}; % alpha 3
\node at (0.9093,1.75) {\tiny$\alpha_{2}$};  % alpha 2
\node at (-0.9093,1.75) {\tiny$\alpha_{1}$}; % alpha 1
\node at (0,1.325) {\tiny$x$}; % x
\node at (-0.3085,2.2843) {\tiny$\lambda_{12}$}; % lambda 12
\node at (0.5,1.4) {\tiny$\lambda_{2}$}; % lambda 2
\node at (-0.5,1.4) {\tiny$\lambda_{1}$}; % lambda 1
% path/loop
\draw (0.75775,1.6875) .. controls (0.9825,2.0482) and (0.555,2.5) .. (0,2.5); % 2 to 2 cap 1
\draw (-0.75775,1.6875) .. controls (-0.9825,2.0482) and (-0.555,2.5) .. (0,2.5); % 1 to 1 cap 2
\draw (0,1.25) -- (0.75775,1.6875); % x to 2
\draw (0,1.25) -- (-0.75775,1.6875); % x to 1
\node at (0,-0.5) {Step 3};
\end{tikzpicture}
\begin{tikzpicture}[scale=1.5]
% dots
\draw[fill] (0,1.25) circle [radius=0.01cm]; % x
%\draw[fill] (-1.0825,0.625) circle [radius=0.01cm]; % 1 cap 3
%\draw[fill] (1.0825,0.625) circle [radius=0.01cm]; % 2 cap 3
%\draw[fill] (0,2.5) circle [radius=0.01cm]; % 1 cap 2
\draw[fill] (0,0.375) circle [radius=0.01cm]; % alpha 3
\draw[fill] (0.75775,1.6875) circle [radius=0.01cm]; % alpha 2
\draw[fill] (-0.75775,1.6875) circle [radius=0.01cm]; % alpha 1
% bottom left extra
\draw[white] (-0.924,-0.1495) .. controls (-0.6495,0.875) and (0.5915,0.7255) .. (0.2165,1.375);
\draw[white] (-1.674,1.1495) .. controls (-0.6495,0.875) and (-0.1585,2.0245) .. (0.2165,1.375);
% bottom right extra
\draw[white] (0.924,-0.1495) .. controls (0.6495,0.875) and (-0.5915,0.7255) .. (-0.2165,1.375);
\draw[white] (1.674,1.1495) .. controls (0.6495,0.875) and (0.1585,2.0245) .. (-0.2165,1.375);
% path/loop
\filldraw[fill=gray!10] (-0.75775,1.6875) .. controls (-0.9825,2.0482) and (-0.555,2.5) .. (0,2.5) .. controls (0.555,2.5) and (0.9825,2.0482) .. (0.75775,1.6875) -- (0,1.25) -- (-0.75775,1.6875);
% top extra
\draw[gray!50] (-0.75,2.75) .. controls (0,2) and (-0.75,1) .. (0,1);
\draw[gray!50] (0.75,2.75) .. controls (0,2) and (0.75,1) .. (0,1);
% bottom shape
\draw[gray!75] (-0.433,0.75) .. controls (-0.866,0.5) and (-1.15,1) .. (-1.299,0.75) .. controls (-1.45,0.5) and (-0.5,0) .. (0,0);
\draw[gray!75] (-0.433,0.75) .. controls (0,1) and (-0.3,1.5) .. (0,1.5) .. controls (0.3,1.5) and (0,1) .. (0.433,0.75);
\draw[gray!75] (0.433,0.75) .. controls (0.866,0.5) and (1.15,1) .. (1.299,0.75) .. controls (1.45,0.5) and (0.5,0) .. (0,0);
% top right shape
\draw[gray] (0.6495,1.125) .. controls (1.0825,0.875) and (0.7915,0.3791) .. (1.0825,0.375) .. controls (1.3745,0.3693) and (1.3325,1.442) .. (1.0825,1.875);
\draw[gray] (0.6495,1.125) .. controls (0.2165,1.375) and (-0.0665,0.8625) .. (-0.2165,1.125) .. controls (-0.3665,1.3848) and (0.2165,1.375) .. (0.2165,1.875);
\draw[gray] (0.2165,1.875) .. controls (0.2165,2.375) and (-0.3585,2.3709) .. (-0.2165,2.625) .. controls (-0.0755,2.8807) and (0.8325,2.308) .. (1.0825,1.875);
% top left shape
\draw[gray] (-0.6495,1.125) .. controls (-1.0825,0.875) and (-0.7915,0.3791) .. (-1.0825,0.375) .. controls (-1.3745,0.3693) and (-1.3325,1.442) .. (-1.0825,1.875);
\draw[gray] (-0.6495,1.125) .. controls (-0.2165,1.375) and (0.0665,0.8625) .. (0.2165,1.125) .. controls (0.3665,1.3848) and (-0.2165,1.375) .. (-0.2165,1.875);
\draw[gray] (-0.2165,1.875) .. controls (-0.2165,2.375) and (0.3585,2.3709) .. (0.2165,2.625) .. controls (0.0755,2.8807) and (-0.8325,2.308) .. (-1.0825,1.875);
% labels
\node[gray!75] at (0,-0.2) {\footnotesize$\aleph_{3}$}; % aleph 3
\node[gray] at (1.2557,1.975) {\footnotesize$\aleph_{2}$}; % aleph 2
\node[gray] at (-1.2557,1.975) {\footnotesize$\aleph_{1}$}; % aleph 1
\node at (0,0.2) {\tiny$\alpha_{3}$}; % alpha 3
\node at (0.9093,1.75) {\tiny$\alpha_{2}$};  % alpha 2
\node at (-0.9093,1.75) {\tiny$\alpha_{1}$}; % alpha 1
\node at (0,1.325) {\tiny$x$}; % x
\node at (-0.3085,2.2843) {\tiny$\lambda_{12}$}; % lambda 12
\node at (0.5,1.4) {\tiny$\lambda_{2}$}; % lambda 2
\node at (-0.5,1.4) {\tiny$\lambda_{1}$}; % lambda 1
\node at (0,-0.5) {Step 4};
\end{tikzpicture}
\caption{Illustration Contracting the Image Under ${F}$ of a 3-Cycle}
\label{fig map extends}
\end{figure}

\begin{prop}\label{prop Cn is sc if SoD is} 
Suppose that the Space of Domains is simply connected. Then so too is the complex $\mathcal{C}_{n}$.
\end{prop}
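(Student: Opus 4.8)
The plan is to show that $\mathcal{C}_{n}$ is connected (already done, Corollary \ref{cor Cn pc}) and that $\pi_{1}(\mathcal{C}_{n})$ is trivial, using the map $\bar{F}$ from the Space of Domains to $\mathcal{C}_{n}$ constructed in Definition \ref{defn map F} and Lemma \ref{lemma map F extends}. The key point is that $\bar{F}$ is a continuous map, so it induces a homomorphism $\bar{F}_{\ast}:\pi_{1}(\mathcal{SoD})\to\pi_{1}(\mathcal{C}_{n})$ on fundamental groups (after choosing compatible basepoints, say a domain $\aleph_{0}=\mathcal{D}_{n}$ and its image $\alpha_{0}=\bar{F}(\aleph_{0})$).

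First I would observe that by hypothesis $\pi_{1}(\mathcal{SoD})=\{1\}$, so the image of $\bar{F}_{\ast}$ is trivial. It therefore suffices to show that $\bar{F}_{\ast}$ is surjective, i.e. that every loop in $\mathcal{C}_{n}$ (based at $\alpha_{0}$) is homotopic to one lying in the image of $\bar{F}$. This is precisely the content of Lemma \ref{loops homotope to image of F}: given any loop $\lambda'$ in $\mathcal{C}_{n}$, one first conjugates by a path to $\alpha_{0}$ (using connectedness, Corollary \ref{cor Cn pc}) to get a based loop $\lambda$, and then that lemma produces a loop $\aleph_{\sigma(0)}\dash\aleph_{\sigma(1)}\dash\cdots\dash\aleph_{\sigma(0)}$ in the Graph of Domains whose image $\mu$ under $F$ is homotopic in $\mathcal{C}_{n}$ to $\lambda$. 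Since the Graph of Domains is the $1$-skeleton of the Space of Domains, this loop represents a class in $\pi_{1}(\mathcal{SoD})$, and $\mu=\bar{F}$ of that class. Hence $[\lambda']=[\lambda]=[\mu]=\bar{F}_{\ast}([\text{loop in }\mathcal{SoD}])$ lies in the image of $\bar{F}_{\ast}$.

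Combining these two facts: $[\lambda']=\bar{F}_{\ast}(\text{something})$ and $\operatorname{im}\bar{F}_{\ast}=\{1\}$, so $[\lambda']=1$ in $\pi_{1}(\mathcal{C}_{n})$. As $\lambda'$ was arbitrary, $\pi_{1}(\mathcal{C}_{n})=\{1\}$, and together with connectedness this gives that $\mathcal{C}_{n}$ is simply connected.

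I expect the only subtlety — and it is already handled by the preceding lemmas rather than by this proof — to be the interplay between the two homotopies: Lemma \ref{loops homotope to image of F} only tells us a loop is homotopic to \emph{some} image of an edge-loop in the Graph of Domains, while Lemma \ref{lemma map F extends} guarantees that $F$ actually extends over the $2$-cells of the Space of Domains (so that an edge-loop bounding a $2$-simplex there maps to a nullhomotopic loop in $\mathcal{C}_{n}$). The clean way to package this is exactly the functoriality statement above: $\bar{F}$ being a genuine continuous map on the whole Space of Domains is what makes $\bar{F}_{\ast}$ well-defined on homotopy classes, so no separate bookkeeping about which relations among edge-loops are respected is needed. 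Thus the proof is short; all the real work was in Lemmas \ref{loops homotope to image of F} and \ref{lemma map F extends}.
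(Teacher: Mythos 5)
Your argument is correct and is essentially the paper's own proof: both rest on Lemma \ref{loops homotope to image of F} (every loop in $\mathcal{C}_{n}$ is homotopic to the $F$-image of an edge-loop in the Graph of Domains) and Lemma \ref{lemma map F extends} ($F$ extends continuously over the $2$-cells), the only difference being that you phrase the conclusion via the induced homomorphism $\bar{F}_{\ast}$ having trivial image while the paper composes $\bar{F}$ with an explicit nullhomotopy disk $f:\mathbb{D}^{2}\to\mathcal{S}$. This is a purely cosmetic repackaging of the same argument.
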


\begin{proof}
Let $\lambda:\mathbb{S}^{1}\to\mathcal{C}_{n}$ be an arbitrary loop in $\mathcal{C}_{n}$.
We temporarily denote the Space of Domains by $\mathcal{S}$, and its 1-skeleton (the Graph of Domains) by $\mathcal{S}^{(1)}$.
By Lemma \ref{loops homotope to image of F}, $\lambda$ is homotopic in $\mathcal{C}_{n}$ to some loop $\mu:\mathbb{S}^{1}\to F(\mathcal{S}^{(1)})$ which lifts to a loop $M:\mathbb{S}^{1}\to\mathcal{S}$ in the Space of Domains with $\mu=F\circ M$, where $F:\mathcal{S}^{(1)}\to\mathcal{C}_{n}$ is the map described in Definition \ref{defn map F}.
If $\mathcal{S}$ is simply connected, then the loop $M$ is contractible.
That is, there exists a continuous map $f:\mathbb{D}^{2}\to\mathcal{S}$ so that $f|_{\mathbb{S}^{1}}=M$.
By Lemma \ref{lemma map F extends}, $F$ extends to a continuos map $\bar{F}:\mathcal{S}\to\mathcal{C}_{n}$.
Now $\bar{F}\circ f:\mathbb{D}^{2}\to\mathcal{C}_{n}$ is a continuous map, and $\bar{F}\circ f|_{\mathbb{S}^{1}}=F\circ M=\mu:\mathbb{S}^{1}\to\mathcal{C}_{n}$.
Thus $\mu$ is contractible, and hence $\lambda$ is as well.
\end{proof}

%		------------------

\subsection{Edges in the Space of Domains} % ignore B and C connections :

Here we will consider adjacency in the Graph/Space of Domains. We will use $\alpha$ to represent both a vertex in the Space of Domains (i.e. a domain, as a subspace of $\mathcal{C}_{n}$), and a graph in a domain (i.e. a vertex in $\mathcal{C}_{n}^{(0)}$).

Let $\alpha_{1}\dash\alpha_{2}$ be an edge in the Space of Domains, that is, let $\alpha_{1}\subset\mathcal{C}_{n}$ and $\alpha_{2}\subset\mathcal{C}_{n}$ be two domains such that $\alpha_{1}\cap\alpha_{2}\ne\emptyset$.
Then there is some vertex $T\in\mathcal{C}_{n}$ in the intersection $\alpha_{1}\cap\alpha_{2}\subset\mathcal{C}_{n}$ and, as noted in Section \ref{pairwise intersections}, some $\varphi\in\stab(T)$ such that $\alpha_{2}=(\alpha_{1})\varphi$. 

Thus the edge $\alpha_{1}\dash\alpha_{2}$ in the Space of Domains can be described in two ways: according to some vertex $T\in\alpha_{1}\cap\alpha_{2}$, or according to some (pure symmetric outer) automorphism $\varphi\in\outs(G)$ satisfying $\alpha_{2}=(\alpha_{1})\varphi$ (and hence also $\alpha_{1}=(\alpha_{2})\varphi^{-1}$).
We may then label the edge $\alpha_{1}\dash\alpha_{2}$ by either
\begin{tikzcd}[cramped,sep=small]
\alpha_{1} \ar[r,dash,"T"]	& \alpha_{2}
\end{tikzcd}
or
\begin{tikzcd}[cramped,sep=small]
\alpha_{1} \ar[r,dash,->-,"\varphi"]	& \alpha_{2}
\end{tikzcd},
depending on our viewpoint.

This subsection considers the former viewpoint, i.e. points in the intersection $\alpha_{1}\cap\alpha_{2}$.
As such, this subsection can be considered the Space of Domains analogue to Section \ref{pairwise intersections}.
The latter viewpoint (automorphisms satisfying $\alpha_{2}=(\alpha_{1})\varphi$) will be discussed in Section \ref{whitehead autos}

\begin{defn} \label{defn type}
We will say an edge $\alpha_{1}\dash \alpha_{2}$ in the Graph/Space of Domains is of \emph{Type $T$} if there is some tree $T$ in the intersection $\alpha_{1}\cap\alpha_{2}$ in the complex $\mathcal{C}_{n}$.
\end{defn}

Note that edges can be of more than one Type.
In particular, if an edge is of Type $T_{1}$, and $T_{2}$ is a collapse of $T_{1}$, then the edge is also of Type $T_{2}$.
However an edge can be of Type $T_{1}$ and Type $T_{2}$ even if neither is a collapse of the other.
Recall from Section \ref{pairwise intersections} that if $T\in\alpha_{1}\cap\alpha_{2}$, then at least one of $A_{i}$, $B_{i,j,k}$, or $C_{i,j,k,l,m}$ is in $\alpha_{1}\cap\alpha_{2}$ for some $i,j,k,l,m\in\{1,\dots,n\}$, hence every edge in the Space of Domains is at least one of Type $A$, Type $B$, or Type $C$.

\smallskip
Some earlier results may be summarised using this new terminology:
\begin{description}
\item[Proposition \ref{prop alpha graphs are path connected}:] Any two vertices in the Graph of Domains are connected via a path whose edges are all of Type A.
\item[Proposition \ref{pw int pc case not C}:] If an edge $\alpha_{1}\dash \alpha_{2}$ in the Graph of Domains is of Type A or Type B, then $\alpha_{1}\cap\alpha_{2}$ is path connected in the complex $\mathcal{C}_{n}$.
\item[Proposition \ref{pw int pc case C}:] If an edge $\alpha_{1}\dash \alpha_{2}$ in the Graph of Domains is of Type C but not of Type A or Type B, then $\alpha_{1}\cap\alpha_{2}$ is not path connected in the complex $\mathcal{C}_{n}$. However, there exists a domain $\alpha_{3}\supseteq\alpha_{1}\cap\alpha_{2}$ so that $[\alpha_{1},\alpha_{2},\alpha_{3}]$ is a 2-cell in the Space of Domains and $\alpha_{1}\cup\alpha_{2}\cup\alpha_{3}$ is simply connected in the complex $\mathcal{C}_{n}$.
\end{description}

We will now deduce that simple connectivity of the Space of Domains can be proved considering only edges of Type A.

\begin{prop}\label{prop type B edges}
Suppose $\alpha_{1}\dash \alpha_{2}$ is an edge in the Space of Domains of Type $B$ but not of Type $A$.
Then there exists some domain $\alpha_{3}$ and edges $\alpha_{1}\dash \alpha_{3}$ and $\alpha_{3}\dash \alpha_{2}$  of Type $A$ such that  $\alpha_{1}\dash \alpha_{2}$  is homotopic to $\alpha_{1}\dash \alpha_{3}\dash \alpha_{2}$ in the Space of Domains.
\end{prop}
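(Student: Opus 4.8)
The plan is to exploit the fact that an edge of Type $B$ but not Type $A$ arises, via Observation \ref{obs pw int stab}, from an automorphism $\varphi\in\stab(B_{i,j,k})$ that lies in the $M_{B_{i,j,k}}$-part of the Bass--Jiang sequence only through a $B_{i,j,k}$-twist and \emph{not} through any single $A$-twist. Concretely, by Proposition \ref{prop brown stabs 3}, $\stab(B_{i,j,k})$ is generated by $j_{k}$, the quotient $\faktor{(i_{jk}\times i_{v_{1}}\times\dots\times i_{v_{n-3}})}{Z(G_{i})}$, and $\Phi$. If $\alpha_{2}=(\alpha_{1})\varphi$ with $\varphi\in\stab(B_{i,j,k})$ but $\varphi\notin\stab(A_{i})$ and $\varphi\notin\stab(A_{j})$, then after absorbing the $\Phi$-part (which fixes every $\alpha$-graph and hence every domain up to the choice of domain itself) we may write $\varphi=\psi_{1}\psi_{2}\phi$ where $\phi\in\Phi$, $\psi_{1}\in j_{k}=\stab(\beta_{j,k})\cap\stab(A_{j})$-part living in the $G_{j}$-operating factor, and $\psi_{2}\in(i_{jk}\times i_{v_{1}}\times\dots\times i_{v_{n-3}})$ living in the $G_{i}$-operating factor. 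The key point is that $\psi_{1}$ alone lies in $\stab(A_{j})$ and $\psi_{2}$ alone (together with $\phi$) lies in $\stab(A_{i})$, since $i_{jk}\times i_{v_{1}}\times\dots\times i_{v_{n-3}}\le i_{v_{1}}\times\dots\times i_{v_{n-1}}$ as subgroups of $\outs(G)$ (using the relation $\faktor{i_{j k v_{1}\dots v_{n-3}}}{Z(G_{i})}=\inn(G_{i})$, as in the remark following Table \ref{table stabs B-J} that $G_{i_{A}}=G_{i_{iB}}$).

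First I would set $\alpha_{3}:=(\alpha_{1})\psi_{2}\phi$. Then $\alpha_{1}\dash \alpha_{3}$ is an edge of Type $A$: indeed $\psi_{2}\phi\in\stab(A_{i})$, and $A_{i}\in\alpha_{1}$ (every domain contains one copy of $A_{i}$), so $A_{i}\in\alpha_{1}\cap\alpha_{3}$ by Observation \ref{obs pw int stab}. Next, $\alpha_{2}=(\alpha_{1})\psi_{1}\psi_{2}\phi$; I would rewrite this as $\alpha_{2}=(\alpha_{3})\,(\psi_{2}\phi)^{-1}\psi_{1}(\psi_{2}\phi)$. The conjugate $\psi_{1}':=(\psi_{2}\phi)^{-1}\psi_{1}(\psi_{2}\phi)$ is again an automorphism in the $G_{j}$-operating factor: conjugating a $j_{k}$-twist by something in the $G_{i}$-operating factor and a factor automorphism yields another element supported on the $G_{j}$-block — this is exactly the content of the commutation relation $[j_{k},\faktor{(i_{jk}\times i_{v_{1}}\times\dots\times i_{v_{n-3}})}{Z(G_{i})}]=1$ together with $j_{k}^{\varphi}=\varphi(j_{k})$ in Proposition \ref{prop brown stabs 3}. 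Hence $\psi_{1}'\in\stab(A_{j})$ (possibly after re-reading which $A_{j}$-copy it fixes: it fixes the $A_{j}$-graph contained in $\alpha_{3}$, and that graph also lies in $\alpha_{2}$). Therefore $\alpha_{3}\dash \alpha_{2}$ is an edge of Type $A$.

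It remains to check that the loop $\alpha_{1}\dash \alpha_{2}\dash \alpha_{3}\dash \alpha_{1}$ is a $2$-cell (or a concatenation of $2$-cells) in the Space of Domains, so that $\alpha_{1}\dash \alpha_{2}$ is homotopic to $\alpha_{1}\dash \alpha_{3}\dash \alpha_{2}$. For this, by Definition \ref{Space of Domains defn} it suffices to exhibit a point of $\mathcal{C}_{n}$ in $\alpha_{1}\cap\alpha_{2}\cap\alpha_{3}$. The graph $B_{i,j,k}$ lies in $\alpha_{1}$ and in $\alpha_{2}$ by hypothesis; and $B_{i,j,k}$ lies in $\alpha_{3}=(\alpha_{1})\psi_{2}\phi$ because $\psi_{2}\phi\in\stab(A_{i})$, and $\psi_{2}\phi$ in fact lies in $\stab(B_{i,j,k})$ too (it is built from $i$-block twists and a factor automorphism, all of which stabilise $B_{i,j,k}$). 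So $B_{i,j,k}\in\alpha_{1}\cap\alpha_{2}\cap\alpha_{3}$, giving the required $2$-cell, and the homotopy follows exactly as in the splitting of a $3$-cycle described after Definition \ref{Space of Domains defn}.

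The main obstacle I anticipate is the bookkeeping in the second paragraph: pinning down that the conjugate $\psi_{1}'$ really is supported on the $G_{j}$-operating factor and stabilises the correct copy of $A_{j}$ — in other words, making precise the informal statement ``$\alpha_{2}=(\alpha_{3})\psi_{1}'$ with $\psi_{1}'$ a Type-$A$ automorphism for $A_{j}$.'' This requires carefully tracking which representatives of $\outs(G)$ we pick (recall $\varphi$ is only defined up to $\Phi$ and up to the choice of $T\in\alpha_{1}\cap\alpha_{2}$), and using the relations of Proposition \ref{prop brown stabs 3} to move the $G_{i}$-part past the $G_{j}$-part. Once that commutation is in hand, the rest is a direct application of Observation \ref{obs pw int stab} and Definition \ref{Space of Domains defn}.
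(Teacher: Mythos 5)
Your proof is correct and follows essentially the same route as the paper's: both factor the automorphism carrying $\alpha_{1}$ to $\alpha_{2}$ through $\stab(B_{i,j,k})$ into its $G_{i}$-operating part and its (conjugated) $G_{j}$-operating part, take $\alpha_{3}$ to be the intermediate domain, and exhibit $B_{i,j,k}$ in the triple intersection to obtain the $2$-cell. The paper phrases this with explicit $\mathfrak{S}$-labellings --- its $\alpha_{3}$, with labelling $\left(H_{i},H_{j}^{i_{jk}},H_{k}^{i_{jk}},H_{v_{1}}^{i_{v_{1}}},\dots,H_{v_{n-3}}^{i_{v_{n-3}}}\right)$, is exactly your $(\alpha_{1})\psi_{2}\phi$ --- whereas you phrase it via the stabiliser presentation and the conjugation formula $\stab(T\cdot\chi)=\chi^{-1}\stab(T)\chi$, but the content is the same.
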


\begin{proof}
Suppose $B_{i,j,k}$ is a $B$-graph in the intersection $\alpha_{1}\cap\alpha_{2}\subset\mathcal{C}_{n}$.
\\ \noindent Let $\{i,j,k,v_{1},\dots,v_{n-3}\}=\{1,\dots,n\}$ and let $\left(H_{i},H_{j},H_{k},H_{v_{1}},\dots,H_{v_{n-3}}\right)$ be an $\mathfrak{S}$-labelling for the $\alpha$-graph in the domain $\alpha_{1}$ of $\mathcal{C}_{n}$.
Then $\left(H_{i},H_{j},H_{k},H_{v_{1}},\dots,H_{v_{n-3}}\right)$ is an $\mathfrak{S}$-labelling for $B_{i,j,k}$, and by Example \ref{eg equivalent B labellings}, any equivalent labelling for $B_{i,j,k}$ must be of the form 
\\ \noindent $\left(H_{i}^{g_{i}g},H_{j}^{g_{j}i_{jk}g},H_{k}^{g_{k}j_{k}i_{jk}g},H_{v_{1}}^{g_{v_{1}}i_{v_{1}}g},\dots,H_{v_{n-3}}^{g_{v_{n-3}}i_{v_{n-3}}g}\right)$ for some $g\in G=H_{1}\ast\dots H_{n}$, $g_{i}\in H_{i}$, $g_{j}\in H_{j}$, $g_{k}\in H_{k}$, $j_{k}\in H_{j}$, $i_{jk}\in H_{i}$, with $g_{v}\in H_{v}$ and $i_{v}\in H_{i}$ for each $v=v_{1},\dots,v_{n-3}$.

We may now assume that the $\alpha$-graph in $\alpha_{2}$ has an $\mathfrak{S}$-labelling of the form 
\\ \noindent $\left(H_{i},H_{j}^{i_{jk}},H_{k}^{j_{k}i_{jk}},H_{v_{1}}^{i_{v_{1}}},\dots,H_{v_{n-3}}^{i_{v_{n-3}}}\right)$ (since both inner automorphisms and relative factor automorphisms stabilise $\alpha$).
 Since the edge $\alpha_{1}\dash\alpha_{2}$ is stipulated to not be of Type $A$, then we must have that $j_{k}\ne 1$ and $i_{a}\ne1$ for some $a\in\{jk,v_{1},\dots,v_{n-3}\}$.

Let $\alpha_{3}$ be the domain whose $\alpha$-graph has $\mathfrak{S}$-labelling $\left(H_{i},H_{j}^{i_{jk}},H_{k}^{i_{jk}},H_{v_{1}}^{i_{v_{1}}},\dots,H_{v_{n-3}}^{i_{v_{n-3}}}\right)$, and let $A_{i}$ and $A_{j}$ be the $A$-graphs in $\alpha_{3}$ with central vertex $H_{i}$ and $H_{j}^{i_{jk}}$, respectively.
Observe that $A_{i}\in\alpha_{3}\cap\alpha_{1}$, thus $\alpha_{1}\dash\alpha_{3}$ is an edge of Type $A$ in the Space of Domains.
Further, note that since $\left(H_{i},H_{j}^{i_{jk}},H_{k}^{i_{jk}},H_{v_{1}}^{i_{v_{1}}},\dots,H_{v_{n-3}}^{i_{v_{n-3}}}\right)$ is an $\mathfrak{S}$-labelling for $A_{j}$, then by Definition \ref{defn equivalent labellings}, so too is $\left(H_{i},H_{j}^{i_{jk}},H_{k}^{i_{jk}\left(j_{k}^{i_{jk}}\right)},H_{v_{1}}^{i_{v_{1}}},\dots,H_{v_{n-3}}^{i_{v_{n-3}}}\right)$.
But $H_{k}^{i_{jk}\left(j_{k}^{i_{jk}}\right)}=H_{k}^{i_{jk}\left(i_{jk}^{-1}j_{k}i_{jk}\right)}=H_{k}^{j_{k}i_{jk}}$.
Thus $A_{j}\in\alpha_{3}\cap\alpha_{2}$, and so $\alpha_{2}\dash\alpha_{3}$ is an edge of Type $A$ in the Space of Domains.

Finally, since $B_{i,j,k}\in\alpha_{1}\cap\alpha_{2}\cap\alpha_{3}$ then $[\alpha_{1},\alpha_{2},\alpha_{3}]$ is a 2-cell in the Space of Domains, and so $\alpha_{1}\dash\alpha_{2}$ is homotopic to the path $\alpha_{1}\dash\alpha_{3}\dash\alpha_{2}$ whose edges are both of Type $A$.
\end{proof}

\begin{prop}\label{prop type C edges}
Suppose $\alpha_{1}\dash \alpha_{2}$ is an edge in the Space of Domains of Type $C$, but which is not of Type $A$ or Type $B$.
Then there exist domains $\alpha_{3}$ and $\alpha_{4}$ and edges $\alpha_{1}\dash \alpha_{3}$, $\alpha_{3}\dash \alpha_{4}$ and $\alpha_{4}\dash \alpha_{2}$  of Type $A$ such that  $\alpha_{1}\dash \alpha_{2}$  is homotopic in the Space of Domains to $\alpha_{1}\dash \alpha_{3}\dash \alpha_{4}\dash \alpha_{2}$.
\end{prop}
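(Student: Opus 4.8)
The plan is to mimic the proof of Proposition \ref{prop type C edges} for Type $B$ edges (Proposition \ref{prop type B edges}), using the explicit description of equivalence classes of labellings of a $C$-graph. Suppose $C_{i,j,k,l,m}$ is a $C$-graph in $\alpha_{1}\cap\alpha_{2}\subset\mathcal{C}_{n}$. Writing $\{i,j,k,l,m,v_{1},\dots,v_{n-5}\}=\{1,\dots,n\}$, I would first record (from the analogue of Example \ref{eg equivalent B labellings} for $C$-graphs, which the excerpt says follows similarly and which underlies Proposition \ref{prop brown stabs 3}) that if $\alpha_{1}$ has $\mathfrak{S}$-labelling $\left(H_{i},H_{j},H_{k},H_{l},H_{m},H_{v_{1}},\dots,H_{v_{n-5}}\right)$ then, after absorbing inner automorphisms and relative factor automorphisms, I may assume $\alpha_{2}$ has a labelling of the form $\left(H_{i},H_{j}^{i_{jk}},H_{k}^{j_{k}i_{jk}},H_{l}^{i_{lm}},H_{m}^{l_{m}i_{lm}},H_{v_{1}}^{i_{v_{1}}},\dots,H_{v_{n-5}}^{i_{v_{n-5}}}\right)$ for some $i_{jk},i_{lm},i_{v_{1}},\dots,i_{v_{n-5}}\in H_{i}$, $j_{k}\in H_{j}$, and $l_{m}\in H_{l}$. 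Since the edge is not of Type $A$ or Type $B$, none of these conjugators can be simultaneously trivial in a way that would land us in the stabiliser of an $A$- or $B$-graph; in particular at least one of $j_{k}$, $l_{m}$ is non-trivial, and genuinely both of the ``extra'' twists $j_{k}$ and $l_{m}$ are switched on (this is precisely the obstruction case identified in Lemma \ref{lemma pw int case 6}).

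Next I would interpolate through two intermediate domains, peeling off one ``extra'' twist at a time. Let $\alpha_{3}$ be the domain whose $\alpha$-graph has labelling $\left(H_{i},H_{j}^{i_{jk}},H_{k}^{j_{k}i_{jk}},H_{l}^{i_{lm}},H_{m}^{i_{lm}},H_{v_{1}}^{i_{v_{1}}},\dots,H_{v_{n-5}}^{i_{v_{n-5}}}\right)$, i.e. the same as $\alpha_{2}$ but with the $l_{m}$-twist removed. Then the $A$-graph $A_{i}$ in $\alpha_{3}$ (with central vertex $H_{i}$) also lies in $\alpha_{1}$ (all the conjugators in $\alpha_{3}$'s labelling lie in $H_{i}$, so twisting out from the $H_{i}$-vertex realises the change of labelling), hence $\alpha_{1}\dash\alpha_{3}$ is of Type $A$. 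For $\alpha_{3}\dash\alpha_{2}$: by the same conjugation manipulation as in Proposition \ref{prop type B edges} — using that $H_{m}^{i_{lm}\left(l_{m}^{i_{lm}}\right)}=H_{m}^{i_{lm}\left(i_{lm}^{-1}l_{m}i_{lm}\right)}=H_{m}^{l_{m}i_{lm}}$ — the $A$-graph $A_{l}$ with central vertex $H_{l}^{i_{lm}}$ lies in both $\alpha_{3}$ and $\alpha_{2}$, so $\alpha_{3}\dash\alpha_{2}$ is of Type $A$. I would then repeat this once more: let $\alpha_{4}$ be the domain with $\alpha$-labelling $\left(H_{i},H_{j}^{i_{jk}},H_{k}^{i_{jk}},H_{l}^{i_{lm}},H_{m}^{i_{lm}},H_{v_{1}}^{i_{v_{1}}},\dots,H_{v_{n-5}}^{i_{v_{n-5}}}\right)$ (now also removing the $j_{k}$-twist). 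Then $A_{i}\in\alpha_{4}\cap\alpha_{1}$ gives $\alpha_{1}\dash\alpha_{4}$ of Type $A$, and the $A_{j}$-argument gives $\alpha_{4}\dash\alpha_{3}$ of Type $A$. So a cleaner route may be $\alpha_{1}\dash\alpha_{4}\dash\alpha_{3}\dash\alpha_{2}$; I would choose the labelling of whichever of $\alpha_3,\alpha_4$ makes the three Type-$A$ edges and the two $2$-cells fit together, matching the statement $\alpha_{1}\dash \alpha_{3}\dash \alpha_{4}\dash \alpha_{2}$.

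Finally I would verify the two triangles are genuine $2$-cells of the Space of Domains, which is what makes the path homotopic to the single edge. Since $C_{i,j,k,l,m}$ (or the appropriate $\delta$- or $B$-collapse of it) lies in $\alpha_{1}\cap\alpha_{3}\cap\alpha_{4}$ and some common graph lies in $\alpha_{3}\cap\alpha_{4}\cap\alpha_{2}$ (e.g. $C_{i,j,k,l,m}$ again, noting all three of $\alpha_{2},\alpha_{3},\alpha_{4}$ contain it by construction), the triple intersections are non-empty, so by Definition \ref{Space of Domains defn} both $[\alpha_{1},\alpha_{3},\alpha_{4}]$ and $[\alpha_{3},\alpha_{4},\alpha_{2}]$ are $2$-cells. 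Concatenating the two homotopies across these $2$-cells shows $\alpha_{1}\dash\alpha_{2}$ is homotopic to $\alpha_{1}\dash\alpha_{3}\dash\alpha_{4}\dash\alpha_{2}$, with all three edges of Type $A$.

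The main obstacle I anticipate is bookkeeping: making sure the four domains $\alpha_1,\alpha_2,\alpha_3,\alpha_4$ are chosen with labellings that are mutually consistent, so that each claimed intersection really is witnessed by an explicit $A$- or $C$-graph with matching labelling (and not merely ``the same underlying tree''). The conjugation identities are exactly the same as in Proposition \ref{prop type B edges}, applied twice to the two independent ``branches'' of the $C$-graph; the only subtlety is confirming that removing one branch's extra twist does not disturb the other branch, which holds because the two sets of conjugators $\{j_k\}\subset H_j$ and $\{l_m\}\subset H_l$ act on disjoint collections of vertex groups. Once the labellings are pinned down, checking the Type-$A$ condition and the non-empty triple intersections is routine.
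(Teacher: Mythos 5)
Your overall strategy --- generalising the proof of Proposition \ref{prop type B edges} by peeling the two ``extra'' twists $j_{k}$ and $l_{m}$ off one at a time --- does work, but only in the second form you give it, and it is a genuinely different route from the paper's. The paper's proof is three lines: it cites Proposition \ref{pw int pc case C} to replace $\alpha_{1}\dash\alpha_{2}$ by $\alpha_{1}\dash\alpha_{3}\dash\alpha_{2}$ with the first edge of Type $A$ and the second of Type $B$, and then applies Proposition \ref{prop type B edges} to the Type $B$ edge. Your direct construction buys independence from the precise three-point description of the intersection in Proposition \ref{pw int pc case C}, at the cost of redoing the bookkeeping.

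The concrete problem is your first choice of $\alpha_{3}$. With the labelling of $\alpha_{2}$ in which only the $l_{m}$-twist is removed, the conjugator of $H_{k}$ relative to $\alpha_{1}$ is $j_{k}i_{jk}$, and $j_{k}\in H_{j}$ is non-trivial (otherwise $\psi\in\stab(\delta_{i,l,m,jk})\subseteq\stab(B_{i,l,m})$ and the edge would already be of Type $B$). So it is false that ``all the conjugators in $\alpha_{3}$'s labelling lie in $H_{i}$'': the automorphism carrying $\alpha_{1}$ to this $\alpha_{3}$ has two non-trivial operating factors, $H_{i}$ and $H_{j}$, hence lies in $\stab(\delta_{i,j,k,lm})\subseteq\stab(B_{i,j,k})$ but in no $\stab(A_{v})$, and $\alpha_{1}\dash\alpha_{3}$ is of Type $B$, not Type $A$. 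Your ``cleaner route'' is the correct one and should be the whole proof: let $\alpha_{3}'$ be $\alpha_{1}$ twisted by the full $H_{i}$-part only (so $A_{i}\in\alpha_{1}\cap\alpha_{3}'$), reinstate the $j_{k}$-twist to get $\alpha_{4}'$ (Type $A$ via the $A$-graph with central vertex $H_{j}^{i_{jk}}$, using $H_{k}^{i_{jk}(j_{k}^{i_{jk}})}=H_{k}^{j_{k}i_{jk}}$), and reinstate the $l_{m}$-twist to reach $\alpha_{2}$ (Type $A$ via $H_{l}^{i_{lm}}$). All three intermediate automorphisms lie in $\stab(C_{i,j,k,l,m})$, so $C_{i,j,k,l,m}$ lies in all four domains, every triple intersection is non-empty, and the two $2$-cells needed for the homotopy exist by Definition \ref{Space of Domains defn}. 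Commit to that version, relabel to match the statement, and delete the first $\alpha_{3}$.
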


\begin{proof}
By Proposition \ref{pw int pc case C}, there exists a domain $\alpha_{3}$ so that $\alpha_{1}\dash \alpha_{2}$ is homotopic to $\alpha_{1}\dash \alpha_{3}\dash \alpha_{2}$, where $\alpha_{1}\dash \alpha_{3}$ is an edge of Type $A$ and $\alpha_{3}\dash \alpha_{2}$ is an edge of Type $B$.
Then by Proposition \ref{prop type B edges}, there exists a domain $\alpha_{4}$ with $\alpha_{3}\dash \alpha_{2}$ homotopic to $\alpha_{3}\dash \alpha_{4}\dash \alpha_{2}$, where $\alpha_{3}\dash \alpha_{4}$ and $\alpha_{4}\dash \alpha_{2}$ are both edges of Type $A$.
Now $\alpha_{1}\dash \alpha_{2}$ is homotopic to $\alpha_{1}\dash \alpha_{3}\dash \alpha_{4}\dash \alpha_{2}$.
\end{proof}

\begin{cor} \label{type A edges} 
Any path in our Graph of Domains is homotopic (in the Space of Domains) to a path whose edges are all of Type $A$.
\end{cor}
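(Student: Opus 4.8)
The plan is to induct on the number of edges in a given path in the Graph of Domains which are \emph{not} of Type $A$, reducing this count by one at each stage by applying the homotopies already established in Propositions \ref{prop type B edges} and \ref{prop type C edges}.

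First I would take an arbitrary path $P=\alpha_{0}\dash\alpha_{1}\dash\dots\dash\alpha_{m}$ in the Graph of Domains. Recall that every edge in the Space of Domains is of at least one of Type $A$, Type $B$, or Type $C$. If every edge of $P$ is of Type $A$, there is nothing to prove. Otherwise, let $N(P)$ be the number of edges in $P$ which are not of Type $A$; I will induct on $N(P)$. Pick any edge $\alpha_{i}\dash\alpha_{i+1}$ of $P$ which is not of Type $A$. Then this edge is of Type $B$ (but not Type $A$) or of Type $C$ (but not of Type $A$ or $B$). In the first case, Proposition \ref{prop type B edges} supplies a domain $\alpha_{3}'$ so that $\alpha_{i}\dash\alpha_{i+1}$ is homotopic in the Space of Domains to $\alpha_{i}\dash\alpha_{3}'\dash\alpha_{i+1}$, with both new edges of Type $A$; in the second, Proposition \ref{prop type C edges} supplies domains $\alpha_{3}',\alpha_{4}'$ so that $\alpha_{i}\dash\alpha_{i+1}$ is homotopic to $\alpha_{i}\dash\alpha_{3}'\dash\alpha_{4}'\dash\alpha_{i+1}$, again with all new edges of Type $A$. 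Replacing the single non-Type-$A$ edge of $P$ by this homotopic subpath yields a new path $P'$ (with the same endpoints, homotopic to $P$ in the Space of Domains) in which that edge has been removed and only Type $A$ edges inserted, so $N(P')=N(P)-1$. By the inductive hypothesis, $P'$ is homotopic to a path all of whose edges are of Type $A$, and hence so is $P$.

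The base case $N(P)=0$ is immediate, so the induction goes through. I expect the only subtlety — and it is minor — is the bookkeeping that inserting a Type $A$ subpath in place of one non-Type-$A$ edge genuinely decreases $N$ by exactly one and does not disturb the Type of any other edge of $P$; this is clear since the edges $\alpha_{j}\dash\alpha_{j+1}$ for $j\ne i$ are untouched. There is no real obstacle here: all the work has been front-loaded into Propositions \ref{prop alpha graphs are path connected}, \ref{pw int pc case C}, \ref{prop type B edges}, and \ref{prop type C edges}, and this corollary is just the clean inductive packaging of those facts.

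\begin{proof}
We induct on the number $N$ of edges of the given path which are not of Type $A$. If $N=0$ there is nothing to prove. If $N>0$, choose an edge $\alpha_{i}\dash\alpha_{i+1}$ of the path which is not of Type $A$. Since every edge is of Type $A$, Type $B$, or Type $C$, this edge is either of Type $B$ but not Type $A$, or of Type $C$ but not of Type $A$ or $B$. In the former case, Proposition \ref{prop type B edges} shows $\alpha_{i}\dash\alpha_{i+1}$ is homotopic in the Space of Domains to a path $\alpha_{i}\dash\alpha_{3}\dash\alpha_{i+1}$ both of whose edges are of Type $A$; in the latter, Proposition \ref{prop type C edges} shows it is homotopic to a path $\alpha_{i}\dash\alpha_{3}\dash\alpha_{4}\dash\alpha_{i+1}$ all of whose edges are of Type $A$. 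Substituting this subpath into our original path produces a homotopic path with one fewer non-Type-$A$ edge. By the inductive hypothesis, this new path — and therefore the original one — is homotopic to a path whose edges are all of Type $A$.
\end{proof}
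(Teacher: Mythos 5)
Your proof is correct and follows essentially the same route as the paper, which simply cites Propositions \ref{prop type B edges} and \ref{prop type C edges} together with the fact that every edge is of Type $A$, $B$, or $C$; your induction on the number of non-Type-$A$ edges is just a more explicit packaging of that one-line argument.
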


\begin{proof}
This follows from Propositions \ref{prop type B edges} and \ref{prop type C edges}, recalling that any edge in the Graph of Domains is at least one of Type $A$, Type $B$, or Type $C$.
\end{proof}

%	-----------------

\subsection{Relative Whitehead Automorphisms} \label{whitehead autos} % whitehead autos

In this subsection, we consider how to move through the Graph of Domains (i.e. how to move along edges in the Space of Domains).
By Corollary \ref{type A edges}, we need only consider edges of Type $A$.

In Definition \ref{defn whitehead autos}, we define Whitehead automorphisms and multiple Whitehead automorphisms, which provide a convenient way to discuss elements of the stabilisers of vertices in the fundamental domain $\mathcal{D}_{n}$.
These are the same (differing only in notation) as those used by Gilbert \cite{Gilbert1987} and Collins and Zieschang \cite{Collins1984}.
However, to examine domains other than the fundamental domain (i.e. vertices in the Space of Domains), it will prove useful to discuss `relative' Whitehead automorphisms, which may act on groups beyond just the factor groups $G_{i}$ of the splitting $\mathfrak{S}$.

We will find that moving along edges of Type $A$ in the Space of Domains is achieved by applying `relative multiple Whitehead automorphisms', or equivalently, by collapsing and expanding edges of graphs of groups in $\mathcal{C}_{n}$ (i.e. travelling along $\alpha\dash A\dash \alpha$ paths in $\mathcal{C}_{n}$).

\begin{defn}[Relative Whitehead Automorphism]\label{defn rel whitehead auto}
Let $H_{1}\ast\dots\ast H_{n}$ be an $\mathfrak{S}$ free factor splitting for $G=G_{1}\ast\dots\ast G_{n}$. 
A \emph{relative Whitehead automorphism} (with respect to the splitting $H_{1}\ast\dots\ast H_{n}$) is
a map $\psi$ for which there exists $x\in H_{i}$ for some $i$ and $A\subseteq\{H_{1},\dots,H_{n}\}-\{H_{i}\}$ so that $\psi$ pointwise conjugates $H_{j}$ by $x$ for each $H_{j}\in A$, and pointwise fixes $H_{k}$ for each $H_{k}\not\in A$.
We denote such a map $\psi$ by $(A,x)$.
If $|A|=1$, i.e. $A=\{H_{j}\}$ for some $j$, we may abuse notation and write $(H_{j},x)$ for $(\{H_{j}\},x)$.

If $\mathbf{x}=(x_{1},\dots,x_{k})\subset H_{i}$ for some $i$ and $\mathbf{A}=(A_{1},\dots,A_{k})$ where each $A_{j}\subseteq\{H_{1},\dots,H_{n}\}-\{H_{i}\}$ and $A_{j_{1}}\cap A_{j_{2}}=\emptyset$ for $j_{1}\ne j_{2}$, then we denote by $(\mathbf{A},\mathbf{x})$ the composition $(A_{1},x_{1})\dots(A_{k},x_{k})$.
Such a map is called a \emph{relative multiple Whitehead automorphism}.
We denote the union $A_{1}\cup\dots\cup A_{k}$ by $\hat{A}$, or, for longer expressions, by $\bigcup\mathbf{A}$.
\end{defn}

The Whitehead automorphisms of Definition \ref{defn whitehead autos} may be thought of as relative Whitehead automorphisms with respect to the initial splitting $G_{1}\ast\dots\ast G_{n}$ of $G$.
However it should be noted that they behave quite differently under composition.

\begin{lemma}
Let $H_{1}\ast\dots\ast H_{n}$ be an $\mathfrak{S}$ free factor splitting for $G_{1}\ast\dots\ast G_{n}$. If $\psi\in\out(G)$ is a relative Whitehead automorphism (with respect to $H_{1}\ast\dots\ast H_{n}$), then $\psi\in\outs(G)$.
\end{lemma}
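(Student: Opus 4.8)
The claim is that a relative Whitehead automorphism $\psi = (A,x)$, defined with respect to an arbitrary $\mathfrak{S}$ free factor splitting $H_1 \ast \dots \ast H_n$ of $G$, is itself a pure symmetric outer automorphism of the original splitting $\mathfrak{S} = (G_1,\dots,G_n)$. The natural approach is to reduce to the already-established Lemma \ref{lemma automorphisms exist between splittings}: since $H_1 \ast \dots \ast H_n$ is an $\mathfrak{S}$ free factor splitting, there is some $\theta \in \aut_{\mathfrak{S}}(G)$ with $\theta(G_i) = H_i$ for each $i$. The plan is to show that $\theta^{-1} \psi \theta$ (or $\theta \psi \theta^{-1}$, depending on variance conventions — I will be careful to match the right action convention used in the paper) is an honest Whitehead automorphism of the splitting $G_1 \ast \dots \ast G_n$ in the sense of Definition \ref{defn whitehead autos}, and hence lies in $\aut_{\mathfrak{S}}(G)$, which is a group; since $\theta \in \aut_{\mathfrak{S}}(G)$ as well, conjugating back shows $\psi \in \aut_{\mathfrak{S}}(G)$, so $\hat\psi \in \outs(G)$.

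First I would fix notation: write $\psi = (A,x)$ with $x \in H_i$ and $A \subseteq \{H_1,\dots,H_n\} - \{H_i\}$, so $\psi$ conjugates each $H_j \in A$ pointwise by $x$ and fixes the rest. Pick $\theta \in \aut_{\mathfrak{S}}(G)$ with $\theta(G_i) = H_i$ for all $i$ (Lemma \ref{lemma automorphisms exist between splittings}), and set $y := \theta^{-1}(x) \in G_i$ and $B := \{\, G_j : H_j \in A \,\} \subseteq \{G_1,\dots,G_n\} - \{G_i\}$. Then I would check directly, on each factor $G_j$, that the composite $\theta \psi \theta^{-1}$ agrees with the Whitehead automorphism $(B, y)$: on $G_j$ with $G_j \in B$ it sends $g \mapsto \theta^{-1}(\theta(g))$ conjugated appropriately — tracking through, $\theta^{-1}$ lands $g$ in $H_j$, $\psi$ conjugates by $x$, and $\theta$ pulls this back to conjugation of $g$ by $\theta^{-1}(x) = y$ (using that $\theta$ is a homomorphism, so $\theta(h^x) = \theta(h)^{\theta(x)}$); on $G_k$ with $G_k \notin B$, $\psi$ fixes $H_k$ pointwise so the composite is the identity. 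Thus $(B,y)$ is a genuine Whitehead automorphism, hence in $\aut_{\mathfrak{S}}(G)$, hence $\psi = \theta^{-1} (B,y) \theta \in \aut_{\mathfrak{S}}(G)$ since this is a subgroup of $\aut(G)$ closed under conjugation by its own elements (indeed $\aut_{\mathfrak{S}}(G)$ is a subgroup of $\aut(G)$ by the Remark after Definition \ref{defn pure symmetric autos}). Therefore the outer class $\hat\psi$ lies in $\outs(G)$.

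The only genuinely delicate point — and the step I expect to demand the most care — is the bookkeeping around the \emph{right-action} convention for $\aut(G)$ acting on $G$, which the paper adopts (see the discussion before Definition \ref{defn operating factor} and in Definition \ref{defn whitehead autos}). Under a right action, composition order and the placement of inverses in conjugators reverse relative to the naive left-action computation, so I must be scrupulous about whether the conjugator that appears is $x$ or $x^{-1}$, and whether the correct identity is $\psi \mapsto \theta^{-1}\psi\theta$ or $\psi \mapsto \theta\psi\theta^{-1}$. Concretely, I would verify on a single element $g \in G_j$ that the two maps send $g$ to the same element, using that $\theta$ respects conjugation; once that base computation is pinned down, the rest is formal. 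Everything else — that $\theta$ exists, that $\aut_{\mathfrak{S}}(G)$ is a subgroup, that passing to outer classes is harmless — is already in place in the excerpt, so this lemma really is just the observation that relative Whitehead automorphisms are conjugates of ordinary Whitehead automorphisms by an element of $\aut_{\mathfrak{S}}(G)$.
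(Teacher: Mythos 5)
Your proposal is correct and follows essentially the same route as the paper: both invoke Lemma \ref{lemma automorphisms exist between splittings} to produce $\theta\in\aut_{\mathfrak{S}}(G)$ carrying $(G_{1},\dots,G_{n})$ to $(H_{1},\dots,H_{n})$, identify the conjugate of $\psi$ by $\theta$ with an ordinary (pure symmetric) conjugation automorphism of the standard splitting, and conclude by closure of the subgroup $\aut_{\mathfrak{S}}(G)$ under conjugation. The paper phrases the conjugated map $\chi$ as pointwise conjugation of each $G_{i}$ by $\theta^{-1}(h_{i})$ where $\psi(H_{i})=H_{i}^{h_{i}}$, which for a single relative Whitehead automorphism $(A,x)$ is exactly your $(B,\theta^{-1}(x))$, so the two arguments coincide in substance.
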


\begin{proof}
By Lemma \ref{lemma automorphisms exist between splittings}, there exists $\varphi\in\outs(G)$ with $\varphi(G_{i})=H_{i}=G_{i}^{g_{i}}$ for each $i$ ( for some $g_{i}\in G$).
Since $\psi$ is a relative Whitehead automorphism for $H_{1}\ast\dots\ast H_{n}$, for each $i$ there exists $h_{i}\in G$ such that $\psi(H_{i})=H_{i}^{h_{i}}$.
Now let $\chi\in\outs(G)$ be such that for each $i$, $\chi:g\mapsto g^{\varphi^{-1}(h_{i})}$ for all $g\in G_{i}$.
Then for each $i$, $\varphi(\chi(G_{i}))=\varphi(\varphi^{-1}(h_{i}^{-1})G_{i}\varphi^{-1}(h_{i})=h_{i}^{-1}\varphi(G_{i})h_{i}=G_{i}^{g_{i}h_{i}}$, and the following diagram commutes:

\begin{tikzcd}
G_{1}\ast\dots\ast G_{n}
\ar[r,"\varphi"]
\ar[d,"\chi"]
					&	G_{1}^{g_{1}}\ast\dots\ast G_{n}^{g_{n}}
						\ar[r,equal]
						\ar[d,"\psi"]
														&	H_{1}\ast\dots\ast H_{n}
															\ar[d,"\psi"]
																				\\
G_{1}^{\varphi^{-1}(h_{1})}\ast\dots\ast G_{n}^{\varphi^{-1}(h_{n})}
\ar[r,"\varphi"]
					&	G_{1}^{g_{1}h_{1}}\ast\dots\ast G_{n}^{g_{n}h_{n}}
						\ar[r,equal]
														&	H_{1}^{h_{1}}\ast\dots\ast H_{n}^{h_{n}}
																				\\
\end{tikzcd}

\noindent
Thus in $\out(G)$ we have $\psi=\varphi^{-1}\chi\varphi$ and since $\varphi,\chi\in\outs(G)\le\out(G)$ then $\psi\in\outs(G)$.
\end{proof}

\begin{lemma}\label{A edges have (A,x) autos}
If $\alpha_{1}\dash\alpha_{2}$ is an edge in the Space of Domains of Type $A$, then there exists some relative multiple Whitehead automorphism $(\mathbf{A},\mathbf{x})$ such that $\alpha_{2}=\alpha_{1}\cdot(\mathbf{A},\mathbf{x})$.
\end{lemma}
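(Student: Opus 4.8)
The plan is to use the edge-of-Type-$A$ hypothesis to produce an $A$-graph $A_i$ sitting in $\alpha_1 \cap \alpha_2$, and then to read off the required automorphism from the description of $\stab(A_i)$ together with the equivalence of labellings. Concretely, by Definition \ref{defn type} there is a copy of $A_i$ (for some $i \in \{1,\dots,n\}$) lying in $\alpha_1 \cap \alpha_2 \subset \mathcal{C}_n$. As in the opening discussion of Section \ref{whitehead autos} (and Observation \ref{obs pw int stab}), since both domains contain this single orbit representative, there is some $\varphi \in \stab(A_i)$ with $\alpha_2 = \alpha_1 \cdot \varphi$. The task is thus reduced to showing that every element of $\stab(A_i)$ can, modulo factor automorphisms and inner automorphisms of $G$ (which fix every $\alpha$-graph, hence fix every domain), be represented by a relative multiple Whitehead automorphism $(\mathbf{A},\mathbf{x})$.

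First I would fix an $\mathfrak{S}$-labelling $(H_i, H_{v_1}, \dots, H_{v_{n-1}})$ for the $\alpha$-graph of $\alpha_1$, so that $A_i$ inherits this labelling with central vertex $H_i$. By Proposition \ref{prop brown stabs 2}, $\stab(A_i)$ is generated by $\faktor{(i_{v_1} \times \dots \times i_{v_{n-1}})}{Z(G_i)}$ and $\Phi$; writing an arbitrary element as a product of a factor automorphism and a tuple of twists $\bigl(f_{i_{v_1}}(x_{1}), \dots, f_{i_{v_{n-1}}}(x_{n-1})\bigr)$ where each $x_j \in G_i$ (equivalently in $\varphi(G_i) = H_i$ after transporting along $\varphi_1$), I would observe that the factor-automorphism part stabilises $\alpha_1$, so does not change the domain, while the twist part is exactly the composition of relative Whitehead automorphisms $(\{H_{v_j}\}, x_j)$ for those $j$ with $x_j \neq 1$. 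Grouping the $v_j$'s by which element $x_j$ conjugates by (so that equal conjugators are absorbed into a single set $A_k$, and the $A_k$ are automatically pairwise disjoint) packages this precisely as a relative multiple Whitehead automorphism $(\mathbf{A}, \mathbf{x})$ with all $x_\ell \in H_i$ and each $A_\ell \subseteq \{H_1,\dots,H_n\} - \{H_i\}$, as required by Definition \ref{defn rel whitehead auto}.

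It then remains to check that $\alpha_1 \cdot (\mathbf{A},\mathbf{x}) = \alpha_2$: this follows because $\varphi$ and $(\mathbf{A},\mathbf{x})$ differ by an element of $\Phi$ composed with an inner automorphism of $G$ (coming from the quotient by $Z(G_i)$ and the identification of $\faktor{G_{i_{v_1 \dots v_{n-1}}}}{Z(G_i)}$ with $\inn(G_i)$ in Proposition \ref{prop brown stabs 2}), both of which act trivially on $\alpha$-graphs (cf.\ the remark after Definition \ref{defn splitting of domain} and item \ref{inner equivalent} of Example \ref{eg equivalent B labellings}). Equivalently, one can phrase this geometrically: travelling along the $\alpha_1 \dash A_i \dash \alpha_2$ path in $\mathcal{C}_n$ collapses the $n-1$ edges at the central vertex of $\alpha_1$ down to $A_i$ and then re-expands them with the conjugators prescribed by $(\mathbf{A},\mathbf{x})$.

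I expect the main obstacle to be bookkeeping rather than conceptual: making sure that the passage from the abstract semidirect-product description of $\stab(A_i)$ in Proposition \ref{prop brown stabs 2} to an honest relative multiple Whitehead automorphism correctly handles the quotient by $Z(G_i)$ and the diagonal identification $\faktor{G_{i_{v_1\dots v_{n-1}}}}{Z(G_i)} = \inn(G_i)$, so that the ``leftover'' part really is a genuine inner automorphism of $G$ (and not merely of $G_i$) when combined with a suitable global conjugation. Once that is checked, the statement follows directly, and in fact the argument shows the automorphism can be taken with conjugating elements lying in the appropriate factor groups $H_i$ of the splitting associated to $\alpha_1$, which is what later sections will want.
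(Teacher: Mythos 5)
Your proposal is correct and follows essentially the same route as the paper's proof: identify an $A_{i}$-graph in $\alpha_{1}\cap\alpha_{2}$, invoke the stabiliser description of Proposition \ref{prop brown stabs 2}, discard the factor-automorphism part (which fixes the domain), and transport the twisting elements into $H_{i}$ before grouping them into a single relative multiple Whitehead automorphism. The only difference is that the paper carries out the "transport" explicitly, computing $\psi^{-1}(G_{v_{j}},y_{j})\psi=(H_{v_{j}},y_{j}^{g_{i}})$ with $y_{j}^{g_{i}}\in G_{i}^{g_{i}}=H_{i}$, which is exactly the bookkeeping step you correctly flagged as the remaining work.
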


\begin{proof}
If $\alpha_{1}\dash\alpha_{2}$ is an edge of Type $A$, then there is some $A$-graph $A_{i}\in\alpha_{1}\cap\alpha_{2}\subset\mathcal{C}_{n}$.
Suppose the $\alpha$-graph in the domain $\alpha_{1}$ has $\mathfrak{S}$-labelling $(H_{1},\dots,H_{n})=(G_{1}^{g_{1}},\dots,G_{n}^{g_{n}})$ for some $g_{1},\dots,g_{n}\in G$. Then so too does $A_{i}$, and the $\alpha$-graph in $\alpha_{2}$ must have labelling $\left( \chi(H_{1}),\dots,\chi(H_{n}) \right)$ for some $[\chi]\in\stab(A_{i})$.

Let $[\psi]\in\outs(G)$ be such that $H_{k}=\psi(G_{k})$ for all $k$ (that is, $\alpha_{1}=\mathcal{D}_{n}\cdot\psi$). If $\underline{A}_{i}$ is the $A_{i}$-graph in $\mathcal{D}_{n}$, the fundamental domain, then $\stab(A_{i})=\psi^{-1}\stab(\underline{A}_{i})\psi$.
Recall from Proposition \ref{prop brown stabs 2} and Example \ref{eg stab A} that $\stab(\underline{A}_{i})$ comprises elements of the form $(G_{v_{1}},y_{1})\dots(G_{v_{n-1}},y_{n-1})\varphi$ where $y_{1},\dots,y_{n-1}\in G_{i}$ and $\varphi\in\Phi=\prod_{k=1}^{n}\aut(G_{k})$.

Since $\varphi\in\stab(\underline{\alpha})$ (where $\underline{\alpha}$ is the $\alpha$-graph in $\mathcal{D}_{n}$), then we may assume that $\alpha_{2}\cdot\psi^{-1}=\mathcal{D}_{n}\cdot(G_{v_{1}},y_{1})\dots(G_{v_{n-1}},y_{n-1})$ for some $y_{1},\dots,y_{n-1}\in G_{i}$.
Now for a given factor group $G_{j}$ ($j\ne i$), we have $(G_{j})(G_{j},y_{j})\psi=(G_{j}^{y_{j}})\psi=\left( \psi(G_{j})\right)^{\psi(y_{j}))}=\left(G_{j}^{g_{j}}\right)^{\left(y_{j}^{g_{i}}\right)}=(H_{j})\left(H_{j},y_{j}^{g_{i}}\right)$.
Observe that since $y_{j}\in G_{i}$ then $y_{j}^{g_{i}}\in G_{i}^{g_{i}}=H_{i}$.

Thus setting $x_{j}=y_{j}^{g_{i}}$ for all $j\ne i$, we have $\alpha_{2}=\alpha_{1}\cdot\psi^{-1}(G_{v_{1}},y_{1})\dots(G_{v_{n-1}},y_{n-1})\psi=\alpha_{1}\cdot(H_{v_{1}},x_{1})\dots(H_{v_{n-1}},x_{v_{n-1}})$.
Grouping together terms for which $x_{j}=x_{k}$, we may then write this in the form $\alpha_{2}=\alpha_{1}\cdot(\mathbf{A},\mathbf{x})$
with $\hat{A}=\bigcup\mathbf{A}\subseteq\{H_{1},\dots,H_{n}\}-\{H_{i}\}$ and $\mathbf{x}\subseteq H_{i}$, as required.
Moreover, $(\mathbf{A},\mathbf{x})\in\stab(A_{i})$, as one would expect.
\end{proof}

\begin{example}\label{eg (H3,h1)}
Let $\alpha_{1}$ be the $\alpha$-graph with $\mathfrak{S}$-labelling $(H_{1},\dots,H_{n})$, and let $h_{1}\in H_{1}$.
Then $\alpha_{2}:=\alpha_{1}\cdot(H_{3},h_{1})$ is the $\alpha$-graph with $\mathfrak{S}$-labelling $(H_{1},H_{2},H_{3}^{h_{1}},H_{4},\dots,H_{n})$.
Moreover, $\alpha_{1}\dash\alpha_{2}$ is an edge in the Space of Domains of Type $A$ (writing $\alpha_{i}$ for the domain containing the $\alpha$-graph $\alpha_{i}$).
We demonstrate this via the following collapse--expansion path in $\mathcal{C}_{n}$:
\begin{center}
\resizebox{\linewidth}{!}{
\begin{tikzpicture}
% alpha 1
\draw[thick] (0,0) -- (0,1); % n
\draw[red,thick] (0,0) -- (0.707,0.707); % 1
\draw[thick] (0,0) -- (1,0); % 2
\draw[blue,thick] (0,0) -- (0.707,-0.707); % 3
\draw[thick] (0,0) -- (0,-1); % 4
\draw[thick] (0,0) -- (-0.707,-0.707); % 5
\draw[thin] (-0.25,0) -- (-0.75,0);
\draw[thin] (-0.177,0.177) -- (-0.53,0.53);
\filldraw (0,1) circle [radius=0.05cm]; % H n
\filldraw[red] (0.707,0.707) circle [radius=0.05cm]; % H 1
\filldraw (1,0) circle [radius=0.05cm]; % H 2
\filldraw[blue] (0.707,-0.707) circle [radius=0.05cm]; % H 3
\filldraw (0,-1) circle [radius=0.05cm]; % H 4
\filldraw (-0.707,-0.707) circle [radius=0.05cm]; % H 5
\filldraw (-1,0) circle [radius=0.05cm];
\filldraw (-0.707,0.707) circle [radius=0.05cm];
\filldraw (-0.854,0.354) circle [radius=0.05cm];
\filldraw (0,0) circle [radius=0.1cm]; % centre
\node at (0,1.2) {$H_{n}$};
\node[red] at (0.9,0.9) {$H_{1}$};
\node at (1.3,0) {$H_{2}$};
\node[blue] at (0.95,-0.95) {$H_{3}$};
\node at (0,-1.3) {$H_{4}$};
\node at (-0.9,-0.9) {$H_{5}$};
% arrow
\draw[thick,-to] (2.25,0) -- (3.25,0);
% A i
\draw[thick] (5,0) -- (5,1); % n
\draw[thick] (5,0) -- (6,0); % 2
\draw[blue,thick] (5,0) -- (5.707,-0.707); % 3
\draw[thick] (5,0) -- (5,-1); % 4
\draw[thick] (5,0) -- (4.293,-0.707); % 5
\draw[thin] (4.75,0) -- (4.25,0);
\draw[thin] (4.823,0.177) -- (4.47,0.53);
\filldraw (5,1) circle [radius=0.05cm]; % H n
\filldraw[red] (5,0) circle [radius=0.1cm]; % H 1
\filldraw (6,0) circle [radius=0.05cm]; % H 2
\filldraw[blue] (5.707,-0.707) circle [radius=0.05cm]; % H 3
\filldraw (5,-1) circle [radius=0.05cm]; % H 4
\filldraw (4.293,-0.707) circle [radius=0.05cm]; % H 5
\filldraw (4,0) circle [radius=0.05cm];
\filldraw (4.293,0.707) circle [radius=0.05cm];
\filldraw (4.146,0.354) circle [radius=0.05cm];
%\filldraw[white] (5,0) circle [radius=0.225cm]; % H 1
\node at (5,1.2) {$H_{n}$};
\node[red] at (5.25,0.225) {$H_{1}$};
\node at (6.3,0) {$H_{2}$};
\node[blue] at (5.95,-0.95) {$H_{3}$};
\node at (5,-1.3) {$H_{4}$};
\node at (4.1,-0.9) {$H_{5}$};
% =
\node at (7.125,0) {\LARGE{$=$}};
% A i (alternate)
\draw[thick] (9,0) -- (9,1); % n
\draw[thick] (9,0) -- (10,0); % 2
\draw[blue,thick] (9,0) -- (9.707,-0.707); % 3
\draw[thick] (9,0) -- (9,-1); % 4
\draw[thick] (9,0) -- (8.293,-0.707); % 5
\draw[thin] (8.75,0) -- (8.25,0);
\draw[thin] (8.823,0.177) -- (8.47,0.53);
\filldraw (9,1) circle [radius=0.05cm]; % H n
\filldraw[red] (9,0) circle [radius=0.1cm]; % H 1
\filldraw (10,0) circle [radius=0.05cm]; % H 2
\filldraw[blue] (9.707,-0.707) circle [radius=0.05cm]; % H 3
\filldraw (9,-1) circle [radius=0.05cm]; % H 4
\filldraw (8.293,-0.707) circle [radius=0.05cm]; % H 5
\filldraw (8,0) circle [radius=0.05cm];
\filldraw (8.293,0.707) circle [radius=0.05cm];
\filldraw (8.146,0.354) circle [radius=0.05cm];
%\filldraw[white] (9,0) circle [radius=0.225cm]; % H 1
\node at (9,1.2) {$H_{n}$};
\node[red] at (9.25,0.225) {$H_{1}$};
\node at (10.3,0) {$H_{2}$};
\node[blue] at (9.95,-0.95) {$H_{3}^{\textcolor{red}{h_{1}}}$};
\node at (9,-1.3) {$H_{4}$};
\node at (8.1,-0.9) {$H_{5}$};
% arrow
\draw[thick,to-] (11.25,0) -- (12.25,0);
% alpha 2
\draw[thick] (14,0) -- (14,1); % n
\draw[red,thick] (14,0) -- (14.707,0.707); % 1
\draw[blue,thick] (14,0) -- (14.707,-0.707); % 3
\draw[thick] (14,0) -- (15,0); % 2
\draw[thick] (14,0) -- (14,-1); % 4
\draw[thick] (14,0) -- (13.293,-0.707); % 5
\draw[thin] (13.75,0) -- (13.25,0);
\draw[thin] (13.823,0.177) -- (13.47,0.53);
\filldraw (14,1) circle [radius=0.05cm]; % H n
\filldraw[red] (14.707,0.707) circle [radius=0.05cm]; % H 1
\filldraw[blue] (14.707,-0.707) circle [radius=0.05cm]; % H 3 ^ h1
\filldraw (15,0) circle [radius=0.05cm]; % H 2
\filldraw (14,-1) circle [radius=0.05cm]; % H 4
\filldraw (13.293,-0.707) circle [radius=0.05cm]; % H 5
\filldraw (13,0) circle [radius=0.05cm];
\filldraw (13.293,0.707) circle [radius=0.05cm];
\filldraw (13.146,0.354) circle [radius=0.05cm];
\filldraw (14,0) circle [radius=0.1cm]; % centre
\node at (14,1.2) {$H_{n}$};
\node[red] at (14.9,0.9) {$H_{1}$};
\node[blue] at (14.95,-0.95) {$H_{3}^{\textcolor{red}{h_{1}}}$};
\node at (15.4,0) {$H_{2}$};
\node at (14,-1.3) {$H_{4}$};
\node at (13.1,-0.9) {$H_{5}$};
\end{tikzpicture}
}
\end{center}
It is not hard to see using this example how to extend Lemma \ref{A edges have (A,x) autos} to an ``if and only if'' statement.
Additionally, we see that relative (multiple) Whitehead automorphisms must obey the relations of the stabiliser of the relevant $A$-graph in $\mathcal{C}_{n}$.
\end{example}

\begin{obs}\label{obs geometric whitehead argument}
In order to apply this kind of geometric argument, we must ensure that all automorphisms are written relative to the domain on which they act. This includes automorphisms written within a composition.
Thus, continuing Example \ref{eg (H3,h1)}, if $h_{2}\in H_{2}$ say, we have $(H_{3},h_{1}h_{2})=(H_{3},h_{1})(H_{3}^{h_{1}},h_{2})$.
\end{obs}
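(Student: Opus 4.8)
The statement to prove is Observation \ref{obs geometric whitehead argument}, which asserts that relative multiple Whitehead automorphisms compose in a way that depends on the domain on which they act: continuing the running example, if $h_1 \in H_1$ and $h_2 \in H_2$, then $(H_3, h_1 h_2) = (H_3, h_1)(H_3^{h_1}, h_2)$.

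\textbf{Approach.} The plan is to verify this identity by direct computation, using the definition of a relative Whitehead automorphism (Definition \ref{defn rel whitehead auto}) together with the convention that composition $(A_1, x_1)(A_2, x_2)$ is read left to right, and that $\aut(G)$ acts on $G$ on the right (so $g^x = x^{-1} g x$ and the composite $\psi_1 \psi_2$ means ``first $\psi_1$, then $\psi_2$''). The key point — and the reason this is worth stating as an observation rather than leaving implicit — is that when we write a composition of relative Whitehead automorphisms, each successive factor must be written relative to the \emph{current} splitting, i.e.\ the one obtained after applying the earlier factors, not relative to the original splitting $(H_1, \dots, H_n)$.

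\textbf{Key steps.} First I would fix the splitting $(H_1, \dots, H_n)$ and recall that $(H_3, h_1 h_2)$, with $h_1 h_2 \in H_1 \ast H_2 \subseteq G$ (note $h_1 h_2 \in G$ but need not lie in any single factor — this is fine, as a relative Whitehead automorphism $(A, x)$ only requires $x$ to lie in \emph{some} $H_i$; here we must be slightly careful, since $h_1 h_2 \notin H_1$ in general, so strictly $(H_3, h_1 h_2)$ is not itself a relative Whitehead automorphism with respect to $(H_1, \dots, H_n)$ — rather it is shorthand for the automorphism conjugating $H_3$ by $h_1 h_2$ and fixing everything else, which is what the left-hand side denotes). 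Then I would compute the effect of the right-hand side on each factor group: applying $(H_3, h_1)$ to the splitting $(H_1, \dots, H_n)$ sends $H_3 \mapsto H_3^{h_1}$ and fixes all other $H_k$, giving the new splitting $(H_1, H_2, H_3^{h_1}, H_4, \dots, H_n)$; applying $(H_3^{h_1}, h_2)$ to \emph{that} splitting sends $H_3^{h_1} \mapsto (H_3^{h_1})^{h_2} = H_3^{h_1 h_2}$ and fixes all others. So the composite sends $H_3 \mapsto H_3^{h_1 h_2}$ and fixes every other factor, which is exactly the left-hand side. I would also remark that one must check the composite is well-defined as a map on $G$ (it is, since conjugation by $h_1 h_2$ is an automorphism of $G$ restricting appropriately on each factor), and that the right-hand side is a genuine composition of two relative Whitehead automorphisms — the first with respect to $(H_1, \dots, H_n)$, the second with respect to the splitting obtained after the first. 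Finally I would note that this contrasts with the behaviour of (non-relative) Whitehead automorphisms, where $(G_3, x)(G_3, y) = (G_3, yx)$ as in the Observation following Notation \ref{defn operating factor}, the difference arising precisely because there the second factor is still written relative to the original splitting.

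\textbf{Main obstacle.} There is essentially no serious obstacle here — the statement is a bookkeeping identity and the proof is a one-line computation once the conventions are pinned down. The only genuine subtlety, and the thing I would be most careful to get right, is the direction of composition and the matching direction of the group action: getting $H_3^{h_1 h_2}$ rather than $H_3^{h_2 h_1}$ depends on reading the composite in the right order and conjugating in the right order, and it is easy to introduce an inverse or a transposition by sloppiness. I would therefore state explicitly at the outset which conventions are in force (right action, left-to-right composition, $g^x = x^{-1}gx$) and then let the computation on each factor group speak for itself.
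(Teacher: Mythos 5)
Your verification is correct and is exactly the computation the paper has in mind: the paper states this as an unproved observation, and the identity follows from the same factor-by-factor labelling computation that the paper uses to prove the closely related Lemma \ref{lemma whitehead properties}(1). Your care with the conventions (right action, left-to-right composition, $(H_{3}^{h_{1}})^{h_{2}}=H_{3}^{h_{1}h_{2}}$) and your remark that $(H_{3},h_{1}h_{2})$ is only shorthand when $h_{1}h_{2}$ lies in no single factor are both consistent with the paper's usage.
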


\begin{lemma}\label{lemma whitehead properties} 
Let $(H_{1},\dots, H_{n})$ be an $\mathfrak{S}$-labelling for some $\alpha$-graph $\alpha\in\mathcal{C}_{n}$.
Suppose $x\in H_{i}$ for some $i$, and $A,B\subseteq\hat{H}=\{H_{1},\dots,H_{n}\}$ with $H_{i}\not\in A\cup B$.
If $A=\{H_{a_{1}},\dots,H_{a_{m}}\}$, set $A^{x}:=\{H_{a_{1}}^{x},\dots,H_{a_{m}}^{x}\}$.
\begin{enumerate}
\item For $x_{1},x_{2}\in H_{i}$, we have $\alpha\cdot(A,x_{1})(A^{x_{1}},x_{2})=\alpha\cdot(A,x_{1}x_{2})$.
\item We have $\alpha\cdot(A,x)(A^{x},x^{-1})=\alpha$. We will thus write $(A^{x},x^{-1})=(A,x)^{-1}$.
\item If $A\cap B=\emptyset$, then $\alpha\cdot(A,x)(B,x)=\alpha\cdot(B,x)(A,x)$, which we may write as $\alpha\cdot(A\cup B,x)$.
\end{enumerate}
\end{lemma}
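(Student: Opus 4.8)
The plan is to verify each of the three identities directly by computing the effect of both sides on each factor group $H_j$ in the labelling $(H_1,\dots,H_n)$, remembering that equality of $\alpha$-graphs is equality up to the equivalence of Definition \ref{defn equivalent labellings} (which in particular absorbs inner automorphisms of $G$ and inner factor automorphisms). The key bookkeeping point, emphasised in Observation \ref{obs geometric whitehead argument}, is that in a composition every relative Whitehead automorphism must be written relative to the domain on which it actually acts; this is exactly why the second factor in (1) is $(A^{x_1},x_2)$ and not $(A,x_2)$, and it is what makes the computations fall out cleanly.

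For (1): the automorphism $(A,x_1)$ sends each $H_{a}\in A$ to $H_a^{x_1}$ and fixes all other $H_k$. Then $(A^{x_1},x_2)$ acts on the resulting graph with labelling $(\dots,H_a^{x_1},\dots)$, conjugating each $H_a^{x_1}$ by $x_2$ to give $H_a^{x_1 x_2}$ (using $x_2\in H_i$, which is unchanged, so this composition is legitimately relative to the new domain), and fixing everything else. The composite therefore sends each $H_a\in A$ to $H_a^{x_1 x_2}$ and fixes the rest, which is precisely the action of $(A,x_1 x_2)$ since $x_1 x_2\in H_i$. For (2): specialise (1) to $x_2 = x_1^{-1}$, giving $\alpha\cdot(A,x)(A^x,x^{-1}) = \alpha\cdot(A, x x^{-1}) = \alpha\cdot(A,1) = \alpha$, since $(A,1)$ is the identity; this justifies the notation $(A,x)^{-1} = (A^x, x^{-1})$. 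For (3): when $A\cap B=\emptyset$, the automorphism $(B,x)$ fixes every element of $A$ (as $A\subseteq\{H_1,\dots,H_n\}-B$ and $H_i\notin A$), so $A^x$ in the intermediate graph is literally $A$ again when we instead apply $(A,x)$ first; concretely $\alpha\cdot(A,x)(B,x)$ sends $H_a\in A$ to $H_a^x$, sends $H_b\in B$ to $H_b^x$, and fixes the rest, and the same holds for $\alpha\cdot(B,x)(A,x)$ by symmetry, so both equal $\alpha\cdot(A\cup B, x)$ — the disjointness of $A$ and $B$ is exactly the hypothesis in Definition \ref{defn rel whitehead auto} that makes $(A\cup B, x)$ a well-defined single relative Whitehead automorphism.

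I do not expect a genuine obstacle here — this is the routine `relative Whitehead calculus' underpinning the later geometric arguments — but the one place to be careful is keeping track of which labelling each term in a composition is acting on, so that expressions like $(A^x, x^{-1})$ are not silently replaced by $(A, x^{-1})$; getting this right is the whole content of Observation \ref{obs geometric whitehead argument}, and stating the computations factor-by-factor as above makes the relativisation explicit and removes any ambiguity.
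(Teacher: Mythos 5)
Your proposal is correct and follows essentially the same route as the paper: all three identities are verified by computing the resulting $\mathfrak{S}$-labelling factor-by-factor, with (2) obtained by specialising (1) to $x_{2}=x_{1}^{-1}$, and (3) by observing that both composites produce the identical labelling (the paper phrases this last step via uniqueness of the $\alpha$-graph in each orbit, but the content is the same). Your explicit emphasis on relativising each term to the domain it acts on matches the paper's use of Observation \ref{obs geometric whitehead argument} and is exactly the right point to be careful about.
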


\begin{proof}
Let $\alpha$ be the $\alpha$-graph in $\mathcal{C}_{n}$ with $(H_{1},\dots,H_{n})$ as a labelling.
We partition $\{H_{1},\dots,H_{n}\}$ as $\{H_{a_{1}},\dots,H_{a_{m}}\}\cup\{H_{i_{1}},\dots,H_{i_{s}}\}$ where $m+s=n$ and $H_{a}\in A$ for each $a\in\{a_{1},\dots,a_{m}\}$.
\begin{enumerate}
%1
\item By Definition \ref{defn action of out(G)}, we have that $\alpha\cdot(A,x_{1})$ is the $\alpha$-graph, say $\alpha_{1}$, with labelling $(H_{a_{1}}^{x_{1}},\dots,H_{a_{m}}^{x_{1}},H_{i_{1}},\dots,H_{i_{s}})$.
Then $\alpha_{1}\cdot(A^{x_{1}},x_{2})$ is the $\alpha$-graph, say $\alpha_{2}$, with labelling $((H_{a_{1}}^{x_{1}})^{x_{2}},\dots,(H_{a_{m}}^{x_{1}})^{x_{2}},H_{i_{1}},\dots,H_{i_{s}})=(H_{a_{1}}^{x_{1}x_{2}},\dots,H_{a_{m}}^{x_{1}x_{2}},H_{i_{1}},\dots,H_{i_{s}})$.
On the other hand, we clearly have that $\alpha\cdot(A,x_{1}x_{2})=\alpha_{2}$.
%2
\item This follows immediately from 1 by setting $x_{1}=x$ and $x_{2}=x^{-1}$ and noting that $xx^{-1}=1$ and $(A,1)$ is the identity for any $A$.
%3
\item Since $A\cap B=\emptyset$, we will partition $\{H_{1},\dots,H_{n}\}$ as $\{H_{a_{1}},\dots,H_{a_{p}}\}\cup\{H_{b_{1}},\dots,H_{b_{q}}\}\cup\{H_{i_{1}},\dots,H_{i_{r}}\}$ where $p+q+r=n$, $H_{a}\in A$ for each $a\in\{a_{1},\dots,a_{p}\}$, and $H_{b}\in B$ for each $b\in\{b_{1},\dots,b_{q}\}$.
Set $\alpha_{1}:=\alpha\cdot(A,x)$ and $\alpha_{2}:=\alpha\cdot(B,x)$.
Then $\alpha_{1}$ is the $\alpha$-graph in $\mathcal{C}_{n}$ with labelling $(H_{a_{1}}^{x},\dots,H_{a_{p}}^{x},H_{b_{1}},\dots,H_{b_{q}},H_{i_{1}},\dots,H_{i_{r}})$, and $\alpha_{2}$ is the $\alpha$-graph in $\mathcal{C}_{n}$ with labelling $(H_{a_{1}},\dots,H_{a_{p}},H_{b_{1}}^{x},\dots,H_{b_{q}}^{x},H_{i_{1}},\dots,H_{i_{r}})$.
Now $\alpha_{3}:=\alpha_{1}\cdot(B,x)$ is the $\alpha$-graph in $\mathcal{C}_{n}$ with labelling $(H_{a_{1}}^{x},\dots,H_{a_{p}}^{x},H_{b_{1}}^{x},\dots,H_{b_{q}}^{x},H_{i_{1}},\dots,H_{i_{r}})$.
But $\alpha_{4}:=\alpha_{2}\cdot(A,x)$ is also an $\alpha$-graph in $\mathcal{C}_{n}$, with the same labelling as $\alpha_{3}$.
Thus $\alpha_{3}$ and $\alpha_{4}$ belong to the same $\out(G)$-orbit of the fundamental domain, $\mathcal{D}_{n}$.
Since $\mathcal{D}_{n}$ contains a unique $\alpha$-graph, then we must have that $\alpha_{3}=\alpha_{4}$.
That is, $\alpha\cdot(A,x)(B,x)=\alpha\cdot(B,x)(A,x)$.
\end{enumerate}
\end{proof}

\begin{rem}
Let $\hat{H}=\{H_{1},\dots,H_{n}\}$.
For $I=\hat{H}-(A\cup B\cup \{H_{i}\})$ with $A\cap B=\emptyset$ and $H_{i}\not\in A\cup B$, we can `partition' the $\alpha$-graph with labelling $(H_{1},\dots,H_{n})$
\\ \noindent \Talphaexp{$H_{1}$}{$H_{2}$}{$H_{3}$}{$H_{n-1}$}{$H_{n}$} as
\begin{tikzpicture}[scale=0.8]
\draw[ultra thick] (0,0) -- (-1,0); % I
\draw[ultra thick] (0,0) -- (0,1); % A
\draw[ultra thick] (0,0) -- (0,-1); % B
\draw[thick,red] (0,0) -- (1,0); % H_i
\filldraw (0,0) circle [radius=0.1cm]; % centre
\filldraw (-1,0) circle [radius=0.12cm];
\filldraw (0,1) circle [radius=0.12cm];
\filldraw (0,-1) circle [radius=0.12cm];
\filldraw[red] (1,0) circle [radius=0.09cm];
\node at (-1.25,0) {$I$};
\node at (0,1.35) {$A$};
\node at (0,-1.4) {$B$};
\node[red] at (1.35,0) {$H_{i}$};
\end{tikzpicture}
~. One should think of this still as an $\alpha$-graph, but `abbreviated' --- instead of drawing individual edges for each leaf $H_{j}$ of $A$, we draw one wider edge (similarly for $B$ and $I$).
\end{rem}

\begin{lemma} \label{disjoint whitehead autos}
Let $H_{1}\ast\dots\ast H_{n}$ be an $\mathfrak{S}$ free factor splitting for $G$, and let $\alpha$ be the $\alpha$-graph (and the domain containing it) with $\mathfrak{S}$-labelling $(H_{1},\dots,H_{n})$.
Suppose there are elements $x\in H_{i}$ and $y\in H_{j}$ for some $i\ne j$, and subsets $A,B\subseteq\{H_{1},\dots,H_{n}\}-\{H_{i},H_{j}\}$.
If $A\cap B=\emptyset$, then $\alpha\cdot(A,x)(B,y)=\alpha\cdot(B,y)(A,x)$.
\end{lemma}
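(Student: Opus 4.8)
The plan is to argue exactly as in the proof of Lemma \ref{lemma whitehead properties}, part 3, but now keeping track of two distinct operating factors. Since $A\cap B=\emptyset$ and neither $A$ nor $B$ contains $H_i$ or $H_j$, I can partition $\hat{H}=\{H_1,\dots,H_n\}$ as $A\sqcup B\sqcup I$, where $I$ contains both $H_i$ and $H_j$ together with the remaining leaves not touched by either Whitehead automorphism. Write $A=\{H_{a_1},\dots,H_{a_p}\}$, $B=\{H_{b_1},\dots,H_{b_q}\}$, $I=\{H_{i_1},\dots,H_{i_r}\}$ with $p+q+r=n$.

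First I would compute $\alpha\cdot(A,x)(B,y)$ directly from Definition \ref{defn action of out(G)}. The map $(A,x)$ conjugates each $H_{a_s}$ by $x$ and fixes everything else (in particular it fixes every member of $B$ and of $I$, so it fixes $H_i$ and $H_j$). Thus $\alpha\cdot(A,x)$ has labelling $(H_{a_1}^{x},\dots,H_{a_p}^{x},H_{b_1},\dots,H_{b_q},H_{i_1},\dots,H_{i_r})$. Now applying $(B,y)$ — which conjugates each $H_{b_t}$ by $y$ and fixes the rest — gives the $\alpha$-graph with labelling $(H_{a_1}^{x},\dots,H_{a_p}^{x},H_{b_1}^{y},\dots,H_{b_q}^{y},H_{i_1},\dots,H_{i_r})$. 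Symmetrically, $\alpha\cdot(B,y)(A,x)$ yields an $\alpha$-graph with the identical labelling. Since both results are $\alpha$-graphs lying in the same $\outs(G)$-orbit as the $\alpha$-graph of the fundamental domain $\mathcal{D}_n$, and $\mathcal{D}_n$ contains exactly one $\alpha$-graph (equivalently, two $\alpha$-graphs with equivalent labellings are equal as points of $\mathcal{C}_n$), the two sides coincide.

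There is one subtlety to handle carefully, which I expect to be the only real obstacle: I must make sure the expressions $(A,x)$ and $(B,y)$ appearing in the composition are interpreted relative to the correct $\alpha$-graph, in the spirit of Observation \ref{obs geometric whitehead argument}. Here this is automatic precisely because $x\in H_i$, $y\in H_j$ with $i\neq j$ and $A\cap B=\emptyset$: applying $(A,x)$ does not change $H_j$ (as $H_j\notin A$ and $H_j\neq H_i$), so the element $y\in H_j$ still makes sense, and the set $B$ consists of leaves whose labels are unchanged by $(A,x)$, so $(B,y)$ means the same relative Whitehead automorphism whether read before or after $(A,x)$. I would state this observation explicitly to justify that no conjugation bookkeeping is needed, and then the labelling computation above closes the argument. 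Alternatively, one can present the proof purely in terms of the labelling tuples as I did, bypassing the relative-automorphism language entirely, which makes the equality of the two composites manifest.
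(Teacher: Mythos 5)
Your proof is correct and is essentially the paper's argument: the paper presents the same computation as a commuting diagram of $\alpha$- and $A$-graphs (partitioning the star into $A$, $B$, $\{H_{i},H_{j}\}$ and the rest) and remarks that the commutativity ``can be seen algebraically by considering the labellings of each $\alpha$-graph,'' which is exactly the labelling calculation you carry out. Your explicit check that $(B,y)$ requires no conjugation adjustment after applying $(A,x)$ --- because $H_{j}\notin A$ and $A\cap B=\emptyset$ --- is the same point the paper isolates in the remark following the lemma, so nothing is missing.
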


\begin{proof}
We have that $\alpha$ is the graph of groups \Talphaexp{$H_{1}$}{$H_{2}$}{$H_{3}$}{$H_{n-1}$}{$H_{n}$}.
For $I=\{H_{1},\dots,H_{n}\}-(A\cup B\cup\{H_{i},H_{j}\})$, we `partition' $\alpha$ as
\begin{tikzpicture}[scale=0.8]
\draw[ultra thick] (0,0) -- (-1,0); % I
\draw[ultra thick] (0,0) -- (0,1); % A
\draw[ultra thick] (0,0) -- (0,-1); % B
\draw[red,thick] (0,0) -- (0.866,0.5); % H_i
\draw[blue,thick] (0,0) -- (0.866,-0.5); % H_j
\filldraw (0,0) circle [radius=0.09cm]; % centre
\filldraw (-1,0) circle [radius=0.12cm];
\filldraw (0,1) circle [radius=0.12cm];
\filldraw (0,-1) circle [radius=0.12cm];
\filldraw[red] (0.866,0.5) circle [radius=0.0625cm];
\filldraw[blue] (0.866,-0.5) circle [radius=0.0625cm];
\node at (-1.25,0) {$I$};
\node at (0,1.35) {$A$};
\node at (0,-1.4) {$B$};
\node[red] at (1.2,0.5) {$H_{i}$};
\node[blue] at (1.2,-0.6) {$H_{j}$};
\end{tikzpicture},
noting that by construction $A \sqcup B \sqcup \{H_{i},H_{j}\} \sqcup I$ forms a disjoint partition of the labelling $(H_{1},\dots,H_{n})$ for $\alpha$.
Then the diagram in Figure \ref{fig (A,x)(B,y)=(B,y)(A,x)} commutes.

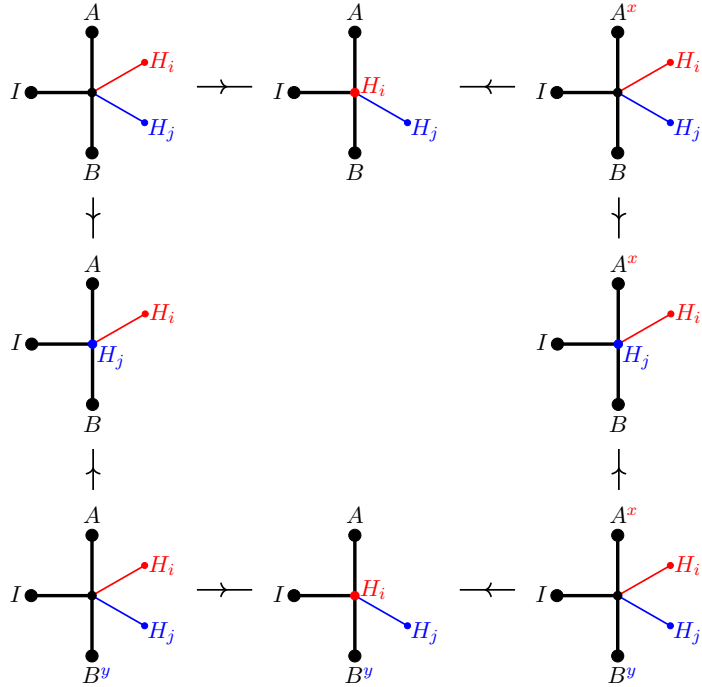
\begin{figure}[h]
\centering
\adjustbox{scale=0.8}{
\begin{tikzcd}
\begin{tikzpicture} %tl
\draw[ultra thick] (0,0) -- (-1,0); % I
\draw[ultra thick] (0,0) -- (0,1); % A
\draw[ultra thick] (0,0) -- (0,-1); % B
\draw[red,thick] (0,0) -- (0.866,0.5); % H_i
\draw[blue,thick] (0,0) -- (0.866,-0.5); % H_j
\filldraw (0,0) circle [radius=0.07cm]; % centre
\filldraw (-1,0) circle [radius=0.1cm];
\filldraw (0,1) circle [radius=0.1cm];
\filldraw (0,-1) circle [radius=0.1cm];
\filldraw[red] (0.866,0.5) circle [radius=0.05cm];
\filldraw[blue] (0.866,-0.5) circle [radius=0.05cm];
\node at (-1.25,-0.1) {$I$};
\node at (0,1.2) {$A$};
\node at (0,-1.45) {$B$};
\node[red] at (1.15,0.4) {$H_{i}$};
\node[blue] at (1.15,-0.7) {$H_{j}$};
\end{tikzpicture}
\ar[d,->-,dash,thick]
\ar[r,->-,dash,thick]
&
\begin{tikzpicture} %tc
\draw[ultra thick] (0,0) -- (-1,0); % I
\draw[ultra thick] (0,0) -- (0,1); % A
\draw[ultra thick] (0,0) -- (0,-1); % B
%\draw[red,thick] (0,0) -- (0.866,0.5); % H_i
\draw[blue,thick] (0,0) -- (0.866,-0.5); % H_j
\filldraw[red] (0,0) circle [radius=0.07cm]; % centre
\filldraw (-1,0) circle [radius=0.1cm];
\filldraw (0,1) circle [radius=0.1cm];
\filldraw (0,-1) circle [radius=0.1cm];
%\filldraw[red] (0.866,0.5) circle [radius=0.05cm];
\filldraw[blue] (0.866,-0.5) circle [radius=0.05cm];
\node at (-1.25,-0.1) {$I$};
\node at (0,1.2) {$A$};
\node at (0,-1.45) {$B$};
\node[red] at (0.3,0) {$H_{i}$};
\node[blue] at (1.15,-0.7) {$H_{j}$};
\end{tikzpicture}
&
\begin{tikzpicture} %tr
\draw[ultra thick] (0,0) -- (-1,0); % I
\draw[ultra thick] (0,0) -- (0,1); % A
\draw[ultra thick] (0,0) -- (0,-1); % B
\draw[red,thick] (0,0) -- (0.866,0.5); % H_i
\draw[blue,thick] (0,0) -- (0.866,-0.5); % H_j
\filldraw (0,0) circle [radius=0.07cm]; % centre
\filldraw (-1,0) circle [radius=0.1cm];
\filldraw (0,1) circle [radius=0.1cm];
\filldraw (0,-1) circle [radius=0.1cm];
\filldraw[red] (0.866,0.5) circle [radius=0.05cm];
\filldraw[blue] (0.866,-0.5) circle [radius=0.05cm];
\node at (-1.25,-0.1) {$I$};
\node at (0.1,1.2) {$A^{\textcolor{red}{x}}$};
\node at (0,-1.45) {$B$};
\node[red] at (1.15,0.4) {$H_{i}$};
\node[blue] at (1.15,-0.7) {$H_{j}$};
\end{tikzpicture}
\ar[l,->-,dash,thick]
\ar[d,->-,dash,thick]
\\
\begin{tikzpicture} %ml
\draw[ultra thick] (0,0) -- (-1,0); % I
\draw[ultra thick] (0,0) -- (0,1); % A
\draw[ultra thick] (0,0) -- (0,-1); % B
\draw[red,thick] (0,0) -- (0.866,0.5); % H_i
%\draw[blue,thick] (0,0) -- (0.866,-0.5); % H_j
\filldraw[blue] (0,0) circle [radius=0.07cm]; % centre
\filldraw (-1,0) circle [radius=0.1cm];
\filldraw (0,1) circle [radius=0.1cm];
\filldraw (0,-1) circle [radius=0.1cm];
\filldraw[red] (0.866,0.5) circle [radius=0.05cm];
%\filldraw[blue] (0.866,-0.5) circle [radius=0.05cm];
\node at (-1.25,-0.1) {$I$};
\node at (0,1.2) {$A$};
\node at (0,-1.45) {$B$};
\node[red] at (1.15,0.4) {$H_{i}$};
\node[blue] at (0.3,-0.3) {$H_{j}$};
\end{tikzpicture}
&
&
\begin{tikzpicture} %mr
\draw[ultra thick] (0,0) -- (-1,0); % I
\draw[ultra thick] (0,0) -- (0,1); % A
\draw[ultra thick] (0,0) -- (0,-1); % B
\draw[red,thick] (0,0) -- (0.866,0.5); % H_i
%\draw[blue,thick] (0,0) -- (0.866,-0.5); % H_j
\filldraw[blue] (0,0) circle [radius=0.07cm]; % centre
\filldraw (-1,0) circle [radius=0.1cm];
\filldraw (0,1) circle [radius=0.1cm];
\filldraw (0,-1) circle [radius=0.1cm];
\filldraw[red] (0.866,0.5) circle [radius=0.05cm];
%\filldraw[blue] (0.866,-0.5) circle [radius=0.05cm];
\node at (-1.25,-0.1) {$I$};
\node at (0.1,1.2) {$A^{\textcolor{red}{x}}$};
\node at (0,-1.45) {$B$};
\node[red] at (1.15,0.4) {$H_{i}$};
\node[blue] at (0.3,-0.3) {$H_{j}$};
\end{tikzpicture}
\\
\begin{tikzpicture} %bl
\draw[ultra thick] (0,0) -- (-1,0); % I
\draw[ultra thick] (0,0) -- (0,1); % A
\draw[ultra thick] (0,0) -- (0,-1); % B
\draw[red,thick] (0,0) -- (0.866,0.5); % H_i
\draw[blue,thick] (0,0) -- (0.866,-0.5); % H_j
\filldraw (0,0) circle [radius=0.07cm]; % centre
\filldraw (-1,0) circle [radius=0.1cm];
\filldraw (0,1) circle [radius=0.1cm];
\filldraw (0,-1) circle [radius=0.1cm];
\filldraw[red] (0.866,0.5) circle [radius=0.05cm];
\filldraw[blue] (0.866,-0.5) circle [radius=0.05cm];
\node at (-1.25,-0.1) {$I$};
\node at (0,1.2) {$A$};
\node at (0.075,-1.45) {$B^{\textcolor{blue}{y}}$};
\node[red] at (1.15,0.4) {$H_{i}$};
\node[blue] at (1.15,-0.7) {$H_{j}$};
\end{tikzpicture}
\ar[u,->-,dash,thick]
\ar[r,->-,dash,thick]
&
\begin{tikzpicture} %bm
\draw[ultra thick] (0,0) -- (-1,0); % I
\draw[ultra thick] (0,0) -- (0,1); % A
\draw[ultra thick] (0,0) -- (0,-1); % B
%\draw[red,thick] (0,0) -- (0.866,0.5); % H_i
\draw[blue,thick] (0,0) -- (0.866,-0.5); % H_j
\filldraw[red] (0,0) circle [radius=0.07cm]; % centre
\filldraw (-1,0) circle [radius=0.1cm];
\filldraw (0,1) circle [radius=0.1cm];
\filldraw (0,-1) circle [radius=0.1cm];
%\filldraw[red] (0.866,0.5) circle [radius=0.05cm];
\filldraw[blue] (0.866,-0.5) circle [radius=0.05cm];
\node at (-1.25,-0.1) {$I$};
\node at (0,1.2) {$A$};
\node at (0.075,-1.45) {$B^{\textcolor{blue}{y}}$};
\node[red] at (0.3,0) {$H_{i}$};
\node[blue] at (1.15,-0.7) {$H_{j}$};
\end{tikzpicture}
&
\begin{tikzpicture} %br
\draw[ultra thick] (0,0) -- (-1,0); % I
\draw[ultra thick] (0,0) -- (0,1); % A
\draw[ultra thick] (0,0) -- (0,-1); % B
\draw[red,thick] (0,0) -- (0.866,0.5); % H_i
\draw[blue,thick] (0,0) -- (0.866,-0.5); % H_j
\filldraw (0,0) circle [radius=0.07cm]; % centre
\filldraw (-1,0) circle [radius=0.1cm];
\filldraw (0,1) circle [radius=0.1cm];
\filldraw (0,-1) circle [radius=0.1cm];
\filldraw[red] (0.866,0.5) circle [radius=0.05cm];
\filldraw[blue] (0.866,-0.5) circle [radius=0.05cm];
\node at (-1.25,-0.1) {$I$};
\node at (0.1,1.2) {$A^{\textcolor{red}{x}}$};
\node at (0.075,-1.45) {$B^{\textcolor{blue}{y}}$};
\node[red] at (1.15,0.4) {$H_{i}$};
\node[blue] at (1.15,-0.7) {$H_{j}$};
\end{tikzpicture}
\ar[l,->-,dash,thick]
\ar[u,->-,dash,thick]
\\
\end{tikzcd}
}
\caption{Commuting Diagram of $\alpha$ and $A$ Graphs}
\label{fig (A,x)(B,y)=(B,y)(A,x)}
\end{figure}

That is, in the Space of Domains there is a loop
\begin{tikzcd}%[cramped]
\alpha
\ar[d,->-,dash,"(B{,}y)"]
\ar[r,->-,dash,"(A{,}x)"] 
			& 	\alpha\cdot(A,x) 
				\ar[d,->-,dash,"(B{,}y)"]
							\\
\alpha\cdot(B,y)
\ar[r,->-,dash,"(A{,}x)"]
			&	\alpha\cdot(A,x)(B,y)
\end{tikzcd}
which can be seen algebraically by considering the labellings of each $\alpha$-graph.
Thus we have $\alpha\cdot(A,x)(B,y)=\alpha\cdot(B,y)(A,x)$.
\end{proof}

\begin{rem}
With notation as in Lemma \ref{disjoint whitehead autos}, if instead we have $H_{j}\in A$, then $\alpha\cdot(A,x)(B,y)$ is \textbf{not} well-defined geometrically --- we would instead need to write $\alpha\cdot(A,x)(B,y^{x})$. 
On the other hand, if $A\cap B\ne\emptyset$, say $B\subseteq A$, then $\alpha\cdot(A,x)(B,y)$ would also not be well-defined geometrically.
Instead, we would write $\alpha\cdot(A,x)(B^{x},y)$.
%These remarks both follow from Observation \ref{obs geometric whitehead argument}, since in the first case the labelling for $\alpha\cdot(A,x)$ does not contain a subgroup containing the element $y$ (only $y^{x}$), and in the second case the labelling for $\alpha\cdot(A,x)$ does not contain any of the subgroups contained in $B$ (rather those contained in $B^{x}$).
Note that this differs from the notation used for non-relative Whitehead automorphisms, where if $\alpha_{0}$ is the $\alpha$-graph in $\mathcal{D}_{n}$ (i.e. with labelling $(G_{1},\dots,G_{n})$) and $x,y\in G_{n}$, we would have that $\alpha_{0}\cdot(G_{1},x)(G_{1},y)$ is the $\alpha$-graph with labelling $(G_{1}^{yx},G_{2},\dots,G_{n})$.
For the remainder of the paper, all automorphisms will be assumed to be relative (multiple) Whitehead automorphisms, unless otherwise specified.
\end{rem}

Despite our relative (multiple) Whitehead automorphisms being different objects than the Whitehead automorphisms used by Gilbert \cite{Gilbert1987}, we borrow some notation introduced in \cite[Section 2]{Gilbert1987}:
\begin{notation}\label{notation whitehead autos}
Let $H_{1}\ast\dots\ast H_{n}$ be an $\mathfrak{S}$ free factor splitting for $G$ and set $\hat{H}:=\{H_{1},\dots,H_{n}\}$.
Given subsets $A_{1},\dots,A_{k}\subseteq\hat{H}$ where $A_{i}\cap A_{j}=\emptyset$ for $i\ne j$,
set $\mathbf{A}=(A_{1},\dots,A_{k})$
and let $\hat{A}:=A_{1}\cup\dots\cup A_{k}$.
Suppose $B$ is an arbitrary subset of $\hat{H}$, $\mathbf{x}=(x_{1},\dots,x_{k})\subset H_{i}$ (where $H_{i}\not\in\hat{A}$), and $y\in H_{i}$.
Also set $\bar{A}:=(\hat{H}-\hat{A})-\{H_{i}\}$.
We will adopt the following notation:
\begin{itemize}
\item $(\mathbf{A}\cap B,\mathbf{x})		:=		(A_{1}\cap B,x_{1})\dots(A_{k}\cap B,x_{k})$
\item $(\mathbf{A}-B,\mathbf{x})			:=		(A_{1}-B,x_{1})\dots(A_{k}-B,x_{k})$
\item $\begin{aligned}[t]
	(\mathbf{A}+_{j}B,\mathbf{x})		:=	&	(A_{1}-B,x_{1})\dots(A_{j-1}-B,x_{j-1})(A_{j}\cup B,x_{j})\\
									&	(A_{j+1}-B,x_{j+1})\dots(A_{k}-B,x_{k})\\
								=	&	(A_{1}-B,x_{1})\dots(A_{j}\cup B,x_{j})\dots(A_{k}-B,x_{k})
	\end{aligned}$
\item $(\mathbf{A},y\mathbf{x})			:=		(A_{1},yx_{1})\dots(A_{k},yx_{k})$
\item $(\mathbf{A},\mathbf{x}y)			:=		(A_{1},x_{1}y)\dots(A_{k},x_{k}y)$
\item $\begin{aligned}[t]
	(\mathbf{\bar{A}}_{j},\mathbf{x})	:=	&	(A_{1},x_{1})\dots(A_{j-1},x_{j-1})(\bar{A},x_{j})(A_{j+1},x_{j+1})\dots(A_{k},x_{k})\\
								=	&	(A_{1},x_{1})\dots(\bar{A},x_{j})\dots(A_{k},x_{k})
	\end{aligned}$
\item $\begin{aligned}[t]
	(\mathbf{A},\mathbf{\tilde{x}}_{j})	:=	&	(A_{1},x_{1})\dots(A_{j-1},x_{j-1})(A_{j},1)(A_{j+1},x_{j+1})\dots(A_{k},x_{k})\\
								=	&	(A_{1},x_{1})\dots(A_{j},1)\dots(A_{k},x_{k})
	\end{aligned}$
\item $[\mathbf{A}]_{j}:=A_{j}$, $[\mathbf{x}]_{j}:=x_{j}$, and $[(\mathbf{A},\mathbf{x})]_{j}:=(A_{j},x_{j})$
\item If $A=\{H_{a_{1}},\dots,H_{a_{m}}\}$ then $A^{x}:=\{H_{a_{1}}^{x},\dots,H_{a_{m}}^{x}\}$ and we define \\ \noindent $\mathbf{A}^{\mathbf{x}}:=(A_{1}^{x_{1}},\dots,A_{k}^{x_{k}})$
\end{itemize}
\end{notation}

We find that similar (though not identical) properties hold for us as are used by Gilbert \cite{Gilbert1987}. In particular, part 4 of the following Proposition is adapted from \cite[Lemma 2.10]{Gilbert1987}.

\begin{prop}\label{whitehead notational properties}
With the above notation, we have that:
\begin{enumerate}
\item %1
$%\begin{aligned}[t]
	(\mathbf{A},\mathbf{x})	=	(\mathbf{A}-B,\mathbf{x})	(\mathbf{A}\cap B,\mathbf{x})	
					=	(\mathbf{A}\cap B,\mathbf{x})	(\mathbf{A}-B,\mathbf{x})
%\end{aligned}
$
\item %2
$	(\mathbf{A}+_{j}B,\mathbf{x})	=	(\mathbf{A}-B,\mathbf{x})	(B,x_{j})	$
\item %3
$	(\mathbf{\bar{A}}_{j},x_{j}^{-1}\mathbf{\tilde{x}}_{j})	=	(\mathbf{A},x_{j}^{-1}\mathbf{x})	(\bar{A},x_{j}^{-1})	$
and $	(\mathbf{\bar{A}}_{j},\mathbf{\tilde{x}}_{j}x_{j}^{-1})	=	(\mathbf{A},\mathbf{x}x_{j}^{-1})	(\bar{A},x_{j}^{-1})	$
\item %4
$%\begin{aligned}[t]
	(\mathbf{A},\mathbf{x})	=	(\mathbf{A}+_{j}B,\mathbf{x})		( (\mathbf{\bar{A}}_{j}\cap B)^{x_{j}},x_{j}^{-1}\mathbf{\tilde{x}}_{j})
					=	(\mathbf{\bar{A}}_{j}\cap B, \mathbf{\tilde{x}}_{j}x_{j}^{-1})		\left( (\mathbf{A}+_{j}B)',\mathbf{x}\right)	
%\end{aligned}
$
\smallskip \\ \noindent where $[(\mathbf{A}+_{j}B)']_{a}:=[(\mathbf{A}+_{j}B)]_{a}=A_{a}-B$ for $a\in\{1,\dots,k\}-\{j\}$ and 
$\begin{aligned}[t]
[(\mathbf{A}+_{j}B)']_{j}	:&=\left( [(\mathbf{A}+_{j}B)]_{j}-\bigcup(\mathbf{\bar{A}}_{j}\cap B) \right) \cup \bigcup\left( [(\mathbf{A}+_{j}B)]_{j}\cap(\mathbf{\bar{A}}_{j}\cap B)\right)^{\mathbf{\tilde{x}}_{j}x_{j}^{-1}}	\\
				&=(A_{j}-B)\cup\bigcup(\mathbf{A}\cap B)^{\mathbf{x}x_{j}^{-1}}\cup(B-\hat{A}^{x_{j}^{-1}})
\end{aligned}$
\end{enumerate}
\end{prop}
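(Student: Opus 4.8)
The strategy is to derive all four identities from the basic algebra of relative Whitehead automorphisms already established in Lemmas \ref{lemma whitehead properties} and \ref{disjoint whitehead autos}, by carefully bookkeeping which letters $H_a$ get conjugated and by what element. Since every relative multiple Whitehead automorphism $(\mathbf{A},\mathbf{x})$ with $\hat{A}\subseteq\hat{H}-\{H_i\}$ and $\mathbf{x}\subset H_i$ lies in $\stab(A_i)$ (Example \ref{eg (H3,h1)}), and its effect on the $\alpha$-graph is completely determined by the tuple of conjugators assigned to each leaf, it suffices to check, leaf by leaf, that the two sides of each identity send a fixed $\alpha$-labelling $(H_1,\dots,H_n)$ to the same labelling. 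I would fix such an $\alpha$ throughout and verify each claim by this ``action on labellings'' method, which reduces everything to elementary computations in the free product $H_i$.

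For (1): partition $\hat A$ into $\hat A\cap B$ and $\hat A - B$; since the $A_a\cap B$ are pairwise disjoint and the $A_a-B$ are pairwise disjoint, and $(\hat A\cap B)\cap(\hat A-B)=\emptyset$, part 3 of Lemma \ref{lemma whitehead properties} (disjoint supports with the \emph{same} conjugator commute, within each index $a$) together with part 1 of that lemma gives $(A_a,x_a)=(A_a-B,x_a)(A_a\cap B,x_a)=(A_a\cap B,x_a)(A_a-B,x_a)$; then the various factors for distinct indices $a$ commute because all conjugators lie in $H_i$ and supports are disjoint (again Lemma \ref{lemma whitehead properties}(3)), so the product rearranges as claimed. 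For (2): by definition $(\mathbf{A}+_jB,\mathbf{x})$ differs from $(\mathbf{A}-B,\mathbf{x})$ only in the $j$-th slot, where $A_j\cup B$ replaces $A_j-B$; writing $A_j\cup B=(A_j-B)\sqcup B$ and using part 3 of Lemma \ref{lemma whitehead properties} to split $(A_j\cup B,x_j)=(A_j-B,x_j)(B,x_j)$, then commuting $(B,x_j)$ past the remaining $(A_a-B,x_a)$ factors (same operating factor $H_i$, disjoint supports), yields $(\mathbf{A}-B,\mathbf{x})(B,x_j)$ — one must note $B$ may meet the other $A_a$, which is precisely why $B$ is pulled all the way to the right rather than staying in place. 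For (3): these are the two ``renormalisation'' identities; using $(\bar A,x_j^{-1})$ acts on the complementary leaves, one checks on labellings that $(\mathbf A,x_j^{-1}\mathbf x)(\bar A,x_j^{-1})$ conjugates every leaf in $\hat A$ by $x_j^{-1}x_a x_j^{-1}\cdot$(nothing more) — wait, more carefully: the leaves in $A_a$ get conjugator $x_j^{-1}x_a$ from the first factor and are untouched by $(\bar A,x_j^{-1})$, while leaves in $\bar A$ get $x_j^{-1}$; comparing with $(\mathbf{\bar A}_j,x_j^{-1}\mathbf{\tilde x}_j)$, whose $j$-th slot is $(\bar A,x_j^{-1})$ (conjugator $x_j^{-1}$ on $\bar A$) and whose $a$-th slot for $a\ne j$ is $(A_a,x_j^{-1}x_a)$, gives the same leaf-by-leaf data; the second formula is the mirror image (right multiplication by $y$ instead of left). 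Here one must be attentive to the geometric well-definedness caveat in the Remark after Lemma \ref{disjoint whitehead autos}: all compositions must be written relative to the domain being acted on, so one should verify that the supports and conjugators in each displayed product are consistent with that convention.

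The main obstacle is part (4), specifically pinning down $[(\mathbf{A}+_jB)']_j$. The content is that after performing $(\mathbf A+_jB,\mathbf x)$ one has over-conjugated: the leaves of $\hat A$ that also lie in $B$ got conjugated twice, so a correction by $((\mathbf{\bar A}_j\cap B)^{x_j},x_j^{-1}\mathbf{\tilde x}_j)$ is needed, and the notation $(\mathbf{\bar A}_j\cap B)$ is doing subtle work — it is the $k$-tuple whose $a$-th entry is $A_a\cap B$ for $a\ne j$ and whose $j$-th entry is $\bar A\cap B$. I would prove this by the same ``track each leaf'' computation: split $\hat H-\{H_i\}$ into the blocks $A_a-B$, $A_a\cap B$ (for $a\ne j$), $\bar A\cap B$, $B-\hat A$, and $\bar A - B$, and for each block compute the net conjugator produced by the right-hand side, using Lemma \ref{lemma whitehead properties}(1) to commute same-conjugator pieces and Lemma \ref{disjoint whitehead autos} to commute pieces with disjoint support. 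The second equality for $[(\mathbf{A}+_jB)']_j$, displayed in the statement, is then read off by collecting which leaves end up in slot $j$ with conjugator $x_j$: those are $A_j-B$ (untouched representatives), $\bigcup(\mathbf A\cap B)^{\mathbf x x_j^{-1}}$ (the doubly-conjugated leaves, renormalised), and $B-\hat A^{x_j^{-1}}$ (the leaves of $B$ outside $\hat A$, carried along). I expect the bookkeeping of the exponents $\mathbf{\tilde x}_j x_j^{-1}$ versus $x_j^{-1}\mathbf{\tilde x}_j$ — i.e. keeping track of which side conjugation is applied — to be the genuinely delicate point, and I would handle it by always reducing to the action on a concrete labelling rather than manipulating the symbolic products directly, mirroring the proof technique used in Lemma \ref{lemma whitehead properties}. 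The analogue in Gilbert \cite[Lemma 2.10]{Gilbert1987} can be used as a sanity check on the final form.
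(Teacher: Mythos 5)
Your proposal is correct, and for parts (1)--(3) it is essentially the paper's argument: the same decompositions $\hat A=(\hat A-B)\sqcup(\hat A\cap B)$ and $A_j\cup B=(A_j-B)\sqcup B$, the same appeal to Lemma \ref{lemma whitehead properties} for splitting and commuting, and for (3) the same observation that the $j$-th slot degenerates to $(A_j,1)$ while $(\bar A,x_j^{-1})$ slides to the end. Where you genuinely diverge is part (4): the paper proves it purely syntactically, by substituting the identities from parts (2) and (3) into $(\mathbf{A}+_jB,\mathbf{x})((\mathbf{\bar A}_j\cap B)^{x_j},x_j^{-1}\mathbf{\tilde x}_j)$ and cancelling/recombining factors such as $(B\cap\hat A,x_j)((\mathbf A\cap B)^{x_j},x_j^{-1}\mathbf{x})=(\mathbf A\cap B,\mathbf x)$, whereas you propose partitioning the leaves into the five blocks $A_a-B$, $A_a\cap B$, $\bar A\cap B$, $B-\hat A$, $\bar A-B$ and computing the net conjugator on each. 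Both routes work; the paper's is shorter once (2) and (3) are in hand, while yours makes the formula for $[(\mathbf{A}+_jB)']_j$ transparent rather than something that falls out of cancellation. Two small cautions: your leaf-by-leaf method must track the actual conjugating element assigned to each leaf (as you say), not merely the resulting labelling up to the equivalence of Definition \ref{defn equivalent labellings}, since that equivalence absorbs inner factor automorphisms; and the commutation of factors with disjoint supports but \emph{distinct} conjugators from the same $H_i$ is not literally Lemma \ref{lemma whitehead properties}(3) (which assumes a common conjugator) --- it follows from the direct-product structure of $\stab(A_i)$ in Proposition \ref{prop brown stabs 2}, a fact the paper's proof also uses tacitly.
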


\begin{proof}
\begin{enumerate}
\item Given arbitrary sets $A$ and $B$, we have $A=(A-B)\cup(A\cap B)$, which is a disjoint partition of the set $A$.
	Now \\	$\begin{aligned}[t]
	(\mathbf{A},\mathbf{x})	&=	(A_{1},x_{1})\dots(A_{k},x_{k})	\\
					&=	((A_{1}-B)\cup(A_{1}\cap B),x_{1})	\dots	((A_{k}-B)\cup(A_{k}\cap B),x_{k})	\\
					&=	(A_{1}-B,x_{1})	(A_{1}\cap B,x_{1})	\dots	(A_{k}-B,x_{k})	(A_{k}\cap B,x_{k})	\\
					&=	(A_{1}-B,x_{1})	\dots	(A_{k}-B,x_{k})	(A_{1}\cap B,x_{1})	\dots	(A_{k}\cap B,x_{k})	\\
					&=	(\mathbf{A}-B,\mathbf{x})	(\mathbf{A}\cap B,\mathbf{x})	.
	\end{aligned}$
	
	Moreover, we also have $A=(A\cap B)\cup(A-B)$, and a similar argument yields $(\mathbf{A},\mathbf{x})=(\mathbf{A}\cap B,\mathbf{x})(\mathbf{A}-B,\mathbf{x})$.
\item Given arbitrary sets $A$ and $B$, we have $A\cup B=(A-B)\cup B$, which is a disjoint partition of the set $A\cup B$.
	Now \\	$\begin{aligned}[t]
	(\mathbf{A}+_{j}B,\mathbf{x})	&=	(A_{1}-B,x_{1})	\dots	(A_{j}\cup B,x_{j})	\dots	(A_{k}-B,x_{k})	\\
						&=	(A_{1}-B,x_{1})	\dots	((A_{j}-B)\cup B,x_{j})	\dots	(A_{k}-B,x_{k})	\\
						&=	(A_{1}-B,x_{1})	\dots	(A_{j}-B,x_{j})	(B,x_{j})	\dots	(A_{k}-B,x_{k})	\\
						&=	(A_{1}-B,x_{1})	\dots	(A_{j}-B,x_{j})	\dots	(A_{k}-B,x_{k})	(B,x_{j})	\\
						&=	(\mathbf{A}-B,\mathbf{x})	(B,x_{j})	.
	\end{aligned}$
\item We have \\	$\begin{aligned}[t]
	(\mathbf{\bar{A}}_{j},x_{j}^{-1}\mathbf{\tilde{x}}_{j})
			&=	(A_{1},x_{j}^{-1}x_{1})	\dots	(\bar{A},x_{j}^{-1}1)	\dots	(A_{k},x_{j}^{-1}x_{k})	\\
			&=	(A_{1},x_{j}^{-1}x_{1})	\dots	(A_{j},1)	(\bar{A},x_{j}^{-1})	\dots	(A_{k},x_{j}^{-1}x_{k})	\\
			&=	(A_{1},x_{j}^{-1}x_{1})	\dots	(A_{j},1)	\dots	(A_{k},x_{j}^{-1}x_{k})	(\bar{A},x_{j}^{-1})	\\
			&=	(A_{1},x_{j}^{-1}x_{1})	\dots	(A_{j},x_{j}^{-1}x_{j})	\dots	(A_{k},x_{j}^{-1}x_{k})	(\bar{A},x_{j}^{-1})	\\
			&=	(\mathbf{A},x_{j}^{-1}\mathbf{x})	(\bar{A},x_{j}^{-1})	.
	\end{aligned}$
	
Similarly, \\ 	$\begin{aligned}[t]
	 (\mathbf{\bar{A}}_{j},\mathbf{\tilde{x}}_{j}x_{j}^{-1})
	 		&=	(A_{1},x_{1}x_{j}^{-1})	\dots	(\bar{A},x_{j}^{-1})	\dots	(A_{k},x_{k}x_{j}^{-1})	\\
	 		&=	(A_{1},x_{1}x_{j}^{-1})	\dots	(A_{j},x_{j}x_{j}^{-1}(\bar{A},x_{j}^{-1})	\dots	(A_{k},x_{k}x_{j}^{-1})	\\
	 		&=	(A_{1},x_{1}x_{j}^{-1})	\dots	(A_{j},x_{j}x_{j}^{-1}	\dots	(A_{k},x_{k}x_{j}^{-1})(\bar{A},x_{j}^{-1})	\\
	 		&=	(\mathbf{A},\mathbf{x}x_{j}^{-1})(\bar{A},x_{j}^{-1})	.
	\end{aligned}$
\item  By 2. and 3. above, we have
 \\	$\begin{aligned}[t]
			&	(\mathbf{A}+_{j}B,\mathbf{x})	((\mathbf{\bar{A}}_{j}\cap B)^{x_{j}},x_{j}^{-1}\mathbf{\tilde{x}}_{j})	\\
			&=	(\mathbf{A}-B,\mathbf{x}) (B,x_{j}) ((\mathbf{A}\cap B)^{x_{j}},x_{j}^{-1}\mathbf{x}) ((\bar{A}\cap B)^{x_{j}},x_{j}^{-1})	\\
			&=	(\mathbf{A}-B,\mathbf{x}) (B\cap\hat{A},x_{j}) (B-\hat{A},x_{j}) ((\mathbf{A}\cap B)^{x_{j}},x_{j}^{-1}\mathbf{x}) ((B-\hat{A})^{x_{j}},x_{j}^{-1})	\\
			&=	(\mathbf{A}-B,\mathbf{x}) (\hat{A}\cap B,x_{j}) ((\mathbf{A}\cap B)^{x_{j}},x_{j}^{-1}\mathbf{x}) (B-\hat{A},x_{j}) ((B-\hat{A})^{x_{j}},x_{j}^{-1})	\\
			&=	(\mathbf{A}-B,\mathbf{x}) (\mathbf{A}\cap B,x_{j}x_{j}^{-1}\mathbf{x}) (B-\hat{A},x_{j}x_{j}^{-1})	\\
			&=	(\mathbf{A}-B,\mathbf{x}) (\mathbf{A}\cap B,\mathbf{x})	\\
			&=	(\mathbf{A},\mathbf{x})	.
	\end{aligned}$
%\smallskip

Similarly, \\	$\begin{aligned}[t]
			&	(\mathbf{\bar{A}}_{j}\cap B,x_{j}^{-1}\mathbf{\tilde{x}}_{j})	((\mathbf{A}+_{j}B)',\mathbf{x})	\\
			&=	(\mathbf{A}\cap B,\mathbf{x}x_{j}^{-1}) (B-\hat{A},x_{j}^{-1}) (\mathbf{A}-B,\mathbf{x}) ((\bigcup (\mathbf{A}\cap B)^{\mathbf{x}x_{j}^{-1}})\cup(B-\hat{A})^{x_{j}^{-1}}),x_{j})	\\
			&=	(\mathbf{A}-B,\mathbf{x}) (\mathbf{A}\cap B,\mathbf{x}x_{j}^{-1}) (B-\hat{A},x_{j}^{-1}) (\bigcup (\mathbf{A}\cap B)^{\mathbf{x}x_{j}^{-1}},x_{j}) (B-\hat{A})^{x_{j}^{-1}},x_{j})	\\
			&=	(\mathbf{A}-B,\mathbf{x}) (\mathbf{A}\cap B,\mathbf{x}x_{j}^{-1}) (\bigcup (\mathbf{A}\cap B)^{\mathbf{x}x_{j}^{-1}},x_{j}) (B-\hat{A},x_{j}^{-1}) (B-\hat{A})^{x_{j}^{-1}},x_{j})	\\
			&=	(\mathbf{A}-B,\mathbf{x}) (\mathbf{A}\cap B,\mathbf{x}) 	\\
			&=	(\mathbf{A},\mathbf{x})	.
	\end{aligned}$
\end{enumerate}
\end{proof}

% ------------------------------------------------- 		SECTION 5		--------------------------------------------

\section{Peak Reduction in the Space of Domains} \label{section peak reduction}	% PEAK REDUCTION	

In this section we will prove that the Space of Domains is simply connected.
We will do this via `peak reduction'.
The idea of this is that given a (based) loop in the Space of Domains, any `peaks' in the loop can be reduced, until the loop is just the basepoint.
This is roughly illustrated in Figure \ref{peak reduction picture}.

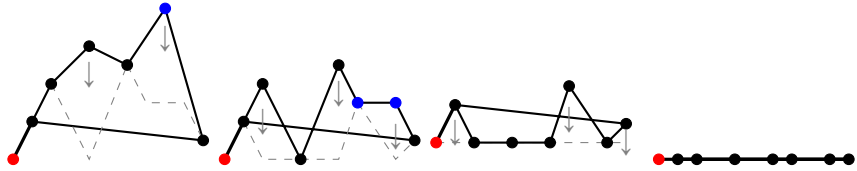
\begin{figure}[h]
\centering
\begin{tikzpicture}
\draw[very thick] (0,0) -- (0.25,0.5);
\draw[thick] (0.25,0.5) -- (0.5,1) -- (1,1.5) -- (1.5,1.25) -- (2,2) -- (2.5,0.25) -- cycle;
\draw[gray,dashed] (0.5,1) -- (1,0) -- (1.5,1.25);
\draw[gray,dashed] (1.5,1.25) -- (1.75,0.75) -- (2.25,0.75) -- (2.5,0.25);
\filldraw[red] (0,0) circle [radius=0.07];
\filldraw (0.25,0.5) circle [radius=0.07];
\filldraw (0.5,1) circle [radius=0.07];
\filldraw (1,1.5) circle [radius=0.07];
\filldraw (1.5,1.25) circle [radius=0.07];
\filldraw[blue] (2,2) circle [radius=0.07];
\filldraw (2.5,0.25) circle [radius=0.07];
\node[gray] at (1,1.125) {$\downarrow$};
\node[gray] at (2,1.6) {$\downarrow$};
\end{tikzpicture}
\begin{tikzpicture}
\draw[very thick] (0,0) -- (0.25,0.5);
\draw[thick] (0.25,0.5) -- (0.5,1) -- (1,0) -- (1.5,1.25) -- (1.75,0.75) -- (2.25,0.75) -- (2.5,0.25) -- cycle;
\draw[gray,dashed] (0.25,0.5) -- (0.5,0) -- (1,0);
\draw[gray,dashed] (1,0) -- (1.5,0) -- (1.75,0.75);
\draw[gray,dashed] (1.75,0.75) -- (2.25,0) -- (2.5,0.25);
\filldraw[red] (0,0) circle [radius=0.07];
\filldraw (0.25,0.5) circle [radius=0.07];
\filldraw (0.5,1) circle [radius=0.07];
\filldraw (1,0) circle [radius=0.07];
\filldraw (1.5,1.25) circle [radius=0.07];
\filldraw[blue] (1.75,0.75) circle [radius=0.07];
\filldraw[blue] (2.25,0.75) circle [radius=0.07];
\filldraw (2.5,0.25) circle [radius=0.07];
\node[gray] at (0.5,0.5) {$\downarrow$};
\node[gray] at (1.5,0.87) {$\downarrow$};
\node[gray] at (2.25,0.3) {$\downarrow$};
\end{tikzpicture}
\begin{tikzpicture}
\draw[gray,dashed] (0,0) -- (2.5,0);
\draw[very thick] (0,0) -- (0.25,0.5);
\draw[thick] (0.25,0.5) -- (0.5,0) -- (1,0) -- (1.5,0) -- (1.75,0.75) -- (2.25,0) -- (2.5,0.25) -- cycle;
\filldraw[red] (0,0) circle [radius=0.07];
\filldraw (0.25,0.5) circle [radius=0.07];
\filldraw (0.5,0) circle [radius=0.07];
\filldraw (1,0) circle [radius=0.07];
\filldraw (1.5,0) circle [radius=0.07];
\filldraw (1.75,0.75) circle [radius=0.07];
\filldraw (2.25,0) circle [radius=0.07];
\filldraw (2.5,0.25) circle [radius=0.07];
\node[gray] at (0.25,0.125) {$\downarrow$};
\node[gray] at (1.75,0.3) {$\downarrow$};
\node[gray] at (2.5,0) {$\downarrow$};
\end{tikzpicture}
\begin{tikzpicture}
\draw[very thick] (0,0) -- (0.25,0);
\draw[very thick] (0.25,0) -- (0.5,0) -- (1,0) -- (1.5,0) -- (1.75,0) -- (2.25,0) -- (2.5,0) -- cycle;
\filldraw[red] (0,0) circle [radius=0.07];
\filldraw (0.25,0) circle [radius=0.07];
\filldraw (0.5,0) circle [radius=0.07];
\filldraw (1,0) circle [radius=0.07];
\filldraw (1.5,0) circle [radius=0.07];
\filldraw (1.75,0) circle [radius=0.07];
\filldraw (2.25,0) circle [radius=0.07];
\filldraw (2.5,0) circle [radius=0.07];
\end{tikzpicture}
\caption{The Idea of `Squashing Loops' by Reducing Peaks}
\label{peak reduction picture}
\end{figure}

We will follow the outline below, which is largely based on the method used by Gilbert \cite[Section 2]{Gilbert1987} (which in turned is based on the work of Collins and Zieschang \cite[Section 2]{Collins1984})\footnote{Note however that the objects Gilbert as well as Collins and Zieschang study are words in a group, as opposed to geometric objects, thus while the overall structure of the idea is similar, the details vary greatly.}:

\begin{itemize} % outline
\item Define a concept of `height' of a vertex/domain $\alpha$, and define a `peak' of a loop in the Graph/Space of Domains
\item By Corollary \ref{type A edges}, we may solely consider loops comprising edges of Type $A$, so a `peak' looks like 
	\begin{tikzcd}[cramped,sep=small]
	\alpha \ar[r,dash,"A"]	& \alpha \ar[r,dash,"A"]	& \alpha
	\end{tikzcd}
\item Split into four cases of 
	\begin{tikzcd}[cramped,sep=small]
	\alpha \ar[r,dash,"A_{i}"]	& \alpha \ar[r,dash,"A_{j}"]	& \alpha
	\end{tikzcd}
	for various conditions on $i$ and $j$
\item For a given path
	\begin{tikzcd}[cramped,sep=small]
	\alpha \ar[r,dash,"A_{i}"]	& \alpha \ar[r,dash,"A_{j}"]	& \alpha
	\end{tikzcd}
	show there is either a 4-cycle or 5-cycle in the Graph of Domains (whose edges are all of Type $A$) with
	\begin{tikzcd}[cramped,sep=small]
	\alpha \ar[r,dash,"A_{i}"]	& \alpha \ar[r,dash,"A_{j}"]	& \alpha
	\end{tikzcd}
	as a subpath
\item Given such a loop in the Graph of Domains, show that it is contractible in the Space of Domains (that is, that 
	\begin{tikzcd}[cramped,sep=small]
	\alpha \ar[r,dash,"A_{i}"]	& \alpha \ar[r,dash,"A_{j}"]	& \alpha
	\end{tikzcd}
	is homotopic to a path of length 2 or 3 with the same endpoints)
\item Show that if
	\begin{tikzcd}[cramped,sep=small]
	\alpha \ar[r,dash,"A_{i}"]	& \alpha \ar[r,dash,"A_{j}"]	& \alpha
	\end{tikzcd}
	was a peak in some loop in the Space of Domains, then it is homotopic to a path whose `middle' is `smaller' than that of
	\begin{tikzcd}[cramped,sep=small]
	\alpha \ar[r,dash,"A_{i}"]	& \alpha \ar[r,dash,"A_{j}"]	& \alpha
	\end{tikzcd}
\end{itemize}

Note that we are only interested in loops in the Space of Domains whose endpoints are vertices and who strictly follow edge paths (with no backtracking etc.).
This is permissible, as any loop can be deformed into such a loop quite easily.
It also means we can easily consider the loop in the Graph of Domains (which is just the one-skeleton of the Space of Domains).

\subsection{Defining Height} \label{height} %% |a|

\begin{notation}\label{defn alpha hat}
Let $\alpha$ be an $\alpha$-graph \Talpha{$n$} in $\mathcal{C}_{n}$ with $\mathfrak{S}$-labelling $\left(H_{1},\dots,H_{n}\right)$.
We denote by $\hat{\alpha}$ the $G$-tree which is the universal cover relative to $\alpha$ (according to Serre \cite{Serre1980}), that is, the $G$-tree satisfying (up to equivariant isometry) $\faktor{\hat{\alpha}}{G}=\alpha$ viewing $\alpha$ here as a quotient graph of groups (via Bass--Serre theory).
We will label vertices of $\hat{\alpha}$ by their stabiliser in $G$, so, for example, $G_{i}\cdot x=G_{i}^{x}$ for $x\in G$ and $G_{i}\in\hat{\alpha}$.
When edges in $\hat{\alpha}$ are given labels, we will write the action of $G$ multiplicatively, so, for example, $e\cdot x=ex$.
\end{notation}

We consider $\alpha$ to be a subgraph  of $\hat{\alpha}$ acting as a fundamental domain for the action of $G$.

Let $\alpha_{0}$ be the fundamental domain $\mathcal{D}_{n}$ (the domain with the graph in Figure \ref{fig alpha 0} at its centre).
\begin{figure}[h]
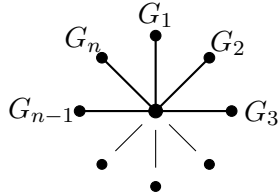

\centering
\Talphaexp{$G_{1}$}{$G_{2}$}{$G_{3}$}{$G_{n-1}$}{$G_{n}$}
\caption{The $\alpha$ Graph at the Centre of the Fundamental Domain}
\label{fig alpha 0}
\end{figure}
Let $\alpha$ be an arbitrary domain (with the graph in Figure \ref{fig arbitrary alpha} at its centre).
\begin{figure}[h]
\centering
\begin{tikzcd}[cramped,sep=small]
\Talphaexp{$H_{1}$}{$H_{2}$}{$H_{3}$}{$H_{n-1}$}{$H_{n}$}
\ar[r,equals,thick]
&
\Talphaexp{$G_{1}^{g_{1}}$}{$G_{2}^{g_{2}}$}{$G_{3}^{g_{3}}$}{$G_{n-1}^{g_{n-1}}$}{$G_{n}^{g_{n}}$}
\\
\end{tikzcd}
\caption{An Arbitrary $\alpha$ Graph}
\label{fig arbitrary alpha}
\end{figure}
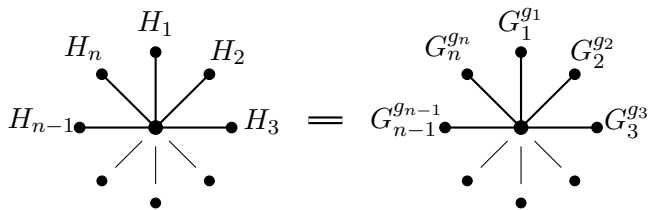

Let $\mathcal{W}$ be the set of pairs $\{G_{i},G_{j}\}$ of (distinct) elements of $\{G_{1},\dots,G_{n}\}$.
Note that $|\mathcal{W}|={n\choose2}=\frac{1}{2}n(n-1)$.

Set $\left\vert\{G_{i},G_{j}\}\right\vert_{\alpha}$ to be the length of the edge path from the vertex labelled $G_{i}$ to the vertex labelled $G_{j}$ in $\hat{\alpha}$ (note that this is symmetric, so we don't need to worry about the order of our pair).

\begin{defn}
We define the \emph{height} of the domain $\alpha$ to be
\[||\alpha||:=\sum_{w\in\mathcal{W}}\left(|w|_{\alpha}-2\right)\]
\end{defn}

Note that this can only take (non-negative) integer values.

\begin{lemma} \label{zero height is basepoint}
We have $||\alpha||=0$ if and only if $\alpha=\alpha_{0}$.
\end{lemma}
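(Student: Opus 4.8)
The plan is to characterise exactly when every pair $\{G_i, G_j\}$ achieves the minimum possible distance $2$ in the universal cover $\hat\alpha$, and to show this forces $\alpha$ to be the fundamental $\alpha$-graph $\alpha_0$. First I would observe that for \emph{any} $\alpha$-graph $\alpha$ in $\mathcal{C}_n$, the underlying graph has a single central (trivial) vertex of valency $n$ with the $n$ leaves carrying the factor-group labels $H_i$; hence in the quotient graph $\alpha$ the distance between the leaf labelled $H_i$ and the leaf labelled $H_j$ is always exactly $2$ (passing through the centre). Lifting to $\hat\alpha$, the edge path realising $|\{G_i,G_j\}|_\alpha$ between the vertex stabilised by $G_i$ and the vertex stabilised by $G_j$ has length at least $2$, with equality if and only if those two vertices share a common neighbour in $\hat\alpha$ — equivalently, if and only if there is a single lift of the central vertex adjacent to both the chosen lift of the $G_i$-vertex and the chosen lift of the $G_j$-vertex. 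This already gives $|w|_\alpha \ge 2$ for all $w \in \mathcal{W}$, so $\|\alpha\| \ge 0$ and $\|\alpha\| = 0$ iff $|w|_\alpha = 2$ for every $w$.

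Next I would unwind what $|\{G_i, G_j\}|_\alpha = 2$ for all pairs means in terms of the conjugators $g_i$ in the labelling $\left(G_1^{g_1}, \dots, G_n^{g_n}\right)$ of $\alpha$. Working in $\hat\alpha$: fix the central vertex $c$ of the fundamental domain copy of $\alpha$ inside $\hat\alpha$; its $n$ neighbours are the vertices stabilised by $G_1^{g_1}, \dots, G_n^{g_n}$. For the distance between the $G_i^{g_i}$-vertex and the $G_j^{g_j}$-vertex to be $2$, they must have a common neighbouring (trivial) vertex; since trivial vertices of $\hat\alpha$ are the $G$-translates of $c$, this common neighbour is $c \cdot h$ for some $h \in G$. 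But $c\cdot h$ is adjacent to the $G_i^{g_i}$-vertex iff that vertex equals one of the $n$ neighbours of $c\cdot h$, i.e. iff $G_i^{g_i} = G_k^{g_k h}$ for some $k$; by Kurosh-type uniqueness of conjugates of free factors this forces $k = i$ and $g_i h^{-1} \in G_i$. Requiring this simultaneously for $i$ and $j$ gives $g_i g_j^{-1} \in G_i \cdot G_j$ in a controlled sense; I would then chase this through all pairs to conclude that, up to replacing $\alpha$ by an equivalent $\mathfrak S$-labelling (which does not change $\|\alpha\|$, since $\hat\alpha$ depends only on the equivalence class), we may take $h$ independent of the pair, hence $c\cdot h = c$ and every $G_i^{g_i}$ is adjacent to $c$ with $g_i \in G_i$, i.e. $G_i^{g_i} = G_i$ as labels up to the equivalence of Definition \ref{defn equivalent labellings}. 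That is precisely the statement that $\alpha \simeq \alpha_0$, i.e. $\alpha = \alpha_0$ as a vertex of $\mathcal{C}_n$.

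The converse direction is immediate: for $\alpha_0$, the labelling is $(G_1, \dots, G_n)$ with all conjugators trivial, so in $\hat{\alpha_0}$ each pair of factor-vertices shares the central vertex of the fundamental domain as a common neighbour, giving $|w|_{\alpha_0} = 2$ for all $w \in \mathcal{W}$ and hence $\|\alpha_0\| = 0$.

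The main obstacle I expect is the rigorous bookkeeping in the forward direction: showing that pairwise minimality of all the $|w|_\alpha$ forces a \emph{single} common trivial vertex (rather than merely many local coincidences), and translating the adjacency condition $G_i^{g_i} = G_k^{g_k h}$ into the statement $g_i h^{-1} \in G_i$ cleanly. This relies on the fact that in a free product, a conjugate $G_i^g$ of a factor is conjugate to $G_k^{g'}$ only when $i = k$ and $g(g')^{-1}$ lies in $G_i$ — a standard consequence of the normal form / Kurosh subgroup theorem, which I would invoke rather than reprove. Once that rigidity is in hand, the deduction that all conjugators can be simultaneously normalised into their own factors (hence the labelling is equivalent to $(G_1,\dots,G_n)$) is routine via Definition \ref{defn equivalent labellings}.
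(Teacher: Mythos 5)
Your reduction of the lemma to the statement ``$|w|_{\alpha}=2$ for every $w\in\mathcal{W}$ implies $\alpha=\alpha_{0}$'' is exactly the paper's, and your easy direction is fine. But the forward direction as written has a genuine gap, and it sits at precisely the decisive step: you obtain, for each pair $\{G_{i},G_{j}\}$, \emph{some} common trivial neighbour $c\cdot h_{ij}$ of the two labelled vertices in $\hat{\alpha}$, and then say you would ``chase this through all pairs'' to make $h$ independent of the pair. That chase is the entire content of the lemma -- everything before it is bookkeeping -- and you do not carry it out; your closing paragraph concedes as much. Moreover, the algebraic route you propose for the chase (comparing conjugators $g_{i}$, invoking Kurosh rigidity to turn adjacency into $g_{i}h^{-1}\in G_{i}$, and then normalising all conjugators simultaneously) is considerably heavier than what is needed, and the phrase ``$g_{i}g_{j}^{-1}\in G_{i}\cdot G_{j}$ in a controlled sense'' is not yet a usable statement. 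Note also that no ``choice of lift'' is involved: the vertex of $\hat{\alpha}$ labelled $G_{i}$ is the unique vertex with that stabiliser (the normaliser of a free factor is itself), so the pairwise distances are unambiguous from the start.

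The missing step has a two-line proof that uses only that $\hat{\alpha}$ is a tree, and this is what the paper does. Every edge of $\hat{\alpha}$ joins a trivial vertex to a non-trivial one, so if $m_{ij}$ denotes the midpoint of the length-$2$ geodesic from the $G_{i}$-vertex to the $G_{j}$-vertex, and if $m_{ij}\neq m_{jk}$ for some triple, then the concatenation $G_{i}\dash m_{ij}\dash G_{j}\dash m_{jk}\dash G_{k}$ is a reduced path of length $4$, forcing $|\{G_{i},G_{k}\}|_{\alpha}=4$ (a tree has unique reduced paths), contradicting $|\{G_{i},G_{k}\}|_{\alpha}=2$. Hence all the midpoints coincide pairwise, so all $n$ labelled vertices share a single trivial neighbour, i.e. $\hat{\alpha}$ contains the star on $G_{1},\dots,G_{n}$, which is exactly $\alpha=\alpha_{0}$. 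If you insert this tripod argument in place of ``chase this through all pairs,'' your proof closes; the Kurosh machinery and the conjugator normalisation then become unnecessary.
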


\begin{proof}
On the one hand, in $\hat{\alpha_{0}}$ we have that $|w|_{\alpha_{0}}=2$ $\forall w\in \mathcal{W}$, so clearly $||\alpha_{0}||=0$.

On the other hand, observe that for any $w\in\mathcal{W}$, $|w|_{\alpha}\ge2$ $\forall \alpha$.
So
\begin{align*}
||\alpha||=||\alpha_{0}||	& \implies  {\sum_{w\in\mathcal{W}}\left(|w|_{\alpha}-2\right)}	= 	0	\\
						& \implies  {\sum_{w\in\mathcal{W}}|w|_{\alpha}}	= 	2|\mathcal{W}|	\\
						& \implies  |w|_{\alpha}=2 \quad\forall w\in\mathcal{W} \\
\end{align*}

We claim that this implies $\alpha=\alpha_{0}$.
Indeed, if there is some $\alpha$ such that for all $i,j\in\{1,\dots,n\}$ we have $|\{G_{i},G_{j}\}|_{\alpha}=2$, then the path in $\hat{\alpha}$ between the vertex labelled $G_{i}$ and the vertex labelled $G_{j}$ must be
\begin{tikzpicture}
\draw[thick] (0,0) -- (1,0) -- (2,0);
\draw[white,fill] (0,0) circle [radius=0.25cm];
\draw[black,fill] (1,0) circle [radius=0.07cm];
\draw[white,fill] (2,0) circle [radius=0.25cm];
\node at (0,0) {$G_{i}$};
\node at (2,0) {$G_{j}$};
\end{tikzpicture}
(for each pair $\{G_{i},G_{j}\}$).
Suppose for some $i,j,k\in\{1\dots,n\}$ that $\hat{\alpha}$ does not contain the tripod 
\begin{tikzpicture}
%edges:
\draw[thick] (0,0) -- (0,1);
\draw[thick] (0,0) -- (0.866,-0.5);
\draw[thick] (0,0) -- (-0.866,-0.5);
%vertices:
\draw[fill] (0,0) circle [radius=0.07];
\draw[white,fill] (0,1) circle [radius=0.25]; \node at (0,1) {$G_{i}$};
\draw[white,fill] (0.866,-0.5) circle [radius=0.25]; \node at (0.866,-0.5) {$G_{j}$};
\draw[white,fill] (-0.866,-0.5) circle [radius=0.25]; \node at (-0.866,-0.5) {$G_{k}$};
\end{tikzpicture}
. Then $\hat{\alpha}$ must contain the path
\begin{tikzpicture}
\draw[thick] (0,0) -- (1,0) -- (2,0);
\draw[thick] (2,0) -- (2,-1) -- (2,-2);
\draw[white,fill] (0,0) circle [radius=0.25cm];
\draw[black,fill] (1,0) circle [radius=0.07cm];
\draw[white,fill] (2,0) circle [radius=0.25cm];
\draw[black,fill] (2,-1) circle [radius=0.07cm];
\draw[white,fill] (2,-2) circle [radius=0.25cm];
\node at (0,0) {$G_{i}$};
\node at (2,0) {$G_{j}$};
\node at (2,-2) {$G_{k}$};
\end{tikzpicture}
. But the only way for the length of $(G_{i},G_{k})$ to be $2$ now is to have a cycle
\begin{tikzpicture}
\draw[thick] (0,0) -- (1,0) -- (2,0);
\draw[thick] (2,0) -- (2,-1) -- (2,-2);
\draw[thick] (0,0) -- (1,-1) -- (2,-2);
\draw[white,fill] (0,0) circle [radius=0.25cm];
\draw[black,fill] (1,0) circle [radius=0.07cm];
\draw[white,fill] (2,0) circle [radius=0.25cm];
\draw[black,fill] (2,-1) circle [radius=0.07cm];
\draw[white,fill] (2,-2) circle [radius=0.25cm];
\draw[black,fill] (1,-1) circle [radius=0.07cm];
\node at (0,0) {$G_{i}$};
\node at (2,0) {$G_{j}$};
\node at (2,-2) {$G_{k}$};
\end{tikzpicture}
. But $\hat{\alpha}$ is a tree, so this cannot happen.

Hence $\hat{\alpha}$ must contain every tripod of the form
\begin{tikzpicture}
%edges:
\draw[thick] (0,0) -- (0,1);
\draw[thick] (0,0) -- (0.866,-0.5);
\draw[thick] (0,0) -- (-0.866,-0.5);
%vertices:
\draw[fill] (0,0) circle [radius=0.07];
\draw[white,fill] (0,1) circle [radius=0.25]; \node at (0,1) {$G_{i}$};
\draw[white,fill] (0.866,-0.5) circle [radius=0.25]; \node at (0.866,-0.5) {$G_{j}$};
\draw[white,fill] (-0.866,-0.5) circle [radius=0.25]; \node at (-0.866,-0.5) {$G_{k}$};
\end{tikzpicture}
for all $i,j,k\in\{1,\dots,n\}$.
But this precisely means that $\hat{\alpha}$ contains as a subgraph the star \scalebox{0.8}{\Talphaexp{$G_{1}$}{$G_{2}$}{$G_{3}$}{$G_{n-1}$}{$G_{n}$}}.

Thus if $|w|_{\alpha}=2$ $\forall w\in\mathcal{W}$, then we must have $\alpha=\alpha_{0}$.

\end{proof}

Note that we think of $\{G_{i},G_{j}\}$ as both a pair of groups, and a pair of vertices in the universal cover of the graph of groups.
Since the universal cover is a tree, there is a unique path from the vertex whose stabiliser is $G_{i}$ to the vertex whose stabiliser is $G_{j}$, so we may also use $\{G_{i},G_{j}\}$ to refer to the edge path connecting them (in a given $\hat{\alpha}$).

\begin{defn}
Let $w$ be a sequence of edges forming a path in a $G$-tree $\hat{\alpha}$, and let $u$ be any subpath of $w$.
Denote by $\Lambda_{w}(u)$ the number of times the subword $u$ (or some $G$-translation $u\cdot{z}$, or inverse $\overbar{u\cdot{z}}$) appears in $w$.
Define $|w|_{\alpha}$ to be the reduced path length of $w$ in $\hat{\alpha}$.
\end{defn}

\begin{con}\label{convention edge labels and map for alpha hat}
Let $\alpha_{1}$ be the $\alpha$-graph with $\mathfrak{S}$-labelling $(H_{1},\dots,H_{n})$, let $\psi\in\outs(G)$, and set $\alpha_{2}:=\alpha_{1}\cdot\psi$.
Recall that $\hat{\alpha}_{1}$ and $\hat{\alpha}_{2}$ are the $G$-trees associated to $\alpha_{1}$ and $\alpha_{2}$, respectively, with vertices labelled by their $G$-stabiliser.
Call the vertex with trivial stabiliser in the convex hull of the vertices $H_{1},\dots,H_{n}$ in $\hat{\alpha}_{1}$ `$v$', and the vertex with trivial stabiliser in the convex hull of the vertices $(H_{1})\psi,\dots,(H_{n})\psi$ in $\hat{\alpha}_{2}$ `$v'$'.
We equivariantly label the edges of $\hat{\alpha}_{1}$ by assigning an edge 
\begin{tikzcd}[cramped,sep=small]
v	\ar[r,dash,->-]	&	H_{j}
\end{tikzcd}
the label `$e_{j}$', and its $G$-images `$e_{j}x$` where $x\in G$.
Similarly, we label the edges of $\hat{\alpha}_{2}$ of the form 
\begin{tikzcd}[cramped,sep=small]
v'	\ar[r,dash,->-]	&	(H_{j})\psi
\end{tikzcd}
`$f_{j}$', and equivariantly extend this to a labelling of all the edges of $\hat{\alpha}_{2}$.
We now define an equivariant map $\varphi_{\psi}:\hat{\alpha}_{1}\to\hat{\alpha}_{2}$ with $\varphi_{\psi}(v)=v'$ so that the vertex in $\hat{\alpha}_{1}$ whose stabiliser is $H_{j}$ is mapped to the vertex in $\hat{\alpha}_{2}$ whose stabiliser is $H_{j}$.
\end{con}

\begin{example}\label{eg change in word length}
Let $\alpha_{1}$ be
\begin{tikzpicture}
\draw[thick] (0,0) -- (0,1);
\draw[thick] (0,0) -- (-0.707,0.707);
\draw[thick] (0,0) -- (0.866,-0.5);
\draw[thick] (0,0) -- (-0.5,-0.866);
\filldraw (0,0) circle [radius=0.075cm]; % centre
\filldraw[white] (0,1) circle [radius=0.275cm]; % H 1
\filldraw[white] (0.866,-0.5) circle [radius=0.275cm]; % H a
\filldraw[white] (-0.5,-0.866) circle [radius=0.275cm]; % H i
\filldraw[white] (-0.707,0.707) circle [radius=0.275cm]; % H n
\node at (0,1) {$H_{1}$};
\node at (0.866,-0.5) {$H_{a}$};
\node at (-0.5,-0.866) {$H_{i}$};
\node at (-0.707,0.707) {$H_{n}$};
\filldraw (1,0) circle [radius=0.05cm];
\filldraw (0.5,0.866) circle [radius=0.05cm];
\filldraw (0.866,0.5) circle [radius=0.05cm];
\filldraw (0.25,-0.968) circle [radius=0.05cm];
\filldraw (-0.917,-0.4) circle [radius=0.05cm];
\filldraw (-0.980,0.2) circle [radius=0.05cm];
\draw[thin] (0.25,0) -- (0.75,0);
\draw[thin] (0.125,0.217) -- (0.375,0.650);
\draw[thin] (0.217,0.125) -- (0.650,0.375);
\draw[thin] (0.063,-0.242) -- (0.188,-0.726);
\draw[thin] (-0.229,-0.1) -- (-0.688,-0.3);
\draw[thin] (-0.245,0.05) -- (-0.735,0.15);
\end{tikzpicture}
and $\alpha_{2}=\alpha_{1}(H_{a},x)$ (with $x\in H_{i}$) be
\begin{tikzpicture}
\draw[thick] (0,0) -- (0,1);
\draw[thick] (0,0) -- (-0.707,0.707);
\draw[thick] (0,0) -- (0.866,-0.5);
\draw[thick] (0,0) -- (-0.5,-0.866);
\filldraw (0,0) circle [radius=0.075cm]; % centre
\filldraw[white] (0,1) circle [radius=0.275cm]; % H 1
\filldraw[white] (0.866,-0.5) circle [radius=0.275cm]; % H a
\filldraw[white] (-0.5,-0.866) circle [radius=0.275cm]; % H i
\filldraw[white] (-0.707,0.707) circle [radius=0.275cm]; % H n
\node at (0,1) {$H_{1}$};
\node at (0.866,-0.5) {$H_{a}^{x}$};
\node at (-0.5,-0.866) {$H_{i}$};
\node at (-0.707,0.707) {$H_{n}$};
\filldraw (1,0) circle [radius=0.05cm];
\filldraw (0.5,0.866) circle [radius=0.05cm];
\filldraw (0.866,0.5) circle [radius=0.05cm];
\filldraw (0.25,-0.968) circle [radius=0.05cm];
\filldraw (-0.917,-0.4) circle [radius=0.05cm];
\filldraw (-0.980,0.2) circle [radius=0.05cm];
\draw[thin] (0.25,0) -- (0.75,0);
\draw[thin] (0.125,0.217) -- (0.375,0.650);
\draw[thin] (0.217,0.125) -- (0.650,0.375);
\draw[thin] (0.063,-0.242) -- (0.188,-0.726);
\draw[thin] (-0.229,-0.1) -- (-0.688,-0.3);
\draw[thin] (-0.245,0.05) -- (-0.735,0.15);
\end{tikzpicture}~.
Consider (the subgraphs of) $\hat{\alpha}_{1}$ and $\hat{\alpha}_{2}$ as illustrated in Figure \ref{fig eg (Ha,x)}, labelled according to Convention \ref{convention edge labels and map for alpha hat}.
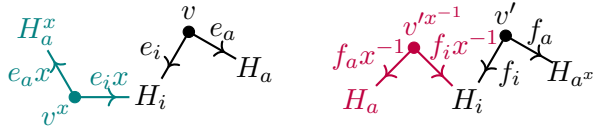
\begin{figure}[h]
\centering
\begin{tikzpicture}
\draw[thick,->-] (0,0) -- (0.866,-0.5);
\draw[thick,->-] (0,0) -- (-0.5,-0.866);
\draw[thick,->-,teal] (-1.5,-0.866) -- (-0.5,-0.866);
\draw[thick,->-,teal] (-1.5,-0.866) -- (-2,0);
\filldraw (0,0) circle [radius=0.075cm]; % centre
\filldraw[teal] (-1.5,-0.866) circle [radius=0.075cm];
\node at (0,0.25) {$v$};
\node at (-1.75,-1.1) {\textcolor{teal}{$v^{x}$}};
\filldraw[white] (0.866,-0.5) circle [radius=0.275cm]; % H a
\filldraw[white] (-0.5,-0.866) circle [radius=0.275cm]; % H i
\filldraw[white] (-2,0) circle [radius=0.275cm]; % H a ^ x
\node at (0.866,-0.5) {$H_{a}$};
\node at (-0.5,-0.866) {$H_{i}$};
\node at (-2,0) {\textcolor{teal}{$H_{a}^{x}$}};
\node at (0.45,0) {$e_{a}$};
\node at (-0.5,-0.25) {$e_{i}$};
\node at (-1.05,-0.65) {\textcolor{teal}{$e_{i}{x}$}};
\node at (-2.1,-0.6) {\textcolor{teal}{$e_{a}{x}$}};
\filldraw[white] (0,-1.2) circle [radius=0.1cm];
\end{tikzpicture}
\hspace{0.25cm}
\begin{tikzpicture}
\draw[thick,->-] (0,0) -- (0.866,-0.5);
\draw[thick,->-] (0,0) -- (-0.5,-0.866);
\draw[thick,->-,purple] (-1.207,-0.159) -- (-0.5,-0.866);
\draw[thick,->-,purple] (-1.207,-0.159) -- (-1.904,-0.866);
\filldraw (0,0) circle [radius=0.075cm]; % centre
\filldraw[purple] (-1.207,-0.159) circle [radius=0.075cm];
\node at (0,0.3) {$v'$};
\node at (-0.95,0.2) {\textcolor{purple}{$v'^{x^{-1}}$}};
\filldraw[white] (0.866,-0.5) circle [radius=0.275cm]; % H a ^ x
\filldraw[white] (-0.5,-0.866) circle [radius=0.275cm]; % H i
\filldraw[white] (-1.904,-0.866) circle [radius=0.275cm]; % H a
\node at (0.866,-0.5) {$H_{a^{x}}$};
\node at (-0.5,-0.866) {$H_{i}$};
\node at (-1.904,-0.866) {\textcolor{purple}{$H_{a}$}};
\node at (0.45,0.05) {$f_{a}$};
\node at (0.05,-0.5) {$f_{i}$};
\node at (-0.55,-0.125) {\textcolor{purple}{$f_{i}{x^{-1}}$}};
\node at (-1.8,-0.25) {\textcolor{purple}{$f_{a}{x^{-1}}$}};
\filldraw[white] (0,-1.2) circle [radius=0.1cm];
\end{tikzpicture}
\caption{Subgraphs of $\hat{\alpha}_{1}$ (left) and $\hat{\alpha}_{2}$ (right)}
\label{fig eg (Ha,x)}
\end{figure}
Observe that $(e_{a})\varphi_{(H_{a},x)}=f_{i}(\overbar{f_{i}x^{-1}})(f_{a}x^{-1})$, where
$f_{i}{x^{-1}}$ is the image in $\hat{\alpha}_{2}$ of  $f_{i}$ under the action of $x^{-1}$, and $\overbar{f_{i}{x^{-1}}}$ is its inverse edge (the same 1-cell, with opposite orientation), and for $j\ne a$ we have $(e_{j})\varphi_{(H_{a},x)}=f_{j}$.
Moreover, we see that $(e_{i}(\overbar{e_{i}x})(e_{a}x))\varphi_{(H_{a},x)}=f_{a}$.
Thus $\varphi_{(H_{a},x)}$ expands some edge paths, contracts some edge paths, and does not change the length of other edge paths.
\end{example}

We now prove some technical lemmas.
Unless otherwise stated, $\Lambda_{w}$ will always concern the word $w$ relating to $\alpha_{1}$.
The following lemmas (and proofs) are similar in structure to a lemma (and proof) of Collins and Zieschang \cite[Lemma 1.5]{Collins1984}, in that we count `subwords' to determine how an automorphism changes the height of a domain. However, since our arguments are applied to different objects, we recover quite different formulae.

\begin{lemma}\label{word length (Ha,x)}
Let $\alpha_{1}$ and $\alpha_{2}=\alpha_{1}(H_{a},x)$ be as in Example \ref{eg change in word length}, and let $w$ be an edge path in $\hat{\alpha}_{1}$.
Then
\[|(w)\varphi_{(H_{a},x)}|_{\alpha_{2}}=|w|_{\alpha_{1}}+2\Lambda_{w}(e_{a})-2\Lambda_{w}(\bar{e_{i}}e_{a})-2\Lambda_{w}((e_{i}{x^{-1}})\bar{e_{i}}e_{a})\]
\end{lemma}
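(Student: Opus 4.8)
The statement is a bookkeeping identity for how the map $\varphi_{(H_a,x)}$ between $G$-trees changes reduced path lengths, and the approach is to compute directly from the local picture in Figure \ref{fig eg (Ha,x)}. First I would set up the map on edges explicitly: as observed in Example \ref{eg change in word length}, $\varphi_{(H_a,x)}$ sends $e_a \mapsto e_i \, \overline{e_i x^{-1}} \, (e_a x^{-1})$, fixes every $e_j$ for $j \neq a$, and (crucially for cancellation) sends the reduced path $e_i \, \overline{e_i x}\, (e_a x)$ to the single edge $f_a$. By equivariance the same holds for every $G$-translate: $e_a z \mapsto (e_i z)(\overline{e_i x^{-1} z})(e_a x^{-1} z)$ and $\overline{e_a z} \mapsto \overline{(e_a x^{-1} z)}(e_i x^{-1} z)(\overline{e_i z})$. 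So if $w = c_1 c_2 \cdots c_m$ is written as a reduced edge path in $\hat\alpha_1$ (each $c_t$ an oriented edge), then $(w)\varphi_{(H_a,x)}$ is obtained by replacing each $c_t$ equal to a translate of $e_a^{\pm 1}$ with the corresponding length-$3$ string and leaving the other letters alone, giving an (a priori unreduced) word of length $|w|_{\alpha_1} + 2\Lambda_w(e_a)$, since each such substitution adds two letters.

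The remaining work is to count the cancellations that occur when this word is reduced in $\hat\alpha_2$. The key point is that cancellation happens exactly at junctions where an $e_a$-substitution string meets its neighbouring letters. I would enumerate the neighbour types: a letter $c_t$ that is a translate of $e_a$ (say $c_t = e_a z$, so $c_{t-1}$ ends at $v z$ and must be some $e_\ell z$ or $\overline{e_\ell z}$ for $\ell \neq a$, or more precisely $c_{t-1}$ terminates at the vertex $v z$). When we substitute, the new string starts with $e_i z$. Cancellation with the image of $c_{t-1}$ occurs precisely when $c_{t-1}$ was $\overline{e_i z}$, i.e.\ when $w$ contained the subword $\overline{e_i} e_a$ (up to translation): this kills one letter, contributing $-2$ to the length change (one from each side, counted once per occurrence gives $-2\Lambda_w(\overline{e_i}e_a)$). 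After that first cancellation, the exposed letter on the substituted side is $\overline{e_i x^{-1} z}$; a second round of cancellation occurs if the letter that preceded $\overline{e_i z}$ in $w$ maps to something ending in $e_i x^{-1} z$, which happens exactly when $w$ contained $(e_i x^{-1})\overline{e_i} e_a$ — contributing $-2\Lambda_w((e_i x^{-1})\overline{e_i}e_a)$. One checks that no further cancellation propagates: after these two steps the exposed letter is $e_a x^{-1} z$, whose image is again a translate of $e_a$, and the local reduced-path condition on $w$ prevents any deeper collapse. The symmetric analysis at the other end of each substitution string (the $e_a x^{-1} z$ tail meeting $c_{t+1}$) gives the same subwords read in reverse, and since $\Lambda_w$ by definition counts occurrences of a subword, its $G$-translates, \emph{and} its inverse, these reversed occurrences are already included in the same three counts $\Lambda_w(e_a)$, $\Lambda_w(\overline{e_i}e_a)$, $\Lambda_w((e_i x^{-1})\overline{e_i}e_a)$. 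Combining, $|(w)\varphi_{(H_a,x)}|_{\alpha_2} = |w|_{\alpha_1} + 2\Lambda_w(e_a) - 2\Lambda_w(\overline{e_i}e_a) - 2\Lambda_w((e_i x^{-1})\overline{e_i}e_a)$, as claimed.

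The main obstacle I expect is making the cancellation bookkeeping airtight: one must be careful that (i) distinct $e_a$-occurrences in $w$ do not interact with each other (they cannot, since any reduced $w$ separating two such letters forces an intervening non-$a$ letter, whose image is a single edge that blocks further telescoping), (ii) the second-level cancellation count $\Lambda_w((e_i x^{-1})\overline{e_i}e_a)$ does not double-count against the first-level one — which is guaranteed because the subword $(e_i x^{-1})\overline{e_i}e_a$ refines $\overline{e_i}e_a$, so each occurrence of the former is an occurrence of the latter, and the length change is genuinely additive, $-2$ then another $-2$, and (iii) the inverse/translate conventions in the definition of $\Lambda_w$ correctly absorb the symmetric ``right-hand'' cancellations so we do not need separate terms. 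I would handle (i)–(iii) by a short case check on adjacent-letter configurations in a reduced path, which is routine once the substitution rule and its inverse are written out as above. This lemma is the base case of a family (the next lemmas presumably treat $(A,\mathbf{x})$ with larger $A$ and multiple Whitehead automorphisms), and the same method — substitute, then count junction cancellations via $\Lambda_w$ of explicit short subwords — will carry over.
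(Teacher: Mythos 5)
Your proposal is correct and follows essentially the same route as the paper's proof: form the unreduced image word (length $|w|_{\alpha_1}+2\Lambda_w(e_a)$), then count the two levels of junction cancellation via the subword counts $\Lambda_w(\overline{e_i}e_a)$ and $\Lambda_w((e_ix^{-1})\overline{e_i}e_a)$, using reducedness of $w$ and the inverse/translate convention in $\Lambda_w$ to rule out further collapse and to absorb the symmetric right-hand cancellations. The paper organises the same computation as a chain $w'\to w''\to w'''$ of successive reductions, but the bookkeeping and the justification are identical to yours.
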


\begin{proof}
Given any word $u$, let $l(u)$ be its unreduced length.
We may assume that $w$ is a reduced word, that is if $w=d_{1}d_{2}\dots d_{m}$, then for any $d_{j}$, we have that $d_{j+1}\ne \overbar{d_{j}}$.
Then $l(w)=\sum_{k=1}^{n}\Lambda_{w}(e_{k})=|w|_{\alpha_{1}}=m$.
Since $w$ is an edge path in $\hat{\alpha}_{1}$, then given a letter (edge) $d_{j}$, there is some $k\in\{1,\dots,n\}$ and some $y\in G$ so that either $d_{j}=e_{k}y$ or $d_{j}=\overbar{e_{k}y}$.
Equivariance of $\varphi_{(H_{a},x)}$ means that $(e_{k}y)\varphi_{(H_{a},x)}=(e_{k})\varphi_{(H_{a},x)}y$, and $(\overbar{e_{k}})\varphi_{(H_{a},x)}=\overbar{(e_{k})\varphi_{(H_{a},x)}}$.

Let $w'$ be the unreduced word $(w)\varphi_{(H_{a},x)}$ in $\alpha_{2}$.
That is, $w'=(d_{1})\varphi_{(H_{a},x)}\dots(d_{m})\varphi_{(H_{a},x)}$
and $l(w')=l((d_{1})\varphi_{(H_{a},x)})+\dots+l((d_{m})\varphi_{(H_{a},x)})$.
We will say $d_{j}\simeq e_{k}$ if $d_{j}=e_{k}y$ or $d_{j}=\overbar{e_{k}y}$ for some $k\in\{1,\dots,n\}$ and some $y\in G$.
If $d_{j}\simeq e_{k}$ for some $k\ne a$ then $(d_{j})\varphi_{(H_{a},x)}\simeq f_{k}$ and $l((d_{j})\varphi_{(H_{a},x)})=l(d_{j})=1$.
If on the other hand $d_{j}\simeq e_{a}$, then $(d_{j})\varphi_{(H_{a},x)}\simeq f_{i}(\overbar{f_{i}x^{-1}})(f_{a}x^{-1})$, and $l((d_{j})\varphi_{(H_{a},x)})=3l(d_{j})$.
Thus:
\[l(w')=\sum_{k\ne a}\Lambda_{w}(e_{k})+3\Lambda(e_{a})=l(w)+2\Lambda(e_{a})=|w|_{\alpha_{1}}+2\Lambda_{w}(e_{a})\]

We now consider reductions to $w'$.
Note that $(\overbar{e_{i}}\varphi_{(H_{a},x)}(e_{a})\varphi_{(H_{a},x)}=\overbar{f_{i}}f_{i}(\overbar{f_{i}x^{-1}})(f_{a}x^{-1})$, which reduces to $(\overbar{f_{i}x^{-1}})(f_{a}x^{-1})$.
Let $w''$ be the result of applying all such reductions to $w'$ (including inversions and $G$-translations of the subword $\overbar{f_{i}}f_{i}$ resulting from images $(\overbar{e_{i}}e_{a})\varphi_{(H_{a},x)}$).
Then the length of $(\overbar{e_{i}}e_{a})\varphi_{(H_{a},x)}$ (and its inversions and $G$-translations) is 2 less in $w''$ than it is in $w'$.
Hence $l(w'')=l(w')-2\Lambda_{w}(\overbar{e_{i}}e_{a})$.

We also have that $(e_{i}x^{-1})\varphi_{(H_{a},x)}(\overbar{e_{i}}e_{a})\varphi_{(H_{a},x)}=(f_{i}x^{-1})(\overbar{f_{i}x^{-1}})(f_{a}x^{-1})$, which reduces to $f_{a}x^{-1}$.
Let $w'''$ be the result of applying all such reductions to $w''$ (including inversions and $G$-translations of the subword $((e_{i}x^{-1})\overbar{e_{i}}e_{a})\varphi_{(H_{a},x)}$).
Then the length of $((e_{i}x^{-1})\overbar{e_{i}}e_{a})\varphi_{(H_{a},x)}$ (and its inversions and $G$-translations) is 2 less in $w'''$ than it is in $w''$.
So $l(w''')=l(w'')-2\Lambda_{w}((e_{i}x^{-1})\overbar{e_{i}}e_{a})$.
Since $w$ was assumed to be reduced, there are no further reductions we can apply to $w'''$, hence $l(w''')=|(w)\varphi_{(H_{a},x)}|_{\alpha_{2}}$.
We therefore have:
\begin{align*}
|(w)\varphi_{(H_{a},x)}|_{\alpha_{2}}=l(w''') 	& 	=l(w'')-2\Lambda_{w}((e_{i}x^{-1})\overbar{e_{i}}e_{a})	\\
							& 	=l(w')-2\Lambda_{w}(\overbar{e_{i}}e_{a})-2\Lambda_{w}((e_{i}x^{-1})\overbar{e_{i}}e_{a})	\\
							& 	=|w|_{\alpha_{1}}+2\Lambda_{w}(e_{a})-2\Lambda_{w}(\overbar{e_{i}}e_{a})-2\Lambda_{w}((e_{i}x^{-1})\overbar{e_{i}}e_{a})	.
\end{align*}
\end{proof}

\begin{lemma}\label{word length (A,x)}
Let $\alpha_{1}$ be the $\alpha$-graph with $\mathfrak{S}$-labelling $(H_{1},\dots,H_{n})$, let $x\in H_{i}$ for some $i$, and let $A\subseteq\{H_{1},\dots,H_{n}\}-\{H_{i}\}$.
Set $\alpha_{2}:=\alpha_{1}(A,x)$, label $\hat{\alpha}_{1}$ and $\hat{\alpha}_{2}$ according to Convention \ref{convention edge labels and map for alpha hat}, and let $\varphi_{(A,x)}:\hat{\alpha}_{1}\to\hat{\alpha}_{2}$  be the equivariant map described in Convention \ref{convention edge labels and map for alpha hat}.
Given an edge path $w$ in $\hat{\alpha}_{1}$, we have:
\[ |(w)\varphi_{(A,x)}|_{\alpha_{2}} = |w|_{\alpha_{1}} + 2\smashoperator[lr]{\sum_{a:H_{a}\in A}} \ \left( \Lambda_{w}(e_{a}) - \Lambda_{w}(\overbar{e_{i}}e_{a}) - \Lambda_{w}( (e_{i}x^{-1})\overbar{e_{i}}e_{a} ) - \smashoperator[lr]{\sum_{b:H_{b}\in A-\{H_{a}\}}} \Lambda_{w}(\overbar{e_{a}}e_{b}) \right) \]
\end{lemma}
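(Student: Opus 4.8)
The statement generalises Lemma \ref{word length (Ha,x)} from a single Whitehead automorphism $(H_a,x)$ to a multiple Whitehead automorphism $(A,x)$, where $A=\{H_{a_1},\dots,H_{a_m}\}$ and all the $a_j$ share the same operating factor $H_i$ and the same conjugator $x$. The natural approach is induction on $|A|=m$, with Lemma \ref{word length (Ha,x)} serving as the base case $m=1$ (and $m=0$ being trivial, since $(A,x)$ with $A=\emptyset$ is the identity). First I would set up the inductive step: write $A=A'\sqcup\{H_a\}$ with $|A'|=m-1$, and use part 3 of Lemma \ref{whitehead properties} (the disjoint-union rule $\alpha\cdot(A',x)(\{H_a\},x)=\alpha\cdot(A,x)$) so that $\varphi_{(A,x)}=\varphi_{(A',x)}\circ\varphi_{(H_a,x)}'$, where the prime indicates that the second map must be taken relative to the intermediate domain $\alpha_1\cdot(A',x)$ (cf.\ Observation \ref{obs geometric whitehead argument} on writing automorphisms relative to the domain on which they act). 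I would apply the inductive hypothesis to $\varphi_{(A',x)}$ applied to the path $(w)\varphi_{(H_a,x)}$, and Lemma \ref{word length (Ha,x)} to $\varphi_{(H_a,x)}$ applied to $w$.

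The bookkeeping is the crux: when one composes, the subword-counts $\Lambda$ must be compared across the two different $G$-trees $\hat\alpha_1$, $\hat\alpha_1\cdot(H_a,x)$, and the final $\hat\alpha_2$. The key observation is that $\varphi_{(H_a,x)}$ only alters edge-paths locally around $e_a$: by the computation in Example \ref{eg change in word length}, it sends $e_j\mapsto f_j$ for $j\neq a$, $e_a\mapsto f_i(\overline{f_ix^{-1}})(f_ax^{-1})$, and does not disturb occurrences of the relevant subwords $\overline{e_i}e_b$, $(e_ix^{-1})\overline{e_i}e_b$ for $b\neq a$ (because those involve edges $e_b$ which are fixed, and the edge $e_i$ at $v$, whose image $f_i$ is unaffected except by the insertion coming from images of $e_a$). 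So I would track precisely which $\Lambda$-terms in the inductive hypothesis change under pulling back along $\varphi_{(H_a,x)}$: the terms $\Lambda_{w}(e_b)$, $\Lambda_w(\overline{e_i}e_b)$, $\Lambda_w((e_ix^{-1})\overline{e_i}e_b)$ survive unchanged for $b\in A'$, and the new cross-terms $\Lambda_w(\overline{e_a}e_b)$ arise exactly from the fact that an occurrence of $\overline{e_a}e_b$ in $w$ becomes, after applying $\varphi_{(H_a,x)}$, a pattern $\overline{(f_ax^{-1})}\,\overline{(\overline{f_ix^{-1}})}\,\overline{f_i}\,f_b=(f_ax^{-1})^{-1}(f_ix^{-1})\overline{f_i}f_b$, which feeds a further cancellation of length $2$ when $\varphi_{(A',x)}$ is applied (because $f_b$, $H_b\in A'$, gets the $(A',x)$-treatment). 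Assembling, the correction term acquires, for the newly added index $a$: the $m=1$ contribution $\Lambda_w(e_a)-\Lambda_w(\overline{e_i}e_a)-\Lambda_w((e_ix^{-1})\overline{e_i}e_a)$ from Lemma \ref{word length (Ha,x)}, together with $-\sum_{b:H_b\in A'}\Lambda_w(\overline{e_a}e_b)$ from the interaction with the previously processed factors; and symmetrically the already-present indices $b\in A'$ pick up the extra $-\Lambda_w(\overline{e_b}e_a)$ term. Since $\sum_{a\in A}\sum_{b\in A-\{a\}}\Lambda_w(\overline{e_a}e_b)$ is symmetric in exactly this way, the two contributions combine to give precisely the double sum in the claimed formula.

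I expect the main obstacle to be the honest verification that $\varphi_{(H_a,x)}$ introduces no \emph{spurious} cancellations or expansions beyond those catalogued above --- in particular, that an image of $e_a$ cannot cancel against an image of some $e_b$, $b\in A'$, except through the $\overline{e_a}e_b$ mechanism, and that the ``hat'' conjugator $x$ (being in $H_i$, fixed by $\varphi_{(H_a,x)}$ since $H_i\notin A$) behaves consistently so that $x$ remains a valid conjugator at every stage. This is exactly the kind of equivariance-plus-reduction argument run in the proof of Lemma \ref{word length (Ha,x)}, just iterated; once the local picture is pinned down the combinatorics is forced. An alternative, perhaps cleaner, route would be to avoid induction entirely and run the Collins--Zieschang-style direct count: realise $(A,x)$ as a single equivariant map $\varphi_{(A,x)}$ sending $e_a\mapsto f_i(\overline{f_ix^{-1}})(f_ax^{-1})$ for each $H_a\in A$ and $e_j\mapsto f_j$ otherwise, compute the unreduced length $l(w')=|w|_{\alpha_1}+2\sum_{a:H_a\in A}\Lambda_w(e_a)$, and then subtract $2$ for each cancellation pattern: $\overline{e_i}e_a$, $(e_ix^{-1})\overline{e_i}e_a$ (as in the $m=1$ case) plus the genuinely new pattern $\overline{e_a}e_b$ for distinct $H_a,H_b\in A$, which after applying $\varphi_{(A,x)}$ produces adjacent $(f_ax^{-1})^{-1}$ and $\dots f_i\dots$ segments that telescope by $2$. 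I would present whichever version makes the cancellation accounting most transparent; the direct count mirrors Lemma \ref{word length (Ha,x)} most closely and is probably the one to write up.
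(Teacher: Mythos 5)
Your proposal is correct and, in the direct-count form you settle on at the end, is essentially the paper's own proof: compute the unreduced length $l(w')=|w|_{\alpha_{1}}+2\sum_{a:H_{a}\in A}\Lambda_{w}(e_{a})$, subtract the $\overbar{e_{i}}e_{a}$ and $(e_{i}x^{-1})\overbar{e_{i}}e_{a}$ cancellations as in Lemma \ref{word length (Ha,x)}, and then handle the new cross-cancellations $(\overbar{e_{a}}e_{b})\varphi_{(A,x)}=(\overbar{f_{a}x^{-1}})(f_{i}x^{-1})\overbar{f_{i}}f_{i}(\overbar{f_{i}x^{-1}})(f_{b}x^{-1})=(\overbar{f_{a}x^{-1}})(f_{b}x^{-1})$, using the symmetry $\Lambda_{w}(\overbar{e_{a}}e_{b})=\Lambda_{w}(\overbar{e_{b}}e_{a})$ to reconcile the length-$4$ reduction per unordered occurrence with the symmetric double sum in the formula. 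The only slip is your phrase ``telescope by $2$'' --- each such occurrence shortens the word by $4$, which becomes $2$ per ordered term precisely because of the double counting you already invoke in the inductive paragraph, so the final bookkeeping comes out right.
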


\begin{proof}
As in Lemma \ref{word length (Ha,x)}, we let $l(u)$ be the unreduced length of a given word $u$, and we assume that $w$ is a reduced word $w=d_{1}d_{2}\dots d_{m}$ with $l(w)=m=|w|_{\alpha_{1}}$.

Let $w'$ be the unreduced word $(w)\varphi_{(A,x)}$ in $\hat{\alpha}_{2}$.
That is, $w'=(d_{1})\varphi_{(A,x)}\dots(d_{m})\varphi_{(A,x)}$ and $l(w')=l((d_{1})\varphi_{(A,x)})+\dots+l((d_{m})\varphi_{(A,x)})$.
Extended from Lemma \ref{word length (Ha,x)},
we have that $(e_{a})\varphi_{(A,x)}=f_{i}(\overbar{f_{i}x^{-1}})(f_{a}x^{-1})$ for any $a$ such that $H_{a}\in A$, and $(e_{k})\varphi_{(A,x)}=f_{k}$ for any $k$ such that $H_{k}\not\in A$.
Thus if $d_{j}\simeq e_{a}$ where $H_{a}\in A$ then $l((d_{j})\varphi_{(A,x)})=3=l(d_{j})+2$ and if $d_{j}\simeq e_{k}$ where $H_{k}\not\in A$ then $l((d_{j})\varphi_{(A,x)})=1=l(d_{j})$.
Now:
\[l(w') = \sum_{j=1}^{m}l((d_{j})\varphi_{(A,x)}) = 3\smashoperator[lr]{\sum_{a:H_{a}\in A}} \Lambda_{w}(e_{a}) + \smashoperator[lr]{\sum_{k:H_{k}\not\in A}} \Lambda_{w}(e_{k}) = l(w)+2\smashoperator[lr]{\sum_{a:H_{a}\in A}} \Lambda(e_{a})\]

We now consider reductions to the word $w'$.
As in Lemma \ref{word length (Ha,x)}, we have that 
for any $a$ where $H_{a}\in A$, $(\overbar{e_{i}})\varphi_{(A,x)}(e_{a})\varphi_{(A,x)}=\overbar{f_{i}}f_{i}(\overbar{f_{i}x^{-1}})(f_{a}x^{-1})=(\overbar{f_{i}x^{-1}})(f_{a}x^{-1})$
and $(e_{i}x^{-1})\varphi_{(A,x)}(\overbar{e_{i}}e_{a})\varphi_{(A,x)}=(f_{i}x^{-1})(\overbar{f_{i}x^{-1}})(f_{a}x^{-1})=f_{a}x^{-1}$.
Let $w''$ be the result of applying all such reductions to $w'$. Then:
\[l(w'')=l(w')-2\smashoperator[lr]{\sum_{a:H_{a}\in A}} \left( \Lambda_{w}(\overbar{e_{i}}e_{a}) + \Lambda_{w}((e_{i}x^{-1})\overbar{e_{i}}e_{a}) \right) \]

Contrary to Lemma \ref{word length (Ha,x)}, $w''$ is not yet fully reduced.
Indeed, observe that for any distinct $a$ and $b$ with $H_{a},H_{b}\in A$, we have that $(\overbar{e_{a}}e_{b})\varphi_{(A,x)}= \\ \noindent (\overbar{f_{a}x^{-1}})(f_{i}x^{-1})\overbar{f_{i}} f_{i}(\overbar{f_{i}x^{-1}})(f_{b}x^{-1})=(\overbar{f_{a}x^{-1}})(f_{b}x^{-1})$.
Let $w'''$ be the result of applying all such reductions to $w''$ (including inversions and $G$-translations).
Then for each distinct $a$ and $b$ with $H_{a},H_{b}\in A$, the length of $(\overbar{e_{a}}e_{b})\varphi_{(A,x)}$ is 4 less in $w'''$ than it is in $w''$.
Note that for any $a$ and $b$ we have $\Lambda_{w}(\overbar{e_{a}}e_{b})=\Lambda_{w}(\overbar{ \overbar{e_{a}}e_{b}})=\Lambda_{w}(\overbar{e_{b}}e_{a})$.
Thus:
\[ l(w''')=l(w'')-\frac{1}{2}\smashoperator[l]{\sum_{a:H_{a}\in A}} \smashoperator[r]{\sum_{b:H_{b}\in A-\{H_{a}\}}} 4\Lambda_{w}(\overbar{e_{a}}e_{b})=l(w'')-2\smashoperator[l]{\sum_{a:H_{a}\in A}} \smashoperator[r]{\sum_{b:H_{b}\in A-\{H_{a}\}}} \Lambda_{w}(\overbar{e_{a}}e_{b}) \]

Since $w$ was assumed to be reduced, there are now no further reductions we can apply to $w'''$, hence $l(w''')=|(w)\varphi_{(A,x)}|_{\alpha_{2}}$.
We therefore have:
\begin{align*}
& |(w)\varphi_{(A,x)}|_{\alpha_{2}}	\\
=&	l(w''') 	\\
=&	l(w'')-2\smashoperator[l]{\sum_{a:H_{a}\in A}}\smashoperator[r]{ \sum_{b:H_{b}\in A-\{H_{a}\}}} \Lambda_{w}(\overbar{e_{a}}e_{b})	\\
=& 	l(w')-2\smashoperator[lr]{\sum_{a:H_{a}\in A}}\left( \Lambda_{w}(\overbar{e_{i}}e_{a})+\Lambda_{w}((e_{i}x^{-1})\overbar{e_{i}}e_{a})\right)-2\smashoperator[l]{\sum_{a:H_{a}\in A}} \smashoperator[r]{\sum_{b:H_{b}\in A-\{H_{a}\}}} \Lambda_{w}(\overbar{e_{a}}e_{b})	\\
=& 	l(w)+2\smashoperator[lr]{\sum_{a:H_{a}\in A}}\Lambda(e_{a})-2\smashoperator[l]{\sum_{a:H_{a}\in A}}\left(\Lambda_{w}(\overbar{e_{i}}e_{a})+\Lambda_{w}((e_{i}x^{-1})\overbar{e_{i}}e_{a})+ \smashoperator[lr]{\sum_{b:H_{b}\in A-\{H_{a}\}}} \Lambda_{w}(\overbar{e_{a}}e_{b})\right)	\\
=& 	|w|_{\alpha_{1}} +2\smashoperator[l]{\sum_{a:H_{a}\in A}}\left( \Lambda(e_{a})-\Lambda_{w}(\overbar{e_{i}}e_{a})-\Lambda_{w}((e_{i}x^{-1})\overbar{e_{i}}e_{a})-\smashoperator[lr]{\sum_{b:H_{b}\in A-\{H_{a}\}}} \Lambda_{w}(\overbar{e_{a}}e_{b}) \right)	.
\end{align*}
\end{proof}

\begin{lemma}\label{domain height (A,x)}
Let $\alpha_{1}$ be the $\alpha$-graph with $\mathfrak{S}$-labelling $(H_{1},\dots,H_{n})$, and let $(\mathbf{A},\mathbf{x})$ be a relative multiple Whitehead automorphism with respect to $\alpha_{1}$, where $\mathbf{x}\subset H_{i}$ for some $i$.
For brevity, we will write $\sum_{H_{a}\in A_{j}}$ for $\sum_{a:H_{a}\in A_{j}}$ (etc.).
If $\alpha_{2}=\alpha_{1}(\mathbf{A},\mathbf{x})$, then $||\alpha_{2}||-||\alpha_{1}||$ is equal to:
\[ 2\sum_{w\in\mathcal{W}}\sum_{A_{j}\in\mathbf{A}}\sum_{H_{a}\in A_{j}}\left(
\Lambda_{w}(e_{a}) - \Lambda_{w}(\bar{e_{i}}e_{a}) - \Lambda_{w}((e_{i}{x_{j}^{-1}})\bar{e_{i}}e_{a}) - \smashoperator[lr]{\sum_{H_{b}\in A_{j}-\{H_{a}\}}}\Lambda_{w}(\overbar{e_{a}}e_{b}) - \frac{1}{2}\smashoperator[lr]{\sum_{H_{c}\in\hat{A}-A_{j}}}\Lambda_{w}(\overbar{e_{a}}e_{c})
\right)\]
\end{lemma}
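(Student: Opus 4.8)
The strategy is to iterate Lemma \ref{word length (A,x)} over the constituent relative Whitehead automorphisms $(A_1,x_1),\dots,(A_k,x_k)$ making up $(\mathbf{A},\mathbf{x})$, then sum the resulting length-changes over all $w\in\mathcal{W}$. First I would write $(\mathbf{A},\mathbf{x}) = (A_1,x_1)(A_2,x_2)\cdots(A_k,x_k)$ and set $\beta_0:=\alpha_1$, $\beta_j:=\beta_{j-1}(A_j,x_j)$, so that $\beta_k=\alpha_2$. The key point, which must be checked carefully, is that because all the $x_j$ lie in the \emph{same} factor $H_i$ and the $A_j$ are pairwise disjoint (this is part of Definition \ref{defn rel whitehead auto}), the composition is ``already relative'' at each stage: the vertex labelled $H_i$ never moves, the edges $e_a$ for $H_a\notin\hat{A}$ are never touched, and for $H_a\in A_j$ the edge $e_a$ in $\hat\beta_{j-1}$ is the same 1-cell as in $\hat\alpha_1$ up to the $G$-equivariant relabelling. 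Hence I may apply Lemma \ref{word length (A,x)} at each stage with the edge-labels inherited from $\hat\alpha_1$ rather than having to re-coordinatise, and the composed map $\varphi_{(\mathbf{A},\mathbf{x})}=\varphi_{(A_1,x_1)}\circ\cdots\circ\varphi_{(A_k,x_k)}$ sends, for each pair $w=\{G_p,G_q\}\in\mathcal{W}$, the length-$2$ path in $\hat\alpha_1$ to the corresponding reduced path in $\hat\alpha_2$.

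Next I would record, for a fixed $w\in\mathcal{W}$, the total change $|(w)\varphi_{(\mathbf{A},\mathbf{x})}|_{\alpha_2} - |w|_{\alpha_1}$. Since $w$ is a path of length $2$ through $v$, say $w = \overbar{e_p}e_q$ (or a $G$-translate), the multiset of length-$2$ subwords occurring in $w$ is extremely restricted, and crucially it does not change under the relabelling at intermediate stages because those stages only rename $G$-translates. So telescoping the $k$ applications of Lemma \ref{word length (A,x)} gives
\begin{align*}
|(w)\varphi_{(\mathbf{A},\mathbf{x})}|_{\alpha_2}-|w|_{\alpha_1}
= 2\sum_{A_j\in\mathbf{A}}\sum_{H_a\in A_j}\Bigl(&\Lambda_w(e_a)-\Lambda_w(\overbar{e_i}e_a)-\Lambda_w((e_i x_j^{-1})\overbar{e_i}e_a)\\
&-\smashoperator[lr]{\sum_{H_b\in A_j-\{H_a\}}}\Lambda_w(\overbar{e_a}e_b)-\smashoperator[lr]{\sum_{H_c\in\hat{A}-A_j}}\Lambda_w(\overbar{e_a}e_c)\Bigr),
\end{align*}
where the final ``cross'' term $\Lambda_w(\overbar{e_a}e_c)$ with $H_c\in\hat{A}-A_j$ arises when a subword $\overbar{e_a}e_c$ has $H_a\in A_j$ and $H_c\in A_{j'}$ for some $j'\ne j$: such a subword gets shortened once (by $2$) at the stage where whichever of $A_j,A_{j'}$ is processed second. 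Summing over the ordered pair $(A_j,A_{j'})$ double-counts this contribution, which is why the factor $\tfrac12$ appears in the statement; I would verify this bookkeeping explicitly, as it is the one genuinely delicate point.

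Finally, summing over all $w\in\mathcal{W}$ and invoking the definition $||\alpha||=\sum_{w\in\mathcal{W}}(|w|_\alpha-2)$ — noting that the constant $-2$ cancels in the difference $||\alpha_2||-||\alpha_1||$ — yields precisely the claimed formula. I expect the main obstacle to be justifying the ``already relative at each stage'' claim rigorously, i.e. that after applying $(A_1,x_1)\cdots(A_{j-1},x_{j-1})$ the edges $e_a$ for $H_a\in A_j$ and $e_i$ are still the distinguished edges emanating from the appropriate trivial-stabiliser vertex, so that Lemma \ref{word length (A,x)} applies verbatim with the inherited labelling and does not introduce spurious reductions involving the already-processed factors; Lemma \ref{disjoint whitehead autos} and the disjointness hypothesis on the $A_j$ together with $\mathbf{x}\subset H_i$ are exactly what make this work, and Example \ref{eg change in word length} gives the model computation to mimic. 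The rest is routine counting of length-$2$ subwords in a length-$2$ path.
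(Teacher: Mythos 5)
Your plan diverges from the paper's proof in a way that introduces two genuine gaps.

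First, and most seriously, you assert that each $w\in\mathcal{W}$ ``is a path of length $2$ through $v$, say $w=\overbar{e_p}e_q$''. This is false. An element of $\mathcal{W}$ is a pair of the \emph{original} factors $\{G_p,G_q\}$, and $|w|_{\alpha_1}$ is the length of the geodesic in $\hat\alpha_1$ between the vertices stabilised by $G_p$ and $G_q$. For a general domain $\alpha_1$ with labelling $(G_1^{g_1},\dots,G_n^{g_n})$ these vertices are $G$-translates of the leaves of the fundamental star, and the geodesic between them is an arbitrarily long reduced edge path --- indeed $\|\alpha_1\|$ measures precisely the total excess of these lengths over $2$, and is zero only for $\alpha_0$ (Lemma \ref{zero height is basepoint}). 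So your concluding step, ``routine counting of length-$2$ subwords in a length-$2$ path'', does not apply; the lemma must be proved for arbitrary reduced edge paths $w$, which is what the paper does.

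Second, the telescoping of Lemma \ref{word length (A,x)} over the stages $(A_1,x_1),\dots,(A_k,x_k)$ is not justified by your claim that the intermediate relabellings ``only rename $G$-translates'' and hence leave the subword counts unchanged. Applying $(A_1,x_1)$ replaces each edge $e_a$ with $H_a\in A_1$ by the length-$3$ path $f_i(\overbar{f_i x_1^{-1}})(f_a x_1^{-1})$; this \emph{creates} new occurrences of subwords of the form $\overbar{f_i}f_c$ (exactly the occurrences that the next stage cancels, producing your cross term) and similarly perturbs the counts of the length-$3$ patterns $(f_i x_{j'}^{-1})\overbar{f_i}f_c$. So at stage $j$ the quantities $\Lambda$ must be evaluated in the reduced image of $w$ in $\hat\beta_{j-1}$, and re-expressing them in terms of counts in the original $w$ is precisely the bookkeeping your plan omits. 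The paper sidesteps this entirely: it writes down the image of every edge under the single composite map $\varphi_{(\mathbf{A},\mathbf{x})}$ (namely $(e_a)\varphi = f_i(\overbar{f_i x_j^{-1}})(f_a x_j^{-1})$ for $H_a\in A_j$ and $(e_k)\varphi=f_k$ otherwise), forms the unreduced image of $w$ in one pass, and enumerates all cancellations there, including the cross-block cancellation $(\overbar{e_a}e_c)\varphi=(\overbar{f_a x_j^{-1}})(f_i x_j^{-1})\overbar{f_i}\,f_i(\overbar{f_i x_{j'}^{-1}})(f_c x_{j'}^{-1})$, which drops the length by $2$ per occurrence and, after accounting for $\Lambda_w(\overbar{e_a}e_c)=\Lambda_w(\overbar{e_c}e_a)$ over ordered pairs of blocks, yields the $\tfrac12$-weighted term. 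If you want to keep an iterative argument you would need to prove the intermediate-count conversion explicitly; as written, the two claims you lean on to avoid it are both incorrect.
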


\begin{proof}
We have that $(\mathbf{A},\mathbf{x})=(A_{1},x_{1})\dots(A_{K},x_{K})$ for some disjoint subsets $A_{1},\dots,A_{K}\subset\{H_{1},\dots,H_{n}\}-\{H_{i}\}$ and some distinct $x_{1},\dots,x_{K}\in H_{i}$.
We consider a word $w=d_{1}\dots d_{m}$ in $\hat{\alpha}_{1}$ and its unreduced image $w'=(d_{1})\varphi_{(\mathbf{A},\mathbf{x})}\dots(d_{m})\varphi_{(\mathbf{A},\mathbf{x})}$ in $\hat{\alpha}_{2}$.
For any word $u$, let $l(u)$ be its unreduced length.
As in Lemmas \ref{word length (Ha,x)} and \ref{word length (A,x)}, we have that for any $a$ with $H_{a}\in A_{j}\in\mathbf{A}$, $(e_{a})\varphi_{(\mathbf{A},\mathbf{x})}=f_{i}(\overbar{f_{i}x_{j}^{-1}})(f_{a}x_{j}^{-1})$, and for any $k$ with $H_{k}\not\in\hat{A}$, $(e_{k})\varphi_{(\mathbf{A},\mathbf{x})}=f_{k}$.
Thus $l(w')=l(w)+2\sum_{H_{a}\in\hat{A}} \Lambda_{w}(e_{a}) = |w|_{\alpha_{1}}+2\sum_{A_{j}\in\mathbf{A}}\sum_{H_{a}\in A_{j}} \Lambda_{w}(e_{a})$.

As in Lemma \ref{word length (A,x)}, each $(A_{j},x_{j})$ leads to reductions of the forms 
$(\overbar{e_{i}}e_{a})\varphi_{(\mathbf{A},\mathbf{x})}=\overbar{f_{i}}f_{i}(\overbar{f_{i}x_{j}^{-1}})(f_{a}x_{j}^{-1})=(\overbar{f_{i}x_{j}^{-1}})(f_{a}x_{j}^{-1})$,
$((e_{i}x_{j}^{-1})\overbar{e_{i}}e_{a})\varphi_{(\mathbf{A},\mathbf{x})}=(f_{i}x_{j}^{-1})(\overbar{f_{i}x_{j}^{-1}})(f_{a}x_{j}^{-1})=f_{a}x^{-1}$,
and $(\overbar{e_{a}}e_{b})\varphi_{(\mathbf{A},\mathbf{x})}=(\overbar{f_{a}x_{j}^{-1}})(f_{i}x_{j}^{-1})\overbar{f_{i}} f_{i}(\overbar{f_{i}x_{j}^{-1}})(f_{b}x_{j}^{-1})=(\overbar{f_{a}x_{j}^{-1}})(f_{b}x_{j}^{-1})$,
where $a$ and $b$ are such that $H_{a}$ and $H_{b}$ are distinct elements of $A_{j}$.
Let $w''$ be the result of applying all such reductions to $w'$, and observe then that:
\begin{align*}
 l(w'')	&=	l(w')-2\sum_{A_{j}\in\mathbf{A}}\sum_{H_{a}\in A_{j}}\left( \Lambda_{w}(\overbar{e_{i}}e_{a}) + \Lambda_{w}((e_{i}x_{j}^{-1})\overbar{e_{i}}e_{a}) + \smashoperator[lr]{\sum_{H_{b}\in A_{j}-\{H_{a}\}}} \Lambda_{w}(\overbar{e_{a}}e_{b}) \right) 	\\
 	&=	|w|_{\alpha_{1}} + 2\sum_{A_{j}\in\mathbf{A}}\sum_{H_{a}\in A_{j}}\left( \Lambda_{w}(e_{a}) - \Lambda_{w}(\overbar{e_{i}}e_{a}) - \Lambda_{w}((e_{i}x_{j}^{-1})\overbar{e_{i}}e_{a}) - \smashoperator[lr]{\sum_{H_{b}\in A_{j}-\{H_{a}\}}} \Lambda_{w}(\overbar{e_{a}}e_{b}) \right) 
 \end{align*}
 
 We now consider further reductions to $w''$ which come from interactions between distinct $(A_{j},x_{j})$ and $(A_{k},x_{k})$.
 Suppose that $H_{a}\in A_{j}$ and $H_{c}\in A_{k}$, and observe that
 \begin{align*}
 (\overbar{e_{a}}e_{c})\varphi_{(\mathbf{A},\mathbf{x})}	&=	(\overbar{e_{a}})\varphi_{(\mathbf{A},\mathbf{x})}(e_{c})\varphi_{(\mathbf{A},\mathbf{x})}	\\
										&=	(\overbar{f_{a}x_{j}^{-1}})(f_{i}x_{j}^{-1})\overbar{f_{i}}f_{i}(\overbar{f_{i}x_{k}^{-1}})(f_{c}x_{k}^{-1})	\\
										&=	(\overbar{f_{a}x_{j}^{-1}})(f_{i}x_{j}^{-1})(\overbar{f_{i}x_{k}^{-1}})(f_{c}x_{k}^{-1})	.
\end{align*}
Let $w'''$ be the result of applying all such reductions to $w''$, and note that the length of $(\overbar{e_{a}}e_{c})\varphi_{(\mathbf{A},\mathbf{x})}$ is 2 less in $w'''$ than it is in $w''$.
Recall that for any $a$ and $c$, $\Lambda_{w}(\overbar{e_{a}}e_{c})=\Lambda_{w}(\overbar{e_{c}}e_{a})$.
Thus $l(w''')=l(w'')-\frac{1}{2}\sum_{A_{j}\in\mathbf{A}}\sum_{H_{a}\in A_{j}}\sum_{H_{c}\in\hat{A}-A_{j}} 2\Lambda_{w}(\overbar{e_{a}}e_{c})$.

Since $w$ was assumed to be reduced, we now have that there are no further reductions to $w'''$.
Thus:
\begin{align*}
	&|(w)\varphi_{(\mathbf{A},\mathbf{x})}|_{\alpha_{2}}	\\
=	&	l(w''')	\\
=	&	l(w'')-2\sum_{A_{j}\in\mathbf{A}}\sum_{H_{a}\in A_{j}}\sum_{H_{c}\in\hat{A}-A_{j}} \frac{1}{2}\Lambda_{w}(\overbar{e_{a}}e_{c})			\\
=	&	|w|_{\alpha_{1}} + 2\sum_{A_{j}\in\mathbf{A}}\sum_{H_{a}\in A_{j}}\left( \Lambda_{w}(e_{a}) - \Lambda_{w}(\overbar{e_{i}}e_{a}) - \Lambda_{w}((e_{i}x_{j}^{-1})\overbar{e_{i}}e_{a}) - \smashoperator[lr]{\sum_{H_{b}\in A_{j}-\{H_{a}\}}} \Lambda_{w}(\overbar{e_{a}}e_{b}) \right)	\\
	&	 -2\sum_{A_{j}\in\mathbf{A}}\sum_{H_{a}\in A_{j}}\sum_{H_{c}\in\hat{A}-A_{j}} \frac{1}{2}\Lambda_{w}(\overbar{e_{a}}e_{c})	\\
=	&	|w|_{\alpha_{1}} + 2\smashoperator[l]{\sum_{A_{j}\in\mathbf{A}}}\smashoperator[r]{\sum_{H_{a}\in A_{j}}} \ \left( \Lambda_{w}(e_{a}) - \Lambda_{w}(\overbar{e_{i}}e_{a}) - \Lambda_{w}((e_{i}x_{j}^{-1})\overbar{e_{i}}e_{a}) - \smashoperator[lr]{\sum_{H_{b}\in A_{j}-\{H_{a}\}}} \Lambda_{w}(\overbar{e_{a}}e_{b}) \right.	\\
	&	\left. - \frac{1}{2}\smashoperator[lr]{\sum_{H_{c}\in\hat{A}-A_{j}}} \Lambda_{w}(\overbar{e_{a}}e_{c}) \right) 
\end{align*}

Now:
\begin{align*}
||\alpha_{2}||=	&	\smashoperator[lr]{\sum_{w\in\mathcal{W}}}\left(|w|_{\alpha_{2}}\right)-2|\mathcal{W}| 		\\
=	&	\smashoperator[lr]{\sum_{w\in\mathcal{W}}}\left(|w|_{\alpha_{1}} + 2\smashoperator[l]{\sum_{A_{j}\in\mathbf{A}}}\smashoperator[r]{\sum_{H_{a}\in A_{j}}} \ \left(\vphantom{\sum_{H_{c}\in\hat{A}-A_{j}}} \Lambda_{w}(e_{a}) - \Lambda_{w}(\overbar{e_{i}}e_{a}) - \Lambda_{w}((e_{i}x_{j}^{-1})\overbar{e_{i}}e_{a}) 	- \smashoperator[lr]{\sum_{H_{b}\in A_{j}-\{H_{a}\}}} \Lambda_{w}(\overbar{e_{a}}e_{b}) \right.\right.	\\
	&	\left.\left.  - \frac{1}{2}\smashoperator[lr]{\sum_{H_{c}\in\hat{A}-A_{j}}} \Lambda_{w}(\overbar{e_{a}}e_{c}) \right) \right)-2|\mathcal{W}| 	\\
=	&	2\smashoperator[l]{\sum_{w\in\mathcal{W}}}\sum_{A_{j}\in\mathbf{A}}\smashoperator[r]{\sum_{H_{a}\in A_{j}}} \ \left( \Lambda_{w}(e_{a}) - \Lambda_{w}(\overbar{e_{i}}e_{a}) - \Lambda_{w}((e_{i}x_{j}^{-1})\overbar{e_{i}}e_{a}) - \smashoperator[lr]{\sum_{H_{b}\in A_{j}-\{H_{a}\}}} \Lambda_{w}(\overbar{e_{a}}e_{b}) \right. 	\\	
	&	\left. - \frac{1}{2}\smashoperator[lr]{\sum_{H_{c}\in\hat{A}-A_{j}}} \Lambda_{w}(\overbar{e_{a}}e_{c}) \right) +\smashoperator[lr]{\sum_{w\in\mathcal{W}}}\left(|w|_{\alpha_{1}}\right)-2|\mathcal{W}| 	\\
=	&	2\smashoperator[l]{\sum_{w\in\mathcal{W}}}\sum_{A_{j}\in\mathbf{A}}\smashoperator[r]{\sum_{H_{a}\in A_{j}}} \ \left( \Lambda_{w}(e_{a}) - \Lambda_{w}(\overbar{e_{i}}e_{a}) - \Lambda_{w}((e_{i}x_{j}^{-1})\overbar{e_{i}}e_{a}) - \smashoperator[lr]{\sum_{H_{b}\in A_{j}-\{H_{a}\}}} \Lambda_{w}(\overbar{e_{a}}e_{b}) \right.	\\	
	&	\left. - \frac{1}{2}\smashoperator[lr]{\sum_{H_{c}\in\hat{A}-A_{j}}} \Lambda_{w}(\overbar{e_{a}}e_{c}) \right)+||\alpha_{1}||	.
\end{align*}
\end{proof}

\begin{rem}
Since an edge path $w$ in $\hat{\alpha}_{1}$ is uniquely defined by its endpoints, which are preseved by the map $\varphi_{(\mathbf{A},\mathbf{x})}$, we will often write $|w|_{\alpha_{2}}$ for $|(w)\varphi_{(\mathbf{A},\mathbf{x})}|_{\alpha_{2}}$.
\end{rem}

\subsection{Reducible Peaks}% cases of path alpha_1 --- alpha_2 --- alpha_3 :

\begin{defn}\label{defn peak}
We will say a path $\alpha_{1}\dash \alpha_{2}\dash \alpha_{3}$ in our Graph/Space of Domains is a \emph{peak} if $||\alpha_{2}||\ge||\alpha_{1}||$ and $||\alpha_{2}||\ge||\alpha_{3}||$, and either $||\alpha_{2}||>||\alpha_{1}||$ or $||\alpha_{2}||>||\alpha_{3}||$ (or both).
Equivalently, $\alpha_{1}\dash \alpha_{2}\dash \alpha_{3}$ is a peak if $||\alpha_{2}||\ge\max\left(||\alpha_{1}||,||\alpha_{3}||\right)$ and $||\alpha_{2}||>\min\left(||\alpha_{1}||,||\alpha_{3}||\right)$.
\end{defn}

In this section, we claim that given a path $\alpha_{1}\dash \alpha_{2}\dash \alpha_{3}$, there exist domains $\alpha_{4}$ and $\alpha_{5}$ such that
\begin{tikzpicture}
\node at (0,0) {$\alpha_{1}$};
\node at (1.5,1) {$\alpha_{2}$};
\node at (3,0) {$\alpha_{3}$};
\node at (1.5,-1) {$\alpha_{4}$};

\draw[thick] (0.18,0.12) -- (1.32,0.88); % 1--2
\draw[thick] (1.68,0.88) -- (2.76,0.16); % 2--3
\draw[thick] (0.18,-0.12) -- (1.26,-0.84); % 1--4
\draw[thick] (1.74,-0.84) -- (2.76,-0.16); % 4--3
\end{tikzpicture}
or
\begin{tikzpicture}
\node at (0,0) {$\alpha_{1}$};
\node at (1.5,1) {$\alpha_{2}$};
\node at (3,0) {$\alpha_{3}$};
\node at (1,-1) {$\alpha_{4}$};
\node at (2,-1) {$\alpha_{5}$};

\draw[thick] (0.18,0.12) -- (1.32,0.88); % 1--2
\draw[thick] (1.68,0.88) -- (2.76,0.16); % 2--3
\draw[thick] (0.15,-0.15) -- (0.85,-0.85); % 1--4
\draw[thick] (1.2,-1) -- (1.75,-1); % 4--5
\draw[thick] (2.15,-0.85) -- (2.85,-0.15); % 5--3
\end{tikzpicture}
forms a loop in the Graph of Domains.

Moreover, we claim that this loop is contractible in our Space of Domains.
Further, we claim that if $\alpha_{1}\dash \alpha_{2}\dash \alpha_{3}$ was a peak, then 
$\alpha_{1}\dash \alpha_{4}\dash \alpha_{3}$ or $\alpha_{1}\dash \alpha_{4}\dash \alpha_{5}\dash \alpha_{3}$
is a reduction; that is, $||\alpha_{4}||<||\alpha_{2}||$ and (if we are in the second of these cases) $||\alpha_{5}||<||\alpha_{2}||$.
In other words, we are claiming that the peak is \emph{reducible}.

Later in this section we will encounter many lemmas, divided into multiple cases. The series of lemmas in each case will roughly follow the structure outlined above.
These will often correspond to lemmas used by Gilbert \cite[Section 2]{Gilbert1987}, but as Gilbert is reducing (cyclic) words of $G$ and we are reducing domains in $\mathcal{C}_{n}$, the proofs are quite different. Recall as well that the notation we use differs subtly to that used by Gilbert.

\begin{defn}\label{defn reducible}
We say a peak
$\alpha_{1}\dash \alpha_{2}\dash \alpha_{3}$ 
is \emph{reducible} (or `can be reduced') if the path $\alpha_{1}\dash \alpha_{2}\dash \alpha_{3}$ is homotopic (in the Space of Domains) to some path $\alpha_{1}=\chi_{0}\dash \chi_{1}\dash \cdots\dash \chi_{k\dash 1}\dash \chi_{k}=\alpha_{3}$ where $||\chi_{i}||<||\alpha_{2}||$ for every $1\le i\le k- 1$.
\end{defn}

\begin{prop}\label{prop i=j peaks reducible} 
Suppose
\begin{tikzcd}[cramped]
\alpha_{1} \ar[r,-<-,dash,"(\mathbf{A}{,}\mathbf{x})"]	& \alpha_{2} \ar[r,->-,dash,"(\mathbf{B}{,}\mathbf{y})"]	& \alpha_{3}
\end{tikzcd}
is a peak in the Space of Domains (whose edges are both of Type $A$), where the $\alpha$-graph in the domain $\alpha_{2}$ has $\mathfrak{S}$-labelling $(H_{1},\dots,H_{n})$.
If there is some $i\in\{1,\dots,n\}$ so that $\mathbf{x},\mathbf{y}\subset H_{i}$, then this peak is reducible.
\end{prop}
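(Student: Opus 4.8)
The plan is to exploit the common operating factor $H_i$ to rewrite the two-step path $\alpha_1 \mathrel{-} \alpha_2 \mathrel{-} \alpha_3$ as a single edge or a short path that avoids the high vertex $\alpha_2$. Since both $(\mathbf{A},\mathbf{x})$ and $(\mathbf{B},\mathbf{y})$ have their $\mathbf{x}$- and $\mathbf{y}$-letters inside the same factor $H_i$, the composition $(\mathbf{A},\mathbf{x})^{-1}(\mathbf{B},\mathbf{y})$ (read with the correct base-domain conventions of Observation~\ref{obs geometric whitehead argument}) is again a relative multiple Whitehead automorphism with all letters in $H_i$; equivalently, the corresponding graphs of groups all share the $A_i$-structure, so $A_i \in \alpha_1 \cap \alpha_2 \cap \alpha_3$ and in particular $\alpha_1 \mathrel{-} \alpha_3$ is an edge of Type $A$. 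This immediately gives a $3$-cycle $\alpha_1 \mathrel{-} \alpha_2 \mathrel{-} \alpha_3 \mathrel{-} \alpha_1$ in the Graph of Domains, and since $A_i$ witnesses a common point, Definition~\ref{Space of Domains defn} supplies the $2$-cell $[\alpha_1,\alpha_2,\alpha_3]$, so the path $\alpha_1 \mathrel{-} \alpha_2 \mathrel{-} \alpha_3$ is homotopic (rel endpoints) in the Space of Domains to the single edge $\alpha_1 \mathrel{-} \alpha_3$.

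Next I would check that this replacement actually reduces height, i.e. that $\|\alpha_1\| < \|\alpha_2\|$ or $\|\alpha_3\| < \|\alpha_2\|$ forces the length-one path $\alpha_1 \mathrel{-} \alpha_3$ to have no interior vertices of height $\ge \|\alpha_2\|$. But a path of length $1$ has no interior vertices at all, so the reducibility condition of Definition~\ref{defn reducible} is vacuously satisfied (with $k=1$, $\chi_0 = \alpha_1$, $\chi_1 = \alpha_3$). Thus once the homotopy is established, reducibility is automatic. The only genuine content is therefore the geometric/algebraic claim that $A_i$ lies in all three domains, which I would justify using Lemma~\ref{A edges have (A,x) autos} and Example~\ref{eg (H3,h1)}: the $A_i$-graph in $\alpha_2$ (with central vertex $H_i$) is fixed by every relative Whitehead automorphism whose operating factor is $H_i$ and whose letters lie in $H_i$, because such automorphisms lie in $\stab(A_i)$ by Proposition~\ref{prop brown stabs 2}; hence $A_i \in \alpha_2 \cap (\alpha_2\cdot(\mathbf{A},\mathbf{x})) = \alpha_2 \cap \alpha_1$ and similarly $A_i \in \alpha_2 \cap \alpha_3$.

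The main obstacle I anticipate is bookkeeping with the \emph{relative} nature of the Whitehead automorphisms: $(\mathbf{A},\mathbf{x})$ is written relative to $\alpha_1$ and $(\mathbf{B},\mathbf{y})$ relative to $\alpha_2$, so one must be careful (per Observation~\ref{obs geometric whitehead argument}) that ``$\mathbf{x},\mathbf{y} \subset H_i$'' is really the correct hypothesis after accounting for which domain each automorphism acts on — here it is, precisely because $A_i$ is stabilised, so $\alpha_1$ and $\alpha_2$ assign the \emph{same} group $H_i$ to the central vertex of $A_i$, and similarly for $\alpha_3$. I would spell out that $\stab(A_i)$ computed relative to $\alpha_1$ equals $\stab(A_i)$ computed relative to $\alpha_2$ (they are literally the stabiliser of the same vertex of $\mathcal{C}_n$), so both $(\mathbf{A},\mathbf{x})$ (viewed as taking $\alpha_2$ to $\alpha_1$, i.e. its inverse) and $(\mathbf{B},\mathbf{y})$ lie in it, and conclude. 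No height computation via Lemma~\ref{domain height (A,x)} is needed in this case; that machinery is reserved for the harder mixed-factor cases $i \ne j$ that follow.
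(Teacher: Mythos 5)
Your argument is essentially the paper's own proof: both automorphisms lie in $\stab(A_{i})$ for the $A$-graph with central vertex group $H_{i}$ in $\alpha_{2}$, so $A_{i}\in\alpha_{1}\cap\alpha_{2}\cap\alpha_{3}$, the $2$-cell $[\alpha_{1},\alpha_{2},\alpha_{3}]$ exists, and the peak collapses to the edge $\alpha_{1}\dash\alpha_{3}$, with reducibility vacuous for a length-one path. The only point you skip is the degenerate case $\alpha_{1}=\alpha_{3}$ (e.g.\ when $(\mathbf{A},\mathbf{x})=(\mathbf{B},\mathbf{y})$), where no edge $\alpha_{1}\dash\alpha_{3}$ or $2$-cell on distinct vertices exists; the paper treats this separately by contracting the backtracking traversal $\alpha_{1}\dash\alpha_{2}\dash\alpha_{1}$ to the constant path at $\alpha_{1}$, which is a valid reduction since a peak forces $\|\alpha_{1}\|<\|\alpha_{2}\|$ there.
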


This proposition corresponds to \cite[Lemma 2.4]{Gilbert1987}.

\begin{proof}
If $(\mathbf{A},\mathbf{x})=(\mathbf{B},\mathbf{y})$, then $\alpha_{1}=\alpha_{2}(\mathbf{A},\mathbf{x})=\alpha_{2}(\mathbf{B},\mathbf{y})=\alpha_{3}$.
Then our peak is really the loop $\alpha_{1}\dash \alpha_{2}\dash \alpha_{1}$. Since this is a forwards and backwards traversal of a single edge in the Space of Domains, then this is clearly contractible to the point $\alpha_{1}$.
By Definition \ref{defn peak}, we must have that $||\alpha_{1}||<||\alpha_{2}$.
Thus the constant `path' at $\alpha_{1}$ is a reduction of the peak $\alpha_{1}\dash \alpha_{2}\dash \alpha_{1}$.

Now suppose $(\mathbf{A},\mathbf{x})\ne(\mathbf{B},\mathbf{y})$. Since $\mathbf{x},\mathbf{y}\subset H_{i}$, then the $A$-graph in the domain $\alpha_{2}$ with central vertex group $H_{i}$ (call it $A_{i}$) belongs to both $\alpha_{2}\cap\alpha_{1}$ and $\alpha_{2}\cap\alpha_{3}$.
In particular, $A_{i}\in\alpha_{1}\cap\alpha_{3}$, so there is an edge $[\alpha_{1},\alpha_{3}]$ between $\alpha_{1}$ and $\alpha_{3}$ in the space of domains.
Moreover, $A_{i}\in\alpha_{1}\cap\alpha_{2}\cap\alpha_{3}$, so there is a 2-cell $[\alpha_{1},\alpha_{2},\alpha_{3}]$ in the Space of Domains.
Hence the path $\alpha_{1}\dash \alpha_{2}\dash \alpha_{3}$ is homotopic in the Space of Domains to the single edge path $\alpha_{1}\dash \alpha_{3}$.
The condition in Definition \ref{defn reducible} is vacuously satisfied here, thus $\alpha_{1}\dash \alpha_{3}$ is a reduction of the peak $\alpha_{1}\dash \alpha_{2}\dash \alpha_{3}$.
\end{proof}

From now on, we will be considering peaks of the form
\begin{tikzcd}[cramped,sep=small]
\alpha_{1} \ar[r,dash,"A_{i}"]	& \alpha_{2} \ar[r,dash,"A_{j}"]	& \alpha_{3}
\end{tikzcd}
where $i\ne j$, that is, paths
\begin{tikzcd}[cramped]
\alpha_{1} \ar[r,-<-,dash,"(\mathbf{A}{,}\mathbf{x})"]	& \alpha_{2} \ar[r,->-,dash,"(\mathbf{B}{,}\mathbf{y})"]	& \alpha_{3}
\end{tikzcd}
where $\mathbf{x}$ and $\mathbf{y}$ belong to different factor groups of the splitting associated to $\alpha_{2}$.
If the $\alpha$-graph contained in the domain $\alpha_{2}$ has $\mathfrak{S}$-labelling $(H_{1},\dots,H_{n})$, let $i$ and $j$ be the (distinct) elements of $\{1,\dots,n\}$ such that $\mathbf{x}\subset H_{i}$ and $\mathbf{y}\subset H_{j}$.
Recall from Definition \ref{defn rel whitehead auto} that $\mathbf{A}=(A_{1},\dots,A_{k})$, a disjoint partition of a subset of $\{H_{1},\dots,H_{n}\}$, and $\hat{A}:=A_{1}\cup\dots\cup A_{k}$.
Similarly, $\mathbf{B}=(B_{1},\dots,B_{l})$ is another disjoint partition of some subset of $\{H_{1},\dots H_{n}\}$ and $\hat{B}:=B_{1}\cup\dots\cup B_{l}$. 

\begin{obs}\label{obs cases of peak}
Note that we necessarily have $H_{i}\not\in\hat{A}$ and $H_{j}\not\in\hat{B}$.
We adopt the four cases used by Gilbert \cite[Lemma 2.12]{Gilbert1987}:

\begin{description}
\item[Case 1:] $H_{i}\not\in\hat{B}$ and $H_{j}\not\in\hat{A}$
\item[Case 2:] $H_{i}\in\hat{B}$ and $H_{j}\not\in\hat{A}$
\item[Case 3:] $H_{i}\not\in\hat{B}$ and $H_{j}\in\hat{A}$
\item[Case 4:] $H_{i}\in\hat{B}$ and $H_{j}\in\hat{A}$
\end{description}

As Cases 2 and 3 are symmetric, we will not consider Case 3 (since after renaming, this will be identical to Case 2).
Additionally, if $H_{j}\in\hat{A}$ then there exists $A_{p}\in\mathbf{A}$ with $H_{j}\in A_{p}$,
and if $H_{i}\in \hat{B}$, then there exists $B_{q}\in\mathbf{B}$ with $H_{i}\in B_{q}$.
As Gilbert does in \cite[Lemma 2.12]{Gilbert1987}, we further split the remaining cases as follows:

\begin{description}
\item[Case 1(a):] $H_{i}\not\in\hat{B}$ and $H_{j}\not\in\hat{A}$, with $\hat{A}\cap\hat{B}=\emptyset$
\item[Case 1(b):] $H_{i}\not\in\hat{B}$ and $H_{j}\not\in\hat{A}$, with $\hat{A}\cap\hat{B}\neq\emptyset$
\item[Case 2(a):] $H_{i}\in\hat{B}$ (say $H_{i}\in B_{q}$) and $H_{j}\not\in\hat{A}$, with $\hat{A}\subseteq B_{q}$
\item[Case 2(b):] $H_{i}\in\hat{B}$ (say $H_{i}\in B_{q}$) and $H_{j}\not\in\hat{A}$, with $\hat{A}\not\subseteq B_{q}$
\item[Case 4:] $H_{i}\in\hat{B}$ and $H_{j}\in\hat{A}$ (say $H_{i}\in B_{q}$ and $H_{j}\in A_{p}$)
\end{description}
\end{obs}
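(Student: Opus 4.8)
The plan is to verify the two assertions bundled into this observation: the support constraints $H_{i}\notin\hat{A}$ and $H_{j}\notin\hat{B}$, and the claim that the listed subcases (together with the suppressed symmetric Case 3) exhaust every peak $\alpha_{1}\dash\alpha_{2}\dash\alpha_{3}$ of the remaining form, with the indices $p$ and $q$ well-defined wherever they appear.

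First I would dispose of the support constraints. By Definition \ref{defn rel whitehead auto}, the relative multiple Whitehead automorphism $(\mathbf{A},\mathbf{x})$ with $\mathbf{x}\subset H_{i}$ is a composition $(A_{1},x_{1})\cdots(A_{k},x_{k})$ in which every $A_{r}$ is a pairwise disjoint subset of $\{H_{1},\dots,H_{n}\}-\{H_{i}\}$; hence $\hat{A}=A_{1}\cup\dots\cup A_{k}\subseteq\{H_{1},\dots,H_{n}\}-\{H_{i}\}$ and so $H_{i}\notin\hat{A}$. The identical argument for $(\mathbf{B},\mathbf{y})$ with $\mathbf{y}\subset H_{j}$ gives $H_{j}\notin\hat{B}$. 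Since the peak under consideration has $i\ne j$, the only membership questions left open are whether $H_{i}\in\hat{B}$ and whether $H_{j}\in\hat{A}$; as these are two independent yes/no alternatives, they produce exactly the four cases 1--4, which therefore cover every peak of this form.

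Next I would justify discarding Case 3. Both the peak condition of Definition \ref{defn peak} and the reducibility condition of Definition \ref{defn reducible} are unchanged under reversing a path $\alpha_{1}\dash\alpha_{2}\dash\alpha_{3}$ to $\alpha_{3}\dash\alpha_{2}\dash\alpha_{1}$, since each is symmetric in the two endpoints, and the reversal of a Type $A$ edge is again a Type $A$ edge. By Lemma \ref{A edges have (A,x) autos}, reversing our peak simply interchanges the data $(\mathbf{A},\mathbf{x},H_{i})$ of one edge with the data $(\mathbf{B},\mathbf{y},H_{j})$ of the other, so it carries Case 2 (where $H_{i}\in\hat{B}$ but $H_{j}\notin\hat{A}$) to Case 3 (where $H_{i}\notin\hat{B}$ but $H_{j}\in\hat{A}$) and fixes Cases 1 and 4. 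Consequently a reducibility proof in Cases 1, 2 and 4 yields the same in Case 3, so these three cases suffice.

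Finally I would check the subcase bookkeeping. When $H_{i}\in\hat{B}$, the pairwise disjointness of $B_{1},\dots,B_{l}$ forces a \emph{unique} index $q$ with $H_{i}\in B_{q}$; likewise, when $H_{j}\in\hat{A}$, a unique index $p$ has $H_{j}\in A_{p}$, so the descriptions in Cases 2 and 4 are unambiguous. Within Case 1 the alternatives $\hat{A}\cap\hat{B}=\emptyset$ and $\hat{A}\cap\hat{B}\ne\emptyset$ are visibly complementary, splitting it into 1(a) and 1(b); within Case 2 the alternatives $\hat{A}\subseteq B_{q}$ and $\hat{A}\not\subseteq B_{q}$ are likewise complementary, splitting it into 2(a) and 2(b); Case 4 needs no further division. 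Hence 1(a), 1(b), 2(a), 2(b) and 4 (plus the symmetric Case 3) are mutually exclusive and collectively exhaustive. There is no real obstacle in the observation itself --- its whole content is that relative multiple Whitehead automorphisms come equipped with disjoint support data --- so the genuine work, namely reducing the peak in each individual subcase, is the business of the lemmas that follow.
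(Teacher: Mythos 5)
Your proposal is correct and matches the paper's (implicit) reasoning: the observation carries no separate proof in the paper, since the support constraints are immediate from Definition \ref{defn rel whitehead auto}, the four cases are the two remaining independent binary alternatives, Case 3 reduces to Case 2 by reversing the peak, and the subcases are visibly complementary. Your write-up simply makes these routine verifications explicit, so there is nothing to add.
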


We now present a series of lemmas in order to prove that in each of the above cases, the peak
\begin{tikzcd}[cramped]
\alpha_{1} \ar[r,-<-,dash,"(\mathbf{A}{,}\mathbf{x})"]	& \alpha_{2} \ar[r,->-,dash,"(\mathbf{B}{,}\mathbf{y})"]	& \alpha_{3}
\end{tikzcd} is reducible.

% ------------------------------------------------------------------------------------ case 1a :

\subsubsection*{Case 1(a): ${H_{i}\not\in\hat{B}}$ and ${H_{j}\not\in\hat{A}}$, with ${\hat{A}\cap\hat{B}=\emptyset}$}

The lemmas for this case are adapted from \cite[Lemma 2.6]{Gilbert1987}.

\begin{lemma}\label{1a loop} % 1a loop
If $H_{i}\not\in\hat{B}$ and $H_{j}\not\in\hat{A}$ with $\hat{A}\cap\hat{B}=\emptyset$, then $(\mathbf{A},\mathbf{x})^{-1}(\mathbf{B},\mathbf{y})=(\mathbf{B},\mathbf{y})(\mathbf{A},\mathbf{x})^{-1}$.
\end{lemma}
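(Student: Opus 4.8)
\textbf{Proposal for the proof of Lemma \ref{1a loop}.}

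The plan is to argue geometrically using the $\alpha$-graph picture developed in Section \ref{whitehead autos}, exactly as in the proof of Lemma \ref{disjoint whitehead autos}. The key observation is that the hypotheses $H_{i}\notin\hat{B}$, $H_{j}\notin\hat{A}$ and $\hat{A}\cap\hat{B}=\emptyset$ say precisely that the two multiple Whitehead automorphisms involved have operating factors ($H_{i}$ for $\mathbf{x}$, $H_{j}$ for $\mathbf{y}$) which do not appear among each other's dependent sets, and whose dependent sets $\hat{A}$ and $\hat{B}$ are disjoint. First I would note that $(\mathbf{A},\mathbf{x})^{-1}$ is itself a relative multiple Whitehead automorphism with operating factor $H_{i}$: by Lemma \ref{lemma whitehead properties}(2) and Notation \ref{notation whitehead autos}, $(A_{t},x_{t})^{-1}=(A_{t}^{x_{t}},x_{t}^{-1})$, and since the operating factor is still $H_{i}$ and $H_{i}\notin\hat{A}$ implies $H_{i}\notin\hat{A}^{\mathbf{x}^{-1}}$, while $\hat{A}^{\mathbf{x}^{-1}}\cap\hat{B}=\emptyset$ still holds (conjugating the dependent groups of $\mathbf{A}$ does not introduce any element of $\hat{B}$, because each $H_{a}^{x_{t}}$ with $H_a\in A_t$ and $x_t\in H_i$ remains distinct from every $H_b\in\hat B$ as vertices of the Bass--Serre tree). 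So it suffices to prove the commutation statement for two relative multiple Whitehead automorphisms with distinct operating factors and disjoint dependent sets.

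The main step is then to reduce to the single-Whitehead case of Lemma \ref{disjoint whitehead autos}. Writing $(\mathbf{A},\mathbf{x})^{-1}=(A'_{1},x'_{1})\cdots(A'_{k},x'_{k})$ (with $A'_{t}=A_{t}^{x_{t}}$, $x'_{t}=x_{t}^{-1}$, each $A'_{t}\subseteq\hat H-\{H_i\}$, the $A'_t$ pairwise disjoint) and $(\mathbf{B},\mathbf{y})=(B_{1},y_{1})\cdots(B_{l},y_{l})$, I would apply Lemma \ref{disjoint whitehead autos} repeatedly: for each pair of factors $(A'_{s},x'_{s})$ and $(B_{r},y_{r})$ we have $x'_{s}\in H_i$, $y_{r}\in H_j$ with $i\neq j$, and $A'_{s},B_{r}\subseteq\{H_{1},\dots,H_{n}\}-\{H_{i},H_{j}\}$ with $A'_{s}\cap B_{r}\subseteq\hat A^{\mathbf x^{-1}}\cap\hat B=\emptyset$; hence $\alpha\cdot(A'_{s},x'_{s})(B_{r},y_{r})=\alpha\cdot(B_{r},y_{r})(A'_{s},x'_{s})$ for any $\alpha$-graph $\alpha$. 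A bubble-sort style induction on the number of adjacent transpositions needed to move each $(B_r,y_r)$ past all the $(A'_s,x'_s)$ then yields $\alpha\cdot(\mathbf{A},\mathbf{x})^{-1}(\mathbf{B},\mathbf{y})=\alpha\cdot(\mathbf{B},\mathbf{y})(\mathbf{A},\mathbf{x})^{-1}$ for every $\alpha$; since an element of $\outs(G)$ is determined by its action on the set of $\alpha$-graphs (equivalently on $\mathcal{D}_{n}^{(0)}$, cf. the uniqueness-of-$\alpha$-graph argument in Lemma \ref{lemma whitehead properties}), this gives the desired equality in $\outs(G)$.

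I expect the only genuine subtlety to be the bookkeeping when passing to the inverse: one must be careful that $(\mathbf{A},\mathbf{x})^{-1}$ is written \emph{relative to the same $\alpha$-graph}, so that Observation \ref{obs geometric whitehead argument} is respected and the sets $A'_t$ and the elements $x'_t$ are the ones that genuinely satisfy the disjointness and operating-factor conditions. Once that is checked the rest is a routine commuting-diagram argument identical in spirit to Figure \ref{fig (A,x)(B,y)=(B,y)(A,x)}, iterated; no new idea is required. (It may also be cleanest to first prove the symmetric statement $\alpha\cdot(\mathbf{A},\mathbf{x})(\mathbf{B},\mathbf{y})=\alpha\cdot(\mathbf{B},\mathbf{y})(\mathbf{A},\mathbf{x})$ directly from Lemma \ref{disjoint whitehead autos} and then conjugate/invert, but handling the inverse up front as above is equivalent and keeps the statement in exactly the form needed for the subsequent ``1(a) loop'' argument.)
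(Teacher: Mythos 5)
Your proposal is correct and takes essentially the same route as the paper: both reduce the statement to repeated application of Lemma \ref{disjoint whitehead autos} after decomposing the multiple Whitehead automorphisms into single ones with pairwise disjoint, non-interacting dependent sets. The only cosmetic difference is that the paper proves the uninverted commutation $(\mathbf{A},\mathbf{x})(\mathbf{B},\mathbf{y})=(\mathbf{B},\mathbf{y})(\mathbf{A},\mathbf{x})$ directly and reads the loop off from it, whereas you carry the inverse through from the start --- the alternative you yourself note at the end is exactly what the paper does.
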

That is, there exists a vertex $\alpha_{4}$ in our Graph of Domains such that
\begin{tikzpicture}
\draw[-<-] (0.2,0.2) -- (1,1);
\draw[->-] (1.5,1) -- (2.3,0.2);
\draw[->-] (0.2,-0.2) -- (1,-1);
\draw[-<-] (1.5,-1) -- (2.3,-0.2);
\node at (0,0) {$\alpha_{1}$};
\node at (1.25,1.15) {$\alpha_{2}$};
\node[scale=0.8] at (0.2,0.85) {$(\mathbf{A},\mathbf{x})$};
\node[scale=0.8] at (2.35,-0.85) {$(\mathbf{A},\mathbf{x})$};
\node at (2.5,0) {$\alpha_{3}$};
\node[scale=0.8] at (2.3,0.85) {$(\mathbf{B},\mathbf{y})$};
\node at (1.25,-1.15) {$\alpha_{4}$};
\node[scale=0.8] at (0.2,-0.85) {$(\mathbf{B},\mathbf{y})$};
\end{tikzpicture}
is a loop.

\begin{proof} % case 1a
By assumption, we have $\alpha_{1}=\alpha_{2}(\mathbf{A},\mathbf{x})$.
Since $H_{j}\not\in\hat{A}$, then the $\mathfrak{S}$-labelling for the $\alpha$-graph $\alpha_{1}$ contained in the domain $\alpha_{1}$ contains the group $H_{j}$. Thus as vertices of $\mathcal{C}_{n}$, we can collapse an edge of $\alpha_{1}$ to achieve an $A$-graph with the group $H_{j}$ at its centre. Then $(\mathbf{B},\mathbf{y})$ is in the stabiliser of this $A$-graph, meaning there is an edge in the Graph of Domains from $\alpha_{1}$ to $\alpha_{1}(\mathbf{B},\mathbf{y})=\alpha_{2}(\mathbf{A},\mathbf{x})(\mathbf{B},\mathbf{y})$, which we will call $\alpha_{4}$.

Now by Definition \ref{defn rel whitehead auto} we can write $(\mathbf{A},\mathbf{x})(\mathbf{B},\mathbf{y})$ as $(A_{1},x_{1})\dots(A_{k},x_{k})(B_{1},y_{1})\dots(B_{l},y_{l})$ for some $k$ and $l$ in $\mathbb{N}$.
So by repeated applications of Lemma \ref{disjoint whitehead autos}, we have $(\mathbf{A},\mathbf{x})(\mathbf{B},\mathbf{y})=(\mathbf{B},\mathbf{y})(\mathbf{A},\mathbf{x})$, noting that each $A_{a}$ and $B_{b}$ are pairwise disjoint.

Hence $\alpha_{3}\cdot(\mathbf{A},\mathbf{x})=\alpha_{2}\cdot(\mathbf{B},\mathbf{y})(\mathbf{A},\mathbf{x})=\alpha_{2}\cdot(\mathbf{A},\mathbf{x})(\mathbf{B},\mathbf{y})=\alpha_{4}$.
\end{proof}

\begin{lemma}\label{1a contractible} % case 1a
The loop 
\begin{tikzpicture}
\draw[-<-] (0.2,0.2) -- (1,1);
\draw[->-] (1.5,1) -- (2.3,0.2);
\draw[->-] (0.2,-0.2) -- (1,-1);
\draw[-<-] (1.5,-1) -- (2.3,-0.2);
\node at (0,0) {$\alpha_{1}$};
\node at (1.25,1.15) {$\alpha_{2}$};
\node[scale=0.8] at (0.2,0.85) {$(\mathbf{A},\mathbf{x})$};
\node[scale=0.8] at (2.35,-0.85) {$(\mathbf{A},\mathbf{x})$};
\node at (2.5,0) {$\alpha_{3}$};
\node[scale=0.8] at (2.3,0.85) {$(\mathbf{B},\mathbf{y})$};
\node at (1.25,-1.15) {$\alpha_{4}$};
\node[scale=0.8] at (0.2,-0.85) {$(\mathbf{B},\mathbf{y})$};
\end{tikzpicture}
described in Lemma \ref{1a loop} (with $\mathbf{x}\subset H_{i}$, $\mathbf{y}\subset H_{j}$, $H_{i}\not\in\hat{B}$, $H_{j}\not\in\hat{A}$, and $\hat{A}\cap\hat{B}=\emptyset$) is contractible in our Space of Domains.
\end{lemma}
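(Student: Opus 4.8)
\textbf{Proof proposal for Lemma \ref{1a contractible}.}

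The plan is to show that the $4$-cycle $\alpha_{1}\dash\alpha_{2}\dash\alpha_{3}\dash\alpha_{4}\dash\alpha_{1}$ in the Graph of Domains bounds a pair of $2$-cells in the Space of Domains, obtained by inserting a diagonal $\alpha_{1}\dash\alpha_{3}$. To do this, by Definition \ref{Space of Domains defn} it suffices to exhibit a vertex of $\mathcal{C}_{n}$ lying in the triple intersection $\alpha_{1}\cap\alpha_{2}\cap\alpha_{3}$, and another lying in $\alpha_{1}\cap\alpha_{3}\cap\alpha_{4}$; then both $[\alpha_{1},\alpha_{2},\alpha_{3}]$ and $[\alpha_{1},\alpha_{3},\alpha_{4}]$ are $2$-simplices in the Space of Domains, so the original loop is homotopic to the constant loop at $\alpha_{1}$ via these two triangles (and the edge $\alpha_{1}\dash\alpha_{3}$).

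First I would identify the diagonal. Since $H_{i}\notin\hat{B}$, the $\mathfrak{S}$-labelling of the $\alpha$-graph in $\alpha_{2}$ assigns $H_{i}$ to one leaf untouched by $(\mathbf{B},\mathbf{y})$, so the $A$-graph $A_{i}$ with central vertex group $H_{i}$ lies in $\alpha_{2}$, and since $(\mathbf{A},\mathbf{x})\in\stab(A_{i})$ (as $\mathbf{x}\subset H_{i}$, using Proposition \ref{prop brown stabs 2} and Example \ref{eg stab A}), we get $A_{i}\in\alpha_{1}\cap\alpha_{2}$. This already gives an edge $\alpha_{1}\dash\alpha_{3}$ once we check $A_{i}\in\alpha_{3}$: indeed $\alpha_{3}=\alpha_{2}(\mathbf{B},\mathbf{y})$ and $(\mathbf{B},\mathbf{y})$ fixes the leaf $H_{i}$ pointwise (as $H_{i}\notin\hat{B}$), so $A_{i}\in\alpha_{3}$ as well; hence $A_{i}\in\alpha_{1}\cap\alpha_{2}\cap\alpha_{3}$. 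For the second triangle, observe dually that $H_{j}\notin\hat{A}$ means the $A$-graph $A_{j}$ with central vertex group $H_{j}$ lies in $\alpha_{1}$ (it is fixed by $(\mathbf{A},\mathbf{x})$), and $(\mathbf{B},\mathbf{y})\in\stab(A_{j})$ since $\mathbf{y}\subset H_{j}$, so $A_{j}\in\alpha_{1}\cap\alpha_{4}$ where $\alpha_{4}=\alpha_{1}(\mathbf{B},\mathbf{y})$. Finally, $A_{j}\in\alpha_{3}$: by Lemma \ref{1a loop}, $\alpha_{3}=\alpha_{2}(\mathbf{B},\mathbf{y})$ and $\alpha_{4}=\alpha_{3}(\mathbf{A},\mathbf{x})$; since $A_{j}$ lies in $\alpha_{3}$ iff $(\mathbf{A},\mathbf{x})\in\stab(\text{the }A_{j}\text{-graph in }\alpha_{3})$, which holds because $(\mathbf{A},\mathbf{x})$ fixes $H_{j}$ pointwise ($H_{j}\notin\hat{A}$) and conjugates the remaining leaves by elements of $H_{i}$—all of which are separated from the central vertex $H_{j}$ by edges of that $A_{j}$-graph—we get $A_{j}\in\alpha_{3}\cap\alpha_{4}$ and hence $A_{j}\in\alpha_{1}\cap\alpha_{3}\cap\alpha_{4}$.

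Having $A_{i}\in\alpha_{1}\cap\alpha_{2}\cap\alpha_{3}$ and $A_{j}\in\alpha_{1}\cap\alpha_{3}\cap\alpha_{4}$, Definition \ref{Space of Domains defn} inserts $2$-cells $[\alpha_{1},\alpha_{2},\alpha_{3}]$ and $[\alpha_{1},\alpha_{3},\alpha_{4}]$ into the Space of Domains. The loop $\alpha_{1}\dash\alpha_{2}\dash\alpha_{3}\dash\alpha_{4}\dash\alpha_{1}$ is then homotopic, across $[\alpha_{1},\alpha_{2},\alpha_{3}]$, to $\alpha_{1}\dash\alpha_{3}\dash\alpha_{4}\dash\alpha_{1}$, and this in turn is the boundary of $[\alpha_{1},\alpha_{3},\alpha_{4}]$, hence null-homotopic. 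Therefore the loop of Lemma \ref{1a loop} is contractible in the Space of Domains.

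I expect the main obstacle to be the bookkeeping in the last step of the middle paragraph: verifying that $A_{j}$ genuinely lies in $\alpha_{3}$ requires being careful that the composite automorphism taking $\alpha_{3}$ to $\alpha_{4}$ is, when rewritten relative to the splitting associated to $\alpha_{3}$, still a relative multiple Whitehead automorphism supported away from the central vertex of the relevant $A_{j}$-graph (cf.\ Observation \ref{obs geometric whitehead argument}); the disjointness hypothesis $\hat{A}\cap\hat{B}=\emptyset$ together with $H_{j}\notin\hat{A}$ and $H_{i}\notin\hat{B}$ is exactly what makes this rewriting harmless, but one must check it rather than assert it. Everything else is a direct application of Observation \ref{obs pw int stab}, the stabiliser descriptions, and Lemma \ref{1a loop}.
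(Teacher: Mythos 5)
There is a genuine gap, and it is fatal to the approach. You claim that $A_{i}\in\alpha_{3}$ because ``$(\mathbf{B},\mathbf{y})$ fixes the leaf $H_{i}$ pointwise''. But fixing the vertex group $H_{i}$ is not what membership of $\stab(A_{i})$ requires. By Proposition \ref{prop brown stabs 2} and Example \ref{eg stab A}, the stabiliser of the $A_{i}$-graph is generated by $\Phi$ together with twists of the \emph{leaves} by elements of the \emph{central} group $H_{i}$; an automorphism which conjugates some leaves by non-trivial elements of $H_{j}$ (a leaf group of $A_{i}$) produces a labelling of $A_{i}$ that is \emph{not} equivalent to the original under Definition \ref{defn equivalent labellings} (after normalising the central conjugator to $1$, each leaf conjugator must lie in $H_{i}$ modulo an inner factor automorphism of that leaf, and a non-trivial element of $H_{j}$ never does, by normal form in the free product). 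So $(\mathbf{B},\mathbf{y})\notin\stab(A_{i})$ whenever $\mathbf{y}$ is non-trivial, and by Observation \ref{obs pw int stab} the graph $A_{i}$ does not lie in $\alpha_{3}$. The symmetric claim $A_{j}\in\alpha_{3}$ fails for the same reason: $(\mathbf{A},\mathbf{x})$ twists leaves of $A_{j}$ by elements of $H_{i}$, not of the central group $H_{j}$. Worse, the diagonal $\alpha_{1}\dash\alpha_{3}$ you want to insert need not exist at all: $\alpha_{3}=\alpha_{1}(\mathbf{A},\mathbf{x})^{-1}(\mathbf{B},\mathbf{y})$, and for generic $\mathbf{A},\mathbf{B}$ (say each partitioned into several pieces with distinct conjugators) the composite $(\mathbf{A},\mathbf{x})^{-1}(\mathbf{B},\mathbf{y})$ stabilises no vertex of $\mathcal{C}_{n}$ --- no graph shape in Table \ref{table n>=5 points} has a stabiliser containing independent multi-leaf twists by two different operating factors --- so $\alpha_{1}\cap\alpha_{3}=\emptyset$ and there is no such edge in the Graph of Domains.

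The paper avoids this by subdividing the square into a lattice, factoring $(\mathbf{A},\mathbf{x})$ and $(\mathbf{B},\mathbf{y})$ into single-leaf twists $(\mathbf{A},\mathbf{x})|_{a}$ and $(\mathbf{B},\mathbf{y})|_{b}$. Each elementary square is then filled because the graph $B_{j,i,A_{a+1}}$ (respectively $B_{i,j,B_{b+1}}$) lies in the relevant triple intersections: it is stabilised both by the single twist $(\mathbf{A},\mathbf{x})|_{a+1}$ (which lies in the $i_{A_{a+1}}$ factor of its stabiliser) and by all of $(\mathbf{B},\mathbf{y})$ (which lies in the $j_{v_{1}}\times\dots$ part, using $\hat{A}\cap\hat{B}=\emptyset$), while the boundary rows and columns collapse via the $A_{i}$- and $A_{j}$-graphs. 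If you want to keep a triangulation-style argument, the witnesses for the $2$-cells must be $B$-type (or $\varepsilon$-, $C$-type) graphs rather than $A$-graphs, which is in effect what the lattice provides.
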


\begin{proof}
Let $\hat{A}=\{H_{A_{1}},\dots,H_{A_{K}}\}$ and $\hat{B}=\{H_{B_{1}},\dots,H_{B_{L}}\}$. For $a\in\{1,\dots,K\}$, write $(\mathbf{A},\mathbf{x})|_{a}:=(\mathbf{A}\cap\{H_{A_{a}}\},\mathbf{x})$.
Set $\hat{\alpha}_{0,0}:=\alpha_{2}$, and recursively define $\hat{\alpha}_{a+1,b}:=\hat{\alpha}_{a,b}(\mathbf{A},\mathbf{x})|_{a+1}$ and $\hat{\alpha}_{a,b+1}:=\hat{\alpha}_{a,b}(\mathbf{B},\mathbf{y})|_{b+1}$.
Note then that $\alpha_{1}=\hat{\alpha}_{K,0}$, $\alpha_{3}=\hat{\alpha}_{0,L}$, and $\alpha_{4}=\hat{\alpha}_{K,L}$.
We can now build the lattice depicted in Figure \ref{fig lattice 1a}.

\begin{figure}
\centering
\adjustbox{scale=0.8}{
\begin{tikzcd}
\hat{\alpha}_{0,0}
\ar[r,red]
\ar[d,blue]
\ar[rrrrr,bend left=20,"(\mathbf{B}{,}\mathbf{y})"]
\ar[ddddd,bend right=30,"(\mathbf{A}{,}\mathbf{x})" ']
				& \cdots
				\ar[r,red]
						& \hat{\alpha}_{0,b}
						\ar[r,red,"(\mathbf{B}{,}\mathbf{y})|_{b+1}" ']
						\ar[d,blue]
										& \hat{\alpha}_{0,b+1}
										\ar[r,red]
										\ar[d,blue]				
															& \cdots
															\ar[r,red]	
																	& \hat{\alpha}_{0,L}
																	\ar[d,blue]	
																	\ar[ddddd,bend left=30,"(\mathbf{A}{,}\mathbf{x})"]		\\
\vdots	
\ar[d,blue]
				& \ddots	& \vdots
						\ar[d,blue]
										& \vdots
										\ar[d,blue]
															& \ddots	& \vdots
																	\ar[d,blue]			\\
\hat{\alpha}_{a,o}
\ar[r,red]
\ar[d,blue,"(\mathbf{A}{,}\mathbf{x})|_{a+1}"]			
				& \cdots
				\ar[r,red]
						& \hat{\alpha}_{a,b}
						\ar[r,red]
						\ar[d,blue]			
										& \hat{\alpha}_{a,b+1}
										\ar[r,red]
										\ar[d,blue]				
															& \cdots
															\ar[r,red]	
																	& \hat{\alpha}_{a,L}
																	\ar[d,blue]			\\
\hat{\alpha}_{a+1,0}
\ar[r,red]
\ar[d,blue]
				& \cdots
				\ar[r,red]
						& \hat{\alpha}_{a+1,b}
						\ar[r,red]
						\ar[d,blue]
										& \hat{\alpha}_{a+1,b+1}
										\ar[r,red]
										\ar[d,blue]
															& \cdots
															\ar[r,red]	
																	& \hat{\alpha}_{a+1,L}
																	\ar[d,blue]			\\
\vdots	
\ar[d,blue]
				& \ddots	& \vdots
						\ar[d,blue]
										& \vdots
										\ar[d,blue]
															& \ddots	& \vdots
																	\ar[d,blue]			\\
\hat{\alpha}_{K,0}
\ar[r,red]
\ar[rrrrr,bend right=20,"(\mathbf{B}{,}\mathbf{y})" ']			& \cdots
				\ar[r,red]	& \hat{\alpha}_{K,b}
						\ar[r,red]			& \hat{\alpha}_{K,b+1}
										\ar[r,red]				& \cdots
															\ar[r,red]
																	& \hat{\alpha}_{K,L}	\\
\end{tikzcd}
}
\caption{Lattice Describing $(A,x)(B,y)=(B,y)(A,x)$ in Case 1a}
\label{fig lattice 1a}
\end{figure}
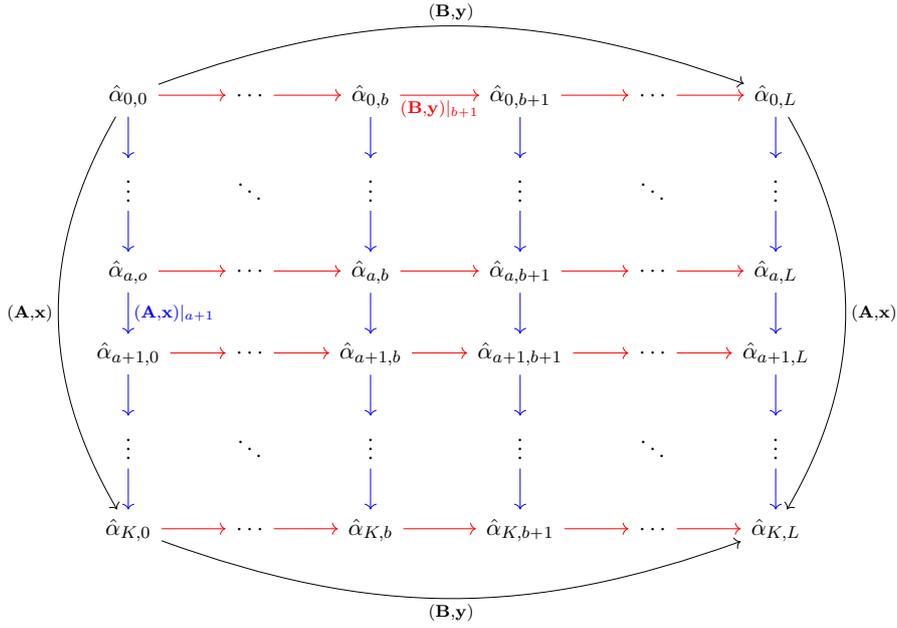

Note that for each $a\in\{0,\dots,K-1\}$ and $b\in\{0,\dots,L-1\}$, the square

\begin{tikzcd}
\hat{\alpha}_{a,b}
\ar[r,red,"(\mathbf{B}{,}\mathbf{y})|_{b+1}"]
\ar[d,blue,"(\mathbf{A}{,}\mathbf{x})|_{a+1}" ']
			& \hat{\alpha}_{a,b+1}
			\ar[d,blue,"(\mathbf{A}{,}\mathbf{x})|_{a+1}"]				\\
\hat{\alpha}_{a+1,b}
\ar[r,red,"(\mathbf{B}{,}\mathbf{y})|_{b+1}" ']
			& \hat{\alpha}_{a+1,b+1}	\\
\end{tikzcd}
is such that the $B$-graph $B_{i,j,B_{b+1}}$ in the domain $\hat{\alpha}_{a,b}$
(and similarly the graph $B_{j,i,A_{a+1}}$) lives in the intersection $\hat{\alpha}_{a,b}\cap\hat{\alpha}_{a+1,b}\cap\hat{\alpha}_{a,b+1}\cap\hat{\alpha}_{a+1,b+1}$, hence this intersection is non-empty. By Definition \ref{Space of Domains defn}, this means that the square is contractible in the Space of Domains.
In the same way, loops $\hat{\alpha}_{0,b}$ -- $\cdots$ -- $\hat{\alpha}_{K,b}$ -- $\hat{\alpha}_{0,b}$ are contractible via the $A$-graph $A_{j}$ in $\hat{\alpha}_{0,b}$, 
and similarly $\hat{\alpha}_{a,0}$ -- $\cdots$ -- $\hat{\alpha}_{a,L}$ -- $\hat{\alpha}_{a,0}$ via $A_{i}$.
Since every `cell' in our lattice is contractible in our Space of Domains, then so too is our initial loop (for which the lattice is akin to a tiling).
\end{proof}

Note that we only actually needed the first and last columns (or rows) of this lattice, since the graph $B_{j,i,A_{a}}$ (or $B_{i,j,B_{b}}$) lies in the intersection of all the domains in row $a$ (or column $b$).

\begin{lemma}\label{1a word length} % lemma 1
Suppose we have a loop
\begin{tikzpicture}
\draw[-<-] (0.2,0.2) -- (1,1);
\draw[->-] (1.5,1) -- (2.3,0.2);
\draw[->-] (0.2,-0.2) -- (1,-1);
\draw[-<-] (1.5,-1) -- (2.3,-0.2);
\node at (0,0) {$\alpha_{1}$};
\node at (1.25,1.15) {$\alpha_{2}$};
\node[scale=0.8] at (0.2,0.85) {$(H_{a},{x})$};
\node[scale=0.8] at (2.35,-0.85) {$(H_{a},{x})$};
\node at (2.5,0) {$\alpha_{3}$};
\node[scale=0.8] at (2.3,0.85) {$(H_{b},{y})$};
\node at (1.25,-1.15) {$\alpha_{4}$};
\node[scale=0.8] at (0.2,-0.85) {$(H_{b},{y})$};
\end{tikzpicture}
where $x\in H_{i}$, $y\in H_{j}$, $H_{a}\ne H_{b}$, and $\{H_{a},H_{b}\}\cap\{H_{i},H_{j}\}=\emptyset$.
Then for any edge path $w$ in $\alpha_{2}$, we have
$|w|_{\alpha_{4}}-|w|_{\alpha_{1}}=|w|_{\alpha_{3}}-|w|_{\alpha_{2}}$.
\end{lemma}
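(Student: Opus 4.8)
The statement is essentially an additivity claim: replacing $\alpha_2$ by $\alpha_1 = \alpha_2\cdot(H_a,x)$ changes the $\hat\alpha$-length of a fixed edge path $w$ by exactly the same amount as replacing $\alpha_3 = \alpha_2\cdot(H_b,y)$ by $\alpha_4 = \alpha_3\cdot(H_a,x)$. The natural strategy is to apply Lemma \ref{word length (Ha,x)} four times and compare. For the edge $\alpha_2 \dash \alpha_1$ (the automorphism $(H_a,x)$, with $x\in H_i$), Lemma \ref{word length (Ha,x)} gives
\[
|w|_{\alpha_1} - |w|_{\alpha_2} = 2\Lambda_w(e_a) - 2\Lambda_w(\bar e_i e_a) - 2\Lambda_w((e_i x^{-1})\bar e_i e_a),
\]
where the counts $\Lambda_w$ are taken in $\hat\alpha_2$ with the edge labelling from Convention \ref{convention edge labels and map for alpha hat}. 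Similarly, for the edge $\alpha_3 \dash \alpha_4$ we apply Lemma \ref{word length (Ha,x)} again, but now the $\hat\alpha$-tree under consideration is $\hat\alpha_3$, with its own edge labels; I would need to track how the relevant subwords $e_a$, $\bar e_i e_a$, $(e_i x^{-1})\bar e_i e_a$ transform under $\varphi_{(H_b,y)}:\hat\alpha_2\to\hat\alpha_3$.

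The key observation is that, because $H_b \ne H_a$ and $\{H_a,H_b\}\cap\{H_i,H_j\}=\emptyset$, the automorphism $(H_b,y)$ (with $y\in H_j$) acts on the edges $e_a$, $e_i$ only by $G$-translation — it does not insert the "detour" $f_j (\overline{f_j y^{-1}}) (\cdots)$ into any of the edges $e_a, e_i$, nor into the combinations $\bar e_i e_a$ or $(e_i x^{-1})\bar e_i e_a$, since none of $H_a, H_i$ lies in the support $\{H_b\}$ of $(H_b,y)$ and $H_j \notin \{H_a, H_i\}$ either. Concretely, from Example \ref{eg change in word length}, $(e_k)\varphi_{(H_b,y)} = f_k$ for all $k$ with $H_k \ne H_b$, and the only nontrivial rewriting affects $e_b$; moreover $\varphi_{(H_b,y)}$ preserves the combinatorial structure of any edge path that avoids $e_b$ (up to $G$-translation, which $\Lambda_w$ is designed to ignore). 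Hence $\Lambda_w(e_a)$, $\Lambda_w(\bar e_i e_a)$ and $\Lambda_w((e_i x^{-1})\bar e_i e_a)$ computed in $\hat\alpha_3$ (with the $f$-labels) equal the corresponding counts in $\hat\alpha_2$ (with the $e$-labels). This forces
\[
|w|_{\alpha_4} - |w|_{\alpha_3} = 2\Lambda_w(e_a) - 2\Lambda_w(\bar e_i e_a) - 2\Lambda_w((e_i x^{-1})\bar e_i e_a) = |w|_{\alpha_1} - |w|_{\alpha_2},
\]
and rearranging gives $|w|_{\alpha_4} - |w|_{\alpha_1} = |w|_{\alpha_3} - |w|_{\alpha_2}$, as required.

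For the writeup I would (i) fix the labelling $(H_1,\dots,H_n)$ of $\alpha_2$ and set up $\hat\alpha_2, \hat\alpha_3$ with edge labels $e_k$, $f_k$ via Convention \ref{convention edge labels and map for alpha hat}; (ii) invoke Lemma \ref{1a loop} (the $\hat A\cap\hat B=\emptyset$, $H_i\notin\hat B$, $H_j\notin\hat A$ hypotheses specialise to singletons here, so $(H_a,x)(H_b,y)=(H_b,y)(H_a,x)$ and $\alpha_4$ is well-defined as both $\alpha_1\cdot(H_b,y)$ and $\alpha_3\cdot(H_a,x)$); (iii) apply Lemma \ref{word length (Ha,x)} to the edge $\alpha_2 \dash \alpha_1$ and to the edge $\alpha_3 \dash \alpha_4$; (iv) argue that the three $\Lambda$-terms match, using that $(H_b,y)$ leaves the edges $e_a$, $e_i$ and the composite subwords untouched except for $G$-translation. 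The main obstacle is step (iv): being careful about what "$\Lambda_w$ computed in $\hat\alpha_3$" means and verifying that the edge-path combinatorics are genuinely unchanged — in particular that the detour introduced by $(H_b,y)$ never sits adjacent to or inside an occurrence of $e_a$, $\bar e_i e_a$, or $(e_i x^{-1})\bar e_i e_a$ in a way that would alter the count. This is exactly the kind of bookkeeping already carried out in the proofs of Lemmas \ref{word length (A,x)} and \ref{domain height (A,x)}, so the argument is routine but needs to be spelled out.
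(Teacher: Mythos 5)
Your proposal is correct, but it takes a different route from the paper. The paper keeps all the $\Lambda$-counts in the single reference tree $\hat{\alpha}_{2}$: it writes down the composite equivariant map $\varphi_{24}:\hat{\alpha}_{2}\to\hat{\alpha}_{4}$ explicitly (so $(e_{a})\varphi_{24}=h_{i}(\overline{h_{i}x^{-1}})(h_{a}x^{-1})$ and $(e_{b})\varphi_{24}=h_{j}(\overline{h_{j}y^{-1}})(h_{b}y^{-1})$), redoes the unreduced-length and cancellation bookkeeping for the two-letter support $\{H_{a},H_{b}\}$, observes that distinctness of $H_{i},H_{j},H_{a},H_{b}$ rules out any cross-reductions, and then verifies the four-term identity $|w|_{\alpha_{4}}=|w|_{\alpha_{1}}+|w|_{\alpha_{3}}-|w|_{\alpha_{2}}$ by comparing with two applications of Lemma \ref{word length (Ha,x)}. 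You instead apply Lemma \ref{word length (Ha,x)} to the two parallel sides $\alpha_{2}\dash\alpha_{1}$ and $\alpha_{3}\dash\alpha_{4}$ of the square and argue that the length-changes agree because the counts $\Lambda(e_{a})$, $\Lambda(\bar e_{i}e_{a})$, $\Lambda((e_{i}x^{-1})\bar e_{i}e_{a})$ are preserved when transported along $\varphi_{(H_{b},y)}$ to the reduced image of $w$ in $\hat{\alpha}_{3}$. This trades the paper's explicit reduction bookkeeping for a transport-of-counts argument, and that transport is exactly where the care is needed: you must check not only that occurrences of these subwords survive (they do, since the cancellations triggered by $(H_{b},y)$ only involve $f_{j}$- and $f_{b}$-type edges), but also that no \emph{new} occurrences of $\bar f_{i}f_{a}$ or $(f_{i}x^{-1})\bar f_{i}f_{a}$ are created by cancellation bringing previously separated letters together; this follows because a nonempty reduced subpath of $w$ has distinct endpoints, which $\varphi_{(H_{b},y)}$ preserves, so its reduced image cannot be empty. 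With that point spelled out, your argument is complete and arguably cleaner, since it generalises immediately to the lattice argument of Observation \ref{obs 1a word length} without recomputing anything.
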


\begin{proof}
Let $w$ be a reduced edge path in $\hat{\alpha}_{2}$, the $G$-tree associated to the $\alpha$-graph $\alpha_{2}$ contained in the domain $\alpha_{2}$.
By assumption, $\alpha_{2}$ is the (domain containing the) $\alpha$-graph with $\mathfrak{S}$-labelling $(H_{1},\dots,H_{n})=(H_{a},H_{b},H_{i},H_{j},H_{v_{1}},\dots,H_{v_{n-4}})$.
It follows that $\alpha_{4}$ is the (domain containing the) $\alpha$-graph with $\mathfrak{S}$-labelling $(H_{a}^{x},H_{b}^{y},H_{i},H_{j},H_{v_{1}},\dots,H_{v_{n-4}})$, where $x\in H_{i}$ and $y\in H_{j}$. Suppose the edges in $\hat{\alpha}_{2}$ are labelled with $e$'s, and the edges in $\hat{\alpha}_{4}$ are labelled with $h$'s.
Let $\varphi_{24}:\hat{\alpha}_{2}\to\hat{\alpha}_{4}$ be the equivariant map described in Convention \ref{convention edge labels and map for alpha hat}.
Then $(e_{a})\varphi_{24}=h_{i}(\overbar{h_{i}x^{-1}})(h_{a}x^{-1})$, $(e_{b})\varphi_{24}=h_{j}(\overbar{h_{j}y^{-1}})(h_{j}y^{-1})$, and $(e_{k})\varphi_{24}=h_{k}$ for any $k\ne a,b$.

Utilising the ideas from the proof of Lemma \ref{word length (Ha,x)}, we will let $l(u)$ be the unredced length of a word $u$, and let $w'$ be the unreduced word $(w)\varphi_{24}$ in $\hat{\alpha}_{4}$.
Then $l(w')=l(w)+2\Lambda_{w}(e_{a})+2\Lambda_{w}(e_{b})=|w|_{\alpha_{2}}+2\left( \Lambda_{w}(e_{a})+\Lambda_{w}(e_{b}) \right)$.

Let $w''$ be the result of applying all reductions of the forms
\begin{align*}
&	(\overbar{e_{i}}e_{a})\varphi			=	\overbar{h_{i}}h_{i}(\overbar{h_{i}x^{-1}})(h_{a}x^{-1})	
								=	(\overbar{h_{i}x^{-1}})(h_{a}x^{-1})					\\
&	((e_{i}x^{-1})\overbar{e_{i}}e_{a})\varphi	=	(h_{i}x^{-1})(\overbar{h_{i}x^{-1}})(h_{a}x^{-1})			
								=	h_{a}x^{-1}									\\
&	(\overbar{e_{j}}e_{b})\varphi			=	\overbar{h_{j}}h_{j}(\overbar{h_{j}y^{-1}})(h_{j}y^{-1})	
								=	(\overbar{h_{j}y^{-1}})(h_{j}y^{-1})					\\
&	((e_{j}y^{-1})\overbar{e_{j}}e_{b})\varphi	=	(h_{j}y^{-1})(\overbar{h_{j}y^{-1}})(h_{j}y^{-1})			
								=	h_{j}y^{-1}									,
\end{align*}
as in the proof of Lemma \ref{word length (Ha,x)}.
Then:
\begin{align*}
l(w'')=	&	l(w')-2\Lambda_{w}(\overbar{e_{i}}e_{a})-2\Lambda_{w}((e_{i}x^{-1})\overbar{e_{i}}e_{a})-2\Lambda_{w}(\overbar{e_{j}}e_{b})-2\Lambda_{w}((e_{j}y^{-1})\overbar{e_{j}}e_{b})	\\
=	&	|w|_{\alpha_{2}}+2\left( \Lambda_{w}(e_{a})+\Lambda_{w}(e_{b})-\Lambda_{w}(\overbar{e_{i}}e_{a})-\Lambda_{w}(\overbar{e_{j}}e_{b})-\Lambda_{w}((e_{i}x^{-1})\overbar{e_{i}}e_{a}) \right.	\\
	&	\left. -\Lambda_{w}((e_{j}y^{-1})\overbar{e_{j}}e_{b}) \right)	.
\end{align*}

Since $H_{i}$, $H_{j}$, $H_{a}$, and $H_{b}$ are all distinct, then there are no possible `cross-reductions' to $w''$ as found in the proofs of Lemmas \ref{word length (A,x)} and \ref{domain height (A,x)}.
Since $w$ was assumed to be reduced in $\hat{\alpha}_{2}$, this now implies that $w''$ is reduced in $\hat{\alpha}_{4}$, hence $|w|_{\alpha_{4}}=l(w'')$.

Thus by Lemma \ref{word length (Ha,x)}, we have:
\begin{align*}
	&	|w|_{\alpha_{1}}	+	|w|_{\alpha_{3}}	-	|w|_{\alpha_{2}}											\\
=	&	|w|_{\alpha_{2}}	+	2\Lambda_{w}(e_{a})-2\Lambda_{w}(\bar{e_{i}}e_{a})-2\Lambda_{w}((e_{i}x^{-1})\bar{e_{i}}e_{a})	\\
	&+	|w|_{\alpha_{2}}	+	2\Lambda_{w}(e_{b})-2\Lambda_{w}(\bar{e_{j}}e_{b})-2\Lambda_{w}((e_{j}y^{-1})\bar{e_{j}}e_{b})	- 	|w|_{\alpha_{2}}			\\	
=	&	|w|_{\alpha_{2}}	+	2\left(	\Lambda_{w}(e_{a})-\Lambda_{w}(\bar{e_{i}}e_{a})-\Lambda_{w}((e_{i}x^{-1})\bar{e_{i}}e_{a})	+	\Lambda_{w}(e_{b})-\Lambda_{w}(\bar{e_{j}}e_{b})-\Lambda_{w}((e_{j}y^{-1})\bar{e_{j}}e_{b})	 \right)	\\
=	&	|w|_{\alpha_{4}}		.
\end{align*}
\end{proof}

\begin{obs}\label{obs 1a word length}
Recall from the proof of Lemma \ref{1a contractible} that we can write the loop 
\begin{tikzpicture}
\draw[-<-] (0.2,0.2) -- (1,1);
\draw[->-] (1.5,1) -- (2.3,0.2);
\draw[->-] (0.2,-0.2) -- (1,-1);
\draw[-<-] (1.5,-1) -- (2.3,-0.2);
\node at (0,0) {$\alpha_{1}$};
\node at (1.25,1.15) {$\alpha_{2}$};
\node[scale=0.8] at (0.2,0.85) {$(\mathbf{A},\mathbf{x})$};
\node[scale=0.8] at (2.35,-0.85) {$(\mathbf{A},\mathbf{x})$};
\node at (2.5,0) {$\alpha_{3}$};
\node[scale=0.8] at (2.3,0.85) {$(\mathbf{B},\mathbf{y})$};
\node at (1.25,-1.15) {$\alpha_{4}$};
\node[scale=0.8] at (0.2,-0.85) {$(\mathbf{B},\mathbf{y})$};
\end{tikzpicture}
as a lattice, each `cell' of which has the form of the loop in Lemma \ref{1a word length}. Using the notation from Lemma \ref{1a contractible}, for any edge path $w$ in $\alpha_{2}=\hat{\alpha}_{0,0}$, we have that:
\begin{align*}
	&|w|_{\alpha_{4}}-|w|_{\alpha_{1}}	\\
=	&|w|_{\hat{\alpha}_{K,L}} - |w|_{\hat{\alpha}_{K,0}}	\\
=	&|w|_{\hat{\alpha}_{K,L}} - |w|_{\hat{\alpha}_{K,L-1}} + |w|_{\hat{\alpha}_{K,L-1}} - \cdots - |w|_{\hat{\alpha}_{K,1}} + |w|_{\hat{\alpha}_{K,1}} - |w|_{\hat{\alpha}_{K,0}}	\\
=	&|w|_{\hat{\alpha}_{K-1,L}} - |w|_{\hat{\alpha}_{K-1,L-1}} + |w|_{\hat{\alpha}_{K-1,L-1}} - \cdots - |w|_{\hat{\alpha}_{K-1,1}} + |w|_{\hat{\alpha}_{K-1,1}} - |w|_{\hat{\alpha}_{K-1,0}}	\\
\vdots 	&	\\
=	&|w|_{\hat{\alpha}_{0,L}} - |w|_{\hat{\alpha}_{0,L-1}} + |w|_{\hat{\alpha}_{0,L-1}} - \cdots - |w|_{\hat{\alpha}_{0,1}} + |w|_{\hat{\alpha}_{0,1}} - |w|_{\hat{\alpha}_{0,0}}	\\
=	&|w|_{\hat{\alpha}_{0,L}} - |w|_{\hat{\alpha}_{0,0}}	\\
=	&|w|_{\alpha_{3}}-|w|_{\alpha_{2}} 	.	\\
\end{align*}
\end{obs}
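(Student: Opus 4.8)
The statement to prove is Observation~\ref{obs 1a word length}, which asserts that for any edge path $w$ in $\hat{\alpha}_{2}$ we have $|w|_{\alpha_{4}}-|w|_{\alpha_{1}}=|w|_{\alpha_{3}}-|w|_{\alpha_{2}}$, where the loop $\alpha_{1}\dash\alpha_{2}\dash\alpha_{3}\dash\alpha_{4}\dash\alpha_{1}$ is the Case 1(a) square built from $(\mathbf{A},\mathbf{x})$ and $(\mathbf{B},\mathbf{y})$.

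The plan is to reduce the general statement (with $\mathbf{A}=(A_1,\dots,A_k)$ and $\mathbf{B}=(B_1,\dots,B_l)$ arbitrary disjoint partitions) to the single-leaf case already handled in Lemma~\ref{1a word length}, by decomposing the square into the lattice of Figure~\ref{fig lattice 1a}. First I would recall the notation of Lemma~\ref{1a contractible}: writing $\hat{A}=\{H_{A_1},\dots,H_{A_K}\}$ and $\hat{B}=\{H_{B_1},\dots,H_{B_L}\}$, and setting $\hat{\alpha}_{0,0}:=\alpha_2$, $\hat{\alpha}_{a+1,b}:=\hat{\alpha}_{a,b}(\mathbf{A},\mathbf{x})|_{a+1}$, $\hat{\alpha}_{a,b+1}:=\hat{\alpha}_{a,b}(\mathbf{B},\mathbf{y})|_{b+1}$, so that $\alpha_1=\hat{\alpha}_{K,0}$, $\alpha_3=\hat{\alpha}_{0,L}$, $\alpha_4=\hat{\alpha}_{K,L}$. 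Each elementary square $\hat{\alpha}_{a,b}\dash\hat{\alpha}_{a+1,b}\dash\hat{\alpha}_{a+1,b+1}\dash\hat{\alpha}_{a,b+1}\dash\hat{\alpha}_{a,b}$ is of exactly the shape covered by Lemma~\ref{1a word length}: its horizontal edges are single-leaf conjugations $(H_{B_{b+1}},y)$ (with $y\in H_j$) and its vertical edges are single-leaf conjugations $(H_{A_{a+1}},x)$ (with $x\in H_i$), and the four leaves $H_{A_{a+1}}, H_{B_{b+1}}, H_i, H_j$ are pairwise distinct because $\hat A\cap\hat B=\emptyset$, $H_i\notin\hat B$, $H_j\notin\hat A$. (A small technical point I would flag: one must check that the ``leaf group'' labelling $H_{A_{a+1}}$ etc.\ at stage $\hat{\alpha}_{a,b}$ is still the appropriate conjugate — but this is automatic because all the conjugators commute and the relevant leaves are never conjugated by each other in Case 1(a), exactly as in the commuting lattice of Lemma~\ref{1a contractible}.)

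The main computational step is then a telescoping argument: apply Lemma~\ref{1a word length} to each elementary square to get $|w|_{\hat{\alpha}_{a+1,b+1}}-|w|_{\hat{\alpha}_{a,b+1}}=|w|_{\hat{\alpha}_{a+1,b}}-|w|_{\hat{\alpha}_{a,b}}$ for all $a,b$. Summing over a fixed column $b$ (i.e.\ telescoping in $a$) shows the quantity $|w|_{\hat{\alpha}_{K,b}}-|w|_{\hat{\alpha}_{0,b}}$ is the sum of the per-square increments down that column, and the same per-square increments appear in every column, so comparing column $0$ with column $L$ and telescoping in the other direction gives $|w|_{\hat{\alpha}_{K,L}}-|w|_{\hat{\alpha}_{K,0}}=|w|_{\hat{\alpha}_{0,L}}-|w|_{\hat{\alpha}_{0,0}}$. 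This is precisely the chain of equalities displayed in the statement, and substituting back $\hat{\alpha}_{K,L}=\alpha_4$, $\hat{\alpha}_{K,0}=\alpha_1$, $\hat{\alpha}_{0,L}=\alpha_3$, $\hat{\alpha}_{0,0}=\alpha_2$ finishes it. I would present this as the explicit chain already written in the excerpt, annotating each ``$=$'' with the reason: the first and last equalities are trivial telescoping identities, and each intermediate line follows from the previous by applying Lemma~\ref{1a word length} square-by-square across the relevant row.

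The only real obstacle is bookkeeping: making sure the hypotheses of Lemma~\ref{1a word length} genuinely apply to every elementary square, i.e.\ that at each stage $\hat{\alpha}_{a,b}$ the four relevant vertex groups are distinct and the horizontal/vertical moves really are the single-leaf relative Whitehead automorphisms $(H_{B_{b+1}},y)$ and $(H_{A_{a+1}},x)$ \emph{written relative to that domain} (cf.\ Observation~\ref{obs geometric whitehead argument}). Since in Case 1(a) none of $\hat A$ is conjugated by elements of $H_j$ and none of $\hat B$ by elements of $H_i$, and $\hat A\cap\hat B=\emptyset$, the leaves $H_{A_a}$ and $H_{B_b}$ retain their form throughout the lattice — this is exactly what Lemma~\ref{disjoint whitehead autos} and the commuting lattice in Lemma~\ref{1a contractible} already establish — so no genuine difficulty arises, and the proof is essentially the displayed telescoping computation together with a sentence justifying the reduction to Lemma~\ref{1a word length}.
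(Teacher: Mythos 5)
Your proposal is correct and follows essentially the same route as the paper: the observation's displayed chain of equalities is precisely the telescoping over the lattice of Lemma \ref{1a contractible}, with Lemma \ref{1a word length} applied cell-by-cell (the hypotheses of that lemma holding in each cell because $\hat{A}\cap\hat{B}=\emptyset$, $H_{i}\not\in\hat{B}$, and $H_{j}\not\in\hat{A}$). Your extra remark about the leaves retaining their form throughout the lattice is a reasonable bookkeeping point that the paper leaves implicit.
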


\begin{lemma}\label{1a domain height} % Nick's 2.6 	% our 1a
If $H_{i}\not\in\hat{B}$ and $H_{j}\not\in\hat{A}$ and $\hat{A}\cap\hat{B}=\emptyset$ then
$||\alpha_{4}||-||\alpha_{1}||=||\alpha_{3}||-||\alpha_{2}||$, where $\alpha_{4}=\alpha_{1}(\mathbf{A},\mathbf{x})(\mathbf{B},\mathbf{y})$.
\end{lemma}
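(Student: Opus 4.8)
\textbf{Proof plan for Lemma \ref{1a domain height}.}

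The plan is to deduce this from the height formula via the additivity of word-length differences established in Observation \ref{obs 1a word length}. Recall from Definition the height is defined as $||\alpha|| = \sum_{w\in\mathcal{W}}\left(|w|_{\alpha}-2\right)$, where $\mathcal{W}$ is the set of (unordered) pairs $\{G_a,G_b\}$ of distinct factor groups, and $|w|_{\alpha}$ denotes the reduced edge-path length in $\hat{\alpha}$ between the vertices labelled by the two members of the pair $w$. Since this is a finite sum indexed by a fixed set $\mathcal{W}$ that does not depend on which domain we are in, it suffices to prove the corresponding identity for each individual pair $w\in\mathcal{W}$ and then sum.

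First I would fix an arbitrary pair $w\in\mathcal{W}$, viewed as an edge path in $\hat{\alpha}_2$ between the two relevant vertices (these vertices have the same stabilisers, hence are preserved by the equivariant maps $\varphi_{(\mathbf{A},\mathbf{x})}$ and $\varphi_{(\mathbf{B},\mathbf{y})}$ of Convention \ref{convention edge labels and map for alpha hat}, so the pair still determines a well-defined path in each of $\hat{\alpha}_1$, $\hat{\alpha}_3$, and $\hat{\alpha}_4$). Applying Observation \ref{obs 1a word length} to this $w$ gives directly
\[
|w|_{\alpha_4} - |w|_{\alpha_1} = |w|_{\alpha_3} - |w|_{\alpha_2},
\]
which rearranges to $|w|_{\alpha_1} + |w|_{\alpha_3} = |w|_{\alpha_2} + |w|_{\alpha_4}$. (Strictly, Observation \ref{obs 1a word length} is stated for an edge path in $\alpha_2$ and relies on the lattice decomposition of the loop from Lemma \ref{1a contractible}, together with the one-factor case Lemma \ref{1a word length}; I would simply invoke it, noting that the hypotheses $H_i\notin\hat B$, $H_j\notin\hat A$, $\hat A\cap\hat B=\emptyset$ are exactly those needed for that lattice to exist.)

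Then I would sum over all $w\in\mathcal{W}$:
\[
\sum_{w\in\mathcal{W}}|w|_{\alpha_1} + \sum_{w\in\mathcal{W}}|w|_{\alpha_3} = \sum_{w\in\mathcal{W}}|w|_{\alpha_2} + \sum_{w\in\mathcal{W}}|w|_{\alpha_4},
\]
and subtract $4|\mathcal{W}|$ from both sides so that each sum becomes the corresponding height, yielding $||\alpha_1|| + ||\alpha_3|| = ||\alpha_2|| + ||\alpha_4||$, i.e. $||\alpha_4|| - ||\alpha_1|| = ||\alpha_3|| - ||\alpha_2||$, as required. The only mild subtlety worth spelling out is that $\alpha_4 = \alpha_1(\mathbf{B},\mathbf{y}) = \alpha_2(\mathbf{A},\mathbf{x})(\mathbf{B},\mathbf{y}) = \alpha_2(\mathbf{B},\mathbf{y})(\mathbf{A},\mathbf{x}) = \alpha_3(\mathbf{A},\mathbf{x})$ is the fourth vertex of the commuting loop from Lemma \ref{1a loop}, so that $\alpha_4$ is unambiguously the vertex referred to in the statement.

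I do not expect any genuine obstacle here: the real content — that word-length changes are "additive around the square" — has already been packaged into Observation \ref{obs 1a word length} and its supporting Lemmas \ref{1a word length} and \ref{domain height (A,x)}. The proof is essentially just: apply that observation pairwise, then sum over $\mathcal{W}$ and cancel the constant $2$'s. The one place to be careful is making sure the pair $w$ genuinely indexes "the same" path across all four $G$-trees; this follows because the equivariant maps fix the vertices with nontrivial stabilisers and trees have unique reduced paths between any two vertices.
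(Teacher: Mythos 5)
Your proposal is correct and matches the paper's own proof essentially verbatim: the paper likewise applies Observation \ref{obs 1a word length} (built on Lemma \ref{1a word length}) to get $|w|_{\alpha_{4}}-|w|_{\alpha_{1}}=|w|_{\alpha_{3}}-|w|_{\alpha_{2}}$ for each pair $w\in\mathcal{W}$, then sums over $\mathcal{W}$ so that the constant $2$'s cancel. Your extra remarks about $\alpha_{4}$ being the fourth vertex of the commuting square and about $w$ indexing the same path in all four trees are correct clarifications that the paper leaves implicit.
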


\begin{proof}
By Lemma \ref{1a word length} and the Observation \ref{obs 1a word length},
\begin{align*}
||\alpha_{4}||-||\alpha_{1}|| 	&=	\sum_{w\in\mathcal{W}}\Bigl(|w|_{\alpha_{4}}-2\Bigr) - \sum_{w\in\mathcal{W}}\Bigl(|w|_{\alpha_{1}}-2\Bigr) 	\\
					&=	\sum_{w\in\mathcal{W}}\Bigl(|w|_{\alpha_{4}}-2-|w|_{\alpha_{1}}+2\Bigr)						\\
					&=	\sum_{w\in\mathcal{W}}\Bigl(|w|_{\alpha_{3}}-|w|_{\alpha_{2}}\Bigr)							\\
					&=	\sum_{w\in\mathcal{W}}\Bigl(|w|_{\alpha_{3}}-2\Bigr) - \sum_{w\in\mathcal{W}}\Bigl(|w|_{\alpha_{2}}-2\Bigr) 	\\
					&=	||\alpha_{3}||-||\alpha_{2}||	.
\end{align*}
\end{proof}

\begin{lemma}\label{1a peak} % 1a peak
If
\begin{tikzcd}[cramped]
\alpha_{1} \ar[r,-<-,dash,"(\mathbf{A}{,}\mathbf{x})"]	& \alpha_{2} \ar[r,->-,dash,"(\mathbf{B}{,}\mathbf{y})"]	& \alpha_{3}
\end{tikzcd}
is a peak in the Graph of Domains, where $H_{i}\not\in\hat{B}$ and $H_{j}\not\in\hat{A}$ with $\hat{A}\cap\hat{B}=\emptyset$, then $||\alpha_{2}\cdot(\mathbf{A},\mathbf{x})(\mathbf{B},\mathbf{y})||<||\alpha_{2}||$.
\end{lemma}

\begin{proof}
By Lemma \ref{1a domain height}, $||\alpha_{2}(\mathbf{A},\mathbf{x})(\mathbf{B},\mathbf{y})||=||\alpha_{4}||=||\alpha_{3}||+||\alpha_{1}||-||\alpha_{2}||$.
Since
\begin{tikzcd}[cramped]
\alpha_{1} \ar[r,-<-,dash,"(\mathbf{A}{,}\mathbf{x})"]	& \alpha_{2} \ar[r,->-,dash,"(\mathbf{B}{,}\mathbf{y})"]	& \alpha_{3}
\end{tikzcd}
is a peak, then $||\alpha_{2}||\ge\max(||\alpha_{1}||,||\alpha_{3}||)$ and $||\alpha_{2}||>\min(||\alpha_{1}||,||\alpha_{3}||)$.
Now:
\begin{align*}
||\alpha_{4}||	&=\max(||\alpha_{1}||,||\alpha_{3}||)+\min(||\alpha_{1}||,||\alpha_{3}||)-||\alpha_{2}||	&\\
			&<||\alpha_{2}||+||\alpha_{2}||-||\alpha_{2}||								&\\
			&=||\alpha_{2}||		.											&
\end{align*}
\end{proof}

\begin{prop}\label{prop 1a summary}
Let $H_{1}\ast\dots\ast H_{n}$ be an $\mathfrak{S}$ free factor splitting for $G$.
Let $(\mathbf{A},\mathbf{x})$ and $(\mathbf{B},\mathbf{y})$ be relative multiple Whitehead automorphisms with $\mathbf{x}\subset H_{i}$ and $\mathbf{y}\subset H_{j}$ and $\hat{A},\hat{B}\subset\{H_{1},\dots,H_{n}\}-\{H_{i},H_{j}\}$ such that $\hat{A}\cap\hat{B}=\emptyset$.
Let $\alpha_{2}$ be the domain whose $\alpha$-graph has $\mathfrak{S}$-labelling $(H_{1},\dots,H_{n})$, and let $\alpha_{1}=\alpha_{2}(\mathbf{A},\mathbf{x})$ and $\alpha_{3}=\alpha_{2}(\mathbf{B},\mathbf{y})$.
If
\begin{tikzcd}[cramped]
\alpha_{1} \ar[r,-<-,dash,"(\mathbf{A}{,}\mathbf{x})"]	& \alpha_{2} \ar[r,->-,dash,"(\mathbf{B}{,}\mathbf{y})"]	& \alpha_{3}
\end{tikzcd}
is a peak, then it is reducible.
\end{prop}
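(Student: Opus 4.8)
The plan is to assemble Proposition \ref{prop 1a summary} directly from the chain of lemmas already proved in Case 1(a). The structure of the argument mirrors the general outline for reducible peaks: produce an auxiliary domain $\alpha_{4}$, exhibit a loop through $\alpha_{1},\alpha_{2},\alpha_{3},\alpha_{4}$, show that loop is contractible in the Space of Domains, and finally check that the new ``middle'' domain $\alpha_{4}$ has strictly smaller height than $\alpha_{2}$.

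First I would set $\alpha_{4}:=\alpha_{2}\cdot(\mathbf{A},\mathbf{x})(\mathbf{B},\mathbf{y})$. By Lemma \ref{1a loop}, since $H_{i}\not\in\hat{B}$, $H_{j}\not\in\hat{A}$, and $\hat{A}\cap\hat{B}=\emptyset$, we have $(\mathbf{A},\mathbf{x})(\mathbf{B},\mathbf{y})=(\mathbf{B},\mathbf{y})(\mathbf{A},\mathbf{x})$, so $\alpha_{4}=\alpha_{1}\cdot(\mathbf{B},\mathbf{y})=\alpha_{3}\cdot(\mathbf{A},\mathbf{x})$, and moreover $\alpha_{1}\dash\alpha_{4}$ and $\alpha_{3}\dash\alpha_{4}$ are edges of Type $A$ in the Graph of Domains (coming from the $A$-graphs with central vertex groups $H_{j}$ and $H_{i}$ respectively). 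Thus we obtain the $4$-cycle $\alpha_{2}\dash\alpha_{1}\dash\alpha_{4}\dash\alpha_{3}\dash\alpha_{2}$. By Lemma \ref{1a contractible}, this $4$-cycle is contractible in the Space of Domains; equivalently, the path $\alpha_{1}\dash\alpha_{2}\dash\alpha_{3}$ is homotopic (rel endpoints) in the Space of Domains to the path $\alpha_{1}\dash\alpha_{4}\dash\alpha_{3}$.

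Next I would invoke Lemma \ref{1a peak}: because $\alpha_{1}\dash\alpha_{2}\dash\alpha_{3}$ is a peak, $||\alpha_{4}||<||\alpha_{2}||$. Hence the homotopic replacement path $\alpha_{1}\dash\alpha_{4}\dash\alpha_{3}$ has its single intermediate vertex $\alpha_{4}$ of height strictly less than $||\alpha_{2}||$, which is exactly the condition in Definition \ref{defn reducible}. Therefore the peak is reducible, completing the proof. The one subtlety worth spelling out explicitly is that the two new edges $\alpha_{1}\dash\alpha_{4}$ and $\alpha_{3}\dash\alpha_{4}$ are of Type $A$ (so that we remain within the class of paths to which Corollary \ref{type A edges} and all subsequent reductions apply); this is already contained in the proof of Lemma \ref{1a loop}, where the relevant $A$-graphs are produced by collapsing an edge of $\alpha_{1}$ (resp.\ $\alpha_{3}$).

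I do not expect a genuine obstacle here, since all the work has been done in Lemmas \ref{1a loop}--\ref{1a peak}; the proposition is purely an assembly statement. The only place requiring care is bookkeeping: making sure the homotopy claimed in Definition \ref{defn reducible} is the one furnished by the contractible $4$-cycle (via the $2$-cells $[\alpha_{2},\alpha_{1},\alpha_{4}]$ and $[\alpha_{2},\alpha_{4},\alpha_{3}]$, or equivalently the single $2$-cell tiling argument of Lemma \ref{1a contractible}), and that the height strict inequality is applied to $\alpha_{4}$ and not to $\alpha_{1}$ or $\alpha_{3}$, whose heights are only known to be $\le||\alpha_{2}||$ with at least one strict. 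A one-line reminder that $k=2$ in the notation $\alpha_{1}=\chi_{0}\dash\chi_{1}\dash\chi_{2}=\alpha_{3}$ of Definition \ref{defn reducible}, with $\chi_{1}=\alpha_{4}$, suffices to close the argument cleanly.
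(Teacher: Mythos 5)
Your proposal is correct and follows essentially the same route as the paper's proof: both form $\alpha_{4}=\alpha_{2}(\mathbf{A},\mathbf{x})(\mathbf{B},\mathbf{y})$, cite Lemmas \ref{1a loop} and \ref{1a contractible} to replace $\alpha_{1}\dash\alpha_{2}\dash\alpha_{3}$ by $\alpha_{1}\dash\alpha_{4}\dash\alpha_{3}$ up to homotopy, and then apply Lemma \ref{1a peak} to get $\|\alpha_{4}\|<\|\alpha_{2}\|$. Your extra bookkeeping (the Type $A$ check on the new edges and the $k=2$ reading of Definition \ref{defn reducible}) is sound and merely makes explicit what the paper leaves implicit.
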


\begin{proof}
By Lemmas \ref{1a loop} and \ref{1a contractible}, the path 
\begin{tikzcd}[cramped]
\alpha_{1} \ar[r,-<-,dash,"(\mathbf{A}{,}\mathbf{x})"]	& \alpha_{2} \ar[r,->-,dash,"(\mathbf{B}{,}\mathbf{y})"]	& \alpha_{3}
\end{tikzcd}
is homotopic in the Space of Domains to the path
\begin{tikzcd}[cramped]
\alpha_{1} \ar[r,->-,dash,"(\mathbf{B}{,}\mathbf{y})"]	& \alpha_{2}(\mathbf{A},\mathbf{x})(\mathbf{B},\mathbf{y}) \ar[r,-<-,dash,"(\mathbf{A}{,}\mathbf{x})"]	& \alpha_{3}
\end{tikzcd}
, and by Lemma \ref{1a peak}, $||\alpha_{2}(\mathbf{A},\mathbf{x})(\mathbf{B},\mathbf{y})||<||\alpha_{2}||$.
Thus by Definition \ref{defn reducible}, the peak
\begin{tikzcd}[cramped]
\alpha_{1} \ar[r,-<-,dash,"(\mathbf{A}{,}\mathbf{x})"]	& \alpha_{2} \ar[r,->-,dash,"(\mathbf{B}{,}\mathbf{y})"]	& \alpha_{3}
\end{tikzcd}
is reducible.
\end{proof}

% -------------------------------------------------------------------------------- case 1b :

\subsubsection*{Case 1(b): $H_{i}\not\in\hat{B}$ and $H_{j}\not\in\hat{A}$, with $\hat{A}\cap\hat{B}\neq\emptyset$}

\begin{lemma}\label{1b loop} % 1b loop
If $H_{i}\not\in\hat{B}$ and $H_{j}\not\in\hat{A}$ with $\hat{A}\cap\hat{B}\neq\emptyset$, then 
$(\mathbf{A},\mathbf{x})^{-1}(\mathbf{B},\mathbf{y})
=(\mathbf{A}\cap\hat{B},\mathbf{x})^{-1}(\mathbf{B},\mathbf{y})(\mathbf{A}-\hat{B},\mathbf{x})^{-1}$ and 
$(\mathbf{A},\mathbf{x})^{-1}(\mathbf{B},\mathbf{y})
=(\mathbf{B}-\hat{A},\mathbf{y})(\mathbf{A},\mathbf{x})^{-1}(\mathbf{B}\cap\hat{A},\mathbf{y})$.
\end{lemma}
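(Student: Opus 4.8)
The plan is to prove both identities by repeated application of the basic commutation relations established in Lemma \ref{disjoint whitehead autos} and the notational bookkeeping of Proposition \ref{whitehead notational properties}, working throughout relative to the $\alpha$-graph in the domain $\alpha_{2}$ with $\mathfrak{S}$-labelling $(H_{1},\dots,H_{n})$. The key observation is that we are in the case $H_{i}\notin\hat{B}$ and $H_{j}\notin\hat{A}$, so every set $A_{a}$ in $\mathbf{A}$ is disjoint from $\{H_{i},H_{j}\}$, every set $B_{b}$ in $\mathbf{B}$ is disjoint from $\{H_{i},H_{j}\}$, and the operating factors $H_{i}$ (for $\mathbf{x}$) and $H_{j}$ (for $\mathbf{y}$) are distinct. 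The only obstruction to freely commuting the $(\mathbf{A},\mathbf{x})$-part past the $(\mathbf{B},\mathbf{y})$-part is that $\hat{A}\cap\hat{B}\neq\emptyset$, so Lemma \ref{disjoint whitehead autos} only applies verbatim to the sub-automorphisms supported on disjoint dependent sets.

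First I would use part 1 of Proposition \ref{whitehead notational properties} to split $(\mathbf{A},\mathbf{x})=(\mathbf{A}-\hat{B},\mathbf{x})(\mathbf{A}\cap\hat{B},\mathbf{x})=(\mathbf{A}\cap\hat{B},\mathbf{x})(\mathbf{A}-\hat{B},\mathbf{x})$, so that $(\mathbf{A},\mathbf{x})^{-1}=(\mathbf{A}-\hat{B},\mathbf{x})^{-1}(\mathbf{A}\cap\hat{B},\mathbf{x})^{-1}$ (using part 2 for the inverses; note that since $\mathbf{x}\subset H_{i}$ and $H_{i}\notin\hat{A}$, forming these inverses is geometrically well-defined). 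Then, since the dependent factors appearing in $(\mathbf{A}-\hat{B},\mathbf{x})$ are by construction disjoint from all of $\hat{B}$ (and from $\{H_{i},H_{j}\}$), and the operating factors $H_{i}$, $H_{j}$ are distinct, Lemma \ref{disjoint whitehead autos} lets us move $(\mathbf{A}-\hat{B},\mathbf{x})^{-1}$ to the far right past $(\mathbf{B},\mathbf{y})$ term by term. This yields $(\mathbf{A},\mathbf{x})^{-1}(\mathbf{B},\mathbf{y})=(\mathbf{A}-\hat{B},\mathbf{x})^{-1}(\mathbf{A}\cap\hat{B},\mathbf{x})^{-1}(\mathbf{B},\mathbf{y})=(\mathbf{A}\cap\hat{B},\mathbf{x})^{-1}(\mathbf{B},\mathbf{y})(\mathbf{A}-\hat{B},\mathbf{x})^{-1}$, which is the first identity.

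Symmetrically, for the second identity I would split $(\mathbf{B},\mathbf{y})=(\mathbf{B}-\hat{A},\mathbf{y})(\mathbf{B}\cap\hat{A},\mathbf{y})$ via part 1 of Proposition \ref{whitehead notational properties}. The factor $(\mathbf{B}-\hat{A},\mathbf{y})$ has all its dependent sets disjoint from $\hat{A}$ and from $\{H_{i},H_{j}\}$, so by Lemma \ref{disjoint whitehead autos} it commutes with $(\mathbf{A},\mathbf{x})^{-1}$ (again using that $H_{j}\neq H_{i}$). Hence $(\mathbf{A},\mathbf{x})^{-1}(\mathbf{B},\mathbf{y})=(\mathbf{A},\mathbf{x})^{-1}(\mathbf{B}-\hat{A},\mathbf{y})(\mathbf{B}\cap\hat{A},\mathbf{y})=(\mathbf{B}-\hat{A},\mathbf{y})(\mathbf{A},\mathbf{x})^{-1}(\mathbf{B}\cap\hat{A},\mathbf{y})$, as claimed.

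The main subtlety — and the point I would be most careful about — is the geometric well-definedness of these expressions under the relative-automorphism conventions warned about in the remarks following Lemma \ref{disjoint whitehead autos}: one must check that at each stage, when a Whitehead automorphism is moved past another, its conjugating element still lies in the correct (unconjugated) factor group relative to the domain it is now acting on. Here this is automatic precisely because $\mathbf{x}\subset H_{i}$ with $H_{i}\notin\hat{A}\cup\hat{B}$ and $\mathbf{y}\subset H_{j}$ with $H_{j}\notin\hat{A}\cup\hat{B}$, so neither $H_{i}$ nor $H_{j}$ is ever conjugated by the automorphisms being applied, and Lemma \ref{disjoint whitehead autos} applies on the nose (rather than its conjugated variant). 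I would state this explicitly as the justification for each invocation of Lemma \ref{disjoint whitehead autos}, and otherwise the proof is a short rearrangement.
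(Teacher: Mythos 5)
Your proposal is correct and follows essentially the same route as the paper: split $(\mathbf{A},\mathbf{x})$ (resp. $(\mathbf{B},\mathbf{y})$) into the part meeting $\hat{B}$ (resp. $\hat{A}$) and the part disjoint from it via Proposition \ref{whitehead notational properties}(1), then commute the disjoint part past the other automorphism using the Case 1a commutation, which is exactly repeated application of Lemma \ref{disjoint whitehead autos} (the paper packages this as a citation of Lemma \ref{1a loop}). Your explicit attention to the geometric well-definedness of the rearranged compositions is a reasonable elaboration of what the paper leaves implicit.
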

That is, there exist vertices $\alpha_{4}$, $\alpha_{5}$, $\alpha_{4}'$, $\alpha_{5}'$ in our Graph of Domains such that 
\begin{tikzpicture}
\node at (0,0) {$\alpha_{1}$};
\node at (1.75,1) {$\alpha_{2}$};
\node at (3.5,0) {$\alpha_{3}$};
\node at (1,-1) {$\alpha_{4}$};
\node at (2.5,-1) {$\alpha_{5}$};
\draw[-<-] (0.18,0.12) -- (1.57,0.88); % 1--2
\draw[->-] (1.93,0.88) -- (3.26,0.16); % 2--3
\draw[-<-] (0.15,-0.15) -- (0.85,-0.85); % 1--4
\draw[->-] (1.2,-1) -- (2.25,-1); % 4--5
\draw[-<-] (2.65,-0.85) -- (3.35,-0.15); % 5--3
\draw[->-,gray,dashed] (1.7,0.8) -- (1.05,-0.8); % 2--4
\node[scale=0.8] at (0.7,0.8) {$(\mathbf{A},\mathbf{x})$};
\node[scale=0.8] at (2.8,0.8) {$(\mathbf{B},\mathbf{y})$};
\node[scale=0.8] at (1.75,-1.3) {$(\mathbf{B},\mathbf{y})$};
\node[scale=0.8] at (-0.3,-0.6) {$(\mathbf{A}\cap\hat{B},\mathbf{x})$};
\node[scale=0.8] at (3.8,-0.6) {$(\mathbf{A}-\hat{B},\mathbf{x})$};
\node[scale=0.8] at (2.25,0) {\textcolor{gray}{$(\mathbf{A}-\hat{B},\mathbf{x})$}};
\end{tikzpicture}
and
\begin{tikzpicture}
\node at (0,0) {$\alpha_{1}$};
\node at (1.75,1) {$\alpha_{2}$};
\node at (3.5,0) {$\alpha_{3}$};
\node at (1,-1) {$\alpha_{4}'$};
\node at (2.5,-1) {$\alpha_{5}'$};
\draw[-<-] (0.18,0.12) -- (1.57,0.88); % 1--2
\draw[->-] (1.93,0.88) -- (3.26,0.16); % 2--3
\draw[->-] (0.15,-0.15) -- (0.85,-0.85); % 1--4
\draw[-<-] (1.2,-1) -- (2.25,-1); % 4--5
\draw[->-] (2.65,-0.85) -- (3.35,-0.15); % 5--3
\draw[->-,gray,dashed] (1.8,0.8) -- (2.45,-0.8); % 2--5
\node[scale=0.8] at (0.7,0.8) {$(\mathbf{A},\mathbf{x})$};
\node[scale=0.8] at (2.8,0.8) {$(\mathbf{B},\mathbf{y})$};
\node[scale=0.8] at (1.75,-1.3) {$(\mathbf{A},\mathbf{x})$};
\node[scale=0.8] at (-0.3,-0.6) {$(\mathbf{B}-\hat{A},\mathbf{y})$};
\node[scale=0.8] at (3.8,-0.6) {$(\mathbf{B}\cap\hat{A},\mathbf{y})$};
\node[scale=0.8] at (1.25,0) {\textcolor{gray}{$(\mathbf{B}-\hat{A},\mathbf{y})$}};
\end{tikzpicture}
are both loops.

\begin{proof} % case 1b
By Proposition \ref{whitehead notational properties} ($1$), $(\mathbf{A},\mathbf{x})(\mathbf{A}\cap\hat{B},\mathbf{x})^{-1}=(\mathbf{A}-\hat{B},\mathbf{x})$.
Note that $\widehat{\mathbf{A}-\hat{B}}$ and $\hat{B}$ are disjoint, and we have $H_{i}\not\in\hat{B}$ and $H_{j}\not\in\hat{A}-\hat{B}$.
Thus by Lemma \ref{1a loop}, $(\mathbf{A}-\hat{B},\mathbf{x})(\mathbf{B},\mathbf{y})=(\mathbf{B},\mathbf{y})(\mathbf{A}-\hat{B},\mathbf{x})$.
The second statement follows similarly, by appropriately switching $A$ and $B$ and $x$ and $y$.
\end{proof}

\begin{lemma}\label{1b contractible} % 1b loops contract
The loops

\begin{tikzpicture}
\node at (0,0) {$\alpha_{1}$};
\node at (1.75,1) {$\alpha_{2}$};
\node at (3.5,0) {$\alpha_{3}$};
\node at (1,-1) {$\alpha_{4}$};
\node at (2.5,-1) {$\alpha_{5}$};
\draw[-<-] (0.18,0.12) -- (1.57,0.88); % 1--2
\draw[->-] (1.93,0.88) -- (3.26,0.16); % 2--3
\draw[-<-] (0.15,-0.15) -- (0.85,-0.85); % 1--4
\draw[->-] (1.2,-1) -- (2.25,-1); % 4--5
\draw[-<-] (2.65,-0.85) -- (3.35,-0.15); % 5--3
\draw[->-,gray,dashed] (1.7,0.8) -- (1.05,-0.8); % 2--4
\node[scale=0.8] at (0.7,0.8) {$(\mathbf{A},\mathbf{x})$};
\node[scale=0.8] at (2.8,0.8) {$(\mathbf{B},\mathbf{y})$};
\node[scale=0.8] at (1.75,-1.3) {$(\mathbf{B},\mathbf{y})$};
\node[scale=0.8] at (-0.3,-0.6) {$(\mathbf{A}\cap\hat{B},\mathbf{x})$};
\node[scale=0.8] at (3.8,-0.6) {$(\mathbf{A}-\hat{B},\mathbf{x})$};
\node[scale=0.8] at (2.25,0) {\textcolor{gray}{$(\mathbf{A}-\hat{B},\mathbf{x})$}};
\end{tikzpicture}
and
\begin{tikzpicture}
\node at (0,0) {$\alpha_{1}$};
\node at (1.75,1) {$\alpha_{2}$};
\node at (3.5,0) {$\alpha_{3}$};
\node at (1,-1) {$\alpha_{4}'$};
\node at (2.5,-1) {$\alpha_{5}'$};
\draw[-<-] (0.18,0.12) -- (1.57,0.88); % 1--2
\draw[->-] (1.93,0.88) -- (3.26,0.16); % 2--3
\draw[->-] (0.15,-0.15) -- (0.85,-0.85); % 1--4
\draw[-<-] (1.2,-1) -- (2.25,-1); % 4--5
\draw[->-] (2.65,-0.85) -- (3.35,-0.15); % 5--3
\draw[->-,gray,dashed] (1.8,0.8) -- (2.45,-0.8); % 2--5
\node[scale=0.8] at (0.7,0.8) {$(\mathbf{A},\mathbf{x})$};
\node[scale=0.8] at (2.8,0.8) {$(\mathbf{B},\mathbf{y})$};
\node[scale=0.8] at (1.75,-1.3) {$(\mathbf{A},\mathbf{x})$};
\node[scale=0.8] at (-0.3,-0.6) {$(\mathbf{B}-\hat{A},\mathbf{y})$};
\node[scale=0.8] at (3.8,-0.6) {$(\mathbf{B}\cap\hat{A},\mathbf{y})$};
\node[scale=0.8] at (1.25,0) {\textcolor{gray}{$(\mathbf{B}-\hat{A},\mathbf{y})$}};
\end{tikzpicture}
are contractible in our Space of Domains.
\end{lemma}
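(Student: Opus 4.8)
Lemma \ref{1b contractible}: the two loops in Lemma \ref{1b loop} are contractible in the Space of Domains.

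The plan is to reduce each pentagonal loop to a pair of quadrilateral/triangular loops, each of which falls under the already-established Case 1(a) machinery (specifically Lemmas \ref{1a loop} and \ref{1a contractible}). Consider the first loop, with the dashed edge $\alpha_{2}\dash\alpha_{4}$ labelled $(\mathbf{A}-\hat{B},\mathbf{x})$ inserted (this edge genuinely exists in the Graph of Domains: since $H_{i}\notin\widehat{\mathbf{A}-\hat{B}}$, collapsing $\alpha_{2}$ to the $A$-graph $A_{i}$ shows $(\mathbf{A}-\hat{B},\mathbf{x})\in\stab(A_{i})$, so $\alpha_{4}=\alpha_{2}\cdot(\mathbf{A}-\hat{B},\mathbf{x})$ is adjacent to $\alpha_{2}$). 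The dashed edge splits the pentagon into an upper triangle $\alpha_{2}\dash\alpha_{4}\dash\alpha_{1}$ (with edges $(\mathbf{A}-\hat{B},\mathbf{x})$, $(\mathbf{A}\cap\hat{B},\mathbf{x})$, and $(\mathbf{A},\mathbf{x})$, all of Type $A$ via the $A_{i}$-graph) and a lower quadrilateral $\alpha_{2}\dash\alpha_{4}\dash\alpha_{5}\dash\alpha_{3}\dash\alpha_{2}$.

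For the upper triangle: all three domains $\alpha_{1},\alpha_{2},\alpha_{4}$ contain the $A$-graph $A_{i}$ (each differs from the others only by an automorphism in $\stab(A_{i})$), so $A_{i}\in\alpha_{1}\cap\alpha_{2}\cap\alpha_{4}\neq\emptyset$ and by Definition \ref{Space of Domains defn} there is a $2$-cell $[\alpha_{1},\alpha_{2},\alpha_{4}]$, making the triangle contractible. For the lower quadrilateral: here $\widehat{\mathbf{A}-\hat{B}}$ and $\hat{B}$ are disjoint, $H_{i}\notin\hat{B}$ and $H_{j}\notin\widehat{\mathbf{A}-\hat{B}}$, so by Lemma \ref{1a loop} applied to the automorphisms $(\mathbf{A}-\hat{B},\mathbf{x})$ and $(\mathbf{B},\mathbf{y})$ this square is exactly of the Case 1(a) form, and Lemma \ref{1a contractible} tells us it is contractible in the Space of Domains. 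Since the pentagonal loop is the concatenation of these two contractible loops (sharing the dashed edge, traversed in opposite directions), it too is contractible. The second loop is handled identically after switching the roles of $A,\mathbf{x},H_{i}$ with $B,\mathbf{y},H_{j}$: the dashed edge $(\mathbf{B}-\hat{A},\mathbf{y})$ splits it into a triangle $[\alpha_{1},\alpha_{2},\alpha_{4}']$ contractible via the $A_{j}$-graph, and a Case 1(a) square contractible by Lemma \ref{1a contractible}.

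I expect the only delicate point to be verifying carefully that the inserted dashed edge is a legitimate Type $A$ edge and that the resulting sub-pieces really do match the hypotheses of Lemma \ref{1a loop} (in particular the disjointness and membership conditions on the index sets, which follow from Proposition \ref{whitehead notational properties}(1) as used in Lemma \ref{1b loop}); once these bookkeeping checks are in place, contractibility is immediate from the Case 1(a) results and the definition of $2$-cells in the Space of Domains. No new estimates or height computations are needed at this stage — those come later, in the analogues of Lemmas \ref{1a word length}–\ref{1a peak} for Case 1(b).
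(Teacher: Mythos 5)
Your proposal is correct and takes essentially the same route as the paper: each pentagon is cut along the dashed edge into a triangle filled by a single $A$-graph (a genuine $2$-cell since all three domains share that $A$-graph) and a Case 1(a) square handled by Lemma \ref{1a contractible}, exactly as in the paper's proof. One small slip in your treatment of the second loop: under the role-switch the dashed edge $(\mathbf{B}-\hat{A},\mathbf{y})$ runs from $\alpha_{2}$ to $\alpha_{5}'$, so the triangle filled by the $A_{j}$-graph is $[\alpha_{2},\alpha_{3},\alpha_{5}']$ and the remaining Case 1(a) square is $\alpha_{1}\dash\alpha_{2}\dash\alpha_{5}'\dash\alpha_{4}'\dash\alpha_{1}$; the triangle $[\alpha_{1},\alpha_{2},\alpha_{4}']$ you name is not one of the pieces (and its third edge $\alpha_{2}\dash\alpha_{4}'$ is not obviously an edge at all). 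With that relabelling the argument goes through verbatim.
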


\begin{proof}
Note that the $\alpha_{1}$--$\alpha_{2}$--$\alpha_{4}$ triangle can be `filled' with an $A_{i}$ graph (that is to say, since $(\mathbf{A},\mathbf{x})$, $(\mathbf{A}\cap\hat{B},\mathbf{x})$ and $(\mathbf{A}-\hat{B},\mathbf{x})$ all live in the stabiliser of the $A_{i}$-graph with $H_{i}$ at its centre in the domain $\alpha_{2}$, then by Definition \ref{Space of Domains defn}, we have a 2-cell $[\alpha_{1},\alpha_{2},\alpha_{3}]$).
Similarly, the $\alpha_{2}$--$\alpha_{3}$--$\alpha_{5}'$ triangle can be `filled' with an $A_{j}$ graph.
Now $(\widehat{\mathbf{A}-\hat{B}})\cap\hat{B}=\emptyset=\hat{A}\cap(\widehat{B-\hat{A}})$, so by Lemma \ref{1a contractible} (Case 1a) the squares $\alpha_{4}\dash \alpha_{2}\dash \alpha_{3}\dash \alpha_{5}\dash \alpha_{4}$ and $\alpha_{1}\dash \alpha_{2}\dash \alpha_{5}'\dash \alpha_{4}'\dash \alpha_{1}$ are both contractible.
\end{proof}

\begin{lemma}\label{lemma acting on common C}
Let $H'=(H_{1}',\dots,H_{n}')$, and suppose $\mathbf{u}\subset H_{i}'$, $\mathbf{v}\subset H_{j}'$, $\mathbf{C},\mathbf{D}\subset\hat{H'}$ with $\hat{C}=\hat{D}$ and $H_{i}',H_{j}'\not\in\hat{C}$.
If $\|\alpha(\mathbf{C},\mathbf{u})\|-\|\alpha\|\le0$ and $\|\alpha(\mathbf{D},\mathbf{v})\|-\|\alpha\|\le0$ then $\|\alpha(\mathbf{C},\mathbf{u})\|-\|\alpha\|=\|\alpha(\mathbf{D},\mathbf{v})\|-\|\alpha\|=0$.
\end{lemma}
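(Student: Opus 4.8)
The plan is to apply Lemma \ref{domain height (A,x)} to both automorphisms and exploit the fact that they conjugate the \emph{same} collection of factors $\hat{C} = \hat{D}$, just from different operating factors $H_i'$ and $H_j'$. First I would expand both $\|\alpha(\mathbf{C},\mathbf{u})\| - \|\alpha\|$ and $\|\alpha(\mathbf{D},\mathbf{v})\| - \|\alpha\|$ using that lemma, writing each as a sum over $\mathcal{W}$ and over the blocks of the respective partitions. The key structural observation is that the dominant (positive) contribution in each formula comes from the terms $\Lambda_w(e_c)$ for $H_c \in \hat{C} = \hat{D}$, which count how often the edge $e_c$ (leading to the conjugated factor $H_c$) appears in the geodesic $w$ between a pair of base vertices. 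I would argue that, since $\hat C = \hat D$, these `leading' counts agree for the two automorphisms; the difference between the two height-changes then lies entirely in the `correction' terms, which I would bound carefully.

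The crux is a positivity/sign argument. For a single edge path $w \in \mathcal{W}$, each summand in Lemma \ref{domain height (A,x)} has the shape $\Lambda_w(e_a) - \Lambda_w(\bar e_i e_a) - \Lambda_w((e_i x_j^{-1})\bar e_i e_a) - \sum \Lambda_w(\bar e_a e_b) - \tfrac12\sum \Lambda_w(\bar e_a e_c)$. The point is that every occurrence of $e_a$ in the reduced path $w$ either is preceded by $\bar e_i$ (counted by the second or third $\Lambda$ term) or is adjacent to another $e_b/e_c$ with $H_b, H_c$ among the conjugated factors, \emph{unless} that occurrence of $e_a$ is at the very end of $w$ — i.e. unless $H_a$ is an endpoint of $w$. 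So the summand is bounded above by (number of endpoints of $w$ among $\hat C$ minus the `wasted' reductions), and crucially it can only be positive when $w$ has an endpoint in the set being conjugated. Since $\hat C = \hat D$, the set of $w \in \mathcal{W}$ that can possibly contribute positively is the \emph{same} for $(\mathbf{C},\mathbf{u})$ and $(\mathbf{D},\mathbf{v})$, namely those pairs $w = \{G_s, G_t\}$ with $G_s$ or $G_t$ lying (after the identification with $H'$-indices) in $\hat C$. For all other $w$, both summands are $\le 0$. Combined with the hypotheses $\|\alpha(\mathbf{C},\mathbf{u})\| \le \|\alpha\|$ and $\|\alpha(\mathbf{D},\mathbf{v})\| \le \|\alpha\|$, I would then conclude that the total over $\mathcal{W}$ for each is both $\le 0$ (hypothesis) and, looking term by term, forces each contributing $w$-summand to vanish; hence both height-differences equal a common value, which must be $0$.

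The main obstacle I anticipate is making the `$e_a$ occurrences that aren't wasted must be endpoints of $w$' claim fully rigorous, and pinning down exactly which pairs $w \in \mathcal{W}$ contribute for a given conjugating set — this requires a careful bookkeeping of reduced geodesics in the Bass--Serre trees $\hat\alpha$, $\hat\alpha(\mathbf{C},\mathbf{u})$, $\hat\alpha(\mathbf{D},\mathbf{v})$, and tracking how the identification $\{G_1,\dots,G_n\} \leftrightarrow \{H_1',\dots,H_n'\}$ interacts with the sets $\hat C, \hat D$. A cleaner alternative, which I would pursue if the direct term-by-term argument gets unwieldy, is to factor $(\mathbf{C},\mathbf{u})$ and $(\mathbf{D},\mathbf{v})$ into single-block relative Whitehead automorphisms $(C_r, u_r)$ and use the elementary formula of Lemma \ref{word length (A,x)} inductively, together with the fact (from the Case 1(b) lemmas, e.g. Lemma \ref{1a domain height} and its analogues) that reordering disjoint conjugations does not change the height; this reduces the problem to the case $|\mathbf{C}| = |\mathbf{D}| = 1$, where the height-change formula is short enough to compare the two sides directly and read off that a non-positive value on both sides, given the shared support $\hat C = \hat D$, must be zero.
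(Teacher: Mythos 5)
Your starting point matches the paper's: expand both height changes via Lemma \ref{domain height (A,x)} and note that the leading terms $\Lambda_{w}(e_{c})$ for $H_{c}\in\hat{C}=\hat{D}$ are common to both. But the central structural claim you rest the argument on --- that a $w$-summand ``can only be positive when $w$ has an endpoint in the set being conjugated'' --- is false. In the reduced geodesic $w$ in $\hat{\alpha}$, each traversal of an edge in the orbit of $e_{c}$ sits in exactly one ``corner'' $\overbar{e_{a}}e_{c}$ at a trivial vertex, so $\Lambda_{w}(e_{c})=\sum_{a\ne c}\Lambda_{w}(\overbar{e_{a}}e_{c})$; the summand for $c$ is therefore positive exactly when enough of these corners have $a\notin\hat{C}\cup\{H_{i}'\}$ (for instance $a=j$, or any other unconjugated factor). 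This happens whenever $w$ merely \emph{passes through} a translate of a vertex of $\hat{C}$ flanked by such edges, regardless of where $w$ ends. Moreover, even granting the claim, the logic does not close: from ``the total is $\le 0$ and most summands are $\le 0$'' nothing is forced to vanish, and indeed a \emph{single} hypothesis $\|\alpha(\mathbf{C},\mathbf{u})\|\le\|\alpha\|$ can certainly hold strictly (e.g.\ undoing a height-increasing twist), so no argument that treats the two hypotheses separately can succeed.

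What is actually needed --- and what the paper does --- is a joint averaging argument. Set
$T:=\sum_{w}\sum_{H_{c}\in\hat{C}}\bigl(\Lambda_{w}(e_{c})-\sum_{H_{a}\in\hat{C}-\{H_{c}\}}\Lambda_{w}(\overbar{e_{a}}e_{c})\bigr)$,
$S_{i}:=\sum_{w}\sum_{H_{c}\in\hat{C}}\Lambda_{w}(\overbar{e_{i}}e_{c})$ and $S_{j}$ likewise. The corner identity above, together with $H_{i}',H_{j}'\notin\hat{C}$, gives $S_{i}+S_{j}\le T$. The hypothesis on $(\mathbf{C},\mathbf{u})$ (after discarding the $\frac{1}{2}$-weighted cross-block terms and bounding $\Lambda_{w}((e_{i}u_{k}^{-1})\overbar{e_{i}}e_{c})\le\Lambda_{w}(\overbar{e_{i}}e_{c})$) yields $T\le 2S_{i}$, and the hypothesis on $(\mathbf{D},\mathbf{v})$ yields $T\le 2S_{j}$ for the \emph{same} $T$ because $\hat{C}=\hat{D}$. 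Adding, $2T\le 2S_{i}+2S_{j}\le 2T$, so every intermediate inequality is an equality, and unwinding them gives $\|\alpha(\mathbf{C},\mathbf{u})\|-\|\alpha\|=\|\alpha(\mathbf{D},\mathbf{v})\|-\|\alpha\|=0$. This interplay between the two operating factors is the missing idea; neither your term-by-term sign analysis nor the proposed reduction to single blocks (which need not preserve the non-positivity hypotheses block-by-block) supplies it.
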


\begin{proof}
Suppose $\|\alpha(\mathbf{C},\mathbf{u})\|-\|\alpha\|\le0$ and $\|\alpha(\mathbf{D},\mathbf{v})\|-\|\alpha\|\le0$.
By Lemma \ref{domain height (A,x)}:
\begin{align*}
\|\alpha(\mathbf{C},\mathbf{u})\|-\|\alpha\| 
=	&	2\smashoperator[l]{\sum_{w\in\mathcal{W}}}\sum_{C_{k}\in\mathbf{C}}\smashoperator[r]{\sum_{H_{c}\in C_{k}}} \ \left( \vphantom{\sum_{H_{a}\in\hat{C}-C_{k}}}	\Lambda_{w}(e_{c}) - \Lambda_{w}(\overbar{e_{i}}e_{c}) - \Lambda_{w}((e_{i}{u_{k}^{-1}})\overbar{e_{i}}e_{c}) 	\right.	\\ 
	&	\left. - \smashoperator[lr]{\sum_{H_{a}\in C_{k}-\{H_{c}\}}}\Lambda_{w}(\overbar{e_{a}}e_{c}) - \frac{1}{2}\smashoperator[lr]{\sum_{H_{a}\in\hat{C}-C_{k}}}\Lambda_{w}(\overbar{e_{a}}e_{c}) \right)	.
\end{align*}
Note that we can write $\hat{C}-\{H_{c}\}=(\hat{C}-C_{k})\sqcup(C_{k}-\{H_{c}\})$.
Thus $\sum_{H_{a}\in\hat{C}-\{H_{c}\}}\Lambda_{w}(\overbar{e_{a}}e_{c})=\sum_{H_{a}\in\hat{C}-C_{k}}\Lambda_{w}(\overbar{e_{a}}e_{c})+\sum_{H_{a}\in C_{k}-\{H_{c}\}}\Lambda_{w}(\overbar{e_{a}}e_{c})$ for all $c$ such that $H_{c}\in\hat{C}$.

Since $\Lambda_{w}$ counts occurrences of subwords of $w$, we must have that $\Lambda_{w}((e_{i}u_{k}^{-1})\overbar{e_{i}}e_{c}) \le \Lambda_{w}(\overbar{e_{i}}e_{c}) \le \Lambda_{w}(e_{c})$ for every $k$ such that $C_{k}\in\mathbf{C}$ and every $c$ such that $H_{c}\in C_{k}$.
Since $e_{i}$, $e_{j}$, and $e_{a}$ (where $H_{a}\in\hat{C}$) are distinct, we must also have that $\Lambda_{w}(\overbar{e_{i}}e_{c}) + \Lambda_{w}(\overbar{e_{j}}e_{c}) + \sum_{H_{a}\in\hat{C}-\{H_{c}\}}\Lambda_{w}(\overbar{e_{a}}e_{c}) \le \Lambda_{w}(e_{c})$ for every $c$ such that $H_{c}\in\hat{C}$.
By assumption, $\|\alpha(\mathbf{C},\mathbf{u})\|-\|\alpha\|\le0$, that is:
\begin{align*}
	&	\sum_{w\in \mathcal{W}}\sum_{C_{k}\in\mathbf{C}}\smashoperator[r]{\sum_{H_{c}\in C_{k}}} \ \left( \Lambda_{w}(e_{c}) - \smashoperator[lr]{\sum_{H_{a}\in\hat{C}-C_{k}}}\Lambda_{w}(\overbar{e_{a}}e_{c}) - \frac{1}{2}\smashoperator[lr]{\sum_{H_{a}\in C_{k}-\{H_{c}\}}}\Lambda_{w}(\overbar{e_{a}}e_{c}) \right)	\\
\le 	&	 \sum_{w\in \mathcal{W}}\sum_{C_{k}\in\mathbf{C}}\sum_{H_{c}\in C_{k}}\Bigl( \Lambda_{w}(\overbar{e_{i}}e_{c}) + \Lambda_{w}((e_{i}u_{k}^{-1})\overbar{e_{i}}e_{c}) \Bigr)	.
\end{align*}
We now deduce the following system of inequalities:
\begin{align}
	&	\sum_{w\in \mathcal{W}}\sum_{H_{c}\in \hat{C}}\left( \Lambda_{w}(e_{c}) - \smashoperator[lr]{\sum_{H_{a}\in\hat{C}-\{H_{c}\}}}\Lambda_{w}(\overbar{e_{a}}e_{c}) \right)	\\
=	&	\sum_{w\in \mathcal{W}}\sum_{C_{k}\in\mathbf{C}}\smashoperator[r]{\sum_{H_{c}\in C_{k}}} \ \left( \Lambda_{w}(e_{c}) - \smashoperator[lr]{\sum_{H_{a}\in\hat{C}-\{H_{c}\}}}\Lambda_{w}(\overbar{e_{a}}e_{c}) \right)	\\
=	&	\sum_{w\in \mathcal{W}}\sum_{C_{k}\in\mathbf{C}}\smashoperator[r]{\sum_{H_{c}\in C_{k}}} \ \left( \Lambda_{w}(e_{c}) - \smashoperator[lr]{\sum_{H_{a}\in\hat{C}-C_{k}}}\Lambda_{w}(\overbar{e_{a}}e_{c}) - \smashoperator[lr]{\sum_{H_{a}\in C_{k}-\{H_{c}\}}}\Lambda_{w}(\overbar{e_{a}}e_{c}) \right)	\\
\le 	&	\sum_{w\in \mathcal{W}}\sum_{C_{k}\in\mathbf{C}}\smashoperator[r]{\sum_{H_{c}\in C_{k}}} \ \left( \Lambda_{w}(e_{c}) - \smashoperator[lr]{\sum_{H_{a}\in\hat{C}-C_{k}}}\Lambda_{w}(\overbar{e_{a}}e_{c}) - \frac{1}{2}\smashoperator[lr]{\sum_{H_{a}\in C_{k}-\{H_{c}\}}}\Lambda_{w}(\overbar{e_{a}}e_{c}) \right)	\\
\le 	&	\sum_{w\in \mathcal{W}}\sum_{C_{k}\in\mathbf{C}}\smashoperator[r]{\sum_{H_{c}\in C_{k}}} \ \left( \Lambda_{w}(\overbar{e_{i}}e_{c}) + \Lambda_{w}((e_{i}u_{k}^{-1})\overbar{e_{i}}e_{c}) \right)	\\
\le 	&	\sum_{w\in \mathcal{W}}\sum_{C_{k}\in\mathbf{C}}\smashoperator[r]{\sum_{H_{c}\in C_{k}}} \ \left( 2\Lambda_{w}(\overbar{e_{i}}e_{c}) \right)	\\
=	& 	2\smashoperator[l]{\sum_{w\in \mathcal{W}}}\sum_{H_{c}\in\hat{C}} \Lambda_{w}(\overbar{e_{i}}e_{c})		.
\end{align}

The same argument yields that $\sum_{w\in \mathcal{W}}\sum_{H_{d}\in \hat{D}}\left( \Lambda_{w}(e_{d}) - \sum_{b\in\hat{D}-\{H_{d}\}}\Lambda_{w}(\overbar{e_{b}}e_{d}) \right)
\le 2\sum_{w\in \mathcal{W}}\sum_{H_{d}\in\hat{D}} \Lambda_{w}(\overbar{e_{j}}e_{d})$.
Since it is assumed that $\hat{D}=\hat{C}$, we can rewrite this to give
$\sum_{w\in \mathcal{W}}\sum_{H_{c}\in \hat{C}}\left( \Lambda_{w}(e_{c}) - \sum_{a\in\hat{C}-\{H_{c}\}}\Lambda_{w}(\overbar{e_{a}}e_{c}) \right)
\le 2\sum_{w\in \mathcal{W}}\sum_{H_{c}\in\hat{C}} \Lambda_{w}(\overbar{e_{j}}e_{c})$.

Now $\sum_{w\in \mathcal{W}}\sum_{H_{c}\in\hat{C}} \Lambda_{w}(\overbar{e_{i}}e_{c}) \ge \frac{1}{2}\sum_{w\in \mathcal{W}}\sum_{H_{c}\in \hat{C}}\left( \Lambda_{w}(e_{c}) - \sum_{H_{a}\in\hat{C}-\{H_{c}\}}\Lambda_{w}(\overbar{e_{a}}e_{c}) \right)$
and $\sum_{w\in \mathcal{W}}\sum_{H_{c}\in\hat{C}} \Lambda_{w}(\overbar{e_{j}}e_{c}) \ge \frac{1}{2}\sum_{w\in \mathcal{W}}\sum_{H_{c}\in \hat{C}}\left( \Lambda_{w}(e_{c}) - \sum_{H_{a}\in\hat{C}-\{H_{c}\}}\Lambda_{w}(\overbar{e_{a}}e_{c}) \right)$.
 Since \\ \noindent $\sum_{w\in \mathcal{W}}\sum_{H_{c}\in\hat{C}} \Lambda_{w}(\overbar{e_{i}}e_{c}) + \sum_{w\in \mathcal{W}}\sum_{H_{c}\in\hat{C}} \Lambda_{w}(\overbar{e_{j}}e_{c}) = \sum_{w\in \mathcal{W}}\sum_{H_{c}\in \hat{C}}
 \Bigl( \Lambda_{w}(e_{c}) -  \textcolor{white}{\Bigr)}$
\\ \noindent $\textcolor{white}{\Bigl(} \sum_{H_{a}\in\hat{C}-\{H_{c}\}}\Lambda_{w}(\overbar{e_{a}}e_{c}) \Bigr)$,
then we must in fact have $\sum_{w\in \mathcal{W}}\sum_{H_{c}\in\hat{C}} \Lambda_{w}(\overbar{e_{i}}e_{c}) 
\\ \noindent = \sum_{w\in \mathcal{W}}\sum_{H_{c}\in\hat{C}} \Lambda_{w}(\overbar{e_{j}}e_{c}) = \frac{1}{2}\sum_{w\in \mathcal{W}}\sum_{H_{c}\in \hat{C}}\left( \Lambda_{w}(e_{c}) - \sum_{H_{a}\in\hat{C}-\{H_{c}\}}\Lambda_{w}(\overbar{e_{a}}e_{c}) \right)$.
This in turn forces each line of (1)--(7) to be an equality.
In particular, \\ \noindent $\sum_{w\in \mathcal{W}}\sum_{C_{k}\in\mathbf{C}}\sum_{H_{c}\in C_{k}}\left( \Lambda_{w}(e_{c}) - \sum_{H_{a}\in\hat{C}-C_{k}}\Lambda_{w}(\overbar{e_{a}}e_{c}) - \frac{1}{2}\sum_{H_{a}\in C_{k}-\{H_{c}\}}\Lambda_{w}(\overbar{e_{a}}e_{c}) \right) 
\\ \noindent = \sum_{w\in \mathcal{W}}\sum_{C_{k}\in\mathbf{C}}\sum_{H_{c}\in C_{k}}\left( \Lambda_{w}(\overbar{e_{i}}e_{c}) + \Lambda_{w}((e_{i}u_{k}^{-1})\overbar{e_{i}}e_{c}) \right)$.
That is, $\|\alpha(\mathbf{C},\mathbf{u})\|-\|\alpha\|=0$.
The same argument applies to see that we must also have $||\alpha(\mathbf{D},\mathbf{v})||-||\alpha||=0$.
\end{proof}

\begin{lemma}\label{1b peak} % 1b peak
Suppose
\begin{tikzcd}[cramped]
\alpha_{1}	&	\alpha_{2}
			\ar[l, dash, ->-,  "(\mathbf{A}{,}\mathbf{x})" ']
			\ar[r, dash, ->-,  "(\mathbf{B}{,}\mathbf{y})"]
									&	\alpha_{3}
\end{tikzcd} is a peak.
If $H_{i}\not\in\hat{B}$ and $H_{j}\not\in\hat{A}$ with $\hat{A}\cap\hat{B}\neq\emptyset$, then 
either
$||\alpha_{2}(\mathbf{A},\mathbf{x})(\mathbf{A}\cap\hat{B},\mathbf{x})^{-1}||<||\alpha_{2}||$
and
$||\alpha_{2}(\mathbf{B},\mathbf{y})(\mathbf{A}-\hat{B},\mathbf{x})||<||\alpha_{2}||$
, or
$||\alpha_{2}(\mathbf{A},\mathbf{x})(\mathbf{B}-\hat{A},\mathbf{y})||<||\alpha_{2}||$
and
$||\alpha_{2}(\mathbf{B},\mathbf{y})(\mathbf{B}\cap\hat{A},\mathbf{y})^{-1}||<||\alpha_{2}||$
.
\end{lemma}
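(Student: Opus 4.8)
The plan is to exploit the symmetry of Lemma \ref{1b loop}, which exhibits two ways of filling in a loop around the peak --- one going through $\alpha_1\dash\alpha_4\dash\alpha_5\dash\alpha_3$ (pushing the $(\mathbf{A}\cap\hat B,\mathbf x)^{-1}$ factor out to the $\alpha_1$-side) and one going through $\alpha_1\dash\alpha_4'\dash\alpha_5'\dash\alpha_3$ (pushing the $(\mathbf{B}\cap\hat A,\mathbf y)$ factor out to the $\alpha_3$-side). For the first decomposition the two intermediate domains are $\alpha_4=\alpha_2(\mathbf{A},\mathbf{x})(\mathbf{A}\cap\hat B,\mathbf x)^{-1}$ and $\alpha_5=\alpha_4(\mathbf B,\mathbf y)=\alpha_2(\mathbf{B},\mathbf{y})(\mathbf{A}-\hat{B},\mathbf{x})$, and for the second they are $\alpha_4'=\alpha_2(\mathbf A,\mathbf x)(\mathbf B-\hat A,\mathbf y)$ and $\alpha_5'=\alpha_2(\mathbf B,\mathbf y)(\mathbf B\cap\hat A,\mathbf y)^{-1}$. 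The claim to prove is that at least one of these two decompositions has both of its intermediate domains strictly lower than $\alpha_2$. So first I would fix notation: set $\alpha_4,\alpha_5,\alpha_4',\alpha_5'$ as the four domains named in Lemma \ref{1b loop}, and recall from Definition \ref{defn peak} that $\|\alpha_2\|\ge\max(\|\alpha_1\|,\|\alpha_3\|)$ and $\|\alpha_2\|>\min(\|\alpha_1\|,\|\alpha_3\|)$.

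Next I would observe that $\alpha_4$ and $\alpha_5$ are each obtained from $\alpha_1$ or from $\alpha_2$ by a single relative multiple Whitehead automorphism whose supporting set is disjoint from the `other' factor group, so Case 1(a) (Proposition \ref{prop 1a summary}, via Lemmas \ref{1a loop}--\ref{1a domain height}) applies to the relevant squares. Concretely, $\widehat{\mathbf{A}-\hat B}$ is disjoint from $\hat B$, and $\hat A$ is disjoint from $\widehat{\mathbf B-\hat A}$, so the two squares appearing in Lemma \ref{1b contractible} are genuine Case-1(a) configurations; in particular Lemma \ref{1a domain height} gives the additivity relations $\|\alpha_5\|-\|\alpha_1\|=\|\alpha_3\|-\|\alpha_4\|$ and $\|\alpha_5'\|-\|\alpha_1\|=\|\alpha_3\|-\|\alpha_4'\|$ (applied to the appropriate square), and $A_i$- resp. $A_j$-triangles give that $\alpha_1\dash\alpha_2\dash\alpha_4$ and $\alpha_2\dash\alpha_3\dash\alpha_5'$ are $2$-cells. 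The upshot is that I can express all four heights in terms of $\|\alpha_1\|$, $\|\alpha_2\|$, $\|\alpha_3\|$ together with the single quantity $\delta_A:=\|\alpha_4\|-\|\alpha_2\|=\|\alpha_2(\mathbf A\cap\hat B,\mathbf x)^{-1}\|-\|\alpha_2\|$ coming from the $A_i$-triangle (and symmetrically $\delta_B:=\|\alpha_5'\|-\|\alpha_2\|$ from the $A_j$-triangle). Here I would use Lemma \ref{domain height (A,x)} to compute $\delta_A$ and $\delta_B$ explicitly as sums of the $\Lambda_w$ quantities.

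Then the crux: I would argue that it is impossible for \emph{both} $\delta_A>0$ and $\delta_B>0$ (this is where Lemma \ref{lemma acting on common C} is designed to be used, applied with $\hat C=\hat D=\hat A\cap\hat B$ and the automorphisms $(\mathbf A\cap\hat B,\mathbf x)^{-1}$ and $(\mathbf B\cap\hat A,\mathbf y)$; it asserts that if $\delta_A\le 0$ and $\delta_B\le 0$ then in fact $\delta_A=\delta_B=0$, and one checks the hypotheses $H_i,H_j\notin \widehat{\hat A\cap\hat B}$ are satisfied since $H_i\notin\hat A$, $H_i\notin\hat B$, $H_j\notin\hat A$, $H_j\notin\hat B$). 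Combining: if $\delta_A\le 0$ then (using the $A_i$-triangle $2$-cell $\alpha_1\dash\alpha_2\dash\alpha_4$ together with the peak inequality) $\|\alpha_4\|=\|\alpha_2\|+\delta_A\le\|\alpha_2\|$, and one pushes this through the Case-1(a) additivity $\|\alpha_5\|=\|\alpha_4\|+\|\alpha_3\|-\|\alpha_1\|$ together with $\|\alpha_3\|\le\|\alpha_2\|$ and $\|\alpha_1\|\le\|\alpha_2\|$ to bound $\|\alpha_5\|$; the strictness needed ($<\|\alpha_2\|$ rather than $\le$) is extracted from the strict part of the peak condition, exactly as in Lemma \ref{1a peak}. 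The delicate point --- and the main obstacle --- is handling the borderline equality cases cleanly: when $\delta_A=0$ (so $\alpha_4=\alpha_2$ geometrically as a height but the domain may differ) one needs Lemma \ref{lemma acting on common C} to rule out the symmetric failure, and one has to be careful to distribute the strict inequality from $\min(\|\alpha_1\|,\|\alpha_3\|)<\|\alpha_2\|$ onto whichever of $\alpha_1,\alpha_3$ is strictly smaller, choosing the decomposition (the $A_i$ one or the $A_j$ one) so that the strictness lands on both intermediate vertices. I would organise the final argument as a short case split: if $\delta_A\le 0$ use the first decomposition, if $\delta_B\le 0$ use the second, and note by Lemma \ref{lemma acting on common C} at least one holds; in each branch, the two displayed strict inequalities follow by the Lemma \ref{1a peak}-style computation.

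\begin{proof}[Proof sketch]
Let $\alpha_4,\alpha_5,\alpha_4',\alpha_5'$ be as in Lemma \ref{1b loop}, so that
$\alpha_4=\alpha_2(\mathbf A,\mathbf x)(\mathbf A\cap\hat B,\mathbf x)^{-1}$,
$\alpha_5=\alpha_2(\mathbf B,\mathbf y)(\mathbf A-\hat B,\mathbf x)$,
$\alpha_4'=\alpha_2(\mathbf A,\mathbf x)(\mathbf B-\hat A,\mathbf y)$, and
$\alpha_5'=\alpha_2(\mathbf B,\mathbf y)(\mathbf B\cap\hat A,\mathbf y)^{-1}$.
Put $\delta_A:=\|\alpha_2(\mathbf A\cap\hat B,\mathbf x)^{-1}\|-\|\alpha_2\|$ and $\delta_B:=\|\alpha_2(\mathbf B\cap\hat A,\mathbf y)^{-1}\|-\|\alpha_2\|$; these are computed by Lemma \ref{domain height (A,x)}. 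Since $\widehat{\mathbf A\cap\hat B}=\widehat{\mathbf B\cap\hat A}=\hat A\cap\hat B$ and $H_i,H_j\notin\hat A\cap\hat B$, Lemma \ref{lemma acting on common C} gives that $\delta_A\le 0$ or $\delta_B\le 0$.

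Suppose $\delta_A\le 0$. The squares $\alpha_4\dash\alpha_2\dash\alpha_3\dash\alpha_5\dash\alpha_4$ and the triangle $\alpha_1\dash\alpha_2\dash\alpha_4$ of Lemma \ref{1b contractible} are Case-1(a) configurations, so Lemma \ref{1a domain height} yields $\|\alpha_5\|-\|\alpha_4\|=\|\alpha_3\|-\|\alpha_2\|$, whence $\|\alpha_4\|=\|\alpha_2\|+\delta_A$ and $\|\alpha_5\|=\|\alpha_3\|+\delta_A$. From the peak hypothesis $\|\alpha_2\|\ge\max(\|\alpha_1\|,\|\alpha_3\|)$ and $\|\alpha_2\|>\min(\|\alpha_1\|,\|\alpha_3\|)$, together with $\delta_A\le 0$ and the strictness argument of Lemma \ref{1a peak} (choosing the side of the peak on which the strict inequality holds), one concludes $\|\alpha_4\|<\|\alpha_2\|$ and $\|\alpha_5\|<\|\alpha_2\|$, which are the first pair of asserted inequalities.

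If instead $\delta_B\le 0$, the symmetric argument applied to $\alpha_4'$ and $\alpha_5'$ (switching the roles of $A,B$ and of $x,y$) gives $\|\alpha_2(\mathbf A,\mathbf x)(\mathbf B-\hat A,\mathbf y)\|=\|\alpha_4'\|<\|\alpha_2\|$ and $\|\alpha_2(\mathbf B,\mathbf y)(\mathbf B\cap\hat A,\mathbf y)^{-1}\|=\|\alpha_5'\|<\|\alpha_2\|$, which is the second pair. As one of the two cases always occurs, the lemma follows.
\end{proof}
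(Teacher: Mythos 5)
Your overall architecture is the same as the paper's — the two decompositions from Lemma \ref{1b loop}, Case-1(a) additivity, Lemma \ref{lemma acting on common C} for the dichotomy, and Lemma \ref{1a peak} to finish — but the central step has a genuine gap. Lemma \ref{lemma acting on common C} is a statement about two automorphisms with equal support applied at the \emph{same} domain, and its conclusion is only ``if both height changes are $\le 0$ then both are $=0$''; it does \emph{not} yield ``at least one of $\delta_A,\delta_B$ is $\le 0$'', which is what your case split rests on. Both could be positive as far as that lemma is concerned, and the paper does not rule this out — it \emph{uses} the positive branch. The paper applies the lemma not at $\alpha_{2}$ but at the auxiliary domain $\alpha_{6}=\alpha_{2}(\mathbf{A}-\hat{B},\mathbf{x})(\mathbf{B}-\hat{A},\mathbf{y})$, where $(\mathbf{A}\cap\hat{B},\mathbf{x})$ and $(\mathbf{B}\cap\hat{A},\mathbf{y})$ are genuine relative Whitehead automorphisms with common support $\hat{A}\cap\hat{B}$, carrying $\alpha_{6}$ to $\alpha_{4}'$ and $\alpha_{5}$ respectively. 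Case-1(a) additivity on the squares $\alpha_{1}\dash\alpha_{4}\dash\alpha_{6}\dash\alpha_{4}'$ and $\alpha_{3}\dash\alpha_{5}'\dash\alpha_{6}\dash\alpha_{5}$ gives $\|\alpha_{1}\|-\|\alpha_{4}\|=\|\alpha_{4}'\|-\|\alpha_{6}\|$ and $\|\alpha_{3}\|-\|\alpha_{5}'\|=\|\alpha_{5}\|-\|\alpha_{6}\|$, and then \emph{each} branch of the lemma's disjunction (some difference strictly positive, or both zero) forces $\min(\|\alpha_{4}\|-\|\alpha_{2}\|,\|\alpha_{5}'\|-\|\alpha_{2}\|)<0$; the rest is Lemma \ref{1a peak}.

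A second, related problem is your identification $\|\alpha_{4}\|-\|\alpha_{2}\|=\|\alpha_{2}(\mathbf{A}\cap\hat{B},\mathbf{x})^{-1}\|-\|\alpha_{2}\|$. Since $\alpha_{4}=\alpha_{2}(\mathbf{A}-\hat{B},\mathbf{x})$, its height change from $\alpha_{2}$ is controlled by the $\Lambda_{w}$-sums of Lemma \ref{domain height (A,x)} over $H_{a}\in\hat{A}-\hat{B}$, while $\alpha_{2}(\mathbf{A}\cap\hat{B},\mathbf{x})^{-1}$ involves sums over $\hat{A}\cap\hat{B}$; these are unrelated, and the cross-terms in Lemma \ref{domain height (A,x)} mean the height change is not additive over a disjoint splitting of the support, so you cannot recover one from the other via $\|\alpha_{1}\|-\|\alpha_{2}\|$ either. (Note also that $(\mathbf{A}\cap\hat{B},\mathbf{x})^{-1}$ is not a relative Whitehead automorphism with respect to the splitting of $\alpha_{2}$.) Finally, the borderline case you flag but defer — $\|\alpha_{4}\|=\|\alpha_{2}\|=\|\alpha_{3}\|$, where $\alpha_{4}\dash\alpha_{2}\dash\alpha_{3}$ is not a peak and Lemma \ref{1a peak} cannot be invoked — is precisely what the both-zero branch of the paper's argument handles, by reducing to $\min(\|\alpha_{1}\|,\|\alpha_{3}\|)<\|\alpha_{2}\|$; your sketch leaves it unresolved.
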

That is, $||\alpha_{2}||>\max(||\alpha_{4}||,||\alpha_{5}||)$ or $||\alpha_{2}||>\max(||\alpha_{4}'||,||\alpha_{5}'||)$.

\begin{proof}
Let $\alpha_{6}=\alpha_{2}(\mathbf{A}-\hat{B},\mathbf{x})(\mathbf{B}-\hat{A},\mathbf{y})$.
For brevity, set $\mathbf{U}=\mathbf{A}-\hat{B}$, $\mathbf{V}=\mathbf{B}-\hat{A}$, $\mathbf{W}=\mathbf{A}\cap\hat{B}$, and $\mathbf{W^{\backprime}}=\mathbf{B}\cap\hat{A}$.
Note that $\hat{W}=\hat{W^{\backprime}}$ and $\hat{U}\cap\hat{B}=\hat{U}\cap\hat{V}=\hat{U}\cap\hat{W}=\hat{V}\cap\hat{W}=\hat{V}\cap\hat{A}=\emptyset$.
Then by Proposition \ref{whitehead notational properties} (1), the diagram in Figure \ref{fig commuting diagram 1b} commutes.

\begin{figure}
\centering
\begin{tikzcd}[cramped]	
			&			&	\alpha_{2}
							\ar[dll, "(\mathbf{A}{,}\mathbf{x})" ',purple]
							\ar[dl, "(\mathbf{U}{,}\mathbf{x})",pos=0.2,Green]
							\ar[drr, "(\mathbf{B}{,}\mathbf{y})",violet]
							\ar[dddr, bend left=90, looseness=2.5, "(\mathbf{V}{,}\mathbf{y})",blue]
							\ar[dddl, bend right=90, looseness=2.5, "(\mathbf{V}{,}\mathbf{y})" ',white]
									&			&
															\\
\alpha_{1}
\ar[ddr, "(\mathbf{V}{,}\mathbf{y})" ',blue]	
			&	\alpha_{4}
				\ar[l, "(\mathbf{W}{,}\mathbf{x})",red]
				\ar[dr, "(\mathbf{V}{,}\mathbf{y})" ',blue]
				\ar[rr, "(\mathbf{B}{,}\mathbf{y})",violet]
						&			&	\alpha_{5}	&	\alpha_{3}
													\ar[l, "(\mathbf{U}{,}\mathbf{x})",Green]	
															\\
			&			&	\alpha_{6}
							\ar[ur, "(\mathbf{W^{\backprime}}{,}\mathbf{y})",pos=0.3,orange]
							\ar[dl, "(\mathbf{W}{,}\mathbf{x})" ',red]
									&			&	
															\\
			&	\alpha_{4}'	&			&	\alpha_{5}'
										\ar[ll, "(\mathbf{A}{,}\mathbf{x})",purple]
										\ar[ul, "(\mathbf{U}{,}\mathbf{x})",Green]
										\ar[uur, "(\mathbf{W^{\backprime}}{,}\mathbf{y})",orange]
												&			\\
\end{tikzcd}
\caption{Commuting Diagram for Case 1b}
\label{fig commuting diagram 1b}
\end{figure}
By applying Lemma \ref{1a domain height} to each of the squares $\alpha_{1}\dash\alpha_{4}\dash\alpha_{6}\dash\alpha_{4}'\dash\alpha_{1}$ and \\ \noindent $\alpha_{3}\dash\alpha_{5}'\dash\alpha_{6}\dash\alpha_{5}\dash\alpha_{1}$ (which both fall under Case 1a), we recover that
$\|\alpha_{1}\|-\|\alpha_{4}\|=\|\alpha_{4}'\|-\|\alpha_{6}\|$ and $\|\alpha_{3}\|-\|\alpha_{5}'\|=\|\alpha_{5}\|-\|\alpha_{6}\|$.
By considering the triangles $\alpha_{1}\dash\alpha_{2}\dash\alpha_{4}\dash\alpha_{1}$ and $\alpha_{3}\dash\alpha_{2}\dash\alpha_{5}'\dash\alpha_{3}$, we see that
$\|\alpha_{1}\|-\|\alpha_{2}\|=(\|\alpha_{1}\|-\|\alpha_{4}\|)+(\|\alpha_{4}\|-\|\alpha_{2}\|)$ and $\|\alpha_{3}\|-\|\alpha_{2}\|=(\|\alpha_{3}\|-\|\alpha_{5}'\|)+(\|\alpha_{5}'\|-\|\alpha_{2}\|)$.
By Lemma \ref{lemma acting on common C}, either $\max(\|\alpha_{4}'\|-\|\alpha_{6}\|,\|\alpha_{5}\|-\|\alpha_{6}\|)>0$ or $\|\alpha_{4}'\|-\|\alpha_{6}\|=\|\alpha_{5}\|-\|\alpha_{6}\|=0$.
Since 
\begin{tikzcd}[cramped]
\alpha_{1}	&	\alpha_{2}
			\ar[l, dash,->-, "(\mathbf{A}{,}\mathbf{x})" ']
			\ar[r,, dash, ->-, "(\mathbf{B}{,}\mathbf{y})"]
									&	\alpha_{3}
\end{tikzcd}
is a peak, then $\max(\|\alpha_{1}\|-\|\alpha_{2}\|,\|\alpha_{3}\|-\|\alpha_{2}\|)\le0$ and $\min(\|\alpha_{1}\|-\|\alpha_{2}\|,\|\alpha_{3}\|-\|\alpha_{2}\|)<0$.

We claim that $\min(\|\alpha_{4}\|-\|\alpha_{2}\|,\|\alpha_{5}'\|-\|\alpha_{2}\|)<0$.
If $\|\alpha_{4}'\|-\|\alpha_{6}\|=\|\alpha_{5}\|-\|\alpha_{6}\|=0$, then:
\begin{align*}
	&	\min(\|\alpha_{4}\|-\|\alpha_{2}\|,\|\alpha_{5}'\|-\|\alpha_{2}\|)	\\
=	&	\min\left( (\|\alpha_{1}\|-\|\alpha_{2}\|)-(\|\alpha_{1}\|-\|\alpha_{4}\|) , (\|\alpha_{3}\|-\|\alpha_{2}\|)-(\|\alpha_{3}\|-\|\alpha_{5}'\|) \right)	\\
=	&	\min\left( (\|\alpha_{1}\|-\|\alpha_{2}\|)-(\|\alpha_{4}'\|-\|\alpha_{6}\|) , (\|\alpha_{3}\|-\|\alpha_{2}\|)-(\|\alpha_{5}\|-\|\alpha_{6}\|) \right)	\\
=	&	\min(\|\alpha_{1}\|-\|\alpha_{2}\|,\|\alpha_{3}\|-\|\alpha_{2}\|)	\\
<	&	0	.
\end{align*}
On the other hand, if $\max(\|\alpha_{4}'\|-\|\alpha_{6}\|,\|\alpha_{5}\|-\|\alpha_{6}\|)>0$ (without loss of generality, say $\|\alpha_{4}'\|-\|\alpha_{6}\|>0$ --- a symmetrically identical argument holds if $\|\alpha_{5}\|-\|\alpha_{6}\|>0$), then:
\begin{align*}
\|\alpha_{4}\|-\|\alpha_{2}\|	&=	 (\|\alpha_{1}\|-\|\alpha_{2}\|)-(\|\alpha_{1}\|-\|\alpha_{4}\|)	\\
					&=	(\|\alpha_{1}\|-\|\alpha_{2}\|)-(\|\alpha_{4}'\|-\|\alpha_{6}\|)	\\
					&<	\|\alpha_{1}\|-\|\alpha_{2}\|	\\
					&\le 	0	.
\end{align*}
In either case, we have that $\min(\|\alpha_{4}\|-\|\alpha_{2}\|,\|\alpha_{5}'\|-\|\alpha_{2}\|)<0$.

Without loss of generality, assume $\|\alpha_{4}\|-\|\alpha_{2}\|<0$.
Then $\alpha_{4}\dash\alpha_{2}\dash\alpha_{3}$ is a peak falling under Case 1a, and by Lemma \ref{1a peak}, $\|\alpha_{5}\|<\|\alpha_{2}\|$.
An identical (symmetric) argument holds if instead $\|\alpha_{5}'\|-\|\alpha_{2}\|<0$.
Thus $\min\left( \max(\|\alpha_{4}\|,\|\alpha_{5}\|), \max(\|\alpha_{4}'\|,\|\alpha_{5}'\|) \right) < \|\alpha_{2}\|$, as required.
\end{proof}

\begin{prop}\label{prop 1b summary} 
Let $H_{1}\ast\dots H_{n}$ be an $\mathfrak{S}$ free factor splitting for $G$.
Let $(\mathbf{A},\mathbf{x})$ and $(\mathbf{B},\mathbf{y})$ be relative multiple Whitehead automorphisms with $\mathbf{x}\subset H_{i}$ and $\mathbf{y}\subset H_{j}$ and $\hat{A},\hat{B}\subset\{H_{1},\dots,H_{n}\}-\{H_{i},H_{j}\}$ such that $\hat{A}\cap\hat{B}\ne\emptyset$.
Let $\alpha_{2}$ be the domain whose $\alpha$-graph has $\mathfrak{S}$-labelling $(H_{1},\dots,H_{n})$, and let $\alpha_{1}=\alpha_{2}(\mathbf{A},\mathbf{x})$ and $\alpha_{3}=\alpha_{2}(\mathbf{B},\mathbf{y})$.
If
\begin{tikzcd}[cramped]
\alpha_{1} \ar[r,-<-,dash,"(\mathbf{A}{,}\mathbf{x})"]	& \alpha_{2} \ar[r,->-,dash,"(\mathbf{B}{,}\mathbf{y})"]	& \alpha_{3}
\end{tikzcd}
is a peak, then it is reducible.
\end{prop}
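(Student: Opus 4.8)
The plan is to assemble the proposition from the three lemmas just proved for Case 1(b), in exact parallel with the proof of Proposition \ref{prop 1a summary}. First I would apply Lemma \ref{1b loop}: since $H_i\notin\hat B$, $H_j\notin\hat A$ and $\hat A\cap\hat B\neq\emptyset$, the composition $(\mathbf{A},\mathbf{x})^{-1}(\mathbf{B},\mathbf{y})$ can be rewritten either as $(\mathbf{A}\cap\hat B,\mathbf{x})^{-1}(\mathbf{B},\mathbf{y})(\mathbf{A}-\hat B,\mathbf{x})^{-1}$ or as $(\mathbf{B}-\hat A,\mathbf{y})(\mathbf{A},\mathbf{x})^{-1}(\mathbf{B}\cap\hat A,\mathbf{y})$, producing in the Graph of Domains the two closed loops built on $\alpha_1,\alpha_2,\alpha_3$ together with, respectively, the intermediate domains $\alpha_4,\alpha_5$ or $\alpha_4',\alpha_5'$. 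By Lemma \ref{1b contractible}, each of these loops is contractible in the Space of Domains — each decomposes into a triangle that can be filled by an $A_i$- or $A_j$-graph together with a square falling under Case 1(a), contractible by Lemma \ref{1a contractible}. Hence the subpath $\alpha_1\dash\alpha_2\dash\alpha_3$ is homotopic, rel endpoints, in the Space of Domains both to $\alpha_1\dash\alpha_4\dash\alpha_5\dash\alpha_3$ and to $\alpha_1\dash\alpha_4'\dash\alpha_5'\dash\alpha_3$.

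Next I would invoke the peak hypothesis through Lemma \ref{1b peak}, which states precisely that when $\alpha_1\dash\alpha_2\dash\alpha_3$ is a peak, at least one of the two replacement paths has every interior vertex of strictly smaller height than $\alpha_2$: either $\max(\|\alpha_4\|,\|\alpha_5\|)<\|\alpha_2\|$ or $\max(\|\alpha_4'\|,\|\alpha_5'\|)<\|\alpha_2\|$. I would select the path witnessing this inequality; then every interior domain $\chi_i$ of that path satisfies $\|\chi_i\|<\|\alpha_2\|$, so by Definition \ref{defn reducible} the peak $\alpha_1\dash\alpha_2\dash\alpha_3$ is reducible, and this completes the proof.

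At the level of this proposition the remaining work is therefore purely bookkeeping, the substance having already been absorbed into Lemmas \ref{1b loop}, \ref{1b contractible} and \ref{1b peak}. If I were to flag the genuinely delicate point, it sits inside Lemma \ref{1b peak}, where one runs the chain of inequalities (1)--(7) of Lemma \ref{lemma acting on common C} on the common set $\hat A\cap\hat B$ and extracts the dichotomy ``either one of the two Case-1(a) squares strictly lowers the height, or both preserve it''. That equality-case analysis — exploiting that this set is conjugated from the two \emph{different} factors $H_i$ and $H_j$ — is what prevents Case 1(b) from collapsing to the clean commuting picture of Case 1(a); for the present proposition, however, it may be treated as a black box and no further estimate is needed.
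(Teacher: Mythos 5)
Your proposal is correct and follows essentially the same route as the paper: it cites Lemmas \ref{1b loop} and \ref{1b contractible} to replace the peak by one of the two length-three paths through $\alpha_{4},\alpha_{5}$ or $\alpha_{4}',\alpha_{5}'$, and then uses Lemma \ref{1b peak} to choose the path whose interior vertices have height strictly less than $\|\alpha_{2}\|$, concluding by Definition \ref{defn reducible}. This is precisely the paper's argument.
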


\begin{proof}
By Lemmas \ref{1b loop} and \ref{1b contractible}, the path 
\begin{tikzcd}[cramped]
\alpha_{1} \ar[r,-<-,dash,"(\mathbf{A}{,}\mathbf{x})"]	& \alpha_{2} \ar[r,->-,dash,"(\mathbf{B}{,}\mathbf{y})"]	& \alpha_{3}
\end{tikzcd}
is homotopic in the Space of Domains to each of the paths
\\ \noindent
\begin{tikzcd}
\alpha_{1} \ar[r,-<-,dash,"(\mathbf{A}\cap\hat{B}{,}\mathbf{x})"]	& \alpha_{2}(\mathbf{A},\mathbf{x})(\mathbf{A}\cap\hat{B},\mathbf{x})^{-1} \ar[r,->-,dash,"(\mathbf{B}{,}\mathbf{y})"] 	& \alpha_{2}(\mathbf{B},\mathbf{y})(\mathbf{A}-\hat{B},\mathbf{x}) \ar[r,-<-,dash,"(\mathbf{A}-\hat{B}{,}\mathbf{x})"]	& \alpha_{3}
\end{tikzcd}
\ and \\ \noindent
\begin{tikzcd}
\alpha_{1} \ar[r,->-,dash,"(\mathbf{B}-\hat{A}{,}\mathbf{y})"]	& \alpha_{2}(\mathbf{A},\mathbf{x})(\mathbf{B}-\hat{A},\mathbf{y}) \ar[r,-<-,dash,"(\mathbf{A}{,}\mathbf{x})"] 	& \alpha_{2}(\mathbf{B},\mathbf{y})(\mathbf{B}\cap\hat{A},\mathbf{y})^{-1} \ar[r,->-,dash,"(\mathbf{B}\cap\hat{A}{,}\mathbf{y})"]	& \alpha_{3}
\end{tikzcd}
.

By Lemma \ref{1b peak}, either
$||\alpha_{2}(\mathbf{A},\mathbf{x})(\mathbf{A}\cap\hat{B},\mathbf{x})^{-1}||<||\alpha_{2}||$
and
$||\alpha_{2}(\mathbf{B},\mathbf{y})(\mathbf{A}-\hat{B},\mathbf{x})||<||\alpha_{2}||$
, or
$||\alpha_{2}(\mathbf{A},\mathbf{x})(\mathbf{B}-\hat{A},\mathbf{y})||<||\alpha_{2}||$
and
$||\alpha_{2}(\mathbf{B},\mathbf{y})(\mathbf{B}\cap\hat{A},\mathbf{y})^{-1}||<||\alpha_{2}||$
Thus by Definition \ref{defn reducible}, one of the above paths is a reduction for the peak
\begin{tikzcd}[cramped]
\alpha_{1} \ar[r,-<-,dash,"(\mathbf{A}{,}\mathbf{x})"]	& \alpha_{2} \ar[r,->-,dash,"(\mathbf{B}{,}\mathbf{y})"]	& \alpha_{3}
\end{tikzcd}
.
\end{proof}

% -------------------------------------------------------------------------------- case 2a :

\subsubsection*{Case 2(a): $H_{i}\in\hat{B}$ (say $H_{i}\in B_{q}$) and $H_{j}\not\in\hat{A}$, with $\hat{A}\subseteq B_{q}$}

The lemmas for this case are adapted from \cite[Lemma 2.7]{Gilbert1987}.

\begin{lemma}\label{2a loop} % 2a loop
If $H_{i}\in B_{q}$ and $H_{j}\not\in\hat{A}$ with $\hat{A}\subseteq B_{q}$, then 
$(\mathbf{A},\mathbf{x})^{-1}(\mathbf{B},\mathbf{y})
=(\mathbf{B'},\mathbf{y})(\mathbf{A}^{y_{q}},\mathbf{x}^{y_{q}})^{-1}$
where $[\mathbf{B'}]_{b}:=B_{b}$ for $b\in\{1,\dots,l\}-\{q\}$ and $[\mathbf{B'}]_{q}:=(B_{q}-\hat{A})\cup\widehat{(B_{q}\cap\mathbf{A})^{\mathbf{x}}}=(B_{q}-\hat{A})\cup\bigcup_{a=1}^{k}A_{a}^{x_{a}}$.
\end{lemma}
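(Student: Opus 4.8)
\textbf{Plan of proof for Lemma \ref{2a loop}.}

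The statement is an identity of pure symmetric outer automorphisms, asserting that the relative multiple Whitehead automorphism $(\mathbf{A},\mathbf{x})^{-1}(\mathbf{B},\mathbf{y})$ can be rewritten as $(\mathbf{B'},\mathbf{y})(\mathbf{A}^{y_{q}},\mathbf{x}^{y_{q}})^{-1}$ under the hypotheses $H_i\in B_q$, $H_j\notin\hat{A}$, $\hat{A}\subseteq B_q$. The plan is to verify this directly at the level of the action on $\alpha$-graphs, exactly as was done for the Case 1(a) identity in Lemma \ref{1a loop} and for the properties in Lemma \ref{lemma whitehead properties} and Proposition \ref{whitehead notational properties}. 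Concretely: fix the $\alpha$-graph $\alpha_2$ with $\mathfrak{S}$-labelling $(H_1,\dots,H_n)$; compute the labelling of $\alpha_2\cdot(\mathbf{A},\mathbf{x})^{-1}(\mathbf{B},\mathbf{y})$ and the labelling of $\alpha_2\cdot(\mathbf{B'},\mathbf{y})(\mathbf{A}^{y_q},\mathbf{x}^{y_q})^{-1}$, and check they agree (so that the two sides of the claimed identity define the same domain, hence — since each $\alpha$-graph lies in a unique domain — represent the same outer automorphism). The key bookkeeping point, flagged in Observation \ref{obs geometric whitehead argument} and the Remark after Lemma \ref{disjoint whitehead autos}, is that all factors in a composition must be read \emph{relative to the domain on which they act}; this is precisely why the conjugated objects $\mathbf{A}^{y_q}$ and $\mathbf{x}^{y_q}$ appear on the right-hand side rather than $\mathbf{A}$ and $\mathbf{x}$.

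First I would unwind the left-hand side. By Definition \ref{defn rel whitehead auto}, $(\mathbf{A},\mathbf{x})^{-1} = (A_k^{x_k},x_k^{-1})\dots(A_1^{x_1},x_1^{-1})$ (using Lemma \ref{lemma whitehead properties}(2) for each single factor, applied to the correct intermediate $\alpha$-graph), and $(\mathbf{B},\mathbf{y})=(B_1,y_1)\dots(B_l,y_l)$. Since $H_i\notin\hat{A}$ and $H_j\notin\hat A$, the only factor of $(\mathbf{B},\mathbf{y})$ that interacts nontrivially with the $(\mathbf{A},\mathbf{x})^{-1}$ part is $(B_q,y_q)$, because $\hat{A}\subseteq B_q$ means every group moved by $(\mathbf{A},\mathbf{x})$ sits inside $B_q$ and so gets additionally conjugated by $y_q\in H_j$ when $(B_q,y_q)$ acts. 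The groups $B_b$ for $b\ne q$ are disjoint from $\hat A\cup\{H_i\}$, so by repeated application of Lemma \ref{disjoint whitehead autos} they commute past $(\mathbf{A}^{\cdot},\mathbf{x}^{\cdot})$-type factors and can be collected unchanged into $\mathbf{B'}$. The substantive computation is therefore the ``local'' identity on the portion of the graph involving $H_i$, $H_j$, $\hat A$ and $B_q-\hat A$: tracking how the conjugators compose on each vertex group $H_a$ with $H_a\in A_a$ (it picks up first $x_a^{-1}$ from $(\mathbf{A},\mathbf{x})^{-1}$, then $y_q$ from $(B_q,y_q)$) versus how they compose on the right-hand side (first $y_q$ from $[\mathbf{B'}]_q$ since $A_a^{x_a}\subseteq[\mathbf{B'}]_q$, then $(x_a^{y_q})^{-1}$ from $(\mathbf{A}^{y_q},\mathbf{x}^{y_q})^{-1}$). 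One checks that in both orders a group $H_a$ with $H_a\in A_a$ ends up conjugated by $x_a^{-1}y_q$, a group in $B_q-\hat A$ ends up conjugated by $y_q$ on both sides, and $H_i,H_j$ and all groups outside $\hat B$ are treated identically; this is the kind of ``partition the leaves and compare conjugators'' argument already run in the proof of Proposition \ref{whitehead notational properties}. I would present it via a commuting square/triangle of $\alpha$- and $A$-graphs, as in Lemma \ref{1a loop}, from which the loop interpretation (the existence of the vertices $\alpha_4,\alpha_5$ in the Graph of Domains and the fact that the diagram closes up) is immediate: the intermediate domain $\alpha_2\cdot(\mathbf{A}^{y_q},\mathbf{x}^{y_q})$ is reached from $\alpha_1$ by an edge of Type $A$ (stabiliser of the $A_j$-graph, since the conjugators all lie in $H_j$ after the reindexing), and from $\alpha_3$ by an edge of Type $A$ (stabiliser of an $A_i$-graph).

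The main obstacle I anticipate is purely notational rather than conceptual: getting the definition of $[\mathbf{B'}]_q = (B_q-\hat A)\cup\bigcup_{a=1}^{k}A_a^{x_a}$ to be consistent as a \emph{disjoint} partition (the $A_a^{x_a}$ are disjoint from each other because the $A_a$ are, and each $A_a^{x_a}$ is disjoint from $B_q-\hat A$ because $A_a\subseteq\hat A\subseteq B_q$ and conjugation by $x_a\in H_i$ with $H_i\notin\hat A$ keeps these groups as honest vertex groups of the relevant $\alpha$-graph, distinct from those in $B_q-\hat A$), and then making sure that the relative-conjugator labels in $(\mathbf{A}^{y_q},\mathbf{x}^{y_q})$ are applied to the \emph{right} $\alpha$-graph — the one obtained after $(\mathbf{B'},\mathbf{y})$ has already acted. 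Once the relative-automorphism convention is applied carefully at each step, the identity falls out by comparing the two $\mathfrak{S}$-labellings componentwise and invoking uniqueness of the $\alpha$-graph in each domain (Definition \ref{defn Dn}), just as in part 3 of the proof of Lemma \ref{lemma whitehead properties}. I would not expect to need any new machinery beyond Lemmas \ref{disjoint whitehead autos}, \ref{lemma whitehead properties}, and Proposition \ref{whitehead notational properties}.
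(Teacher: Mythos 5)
Your proposal is correct and follows essentially the same route as the paper: both isolate the interaction of $(\mathbf{A},\mathbf{x})$ with the single piece $(B_{q},y_{q})$ by commuting the disjoint factors $(B_{b},y_{b})$, $b\ne q$, out of the way, and both come down to the splitting $B_{q}=(B_{q}-\hat{A})\sqcup\hat{A}$ together with the observation that conjugating the $\mathbf{A}$-data by $y_{q}$ accounts for the reordering. The only difference is one of packaging: you verify the kernel identity by tracking the conjugator on each vertex group directly and invoking uniqueness of the $\alpha$-graph in a domain, whereas the paper chains the already-established identities of Lemma \ref{lemma whitehead properties}(1)--(3) to write $(\mathbf{A}^{y_{q}},\mathbf{x}^{y_{q}})=(\hat{A},y_{q})^{-1}(\mathbf{A},\mathbf{x}y_{q})$ and compute $(\mathbf{A},\mathbf{x})(\mathbf{B'},\mathbf{y})=(\mathbf{B},\mathbf{y})(\mathbf{A}^{y_{q}},\mathbf{x}^{y_{q}})$ symbolically --- the same computation in a different dress (your only slips, referring to a vertex $\alpha_{5}$ and to $\alpha_{2}\cdot(\mathbf{A}^{y_{q}},\mathbf{x}^{y_{q}})$ where the Case 2(a) square has only $\alpha_{4}=\alpha_{3}\cdot(\mathbf{A}^{y_{q}},\mathbf{x}^{y_{q}})$, concern the commentary after the lemma rather than the identity itself).
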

That is, there exists a vertex $\alpha_{4}$ in our Graph of Domains such that
\begin{tikzpicture}
\draw[-<-] (0.2,0.2) -- (1,1);
\draw[->-] (1.5,1) -- (2.3,0.2);
\draw[->-] (0.2,-0.2) -- (1,-1);
\draw[-<-] (1.5,-1) -- (2.3,-0.2);
\node at (0,0) {$\alpha_{1}$};
\node at (1.25,1.15) {$\alpha_{2}$};
\node[scale=0.8] at (0.2,0.85) {$(\mathbf{A},\mathbf{x})$};
\node[scale=0.8] at (2.45,-0.85) {$(\mathbf{A}^{y_{q}},\mathbf{x}^{y_{q}})$};
\node at (2.5,0) {$\alpha_{3}$};
\node[scale=0.8] at (2.3,0.85) {$(\mathbf{B},\mathbf{y})$};
\node at (1.25,-1.15) {$\alpha_{4}$};
\node[scale=0.8] at (0.175,-0.85) {$(\mathbf{B'},\mathbf{y})$};
\end{tikzpicture}
is a loop.

\begin{proof} % case 2a
We have that $B_{q}=(B_{q}-\hat{A})\sqcup \hat{A}$.
In particular, $\hat{A}\cap B_{b}=\emptyset$ for any $b\ne q$.
By Lemma \ref{lemma whitehead properties} (3) we have that $(B_{q},y_{q})=(B_{q}-\hat{A},y_{q})(\hat{A},y_{q})$.
By (1) and (2) of Lemma \ref{lemma whitehead properties}, we have that $(\mathbf{A}^{y_{q}},\mathbf{x}^{y_{q}})=(\hat{A}^{y_{q}},y_{q}^{-1})(\mathbf{A}^{y_{q}y_{q}^{-1}},\mathbf{x}y_{q})=(\hat{A},y_{q})^{-1}(\mathbf{A},\mathbf{x}y_{q})$.
Now
	$\begin{aligned}[t]
	(\mathbf{A},\mathbf{x})(\mathbf{B'},\mathbf{y})	&=	(\mathbf{A},\mathbf{x})(B_{1},y_{1})\dots( (B_{q}-\hat{A})\cup \widehat{\mathbf{A}^{\mathbf{x}}},y_{q})\dots(B_{l},y_{l})	\\
	&=	(\mathbf{A},\mathbf{x})(B_{1},y_{1})\dots(B_{q}-\hat{A},y_{q})(\widehat{\mathbf{A}^{\mathbf{x}}},y_{q})\dots(B_{l},y_{l})	\\
	&=	(B_{1},y_{1})\dots(B_{q}-\hat{A},y_{q})\dots(B_{l},y_{l})(\mathbf{A},\mathbf{x})(\widehat{\mathbf{A}^{\mathbf{x}}},y_{q})	\\
	&=	(\mathbf{B}-\hat{A},\mathbf{y})(\mathbf{A},\mathbf{x}y_{q})	\\
	&=	(\mathbf{B}-\hat{A},\mathbf{y})(\hat{A},y_{q})(\mathbf{A}^{y_{q}},\mathbf{x}^{y_{q}})	\\
	&=	(\mathbf{B},\mathbf{y})(\mathbf{A}^{y_{q}},\mathbf{x}^{y_{q}})	.
	\end{aligned}$
\\
\end{proof}

\begin{lemma}\label{2a contractible} % case 2a
The loop 
\begin{tikzpicture}
\draw[-<-] (0.2,0.2) -- (1,1);
\draw[->-] (1.5,1) -- (2.3,0.2);
\draw[->-] (0.2,-0.2) -- (1,-1);
\draw[-<-] (1.5,-1) -- (2.3,-0.2);
\node at (0,0) {$\alpha_{1}$};
\node at (1.25,1.15) {$\alpha_{2}$};
\node[scale=0.8] at (0.2,0.85) {$(\mathbf{A},\mathbf{x})$};
\node[scale=0.8] at (2.45,-0.85) {$(\mathbf{A}^{y_{q}},\mathbf{x}^{y_{q}})$};
\node at (2.5,0) {$\alpha_{3}$};
\node[scale=0.8] at (2.3,0.85) {$(\mathbf{B},\mathbf{y})$};
\node at (1.25,-1.15) {$\alpha_{4}$};
\node[scale=0.8] at (0.175,-0.85) {$(\mathbf{B'},\mathbf{y})$};
\end{tikzpicture}
(where $H_{i}\in B_{q}$ and $H_{j}\not\in\hat{A}$ with $\hat{A}\subseteq B_{q}$, and $\mathbf{B'}$ is given by $[\mathbf{B'}]_{b}=B_{b}$ for $b\ne q$ and $[\mathbf{B'}]_{q}=(B_{q}-\hat{A})\cup\widehat{\mathbf{A}^{\mathbf{x}}}$) is contractible in our Space of Domains.
\end{lemma}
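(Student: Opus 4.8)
The loop under consideration is the $4$-cycle on the vertices $\alpha_{1},\alpha_{2},\alpha_{3},\alpha_{4}$, whose four edges carry (by Lemma \ref{2a loop}) the automorphisms $(\mathbf{A},\mathbf{x})$, $(\mathbf{B},\mathbf{y})$, $(\mathbf{A}^{y_{q}},\mathbf{x}^{y_{q}})$ and $(\mathbf{B'},\mathbf{y})$. Each of these is of Type $A$: $(\mathbf{A},\mathbf{x})$ has operating factor $H_{i}$, the automorphisms $(\mathbf{B},\mathbf{y})$ and $(\mathbf{B'},\mathbf{y})$ have operating factor $H_{j}$ (viewed respectively in $\alpha_{2}$ and in $\alpha_{1}$, which agree since $H_{j}\notin\hat{A}$), and $(\mathbf{A}^{y_{q}},\mathbf{x}^{y_{q}})$ has operating factor the vertex group $H_{i}^{y_{q}}$ of $\alpha_{3}$. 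The plan is to fill this square by a lattice, exactly as in the proof of Lemma \ref{1a contractible}, the only new feature being that the ``bottom'' side carries the $y_{q}$-conjugate $(\mathbf{A}^{y_{q}},\mathbf{x}^{y_{q}})$ of the ``top'' side rather than $(\mathbf{A},\mathbf{x})$ itself.

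Concretely, first I would split $(\mathbf{A},\mathbf{x})$ and $(\mathbf{B'},\mathbf{y})$ into their individual one-vertex factors and, using Lemma \ref{2a loop} repeatedly, assemble a grid of domains $\hat{\alpha}_{a,b}$ with $\hat{\alpha}_{0,0}=\alpha_{2}$, $\hat{\alpha}_{K,0}=\alpha_{1}$, $\hat{\alpha}_{0,L}=\alpha_{3}$ and $\hat{\alpha}_{K,L}=\alpha_{4}$, in which every unit cell is a $4$-cycle of the kind produced in Lemma \ref{1a loop}. For each such cell, Lemma \ref{lemma whitehead properties} (parts~1--3) together with Observation \ref{obs geometric whitehead argument} lets one rewrite the four edge automorphisms relative to the domains on which they act and so exhibit a single conjugating element realising the commutation around the cell; the corresponding $B$-graph of the form $B_{i,j,k}$ (or $B_{j,i,k}$) then lies in the intersection of the four domains of that cell, which is therefore contractible in the Space of Domains by Definition \ref{Space of Domains defn}. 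The rows and columns of the grid are loops which, just as in Lemma \ref{1a contractible}, are filled in by suitable $A$-graphs centred at $H_{i}$, $H_{i}^{y_{q}}$ or $H_{j}$; as observed there, in fact only the extreme rows and columns of the lattice are actually needed. Since every cell of the lattice is contractible in the Space of Domains, so is the $4$-cycle on $\alpha_{1},\alpha_{2},\alpha_{3},\alpha_{4}$.

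Before running the lattice argument it is harmless to assume that $\mathbf{A}$ consists of a single term: since $\hat{A}\subseteq B_{q}$, the factors of $(\mathbf{A},\mathbf{x})$ have pairwise disjoint operating sets and any two of them commute by Lemma \ref{disjoint whitehead autos}, so the general case follows from the one-term case by finitely many applications of Case~1(a) (Lemma \ref{1a contractible}). I expect the main obstacle to be purely a matter of bookkeeping: because the bottom side of the square is the $y_{q}$-conjugate $(\mathbf{A}^{y_{q}},\mathbf{x}^{y_{q}})$ rather than $(\mathbf{A},\mathbf{x})$, the grid is not a literal commuting grid but a ``sheared'' one, and at each cell one must take care to write every automorphism as a relative (multiple) Whitehead automorphism with respect to the correct domain before invoking Lemma \ref{2a loop} or Lemma \ref{1a loop} --- this is precisely the subtlety flagged in Observation \ref{obs geometric whitehead argument}.
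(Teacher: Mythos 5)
Your overall strategy is the paper's: decompose the square into a lattice whose unit cells are filled by $B$-graphs lying in the intersection of their four corner domains, and cap the rows/columns with $A$-graphs. However, two specific claims in your construction do not hold as stated, and they matter.

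First, the unit cells of the relevant lattice are \emph{not} ``$4$-cycles of the kind produced in Lemma \ref{1a loop}.'' In this case the operating factor $H_{i}$ of each one-vertex piece $(H_{A_{a+1}},x_{A_{a+1}})$ of $(\mathbf{A},\mathbf{x})$ lies inside $B_{q}$, so each cell is a singleton instance of the Case~2(a) square of Lemma \ref{2a loop}: its bottom edge carries the $y_{q}$-conjugate $(H_{A_{a+1}}^{y_{q}},x_{A_{a+1}}^{y_{q}})$ and its right-hand rung carries a \emph{modified} automorphism $(\mathbf{B^{a\prime}},\mathbf{y})$ in which $H_{A_{1}},\dots,H_{A_{a}}$ have been replaced by their $x$-conjugates. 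If the cells really were Case~1(a) cells, the bottom of the big square would carry $(\mathbf{A},\mathbf{x})$ rather than $(\mathbf{A}^{y_{q}},\mathbf{x}^{y_{q}})$, contradicting the loop you are trying to fill. These cells are contractible because the graph $B_{j,i,A_{a+1}}$ lies in all four corner domains --- not via the disjoint-support mechanism of Case~1(a).

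Second, splitting $(\mathbf{B'},\mathbf{y})$ into its individual one-vertex factors does not yield a grid of commuting squares. One of those factors is $(H_{i},y_{q})$ (since $H_{i}\in B_{q}$), and its interaction with a factor $(H_{A_{a+1}},x_{A_{a+1}})$ of $(\mathbf{A},\mathbf{x})$ --- whose operating factor is precisely $H_{i}$ --- is a Case~2 configuration with $\hat{A}=\{H_{A_{a+1}}\}\not\subseteq\{H_{i}\}$, i.e.\ Case~2(b), whose basic relation is a pentagon, not a square. So the full two-dimensional grid does not close up cell by cell. The paper's proof avoids this by decomposing only $(\mathbf{A},\mathbf{x})$ into one-vertex pieces and keeping each rung $(\mathbf{B^{a\prime}},\mathbf{y})$ whole, producing a ladder $\tilde{\alpha}_{0,0},\dots,\tilde{\alpha}_{K,0}$ on one side and $\tilde{\alpha}_{0,L},\dots,\tilde{\alpha}_{K,L}$ on the other, with the two side-loops filled by $A_{i}$-graphs and each rung-square filled by $B_{j,i,A_{a+1}}$. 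Your proposed preliminary reduction to the one-term case by ``applications of Case~1(a)'' also needs the interpolating rungs $\mathbf{B^{a\prime}}$ to even state what the intermediate squares are, so it cannot be dismissed as bookkeeping; it is the content of the proof. (A minor point: the commutation of the one-vertex pieces of $(\mathbf{A},\mathbf{x})$ among themselves is Lemma \ref{lemma whitehead properties}(3), not Lemma \ref{disjoint whitehead autos}, since they share the operating factor $H_{i}$.)
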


\begin{proof}
Let $\hat{A}=\{H_{A_{1}},\dots,H_{A_{K}}\}$.
Recall from Lemma \ref{1a contractible} that we denote $(\mathbf{A},\mathbf{x})|_{a}:=(\mathbf{A}\cap\{H_{a}\},\mathbf{x})$. Then there is some $x_{A_{a}}\in\mathbf{x}$ so that $(\mathbf{A},\mathbf{x})|_{a}=(H_{a},x_{A_{a}})$.
Let $\tilde{\alpha}_{0,0}:=\alpha_{2}$
and for $a=1,\dots,K$, recursively define $\tilde{\alpha}_{a,0}:=\tilde{\alpha}_{a-1,0}(\mathbf{A},\mathbf{x})|_{a}$.
Then for each $a$, $\tilde{\alpha}_{a,0}$ is the (domain whose) $\alpha$-graph has $\mathfrak{S}$-labelling comprising the groups $H_{A_{1}}^{x_{A_{1}}},\dots,H_{A_{a}}^{x_{A_{a}}},H_{A_{a+1}},\\\noindent\dots,H_{A_{K}},H_{v_{1}},\dots,H_{v_{n-K}}$.
For each $a$, define $\mathbf{B^{a\prime}}$ by $[\mathbf{B^{a\prime}}]_{b}:=B_{b}$ and $[\mathbf{B^{a\prime}}]_{q}:=(B_{q}-\{H_{A_{1}},\dots,H_{A_{a}}\})\cup\{H_{A_{1}}^{x_{A_{1}}},\dots,H_{A_{a}}^{x_{A_{a}}}\}$.
Note that $\mathbf{B^{K\prime}}=\mathbf{B'}$.
Again for each $a=0,\dots,K$, we define $\tilde{\alpha}_{a,L}:=\tilde{\alpha}_{a,0}(\mathbf{B^{a\prime}},\mathbf{y})$.
Then $\tilde{\alpha}_{a+1,L}=\tilde{\alpha}_{a,L}(H_{A_{a}}^{y_{q}},x_{A_{a}}^{y_{q}})$.
Observe that $\tilde{\alpha}_{K,0}=\alpha_{1}$, $\tilde{\alpha}_{0,L}=\alpha_{3}$, and $\tilde{\alpha}_{K,L}=\alpha_{4}$.
Thus we have constructed a lattice as depicted in Figure \ref{fig lattice 2a}.
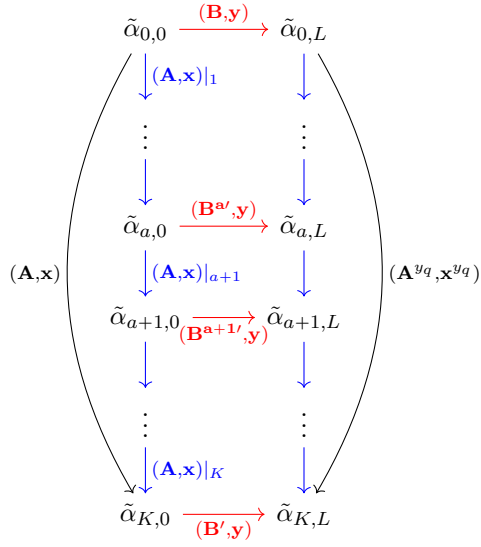
\begin{figure}
\centering
\adjustbox{scale=0.9}{
\begin{tikzcd}
\tilde{\alpha}_{0,0}
\ar[r,red,"(\mathbf{B}{,}\mathbf{y})"]
\ar[d,blue,"(\mathbf{A}{,}\mathbf{x})|_{1}"]
\ar[ddddd,bend right=30,"(\mathbf{A}{,}\mathbf{x})" ']
				&	\tilde{\alpha}_{0,L}
					\ar[d,blue]
					\ar[ddddd,bend left=30,"(\mathbf{A}^{y_{q}}{,}\mathbf{x}^{y_{q}})"]
									\\
\vdots
\ar[d,blue]
				&	\vdots
					\ar[d,blue]
									\\
\tilde{\alpha}_{a,0}
\ar[r,red,"(\mathbf{B^{a\prime}}{,}\mathbf{y})"]
\ar[d,blue,"(\mathbf{A}{,}\mathbf{x})|_{a+1}"]
				&	\tilde{\alpha}_{a,L}
					\ar[d,blue]
									\\
\tilde{\alpha}_{a+1,0}
\ar[r,red,"(\mathbf{B^{a+1\prime}}{,}\mathbf{y})" ']
\ar[d,blue]
				&	\tilde{\alpha}_{a+1,L}
					\ar[d,blue]
									\\
\vdots
\ar[d,blue,"(\mathbf{A}{,}\mathbf{x})|_{K}"]
				&	\vdots
					\ar[d,blue]
									\\
\tilde{\alpha}_{K,0}
\ar[r,red,"(\mathbf{B'}{,}\mathbf{y})" ']
				&	\tilde{\alpha}_{K,L}
									\\
\end{tikzcd}
}
\caption{Lattice Describing $(A,x)(B',y)=(B,y)(A^{y_{q}},x^{y_{q}})$ in Case 2a}
\label{fig lattice 2a}
\end{figure}
As in the proof of Lemma \ref{1a contractible}, the loops $\tilde{\alpha}_{0,0}\dash\dots\dash \tilde{\alpha}_{K,0}\dash \tilde{\alpha}_{0,0}$
and $\tilde{\alpha}_{0,L}\dash\dots\dash \tilde{\alpha}_{K,L}\dash \tilde{\alpha}_{0,L}$ are contractible via $A_{i}$-graphs living in domains $\tilde{\alpha}_{0,0}$ and $\tilde{\alpha}_{0,L}$, respectively.
Also, for each $a\in\{0,\dots,K-1\}$ the square

\noindent
\begin{tikzcd}
\tilde{\alpha}_{a,0}
\ar[r,red,"(\mathbf{B^{a\prime}}{,}\mathbf{y})"]
\ar[d,blue,"(\mathbf{A}{,}\mathbf{x})|_{a+1}" ']
				&	\tilde{\alpha}_{a,L}
					\ar[d,blue,"(H_{A_{a}}^{y_{q}}{,}x_{A_{a}}^{y_{q}})"]
									\\
\tilde{\alpha}_{a+1,0}
\ar[r,red,"(\mathbf{B^{a+1\prime}}{,}\mathbf{y})" ']
				&	\tilde{\alpha}_{a+1,L}
									\\
\end{tikzcd}
is contractible via the graph $B_{j,i,a}$ in the domain $\tilde{\alpha}_{a,0}$ (that is, the graph \TB{$n-3$}{$H_{j}$}{$H_{i}$}{$H_{a}$} lives in the intersection $\tilde{\alpha}_{a,0}\cap\tilde{\alpha}_{a,L}\cap\tilde{\alpha}_{a+1,0}\cap\tilde{\alpha}_{a+1,L}$).
\end{proof}

\begin{lemma}\label{2a word length} % lemma 2
Suppose we have a loop
\begin{tikzpicture}
\draw[-<-] (0.2,0.2) -- (1,1);
\draw[->-] (1.5,1) -- (2.3,0.2);
\draw[->-] (0.2,-0.2) -- (1,-1);
\draw[-<-] (1.5,-1) -- (2.3,-0.2);
\node at (0,0) {$\alpha_{1}$};
\node at (1.25,1.15) {$\alpha_{2}$};
\node[scale=0.8] at (0.2,0.85) {$(H_{a},{x})$};
\node[scale=0.8] at (2.45,-0.85) {$(H_{a}^{y_{q}},{x}^{y_{q}})$};
\node at (2.5,0) {$\alpha_{3}$};
\node[scale=0.8] at (2.3,0.85) {$(B_{q},y_{q})$};
\node at (1.25,-1.15) {$\alpha_{4}$};
\node[scale=0.8] at (0.15,-0.85) {$(B'_{q},y_{q})$};
\end{tikzpicture}
where $B_{q}'=\{H_{a}^{x}\}\cup(B_{q}-\{H_{a}\})$, and $y_{q}\in H_{j}\ne H_{a}$, $x\in H_{i}\in B_{q}$, and $H_{a}\in B_{q}$.
Then for any edge path $w$ in $\hat{\alpha}_{2}$, we have
$|w|_{\alpha_{4}}-|w|_{\alpha_{1}}=|w|_{\alpha_{3}}-|w|_{\alpha_{2}}$.
\end{lemma}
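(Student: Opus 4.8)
The plan is to establish the identity by telescoping through $\alpha_3$ and reducing everything to Lemma~\ref{word length (Ha,x)}, together with a short analysis of how $\varphi_{(B_q,y_q)}$ acts on a few specific subwords. First I would record the labellings: $\alpha_2$ carries the labelling $(H_1,\dots,H_n)$, so $\alpha_3=\alpha_2(B_q,y_q)$ carries the labelling obtained by replacing $H_c$ with $H_c^{y_q}$ for every $H_c\in B_q$ (and fixing $H_k$ for $H_k\notin B_q$; in particular $H_j\notin B_q$ since $y_q\in H_j$), while $\alpha_4=\alpha_1(B_q',y_q)=\alpha_3(H_a^{y_q},x^{y_q})$ by Lemma~\ref{2a loop}. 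Label the edges of $\hat{\alpha}_2$, $\hat{\alpha}_3$, $\hat{\alpha}_4$ by $e_k$, $g_k$, $h_k$ respectively, following Convention~\ref{convention edge labels and map for alpha hat}, and take $\varphi_{24}\colon\hat{\alpha}_2\to\hat{\alpha}_4$ to be the composite map factoring through $\hat{\alpha}_3$.

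In the edge $\alpha_3\dash\alpha_4$ the conjugated factor is exactly the group at position $a$ of $\alpha_3$ and the conjugator $x^{y_q}\in H_i^{y_q}$ lies in the group at position $i$ of $\alpha_3$, so Lemma~\ref{word length (Ha,x)} applies verbatim to this edge: writing $w_3$ for the reduced image $(w)\varphi_{(B_q,y_q)}$ in $\hat{\alpha}_3$ and using that $(w_3)\varphi_{(H_a^{y_q},x^{y_q})}=(w)\varphi_{24}$,
\[ |w|_{\alpha_4}=|w|_{\alpha_3}+2\Lambda_{w_3}(g_a)-2\Lambda_{w_3}(\overline{g_i}\,g_a)-2\Lambda_{w_3}\big((g_i(x^{y_q})^{-1})\overline{g_i}\,g_a\big). \]
The crux is to show
\[ \Lambda_{w_3}(g_a)=\Lambda_w(e_a),\qquad \Lambda_{w_3}(\overline{g_i}\,g_a)=\Lambda_w(\overline{e_i}\,e_a),\qquad \Lambda_{w_3}\big((g_i(x^{y_q})^{-1})\overline{g_i}\,g_a\big)=\Lambda_w\big((e_ix^{-1})\overline{e_i}\,e_a\big), \]
which I would do by direct inspection of $\varphi_{(B_q,y_q)}$: it sends $e_c\mapsto g_j(\overline{g_jy_q^{-1}})(g_cy_q^{-1})$ for $H_c\in B_q$ and fixes $e_k$ for $H_k\notin B_q$. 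Thus a $G$-translate of $g_a$ occurs in $w_3$ exactly at the tail of the image of each $e_a$, giving the first equality; in the reduction of $\overline{(e_i)\varphi_{(B_q,y_q)}}\,(e_a)\varphi_{(B_q,y_q)}$ the letters $\overline{g_j}\,g_j$ and then $g_jy_q^{-1}\,\overline{g_jy_q^{-1}}$ cancel, leaving the translate $(\overline{g_iy_q^{-1}})(g_ay_q^{-1})$ of $\overline{g_i}\,g_a$, giving the second; and the third is the same reduction preceded by the tail $g_iy_q^{-1}x^{-1}$ of the image of $e_ix^{-1}$, where one checks $(x^{y_q})^{-1}y_q^{-1}=y_q^{-1}x^{-1}$ so the $G$-translates match. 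One also notes these are the only occurrences, since the particular edges $g_i,g_a$ appear only at the extremities of the images of $e_i,e_a$. Substituting back and comparing the resulting expression for $|w|_{\alpha_4}-|w|_{\alpha_3}$ with Lemma~\ref{word length (Ha,x)} applied to $\alpha_1=\alpha_2(H_a,x)$ gives $|w|_{\alpha_4}-|w|_{\alpha_3}=|w|_{\alpha_1}-|w|_{\alpha_2}$, i.e.\ the claimed $|w|_{\alpha_4}-|w|_{\alpha_1}=|w|_{\alpha_3}-|w|_{\alpha_2}$.

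The main obstacle I anticipate is precisely this last piece of bookkeeping: because $H_i$ and $H_a$ both lie in $B_q$, the images of the relevant edges under $\varphi_{(B_q,y_q)}$ are length-three paths rather than single edges, so some care is needed to confirm that $\varphi_{(B_q,y_q)}$ neither creates nor destroys copies of the three patterns above and that all $G$-translates align. An alternative would be to bypass the telescoping and build $\varphi_{24}$ directly, counting the unreduced length of $(w)\varphi_{24}$ and then all reductions as in the proof of Lemma~\ref{1a word length}; this is more self-contained but forces one to juggle the three families of reductions (those coming from $(B_q,y_q)$, those from $(H_a,x)$, and the mixed $\overline{e_a}\,e_c$ cancellations) simultaneously, so I would favour the telescoped argument.
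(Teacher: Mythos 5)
Your proposal is correct, but it takes a genuinely different route from the paper. You telescope through $\alpha_{3}$: apply Lemma \ref{word length (Ha,x)} to the single edge $\alpha_{3}\dash\alpha_{4}$ (using $\alpha_{4}=\alpha_{3}(H_{a}^{y_{q}},x^{y_{q}})$ from Lemma \ref{2a loop}), and then reduce the whole lemma to the claim that the three subword counts $\Lambda(e_{a})$, $\Lambda(\overline{e_{i}}e_{a})$, $\Lambda((e_{i}x^{-1})\overline{e_{i}}e_{a})$ are preserved by $\varphi_{(B_{q},y_{q})}$, with the translate bookkeeping $(x^{y_{q}})^{-1}y_{q}^{-1}=y_{q}^{-1}x^{-1}$ making the patterns match. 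The paper instead does exactly the ``alternative'' you mention and reject: it builds the composite map $\varphi_{24}$ directly, records that $e_{a}$ maps to a length-five path, $e_{i}$ and the other $e_{b}$ with $H_{b}\in B_{q}$ to length-three paths, and everything else to single edges, and then catalogues five successive families of reductions to obtain a closed formula for $|w|_{\alpha_{4}}$, which it compares term-by-term with the formulas for $|w|_{\alpha_{1}}$ and $|w|_{\alpha_{3}}$ coming from Lemmas \ref{word length (Ha,x)} and \ref{word length (A,x)}. Your route is shorter and more conceptual, and your identification of where each pattern arises is correct; the one thing to be aware of is that making the invariance of the three counts rigorous still requires knowing the full reduction catalogue of $\varphi_{(B_{q},y_{q})}$ (to certify that the extremal letters $g_{i},g_{a}$ of the length-three images are never cancelled and that no new adjacencies $\overline{g_{i}}g_{a}$ are created by longer-range cancellation), so in practice you end up re-deriving most of what the proof of Lemma \ref{word length (A,x)} already establishes; the paper's brute-force computation trades elegance for being entirely explicit and self-contained.
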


\begin{proof}
Set $\tilde{B}_{q}:=B_{q}-\{H_{i},H_{a}\}$.
By Lemma \ref{word length (Ha,x)}, we have that 
\[|w|_{\alpha_{1}}=|w|_{\alpha_{2}}+2\Lambda_{w}(e_{a})-2\Lambda_{w}(\overbar{e_{i}}e_{a})-2\Lambda_{w}((e_{i}x^{-1})\overbar{e_{i}}e_{a}) . \]
By Lemma \ref{word length (A,x)}, we have that
\begin{align*}
|w|_{\alpha_{3}}=	&	|w|_{\alpha_{2}}+2\smashoperator[lr]{\sum_{H_{b}\in B_{q}}} \ \left( \Lambda_{w}(e_{b})-\Lambda_{w}(\overbar{e_{j}}e_{b})-\Lambda_{w}((e_{j}y_{q}^{-1})\overbar{e_{j}}e_{b}) - \smashoperator[lr]{\sum_{H_{c}\in B_{q}-\{H_{b}\}}}\Lambda_{w}(\overbar{e_{c}}e_{b}) \right)	\\
=			&	|w|_{\alpha_{2}}+2\smashoperator[lr]{\sum_{H_{b}\in \tilde{B}_{q}}} \ \left( \Lambda_{w}(e_{b})-\Lambda_{w}(\overbar{e_{j}}e_{b})-\Lambda_{w}((e_{j}y_{q}^{-1})\overbar{e_{j}}e_{b}) - \smashoperator[lr]{\sum_{H_{c}\in \tilde{B}_{q}-\{H_{b}\}}}\Lambda_{w}(\overbar{e_{c}}e_{b})  \right) 	\\
			&	+ 2\Lambda_{w}(e_{i})-2\Lambda_{w}(\overbar{e_{j}}e_{i})-2\Lambda_{w}((e_{j}y_{q}^{-1})\overbar{e_{j}}e_{i}) - 4\smashoperator[lr]{\sum_{H_{c}\in\tilde{B}_{q}}}\Lambda_{w}(\overbar{e_{c}}e_{i})	\\
			&	+ 2\Lambda_{w}(e_{a})-2\Lambda_{w}(\overbar{e_{j}}e_{a})-2\Lambda_{w}((e_{j}y_{q}^{-1})\overbar{e_{j}}e_{a}) - 4\smashoperator[lr]{\sum_{H_{c}\in\tilde{B}_{q}}}\Lambda_{w}(\overbar{e_{c}}e_{a})	\\
			&	-4\Lambda_{w}(\overbar{e_{i}}e_{a})	.
\end{align*}
We will follow the methods used in Section \ref{height} to compute $|w|_{\alpha_{4}}$.

Let $\varphi_{24}:=\varphi_{(H_{a},x)(B_{q}',y_{q})}:\hat{\alpha}_{2}\to\hat{\alpha}_{4}$ be the equivariant map described in Convention \ref{convention edge labels and map for alpha hat}.
If the edges of $\hat{\alpha}_{2}$ are labelled by $e$'s, and the edges of $\hat{\alpha}_{4}$ are labeled by $h$'s, then we have:
\begin{align*}
(e_{a})\varphi_{24}	&=	h_{j}(\overbar{h_{j}y_{q}^{-1}})(h_{i}y_{q}^{-1})(\overbar{h_{i}y_{q}^{-1}x^{-1}})(h_{a}y_{q}^{-1}x^{-1})	\\
(e_{i})\varphi_{24}		&=	h_{j}(\overbar{h_{j}y_{q}^{-1}})(h_{i}y_{q}^{-1})		\\
(e_{b})\varphi_{24}	&=	h_{j}(\overbar{h_{j}y_{q}^{-1}})(h_{b}y_{q}^{-1})	\\
(e_{k})\varphi_{24} 	&=	h_{k}	
\end{align*}
for all $b$ with $H_{b}\in\tilde{B}_{q}$, and for all $k$ with $H_{k}\not\in B_{q}$.
In particular, since $H_{j}\not\in B_{q}$, we have 
$(e_{j})\varphi_{24} =h_{j}$.	

Given a word (edge path) $w$ in $\hat{\alpha}_{2}$, set $w'$ to be the unreduced word $(w)\varphi_{24}$ in $\hat{\alpha}_{4}$.
If $l(u)$ is the unreduced length of a given word $u$, then 
$l(w')=l(w) + 4\Lambda_{w}(e_{a}) +2\Lambda_{w}(e_{i}) + 2\sum_{H_{b}\in\tilde{B}_{q}}\Lambda_{w}(e_{b})$.

Observe that for any $b$ with $H_{b}\in \tilde{B}_{q}$ we have
$(\overbar{e_{j}}e_{b})\varphi_{24}=\overbar{h_{j}}h_{j}(\overbar{h_{j}y_{q}^{-1}})(h_{b}y_{q}^{-1})=(\overbar{h_{j}y_{q}^{-1}})(h_{b}y_{q}^{-1})$.
We also have that
$(\overbar{e_{j}}e_{a})\varphi_{24}=\overbar{h_{j}}h_{j}(\overbar{h_{j}y_{q}^{-1}})(h_{i}y_{q}^{-1})(\overbar{h_{i}y_{q}^{-1}x^{-1}})(h_{a}y_{q}^{-1}x^{-1})
\\ \noindent =(\overbar{h_{j}y_{q}^{-1}})(h_{i}y_{q}^{-1})(\overbar{h_{i}y_{q}^{-1}x^{-1}})(h_{a}y_{q}^{-1}x^{-1})$
and $(\overbar{e_{j}}e_{i})\varphi_{24}=\overbar{h_{j}}h_{j}(\overbar{h_{j}y_{q}^{-1}})(h_{i}y_{q}^{-1})=
\\ \noindent (\overbar{h_{j}y_{q}^{-1}})(h_{i}y_{q}^{-1})$.
Let $w''$ be the result of performing all such reductions (i.e. of the form $(\overbar{e_{j}}e_{b})\varphi_{24}$ where $H_{b}\in B_{q}$) to $w'$.
Then for each $b$ with $H_{b}\in B_{q}$, we have that the length of $(\overbar{e_{j}}e_{b})\varphi_{24}$  is 2 less in $w''$ than it is in $w'$.
Thus $l(w'')=l(w')-2\sum_{H_{b}\in B_{q}}\Lambda_{w}(\overbar{e_{j}}e_{b})=l(w')-2\Lambda_{w}(\overbar{e_{j}}e_{i})-2\Lambda_{w}(\overbar{e_{j}}e_{a})-2\sum_{H_{b}\in\tilde{B}_{q}}\Lambda_{w}(\overbar{e_{j}}e_{b})$.

We now observe that for any $b$ with $H_{b}\in\tilde{B}_{q}$, we have
\\ \noindent
$((e_{j}y_{q}^{-1})\overbar{e_{j}}e_{b})\varphi_{24}=(h_{j}y_{q}^{-1})(\overbar{h_{j}y_{q}^{-1}})(h_{b}y_{q}^{-1})=(h_{b}y_{q}^{-1})$,
and similarly,
\\ \noindent
$((e_{j}y_{q}^{-1})\overbar{e_{j}}e_{i})\varphi_{24}=(h_{j}y_{q}^{-1})(\overbar{h_{j}y_{q}^{-1}})(h_{i}y_{q}^{-1})=(h_{i}y_{q}^{-1})$.
Also note that
$((e_{j}y_{q}^{-1})\overbar{e_{j}}e_{a})\varphi_{24}=(h_{j}y_{q}^{-1})(\overbar{h_{j}y_{q}^{-1}})(h_{i}y_{q}^{-1})(\overbar{h_{i}y_{q}^{-1}x^{-1}})(h_{a}y_{q}^{-1}x^{-1})=
(h_{i}y_{q}^{-1})(\overbar{h_{i}y_{q}^{-1}x^{-1}})(h_{a}y_{q}^{-1}x^{-1})$.
Let $w'''$ be the result of applying all such reductions to $w''$, and note that for each $b$ with $H_{b}\in B_{q}$, the length of $((e_{j}y_{q}^{-1})\overbar{e_{j}}e_{b})\varphi_{24}$ is 2 less in $w'''$ than it is in $w''$.
Now
$l(w''')=l(w'')-2\sum_{H_{b}\in B_{q}}\Lambda_{w}((e_{j}y_{q}^{-1})\overbar{e_{j}}e_{b})=l(w'')-2\Lambda_{w}((e_{j}y_{q}^{-1})\overbar{e_{j}}e_{i})-2\Lambda_{w}((e_{j}y_{q}^{-1})\overbar{e_{j}}e_{a})-2\sum_{H_{b}\in\tilde{B}_{q}}\Lambda_{w}((e_{j}y_{q}^{-1})\overbar{e_{j}}e_{b})$.

We now consider `cross-reductions' between elements of $B_{q}$.
Let $b$ and $c$ be such that $H_{b}\in\tilde{B}_{q}$ and $H_{c}\in\tilde{B}_{q}-\{H_{b}\}$.
Then 
$(\overbar{e_{c}}e_{b})\varphi_{24}=(\overbar{h_{c}y_{q}{-1}})(h_{j}y_{q}^{-1})\overbar{h_{j}}h_{j}(\overbar{h_{j}y_{q}^{-1}})(h_{b}y_{q}^{-1})=(\overbar{h_{c}y_{q}^{-1}})(h_{b}y_{q}^{-1})$.
Additionally,
$(\overbar{e_{i}}e_{b})\varphi_{24}=(\overbar{h_{i}y_{q}{-1}})(h_{j}y_{q}^{-1})\overbar{h_{j}}h_{j}(\overbar{h_{j}y_{q}^{-1}})(h_{b}y_{q}^{-1})=
\\ \noindent (\overbar{h_{i}y_{q}^{-1}})(h_{b}y_{q}^{-1})$,
and
$(\overbar{e_{a}}e_{b})\varphi_{24}=(\overbar{h_{a}y_{q}^{-1}x^{-1}})(h_{i}y_{q}^{-1}x^{-1})(\overbar{h_{i}y_{q}{-1}})(h_{j}y_{q}^{-1})\overbar{h_{j}}h_{j}(\overbar{h_{j}y_{q}^{-1}})(h_{b}y_{q}^{-1})
\\ \noindent =(\overbar{h_{a}y_{q}^{-1}x^{-1}})(h_{i}y_{q}^{-1}x^{-1})(\overbar{h_{i}y_{q}^{-1}})(h_{b}y_{q}^{-1})$.
Finally,
$(\overbar{e_{i}}e_{a})\varphi_{24}=
\\ \noindent (\overbar{h_{i}y_{q}^{-1}})(h_{j}y_{q}^{-1})\overbar{h_{j}}h_{j}(\overbar{h_{j}y_{q}^{-1}})(h_{i}y_{q}^{-1})(\overbar{h_{i}y_{q}^{-1}x^{-1}})(h_{a}y_{q}^{-1}x^{-1})=(\overbar{h_{i}y_{q}^{-1}x^{-1}})(h_{a}y_{q}^{-1}x^{-1})$.
Letting $w''''$ be the result of applying all such reductions to $w'''$, we see that for $b$ and $c$ with $H_{b},H_{c}\in\tilde{B}_{q}$ distinct, the length of $(\overbar{e_{c}}e_{b})\varphi_{24}$ is 4 less in $w''''$ than it is in $w'''$, the lengths of $(\overbar{e_{i}}e_{b})\varphi_{24}$ and $(\overbar{e_{a}}e_{b})\varphi_{24}$ are each 4 less in $w''''$ than in $w'''$, and the length of $(\overbar{e_{i}}e_{a})\varphi_{24}$ is 6 less in $w''''$ than it is in $w'''$.
Observe that $\Lambda_{w}(\overbar{e_{c}}e_{b})=\Lambda_{w}(\overbar{\overbar{e_{c}}e_{b}})=\Lambda_{w}(\overbar{e_{b}}e_{c})$.
Thus $l(w'''')=l(w''')-6\Lambda_{w}(\overbar{e_{i}}e_{a})-4\sum_{H_{b}\in\tilde{B}_{q}}\left( \Lambda_{w}(\overbar{e_{a}}e_{b}) + \Lambda_{w}(\overbar{e_{i}}e_{b}) + \frac{1}{2}\sum_{H_{c}\in\tilde{B}_{q}-\{H_{b}\}} \Lambda_{w}(\overbar{e_{c}}e_{b}) \right) $.

Assuming $w$ was reduced to begin with, then there is only one final type of reduction we can apply to $w''''$.
We have that 
$((e_{i}x^{-1})\overbar{e_{i}}e_{a})\varphi=
\\ \noindent (h_{j}x^{-1})(\overbar{h_{j}y_{q}^{-1}x^{-1}})(h_{i}y_{q}^{-1}x^{-1})(\overbar{h_{i}y_{q}^{-1}x^{-1}})(h_{a}y_{q}^{-1}x^{-1})=(h_{j}x^{-1})(\overbar{h_{j}y_{q}^{-1}x^{-1}})(h_{a}y_{q}^{-1}x^{-1})$.
Letting $w'''''$ be the result of applying all such reductions to $w''''$, we see that the length of $((e_{i}x^{-1})\overbar{e_{i}}e_{a})\varphi$ is 2 less in $w'''''$ than it is in $w''''$.
Then $l(w''''')=l(w'''')-2\Lambda_{w}((e_{i}x^{-1})\overbar{e_{i}}e_{a})$.

Since $w'''''$ and $w$ are both fully reduced, we have:
\begin{align*}
	&	|w|_{\alpha_{4}}	=	|(w)\varphi_{24}|_{\alpha_{4}}	=	l(w''''')	\\
=	&	l(w'''')-2\Lambda_{w}((e_{i}x^{-1})\overbar{e_{i}}e_{a})	\\
=	&	l(w''')-6\Lambda_{w}(\overbar{e_{i}}e_{a})-4\smashoperator[lr]{\sum_{H_{b}\in\tilde{B}_{q}}} \ \left( \Lambda_{w}(\overbar{e_{a}}e_{b}) + \Lambda_{w}(\overbar{e_{i}}e_{b}) + \frac{1}{2}\smashoperator[lr]{\sum_{H_{c}\in\tilde{B}_{q}-\{H_{b}\}}} \Lambda_{w}(\overbar{e_{c}}e_{b}) \right) - 2\Lambda_{w}((e_{i}x^{-1})\overbar{e_{i}}e_{a})	\\
=	&	l(w'')-2\Lambda_{w}((e_{j}y_{q}^{-1})\overbar{e_{j}}e_{i})-2\Lambda_{w}((e_{j}y_{q}^{-1})\overbar{e_{j}}e_{a})-2\smashoperator[lr]{\sum_{H_{b}\in\tilde{B}_{q}}}\Lambda_{w}((e_{j}y_{q}^{-1})\overbar{e_{j}}e_{b}) 	-6\Lambda_{w}(\overbar{e_{i}}e_{a})	\\
	&	-4\smashoperator[lr]{\sum_{H_{b}\in\tilde{B}_{q}}} \ \left( \Lambda_{w}(\overbar{e_{a}}e_{b}) + \Lambda_{w}(\overbar{e_{i}}e_{b}) + \frac{1}{2}\smashoperator[lr]{\sum_{H_{c}\in\tilde{B}_{q}-\{H_{b}\}}} \Lambda_{w}(\overbar{e_{c}}e_{b}) \right) -2\Lambda_{w}((e_{i}x^{-1})\overbar{e_{i}}e_{a})	\\
=	&	l(w')-2\Lambda_{w}(\overbar{e_{j}}e_{i})-2\Lambda_{w}(\overbar{e_{j}}e_{a})-2\smashoperator[lr]{\sum_{H_{b}\in\tilde{B}_{q}}}\Lambda_{w}(\overbar{e_{j}}e_{b})	-2\Lambda_{w}((e_{j}y_{q}^{-1})\overbar{e_{j}}e_{i})-2\Lambda_{w}((e_{j}y_{q}^{-1})\overbar{e_{j}}e_{a})	\\
	&	-2\smashoperator[lr]{\sum_{H_{b}\in\tilde{B}_{q}}}\Lambda_{w}((e_{j}y_{q}^{-1})\overbar{e_{j}}e_{b}) 	-6\Lambda_{w}(\overbar{e_{i}}e_{a})-4\smashoperator[lr]{\sum_{H_{b}\in\tilde{B}_{q}}} \ \left( \Lambda_{w}(\overbar{e_{a}}e_{b}) + \Lambda_{w}(\overbar{e_{i}}e_{b}) + \frac{1}{2}\smashoperator[lr]{\sum_{H_{c}\in\tilde{B}_{q}-\{H_{b}\}}} \Lambda_{w}(\overbar{e_{c}}e_{b}) \right)	\\
	&	-2\Lambda_{w}((e_{i}x^{-1})\overbar{e_{i}}e_{a})	\\
=	&	l(w) + 4\Lambda_{w}(e_{a}) +2\Lambda_{w}(e_{i}) + 2\smashoperator[lr]{\sum_{H_{b}\in\tilde{B}_{q}}}\Lambda_{w}(e_{b})	-2\Lambda_{w}(\overbar{e_{j}}e_{i})-2\Lambda_{w}(\overbar{e_{j}}e_{a})	-2\smashoperator[lr]{\sum_{H_{b}\in\tilde{B}_{q}}}\Lambda_{w}(\overbar{e_{j}}e_{b})	\\
	&	-2\Lambda_{w}((e_{j}y_{q}^{-1})\overbar{e_{j}}e_{i})-2\Lambda_{w}((e_{j}y_{q}^{-1})\overbar{e_{j}}e_{a})-2\smashoperator[lr]{\sum_{H_{b}\in\tilde{B}_{q}}}\Lambda_{w}((e_{j}y_{q}^{-1})\overbar{e_{j}}e_{b}) -6\Lambda_{w}(\overbar{e_{i}}e_{a})	\\
		&	-4\smashoperator[lr]{\sum_{H_{b}\in\tilde{B}_{q}}} \ \left( \Lambda_{w}(\overbar{e_{a}}e_{b}) + \Lambda_{w}(\overbar{e_{i}}e_{b}) + \frac{1}{2}\smashoperator[lr]{\sum_{H_{c}\in\tilde{B}_{q}-\{H_{b}\}}} \Lambda_{w}(\overbar{e_{c}}e_{b}) \right) -2\Lambda_{w}((e_{i}x^{-1})\overbar{e_{i}}e_{a})	\\
=	&	|w|_{\alpha_{2}} + 2\Lambda_{w}(e_{a})-2\Lambda_{w}(\overbar{e_{i}}e_{a})-2\Lambda_{w}((e_{i}x^{-1})\overbar{e_{i}}e_{a})		\\
	&	+2\smashoperator[lr]{\sum_{H_{b}\in\tilde{B}_{q}}} \ \left( \Lambda_{w}(e_{b})-\Lambda_{w}(\overbar{e_{j}}e_{b})-\Lambda_{w}((e_{j}y_{q}^{-1})\overbar{e_{j}}e_{b})-\smashoperator[lr]{\sum_{H_{c}\in\tilde{B}_{q}-\{H_{b}\}}}\Lambda_{w}(\overbar{e_{c}}e_{b}) \right)	\\
	&	+ 2\Lambda_{w}(e_{i})-2\Lambda_{w}(\overbar{e_{j}}e_{i})-2\Lambda_{w}((e_{j}y_{q}^{-1})\overbar{e_{j}}e_{i}) - 4\smashoperator[lr]{\sum_{H_{c}\in\tilde{B}_{q}}}\Lambda_{w}(\overbar{e_{c}}e_{i})	\\
	&	+ 2\Lambda_{w}(e_{a})-2\Lambda_{w}(\overbar{e_{j}}e_{a})-2\Lambda_{w}((e_{j}y_{q}^{-1})\overbar{e_{j}}e_{a}) - 4\smashoperator[lr]{\sum_{H_{c}\in\tilde{B}_{q}}}\Lambda_{w}(\overbar{e_{c}}e_{a})	\\
	&	-4\Lambda_{w}(\overbar{e_{i}}e_{a})	\\
=	&	|w|_{\alpha_{1}}+|w|_{\alpha_{3}}-|w|_{\alpha_{2}}	.
\end{align*}
\end{proof}

\begin{lemma}\label{2a domain height} % Nick's 2.7 	% our 2a
If $\mathbf{x}\subset H_{i}\in\ B_{q}$ and $\mathbf{y}\subset H_{j}\not\in\hat{A}$ and $\hat{A}\subseteq B_{q}$ then
$||\alpha_{4}||-||\alpha_{1}||=||\alpha_{3}||-||\alpha_{2}||$,
where $\alpha_{1}=\alpha_{2}(\mathbf{A},\mathbf{x})$, $\alpha_{3}=\alpha_{2}(\mathbf{B},\mathbf{y})$, and $\alpha_{4}=\alpha_{2}(\mathbf{B},\mathbf{y})(\mathbf{A}^{y_{q}},\mathbf{x}^{y_{q}})$.
\end{lemma}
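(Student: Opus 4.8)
The plan is to follow the template of Case 1a (Lemmas \ref{1a word length} through \ref{1a domain height}): first prove the word-length identity
\[
|w|_{\alpha_{4}} - |w|_{\alpha_{1}} = |w|_{\alpha_{3}} - |w|_{\alpha_{2}} \qquad\text{for every }w\in\mathcal{W},
\]
and then pass to heights by the trivial computation at the end of Lemma \ref{1a domain height}: since $||\alpha|| = \sum_{w\in\mathcal{W}}(|w|_{\alpha}-2)$, the $-2$'s cancel summand-by-summand, so $||\alpha_{4}|| - ||\alpha_{1}|| = \sum_{w}(|w|_{\alpha_{4}}-|w|_{\alpha_{1}}) = \sum_{w}(|w|_{\alpha_{3}}-|w|_{\alpha_{2}}) = ||\alpha_{3}|| - ||\alpha_{2}||$. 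All the content is therefore in the word-length identity.

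To obtain that identity I would use the lattice of domains already built in the proof of Lemma \ref{2a contractible} (Figure \ref{fig lattice 2a}), which presents the loop $\alpha_{2}\dash \alpha_{1}\dash \alpha_{4}\dash \alpha_{3}\dash \alpha_{2}$ as a vertical stack of $K = |\hat{A}|$ rectangular cells; the $a$-th cell has single-group left edge $(\mathbf{A},\mathbf{x})|_{a+1}$, right edge its conjugate $(H_{A_{a+1}}^{y_{q}}, x_{A_{a+1}}^{y_{q}})$, top edge $(\mathbf{B^{a\prime}},\mathbf{y})$ and bottom edge $(\mathbf{B^{a+1\prime}},\mathbf{y})$, with $\mathbf{B^{a\prime}}$ and $\mathbf{B^{a+1\prime}}$ differing only in their $q$-th component. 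I would refine each such cell by applying the factors of $(\mathbf{B^{a\prime}},\mathbf{y})$ one at a time, scheduling the $q$-th factor last. For an index $b\neq q$ the resulting subcell is a commuting square of single-group Whitehead automorphisms along the distinct factors $H_{A_{a+1}}$ and $H_{j}$: the hypotheses of Lemma \ref{disjoint whitehead autos} hold because $\hat{A}\subseteq B_{q}$ forces $H_{A_{a+1}}\notin B_{b}$, while $H_{i}\in B_{q}$ together with $H_{j}\notin\hat{B}$ keeps both labels off $\{H_{i},H_{j}\}$, so Lemma \ref{1a word length} applies and gives, for a fixed $w$, that the change in $|w|$ down the left edge of the subcell equals the change down its right edge. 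The $b=q$ subcell is exactly the configuration of Lemma \ref{2a word length}, with the set $B_{q}$ there replaced by $[\mathbf{B^{a\prime}}]_{q}$ (note $H_{i}\in[\mathbf{B^{a\prime}}]_{q}$ since $H_{i}\notin\hat{A}$ is never swapped, $H_{A_{a+1}}\in[\mathbf{B^{a\prime}}]_{q}$ since it has not yet been swapped, and $H_{j}\notin[\mathbf{B^{a\prime}}]_{q}$ since $H_{j}\notin\hat{B}$), so Lemma \ref{2a word length} gives the same corner identity. Chaining these equalities along each row and then down the two outer columns telescopes, exactly as in Observation \ref{obs 1a word length}, to the desired identity.

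I expect the only real obstacle to be the bookkeeping in this refinement, not any new idea: one must track the conjugation by $y_{q}$ that appears on labels once the $q$-th factor has been applied — which is why it is scheduled last, so that all the $b\neq q$ subcells still involve the unconjugated labels — and one must check that after the partial swaps $H_{A_{1}},\dots,H_{A_{a}}\mapsto H_{A_{1}}^{x_{A_{1}}},\dots,H_{A_{a}}^{x_{A_{a}}}$ the $b=q$ subcell still matches every hypothesis of Lemma \ref{2a word length}. A direct alternative — computing $||\alpha_{1}||-||\alpha_{2}||$, $||\alpha_{3}||-||\alpha_{2}||$, $||\alpha_{4}||-||\alpha_{3}||$ and $||\alpha_{4}||-||\alpha_{1}||$ separately from the formula in Lemma \ref{domain height (A,x)} — would be considerably messier, since those counts $\Lambda_{w}$ are taken relative to several different base trees $\hat{\alpha}_{1},\hat{\alpha}_{2},\hat{\alpha}_{3}$, whereas the lattice keeps everything relative to $\hat{\alpha}_{2}$.
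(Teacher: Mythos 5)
Your proposal is correct and takes essentially the same route as the paper: the paper's own proof of this lemma is precisely the trivial summation over $w\in\mathcal{W}$ citing Lemma \ref{2a word length}, with the passage from that single-group cell to the full multiple Whitehead automorphisms handled (implicitly) by the lattice of Lemma \ref{2a contractible} and telescoping as in Observation \ref{obs 1a word length}. Your refinement of each lattice cell into $b\ne q$ subcells governed by Lemma \ref{1a word length} and a $b=q$ subcell governed by Lemma \ref{2a word length} merely spells out the bookkeeping the paper leaves implicit.
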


\begin{proof}
By Lemma \ref{2a word length},
\begin{align*}
||\alpha_{4}||-||\alpha_{1}|| 	&=	\sum_{w\in\mathcal{W}}\Bigl(|w|_{\alpha_{4}}-2\Bigr) - \sum_{w\in\mathcal{W}}\Bigl(|w|_{\alpha_{1}}-2\Bigr) 	\\
					&=	\sum_{w\in\mathcal{W}}\Bigl(|w|_{\alpha_{4}}-2-|w|_{\alpha_{1}}+2\Bigr)						\\
					&=	\sum_{w\in\mathcal{W}}\Bigl(|w|_{\alpha_{3}}-|w|_{\alpha_{2}}\Bigr)							\\
					&=	\sum_{w\in\mathcal{W}}\Bigl(|w|_{\alpha_{3}}-2\Bigr) - \sum_{w\in\mathcal{W}}\Bigl(|w|_{\alpha_{2}}-2\Bigr) 	\\
					&=	||\alpha_{3}||-||\alpha_{2}||	.
\end{align*}
\end{proof}

\begin{lemma}\label{2a peak} % 2a peak
If
\begin{tikzcd}[cramped]
\alpha_{1} \ar[r,-<-,dash,"(\mathbf{A}{,}\mathbf{x})"]	& \alpha_{2} \ar[r,->-,dash,"(\mathbf{B}{,}\mathbf{y})"]	& \alpha_{3}
\end{tikzcd}
is a peak
with $H_{i}\in\hat{B}$, $H_{j}\not\in\hat{A}$, and $\hat{A}\subseteq B_{q}$, then 
 $||\alpha_{2}(\mathbf{B},\mathbf{y})(\mathbf{A}^{y_{q}},\mathbf{x}^{y_{q}})||<||\alpha_{2}||$.
\end{lemma}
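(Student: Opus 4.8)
The plan is to mirror exactly the argument used in Lemma \ref{1a peak}, now fed by the height formula of Lemma \ref{2a domain height} instead of Lemma \ref{1a domain height}. Set $\alpha_{4} := \alpha_{2}(\mathbf{B},\mathbf{y})(\mathbf{A}^{y_{q}},\mathbf{x}^{y_{q}})$, which is the domain appearing in the loop of Lemma \ref{2a loop}; our goal is precisely to show $\|\alpha_{4}\|<\|\alpha_{2}\|$. First I would invoke Lemma \ref{2a domain height}, whose hypotheses $\mathbf{x}\subset H_{i}\in B_{q}$, $\mathbf{y}\subset H_{j}\not\in\hat{A}$ and $\hat{A}\subseteq B_{q}$ are exactly those assumed here, to conclude $\|\alpha_{4}\|-\|\alpha_{1}\|=\|\alpha_{3}\|-\|\alpha_{2}\|$, i.e. $\|\alpha_{4}\|=\|\alpha_{1}\|+\|\alpha_{3}\|-\|\alpha_{2}\|$.

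Next I would use the peak hypothesis. By Definition \ref{defn peak}, since $\alpha_{1}\dash\alpha_{2}\dash\alpha_{3}$ is a peak we have $\|\alpha_{2}\|\ge\max(\|\alpha_{1}\|,\|\alpha_{3}\|)$ and $\|\alpha_{2}\|>\min(\|\alpha_{1}\|,\|\alpha_{3}\|)$. Writing $\|\alpha_{1}\|+\|\alpha_{3}\|=\max(\|\alpha_{1}\|,\|\alpha_{3}\|)+\min(\|\alpha_{1}\|,\|\alpha_{3}\|)$ and substituting gives
\[
\|\alpha_{4}\|=\max(\|\alpha_{1}\|,\|\alpha_{3}\|)+\min(\|\alpha_{1}\|,\|\alpha_{3}\|)-\|\alpha_{2}\|<\|\alpha_{2}\|+\|\alpha_{2}\|-\|\alpha_{2}\|=\|\alpha_{2}\|,
\]
which is the claim. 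This completes the argument.

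There is essentially no genuine obstacle in this particular lemma: all the real work has already been done in Lemma \ref{2a domain height} (and, behind it, in the word-length computation of Lemma \ref{2a word length} and the lattice of Lemma \ref{2a contractible}). The only point requiring a little care is bookkeeping — making sure the $\alpha_{4}$ referred to in the statement is the same domain $\alpha_{2}(\mathbf{B},\mathbf{y})(\mathbf{A}^{y_{q}},\mathbf{x}^{y_{q}})$ to which Lemma \ref{2a domain height} applies, and that the strict inequality in the peak definition (not merely $\ge$) is what forces the strict drop in height. Once combined with Lemma \ref{2a loop} and Lemma \ref{2a contractible}, this lemma will feed into a Case~2a analogue of Proposition \ref{prop 1a summary}, showing the peak is reducible in the sense of Definition \ref{defn reducible}.
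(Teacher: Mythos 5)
Your proposal is correct and follows exactly the paper's own argument: apply Lemma \ref{2a domain height} to get $\|\alpha_{4}\|=\|\alpha_{1}\|+\|\alpha_{3}\|-\|\alpha_{2}\|$, then use the peak inequalities $\|\alpha_{2}\|\ge\max(\|\alpha_{1}\|,\|\alpha_{3}\|)$ and $\|\alpha_{2}\|>\min(\|\alpha_{1}\|,\|\alpha_{3}\|)$ to conclude the strict drop. No differences worth noting.
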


\begin{proof}
By Lemma \ref{2a domain height}, $||\alpha_{2}(\mathbf{B},\mathbf{y})(\mathbf{A}^{y_{q}},\mathbf{x}^{y_{q}})||=||\alpha_{4}||=||\alpha_{3}||+||\alpha_{1}||-||\alpha_{2}||$.
Since
\begin{tikzcd}[cramped]
\alpha_{1} \ar[r,-<-,dash,"(\mathbf{A}{,}\mathbf{x})"]	& \alpha_{2} \ar[r,->-,dash,"(\mathbf{B}{,}\mathbf{y})"]	& \alpha_{3}
\end{tikzcd}
is a peak, then $||\alpha_{2}||\ge\max(||\alpha_{1}||,||\alpha_{3}||)$ and $||\alpha_{2}||>\min(||\alpha_{1}||,||\alpha_{3}||)$.
Now:
\begin{align*}
||\alpha_{4}||	&=\max(||\alpha_{1}||,||\alpha_{3}||)+\min(||\alpha_{1}||,||\alpha_{3}||)-||\alpha_{2}||	&\\
			&<||\alpha_{2}||+||\alpha_{2}||-||\alpha_{2}||								&\\
			&=||\alpha_{2}||		.											&
\end{align*}
\end{proof}

\begin{prop}\label{prop 2a summary} 
Let $H_{1}\ast\dots\ast H_{n}$ be an $\mathfrak{S}$ free factor splitting for $G$.
Let $(\mathbf{A},\mathbf{x})$ and $(\mathbf{B},\mathbf{y})$ be relative multiple Whitehead automorphisms with $\mathbf{x}\subset H_{i}$ and $\mathbf{y}\subset H_{j}$, $\hat{A}\subset\{H_{1},\dots,H_{n}\}-\{H_{i},H_{j}\}$, $\hat{B}\subset\{H_{1},\dots,H_{n}\}-\{H_{j}\}$ such that $\{H_{i}\}\cup\hat{A}\subset B_{q}$ for some $q$.
Let $\alpha_{2}$ be the domain whose $\alpha$-graph has $\mathfrak{S}$-labelling $(H_{1},\dots,H_{n})$, and let $\alpha_{1}=\alpha_{2}(\mathbf{A},\mathbf{x})$ and $\alpha_{3}=\alpha_{2}(\mathbf{B},\mathbf{y})$.
If
\begin{tikzcd}[cramped]
\alpha_{1} \ar[r,-<-,dash,"(\mathbf{A}{,}\mathbf{x})"]	& \alpha_{2} \ar[r,->-,dash,"(\mathbf{B}{,}\mathbf{y})"]	& \alpha_{3}
\end{tikzcd}
is a peak, then it is reducible.
\end{prop}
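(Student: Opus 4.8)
The plan is to assemble Lemmas~\ref{2a loop}, \ref{2a contractible}, and \ref{2a peak} in exactly the way the previous two cases were closed off. First I would apply Lemma~\ref{2a loop}: under the standing hypotheses ($\mathbf{x}\subset H_{i}$, $\mathbf{y}\subset H_{j}$, $H_{i}\in B_{q}$, $H_{j}\notin\hat{A}$, $\hat{A}\subseteq B_{q}$) we have the identity $(\mathbf{A},\mathbf{x})^{-1}(\mathbf{B},\mathbf{y})=(\mathbf{B'},\mathbf{y})(\mathbf{A}^{y_{q}},\mathbf{x}^{y_{q}})^{-1}$, where $\mathbf{B'}$ is the tuple with $[\mathbf{B'}]_{b}=B_{b}$ for $b\neq q$ and $[\mathbf{B'}]_{q}=(B_{q}-\hat{A})\cup\bigcup_{a=1}^{k}A_{a}^{x_{a}}$. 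Setting $\alpha_{4}:=\alpha_{2}(\mathbf{B},\mathbf{y})(\mathbf{A}^{y_{q}},\mathbf{x}^{y_{q}})$, this produces an edge $\alpha_{1}\dash \alpha_{4}$ (labelled $(\mathbf{B'},\mathbf{y})$) and an edge $\alpha_{3}\dash \alpha_{4}$ (labelled $(\mathbf{A}^{y_{q}},\mathbf{x}^{y_{q}})$) in the Graph of Domains, so that $\alpha_{1}\dash \alpha_{2}\dash \alpha_{3}\dash \alpha_{4}\dash \alpha_{1}$ is a $4$-cycle all of whose edges are of Type~$A$.

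Next I would invoke Lemma~\ref{2a contractible}, which shows that this $4$-cycle is contractible in the Space of Domains (via the lattice of Figure~\ref{fig lattice 2a}, whose cells are filled in by $A_{i}$-graphs, $A_{j}$-graphs, and the $B$-graphs $B_{j,i,a}$ for $H_{a}\in\hat{A}$). Hence the subpath $\alpha_{1}\dash \alpha_{2}\dash \alpha_{3}$ is homotopic rel endpoints, in the Space of Domains, to the length-two path $\alpha_{1}\dash \alpha_{4}\dash \alpha_{3}$, whose only intermediate vertex is $\alpha_{4}$.

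Finally, using that $\alpha_{1}\dash \alpha_{2}\dash \alpha_{3}$ is a peak, I would apply Lemma~\ref{2a peak} (which rests on the height computation of Lemma~\ref{2a domain height}, itself built from the edge-path bookkeeping of Lemma~\ref{2a word length}) to obtain $||\alpha_{4}||=||\alpha_{1}||+||\alpha_{3}||-||\alpha_{2}||=\max(||\alpha_{1}||,||\alpha_{3}||)+\min(||\alpha_{1}||,||\alpha_{3}||)-||\alpha_{2}||<||\alpha_{2}||$, the strict inequality coming from the two defining inequalities of a peak. Thus every intermediate vertex of the homotopic path --- there is just the one, $\alpha_{4}$ --- has strictly smaller height than $\alpha_{2}$, so by Definition~\ref{defn reducible} the peak is reducible.

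Since every technical ingredient has already been proved, the proposition itself is essentially bookkeeping: one only checks that $\alpha_{4}$ is a genuine domain (immediate, as $(\mathbf{B'},\mathbf{y})$ and $(\mathbf{A}^{y_{q}},\mathbf{x}^{y_{q}})$ stabilise the relevant $A$-graphs) and that the homotopic path has exactly one intermediate vertex, so Definition~\ref{defn reducible} applies with $k=2$. Were the supporting lemmas not in hand, the genuinely hard step would be Lemma~\ref{2a word length}: establishing $||\alpha_{4}||-||\alpha_{1}||=||\alpha_{3}||-||\alpha_{2}||$ requires tracking how the equivariant map $\varphi_{(H_{a},x)(B_{q}',y_{q})}$ expands, contracts, and leaves unchanged edge paths in the associated $G$-trees, enumerating in particular all of the cross-cancellations among distinct leaves of $B_{q}$ --- this is precisely the case where the operating vertex $H_{i}$ itself lies in $B_{q}$, so the interaction of the two Whitehead automorphisms is at its most intertwined.
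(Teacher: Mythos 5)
Your proposal is correct and follows essentially the same route as the paper's proof: it combines Lemma~\ref{2a loop} and Lemma~\ref{2a contractible} to replace the peak by the length-two path through $\alpha_{4}=\alpha_{2}(\mathbf{B},\mathbf{y})(\mathbf{A}^{y_{q}},\mathbf{x}^{y_{q}})$, and then applies Lemma~\ref{2a peak} together with Definition~\ref{defn reducible} to conclude. The only difference is expository (your remarks on which supporting lemma carries the real work), not mathematical.
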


\begin{proof}
By Lemmas \ref{2a loop} and \ref{2a contractible}, the path 
\begin{tikzcd}[cramped]
\alpha_{1} \ar[r,-<-,dash,"(\mathbf{A}{,}\mathbf{x})"]	& \alpha_{2} \ar[r,->-,dash,"(\mathbf{B}{,}\mathbf{y})"]	& \alpha_{3}
\end{tikzcd}
is homotopic in the Space of Domains to the path
\begin{tikzcd}[cramped]
\alpha_{1} \ar[r,->-,dash,"(\mathbf{B'}{,}\mathbf{y})"]	& \alpha_{2}(\mathbf{B},\mathbf{y})(\mathbf{A}^{y_{q}},\mathbf{x}^{y_{q}}) \ar[r,-<-,dash,"(\mathbf{A}^{y_{q}}{,}\mathbf{x}^{y_{q}})"]	&[0.4cm] \alpha_{3}
\end{tikzcd}
where $\mathbf{B'}=(B_{1},\dots,(B_{q}-\hat{A})\cup\widehat{\mathbf{A}^{\mathbf{x}}},\dots,B_{k})$.
By Lemma \ref{2a peak}, $||\alpha_{2}(\mathbf{B},\mathbf{y})(\mathbf{A}^{y_{q}},\mathbf{x}^{y_{q}})||<||\alpha_{2}||$.
Thus by Definition \ref{defn reducible}, the peak
\begin{tikzcd}[cramped]
\alpha_{1} \ar[r,-<-,dash,"(\mathbf{A}{,}\mathbf{x})"]	& \alpha_{2} \ar[r,->-,dash,"(\mathbf{B}{,}\mathbf{y})"]	& \alpha_{3}
\end{tikzcd}
is reducible.
\end{proof}

% ----------------------------------------------------------------------------------- case 2b :

\subsubsection*{Case 2(b): $H_{i}\in\hat{B}$ (say $H_{i}\in B_{q}$) and $H_{j}\not\in\hat{A}$, with $\hat{A}\not\subseteq B_{q}$}

\begin{lemma}\label{2b loop} % 2b loop
If $H_{i}\in\hat{B}$ and $H_{j}\not\in\hat{A}$ with $\hat{A}\not\subseteq B_{q}$, then 
$(\mathbf{A},\mathbf{x})^{-1}(\mathbf{B},\mathbf{y})=(\mathbf{A}-B_{q},\mathbf{x})^{-1}(\mathbf{B},\mathbf{y})(\mathbf{A}\cap B_{q},\mathbf{x}^{y_{q}})^{-1}$ and
$(\mathbf{A},\mathbf{x})^{-1}(\mathbf{B},\mathbf{y})=(\mathbf{B}+_{q}\hat{A},\mathbf{y})(\mathbf{A},\mathbf{x}^{y_{q}})^{-1}(\mathbf{\bar{B}}_{q}\cap\hat{A},y_{q}^{-1}\mathbf{\tilde{y}}_{q})$,
where $[\mathbf{B'}]_{q}:=(B_{q}-\hat{A})\cup\bigcup(\mathbf{A}^{\mathbf{x}})$ and $[\mathbf{B'}]_{k}:=B_{k}$ for $k\ne q$,
and $[(\mathbf{B}+_{q}\hat{A})']_{q}:=([(\mathbf{B}+_{q}\hat{A})]_{q}-\hat{A})\cup\bigcup([(\mathbf{B}+_{q}\hat{A})]_{q}\cap\mathbf{A})^{\mathbf{x}}=(B_{q}-\hat{A})\cup\bigcup\mathbf{A}^{\mathbf{x}}$ and $[(\mathbf{B}+_{q}\hat{A})']_{k}:=[(\mathbf{B}+_{q}\hat{A})]_{k}=B_{k}-\hat{A}$ for $k\ne q$.
\end{lemma}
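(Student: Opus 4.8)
The plan is to reduce Case 2(b) to the cases already treated — Case 1 (Lemmas \ref{1a loop} and \ref{1b loop}) and Case 2(a) (Lemma \ref{2a loop}) — by splitting the tuple $\mathbf{A}$ along the block $B_{q}$. First I would use Proposition \ref{whitehead notational properties}(1) to write $(\mathbf{A},\mathbf{x})=(\mathbf{A}\cap B_{q},\mathbf{x})(\mathbf{A}-B_{q},\mathbf{x})=(\mathbf{A}-B_{q},\mathbf{x})(\mathbf{A}\cap B_{q},\mathbf{x})$, the two factors commuting because they have disjoint supports (and the common operating factor $H_{i}$, so no conjugation correction is needed). Inverting, $(\mathbf{A},\mathbf{x})^{-1}=(\mathbf{A}-B_{q},\mathbf{x})^{-1}(\mathbf{A}\cap B_{q},\mathbf{x})^{-1}$, so that $(\mathbf{A},\mathbf{x})^{-1}(\mathbf{B},\mathbf{y})=(\mathbf{A}-B_{q},\mathbf{x})^{-1}\bigl((\mathbf{A}\cap B_{q},\mathbf{x})^{-1}(\mathbf{B},\mathbf{y})\bigr)$.

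For the first identity I would apply Lemma \ref{2a loop} to the inner product $(\mathbf{A}\cap B_{q},\mathbf{x})^{-1}(\mathbf{B},\mathbf{y})$, whose hypotheses hold since $\widehat{\mathbf{A}\cap B_{q}}\subseteq B_{q}$, $H_{i}\in B_{q}$ and $H_{j}\notin\widehat{\mathbf{A}\cap B_{q}}$; this yields $(\mathbf{B'},\mathbf{y})\bigl((\mathbf{A}\cap B_{q})^{y_{q}},\mathbf{x}^{y_{q}}\bigr)^{-1}$, with $B_{q}$ replaced in $\mathbf{B'}$ by $(B_{q}-\hat{A})\cup\bigcup(\mathbf{A}\cap B_{q})^{\mathbf{x}}$ (the $\mathbf{B'}$ of the statement, $\bigcup(\mathbf{A}^{\mathbf{x}})$ being read as the conjugates of the part of $\mathbf{A}$ lying in $B_{q}$). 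It then remains to move $(\mathbf{A}-B_{q},\mathbf{x})^{-1}$ past $(\mathbf{B'},\mathbf{y})$: its support $\widehat{\mathbf{A}-B_{q}}$ is disjoint from $B_{q}$ and hence from the block $[\mathbf{B'}]_{q}$ and from $(\mathbf{A}\cap B_{q})^{y_{q}}$, so Lemma \ref{disjoint whitehead autos} handles the factors of $\mathbf{B'}$ outside $B_{q}$ and Lemma \ref{1b loop} (applied to $(\mathbf{A}-B_{q},\mathbf{x})^{-1}(\mathbf{B'},\mathbf{y})$, whose operating factors no longer conflict) gives the rest, producing the stated factorisation. For the second identity I would instead first fold $\hat{A}$ into the block $B_{q}$, using Proposition \ref{whitehead notational properties}(2) and (4) to obtain the tuple $\mathbf{B}+_{q}\hat{A}$, and then peel the $\mathbf{A}$-conjugates back off on the right through the correction term $(\mathbf{\bar{B}}_{q}\cap\hat{A},y_{q}^{-1}\mathbf{\tilde{y}}_{q})$, mirroring the proof of Lemma \ref{2a loop} but retaining all of $(\mathbf{A},\mathbf{x}^{y_{q}})^{-1}$ rather than only its restriction to $B_{q}$.

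Each of the two factorisations exhibits domains $\alpha_{4}$ and $\alpha_{5}$ completing a $4$- or $5$-cycle in the Graph of Domains, since every intermediate automorphism lies in the stabiliser of an $A$- or $B$-graph contained in the relevant $\alpha$-graph. The step I expect to be the main obstacle is not the algebra (a routine chain of the identities in Lemma \ref{lemma whitehead properties} and Proposition \ref{whitehead notational properties} together with Lemmas \ref{1a loop}, \ref{1b loop} and \ref{2a loop}) but the bookkeeping forced by working with \emph{relative} Whitehead automorphisms: because $H_{i}\in B_{q}$, applying $(\mathbf{B},\mathbf{y})$ conjugates the operating factor of the surviving part of $\mathbf{A}$, so every later occurrence of $\mathbf{x}$ must be rewritten as $\mathbf{x}^{y_{q}}$ and the support $B_{q}$ updated to $[\mathbf{B'}]_{q}$ or $[(\mathbf{B}+_{q}\hat{A})']_{q}$ once $\mathbf{A}$-conjugates enter. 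Verifying that these updated tuples remain genuine disjoint partitions — so that the composites are legitimate relative multiple Whitehead automorphisms and the $A$- and $B$-graphs really do lie in the claimed intersections of domains — is the delicate point, and is where I would take most care.
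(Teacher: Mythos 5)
Your proposal follows essentially the same route as the paper: both identities are obtained by splitting off part of the product via Proposition \ref{whitehead notational properties} (part (1) for the first identity, part (4) for the second) and then invoking Lemma \ref{2a loop}, whose hypothesis is satisfied because $\widehat{\mathbf{A}\cap B_{q}}\subseteq B_{q}$ in the first case and $\hat{A}\subseteq[\mathbf{B}+_{q}\hat{A}]_{q}$ in the second. One correction to the first identity: once you substitute the Case 2(a) identity into $(\mathbf{A}-B_{q},\mathbf{x})^{-1}\bigl((\mathbf{A}\cap B_{q},\mathbf{x})^{-1}(\mathbf{B},\mathbf{y})\bigr)$ you already have the stated right-hand side, so the further step of ``moving $(\mathbf{A}-B_{q},\mathbf{x})^{-1}$ past $(\mathbf{B'},\mathbf{y})$'' is not needed --- and the appeal to Lemma \ref{1b loop} there would in fact be illegitimate, since $H_{i}\in B_{q}-\hat{A}\subseteq[\mathbf{B'}]_{q}$, so the operating factor of $\mathbf{A}-B_{q}$ still lies in the support of $\mathbf{B'}$ and the Case 1 hypothesis fails. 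Deleting that superfluous sentence leaves an argument that matches the paper's proof.
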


That is, there exist vertices $\alpha_{4}$, $\alpha_{5}$, $\alpha_{4}'$, $\alpha_{5}'$ in our Graph of Domains such that 
\begin{tikzpicture}
\node at (0,0) {$\alpha_{1}$};
\node at (1.75,1) {$\alpha_{2}$};
\node at (3.5,0) {$\alpha_{3}$};
\node at (1,-1) {$\alpha_{4}$};
\node at (2.5,-1) {$\alpha_{5}$};
\draw[-<-] (0.18,0.12) -- (1.57,0.88); % 1--2
\draw[->-] (1.93,0.88) -- (3.26,0.16); % 2--3
\draw[-<-] (0.15,-0.15) -- (0.85,-0.85); % 1--4
\draw[->-] (1.2,-1) -- (2.25,-1); % 4--5
\draw[-<-] (2.65,-0.85) -- (3.35,-0.15); % 5--3
\draw[->-,gray,dashed] (1.7,0.8) -- (1.05,-0.8); % 2--4
\node[scale=0.8] at (0.7,0.8) {$(\mathbf{A},\mathbf{x})$};
\node[scale=0.8] at (2.8,0.8) {$(\mathbf{B},\mathbf{y})$};
\node[scale=0.8] at (1.75,-1.35) {$(\mathbf{B'},\mathbf{y})$};
\node[scale=0.8] at (-0.35,-0.6) {$(\mathbf{A}-B_{q},\mathbf{x})$};
\node[scale=0.8] at (4.3,-0.6) {$( (\mathbf{A}\cap B_{q})^{y_{q}},\mathbf{x}^{y_{q}})$};
\node[scale=0.8] at (2.3,0) {\textcolor{gray}{$(\mathbf{A}\cap B_{q},\mathbf{x})$}};
\end{tikzpicture}
and
\begin{tikzpicture}
\node at (0,0) {$\alpha_{1}$};
\node at (1.75,1) {$\alpha_{2}$};
\node at (3.5,0) {$\alpha_{3}$};
\node at (1,-1) {$\alpha_{4}'$};
\node at (2.5,-1) {$\alpha_{5}'$};
\draw[-<-] (0.18,0.12) -- (1.57,0.88); % 1--2
\draw[->-] (1.93,0.88) -- (3.26,0.16); % 2--3
\draw[->-] (0.15,-0.15) -- (0.85,-0.85); % 1--4
\draw[-<-] (1.2,-1) -- (2.25,-1); % 4--5
\draw[->-] (2.65,-0.85) -- (3.35,-0.15); % 5--3
\draw[->-,gray,dashed] (1.8,0.8) -- (2.45,-0.8); % 2--5
\node[scale=0.8] at (0.7,0.8) {$(\mathbf{A},\mathbf{x})$};
\node[scale=0.8] at (2.8,0.8) {$(\mathbf{B},\mathbf{y})$};
\node[scale=0.8] at (1.8,-1.35) {$(\mathbf{A}^{y_{q}},\mathbf{x}^{y_{q}})$};
\node[scale=0.8] at (-0.55,-0.6) {$( (\mathbf{B}+_{q}\hat{A})\mathbf{'},\mathbf{y})$};
\node[scale=0.8] at (4.45,-0.6) {$( (\mathbf{\bar{B}}_{q}\cap\hat{A})^{y_{q}},y_{q}^{-1}\mathbf{\tilde{y}}_{q})$};
\node[scale=0.8] at (1.25,0) {\textcolor{gray}{$(\mathbf{B}+_{q}\hat{A},\mathbf{y})$}};
\end{tikzpicture}
are both loops.

\begin{proof} % case 2b
By Proposition \ref{whitehead notational properties} ($1$), $(\mathbf{A},\mathbf{x})(\mathbf{A}-B_{q},\mathbf{x})^{-1}=(\mathbf{A}\cap B_{q},\mathbf{x})$.
Writing $\bigcup{\mathbf{A}}=\hat{A}$, we have that $\bigcup{(\mathbf{A}\cap B_{q})}=\hat{A}\cap B_{q}\subseteq B_{q}$.
Similarly, by Proposition \ref{whitehead notational properties} ($4$), $(\mathbf{B},\mathbf{y})( (\mathbf{\bar{B}}_{q}\cap\hat{A})^{y_{q}},y_{q}^{-1}\mathbf{\tilde{y}}_{q})^{-1}=(\mathbf{B}+_{q}\hat{A},\mathbf{y})$.
Note that $[\mathbf{B}+_{q}\hat{A}]_{q}=B_{q}\cup\hat{A}\supseteq\hat{A}$.
We have now reduced both problems to the form required by Case 2a, so the result follows from Lemma \ref{2a loop}.
\end{proof}

\begin{lemma}\label{2b contractible} %2b contractible
The loops 
\\ \noindent
\begin{tikzpicture}
\node at (0,0) {$\alpha_{1}$};
\node at (1.75,1) {$\alpha_{2}$};
\node at (3.5,0) {$\alpha_{3}$};
\node at (1,-1) {$\alpha_{4}$};
\node at (2.5,-1) {$\alpha_{5}$};
\draw[-<-] (0.18,0.12) -- (1.57,0.88); % 1--2
\draw[->-] (1.93,0.88) -- (3.26,0.16); % 2--3
\draw[-<-] (0.15,-0.15) -- (0.85,-0.85); % 1--4
\draw[->-] (1.2,-1) -- (2.25,-1); % 4--5
\draw[-<-] (2.65,-0.85) -- (3.35,-0.15); % 5--3
\draw[->-,gray,dashed] (1.7,0.8) -- (1.05,-0.8); % 2--4
\node[scale=0.8] at (0.7,0.8) {$(\mathbf{A},\mathbf{x})$};
\node[scale=0.8] at (2.8,0.8) {$(\mathbf{B},\mathbf{y})$};
\node[scale=0.8] at (1.75,-1.35) {$(\mathbf{B'},\mathbf{y})$};
\node[scale=0.8] at (-0.35,-0.6) {$(\mathbf{A}-B_{q},\mathbf{x})$};
\node[scale=0.8] at (4.3,-0.6) {$( (\mathbf{A}\cap B_{q})^{y_{q}},\mathbf{x}^{y_{q}})$};
\node[scale=0.8] at (2.3,0) {\textcolor{gray}{$(\mathbf{A}\cap B_{q},\mathbf{x})$}};
\end{tikzpicture}
and
\begin{tikzpicture}
\node at (0,0) {$\alpha_{1}$};
\node at (1.75,1) {$\alpha_{2}$};
\node at (3.5,0) {$\alpha_{3}$};
\node at (1,-1) {$\alpha_{4}'$};
\node at (2.5,-1) {$\alpha_{5}'$};
\draw[-<-] (0.18,0.12) -- (1.57,0.88); % 1--2
\draw[->-] (1.93,0.88) -- (3.26,0.16); % 2--3
\draw[->-] (0.15,-0.15) -- (0.85,-0.85); % 1--4
\draw[-<-] (1.2,-1) -- (2.25,-1); % 4--5
\draw[->-] (2.65,-0.85) -- (3.35,-0.15); % 5--3
\draw[->-,gray,dashed] (1.8,0.8) -- (2.45,-0.8); % 2--5
\node[scale=0.8] at (0.7,0.8) {$(\mathbf{A},\mathbf{x})$};
\node[scale=0.8] at (2.8,0.8) {$(\mathbf{B},\mathbf{y})$};
\node[scale=0.8] at (1.8,-1.35) {$(\mathbf{A}^{y_{q}},\mathbf{x}^{y_{q}})$};
\node[scale=0.8] at (-0.55,-0.6) {$( (\mathbf{B}+_{q}\hat{A})\mathbf{'},\mathbf{y})$};
\node[scale=0.8] at (4.45,-0.6) {$( (\mathbf{\bar{B}}_{q}\cap\hat{A})^{y_{q}},y_{q}^{-1}\mathbf{\tilde{y}}_{q})$};
\node[scale=0.8] at (1.25,0) {\textcolor{gray}{$(\mathbf{B}+_{q}\hat{A},\mathbf{y})$}};
\end{tikzpicture}
are both contractible in our Space of Domains.
\end{lemma}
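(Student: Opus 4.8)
The plan is to follow the template of the proof of Lemma~\ref{1b contractible} (Case~1b) essentially verbatim, replacing every appeal to Case~1a by an appeal to Case~2a. Each of the two pentagonal loops in Lemma~\ref{2b loop} will be cut into a triangle that can be filled by a single $A$-graph — hence is a $2$-cell in the Space of Domains by Definition~\ref{Space of Domains defn} — together with a quadrilateral that has exactly the shape of the loop treated in Lemma~\ref{2a contractible}; gluing the two pieces along their common (dashed) edge then contracts the pentagon.

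First I would treat the loop through $\alpha_{4}$ and $\alpha_{5}$. Writing $\alpha_{4}=\alpha_{2}\cdot(\mathbf{A}\cap B_{q},\mathbf{x})$ (the codomain of the dashed edge), Proposition~\ref{whitehead notational properties}~(1) gives $\alpha_{1}=\alpha_{2}\cdot(\mathbf{A},\mathbf{x})=\alpha_{4}\cdot(\mathbf{A}-B_{q},\mathbf{x})$, so the edges of the triangle $\alpha_{1}\dash \alpha_{2}\dash \alpha_{4}$ carry the labels $(\mathbf{A},\mathbf{x})$, $(\mathbf{A}\cap B_{q},\mathbf{x})$ and $(\mathbf{A}-B_{q},\mathbf{x})$. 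Since each of these conjugates the groups of the $\mathfrak{S}$-labelling of $\alpha_{2}$ only by elements of $H_{i}$, all three lie in the stabiliser of the $A_{i}$-graph with central vertex group $H_{i}$ in $\alpha_{2}$; this $A_{i}$-graph then lies in $\alpha_{1}\cap\alpha_{2}\cap\alpha_{4}$, so $[\alpha_{1},\alpha_{2},\alpha_{4}]$ is a $2$-cell. The remaining quadrilateral $\alpha_{4}\dash \alpha_{2}\dash \alpha_{3}\dash \alpha_{5}\dash \alpha_{4}$ is the loop of Lemma~\ref{2a loop} with $(\mathbf{A},\mathbf{x})$ replaced by $(\mathbf{A}\cap B_{q},\mathbf{x})$: here $\widehat{\mathbf{A}\cap B_{q}}=\hat{A}\cap B_{q}\subseteq B_{q}$, $H_{i}\in B_{q}$, and $H_{j}\notin\hat{A}\supseteq\widehat{\mathbf{A}\cap B_{q}}$, so it falls under Case~2a and is contractible by Lemma~\ref{2a contractible}.

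For the loop through $\alpha_{4}'$ and $\alpha_{5}'$ I would argue dually. Writing $\alpha_{5}'=\alpha_{2}\cdot(\mathbf{B}+_{q}\hat{A},\mathbf{y})$ (the codomain of the dashed edge), Proposition~\ref{whitehead notational properties}~(4) gives $\alpha_{3}=\alpha_{2}\cdot(\mathbf{B},\mathbf{y})=\alpha_{5}'\cdot\big((\mathbf{\bar{B}}_{q}\cap\hat{A})^{y_{q}},y_{q}^{-1}\mathbf{\tilde{y}}_{q}\big)$, so the edges of the triangle $\alpha_{2}\dash \alpha_{3}\dash \alpha_{5}'$ carry the labels $(\mathbf{B},\mathbf{y})$, $(\mathbf{B}+_{q}\hat{A},\mathbf{y})$ and $\big((\mathbf{\bar{B}}_{q}\cap\hat{A})^{y_{q}},y_{q}^{-1}\mathbf{\tilde{y}}_{q}\big)$, each of which conjugates only by elements of $H_{j}$; hence all three lie in the stabiliser of the $A_{j}$-graph with central vertex group $H_{j}$ in $\alpha_{2}$, which then lies in $\alpha_{2}\cap\alpha_{3}\cap\alpha_{5}'$, making $[\alpha_{2},\alpha_{3},\alpha_{5}']$ a $2$-cell. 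The remaining quadrilateral $\alpha_{1}\dash \alpha_{2}\dash \alpha_{5}'\dash \alpha_{4}'\dash \alpha_{1}$ is the loop of Lemma~\ref{2a loop} with $(\mathbf{B},\mathbf{y})$ replaced by $(\mathbf{B}+_{q}\hat{A},\mathbf{y})$: since $[\mathbf{B}+_{q}\hat{A}]_{q}=B_{q}\cup\hat{A}\supseteq\hat{A}$, $H_{i}\in B_{q}\subseteq[\mathbf{B}+_{q}\hat{A}]_{q}$, and $H_{j}\notin\hat{A}$, it falls under Case~2a and is contractible by Lemma~\ref{2a contractible}.

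The main obstacle I expect is bookkeeping rather than anything conceptual: one must ensure that every intermediate relative multiple Whitehead automorphism appearing in the pentagons of Lemma~\ref{2b loop} is written relative to the domain on which it acts (Observation~\ref{obs geometric whitehead argument}), so that the algebraic identities of Proposition~\ref{whitehead notational properties} literally reproduce the pictured edge labels and so that the two quadrilaterals exactly match the hypotheses of Lemma~\ref{2a loop}/Lemma~\ref{2a contractible}; and one must check the two set-theoretic containments $\widehat{\mathbf{A}\cap B_{q}}\subseteq B_{q}$ and $\hat{A}\subseteq[\mathbf{B}+_{q}\hat{A}]_{q}$ that place the quadrilaterals into Case~2a. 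Once this is arranged, the geometric content — triangles are $2$-cells, quadrilaterals are contractible — is immediate.
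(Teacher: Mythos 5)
Your proposal is correct and follows essentially the same route as the paper: each pentagon is split along the dashed edge into a triangle filled by the $A_{i}$- (respectively $A_{j}$-) graph and a quadrilateral placed into Case~2a via the containments $\hat{A}\cap B_{q}\subseteq B_{q}$ and $\hat{A}\subseteq[\mathbf{B}+_{q}\hat{A}]_{q}$, then contracted by Lemma~\ref{2a contractible}. The paper's proof is just a terser version of exactly this argument.
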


\begin{proof}
As in Lemma \ref{1b contractible}, the $\alpha_{1}$--$\alpha_{2}$--$\alpha_{4}$ triangle can be `filled' with an $A_{i}$ graph, and the $\alpha_{2}$--$\alpha_{3}$--$\alpha_{5}'$ triangle can be `filled' with an $A_{j}$ graph.
Now $\hat{A}\cap B_{q}\subseteq B_{q}$ and $\hat{A}\subseteq[\mathbf{B}+_{q}\hat{A}]_{q}=B_{q}\cup\hat{A}$, so by Lemma \ref{2a contractible} (Case 2a), the squares $\alpha_{4}\dash \alpha_{2}\dash \alpha_{3}\dash \alpha_{5}\dash \alpha_{4}$ and $\alpha_{1}\dash \alpha_{2}\dash \alpha_{5}'\dash \alpha_{4}'\dash \alpha_{1}$ are both contractible.
\end{proof}
  
\begin{lemma}\label{2b peak} % 2b peak
If
\begin{tikzcd}[cramped]
\alpha_{1} \ar[r,-<-,dash,"(\mathbf{A}{,}\mathbf{x})"]	& \alpha_{2} \ar[r,->-,dash,"(\mathbf{B}{,}\mathbf{y})"]	& \alpha_{3}
\end{tikzcd}
is a peak
with $H_{i}\in\hat{B}$, $H_{j}\not\in\hat{A}$, and $\hat{A}\not\subseteq B_{q}$, then either
$||\alpha_{2}(\mathbf{A},\mathbf{x})(\mathbf{A}-B_{q},\mathbf{x})^{-1}||<||\alpha_{2}||$
and
$||\alpha_{2}(\mathbf{A}\cap B_{q},\mathbf{x})(\mathbf{B'},\mathbf{y})||<||\alpha_{2}||$
, or
$||\alpha_{2}(\mathbf{A},\mathbf{x})((\mathbf{B}+_{q}\hat{A})\mathbf{'},\mathbf{y})||<||\alpha_{2}||$
and
$||\alpha_{2}(\mathbf{B}+_{q}\hat{A},\mathbf{y})||<||\alpha_{2}||$
.
\end{lemma}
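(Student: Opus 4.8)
The plan is to follow the template of the proof of Lemma~\ref{1b peak} (Case~1(b)), with Case~1(a) replaced throughout by Case~2(a). By Lemma~\ref{2b loop} the loop around the given peak has two factorisations; I would first splice these together into a single commuting diagram of domains, analogous to Figure~\ref{fig commuting diagram 1b}, with vertices $\alpha_{1},\dots,\alpha_{5},\alpha_{4}',\alpha_{5}'$ and a common corner $\alpha_{6}$. Writing $\mathbf{U}:=\mathbf{A}-B_{q}$ and $\mathbf{W}:=\mathbf{A}\cap B_{q}$ (so $\hat{U}\cap B_{q}=\emptyset$ and $\hat{W}\subseteq B_{q}$, both disjoint from $H_{i}$ and $H_{j}$), the corner $\alpha_{6}$ is obtained from $\alpha_{2}$ by applying $(\mathbf{U},\mathbf{x})$ and then the enlarged map $(\mathbf{B}+_{q}\hat{A},\mathbf{y})$; using Proposition~\ref{whitehead notational properties} one checks that $\alpha_{6}$ genuinely sits at the foot of both loops of Lemma~\ref{2b loop}. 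In this diagram the triangle $\alpha_{1}\dash\alpha_{2}\dash\alpha_{4}$ is filled by an $A_{i}$-graph and $\alpha_{2}\dash\alpha_{3}\dash\alpha_{5}'$ by an $A_{j}$-graph, while the two quadrilaterals $\alpha_{4}\dash\alpha_{2}\dash\alpha_{3}\dash\alpha_{5}\dash\alpha_{4}$ and $\alpha_{1}\dash\alpha_{2}\dash\alpha_{5}'\dash\alpha_{4}'\dash\alpha_{1}$ each satisfy the hypotheses of Case~2(a) --- the first with $\hat{A}$ replaced by $\hat{W}\subseteq B_{q}$, the second with $\mathbf{B}$ replaced by $\mathbf{B}+_{q}\hat{A}$.

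Next I would extract the numerical inequalities. Lemma~\ref{2a domain height} applied to the two Case~2(a) quadrilaterals gives $||\alpha_{1}||-||\alpha_{4}||=||\alpha_{4}'||-||\alpha_{6}||$ and $||\alpha_{3}||-||\alpha_{5}'||=||\alpha_{5}||-||\alpha_{6}||$, and the two triangles give the telescoping identities $||\alpha_{1}||-||\alpha_{2}||=(||\alpha_{1}||-||\alpha_{4}||)+(||\alpha_{4}||-||\alpha_{2}||)$ and $||\alpha_{3}||-||\alpha_{2}||=(||\alpha_{3}||-||\alpha_{5}'||)+(||\alpha_{5}'||-||\alpha_{2}||)$. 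The two edges of the diagram running from $\alpha_{4}'$ and from $\alpha_{5}$ up to $\alpha_{6}$ are built from automorphisms with matching support (this coincidence of supports is where $\hat{W}\subseteq B_{q}$ enters), one supported in $H_{i}$ and one in $H_{j}$; hence Lemma~\ref{lemma acting on common C} applies to $\alpha_{6}$ and yields that either $\max(||\alpha_{4}'||-||\alpha_{6}||,\,||\alpha_{5}||-||\alpha_{6}||)>0$, or both of these differences are zero.

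Then I would feed in the peak hypothesis $\max(||\alpha_{1}||-||\alpha_{2}||,\,||\alpha_{3}||-||\alpha_{2}||)\le 0$ and $\min(||\alpha_{1}||-||\alpha_{2}||,\,||\alpha_{3}||-||\alpha_{2}||)<0$. If the first alternative of Lemma~\ref{lemma acting on common C} holds, then, exactly as in Lemma~\ref{1b peak}, one of $||\alpha_{4}||-||\alpha_{2}||$, $||\alpha_{5}'||-||\alpha_{2}||$ is forced to be strictly negative, since the height drop along that branch through $\alpha_{6}$ has strictly increased; if the second alternative holds, the telescoping is height-neutral through $\alpha_{6}$ and so $\min(||\alpha_{4}||-||\alpha_{2}||,\,||\alpha_{5}'||-||\alpha_{2}||)=\min(||\alpha_{1}||-||\alpha_{2}||,\,||\alpha_{3}||-||\alpha_{2}||)<0$. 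In either case $\min(||\alpha_{4}||-||\alpha_{2}||,\,||\alpha_{5}'||-||\alpha_{2}||)<0$; assuming without loss of generality that $||\alpha_{4}||<||\alpha_{2}||$, the path $\alpha_{4}\dash\alpha_{2}\dash\alpha_{3}$ is now itself a peak falling under Case~2(a), so Lemma~\ref{2a peak} gives $||\alpha_{5}||<||\alpha_{2}||$ and hence $\max(||\alpha_{4}||,||\alpha_{5}||)<||\alpha_{2}||$. Symmetrically, if instead $||\alpha_{5}'||<||\alpha_{2}||$, one obtains $\max(||\alpha_{4}'||,||\alpha_{5}'||)<||\alpha_{2}||$. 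This is precisely the claimed dichotomy, once the identifications $\alpha_{4}=\alpha_{2}(\mathbf{A},\mathbf{x})(\mathbf{A}-B_{q},\mathbf{x})^{-1}$, $\alpha_{5}=\alpha_{2}(\mathbf{A}\cap B_{q},\mathbf{x})(\mathbf{B'},\mathbf{y})$, $\alpha_{4}'=\alpha_{2}(\mathbf{A},\mathbf{x})((\mathbf{B}+_{q}\hat{A})',\mathbf{y})$, $\alpha_{5}'=\alpha_{2}(\mathbf{B}+_{q}\hat{A},\mathbf{y})$ supplied by Lemma~\ref{2b loop} are substituted back in.

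The hard part is entirely the bookkeeping with the $(\mathbf{A},\mathbf{x})$-notation of Notation~\ref{notation whitehead autos}: one must confirm that after the decompositions of Lemma~\ref{2b loop} every face of the diagram genuinely satisfies the hypotheses of a case already handled --- in particular that the disjointness $\hat{U}\cap B_{q}=\emptyset$ and the containments $\hat{W}\subseteq B_{q}$, $\hat{A}\subseteq[\mathbf{B}+_{q}\hat{A}]_{q}$ are exactly what make the two quadrilaterals instances of Case~2(a) --- and that the two edges into $\alpha_{6}$ really do have equal support, so that Lemma~\ref{lemma acting on common C} applies. This is the same manipulation that Proposition~\ref{whitehead notational properties} is designed to automate, but since Case~2(b) introduces the conjugators $\mathbf{x}^{y_{q}}$ one has to be careful about which case each face belongs to; once the commuting diagram is pinned down, the height inequalities go through verbatim as in Case~1(b).
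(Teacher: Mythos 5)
Your overall strategy is exactly the paper's: introduce a sixth domain $\alpha_{6}$ splicing the two loops of Lemma~\ref{2b loop} into one commuting diagram, extract two ``parallelogram'' height identities, apply Lemma~\ref{lemma acting on common C} at $\alpha_{6}$, deduce $\min(\|\alpha_{4}\|-\|\alpha_{2}\|,\|\alpha_{5}'\|-\|\alpha_{2}\|)<0$ as in Lemma~\ref{1b peak}, and finish with Lemma~\ref{2a peak}. The endgame and the identifications of $\alpha_{4},\alpha_{5},\alpha_{4}',\alpha_{5}'$ are all correct. But the bookkeeping you defer to the end is precisely where your diagram, as described, does not work, in three places.

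First, the common corner is $\alpha_{6}=\alpha_{2}(\mathbf{A}\cap B_{q},\mathbf{x})(\mathbf{B}+_{q}\hat{A},\mathbf{y})$ --- it is reached from the collapse $\alpha_{4}=\alpha_{2}(\mathbf{A}\cap B_{q},\mathbf{x})$, i.e.\ built from your $\mathbf{W}$, not from $\mathbf{U}=\mathbf{A}-B_{q}$ as you state. Second, the identities you need, $\|\alpha_{1}\|-\|\alpha_{4}\|=\|\alpha_{4}'\|-\|\alpha_{6}\|$ and $\|\alpha_{3}\|-\|\alpha_{5}'\|=\|\alpha_{5}\|-\|\alpha_{6}\|$, cannot come from the two quadrilaterals through $\alpha_{2}$ that you name: those do not contain $\alpha_{6}$, and Lemma~\ref{2a domain height} applied to them yields instead $\|\alpha_{5}\|-\|\alpha_{4}\|=\|\alpha_{3}\|-\|\alpha_{2}\|$ and $\|\alpha_{4}'\|-\|\alpha_{1}\|=\|\alpha_{5}'\|-\|\alpha_{2}\|$, which on their own do not force the conclusion. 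The correct faces are $\alpha_{1}\dash\alpha_{4}\dash\alpha_{6}\dash\alpha_{4}'$ and $\alpha_{3}\dash\alpha_{5}\dash\alpha_{6}\dash\alpha_{5}'$; moreover only the first is a Case~2(a) square (centred at $\alpha_{4}$), while the second is a Case~1(a) square --- the two spokes out of $\alpha_{5}'$ have disjoint supports $(\hat{A}\cap B_{q})^{y_{q}}$ and $(\hat{A}-\hat{B})^{y_{q}}$ with neither operating factor lying in either support, so Lemma~\ref{1a domain height} is what applies there. Third, the equal-support hypothesis needed for Lemma~\ref{lemma acting on common C} at $\alpha_{6}$ is $\widehat{(\mathbf{A}-B_{q})^{y_{q}}}=\widehat{(\mathbf{\bar{B}}_{q}\cap\hat{A})^{y_{q}}}$, i.e.\ it concerns $\hat{A}-B_{q}$ (your $\hat{U}$); it is not a consequence of $\hat{W}\subseteq B_{q}$ as you claim. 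With the diagram corrected in these three respects, the height inequalities and the reduction to Lemma~\ref{2a peak} go through exactly as you describe and as in the paper.
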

That is, $||\alpha_{2}||>\max(||\alpha_{4}||,||\alpha_{5}||)$ or $||\alpha_{2}||>\max(||\alpha_{4}'||,||\alpha_{5}'||)$.

\begin{proof}
It will suffice to show that either $||\alpha_{4}||<||\alpha_{2}||$ or $||\alpha_{5}'||<||\alpha_{2}||$. The problem then reduces to Case 2a (Lemma \ref{2a peak}).

Let $\alpha_{6}=\alpha_{2}(\mathbf{A}\cap B_{q},\mathbf{x})(\mathbf{B}+_{q}\hat{A},\mathbf{y})$.
For brevity, let $\mathbf{P}=\mathbf{A}-B_{q}$, $\mathbf{Q}=\mathbf{A}\cap B_{q}$, $\mathbf{R}=\mathbf{B}+_{q}\hat{A}$, and $\mathbf{S}=\mathbf{\bar{B}}_{q}\cap\hat{A}$.
For $b\ne q$ set $[\mathbf{R''}]_{b}=[\mathbf{R'}]_{b}:=B_{b}-\hat{A}=[\mathbf{R}]_{b}$,
$[\mathbf{R''}]_{q}:=(B_{q}-\hat{A})\sqcup \widehat{(B_{q}\cap \mathbf{A})^{\mathbf{x}}}\sqcup (\hat{A}-B_{q})$, and
$[\mathbf{R'}]_{q}:=(B_{q}-\hat{A})\sqcup \widehat{(\mathbf{A})^{\mathbf{x}}}$.
Then the diagram in Figure \ref{fig commuting diagram 2b} commutes.
\begin{figure}
\centering
\begin{tikzcd}[cramped]
			&			&	\alpha_{2} % 2
							\ar[dll, "(\mathbf{A}{,}\mathbf{x})" ' ,purple]
							\arrow[Green]{dl}[pos=0.2,Green,xshift=-0.5em]{(\mathbf{Q}{,}\mathbf{x})}
							\ar[drr, "(\mathbf{B}{,}\mathbf{y})",violet]
							\ar[dddr, bend left=90, looseness=2.5, "(\mathbf{R}{,}\mathbf{y})",blue]
							\ar[dddl, bend right=90, looseness=2.5, "(\mathbf{R}{,}\mathbf{y})" ',white]
									&			&
															\\
	\alpha_{1} % 1
	\ar[ddr, "(\mathbf{R'}{,}\mathbf{y})" ',blue]	
			&	\alpha_{4} % 4
				\ar[l, pos=0.45, "(\mathbf{P}{,}\mathbf{x})",red]
				\ar[dr, "(\mathbf{R''}{,}\mathbf{y})" ',blue]
				\ar[rr, "(\mathbf{B'}{,}\mathbf{y})",violet]
						&			&	\alpha_{5} % 5
												&	\alpha_{3} % 3
													\arrow[Green]{l}[Green,pos=0.7,yshift=-0.2em]{(\mathbf{Q}^{y_{q}}{,}\mathbf{x}^{y_{q}})}	
															\\	
			&			&	\alpha_{6} % 6
							\arrow[orange]{ur}[xshift=0.5em]{(\mathbf{S}^{y_{q}}{,}y_{q}^{-1}\mathbf{\tilde{y}}_{q})}%pos=0.3
							\ar[dl, pos=0.3, "(\mathbf{P}^{y_{q}}{,}\mathbf{x}^{y_{q}})" ',red]
									&			&	
															\\
			&	\alpha_{4}' % 4 '
						&			&	\alpha_{5}' % 5 '
										\ar[ll, "(\mathbf{A}^{y_{q}}{,}\mathbf{x}^{y_{q}})",purple]
										\ar[Green]{ul}[Green,xshift=0.5em]{(\mathbf{Q}^{y_{q}}{,}\mathbf{x}^{y_{q}})}
										\ar[orange]{uur}[pos=0.3,xshift=0.5em]{(\mathbf{S}^{y_{q}}{,}y_{q}^{-1}\mathbf{\tilde{y}}_{q})}
												&			\\
\end{tikzcd}
\caption{Commuting Diagram for Case 2b}
\label{fig commuting diagram 2b}
\end{figure}

Since
\begin{tikzcd}[cramped]
\alpha_{1}	&	\alpha_{2}
			\ar[l, dash,->-, "(\mathbf{A}{,}\mathbf{x})" ']
			\ar[r,, dash, ->-, "(\mathbf{B}{,}\mathbf{y})"]
									&	\alpha_{3}
\end{tikzcd}
is a peak, we have that $\min(\|\alpha_{1}\|-\|\alpha_{2}\|,\|\alpha_{3}\|-\|\alpha_{2}\|)<0$ and $\max(\|\alpha_{1}\|-\|\alpha_{2}\|,\|\alpha_{3}\|-\|\alpha_{2}\|)\le0$.

Observe that $\widehat{\mathbf{Q}^{y_{q}}} \cap \widehat{\mathbf{S}^{y_{q}}} = (\hat{A}\cap B_{q})^{y_{q}} \cap (\hat{A}-\hat{B})^{y_{q}} = \emptyset$.
Additionally, $y_{q}^{-1}\mathbf{\tilde{y}}_{q}\in H_{j}\not\in\hat{A}$ (so $y_{q}^{-1}\mathbf{\tilde{y}}_{q}\not\in\widehat{\mathbf{Q}^{y_{q}}}$), and
$\mathbf{x}^{y_{q}}\in H_{i}^{y_{q}}\in B_{q}^{y_{q}}$ (so $\mathbf{x}^{y_{q}}\not\in\widehat{\mathbf{S}^{y_{q}}}$).
Thus $\alpha_{5}'\dash\alpha_{6}\dash\alpha_{5}\dash\alpha_{3}\dash\alpha_{5}'$ is a square falling under Case 1a, so by Lemma \ref{1a domain height}, we have that
$\|\alpha_{3}\|-\|\alpha_{5}'\|=\|\alpha_{5}\|-\|\alpha_{6}\|$.

Also observe that $\hat{P}=\hat{A}-B_{q}\subseteq[\mathbf{R''}]_{q}$, $\mathbf{x}\in H_{i}\in B_{q}-\hat{A}\subset[\mathbf{R''}]_{q}$, and $\mathbf{y}\in H_{j}\not\in\hat{A}-B_{q}=\hat{P}$.
So $\alpha_{4}\dash\alpha_{6}\dash\alpha_{4}'\dash\alpha_{1}\dash\alpha_{4}$ is a square falling under Case 2a, and by Lemma \ref{2a domain height}, we have that
$\|\alpha_{1}\|-\|\alpha_{4}\|=\|\alpha_{4}'\|-\|\alpha_{6}\|$.

Since $\widehat{\mathbf{P}^{y_{q}}}=(\hat{A}-B_{q})^{y_{q}}=\widehat{\mathbf{S}^{y_{q}}}$ (and $H_{i}^{y_{q}},H_{j}^{y_{q}}\not\in\widehat{\mathbf{P}^{y_{q}}}$) then by Lemma \ref{lemma acting on common C}, we have that either 
$\max(\|\alpha_{4}'\|-\|\alpha_{6}\|,\|\alpha_{5}\|-\|\alpha_{6}\|)>0$ or $\|\alpha_{4}'\|-\|\alpha_{6}\|=\|\alpha_{5}\|-\|\alpha_{6}\|=0$.

Finally, we note that $\|\alpha_{4}\|-\|\alpha_{2}\|=(\|\alpha_{1}\|-\|\alpha_{2}\|)-(\|\alpha_{1}\|-\|\alpha_{4}\|)$, and similarly, $\|\alpha_{5}'\|-\|\alpha_{2}\|=(\|\alpha_{3}\|-\|\alpha_{2}\|)-(\|\alpha_{3}\|-\|\alpha_{5}'\|)$.

As in the proof of Lemma \ref{1b peak}, we now deduce from this information that $\min(\|\alpha_{4}\|-\|\alpha_{2}\|,\|\alpha_{5}'\|-\|\alpha_{2}\|)<0$, as required.
\end{proof}

\begin{prop}\label{prop 2b summary} 
Let $H_{1}\ast\dots\ast H_{n}$ be an $\mathfrak{S}$ free factor splitting for $G$.
Let $(\mathbf{A},\mathbf{x})$ and $(\mathbf{B},\mathbf{y})$ be relative multiple Whitehead automorphisms with $\mathbf{x}\subset H_{i}$, $\mathbf{y}\subset H_{j}$, $\hat{A}\subset\{H_{1},\dots,H_{n}\}-\{H_{i},H_{j}\}$, $\hat{B}\subset\{H_{1},\dots,H_{n}\}-\{H_{j}\}$, $H_{i}\in B_{q}$ for some $Q$, and $\hat{A}\not\subset B_{q}$.
Let $\alpha_{2}$ be the domain whose $\alpha$-graph has $\mathfrak{S}$-labelling $(H_{1},\dots,H_{n})$, and let $\alpha_{1}=\alpha_{2}(\mathbf{A},\mathbf{x})$ and $\alpha_{3}=\alpha_{2}(\mathbf{B},\mathbf{y})$.
If
\begin{tikzcd}[cramped]
\alpha_{1} \ar[r,-<-,dash,"(\mathbf{A}{,}\mathbf{x})"]	& \alpha_{2} \ar[r,->-,dash,"(\mathbf{B}{,}\mathbf{y})"]	& \alpha_{3}
\end{tikzcd}
is a peak, then it is reducible.
\end{prop}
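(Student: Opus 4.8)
The plan is to mirror exactly the pattern of Propositions~\ref{prop 1a summary}, \ref{prop 1b summary}, and \ref{prop 2a summary}: combine the already-established loop lemma, contractibility lemma, and peak-reduction lemma for Case~2(b) to conclude reducibility. Concretely, by Lemmas~\ref{2b loop} and \ref{2b contractible}, the path
\begin{tikzcd}[cramped]
\alpha_{1} \ar[r,-<-,dash,"(\mathbf{A}{,}\mathbf{x})"]	& \alpha_{2} \ar[r,->-,dash,"(\mathbf{B}{,}\mathbf{y})"]	& \alpha_{3}
\end{tikzcd}
is homotopic in the Space of Domains to each of the two length-$3$ paths obtained from the two factorisations in Lemma~\ref{2b loop}: namely
\begin{tikzcd}[cramped]
\alpha_{1} \ar[r,->-,dash,"(\mathbf{A}-B_{q}{,}\mathbf{x})"]	&[0.6cm] \alpha_{2}(\mathbf{A},\mathbf{x})(\mathbf{A}-B_{q},\mathbf{x})^{-1} \ar[r,->-,dash,"(\mathbf{B}{,}\mathbf{y})"]	&[0.6cm] \alpha_{2}(\mathbf{A}\cap B_{q},\mathbf{x})(\mathbf{B'},\mathbf{y}) \ar[r,-<-,dash,"((\mathbf{A}\cap B_{q})^{y_{q}}{,}\mathbf{x}^{y_{q}})"]	&[1.5cm] \alpha_{3}
\end{tikzcd}
and the analogous path through $\alpha_{4}'$, $\alpha_{5}'$ built from $(\mathbf{B}+_{q}\hat{A},\mathbf{y})$ and $(\mathbf{A}^{y_{q}},\mathbf{x}^{y_{q}})$. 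By Lemma~\ref{2b peak}, one of these two alternatives has both of its two new intermediate vertices of strictly smaller height than $\|\alpha_{2}\|$; that is, either $\|\alpha_{4}\|,\|\alpha_{5}\|<\|\alpha_{2}\|$ or $\|\alpha_{4}'\|,\|\alpha_{5}'\|<\|\alpha_{2}\|$. Hence by Definition~\ref{defn reducible}, the peak is reducible.

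The proof is therefore a direct three-line assembly, with no new computation required: invoke \ref{2b loop} for the homotopy of edge paths (via the $2$-cells in the Space of Domains witnessed by the relevant $A$- and $B$-graphs), invoke \ref{2b peak} for the height drop, and quote Definition~\ref{defn reducible}. The only care needed is bookkeeping: making sure that the intermediate domains named in the homotopic paths are precisely the $\alpha_{4},\alpha_{5}$ (resp.\ $\alpha_{4}',\alpha_{5}'$) of Lemmas~\ref{2b loop}--\ref{2b peak}, so that the inequalities from \ref{2b peak} apply verbatim. Since Lemma~\ref{2b peak} is phrased as an inclusive ``either\dots or'', one picks whichever disjunct holds and reads off the corresponding length-$3$ path as the reduction.

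I do not anticipate a genuine obstacle here — all the real work (the loop constructions, the word-length computations feeding the height estimates, the reduction to Case~2a via Proposition~\ref{whitehead notational properties}) has already been carried out in the preceding lemmas. If anything, the subtlety is purely cosmetic: ensuring the displayed \texttt{tikzcd} diagrams match the orientation conventions used earlier (the $-<-$/$->-$ arrow styles on the peak, and which factor sits over which edge), and confirming that ``reducible'' is satisfied in the degenerate sense that both new intermediate heights, not merely their maximum, drop below $\|\alpha_{2}\|$. Thus the proof will read essentially identically to that of Proposition~\ref{prop 2a summary}, with the triangle/square decompositions of Lemma~\ref{2b contractible} replacing those of Lemma~\ref{2a contractible}.

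\begin{proof}
By Lemmas~\ref{2b loop} and \ref{2b contractible}, the path
\begin{tikzcd}[cramped]
\alpha_{1} \ar[r,-<-,dash,"(\mathbf{A}{,}\mathbf{x})"]	& \alpha_{2} \ar[r,->-,dash,"(\mathbf{B}{,}\mathbf{y})"]	& \alpha_{3}
\end{tikzcd}
is homotopic in the Space of Domains to each of the paths
\begin{tikzcd}[cramped]
\alpha_{1} \ar[r,->-,dash,"(\mathbf{A}-B_{q}{,}\mathbf{x})"]	&[0.8cm] \alpha_{4} \ar[r,->-,dash,"(\mathbf{B}{,}\mathbf{y})"]	&[0.2cm] \alpha_{5} \ar[r,-<-,dash,"((\mathbf{A}\cap B_{q})^{y_{q}}{,}\mathbf{x}^{y_{q}})"]	&[1.7cm] \alpha_{3}
\end{tikzcd}
and
\begin{tikzcd}[cramped]
\alpha_{1} \ar[r,->-,dash,"((\mathbf{B}+_{q}\hat{A})'{,}\mathbf{y})"]	&[1.2cm] \alpha_{4}' \ar[r,-<-,dash,"(\mathbf{A}^{y_{q}}{,}\mathbf{x}^{y_{q}})"]	&[0.6cm] \alpha_{5}' \ar[r,->-,dash,"((\mathbf{\bar{B}}_{q}\cap\hat{A})^{y_{q}}{,}y_{q}^{-1}\mathbf{\tilde{y}}_{q})"]	&[1.9cm] \alpha_{3}
\end{tikzcd}
where $\alpha_{4}$, $\alpha_{5}$, $\alpha_{4}'$, $\alpha_{5}'$ are as in Lemma~\ref{2b loop}.
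By Lemma~\ref{2b peak}, either $\|\alpha_{4}\|<\|\alpha_{2}\|$ and $\|\alpha_{5}\|<\|\alpha_{2}\|$, or $\|\alpha_{4}'\|<\|\alpha_{2}\|$ and $\|\alpha_{5}'\|<\|\alpha_{2}\|$.
In the first case the path through $\alpha_{4}$ and $\alpha_{5}$ is a reduction of the peak; in the second case the path through $\alpha_{4}'$ and $\alpha_{5}'$ is.
Thus by Definition~\ref{defn reducible}, the peak
\begin{tikzcd}[cramped]
\alpha_{1} \ar[r,-<-,dash,"(\mathbf{A}{,}\mathbf{x})"]	& \alpha_{2} \ar[r,->-,dash,"(\mathbf{B}{,}\mathbf{y})"]	& \alpha_{3}
\end{tikzcd}
is reducible.
\end{proof}
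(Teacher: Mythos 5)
Your proposal is correct and follows essentially the same route as the paper's own proof: invoke Lemmas \ref{2b loop} and \ref{2b contractible} to replace the peak by the two candidate length-$3$ paths through $\alpha_{4},\alpha_{5}$ and $\alpha_{4}',\alpha_{5}'$, apply Lemma \ref{2b peak} to see that one of the two alternatives has both intermediate heights strictly below $\|\alpha_{2}\|$, and conclude via Definition \ref{defn reducible}. The only blemishes are cosmetic labelling slips in your first displayed path (the edge $\alpha_{1}$--$\alpha_{4}$ should carry the reversed orientation, and the middle edge should be labelled $(\mathbf{B'},\mathbf{y})$ rather than $(\mathbf{B},\mathbf{y})$, as in Lemma \ref{2b loop}), which do not affect the argument since you identify the vertices as those of that lemma.
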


\begin{proof}
By Lemmas \ref{2b loop} and \ref{2b contractible}, the path 
\begin{tikzcd}[cramped]
\alpha_{1} \ar[r,-<-,dash,"(\mathbf{A}{,}\mathbf{x})"]	& \alpha_{2} \ar[r,->-,dash,"(\mathbf{B}{,}\mathbf{y})"]	& \alpha_{3}
\end{tikzcd}
is homotopic in the Space of Domains to each of the paths
\\ \noindent
\begin{tikzcd}[sep=large]
\alpha_{1} \ar[r,-<-,dash,"(\mathbf{A}-B_{q}{,}\mathbf{x})"]	& \alpha_{2}(\mathbf{A},\mathbf{x})(\mathbf{A}-B_{q},\mathbf{x})^{-1} \ar[r,->-,dash,"(\mathbf{B'}{,}\mathbf{y})"] 	&[-0.4cm] \alpha_{2}(\mathbf{A}\cap B_{q},\mathbf{x})(\mathbf{B'},\mathbf{y}) \ar[r,-<-,dash,"((\mathbf{A}\cap B_{q})^{y_{q}}{,}\mathbf{x}^{y_{q}})"]	&[0.3cm] \alpha_{3}
\end{tikzcd}
\\ \noindent and
\begin{tikzcd}[sep=large]
\alpha_{1} \ar[r,->-,dash,"((\mathbf{B}+_{q}\hat{A})\mathbf{'}{,}\mathbf{y})"]	& \alpha_{2}(\mathbf{A},\mathbf{x})((\mathbf{B}+_{q}\hat{A})\mathbf{'},\mathbf{y}) \ar[r,-<-,dash,"(\mathbf{A}^{y_{q}}{,}\mathbf{x}^{y_{q}})"] 	&[-0.3cm] \alpha_{2}(\mathbf{B}+_{q}\hat{A},\mathbf{y}) \ar[r,->-,dash,"((\mathbf{\bar{B}}_{q}\cap\hat{A})^{y_{q}}{,}y_{q}^{-1}\mathbf{\tilde{y}}_{q})"]	&[0.5cm] \alpha_{3}
\end{tikzcd}
.

By Lemma \ref{2b peak}, either
$||\alpha_{2}(\mathbf{A},\mathbf{x})(\mathbf{A}-B_{q},\mathbf{x})^{-1}||<||\alpha_{2}||$
and
$||\alpha_{2}(\mathbf{A}\cap B_{q},\mathbf{x})(\mathbf{B'},\mathbf{y})|| \\ \noindent <||\alpha_{2}||$
, or
$||\alpha_{2}(\mathbf{A},\mathbf{x})((\mathbf{B}+_{q}\hat{A})\mathbf{'},\mathbf{y})||<||\alpha_{2}||$
and
$||\alpha_{2}(\mathbf{B}+_{q}\hat{A},\mathbf{y})||<||\alpha_{2}||$.
Thus by Definition \ref{defn reducible}, one of the above paths is a reduction for the peak
\begin{tikzcd}[cramped]
\alpha_{1} \ar[r,-<-,dash,"(\mathbf{A}{,}\mathbf{x})"]	&[-0.1cm] \alpha_{2} \ar[r,->-,dash,"(\mathbf{B}{,}\mathbf{y})"]	&[-0.1cm] \alpha_{3}
\end{tikzcd}~.
\end{proof}

% ---------------------------------------------------------------------- case 4 :

\subsubsection*{Case 4: $H_{i}\in\hat{B}$ and $H_{j}\in\hat{A}$ (say $H_{i}\in B_{q}$ and $H_{j}\in A_{p}$)}

\begin{lemma}\label{4 loop} % 4 loop
If $H_{i}\in B_{q}$ and $H_{j}\in A_{p}$, then \\
$(\mathbf{A},\mathbf{x})^{-1}(\mathbf{B},\mathbf{y})=((\mathbf{A}+_{p}B_{q})',\mathbf{x})^{-1}(\mathbf{B'},\mathbf{y})((\mathbf{\bar{A}}_{p}\cap B_{q})^{y_{q}},(\mathbf{\tilde{x}}_{p}x_{p}^{-1})^{y_{q}})^{-1}$
and \\ $(\mathbf{A},\mathbf{x})^{-1}(\mathbf{B},\mathbf{y})=((\mathbf{\bar{B}}_{q}\cap A_{p})^{x_{p}},(\mathbf{\tilde{y}}_{q}y_{q}^{-1})^{x_{p}})(\mathbf{A'},\mathbf{x})^{-1}((\mathbf{B}+_{q}A_{p})',\mathbf{y})$,
where $(\mathbf{A}+_{p}B_{q})'$ (and $(\mathbf{B}+_{q}A_{p})'$) are as defined in Proposition \ref{whitehead notational properties} (4), and $\mathbf{A'}$ and $\mathbf{B'}$ are defined similarly.
\end{lemma}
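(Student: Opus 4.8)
The plan is to reduce both identities to Case 2(a), exactly as Gilbert reduces his Case 4 to his Case 2(a) in \cite[Lemma 2.12]{Gilbert1987}. The mechanism is Proposition \ref{whitehead notational properties}(4), which lets us peel off from $(\mathbf{A},\mathbf{x})$ (resp.\ $(\mathbf{B},\mathbf{y})$) a relative multiple Whitehead automorphism whose support lies inside $B_{q}$ (resp.\ $A_{p}$) and which interacts with $(\mathbf{B},\mathbf{y})$ (resp.\ $(\mathbf{A},\mathbf{x})$) only in the way already handled by Lemma \ref{2a loop}.

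For the first identity I would start from the second form in Proposition \ref{whitehead notational properties}(4), applied with $j$ the index such that $H_{j}\in A_{p}$ and $B=B_{q}$, to write $(\mathbf{A},\mathbf{x})=(\mathbf{\bar{A}}_{p}\cap B_{q},\,\mathbf{\tilde{x}}_{p}x_{p}^{-1})\,\bigl((\mathbf{A}+_{p}B_{q})',\mathbf{x}\bigr)$. Write $P:=(\mathbf{\bar{A}}_{p}\cap B_{q},\,\mathbf{\tilde{x}}_{p}x_{p}^{-1})$. The key bookkeeping step is to check that $P$ has operating factor $H_{i}$ (since $\mathbf{\tilde{x}}_{p}x_{p}^{-1}\subset H_{i}$), that $\widehat{P}\subseteq B_{q}$ (being an intersection with $B_{q}$), and that $H_{j}\notin\widehat{P}$ (because $\widehat{\mathbf{\bar{A}}_{p}}=\hat{H}-A_{p}-\{H_{i}\}$ and $H_{j}\in A_{p}$). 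Together with the hypothesis $H_{i}\in B_{q}$, this means the pair $\bigl(P,(\mathbf{B},\mathbf{y})\bigr)$ satisfies the hypotheses of Case 2(a), so Lemma \ref{2a loop} gives $P^{-1}(\mathbf{B},\mathbf{y})=(\mathbf{B'},\mathbf{y})\,\bigl((\mathbf{\bar{A}}_{p}\cap B_{q})^{y_{q}},(\mathbf{\tilde{x}}_{p}x_{p}^{-1})^{y_{q}}\bigr)^{-1}$ with $\mathbf{B'}$ as in that lemma. Substituting $(\mathbf{A},\mathbf{x})^{-1}=\bigl((\mathbf{A}+_{p}B_{q})',\mathbf{x}\bigr)^{-1}P^{-1}$ then yields the claimed formula. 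The second identity is the mirror-image argument: decompose $(\mathbf{B},\mathbf{y})=(\mathbf{\bar{B}}_{q}\cap A_{p},\,\mathbf{\tilde{y}}_{q}y_{q}^{-1})\,\bigl((\mathbf{B}+_{q}A_{p})',\mathbf{y}\bigr)$ via Proposition \ref{whitehead notational properties}(4), observe that $S:=(\mathbf{\bar{B}}_{q}\cap A_{p},\,\mathbf{\tilde{y}}_{q}y_{q}^{-1})$ has operating factor $H_{j}$ with $\widehat{S}\subseteq A_{p}$ and $H_{i}\notin\widehat{S}$ while $H_{j}\in A_{p}$, so that $\bigl(S,(\mathbf{A},\mathbf{x})\bigr)$ falls under Case 2(a) with the parameter $x_{p}$ playing the role of ``$y_{q}$''; Lemma \ref{2a loop} then commutes $S$ past $(\mathbf{A},\mathbf{x})^{-1}$ and produces $\mathbf{A'}$, and reassembling gives the second formula.

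The ``that is'' clause asserting the existence of the loop vertices $\alpha_{4},\alpha_{5},\alpha_{4}',\alpha_{5}'$ would follow exactly as in Lemmas \ref{1b loop} and \ref{2b loop}: each intermediate automorphism lies in the stabiliser of an $A$-graph (either $A_{i}$ or $A_{j}$) contained in the appropriate domain, so the corresponding edges of Type $A$ exist in the Graph of Domains, and the two factorisations of $(\mathbf{A},\mathbf{x})^{-1}(\mathbf{B},\mathbf{y})$ say precisely that the two resulting length-$3$ paths from $\alpha_{1}$ to $\alpha_{3}$ agree with $\alpha_{1}\dash\alpha_{2}\dash\alpha_{3}$. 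I expect the only real difficulty to be purely notational: verifying that the primed objects $(\mathbf{A}+_{p}B_{q})'$, $(\mathbf{B}+_{q}A_{p})'$, $\mathbf{A'}$, $\mathbf{B'}$ coming out of Proposition \ref{whitehead notational properties}(4) and Lemma \ref{2a loop} really are the ones named in the statement, and that the support conditions ($H_{i},H_{j}\notin\widehat{P}$, $\widehat{P}\subseteq B_{q}$, etc.) hold exactly, since these are what license the appeal to Case 2(a); the algebra itself is just a concatenation of the two cited results with one use of the identity $(fg)^{-1}=g^{-1}f^{-1}$ for the right action.
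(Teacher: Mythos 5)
Your proposal is correct and follows essentially the same route as the paper: decompose $(\mathbf{A},\mathbf{x})$ via Proposition \ref{whitehead notational properties}(4) into $(\mathbf{\bar{A}}_{p}\cap B_{q},\mathbf{\tilde{x}}_{p}x_{p}^{-1})((\mathbf{A}+_{p}B_{q})',\mathbf{x})$, verify the support conditions so that the peeled-off factor together with $(\mathbf{B},\mathbf{y})$ falls under Case 2(a), apply Lemma \ref{2a loop}, and obtain the second identity by symmetry. The bookkeeping checks you flag (operating factor $H_i$, $\widehat{P}\subseteq B_q$, $H_j\notin\widehat{P}$) are exactly the ones the paper records.
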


That is, there exist vertices $\alpha_{4}$, $\alpha_{5}$, $\alpha_{4}'$, and $\alpha_{5}'$ in our Graph of Domains such that 
\begin{tikzpicture}
\node at (0,0) {$\alpha_{1}$};
\node at (1.75,1) {$\alpha_{2}$};
\node at (3.5,0) {$\alpha_{3}$};
\node at (1,-1) {$\alpha_{4}$};
\node at (2.5,-1) {$\alpha_{5}$};
\draw[-<-] (0.18,0.12) -- (1.57,0.88); % 1--2
\draw[->-] (1.93,0.88) -- (3.26,0.16); % 2--3
\draw[-<-] (0.15,-0.15) -- (0.85,-0.85); % 1--4
\draw[->-] (1.2,-1) -- (2.25,-1); % 4--5
\draw[-<-] (2.65,-0.85) -- (3.35,-0.15); % 5--3
\draw[->-,gray,dashed] (1.7,0.8) -- (1.05,-0.8); % 2--4
\node[scale=0.8] at (0.7,0.8) {$(\mathbf{A},\mathbf{x})$};
\node[scale=0.8] at (2.8,0.8) {$(\mathbf{B},\mathbf{y})$};
\node[scale=0.8] at (1.75,-1.3) {$(\mathbf{B'},\mathbf{y})$};
\node[scale=0.8] at (-0.55,-0.6) {$((\mathbf{A}+_{p}B_{q})',\mathbf{x})$};
\node[scale=0.8] at (4.8,-0.6) {$((\mathbf{\bar{A}}_{p}\cap B_{q})^{y_{q}},(\mathbf{\tilde{x}}_{p}x_{p}^{-1})^{y_{q}})$};
\node[scale=0.8] at (1.75,-0.2) {\textcolor{gray}{$(\mathbf{\bar{A}}_{p}\cap B_{q},\mathbf{\tilde{x}}_{p}x_{p}^{-1})$}};
\end{tikzpicture}

and
\begin{tikzpicture}
\node at (0,0) {$\alpha_{1}$};
\node at (1.75,1) {$\alpha_{2}$};
\node at (3.5,0) {$\alpha_{3}$};
\node at (1,-1) {$\alpha_{4}'$};
\node at (2.5,-1) {$\alpha_{5}'$};
\draw[-<-] (0.18,0.12) -- (1.57,0.88); % 1--2
\draw[->-] (1.93,0.88) -- (3.26,0.16); % 2--3
\draw[->-] (0.15,-0.15) -- (0.85,-0.85); % 1--4
\draw[-<-] (1.2,-1) -- (2.25,-1); % 4--5
\draw[->-] (2.65,-0.85) -- (3.35,-0.15); % 5--3
\draw[->-,gray,dashed] (1.8,0.8) -- (2.45,-0.8); % 2--5
\node[scale=0.8] at (0.7,0.8) {$(\mathbf{A},\mathbf{x})$};
\node[scale=0.8] at (2.8,0.8) {$(\mathbf{B},\mathbf{y})$};
\node[scale=0.8] at (1.75,-1.3) {$(\mathbf{A'},\mathbf{x})$};
\node[scale=0.8] at (-1.3,-0.6) {$((\mathbf{\bar{B}}_{q}\cap A_{p})^{x_{p}},(\mathbf{\tilde{y}}_{q}y_{q}^{-1})^{x_{p}})$};
\node[scale=0.8] at (4.1,-0.6) {$((\mathbf{B}+_{q}A_{p})',\mathbf{y})$};
\node[scale=0.8] at (1.75,-0.25) {\textcolor{gray}{$(\mathbf{\bar{B}}_{q}\cap A_{p},\mathbf{\tilde{y}}_{q}y_{q}^{-1})$}};
\end{tikzpicture}
are both loops.

\begin{proof} % case 4
By Proposition \ref{whitehead notational properties} (4), $(\mathbf{A},\mathbf{x})=(\mathbf{\bar{A}}_{p}\cap B_{q},\mathbf{\tilde{x}}_{p}x_{p}^{-1})((\mathbf{A}+_{p}B_{q})',\mathbf{x})$.%\textcolor{red}{bad}.
Now $\mathbf{\tilde{x}}_{p}x_{p}^{-1}\in H_{i}\in B_{q}$ still, and as $H_{j}\not\in\mathbf{B}$ then $H_{j}\not\in\mathbf{\bar{A}}_{p}\cap B_{q}$.
Also $\mathbf{\bar{A}}_{p}\cap B_{q}\subseteq B_{q}$, so by Case 2a (Lemma \ref{2a loop}), $(\mathbf{\bar{A}}_{p}\cap B_{q},\mathbf{\tilde{x}}_{p}x_{p}^{-1})(\mathbf{B'},\mathbf{y})=(\mathbf{B},\mathbf{y})((\mathbf{\bar{A}}_{p}\cap B_{q})^{y_{q}},(\mathbf{\tilde{x}}_{p}x_{p}^{-1})^{y_{q}})$.
The second loop is achieved by renaming $A$ to $B$ and $x$ to $y$, and vice versa.
\end{proof}

\begin{lemma}\label{4 contractible} % 4 contractible
The loops
\begin{tikzpicture}
\node at (0,0) {$\alpha_{1}$};
\node at (1.75,1) {$\alpha_{2}$};
\node at (3.5,0) {$\alpha_{3}$};
\node at (1,-1) {$\alpha_{4}$};
\node at (2.5,-1) {$\alpha_{5}$};
\draw[-<-] (0.18,0.12) -- (1.57,0.88); % 1--2
\draw[->-] (1.93,0.88) -- (3.26,0.16); % 2--3
\draw[-<-] (0.15,-0.15) -- (0.85,-0.85); % 1--4
\draw[->-] (1.2,-1) -- (2.25,-1); % 4--5
\draw[-<-] (2.65,-0.85) -- (3.35,-0.15); % 5--3
\draw[->-,gray,dashed] (1.7,0.8) -- (1.05,-0.8); % 2--4
\node[scale=0.8] at (0.7,0.8) {$(\mathbf{A},\mathbf{x})$};
\node[scale=0.8] at (2.8,0.8) {$(\mathbf{B},\mathbf{y})$};
\node[scale=0.8] at (1.75,-1.3) {$(\mathbf{B'},\mathbf{y})$};
\node[scale=0.8] at (-0.55,-0.6) {$((\mathbf{A}+_{p}B_{q})',\mathbf{x})$};
\node[scale=0.8] at (4.8,-0.6) {$((\mathbf{\bar{A}}_{p}\cap B_{q})^{y_{q}},(\mathbf{\tilde{x}}_{p}x_{p}^{-1})^{y_{q}})$};
\node[scale=0.8] at (1.75,-0.2) {\textcolor{gray}{$(\mathbf{\bar{A}}_{p}\cap B_{q},\mathbf{\tilde{x}}_{p}x_{p}^{-1})$}};
\end{tikzpicture}

and
\begin{tikzpicture}
\node at (0,0) {$\alpha_{1}$};
\node at (1.75,1) {$\alpha_{2}$};
\node at (3.5,0) {$\alpha_{3}$};
\node at (1,-1) {$\alpha_{4}'$};
\node at (2.5,-1) {$\alpha_{5}'$};
\draw[-<-] (0.18,0.12) -- (1.57,0.88); % 1--2
\draw[->-] (1.93,0.88) -- (3.26,0.16); % 2--3
\draw[->-] (0.15,-0.15) -- (0.85,-0.85); % 1--4
\draw[-<-] (1.2,-1) -- (2.25,-1); % 4--5
\draw[->-] (2.65,-0.85) -- (3.35,-0.15); % 5--3
\draw[->-,gray,dashed] (1.8,0.8) -- (2.45,-0.8); % 2--5
\node[scale=0.8] at (0.7,0.8) {$(\mathbf{A},\mathbf{x})$};
\node[scale=0.8] at (2.8,0.8) {$(\mathbf{B},\mathbf{y})$};
\node[scale=0.8] at (1.75,-1.3) {$(\mathbf{A'},\mathbf{x})$};
\node[scale=0.8] at (-1.3,-0.6) {$((\mathbf{\bar{B}}_{q}\cap A_{p})^{x_{p}},(\mathbf{\tilde{y}}_{q}y_{q}^{-1})^{x_{p}})$};
\node[scale=0.8] at (4.1,-0.6) {$((\mathbf{B}+_{q}A_{p})',\mathbf{y})$};
\node[scale=0.8] at (1.75,-0.25) {\textcolor{gray}{$(\mathbf{\bar{B}}_{q}\cap A_{p},\mathbf{\tilde{y}}_{q}y_{q}^{-1})$}};
\end{tikzpicture}
are both contractible in our Space of Domains.
\end{lemma}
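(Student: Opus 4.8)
The plan is to follow the template of Lemmas \ref{1b contractible} and \ref{2b contractible}: split each of the two pentagons along the dashed edge shown in its loop diagram into a triangle and a quadrilateral, observe that the triangle is spanned by a single $A$-graph (so bounds a $2$-cell of the Space of Domains by Definition \ref{Space of Domains defn}), and recognise the quadrilateral as a square of the type treated in Case 2a, which is contractible by Lemma \ref{2a contractible}.

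For the first loop, Proposition \ref{whitehead notational properties} (4) gives the identity $(\mathbf{A},\mathbf{x}) = (\mathbf{\bar{A}}_{p}\cap B_{q},\mathbf{\tilde{x}}_{p}x_{p}^{-1})\bigl((\mathbf{A}+_{p}B_{q})',\mathbf{x}\bigr)$, so $\alpha_{4}=\alpha_{2}(\mathbf{\bar{A}}_{p}\cap B_{q},\mathbf{\tilde{x}}_{p}x_{p}^{-1})$ and the dashed edge $\alpha_{2}\dash\alpha_{4}$ closes $\alpha_{1}\dash\alpha_{2}\dash\alpha_{4}$ into a triangle. Since $\mathbf{x}\subset H_{i}$ and $H_{i}\notin\hat{A}$, the $A$-graph $A_{i}$ in $\alpha_{2}$ with vertex group $H_{i}$ lies in $\alpha_{1}\cap\alpha_{2}$; and since $\mathbf{\tilde{x}}_{p}x_{p}^{-1}\subset H_{i}$ and $H_{i}\notin\widehat{\mathbf{\bar{A}}_{p}\cap B_{q}}$ (because $H_{i}\notin\mathbf{\bar{A}}_{p}$), it also lies in $\alpha_{4}$. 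Hence $A_{i}\in\alpha_{1}\cap\alpha_{2}\cap\alpha_{4}$ and $[\alpha_{1},\alpha_{2},\alpha_{4}]$ is a $2$-cell. The remaining quadrilateral $\alpha_{2}\dash\alpha_{3}\dash\alpha_{5}\dash\alpha_{4}\dash\alpha_{2}$ is precisely the configuration of Case 2a, applied with $\mathbf{\bar{A}}_{p}\cap B_{q}$ in the role of $\mathbf{A}$ (conjugator $\mathbf{\tilde{x}}_{p}x_{p}^{-1}\subset H_{i}\in B_{q}$, with $\widehat{\mathbf{\bar{A}}_{p}\cap B_{q}}\subseteq B_{q}$ and $H_{j}\notin\widehat{\mathbf{\bar{A}}_{p}\cap B_{q}}$) and $\mathbf{B}$ in the role of $\mathbf{B}$, so by Lemma \ref{2a contractible} it is contractible. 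Gluing the $2$-cell onto the contracted square contracts the first loop. The second loop is handled identically after interchanging $\mathbf{A}\leftrightarrow\mathbf{B}$, $\mathbf{x}\leftrightarrow\mathbf{y}$, $p\leftrightarrow q$ and $H_{i}\leftrightarrow H_{j}$: using $(\mathbf{B},\mathbf{y}) = (\mathbf{\bar{B}}_{q}\cap A_{p},\mathbf{\tilde{y}}_{q}y_{q}^{-1})\bigl((\mathbf{B}+_{q}A_{p})',\mathbf{y}\bigr)$, the triangle $\alpha_{2}\dash\alpha_{3}\dash\alpha_{5}'$ is spanned by the $A_{j}$-graph in $\alpha_{2}$, and the quadrilateral $\alpha_{1}\dash\alpha_{2}\dash\alpha_{5}'\dash\alpha_{4}'$ is again of Case 2a type (now with $H_{j}$ playing the role of the operating factor $H_{i}$ of Case 2a and $A_{p}$ the role of $B_{q}$).

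The only point requiring real care is the bookkeeping that each quadrilateral genuinely matches the Case 2a template — in particular that the modified partitions $\mathbf{B'}$ and $\mathbf{A'}$ appearing in Lemma \ref{4 loop} coincide with the partition produced by Lemma \ref{2a loop} when that lemma is applied with $\mathbf{\bar{A}}_{p}\cap B_{q}$ (respectively $\mathbf{\bar{B}}_{q}\cap A_{p}$) as its operating tuple. This is a routine, if slightly tedious, unwinding of Notation \ref{notation whitehead autos} and Proposition \ref{whitehead notational properties}; no new geometric input is needed, since every $2$-cell involved arises from one of the two mechanisms already in hand (a common $A$-graph, or the Case 2a lattice of Lemma \ref{2a contractible}).
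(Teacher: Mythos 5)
Your proof is correct and follows essentially the same route as the paper's: fill the triangle $\alpha_{1}\dash\alpha_{2}\dash\alpha_{4}$ (resp. $\alpha_{2}\dash\alpha_{3}\dash\alpha_{5}'$) with the common $A_{i}$- (resp. $A_{j}$-) graph, and contract the remaining square via Lemma \ref{2a contractible} after noting $\widehat{\mathbf{\bar{A}}_{p}\cap B_{q}}\subseteq B_{q}$ (resp. $\widehat{\mathbf{\bar{B}}_{q}\cap A_{p}}\subseteq A_{p}$). The extra detail you give on why the $A_{i}$-graph lies in all three domains and on matching the Case 2a template is consistent with, and slightly more explicit than, the paper's argument.
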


\begin{proof}
As in Lemma \ref{1b contractible}, the $\alpha_{1}$--$\alpha_{2}$--$\alpha_{4}$ triangle can be `filled' with an $A_{i}$ graph, and the $\alpha_{2}$--$\alpha_{3}$--$\alpha_{5}'$ triangle can be `filled' with an $A_{j}$ graph.
Now $\bigcup(\mathbf{\bar{A}}_{p}\cap B_{q})\subseteq B_{q}$ and $\bigcup(\mathbf{\bar{B}}\cap A_{p})\subseteq A_{p}$, so (after relabelling) by Lemma \ref{2a contractible} (Case 2a), the squares $\alpha_{4}\dash \alpha_{2}\dash \alpha_{3}\dash \alpha_{5}\dash \alpha_{4}$ and $\alpha_{1}\dash \alpha_{2}\dash \alpha_{5}'\dash \alpha_{4}'\dash \alpha_{1}$ are both contractible.
\end{proof}

\begin{lemma}\label{4 peak} % 4 peak
If
\begin{tikzcd}[cramped]
\alpha_{1} \ar[r,-<-,dash,"(\mathbf{A}{,}\mathbf{x})"]	& \alpha_{2} \ar[r,->-,dash,"(\mathbf{B}{,}\mathbf{y})"]	& \alpha_{3}
\end{tikzcd}
is a peak
with $H_{i}\in\hat{B}$ and $H_{j}\in\hat{A}$, then (up to relabelling)
$||\alpha_{2}(\mathbf{A},\mathbf{x})((\mathbf{A}+_{p}B_{q})',\mathbf{x})^{-1}||<||\alpha_{2}||$
and
$||\alpha_{2}(\mathbf{\bar{A}}_{p}\cap B_{q},\mathbf{\tilde{x}}_{p}x_{p}^{-1})(\mathbf{B'},\mathbf{y})||<||\alpha_{2}||$.
\end{lemma}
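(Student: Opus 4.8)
The proof will closely parallel that of Lemma~\ref{2b peak}, since Case~4 is the common refinement of Cases~2a and~2b: the plan is to build a commuting diagram of relative multiple Whitehead automorphisms whose constituent squares are each instances of a case already treated, and then play the resulting height identities off against the peak hypothesis. Keeping the notation of Lemma~\ref{4 loop}, write $\mathbf{P}:=\mathbf{\bar A}_{p}\cap B_{q}$ and $\mathbf{Q}:=\mathbf{\bar B}_{q}\cap A_{p}$, so that by Proposition~\ref{whitehead notational properties}~(4) we have the factorisations $(\mathbf{A},\mathbf{x})=(\mathbf{P},\mathbf{\tilde x}_{p}x_{p}^{-1})\,((\mathbf{A}+_{p}B_{q})',\mathbf{x})$ and $(\mathbf{B},\mathbf{y})=(\mathbf{Q},\mathbf{\tilde y}_{q}y_{q}^{-1})\,((\mathbf{B}+_{q}A_{p})',\mathbf{y})$. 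Set $\alpha_{4}=\alpha_{2}(\mathbf{P},\mathbf{\tilde x}_{p}x_{p}^{-1})$, $\alpha_{5}=\alpha_{4}(\mathbf{B}',\mathbf{y})$, $\alpha_{4}'=\alpha_{2}((\mathbf{B}+_{q}A_{p})',\mathbf{y})$, $\alpha_{5}'=\alpha_{4}'(\mathbf{A}',\mathbf{x})$ as in Lemma~\ref{4 loop}, and introduce the corner domain $\alpha_{6}$ obtained by applying to $\alpha_{2}$ both $(\mathbf{P},\mathbf{\tilde x}_{p}x_{p}^{-1})$ and the appropriate conjugate of $(\mathbf{B}+_{q}A_{p},\mathbf{y})$, so that the $\alpha$'s assemble into a commuting diagram of exactly the shape of Figure~\ref{fig commuting diagram 2b}, with $\alpha_{1},\alpha_{2},\alpha_{3}$ on top, $\alpha_{4},\alpha_{5}$ in a middle row, $\alpha_{6}$ at the centre and $\alpha_{4}',\alpha_{5}'$ at the bottom.

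Next I would identify the constituent squares. The triangles $\alpha_{1}$--$\alpha_{2}$--$\alpha_{4}$ and $\alpha_{2}$--$\alpha_{3}$--$\alpha_{5}'$ are filled by the $A_{i}$-graph, respectively the $A_{j}$-graph, inside $\alpha_{2}$ (all three edges of each lie in a single vertex stabiliser), exactly as in the proof of Lemma~\ref{4 contractible}. The square built from $\alpha_{1},\alpha_{4},\alpha_{6},\alpha_{4}'$ has all of its operating data inside $H_{i}$ together with one partition-changing automorphism whose underlying set contains $A_{p}$, hence is a Case~2a square, and Lemma~\ref{2a domain height} gives $\|\alpha_{1}\|-\|\alpha_{4}\|=\|\alpha_{4}'\|-\|\alpha_{6}\|$; symmetrically the square on $\alpha_{3},\alpha_{5}',\alpha_{6},\alpha_{5}$ is a Case~2a (or Case~1a) square, giving $\|\alpha_{3}\|-\|\alpha_{5}'\|=\|\alpha_{5}\|-\|\alpha_{6}\|$. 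The remaining square $\alpha_{4}'$--$\alpha_{6}$--$\alpha_{5}$ joins $\alpha_{6}$ to $\alpha_{4}'$ and to $\alpha_{5}$ by two relative multiple Whitehead automorphisms with operating factors $H_{i}$ and $H_{j}$ and with a common underlying label set (the appropriate conjugate of $\bigcup\mathbf{P}$), so Lemma~\ref{lemma acting on common C} applies: either $\max(\|\alpha_{4}'\|-\|\alpha_{6}\|,\|\alpha_{5}\|-\|\alpha_{6}\|)>0$, or $\|\alpha_{4}'\|-\|\alpha_{6}\|=\|\alpha_{5}\|-\|\alpha_{6}\|=0$.

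Combining these identities with the peak hypothesis $\max(\|\alpha_{1}\|-\|\alpha_{2}\|,\|\alpha_{3}\|-\|\alpha_{2}\|)\le0$, $\min(\|\alpha_{1}\|-\|\alpha_{2}\|,\|\alpha_{3}\|-\|\alpha_{2}\|)<0$ (Definition~\ref{defn peak}) and with the triangle relations $\|\alpha_{4}\|-\|\alpha_{2}\|=(\|\alpha_{1}\|-\|\alpha_{2}\|)-(\|\alpha_{1}\|-\|\alpha_{4}\|)$ and $\|\alpha_{5}'\|-\|\alpha_{2}\|=(\|\alpha_{3}\|-\|\alpha_{2}\|)-(\|\alpha_{3}\|-\|\alpha_{5}'\|)$, the same short deduction as in the proof of Lemma~\ref{2b peak} forces $\min(\|\alpha_{4}\|-\|\alpha_{2}\|,\|\alpha_{5}'\|-\|\alpha_{2}\|)<0$. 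Up to interchanging the roles of $(\mathbf{A},\mathbf{x})$ and $(\mathbf{B},\mathbf{y})$ (hence of the two loops of Lemma~\ref{4 loop}) we may assume $\|\alpha_{4}\|-\|\alpha_{2}\|<0$. Then $\alpha_{4}$--$\alpha_{2}$--$\alpha_{3}$ is a peak whose left-hand automorphism $(\mathbf{\bar A}_{p}\cap B_{q},\mathbf{\tilde x}_{p}x_{p}^{-1})$ has operating factor $H_{i}$ and union contained in $B_{q}$ (so $H_{j}$ no longer appears on the left), i.e.\ a peak of Case~2a type; Lemma~\ref{2a peak} then yields $\|\alpha_{5}\|<\|\alpha_{2}\|$, as claimed. (The reducibility statement is then read off exactly as in Propositions~\ref{prop 2a summary} and~\ref{prop 2b summary}, using Lemmas~\ref{4 loop} and~\ref{4 contractible} and Definition~\ref{defn reducible}.)

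The main obstacle will be the set-theoretic bookkeeping: for each of the four or five sub-squares one must verify that the disjointness and membership conditions required to invoke the earlier cases ($\hat{A}\cap\hat{B}=\emptyset$ for Case~1a, $\hat{A}\subseteq B_{q}$ for Case~2a, equal underlying sets for Lemma~\ref{lemma acting on common C}, and $H_{i},H_{j}\notin$ the relevant sets) genuinely hold after the primes and conjugations of Proposition~\ref{whitehead notational properties}~(4) have been applied, and that the residual peak $\alpha_{4}$--$\alpha_{2}$--$\alpha_{3}$ is genuinely of the simpler type claimed. None of this is conceptually hard, but it is precisely where the notation of Notation~\ref{notation whitehead autos} earns its keep, and it is the step most likely to conceal an indexing or sign slip.
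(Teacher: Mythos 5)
Your endgame is right (once $\min(\|\alpha_{4}\|,\|\alpha_{5}'\|)<\|\alpha_{2}\|$ is known, the residual peak $\alpha_{4}\dash\alpha_{2}\dash\alpha_{3}$, resp.\ $\alpha_{1}\dash\alpha_{2}\dash\alpha_{5}'$, is of Case~2a type and Lemma~\ref{2a peak} finishes), but the middle of your argument has a genuine gap. The central square of your proposed hexagon joins $\alpha_{6}$ to $\alpha_{4}'$ and to $\alpha_{5}$ by (conjugates of) the two residual automorphisms $(\mathbf{\bar{A}}_{p}\cap B_{q},\mathbf{\tilde{x}}_{p}x_{p}^{-1})$ and $(\mathbf{\bar{B}}_{q}\cap A_{p},\mathbf{\tilde{y}}_{q}y_{q}^{-1})$, and you claim these have ``a common underlying label set''. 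They do not: their underlying sets are $\widehat{\mathbf{\bar{A}}_{p}\cap B_{q}}=B_{q}-A_{p}-\{H_{i}\}$ and $\widehat{\mathbf{\bar{B}}_{q}\cap A_{p}}=A_{p}-B_{q}-\{H_{j}\}$, which are \emph{disjoint} (one lies in $B_{q}-A_{p}$, the other in $A_{p}-B_{q}$) and hence equal only in the degenerate situation where both are empty. So the hypothesis $\hat{C}=\hat{D}$ of Lemma~\ref{lemma acting on common C} fails, the dichotomy you need on the central square is unavailable, and the two height identities from the outer squares alone do not force $\min(\|\alpha_{4}\|-\|\alpha_{2}\|,\|\alpha_{5}'\|-\|\alpha_{2}\|)<0$. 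This is not mere bookkeeping: Case~4 is not obtained from Case~2b by formally replacing sets, because here \emph{both} automorphisms must be split, and the two leftover pieces live on opposite sides of $A_{p}\,\triangle\,B_{q}$.

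The paper gets around this with a different device. Since inner automorphisms act trivially on domains, one may replace $(\mathbf{A},\mathbf{x})$ by $(\mathbf{\bar{A}}_{p},\mathbf{\tilde{x}}_{p}x_{p}^{-1})$ and $(\mathbf{B},\mathbf{y})$ by $(\mathbf{\bar{B}}_{q},\mathbf{\tilde{y}}_{q}y_{q}^{-1})$ without changing $\alpha_{1}$ or $\alpha_{3}$; in this complementary form $H_{i}\not\in\hat{\mathbf{\bar{B}}}_{q}$ and $H_{j}\not\in\hat{\mathbf{\bar{A}}}_{p}$, so the peak now satisfies the Case~1 hypotheses. If $A_{p}\cup B_{q}\ne\hat{H}$ one applies Lemma~\ref{1b peak} directly (there the common set fed to Lemma~\ref{lemma acting on common C} is $\bar{A_{p}}\cap\bar{B_{q}}=\hat{H}-(A_{p}\cup B_{q})$, which genuinely is a single set), and the identity $C-\bar{D}=C\cap D$ identifies the resulting domains with your $\alpha_{4}$ and $\alpha_{5}'$; if $A_{p}\cup B_{q}=\hat{H}$ then $\alpha_{4}=\alpha_{1}$ and $\alpha_{5}'=\alpha_{3}$ and the peak hypothesis gives the inequality for free. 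To repair your write-up you should either adopt this reduction to Case~1 or rebuild your hexagon around the complementary sets $\mathbf{\bar{A}}_{p}-\bar{B_{q}}$, $\mathbf{\bar{B}}_{q}-\bar{A_{p}}$ and the common piece $\bar{A_{p}}\cap\bar{B_{q}}$, rather than around $\mathbf{\bar{A}}_{p}\cap B_{q}$ and $\mathbf{\bar{B}}_{q}\cap A_{p}$.
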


\begin{proof}
First, note that inner automorphisms stabilise each point of $\mathcal{C}_{n}$, and hence each domain in the Space of Domains. Writing $\gamma(z)$ for the inner automorphism which conjugates everything by $z$, we then see that
$\alpha_{1}=\alpha_{2}(\mathbf{A},\mathbf{x})=\alpha_{2}(\mathbf{A},\mathbf{x})\gamma(x_{p}^{-1})=\alpha_{2}(\mathbf{\bar{A}}_{p},\mathbf{\tilde{x}}_{p}x_{p}^{-1})$.
Similarly, $\alpha_{3}=\alpha_{2}(\mathbf{\bar{B}}_{q},\mathbf{\tilde{y}}_{q}y_{q}^{-1})$.
Note that $\mathbf{\hat{\bar{A}}}_{p}=\bar{A_{p}}$ and $\mathbf{\hat{\bar{B}}}_{q}=\bar{B_{q}}$.
Since $\mathbf{x}\subset H_{i}\in B_{q}$ and $\mathbf{y}\subset H_{j}\in A_{p}$, then $\mathbf{\tilde{x}}_{p}x_{p}^{-1}\subset H_{i}\not\in\mathbf{\hat{\bar{A}}}_{p}$ and  $\mathbf{\tilde{y}}_{q}y_{q}^{-1}\subset H_{j}\not\in\mathbf{\hat{\bar{B}}}_{q}$. \\
If $A_{p}\cup B_{q}\not=\hat{H}$ then by Lemma \ref{1b peak}, either $\|\alpha_{2}(\mathbf{\bar{A}}_{p}-\bar{B_{q}},\mathbf{\tilde{x}}_{p}x_{p}^{-1})\|<\|\alpha_{2}\|$ or $\|\alpha_{2}(\mathbf{\bar{B}}_{q}-\bar{A_{p}},\mathbf{\tilde{y}}_{q}y_{q}^{-1})\|<\|\alpha_{2}\|$.
But given arbitrary sets $C$ and $D$, $C-\bar{D}=C\cap D$.
Hence either $\|\alpha_{4}\|<\|\alpha_{2}\|$ or $\|\alpha_{5}'\|<\|\alpha_{2}\|$. \\
If $A_{p}\cup B_{q}=\hat{H}$ then $\alpha_{4}=\alpha_{1}(\mathbf{A}+_{p}B_{q},\mathbf{x})^{-1}=\alpha_{1}\gamma(x_{p}^{-1})=\alpha_{1}$.
Similarly, $\alpha_{5}'=\alpha_{3}$, and since $\alpha_{1}\dash \alpha_{2}\dash \alpha_{3}$ is a peak, then $\|\alpha_{2}\|>\min(\|\alpha_{4}\|,\|\alpha_{5}'\|)$. \\
Now one of $\alpha_{4}\dash \alpha_{2}\dash \alpha_{3}$ or $\alpha_{1}\dash \alpha_{2}\dash \alpha_{5}'$ is a peak satisfying Case 2a, and the result follows from Lemma \ref{2a peak}.
\end{proof}

\begin{prop}\label{prop 4 summary} 
Let $H_{1}\ast\dots\ast H_{n}$ be an $\mathfrak{S}$ free factor splitting for $G$.
Let $(\mathbf{A},\mathbf{x})$ and $(\mathbf{B},\mathbf{y})$ be relative multiple Whitehead automorphisms with $\mathbf{x}\subset H_{i}$ and $\mathbf{y}\subset H_{j}$ and $\hat{A}\subset\{H_{1},\dots,H_{n}\}-\{H_{i}\}$, $\hat{B}\subset\{H_{1},\dots,H_{n}\}-\{H_{j}\}$ such that for some $p$ and $q$ we have $H_{i}\in B_{q}$ and $H_{j}\in A_{p}$.
Let $\alpha_{2}$ be the domain whose $\alpha$-graph has $\mathfrak{S}$-labelling $(H_{1},\dots,H_{n})$, and let $\alpha_{1}=\alpha_{2}(\mathbf{A},\mathbf{x})$ and $\alpha_{3}=\alpha_{2}(\mathbf{B},\mathbf{y})$.
If
\begin{tikzcd}[cramped]
\alpha_{1} \ar[r,-<-,dash,"(\mathbf{A}{,}\mathbf{x})"]	& \alpha_{2} \ar[r,->-,dash,"(\mathbf{B}{,}\mathbf{y})"]	& \alpha_{3}
\end{tikzcd}
is a peak, then it is reducible.
\end{prop}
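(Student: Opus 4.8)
The strategy mirrors the summary propositions already established for Cases 1(a), 1(b), 2(a), and 2(b) (Propositions \ref{prop 1a summary}, \ref{prop 1b summary}, \ref{prop 2a summary}, \ref{prop 2b summary}): the substantive work has been done in the three preceding lemmas (Lemmas \ref{4 loop}, \ref{4 contractible}, \ref{4 peak}), and the proposition is simply their assembly. First, I would invoke Lemma \ref{4 loop} and Lemma \ref{4 contractible} to record that the peak path
\begin{tikzcd}[cramped]
\alpha_{1} \ar[r,-<-,dash,"(\mathbf{A}{,}\mathbf{x})"]	& \alpha_{2} \ar[r,->-,dash,"(\mathbf{B}{,}\mathbf{y})"]	& \alpha_{3}
\end{tikzcd}
is homotopic in the Space of Domains to each of the two length-$3$ paths
\begin{center}
\begin{tikzcd}[cramped,column sep=1.4em]
\alpha_{1} \ar[r,-<-,dash]	& \alpha_{2}(\mathbf{A},\mathbf{x})((\mathbf{A}+_{p}B_{q})',\mathbf{x})^{-1} \ar[r,->-,dash] 	& \alpha_{2}(\mathbf{\bar{A}}_{p}\cap B_{q},\mathbf{\tilde{x}}_{p}x_{p}^{-1})(\mathbf{B'},\mathbf{y}) \ar[r,-<-,dash]	& \alpha_{3}
\end{tikzcd}
\end{center}
and the symmetric path obtained by interchanging the roles of $(\mathbf{A},\mathbf{x})$ and $(\mathbf{B},\mathbf{y})$ (passing through $\alpha_{4}'$ and $\alpha_{5}'$). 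The homotopy is justified because the intermediate loops in Lemma \ref{4 loop} are filled in by $A$-graphs and by squares falling under Case 2(a), whose contractibility is Lemma \ref{2a contractible}.

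Next I would apply Lemma \ref{4 peak}: up to relabelling of $(\mathbf{A},\mathbf{x})$ and $(\mathbf{B},\mathbf{y})$, both intermediate domains $\alpha_{2}(\mathbf{A},\mathbf{x})((\mathbf{A}+_{p}B_{q})',\mathbf{x})^{-1}$ and $\alpha_{2}(\mathbf{\bar{A}}_{p}\cap B_{q},\mathbf{\tilde{x}}_{p}x_{p}^{-1})(\mathbf{B'},\mathbf{y})$ have height strictly smaller than $\|\alpha_{2}\|$. Hence the corresponding length-$3$ path satisfies the condition of Definition \ref{defn reducible}, and the peak is reducible. This final assembly is essentially a one-paragraph argument identical in form to the proof of Proposition \ref{prop 2b summary}.

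\textbf{Main obstacle.} The genuine difficulty is entirely contained in Lemma \ref{4 peak}, not in the proposition itself. There, the reduction of the full ``both factors contain the operating letter of the other'' configuration is handled by first conjugating by an inner automorphism to rewrite $(\mathbf{A},\mathbf{x})$ as $(\mathbf{\bar{A}}_{p},\mathbf{\tilde{x}}_{p}x_{p}^{-1})$ (and likewise for $\mathbf{B}$), thereby moving $H_{j}$ out of the $\mathbf{A}$-support and reducing to Case 1(b) when $A_{p}\cup B_{q}\ne\hat{H}$, or to the trivial situation when $A_{p}\cup B_{q}=\hat{H}$; in either event one is left with a peak satisfying Case 2(a), whose reducibility is Lemma \ref{2a peak}. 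So the main technical burden — verifying the inner-automorphism rewriting is compatible with the geometric bookkeeping of relative Whitehead automorphisms (Observation \ref{obs geometric whitehead argument}) and that the resulting heights are controlled by Lemmas \ref{1b peak} and \ref{2a peak} — has already been discharged. Given those lemmas, Proposition \ref{prop 4 summary} follows with no further obstruction.

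\begin{proof}
By Lemmas \ref{4 loop} and \ref{4 contractible}, the path
\begin{tikzcd}[cramped]
\alpha_{1} \ar[r,-<-,dash,"(\mathbf{A}{,}\mathbf{x})"]	& \alpha_{2} \ar[r,->-,dash,"(\mathbf{B}{,}\mathbf{y})"]	& \alpha_{3}
\end{tikzcd}
is homotopic in the Space of Domains to each of the paths
\begin{center}
\begin{tikzcd}[cramped,column sep=1.2em]
\alpha_{1} \ar[r,-<-,dash]	& \alpha_{2}(\mathbf{A},\mathbf{x})((\mathbf{A}+_{p}B_{q})',\mathbf{x})^{-1} \ar[r,->-,dash] 	& \alpha_{2}(\mathbf{\bar{A}}_{p}\cap B_{q},\mathbf{\tilde{x}}_{p}x_{p}^{-1})(\mathbf{B'},\mathbf{y}) \ar[r,-<-,dash]	& \alpha_{3}
\end{tikzcd}
\end{center}
and
\begin{center}
\begin{tikzcd}[cramped,column sep=1.2em]
\alpha_{1} \ar[r,->-,dash]	& \alpha_{2}(\mathbf{A},\mathbf{x})((\mathbf{\bar{B}}_{q}\cap A_{p})^{x_{p}},(\mathbf{\tilde{y}}_{q}y_{q}^{-1})^{x_{p}}) \ar[r,-<-,dash] 	& \alpha_{2}(\mathbf{\bar{B}}_{q}\cap A_{p},\mathbf{\tilde{y}}_{q}y_{q}^{-1})(\mathbf{A'},\mathbf{x}) \ar[r,->-,dash]	& \alpha_{3}
\end{tikzcd}
\end{center}
By Lemma \ref{4 peak}, up to relabelling of $(\mathbf{A},\mathbf{x})$ and $(\mathbf{B},\mathbf{y})$, we have both $\|\alpha_{2}(\mathbf{A},\mathbf{x})((\mathbf{A}+_{p}B_{q})',\mathbf{x})^{-1}\|<\|\alpha_{2}\|$ and $\|\alpha_{2}(\mathbf{\bar{A}}_{p}\cap B_{q},\mathbf{\tilde{x}}_{p}x_{p}^{-1})(\mathbf{B'},\mathbf{y})\|<\|\alpha_{2}\|$.
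Thus by Definition \ref{defn reducible}, the relevant path above is a reduction of the peak
\begin{tikzcd}[cramped]
\alpha_{1} \ar[r,-<-,dash,"(\mathbf{A}{,}\mathbf{x})"]	&[-0.1cm] \alpha_{2} \ar[r,->-,dash,"(\mathbf{B}{,}\mathbf{y})"]	&[-0.1cm] \alpha_{3}
\end{tikzcd}~, so this peak is reducible.
\end{proof}
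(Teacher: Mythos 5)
Your proof is correct and follows the paper's own argument essentially verbatim: you assemble Lemmas \ref{4 loop} and \ref{4 contractible} to replace the peak by one of two length-three paths, then apply Lemma \ref{4 peak} to conclude that the intermediate vertices of the relevant one have height strictly below $\|\alpha_{2}\|$, exactly as in Propositions \ref{prop 1b summary} and \ref{prop 2b summary}. The only blemish is a transcription slip in your second displayed path: its middle vertex should be $\alpha_{2}(\mathbf{\bar{B}}_{q}\cap A_{p},\mathbf{\tilde{y}}_{q}y_{q}^{-1})$ (the vertex $\alpha_{5}'$) rather than $\alpha_{2}(\mathbf{\bar{B}}_{q}\cap A_{p},\mathbf{\tilde{y}}_{q}y_{q}^{-1})(\mathbf{A'},\mathbf{x})$, which coincides with the first intermediate vertex $\alpha_{4}'$ --- this does not affect the argument.
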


\begin{proof}
By Lemmas \ref{4 loop} and \ref{4 contractible}, the path 
\begin{tikzcd}[cramped]
\alpha_{1} \ar[r,-<-,dash,"(\mathbf{A}{,}\mathbf{x})"]	& \alpha_{2} \ar[r,->-,dash,"(\mathbf{B}{,}\mathbf{y})"]	& \alpha_{3}
\end{tikzcd}
is homotopic in the Space of Domains to each of the paths
\\ \noindent
\begin{tikzcd}[sep=huge]
\alpha_{1} \ar[r,-<-,dash,"((\mathbf{A}+_{p}B_{q})'{,}\mathbf{x})"]
&[-0.5cm] \alpha_{2}(\mathbf{\bar{A}}_{p}\cap B_{q},\mathbf{\tilde{x}}_{p}x_{p}^{-1}) \ar[r,->-,dash,"(\mathbf{B'}{,}\mathbf{y})"]
&[-1.2cm] \alpha_{2}(\mathbf{\bar{A}}_{p}\cap B_{q},\mathbf{\tilde{x}}_{p}x_{p}^{-1})(\mathbf{B'},\mathbf{y}) \ar[r,-<-,dash,"((\mathbf{\bar{A}}_{p}\cap B_{q})^{y_{q}}{,}(\mathbf{\tilde{x}}_{p}x_{p}^{-1})^{y_{q}})"]
&[0.7cm] \alpha_{3}
\end{tikzcd}
\\ \noindent and \\ \noindent
\begin{tikzcd}[sep=huge]
\alpha_{1} \ar[r,->-,dash,"((\mathbf{\bar{B}}_{q}\cap A_{p})^{x_{p}}{,}(\mathbf{\tilde{y}}_{q}y_{q}^{-1})^{x_{p}})"]
&[0.7cm] \alpha_{2}(\mathbf{\bar{B}}_{q}\cap A_{p}{,}\mathbf{\tilde{y}}_{q}y_{q}^{-1})(\mathbf{A'},\mathbf{x}) \ar[r,-<-,dash,"(\mathbf{A'}{,}\mathbf{x})"] 
&[-1.2cm] \alpha_{2}(\mathbf{\bar{B}}_{q}\cap A_{p}{,}\mathbf{\tilde{y}}_{q}y_{q}^{-1})
\ar[r,->-,dash,"((\mathbf{B}+_{q}A_{p})'{,}\mathbf{y})"]
&[-0.5cm] \alpha_{3}	\ .
\end{tikzcd}

By Lemma \ref{4 peak}, either
$||\alpha_{2}(\mathbf{A},\mathbf{x})((\mathbf{A}+_{p}B_{q})',\mathbf{x})^{-1}||<||\alpha_{2}||$
and \\ \noindent
$||\alpha_{2}(\mathbf{\bar{A}}_{p}\cap B_{q},\mathbf{\tilde{x}}_{p}x_{p}^{-1})(\mathbf{B'},\mathbf{y})||<||\alpha_{2}||$
, or
$||\alpha_{2}(\mathbf{\bar{B}}_{q}\cap A_{p},\mathbf{\tilde{y}}_{q}y_{q}^{-1})(\mathbf{A'},\mathbf{x})||<||\alpha_{2}||$
and
$||\alpha_{2}(\mathbf{\bar{B}}_{q}\cap A_{p}{,}\mathbf{\tilde{y}}_{q}y_{q}^{-1})||<||\alpha_{2}||$.
Thus by Definition \ref{defn reducible}, one of the above paths is a reduction for the peak
\begin{tikzcd}[cramped]
\alpha_{1} \ar[r,-<-,dash,"(\mathbf{A}{,}\mathbf{x})"]	& \alpha_{2} \ar[r,->-,dash,"(\mathbf{B}{,}\mathbf{y})"]	& \alpha_{3}
\end{tikzcd}~.
\end{proof}

% summary

\subsection{Simple Connectivity}

We have now done all the required work to conclude:

\begin{prop}\label{peak reduction thm} 
Every peak
\begin{tikzcd}[cramped]
\alpha_{1} \ar[r,-<-,dash,"(\mathbf{A}{,}\mathbf{x})"]	& \alpha_{2} \ar[r,->-,dash,"(\mathbf{B}{,}\mathbf{y})"]	& \alpha_{3}
\end{tikzcd}
(whose edges are both of Type $A$)
in the Space of Domains is reducible (to a path of length $2$ or $3$ whose edges are all of Type $A$).
\end{prop}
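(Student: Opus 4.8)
The plan is to dispatch the proposition by a straightforward case analysis that invokes the work already assembled in this section. First I would recall that by Proposition~\ref{prop i=j peaks reducible}, if the two relative multiple Whitehead automorphisms $(\mathbf{A},\mathbf{x})$ and $(\mathbf{B},\mathbf{y})$ have $\mathbf{x}$ and $\mathbf{y}$ lying in the same factor group $H_{i}$ of the splitting associated to $\alpha_{2}$, then the peak is reducible. So I may assume $\mathbf{x}\subset H_{i}$ and $\mathbf{y}\subset H_{j}$ with $i\ne j$. This is exactly the setting of Observation~\ref{obs cases of peak}, which organises the situation into Cases 1(a), 1(b), 2(a), 2(b), and 4 (Case 3 being symmetric to Case 2, and these cases being exhaustive by the discussion preceding the observation).

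Next I would simply cite the corresponding summary proposition in each case: Proposition~\ref{prop 1a summary} handles Case 1(a), Proposition~\ref{prop 1b summary} handles Case 1(b), Proposition~\ref{prop 2a summary} handles Case 2(a), Proposition~\ref{prop 2b summary} handles Case 2(b), and Proposition~\ref{prop 4 summary} handles Case 4. Each of these asserts precisely that a peak of the stated form is reducible. Finally, I would note that the homotopies produced in each case replace the length-$2$ peak $\alpha_{1}\dash\alpha_{2}\dash\alpha_{3}$ by a path of length $2$ (Cases 1(a), 2(a), and the ``triangle'' subcase via an $A$-graph in Proposition~\ref{prop i=j peaks reducible}) or length $3$ (Cases 1(b), 2(b), 4), and that in every case the intermediate domains strictly decrease in height; moreover, tracing through the constructions of $\alpha_{4}$, $\alpha_{5}$, etc., each new edge introduced arises from an inclusion into the stabiliser of some $A$-graph (the triangles are filled by $A$-graphs, and the quadrilaterals decompose via Case 1(a)/2(a) squares whose edges are of Type $A$), so the reducing path consists entirely of edges of Type $A$. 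This last bookkeeping is the only part requiring care, but it amounts to reading off the explicit automorphisms labelling the edges in Lemmas~\ref{1a loop}, \ref{1b loop}, \ref{2a loop}, \ref{2b loop}, and \ref{4 loop} and observing that each is a relative multiple Whitehead automorphism, hence (by Lemma~\ref{A edges have (A,x) autos} and the remark following Example~\ref{eg (H3,h1)}) corresponds to an edge of Type $A$.

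I do not anticipate a genuine obstacle here, since the proposition is essentially a packaging statement: all the substantive content---the loop identities, the contractibility of the loops in the Space of Domains, and the height estimates---has been proved in the preceding subsections. The one point to state carefully is the claim that the reduction has length at most $3$ with Type~$A$ edges; I would phrase the proof so as to make explicit that in Case 1(a) and Case 2(a) the reduction is
\begin{tikzcd}[cramped]
\alpha_{1} \ar[r,dash,"A"]	& \alpha_{4} \ar[r,dash,"A"]	& \alpha_{3}
\end{tikzcd}
of length $2$, that in Proposition~\ref{prop i=j peaks reducible} it is either the trivial path or a single Type~$A$ edge, and that in Cases 1(b), 2(b), and 4 it is
\begin{tikzcd}[cramped]
\alpha_{1} \ar[r,dash,"A"]	& \alpha_{4} \ar[r,dash,"A"]	& \alpha_{5} \ar[r,dash,"A"]	& \alpha_{3}
\end{tikzcd}
of length $3$, with each edge of Type~$A$ by inspection of the labelling automorphisms. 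The proof is then complete by exhaustion of cases.

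\begin{proof}
If there is some $i$ with $\mathbf{x},\mathbf{y}\subset H_{i}$ (where $(H_{1},\dots,H_{n})$ is the $\mathfrak{S}$-labelling of the $\alpha$-graph in $\alpha_{2}$), then the peak is reducible by Proposition~\ref{prop i=j peaks reducible}, and the reduction produced there is either the constant path at $\alpha_{1}=\alpha_{3}$ or the single edge $\alpha_{1}\dash\alpha_{3}$ determined by the $A_{i}$-graph lying in $\alpha_{1}\cap\alpha_{2}\cap\alpha_{3}$, which is of Type $A$.

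Otherwise, we have $\mathbf{x}\subset H_{i}$ and $\mathbf{y}\subset H_{j}$ with $i\ne j$, so $H_{i}\not\in\hat{A}$ and $H_{j}\not\in\hat{B}$. By Observation~\ref{obs cases of peak} we are in exactly one of Cases 1(a), 1(b), 2(a), 2(b), or 4 (Case 3 being symmetric to Case 2). In each case the peak is reducible: Case 1(a) by Proposition~\ref{prop 1a summary}, Case 1(b) by Proposition~\ref{prop 1b summary}, Case 2(a) by Proposition~\ref{prop 2a summary}, Case 2(b) by Proposition~\ref{prop 2b summary}, and Case 4 by Proposition~\ref{prop 4 summary}.

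It remains to observe that in each case the reduction has length $2$ or $3$ with all edges of Type $A$. In Cases 1(a) and 2(a) the reduction, by Lemmas~\ref{1a loop}, \ref{1a contractible}, \ref{2a loop}, and \ref{2a contractible}, is a path $\alpha_{1}\dash\alpha_{4}\dash\alpha_{3}$ of length $2$, and each edge is labelled by a relative multiple Whitehead automorphism (for example $(\mathbf{B},\mathbf{y})$ and $(\mathbf{A},\mathbf{x})$, respectively $(\mathbf{B'},\mathbf{y})$ and $(\mathbf{A}^{y_{q}},\mathbf{x}^{y_{q}})$); hence by Lemma~\ref{A edges have (A,x) autos} these edges are of Type $A$. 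In Cases 1(b), 2(b), and 4 the reduction, by Lemmas~\ref{1b loop}, \ref{1b contractible}, \ref{2b loop}, \ref{2b contractible}, \ref{4 loop}, and \ref{4 contractible}, is a path $\alpha_{1}\dash\alpha_{4}\dash\alpha_{5}\dash\alpha_{3}$ of length $3$ whose edges are again each labelled by a relative multiple Whitehead automorphism, and so are of Type $A$. This exhausts all cases and completes the proof.
\end{proof}
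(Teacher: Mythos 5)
Your proof is correct and follows essentially the same route as the paper's: reduce to the case $i\ne j$ via Proposition~\ref{prop i=j peaks reducible}, then dispatch the remaining cases of Observation~\ref{obs cases of peak} by citing Propositions~\ref{prop 1a summary}, \ref{prop 1b summary}, \ref{prop 2a summary}, \ref{prop 2b summary}, and \ref{prop 4 summary}. Your additional bookkeeping verifying that the reducing paths have length $2$ or $3$ with all edges of Type~$A$ (which needs the ``if and only if'' extension of Lemma~\ref{A edges have (A,x) autos} noted after Example~\ref{eg (H3,h1)}, not just the lemma itself) is left implicit in the paper's proof, so including it is a minor improvement rather than a divergence.
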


\begin{proof}
Let
\begin{tikzcd}[cramped]
\alpha_{1} \ar[r,-<-,dash,"(\mathbf{A}{,}\mathbf{x})"]	& \alpha_{2} \ar[r,->-,dash,"(\mathbf{B}{,}\mathbf{y})"]	& \alpha_{3}
\end{tikzcd}
be a peak in the Space of Domains whose edges are both of Type $A$.
Suppose $\alpha_{2}$ has $\mathfrak{S}$-labelling $(H_{1},\dots,H_{n})$. Then for some $i$ and $j$ we have $\mathbf{x}\subset H_{i}$ and $\mathbf{y}\subset H_{j}$.
By assumption, $H_{i}\not\in\hat{A}$ and $H_{j}\not\in\hat{B}$.
If $i=j$ then by Proposition \ref{prop i=j peaks reducible}, the peak is reducible.
Otherwise, our peak falls into one of the Cases 1--4 as described in Observation \ref{obs cases of peak}, and by Propositions \ref{prop 1a summary}, \ref{prop 1b summary}, \ref{prop 2a summary}, \ref{prop 2b summary}, and \ref{prop 4 summary}, we are done (after renaming, if we fell under Case 3).
\end{proof}

We can now use this Peak Reduction Proposition to argue that the Space of Domains, and hence the complex $\mathcal{C}_{n}$, is simply connected.

\begin{thm}\label{Space of Domains is simply connected}
Our Space of Domains is simply connected.
\end{thm}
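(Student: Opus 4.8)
The plan is to deduce simple connectivity of the Space of Domains from the peak-reduction machinery already assembled, by a well-founded induction on the height function $||\cdot||$ of Section \ref{height}, with Proposition \ref{peak reduction thm} doing the real work.

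First I would reduce to a convenient class of loops. Since the Graph of Domains is path connected (Proposition \ref{prop alpha graphs are path connected}), so is the Space of Domains, and I may compute $\pi_{1}$ based at the fundamental domain $\alpha_{0}$. By cellular approximation any based loop is homotopic to an edge loop $\lambda = (\alpha_{0} = \beta_{0} \dash \beta_{1} \dash \cdots \dash \beta_{m} = \alpha_{0})$ in the Graph of Domains, and by Corollary \ref{type A edges} I may further homotope it, within the Space of Domains, so that every edge of $\lambda$ is of Type $A$. To each such loop I attach the lexicographically ordered complexity
\[ \mu(\lambda) := \left( \max_{0\le i\le m}||\beta_{i}|| \ , \ \#\{\, i : ||\beta_{i}|| = \max_{0\le j\le m}||\beta_{j}|| \,\} \right), \]
which takes values in a well-ordered set, and I would argue by induction on $\mu(\lambda)$ that $\lambda$ is null-homotopic.

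For the induction: if the maximum height is $0$, then by Lemma \ref{zero height is basepoint} every $\beta_{i}$ equals $\alpha_{0}$, and since edges of the Graph of Domains join distinct domains, $\lambda$ is the constant loop. Otherwise, let $M$ denote the maximum height. As $||\beta_{0}|| = ||\beta_{m}|| = 0 < M$, the value $M$ is attained only at interior vertices, so I may choose a maximal run $\beta_{a} \dash \cdots \dash \beta_{b}$ (with $1 \le a \le b \le m-1$ and $||\beta_{a-1}|| < M$) of consecutive vertices of height $M$. Then $\beta_{a-1} \dash \beta_{a} \dash \beta_{a+1}$ is a peak in the sense of Definition \ref{defn peak}: indeed $||\beta_{a}|| = M \ge \max(||\beta_{a-1}||,||\beta_{a+1}||)$ since both neighbours have height $\le M$, while $||\beta_{a}|| = M > ||\beta_{a-1}|| \ge \min(||\beta_{a-1}||,||\beta_{a+1}||)$; note this works even when $||\beta_{a+1}|| = M$ (a flat stretch at the top), which the definition of peak permits. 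Both its edges are of Type $A$, so Proposition \ref{peak reduction thm} applies and reduces the peak, homotoping in the Space of Domains, to a path $\beta_{a-1} = \chi_{0} \dash \chi_{1} \dash \cdots \dash \chi_{k} = \beta_{a+1}$ with $k \in \{2,3\}$, all of whose edges are of Type $A$ and all of whose new interior vertices $\chi_{1},\dots,\chi_{k-1}$ have height strictly less than $M$. Substituting this for $\beta_{a-1}\dash\beta_{a}\dash\beta_{a+1}$ in $\lambda$ produces a loop $\lambda'$, homotopic to $\lambda$ in the Space of Domains, still with all edges of Type $A$, in which one vertex of maximal height has been deleted and nothing of height $\ge M$ introduced. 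Hence $\mu(\lambda') < \mu(\lambda)$, and by the inductive hypothesis $\lambda'$, and therefore $\lambda$, bounds a disc in the Space of Domains. This completes the induction and shows the Space of Domains is simply connected.

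The substantive input — that every Type $A$ peak is reducible — is exactly Proposition \ref{peak reduction thm}, already proved, so the only points left to check here are bookkeeping ones: that a peak can always be located at the left end of a maximal top run (the flat-stretch case included), and that each reduction strictly decreases $\mu$ without creating new vertices of height $\ge M$. I do not expect any genuine obstacle beyond verifying these.
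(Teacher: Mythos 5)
Your proof is correct and takes essentially the same approach as the paper: reduce to Type $A$ edge loops via Corollary \ref{type A edges}, eliminate peaks via Proposition \ref{peak reduction thm}, and conclude from Lemma \ref{zero height is basepoint} that the resulting loop is the constant loop at $\alpha_{0}$. The only difference is that your lexicographic induction on $\bigl(\max\text{ height},\ \#\text{ of maximal vertices}\bigr)$ makes explicit the termination argument that the paper leaves implicit in the phrase ``$\lambda$ is homotopic to a peak reduced loop,'' which is a worthwhile clarification but not a different method.
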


\begin{proof}
Let $\lambda$ be a loop in the Space of Domains. 
Note that any loop is homotopic to a based loop, so without loss of generality, we assume $\lambda$ contains the basepoint $\alpha_{0}$.
By Corollary \ref{type A edges} and Proposition \ref{peak reduction thm}, $\lambda$ is homotopic (in the Space of Domains) to a peak reduced loop $\lambda'$.
But any peak reduced loop must have constant height (else it would contain some `heighest' point, i.e. a peak).
Since the basepoint has height $0$ (Lemma \ref{zero height is basepoint}) then every point in $\lambda'$ must have height $0$.
But again by Lemma \ref{zero height is basepoint}, the only point with height $0$ is the basepoint.
Hence $\lambda'$ is actually the constant `loop' at the basepoint, $\alpha_{0}$.
Thus any loop $\lambda$ is homotopic to a constant loop, hence the Space of Domains is simply connected.
\end{proof}

\simplyconnected

\begin{proof}
This follows directly from Theorem \ref{Space of Domains is simply connected} and Proposition \ref{prop Cn is sc if SoD is}.
\end{proof}

% ---------------------------------------------------------  	BIBLIOGRAPHY	-----------------------------------------------------

%\newpage
%\bibliography{references}
%\bibliographystyle{abbrv}

%\errors %uncomment this to check for badboxes

\end{document}